\documentclass{amsart}
\usepackage{hyperref}
\numberwithin{equation}{section}

\usepackage{amssymb}
\usepackage{amsmath, amsfonts,amsthm,amssymb,amscd, verbatim,graphicx,color,multirow,tikz,tikz-cd,booktabs, caption, mathdots,bm,chngcntr,adjustbox,longtable
}
\usepackage{tikz-cd}
\usetikzlibrary{positioning}
\newtheorem{theorem}{Theorem}[section]
\newtheorem{corollary}[theorem]{Corollary}
\newtheorem{conjecture}[theorem]{Conjecture}
\newtheorem{lemma}[theorem]{Lemma}

\newtheorem{remark}[theorem]{Remark}
\newtheorem{proposition}[theorem]{Proposition}
\newtheorem{notation}[theorem]{Notation}
\newtheorem{problem}[theorem]{Problem}
\newtheorem{definition}[theorem]{Definition}

\def\nor#1#2{{\bf N}_{{#1}}({{#2}})}
\def\arrvline{\hfil\kern\arraycolsep\vline\kern-\arraycolsep\hfilneg}
\def\cent#1#2{{\bf C}_{{#1}}({{#2}})}
\def\Z#1{{\bf Z}({{#1}})}
\def\order#1{{\bf o}({{#1}})}

\begin{document}

\title[Normal $2$-coverings  and their generalizations]{Normal $2$-coverings of the finite simple groups and their generalizations}

\author[D.~Bubboloni]{Daniela Bubboloni}

\address{Daniela Bubboloni,
DIMAI, Universit\`a degli Studi di Firenze, Viale Morgagni 67/a,
50134 Firenze, Italy}
\email{daniela.bubboloni@unifi.it}

\author[P. Spiga]{Pablo Spiga}
\address{Pablo Spiga, Dipartimento di Matematica Pura e Applicata,
 University of Milano-Bicocca, Via Cozzi 55, 20126 Milano, Italy}
\email{pablo.spiga@unimib.it}

\author[Th. Weigel]{Thomas Weigel}
\address{Thomas Weigel, Dipartimento di Matematica Pura e Applicata,
 University of Milano-Bicocca, Via Cozzi 55, 20126 Milano, Italy}
\email{thomas.weigel@unimib.it}

\subjclass[2010]{primary 20D06, secondary 20E32}
\keywords{normal coverings, simple groups, almost simple groups}

  \begin{abstract}
Given a finite group $G$, we say that $G$ has weak normal covering number $\gamma_w(G)$ if  $\gamma_w(G)$ is the smallest integer with $G$ admitting proper subgroups $H_1,\ldots,H_{\gamma_w(G)}$ such that each element of $G$ has a conjugate in $H_i$, for some $i\in \{1,\ldots,\gamma_w(G)\}$, via an element in the automorphism group of $G$.

We prove that the weak normal covering number  of every non-abelian simple group is at least $2$ and we classify the non-abelian simple groups attaining $2$. As an application, we classify the non-abelian simple groups having normal covering number $2$. We also show that the weak normal covering number  of an almost simple group is at least two up to one exception.

We determine the weak normal covering number and the normal covering number of the almost simple groups having socle a sporadic simple group. Using similar methods we find the clique number of the invariably generating graph  of the almost simple groups having socle a sporadic simple group.
\end{abstract}

\maketitle
\section{Introduction}\label{s:intro}

It is well known that a finite group  cannot be the union of conjugates of a proper subgroup.
However, there are examples of finite groups which are the union of conjugates of two proper subgroups. There are rather intriguing examples of this phenomenon. For instance, it was already shown by Dye~\cite{Dye} in 1979 that, in even characteristic, the symplectic group $\mathrm{Sp}_n(q)$ is the union of conjugates of its subgroups $\mathrm{SO}^-_{n}(q)$ and
$\mathrm{SO}^{+}_{n}(q)$.

\begin{definition}\label{def:1}{\rm
Let $k$ be a positive integer and let $G$ be a finite non-cyclic group. A \textbf{\textit{normal $k$-covering}} of $G$ is a set $\mu=\{H_1,\ldots, H_k\}$ of $k$  proper subgroups of $G$ with the property that every element of $G$ belongs to the conjugate $H_i^g$,
for some $i \in \{1, \ldots , k\}$ and for some $g \in G$, that is,
$$G=\bigcup_{i=1}^k\bigcup_{g\in G}H_i^g.$$We refer to $H_1,\ldots,H_k$ as the \textbf{\textit{components}} of $\mu$.  If $H_1,\ldots,H_k$ are maximal subgroups of $G$, we refer to them as \textbf{\textit{maximal components}}.
Clearly, if $G$ is a cyclic group, then $G$ admits no normal $k$-covering, because the generators of $G$ lie in no proper subgroup.

The \textbf{\textit{normal covering number}} of the group $G$, denoted by $\gamma(G)$, is the smallest integer
$k$ such that $G$ admits a normal $k$-covering. Note that in a normal $k$-covering $\{H_1,\ldots,H_k\}$ with $k=\gamma(G)$, the proper subgroups $H_1,\ldots,H_k$ are in distinct $G$-conjugacy classes.}
\end{definition}

Finite groups having normal covering number $2$ are often  algebraically and combinatorially very interesting; for instance, the examples of Dye have been used in~\cite{guest} to give new  solutions to Perlis' equation~\cite{perlis} in algebraic number fields. One of the first motivations for investigating finite groups having normal covering number $2$ goes back to a problem in Galois theory (for more details see~\cite[Section 1]{bbh}) and is linked to the study of intersective polynomials, that is, integer polynomials having a root modulo $p$, for every prime number $p$ (see ~\cite{BS} and ~\cite{RS}).
Recently, simple groups having normal covering number $2$ have been used to construct sparsely connected invariably generating graphs~\cite{garzoni}. There is also a unexpected  connection between the theory of transitive permutation groups $G$ in which  every derangement is a $p$-element for some prime $p$ and the normal $2$-coverings of $G$. Indeed, as observed in \cite{BT},  $G$ has such property if and only if $G$ admits a normal $2$-covering with components given by a point stabilizer  and a $p$-Sylow subgroup of $G$. As an application of our work, we give a brief proof of one of the main results in~\cite{BT} in Section~\ref{BTproof}. We also recall that the normal covering number $\gamma(G)$ has connections with some questions about the generation of the group $G$. Consider the number $\kappa(G)$,  introduced
by Britnell and Mar\'oti~\cite{BM} as the maximum size of a set $X$
of conjugacy classes of $G$ such that any pair of elements from distinct classes in $X$
generates $G$. As already observed in \cite{JA}, from this definition, it is clear  that
\begin{equation}\label{kappa-gamma}
\kappa(G)\leq \gamma(G).
\end{equation}
 Now, Guralnick and Malle \cite[Theorem 1.3]{GM} and Kantor, Lubotzky and Shalev~\cite[Theorem~1.3]{KLS} have shown that for non-abelian finite simple groups we have
\begin{equation}\label{kappa-simple}
\kappa(G)\geq 2.
\end{equation}

In this paper we determine, among other things, the finite simple groups $G$ having normal covering number as small as possible, that is, with $\gamma(G)=2$. Thus, in view of  \eqref{kappa-gamma} and \eqref{kappa-simple}, our results also allow to  describe some of the finite simple groups $G$ attaining the minimum value for $\kappa$, that is  $\kappa(G)=2.$

Actually, in applications, it is often important to consider a more general situation. This more general situation arises for instance investigating small cliques in derangement graphs of permutation groups~\cite{meagher} and also in the reduction theorem in~\cite{Lucchini} for investigating arbitrary finite groups having small normal covering number, a similar reduction theorem appears also in~\cite{Praeger1KC}. Another typical example where this more general situation is important is the study of Kronecker classes in number fields, see~\cite{Preager3KC,Octic,Praeger2KC,Praeger1KC,Sa88}. 

\begin{definition}\label{def:12}{\rm
Let $k$ be a positive integer and let $G$ be a finite non-cyclic group. A \textbf{\textit{weak normal $k$-covering}} of $G$ is a family $\mu=\{H_1,\ldots, H_k\}$ of $k$ distinct proper subgroups of $G$ with the property that every element of $G$ belongs to $H_i^g$,
for some $i \in \{1, \ldots , k\}$ and for some $g \in \mathrm{Aut}(G)$, that is,
$$G=\bigcup_{i=1}^k\bigcup_{g\in \mathrm{Aut}(G) }H_i^g.$$

As for normal coverings, we refer to $H_1,\ldots,H_k$ as the \textbf{\textit{components}} of the weak normal $k$-covering and if $H_1,\ldots,H_k$ are maximal subgroups of $G$, we refer to them as \textbf{\textit{maximal components}}.
The \textbf{\textit{weak normal covering number}} of $G$, denoted by $\gamma_w(G)$, is the smallest integer
$k$ such that $G$ admits a weak normal $k$-covering. Since $\mathrm{Aut}(G)$ contains all the inner automorphisms of $G$, we have $\gamma_w(G)\le \gamma(G)$.

Note that in a weak normal $k$-covering $\{H_1,\ldots,H_k\}$ with $k=\gamma_w(G)$, the proper subgroups $H_1,\ldots,H_k$ are in distinct $\mathrm{Aut}(G)$-conjugacy classes.}
\end{definition}
Observe that, if $G$ is a group having (weak) normal covering number $k$, then $G$ has a (weak) normal $k$-covering whose components are maximal subgroups of $G$. In this paper we are mainly concerned on maximal components.

\subsection{The weak normal covering number and the normal covering number of the finite non-abelian simple groups}

The following is the main result of the paper.

\begin{theorem}\label{main theorem}
Let $G$ be a finite non-abelian simple group. Then $\gamma_w(G)\geq2$. Moreover,
\begin{itemize}
\item [(i)] $\gamma_w(G)=2$ if and only if $G$ appears in the first column of {\rm Tables~\ref{00}--\ref{000====}};
\item [(ii)] $\gamma (G)=2$ if and only if $G$ appears in the first column of {\rm Tables~\ref{00}--\ref{000====}} and in the fifth column of those tables appears at least one time a number different from $0$.
\end{itemize}
\end{theorem}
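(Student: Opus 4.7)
The plan is to proceed via the Classification of Finite Simple Groups, handling the lower bound $\gamma_w(G)\ge 2$, the classification of the simple groups with $\gamma_w(G)=2$, and the descent to $\gamma(G)=2$ as three successive steps. The lower bound amounts to the statement that for every proper subgroup $H<G$ some $x\in G$ has its full $\mathrm{Aut}(G)$-orbit disjoint from $H$, i.e.\ some fused conjugacy class of $G$ misses $H$. This is a strengthening of the classical Fein--Kantor--Schacher ``derangement'' theorem, and I would attempt a uniform treatment by producing, in each non-abelian simple $G$, a conjugacy class of elements of large prime-power order -- for instance a regular semisimple element in a Coxeter torus when $G$ is of Lie type, a cycle of maximal suitable length when $G$ is alternating, and a class read off from the ATLAS when $G$ is sporadic -- whose order and structure force the full $\mathrm{Aut}(G)$-class to avoid $H$. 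Zsigmondy-prime considerations, together with explicit lists of element orders in maximal subgroups, should handle the Lie-type case.

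For part (i) I would split into two directions. The \emph{constructive} direction requires, for every simple $G$ in Tables~\ref{00}--\ref{000====}, an explicit pair $\{H_1,H_2\}$ of proper subgroups whose $\mathrm{Aut}(G)$-conjugates cover $G$. Several such constructions are already in the literature and form the backbone of this direction: Dye's covering of $\mathrm{Sp}_{2n}(q)$ for even $q$ by the two orthogonal subgroups, coverings of small alternating groups by intransitive and imprimitive maximal subgroups, and coverings of sporadic groups extracted from their character tables. The harder, \emph{exclusion} direction requires, for every non-abelian simple $G$ not appearing in the tables, a proof that no pair of proper subgroups works. For this I would fix a candidate pair $\{M_1,M_2\}$ of representatives of $\mathrm{Aut}(G)$-classes of maximal subgroups and exhibit a fused conjugacy class disjoint from $M_1\cup M_2$. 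The case analysis again follows CFSG: alternating groups via cycle-type bookkeeping (building on the permutation-theoretic techniques already developed for normal coverings of symmetric and alternating groups), classical groups via Aschbacher's classification of maximal subgroups combined with element-order invariants, exceptional groups via Liebeck--Seitz, and sporadic groups by direct computation.

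Part (ii) refines part (i): once a weak normal $2$-covering $\{H_1,H_2\}$ of $G$ is in hand, it descends to a genuine normal $2$-covering precisely when the $\mathrm{Aut}(G)$-orbit of each $H_i$ splits into $G$-conjugacy classes whose union still covers every fused class that needs covering. The fifth column of the tables is engineered to record exactly this data, so part (ii) is harvested alongside part (i) by tracking $G$-conjugacy classes of subgroups rather than $\mathrm{Aut}(G)$-classes; the presence of a nonzero entry in that column is the combinatorial shadow of inner-conjugacy sufficing. The main obstacle throughout will be the uniform control of the exclusion direction for classical groups of large rank: there the Aschbacher families of maximal subgroups proliferate and one has to track semisimple, unipotent, and mixed classes individually to verify that no candidate pair succeeds. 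I expect the bulk of the work, and most of the content of Tables~\ref{00}--\ref{000====}, to live in this Lie-type analysis, with the sporadic and small-rank cases serving mainly as sporadic/alternating entries and as boundary tests of the general arguments.
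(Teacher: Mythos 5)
Your plan follows essentially the same route as the paper: a CFSG case analysis in which special elements of large order (Singer cycles, primitive-prime-divisor and ``Bertrand'' elements, Zsigmondy primes) combined with Aschbacher's classification pin down the possible maximal components, Dye's orthogonal coverings and similar known constructions supply the positive direction, sporadic and small cases are settled by computer, and part (ii) is harvested by tracking $G$-classes inside the $\mathrm{Aut}(G)$-classes of the components. The only notable difference is one of economy rather than substance: the paper does not reprove the bound $\gamma_w(G)\geq 2$ uniformly (it quotes Saxl and the invariable-generation results of Kantor--Lubotzky--Shalev and Guralnick--Malle, with a computation for $\mathrm{P}\Omega_8^+(2)$ and $\mathrm{P}\Omega_8^+(3)$), and it likewise imports the alternating, sporadic, exceptional and linear cases from earlier work, concentrating its new analysis on the remaining classical families through the Malle--Saxl--Weigel Singer-cycle theorem and the Guralnick--Penttila--Praeger--Saxl $ppd$-theorem rather than an unstructured sweep over all pairs of maximal subgroups.
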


Saxl~\cite{Sa88} in his investigation on Kronecker classes has proved that $\gamma_w(G)\ge 2$, for every finite non-abelian simple group. However, since his main concern was showing $\gamma_w(G)\ne 1$, his work does not attempt to classify the finite non-abelian simple groups attaining the minimum $\gamma_w(G)=2$. We do this in our main result Theorem~\ref{main theorem}. 
We emphasize that the inequality $\gamma_w(G)\geq2$ can also be deduced from the results in \cite{KLS} and \cite{GM},  as commented in Section \ref{asso-graph}.   The hard part of Theorem \ref{main theorem} relies on the classification of the finite non-abelian simple groups for which the functions $\gamma_w$ and $\gamma$ attain their minimum value $2$.

Weak normal $1$-coverings of finite groups have already appeared in the literature a few times in the study of Kronecker classes~\cite{Preager3KC,Octic,Praeger2KC,Praeger1KC}. It is still open an interesting question of Neumann and Praeger~\cite[Problem~11.71]{Kouvorka}. We report here, to give further evidence, their question. Let $A$ be a finite group with a normal subgroup $G$. A subgroup $H$ of $G$ is called an $A$-\textit{\textbf{covering subgroup}} of $G$ if
$$G=\bigcup_{a\in A}H^a.$$
Clearly, if $H$ is an $A$-covering subgroup of $G$, then $\gamma_w(G)=1$ and $\{H\}$ is a weak normal $1$-covering of $G$.

\smallskip

\noindent\textbf{Question: }
Is there a function $f : \mathbb{N} \to \mathbb{N}$
such that whenever $H < G < A$, where $A$ is a finite group, $G$ is a normal subgroup
of $A$ of index $n$, and $H$ is an $A$-covering subgroup of $G$, such that $|G : H | \le f (n)$?

\begin{notation}\label{notation} {\rm We explain the notation in Tables~\ref{00}--\ref{000====}. There are six columns in total. In the first column, we denote the simple group $G$. In the sixth column, we have reported the number of weak normal $2$-coverings of $G$ with maximal components up to $\mathrm{Aut}(G)$-\textbf{\textit{conjugacy}}, where we say that two weak normal $2$-coverings having components $\{H_1,K_1\}$ and $\{H_2,K_2\}$ are $\mathrm{Aut}(G)$-\textit{\textbf{conjugate}} if there exist $\varphi,\psi\in \mathrm{Aut}(G)$ with either $$H_1^\varphi=H_2 \hbox{ and }K_1^\psi=K_2,\,\, \hbox{or } H_1^\varphi=K_2 \hbox{ and }K_1^\psi=H_2.$$

In the second and in the third column, we present the maximal components $H$ and $K$ for the weak normal $2$-coverings of $G$. We present a row for each $\mathrm{Aut}(G)$-conjugacy class. In particular, for a fixed $G$, the number of rows appearing in Tables~\ref{00}--\ref{000====} for $G$ is exactly the number appearing in the sixth column. For instance, when $G:=A_6$, there are two $\mathrm{Aut}(G)$-conjugacy classes of weak normal $2$-coverings and in the two rows concerning $A_6$ we have reported
the two weak normal $2$-coverings with maximal components $\{A_6\cap(S_2\times S_4),A_5\}$ and $\{A_6\cap(S_3\mathrm{wr}S_2),A_5\}$.

In the fourth column, we collect some very basic comments (typically reminding  some isomorphisms between the groups in the list).

We finally explain the number appearing in the fifth column. We say that two weak normal $2$-coverings of $G$ having components $\{H_1,K_1\}$ and $\{H_2,K_2\}$  are $G$-\textit{\textbf{conjugate}} if there exist $\varphi,\psi\in G$ with either $$H_1^\varphi=H_2 \hbox{ and }K_1^\psi=K_2,\,\, \hbox{or } H_1^\varphi=K_2 \hbox{ and }K_1^\psi=H_2.$$
Clearly, this defines an equivalence relation on the set of weak normal $2$-coverings, which refines the equivalence relation given by the $\mathrm{Aut}(G)$-conjugacy classes. Now, let $\{H,K\}$ be a weak normal $2$-covering of $G$ with maximal components appearing  in some row of Tables~\ref{00}--\ref{000====}. The $\mathrm{Aut}(G)$-conjugacy class $\{\{H^\varphi,K^\psi\}\mid \varphi,\psi\in\mathrm{Aut}(G)\}$ is a union  of $G$-conjugacy classes. Observe that only some of these $G$-conjugacy classes (possibly none) give rise  in fact to normal $2$-coverings and, in the fifth column we report their number.
Coming back to our example of $G:=A_6$,  the $\mathrm{Aut}(G)$-conjugacy class  represented by the weak normal $2$-covering with components $\{A_6\cap(S_2\times S_4),A_5\}$ splits into two $G$-conjugacy classes of normal $2$-coverings.
A similar comment applies for the components $\{A_6\cap(S_3\mathrm{wr}S_2),A_5\}$. Analogously, from Table~\ref{00}, $G:=G_2(3)$ has a unique $\mathrm{Aut}(G)$-conjugacy class of weak normal $2$-coverings, but none of the corresponding $G$-classes gives rise to a normal $2$-covering.
When an $\mathrm{Aut}(G)$-conjugacy class gives rise to at least two normal $2$-coverings with maximal components, information about the embeddings of the corresponding components in $G$ can be found directly in the statements  related to $G$. For instance, the two normal $2$-coverings of $\mathrm{Sp}_4(q)$, for $q\geq 8$ even, arising from the unique $\mathrm{Aut}(\mathrm{Sp}_4(q))$-conjugacy class of weak normal $2$-coverings are described in Lemma \ref{dimension4evensymplectic}.

Note that, by adding the numbers appearing on the fifth column for a given group $G$, we obtain the number of normal $2$-coverings of $G$, up to $G$-conjugacy.

}
\end{notation}

The reader might find some similarities between the maximal components appearing in Tables~~\ref{00}--\ref{000====} and the non-trivial maximal factorizations of simple groups classified by Liebeck, Praeger and Saxl in~\cite{LPS3,LPS4}. There are simple groups (like the symplectic groups $\mathrm{PSp}_6(3^f)$ with  $f\ge 2$) admitting normal $2$-coverings which do not admit non-trivial factorizations; vice versa,  there are simple groups (like alternating groups of prime degree $p\ge 11$)  admitting non-trivial factorizations which do not admit normal $2$-coverings. Therefore, there is no direct connection between normal $2$-coverings and factorizations. For the same reason, the reader might find some similarities between the groups appearing in Tables~~\ref{00}--\ref{000====} and the factorizations of the form $G={\bf N}_G(\langle x\rangle){\bf N}_G(\langle y\rangle)$ of the almost  simple groups $G$ obtained in~\cite{GGS}.

 In our proof of Theorem~\ref{main theorem}, we keep track of the maximal components involved in our weak normal $2$-coverings and hence we obtain the following refinement.

\begin{theorem}\label{main theorem1}
Let $G$ be a finite non-abelian simple group and let $\mu=\{H,K\}$ be  a weak normal $2$-covering of $G$ with maximal components. Then  the pair $(H,K)$ appears in {\rm Tables~\ref{00}--\ref{000====}}, up to  $\mathrm{Aut}(G)$-conjugacy. Moreover, $\mu$ gives rise to at least a normal $2$-covering of $G$ if and only if in the corresponding row of the fifth column of those tables appears a number greater than $0.$
\end{theorem}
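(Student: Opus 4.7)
The plan is to read off Theorem~\ref{main theorem1} from the same case analysis that establishes Theorem~\ref{main theorem}, by retaining a complete record of which maximal subgroups arise as components at every stage. By the Classification of Finite Simple Groups I would split the argument into four families: alternating groups, sporadic simple groups, exceptional groups of Lie type, and classical groups. For each $G$ the task is to enumerate, up to $\mathrm{Aut}(G)$-conjugacy, the pairs $\{H,K\}$ of maximal subgroups of $G$ satisfying $G=\bigcup_{\varphi\in\mathrm{Aut}(G)}(H^\varphi\cup K^\varphi)$, and then to separate out those pairs which also give rise to an honest normal $2$-covering.

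For each family I would proceed in two passes. In the first, negative, pass I would exclude nearly all candidate pairs by exhibiting a single element $g\in G$ whose $\mathrm{Aut}(G)$-orbit meets neither $H$ nor $K$: the effective test classes are Singer-type semisimple elements, regular unipotents, long cycles, and, for sporadic groups, specific classes read off the ATLAS. The admissible maximal subgroups then come from the known classifications of maximal subgroups in each family (Kleidman--Liebeck for classical groups, Liebeck--Seitz and their continuators for exceptional groups, with direct computation handling the alternating and sporadic cases). In the second, positive, pass I would check that every remaining pair actually covers $G$, by matching element orders, characteristic polynomials, cycle types or Jordan structures; the outcome is exactly the list in Tables~\ref{00}--\ref{000====}. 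The hard part will be the classical groups: Aschbacher's theorem produces many geometric types, and the detailed descriptions depend delicately on dimension and characteristic. There I would rely on a torus-based argument---every maximal torus of $G$ must sit, up to $\mathrm{Aut}(G)$, inside $H$ or $K$---combined with Zsigmondy primes to prune the Aschbacher classes of $H$ and $K$ to the configurations tabulated.

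Finally, to populate the fifth column I would take each $\mathrm{Aut}(G)$-conjugacy class of weak normal $2$-coverings identified above and decompose it into $G$-conjugacy classes of pairs $\{H^g,K^{g'}\}$, then test each resulting $G$-class against the stronger condition that every element of $G$ be $G$-conjugate into $H\cup K$. Pairs whose $\mathrm{Aut}(G)$-classes do not split under $G$-conjugacy contribute trivially; in the remaining cases one analyzes the action of $\mathrm{Out}(G)$ on the $G$-classes of $H$ and $K$ and counts the surviving pairs directly. The entries equal to $0$ in the fifth column then correspond precisely to the weak normal $2$-coverings that genuinely require a nontrivial outer automorphism to work, yielding the ``if and only if'' in the second assertion of Theorem~\ref{main theorem1}.
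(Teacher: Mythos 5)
Your overall strategy is the one the paper follows (CFSG case split, a negative pass with Singer/ppd and unipotent test elements against the known lists of maximal subgroups, a positive pass verifying the survivors, and a final count of $G$-classes inside each $\mathrm{Aut}(G)$-class exactly as in Section~\ref{sec:newwen}), but two of your key steps are not sound as stated. First, the covering hypothesis only controls single elements, hence the cyclic subgroups they generate: if $T$ is a non-cyclic maximal torus, different elements of $T$ may fall into different conjugates of $H$ and $K$, so the principle ``every maximal torus of $G$ must sit, up to $\mathrm{Aut}(G)$, inside $H$ or $K$'' is false in general. The usable version is the one the paper runs: choose elements of primitive prime divisor order (Singer cycles, Bertrand elements) whose generated cyclic group is large, and feed them into Lemma~\ref{malleu} and the Guralnick--Penttila--Praeger--Saxl machinery (Theorem~\ref{main}). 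This is repairable, but it is a correction you would have to make before the pruning of Aschbacher classes goes through.

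The more serious gap is the reliance on action types, characteristic polynomials and Jordan structures as $\mathrm{Aut}(G)$-invariants. These are invariants of conjugacy in the natural module, and they are \emph{not} preserved by the graph automorphisms of $\mathrm{PSL}_n(q)$ ($n\ge 3$), the graph automorphism of $\mathrm{Sp}_4(2^f)$, or the triality of $\mathrm{P}\Omega_8^+(q)$, none of which act on $V$; and these are precisely the groups where the delicate and exceptional table entries occur. For $\mathrm{P}\Omega_8^+(q)$ no single ``missed element'' can finish the argument once triality fuses the three classes of subgroups $\mathrm{Sp}_6(q)$ (respectively $2^\cdot\Omega_7(q)$): the paper instead works with $\mathrm{Aut}$-invariant data (element orders and centralizer orders, cf.\ Tables~\ref{o7q^2+1}--\ref{o7q^2+111}) and a counting argument comparing how many elements of order $q^3+1$ of the two kinds the fused classes can absorb, which forces $q\le 3$ in Lemma~\ref{dimension8orthogonal+}; similarly $\mathrm{Sp}_4(2^f)$ is settled via elements of order $q+1$ with centralizer of order $(q+1)^2$ and the normalizer indices of Lemma~\ref{technical}, not via action types. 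Your plan has no mechanism replacing these steps, so as written it does not reach the tabulated answer for exactly the rows of Tables~\ref{00}--\ref{000====} that make the classification hard (nor does it flag that for $q$ odd one must work in $\mathrm{P}\Omega_8^+(q)$ itself rather than in $\Omega_8^+(q)$, since triality does not lift). The treatment of the fifth column is fine and coincides with the paper's use of the bounds $\max\{h,k\}\mid C\le hk$ together with computer checks.
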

The notation we use to describe the components $H$ and $K$ is standard. In particular, for the simple classical groups we use the notation in~\cite{bhr,kl}. We try to give as much information as possible on $H$ and $K$ in Tables~\ref{00}--\ref{000====}, however when this is not possible we just give a rough description.

In particular, Theorem~\ref{main theorem1} gives a complete classification of the finite simple groups $G$ with $\gamma(G)=2$ or with $\gamma_w(G)=2$ and of the (weak) normal $2$-coverings with maximal components. An immediate consequence of Theorem~\ref{main theorem1} is the following corollary.

\begin{corollary}\label{corollary}
Let $G$ be a finite non-abelian simple group. Then $\gamma_w(G)=2$ if and only if one of the following holds
\begin{enumerate}
\item $\gamma(G)=2$ and $G$ is isomorphic to one of the groups listed in Table~$\ref{tablegamma2}$,
\item $G$ is the alternating group $A_9$,
\item $G$ is the sporadic group $M_{12}$,
\item $G$ is the exceptional group of Lie type $G_2(3)$,
\item $G$ is the unitary group $\mathrm{PSU}_6(2)$,
\item $G$ is the orthogonal group $\mathrm{P}\Omega_8^+(2)$ or $\mathrm{P}\Omega_8^+(3)$.
\end{enumerate}
\end{corollary}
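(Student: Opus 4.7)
The plan is to derive this corollary as a direct consequence of Theorem~\ref{main theorem} (and hence Theorem~\ref{main theorem1}) by inspection of Tables~\ref{00}--\ref{000====}. By Theorem~\ref{main theorem}(i), a finite non-abelian simple group $G$ satisfies $\gamma_w(G)=2$ if and only if $G$ appears in the first column of Tables~\ref{00}--\ref{000====}; so the set of groups to be described is fixed. The task is to partition this set according to whether $\gamma(G)=2$ or $\gamma(G)>2$.

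The key tool for the partition is Theorem~\ref{main theorem}(ii): for a group $G$ occurring in these tables, $\gamma(G)=2$ precisely when at least one row associated with $G$ has a nonzero entry in the fifth column. I would therefore scan each row of Tables~\ref{00}--\ref{000====}, separating the listed groups into two subsets. The first subset consists of groups with at least one nonzero fifth-column entry; these are exactly the simple groups with $\gamma(G)=2$, and they are aggregated in Table~\ref{tablegamma2}, giving case (1). The second subset consists of groups every one of whose rows has fifth-column entry $0$; for such groups $\gamma_w(G)=2$ while $\gamma(G)>2$ (noting that $\gamma_w(G)\le\gamma(G)$ and that $\gamma(G)>2$ cannot be equal to $\gamma_w(G)=2$).

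Running through the tables, the groups in the second subset turn out to be exactly $A_9$, $M_{12}$, $G_2(3)$, $\mathrm{PSU}_6(2)$, $\mathrm{P}\Omega_8^+(2)$, and $\mathrm{P}\Omega_8^+(3)$, matching cases (2)--(6) of the statement. For the reverse direction, each of these six groups appears in the first column of the tables, so by Theorem~\ref{main theorem}(i) it satisfies $\gamma_w(G)=2$; and any $G$ with $\gamma(G)=2$ automatically satisfies $\gamma_w(G)\le\gamma(G)=2$, combined with the universal lower bound $\gamma_w(G)\ge 2$ from Theorem~\ref{main theorem}, giving $\gamma_w(G)=2$ as well. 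Thus both directions of the biconditional follow.

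The only real obstacle is the exhaustive tabular check: one must verify, row by row, that the six exceptional simple groups listed in (2)--(6) are exactly those whose fifth-column entries are uniformly zero. This is a mechanical but nontrivial bookkeeping exercise, relying entirely on Tables~\ref{00}--\ref{000====} produced in the proof of Theorem~\ref{main theorem1}; no further group-theoretic argument is required for the corollary itself.
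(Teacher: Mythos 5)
Your proposal is correct and matches the paper's own argument: the corollary is obtained exactly as you describe, by combining Theorem~\ref{main theorem}(i)--(ii) with a row-by-row inspection of Tables~\ref{00}--\ref{000====}, separating the groups with some nonzero fifth-column entry (those with $\gamma(G)=2$, collected in Table~\ref{tablegamma2}) from the six groups whose fifth-column entries are all zero. No further comment is needed.
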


\begin{table}[ht]
\begin{tabular}{c|c}
\toprule[1.5pt]
Group&details\\
\midrule[1.5pt]
alternating&$A_5$, $A_6$, $A_7$, $A_8$\\
sporadic&$M_{11}$\\
exceptional& $G_2(2^a)$ with $a\ge 2$, $G_2(2)'$, ${}^2G_2(3)'$\\
& ${}^2F_4(2)'$, $F_4(3^a)$ with $a\ge1$\\
linear &$\mathrm{PSL}_2(q)$ with $q\ge 4$\\
& $\mathrm{PSL}_3(q)$\\
& $\mathrm{PSL}_4(q)$\\
unitary&$\mathrm{PSU}_2(q)\cong\mathrm{PSL}_2(q)$ with $q\ge 4$\\
& $\mathrm{PSU}_3(3^a)$, $\mathrm{PSU}_3(5)$\\
& $\mathrm{PSU}_4(q)$\\
symplectic& $\mathrm{PSp}_2(q)\cong\mathrm{PSL}_2(q)$ with $q\ge 4$\\
& $\mathrm{PSp}_4(3)\cong\mathrm{PSU}_4(2)$, $\mathrm{PSp}_4(2)'\cong A_6$\\
& $\mathrm{PSp}_n(2^a)$ with $(n,a)\ne (4,2)$\\
& $\mathrm{PSp}_6(3^a)$\\
orthogonal&$\Omega_{3}(q)\cong\mathrm{PSL}_2(q)$, $\Omega_5(3)\cong\mathrm{PSp}_4(3)$,\\
& $\mathrm{P}\Omega_4^-(q)\cong\mathrm{PSL}_2(q^2)$, $\mathrm{P}\Omega_6^-(q)\cong\mathrm{PSU}_4(q)$\\
& $\mathrm{P}\Omega_6^+(q)\cong\mathrm{PSL}_4(q)$\\
\bottomrule[1.5pt]
\end{tabular}
\caption{Classification of finite simple groups $G$ with $\gamma(G)=2$. }
\label{tablegamma2}
\end{table}
In Table~\ref{tablegamma2}, we have highlighted some isomorphisms among non-abelian simple groups. Recall also that $\mathrm{P}\Omega_4^+(q)\cong\mathrm{PSL}_2(q)\times\mathrm{PSL}_2(q)$ is not  simple.

It follows immediately from Corollary~\ref{corollary} that, apart from six exceptional cases, for each non-abelian simple group $G$, we have $\gamma(G)=2$ if and only if $\gamma_w(G)=2$.  We actually conjecture that a similar pattern holds in general and hence, in this case, nature is not as diverse as it can possibly be: $\gamma(G)$ always coincides with $\gamma_w(G)$, except for a ``low level noise''.
\begin{conjecture}\label{conjecture1}
{\rm There exists a function $f:\mathbb{N}\to\mathbb{N}$ such that, if $G$ is a finite non-abelian simple group with $\gamma_w(G)=m$, then either $\gamma(G)=m$, or $|G|\le f(m)$, or $G\cong\mathrm{P}\Omega_n^+(q)$.}
\end{conjecture}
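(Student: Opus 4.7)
The plan is to start from a weak normal $m$-covering $\mu=\{H_1,\ldots,H_m\}$ with maximal components and convert it into a normal covering, quantifying the loss. For each $H_i$, let $c_i$ be the number of $G$-conjugacy classes of subgroups contained in the $\mathrm{Aut}(G)$-orbit of $H_i$, and pick representatives $H_i^{(1)},\dots,H_i^{(c_i)}$. Any element $g\in G$ which is $\mathrm{Aut}(G)$-conjugate into $H_i$ is automatically $G$-conjugate into some $H_i^{(j)}$, so the family $\{H_i^{(j)}:1\le i\le m,\,1\le j\le c_i\}$ is a normal covering and
\[
\gamma(G)\le\sum_{i=1}^{m}c_i\le m\cdot|\mathrm{Out}(G)|.
\]
In particular $\gamma(G)=m$ as soon as every $c_i=1$, so the conjecture becomes a statement about the way outer automorphisms can split (equivalently, fuse) conjugacy classes of maximal subgroups. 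The natural strategy is therefore: classify the families $(G,H_i)$ for which a maximal subgroup can fall in an $\mathrm{Aut}(G)$-class strictly larger than its $G$-class, and show that these fusions essentially never allow $\gamma_w(G)<\gamma(G)$ outside of a bounded list of exceptions and the family $\mathrm{P}\Omega_n^+(q)$.

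I would organise the proof by the Classification of Finite Simple Groups. For sporadic $G$ one has $|\mathrm{Out}(G)|\le 2$ and the situation is handled in this paper by direct case analysis, giving finitely many exceptions. For alternating $G=A_n$ with $n\ne 6$, $\mathrm{Out}(G)$ has order $2$ and the $S_n$-conjugacy class of a maximal subgroup splits into two $A_n$-classes only for a restricted list of intransitive/imprimitive/primitive types; a uniform argument should show that replacing an $S_n$-fused pair by its two $A_n$-components never produces a strictly smaller covering outside a finite list. For the exceptional groups of Lie type, outside the four families $G_2$, $F_4$, $E_7$, $E_8$ (where $\mathrm{Out}(G)$ is solved by field automorphisms) one is left with $E_6(q)$ and ${}^3D_4(q)$; the graph or triality automorphisms here fuse only a known small number of classes, and this should again restrict exceptions to bounded cases.

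The genuine difficulty is the classical case. For $\mathrm{PSL}_n(q)$, $\mathrm{PSU}_n(q)$, $\mathrm{PSp}_n(q)$ and $\mathrm{P}\Omega_n^-(q)$, the plan is to exploit the Aschbacher stratification of maximal subgroups: within each Aschbacher class $\mathcal{C}_i$, the action of $\mathrm{Out}(G)$ on $G$-classes is understood (see \cite{bhr,kl}), and one can check family by family that fusion either preserves covering properties (so a $G$-class representative alone still covers what the fused $\mathrm{Aut}(G)$-class did, up to a few small exceptions) or that the cycle types of elements forced into the subgroup force $\gamma(G)=\gamma_w(G)$ by a generic counting/parity argument. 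The exception $\mathrm{P}\Omega_n^+(q)$ arises genuinely because the graph automorphism (and triality for $n=8$) interchanges the two $G$-classes of maximal parabolics $P_{n/2-1}$ and $P_{n/2}$, together with the two half-spin-type subgroups, producing families where an element lies in a subgroup up to $\mathrm{Aut}(G)$ but not up to $G$ for intrinsic reasons of spinor norm; this fusion is infinite in $q$ and cannot be absorbed into a bounded exceptional list.

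The main obstacle, as in the $m=2$ case proved in the paper, is the last step: showing that even when the graph/diagonal/field automorphisms do fuse maximal-subgroup classes outside of $\mathrm{P}\Omega_n^+$, the resulting weak covering still produces a normal covering of size $m$ except for boundedly many small $G$. One expects that a uniform argument using element orders or semisimple conjugacy classes that are not $\mathrm{Out}(G)$-fused will force at least one of the components of $\mu$ to be $\mathrm{Aut}(G)$-stable, reducing to the normal case; making this precise uniformly across all classical families and all $m$ is where the real work lies, and it is plausible that the function $f$ produced will grow at least as fast as the Guralnick--Malle/Kantor--Lubotzky--Shalev bounds on $\kappa(G)$ in \eqref{kappa-simple}.
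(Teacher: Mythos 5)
There is a fundamental mismatch here: the statement you were asked about is Conjecture~\ref{conjecture1}, which the paper explicitly leaves open. The paper proves nothing beyond the case $m=2$, which follows from the classification in Theorem~\ref{main theorem1} and Corollary~\ref{corollary}; for general $m$ no proof exists in the paper, so there is no argument to compare yours against, and your text does not supply one either. What you have written is a research plan, not a proof. The only complete step is the inequality $\gamma(G)\le\sum_i c_i\le m\cdot|\mathrm{Out}(G)|$, which is correct (it is essentially the observation of Section~\ref{sec:newwen} about the numbers $h$ and $k$), but it is far from the conjecture: $|\mathrm{Out}(G)|$ is unbounded over the classical groups (it grows with the field degree and with $\gcd(n,q\mp1)$), so this bound neither forces $\gamma(G)=m$ nor bounds $|G|$ in terms of $m$. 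Everything after that is conditional --- ``should show'', ``one expects'', ``this is where the real work lies'' --- and the decisive assertion, namely that fusion of maximal-subgroup classes by outer automorphisms never strictly lowers the covering number outside $\mathrm{P}\Omega_n^+(q)$ and a set of groups of order bounded in terms of $m$, is exactly the content of the conjecture and is left unproven. Note also that already for $m=2$ the paper's own classification exhibits exceptions ($A_9$, $M_{12}$, $G_2(3)$, $\mathrm{PSU}_6(2)$, $\mathrm{P}\Omega_8^+(2)$, $\mathrm{P}\Omega_8^+(3)$) whose treatment required the full machinery of Sections~\ref{sec:linear}--\ref{sec:orthogonal+} plus computer calculations, so any honest attack on the general case must explain how such exceptional behaviour is controlled uniformly in $m$; your sketch does not address this.

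Two smaller inaccuracies are worth flagging. First, ${}^3D_4(q)$ has cyclic outer automorphism group (field automorphisms only), so it does not belong with $E_6(q)$ as a source of graph-type fusion of maximal-subgroup classes; the triality lives inside $\mathrm{Aut}(\mathrm{P}\Omega_8^+(q))$, not in $\mathrm{Aut}({}^3D_4(q))$. Second, your heuristic for why $\mathrm{P}\Omega_n^+(q)$ must be excepted (fusion of the two classes of half-dimensional parabolics and of spin-type subgroups) is plausible and consistent with the paper's remark that $\gamma_w$ and $\gamma$ are expected to differ infinitely often for these groups ``because of the role of graph automorphisms'', but it is not a proof that these are the \emph{only} unbounded source of discrepancy, which is the whole difficulty.
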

Clearly, for each $m$, it is of independent interest classifying the finite non-abelian simple groups with $\gamma_w(G)=m\ne \gamma(G)$. Note that Corollary~\ref{corollary} gives that classification when $m=2$. In Conjecture~\ref{conjecture1}, we believe that $\gamma_w(\mathrm{P}\Omega_n^+(q))$ and $\gamma(\mathrm{P}\Omega_n^+(q))$ differ for infinitely many values of $n$ and $q$, because of the role of graph automorphisms.

Inspired by the work in this paper, we now also make a conjecture concerning normal coverings of finite non-abelian simple groups of bounded cardinality. Broadly speaking, we conjecture that, for every positive integer $c$, the number of finite simple groups having normal covering at most $c$ is ``small''. However, we need to clarify what we mean with small.
\begin{conjecture}\label{conjecture}
{\rm There exists a function $f:\mathbb{N}\to\mathbb{N}$ such that if $G$ is a finite non-abelian simple group with $\gamma(G)\le c$, then one of the following holds:
\begin{itemize}
\item $|G|\le f(c)$,
\item $G$ is a finite simple group of Lie type having Lie rank at most $f(c)$,
\item $G=\mathrm{Sp}_n(q)$ with $q$ even.
\end{itemize}}
\end{conjecture}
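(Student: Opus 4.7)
The plan is to establish the conjecture by a case analysis guided by the classification of finite simple groups. For sporadic groups the statement is vacuous once $f(c)$ exceeds the largest order $|M|$ of a sporadic simple group; for exceptional groups of Lie type the Lie rank is bounded absolutely, so they automatically fall into the second alternative. The real work lies in the alternating groups and the classical groups, where one must show that $\gamma(G)\to\infty$ as $|G|\to\infty$, with the sole exception of the family $\mathrm{Sp}_n(q)$ with $q$ even, for which Dye's construction gives $\gamma(G)=2$ for every even $n\ge 4$.

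The common strategy for each infinite family is to exhibit, for every $c$, a set of $c+1$ conjugacy classes of $G$ that are pairwise \emph{incompatible} in the sense that no proper subgroup of $G$ meets more than one of them; this forces $\gamma(G)\ge c+1$. For alternating groups $A_n$ one would take classes of elements whose cycle structure contains a prime $p$ in a prescribed range, and invoke the O'Nan--Scott theorem to see that a maximal subgroup containing such an element must be intransitive, imprimitive, or almost simple of very restricted isomorphism type; Bertrand-type prime estimates then produce enough incompatible classes once $n$ is large in terms of $c$. For classical groups the natural witnesses are regular semisimple elements in irreducible maximal tori $T$: such an element lies in essentially only one maximal overgroup, namely $\mathbf{N}_G(T)$, and tori of different cyclotomic type cannot jointly embed into a single Aschbacher-class maximal subgroup except in a very small number of geometric configurations. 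Counting torus types via the cyclotomic factorisation of $|G|$ then forces the number of required components to grow with the Lie rank.

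The main obstacle is to show that, among classical groups of growing rank, the exception $\mathrm{Sp}_n(q)$ with $q$ even is the \emph{only} one. Here one must rule out any analogue of Dye's construction in $\mathrm{Sp}_n(q)$ with $q$ odd, in $\mathrm{P}\Omega_n^\pm(q)$, and in $\mathrm{PSL}_n(q)$ or $\mathrm{PSU}_n(q)$ of large rank. A natural tool is the classification of maximal factorisations of finite almost simple groups by Liebeck, Praeger and Saxl \cite{LPS3,LPS4}, since a normal $k$-covering with $k$ bounded forces strong constraints on the joint behaviour of conjugates of the components, and in the extremal case often something close to a factorisation. The exceptional feature of $\mathrm{Sp}_n(q)$ in even characteristic is the coincidence with a non-degenerate orthogonal geometry allowing the two non-conjugate subgroups $\mathrm{SO}_n^+(q)$ and $\mathrm{SO}_n^-(q)$ to jointly absorb every semisimple class; one must show, using the Aschbacher-class description of maximal subgroups, that no analogous pairing exists elsewhere. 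The delicate point is a uniform-in-rank statement: the estimates must not degenerate as the rank grows, which rules out any strategy that is purely inductive on rank via Levi subgroups.

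Once the family-by-family bounds are in hand, the function $f$ is obtained by taking a maximum over the finitely many constants produced. The resulting $f$ will be highly non-explicit; extracting a reasonable growth rate (even polynomial versus superexponential in $c$) is a second, separate problem that I would expect to require sharper character-sum or probabilistic generation estimates, in the spirit of \cite{GM} and \cite{KLS}.
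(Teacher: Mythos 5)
The statement you are trying to prove is not a theorem of the paper at all: it is stated as Conjecture~\ref{conjecture}, and the authors explicitly say they have little evidence for it, knowing it only for linear groups (via Britnell and Mar\'oti~\cite{BM}) and for alternating groups (via the linear bounds in~\cite{JA,BPS2}). So there is no proof in the paper to compare with, and your text should be judged as a proposed proof of an open problem. As such, it is a research programme rather than an argument: every step that is actually in doubt is asserted, not established.

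Concretely, the gap is in the classical groups of unbounded rank other than $\mathrm{Sp}_n(q)$ with $q$ even. Your strategy is to produce, for each $c$, at least $c+1$ conjugacy classes no two of which lie in a common proper subgroup (equivalently, to show $\kappa(G)\to\infty$ with the rank, which bounds $\gamma(G)$ from below via $\kappa(G)\le\gamma(G)$), using regular semisimple elements in maximal tori of distinct cyclotomic type. But the key claim that ``tori of different cyclotomic type cannot jointly embed into a single Aschbacher-class maximal subgroup except in a very small number of geometric configurations'' is false as a blanket statement and, where it can be salvaged, is exactly the unproved content of the conjecture: subgroups such as $\mathrm{SO}_n^{\pm}(q)\le \mathrm{Sp}_n(q)$, field-extension ($\mathcal{C}_3$) subgroups, subfield ($\mathcal{C}_5$) subgroups and large reducible ($\mathcal{C}_1$) subgroups each contain regular semisimple elements of many distinct torus types, and Dye's example~\cite{Dye} shows that in an unbounded-rank family two subgroups can absorb every conjugacy class. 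Your proposal acknowledges that one must rule out analogues of Dye's construction in $\mathrm{PSp}_n(q)$ with $q$ odd, in $\mathrm{P}\Omega_n^{\pm}(q)$ and in $\mathrm{PSU}_n(q)$ uniformly in the rank, and suggests the factorization classification~\cite{LPS3,LPS4} as a tool, but no mechanism is given for why no such pairing (or bounded family of pairings/triples, etc.) exists; the paper itself warns that normal $2$-coverings and factorizations are not directly related, so that route needs a genuinely new idea. In short: the alternating and linear cases you sketch are already known, and for the remaining families your plan reduces the conjecture to statements that are at least as hard as the conjecture itself.
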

We have little evidence towards Conjecture~\ref{conjecture}. For linear groups, it follows from the work of Britnell and Mar\'oti~\cite{BM}. For alternating groups, it follows from the linear bounds on the normal covering number of the symmetric and alternating groups in~\cite{JA,BPS2}.

Clearly, for a fixed $c$, the classification of the finite non-abelian simple groups having normal covering number at most $c$ can be rather hard and, in this context, our paper deals with the first meaningful case $c=2$.

Our Tables~\ref{00}--\ref{000====} look rather complex and difficult and, in a sense, they are, because we are pinning down one by one all weak normal $2$-coverings. However, at a careful analysis, some pattern arises for classical groups. Indeed, for the classical groups, there are weak normal $2$-coverings which hold for each value of $q$: these might be regarded as generic weak normal $2$-coverings, as defined in~\cite{blw}. All other weak normal $2$-coverings that arise are when $q$ is small.

\subsection{The normal $2$-coverings of the almost simple groups and the weak normal covering number of the almost simple groups}\label{sec:introintro}
Weak normal $2$-coverings of non-abelian simple groups allow to shed light on the normal $2$-coverings of almost simple groups. Let $A$ be an almost simple group with socle the non-abelian simple group $G$. Suppose $A$ admits a normal $2$-covering with components $H\leq A$ and $K\leq A$. There are only three kinds of choice for those components:
\begin{itemize}
\item[ND:]  $H\ngeq G$ and $K\ngeq G$,
\item[DI:] $G\le H\cap K$,
\item[DII:]  replacing $H$ with $K$ if necessary, $H\ge G$ and $K\ngeq G$.
\end{itemize}
The label ND stands for non-degenerate, DI for degenerate of first type and DII for degenerate of second type.

 In the first case, that is ND, from $A=\bigcup_{a\in A}(H^a\cup K^a)$, we deduce $$G=\bigcup_{a\in A}((H\cap G)^a\cup (K\cap G)^a)$$ with $H\cap G$ and $K\cap G$ proper subgroups of $G$. Therefore $\gamma_w(G)=2$ and hence $G$ is one of the groups in the first column of Tables~\ref{00}--\ref{000====}. Moreover, building on that idea, we prove the following result on the weak normal covering number of almost simple groups.
\begin{theorem}\label{main theorem2}
Let $X$ be a finite almost simple group. Then one of the following holds
\begin{enumerate}
\item\label{main theorem2_1} $\gamma_w(X)\geq 2$,
\item\label{main theorem2_2} the socle $G$ of $X$ is $\mathrm{P}\Omega_8^+(q)$, $q$ is odd, $\mathrm{PO}_8^+(q)\le X$, $\mathrm{Aut}(X)$ contains a triality automorphism of $G$ and $X$ does not contain a triality automorphism of $G$. Moreover, each weak normal $1$-covering $\{H\}$ of $X$ satisfies $G\le H$.
\end{enumerate}
\end{theorem}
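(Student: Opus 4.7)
The plan is to suppose $\gamma_w(X)=1$ with witness a proper subgroup $H<X$, and to identify all almost simple $X$ for which this is possible. Since $X$ is almost simple with socle $G$, the natural identification gives $\mathrm{Aut}(X)=\mathrm{Aut}(G)$, and $G$ is characteristic in $X$. The first step handles the \emph{non-degenerate case} $H\cap G<G$: intersecting the covering $X=\bigcup_{\alpha\in\mathrm{Aut}(G)}H^\alpha$ with $G$ yields $G=\bigcup_{\alpha\in\mathrm{Aut}(G)}(H\cap G)^\alpha$, a weak normal $1$-covering of $G$ contradicting $\gamma_w(G)\ge 2$ from Theorem~\ref{main theorem}. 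Hence every weak normal $1$-covering of $X$ must satisfy $G\le H$, which already establishes the last sentence of conclusion~(2).

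Next I reduce to the quotient $\bar X:=X/G\le\mathrm{Out}(G)$ and $\bar H:=H/G<\bar X$. Since $G$ is characteristic, the $\mathrm{Aut}(X)=\mathrm{Aut}(G)$-action on $X$ descends to the conjugation action of $\mathrm{Out}(G)$ on itself, and the covering condition becomes: every element of $\bar X$ is $\mathrm{Out}(G)$-conjugate to an element of $\bar H$. A short order argument rules out cyclic $\bar X$: a generator has order $|\bar X|$, while every element of a proper subgroup $\bar H$ has strictly smaller order, and conjugation preserves orders. Thus conclusion~(1) holds whenever $\bar X$ is cyclic, and the remaining task is the analysis of non-cyclic $\bar X$.

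The heart of the proof, and the main obstacle, is a case analysis across finite non-abelian simple groups $G$ using the classification of finite simple groups and the structure of $\mathrm{Out}(G)$. For $G$ alternating or sporadic, $\mathrm{Out}(G)$ is cyclic or equal to the abelian Klein four group $\mathrm{Out}(A_6)$, so conjugation in $\mathrm{Out}(G)$ is trivial on its subgroups and no proper $\bar H$ covers a non-cyclic $\bar X$. For groups of Lie type distinct from $\mathrm{P}\Omega_8^+(q)$, the graph component of $\mathrm{Out}(G)$ is cyclic and $\mathrm{Out}(G)$ decomposes into diagonal, field and graph pieces whose $\mathrm{Out}(G)$-conjugacy classes are separated by ``type''; a proper $\bar H$ inside a non-cyclic $\bar X$ must therefore omit some element-type class appearing in $\bar X$, so the covering fails. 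For $G=\mathrm{P}\Omega_8^+(q)$ with $q$ even one has $\mathrm{Out}(G)=S_3\times C_f$, and a non-cyclic $\bar X$ must mix a transposition of $S_3$ with an involution of $C_f$; these sit in distinct $\mathrm{Out}(G)$-classes and are not fused, so again no covering exists. Finally, for $G=\mathrm{P}\Omega_8^+(q)$ with $q$ odd, $\mathrm{Out}(G)$ contains $S_4=V_4\rtimes S_3$ where triality cyclically permutes the three involutions of the diagonal $V_4=\mathrm{PO}_8^+(q)/G$; if $\mathrm{PO}_8^+(q)\le X$ and $X$ contains no triality element, then $\bar X\cap S_4=V_4$ and $\bar X=V_4\times(\bar X\cap C_f)$, and taking $\bar H=\langle t\rangle\times(\bar X\cap C_f)$ for any involution $t\in V_4$ produces a proper subgroup of $\bar X$ whose conjugates under triality exhaust $\bar X$. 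Lifting back to $X$ gives an explicit weak normal $1$-covering realising configuration~(2). The delicate aspect is the uniform exclusion of all non-$D_4$ Lie-type families, which rests on careful bookkeeping of the $\mathrm{Out}(G)$-orbits of outer automorphisms within each simple group.
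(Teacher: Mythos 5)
Your opening reduction (the non-degenerate case, forcing $G\le H$ via Theorem~\ref{main theorem}, and the passage to the quotient) follows the paper's route, but two problems remain, one of them fatal to the statement as it stands. First, the identification $\mathrm{Aut}(X)=\mathrm{Aut}(G)$ is false in general: for almost simple $X$ one only has an embedding $X\unlhd\mathrm{Aut}(X)\le\mathrm{Aut}(G)$, and $\mathrm{Aut}(X)$ corresponds to ${\bf N}_{\mathrm{Aut}(G)}(X)$, which is proper in $\mathrm{Aut}(G)$ whenever $X/G$ is not normal in $\mathrm{Out}(G)$ (exactly the situation that matters for $\mathrm{P}\Omega_8^+(q)$). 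Replacing $\mathrm{Aut}(X)$-conjugacy by $\mathrm{Out}(G)$-conjugacy is harmless when you are ruling coverings out (it is a weaker necessary condition), but it is not harmless for conclusion~(2), whose content is precisely a statement about ${\bf N}_{\mathrm{Aut}(G)}(X)$; the correct reduction is to the action of $Y:=\mathrm{Aut}(X)/G$ on $Z:=X/G$ with $V:=H/G<Z\unlhd Y$, which is how the paper sets things up. Second, and decisively: the theorem asserts that whenever $\gamma_w(X)=1$ the socle is $\mathrm{P}\Omega_8^+(q)$ with $q$ odd \emph{and} $\mathrm{PO}_8^+(q)\le X$, $\mathrm{Aut}(X)$ contains a triality, and $X$ does not. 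Your treatment of the $D_4$ case only runs the converse: assuming those constraints, you build $\bar H=\langle t\rangle\times(\bar X\cap C_f)$ and exhibit a covering (and even there the hypotheses do not force $\bar X=V_4\times(\bar X\cap C_f)$, since $\bar X$ need not split and could meet $S_4$ in $D_8$). You never show that a weak normal $1$-covering of an almost simple group with socle $\mathrm{P}\Omega_8^+(q)$ forces this configuration. The paper does this with Lemma~\ref{l:technicalS4}, which classifies all triples $V<Z\unlhd Y\le S_4\times C_f$ satisfying $Z=\bigcup_{y\in Y}V^y$ (forcing $Z=K\times C_1$ with $K$ the Klein subgroup and $V=K_0\times C_1$), and then Remark~\ref{remark:final} translates this into the conditions on $X$, $H$ and $\mathrm{Aut}(X)$.

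On the exclusion of all other simple groups, your proposed ``type separation'' of outer automorphisms is a heuristic, not an argument: the claim that a proper $\bar H$ in a non-cyclic $\bar X$ must miss an entire conjugacy type needs justification family by family (diagonal subgroups of order divisible by odd primes, field parts acting on them, graph-field elements, non-split subgroups of $\mathrm{Out}$, etc.), and you concede this is unfinished bookkeeping. The paper avoids the whole case analysis with one clean structural fact, Lemma~\ref{lemma-super}: if $Y$ is supersolvable and $V<Z\unlhd Y$, then $\bigcup_{y\in Y}V^y\subsetneq Z$; since $\mathrm{Out}(G)$ is supersolvable for every non-abelian simple $G$ except $\mathrm{P}\Omega_8^+(q)$ with $q$ odd, everything but the triality case is dismissed at once. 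If you want to salvage your approach, replace the type-bookkeeping by this supersolvability argument (or carry the bookkeeping out in full), and add the missing forward analysis in $S_4\times C_f$ with the correct conjugating group $Y=\mathrm{Aut}(X)/G$.
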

We observe that there exist almost simple groups $X$ with $\gamma_w(X)=1$ and hence instances in part~\eqref{main theorem2_2}  of Theorem~\ref{main theorem2} do arise.  For instance, let $G:=\mathrm{P}\Omega_8^+(q)$ with $q$ an odd prime number and let $A:=\mathrm{Aut}(G)$. Observe
that the group of outer automorphisms $\mathrm{Out}(G)$ of $G$ is isomorphic to the symmetric group $S_4$. Therefore, $A$ contains a normal subgroup $X$ with $G\le X$ and with $X/G$ equals to the Klein group of $\mathrm{Out}(G)$. Let $H:=\mathrm{PO}_8^+(q)$ and observe that
$$G\le H\le X$$
and $|H:G|=|X:H|=2$.
Now, the conjugates of $H$ under $A$ cover the whole of $X$, that is, $X=\bigcup_{a\in A}H^a$, and hence $\{H\}$ is a weak normal $2$-covering of $X$. The main contribution in Theorem~\ref{main theorem2} is that all almost simple groups $X$ with $\gamma_w(X)=1$ arise from a similar construction.
For more details on the possible groups $X$ arising in part~\eqref{main theorem2_2}  of Theorem~\ref{main theorem2} see the proof of Theorem~\ref{main theorem2} and Remark~\ref{remark:final} in Section~\ref{sec:final}.

 Degenerate normal $2$-coverings of $A$ of the first type  correspond to normal $2$-coverings of the quotient group $A/G$. Note that, when $\mathrm{Aut}(G)/G$ is cyclic (like for sporadic simple groups and alternating groups of degree different from $6$), these normal  $2$-coverings do not arise. In Theorem~\ref{theorem:silly}, we do give a complete classification of the non-abelian simple groups admitting a $2$-covering of this type.  
 
 The degenerate normal $2$-coverings of $A$ of the second type are studied in Theorem~\ref{theorem:silly2}. The analysis of these degenerate normal $2$-coverings is intimately related to the Memoir of Guralnick, M\"{u}ller and Saxl~\cite{GMS}. For more details see Section~\ref{sec:degenerateasdf}. The statement of Theorem~\ref{theorem:silly2} is too technical to be reported in this introductory section. Here we simply give  an  elementary example of these rather peculiar normal $2$-coverings. The group $A:=\mathrm{P}\Gamma\mathrm{L}_2 (8)$ admits a normal 2-covering using $H := \mathrm{PSL}_2(8)$ and $K := {\bf N}_A (P )$, where $P$ is a $3$-Sylow of $A$. This example is part of an infinite family of examples found in Theorem~\ref{theorem:silly2} part~\eqref{parTT3}.

\subsection{The invariably generating graph and the Aut-invariably generating graph}\label{asso-graph}
The connection between $\gamma(G)$ and $\kappa(G)$ runs deeper than what we have mentioned in~\eqref{kappa-gamma}. The right context to talk about this is the \textbf{\textit{invariably generating graph}}. Let $G$ be a group and let $g_1,\ldots,g_\kappa\in G$. Then, $\{g_1,\ldots,g_\kappa\}$ is said to \textbf{\textit{invariably generate}} $G$ if, for every $x_1,\ldots,x_\kappa\in G$, we have
$$G=\langle g_1^{x_1},\ldots,g_\kappa^{x_\kappa}\rangle.$$
The reference work on invariable generation is the paper of Kantor, Lubotzky and Shalev~\cite{KLS}, where among other things it is proved~\cite[Theorem~$1.3$]{KLS} that every non-abelian finite simple group is invariably generated by two elements.
Recall that there is a combinatorial gadget $\Lambda(G)$, the invariably generating graph of $G$, that can be efficiently used to investigate invariably generating pairs. The vertices of $\Lambda(G)$ are the non-trivial conjugacy classes of $G$ and two conjugacy classes $g_1^G$ and $g_2^G$ are declared to be adjacent if  the set  $\{g_1,g_2\}$ invariably generates $G$. Using this terminology, $\kappa(G)$ is the clique number of $\Lambda(G)$, that is, the largest cardinality of a set of pair-wise adjacent vertices of $\Lambda(G)$. The invariably generation of groups has tight connection with groups admitting Beauville structures and with the spread of a group. It is an interesting invariant associated to a group that has recently received some attention~\cite{Garzoni1,Garzoni2,Garzoni3} and its investigation requires some deep results, like the Fulman-Guralnick solution~\cite{G1,G2,G22,G3} of the Boston-Shalev~\cite{Boston} conjecture on derangements in finite simple groups.

Here, we simply observe that, at the moment, for non-abelian simple groups, we only have  a handful of examples where $\gamma(G)\ne \kappa(G)$. For instance, $\kappa(G_2(3))=2=\kappa(J_2)$, but $\gamma(G_2(3))\ne 2\ne \gamma(J_2)$. We are therefore wondering whether there exist only  finitely many non-abelian simple groups $G$ with $\kappa(G)\ne \gamma(G)$. We doubt very much that this is the case and hence we propose the following more general question.
\begin{problem}\label{proble1}
{\rm Classify the finite non-abelian simple groups $G$ with $\kappa(G)\ne \gamma(G)$.}
\end{problem}
In the paper we completely classify the sporadic simple groups $G$ with $\kappa(G)\ne \gamma(G)$ (see Table \ref{tableSporadic}).
Moreover, motivated by Theorem~\ref{main theorem}, by \eqref{kappa-gamma} and by \eqref{kappa-simple}, we propose the following.
\begin{problem}\label{proble2}
{\rm Classify the finite non-abelian simple groups $G$ with $\kappa(G)=2$.}
\end{problem}

Inspired by our definition of weak normal covering and by some results in the recent literature  like Theorem 5.1 in \cite{KLS} and Corollary 7.2 in \cite{GM}, we propose a new graph to be associated with a group. Let $G$ be a group and let $g_1,\ldots,g_\kappa\in G$. Then, $\{g_1,\ldots,g_\kappa\}$ is said to \textbf{\textit{Aut-invariably generate}} $G$ if, for every $x_1,\ldots,x_\kappa\in \mathrm{Aut}(G)$, we have
$$G=\langle g_1^{x_1},\ldots,g_\kappa^{x_\kappa}\rangle.$$
Of course, if  $\{g_1,\ldots,g_\kappa\}$ Aut-invariably generates $G$, then $\{g_1,\ldots,g_\kappa\}$ invariably generates $G$ too.
We define the \textbf{\textit {Aut-invariably generating graph}}  $\Upsilon(G)$,  considering as vertices the non-trivial $\mathrm{Aut}(G)$-conjugacy classes of $G$ and two classes $g_1^{\mathrm{Aut}(G)}$ and $g_2^{\mathrm{Aut}(G)}$ to be adjacent if  the set $\{g_1,g_2\}$ Aut-invariably generates $G$. We define next, $\kappa_w(G)$ as the clique number of $\Upsilon(G)$.
It is easily checked that the following inequalities hold
\begin{equation}\label{new-kappa}
\kappa_w(G)\leq \gamma_w(G),\qquad \kappa_w(G)\leq \kappa(G). \qquad
\end{equation}
By \cite[Theorem 5.1]{KLS}  or by \cite[Corollary 7.2]{GM},  for every non-abelian simple group $G$ with the exception of $G=\mathrm{P}\Omega_8^+(q)$ for $q\leq 3$, we have 
\begin{equation}\label{kappa-weak-simple}
\kappa_w(G)\geq 2.
\end{equation}
Using  the computer algebra systems \textrm{GAP}~\cite{GAP} and \texttt{magma}~\cite{magma} we computed the values of $ \kappa_w$ for the two missing groups above:
\begin{equation}\label{kappa-weak-ort}
\kappa_w(\mathrm{P}\Omega_8^+(2))=1\qquad \hbox{and}\qquad  \kappa_w(\mathrm{P}\Omega_8^+(3))=2.
\end{equation}

In particular, by \eqref{new-kappa} and \eqref{kappa-weak-simple}, we deduce that, for every non-abelian simple group $G$ with the exception of $G=\mathrm{P}\Omega_8^+(q)$ for $q\leq 3$, we have 
$\gamma_w(G)\geq 2$. On the other hand, in  Lemma \ref{dimension8orthogonal+}, we show that $\gamma_w(\mathrm{P}\Omega_8^+(2))=\gamma_w(\mathrm{P}\Omega_8^+(3))=2$ (see also Table~\ref{000====}). Thus, with no exceptions, for every non-abelian simple group $G$, we have
\begin{equation}\label{kappa-weak-ineq}
\gamma_w(G)\geq 2.
\end{equation}
In Table~\ref{tableSporadic} we have reported  the values of $\gamma$, $\gamma_w$, $\kappa$ and $\kappa_w$ for the almost simple group having socle a sporadic group.
These values are computed by the invaluable help of computer algebra systems \textrm{GAP}~\cite{GAP} and \texttt{magma}~\cite{magma}. Details on these computations are in Section~\ref{section:new}.

Looking at Table~\ref{tableSporadic} we see examples of sporadic simple groups $G$ with  $\gamma_w(G)\ne \kappa_w(G)$. For instance, $\kappa_w(J_2)=2\neq \gamma_w(J_2)=3$. On the other hand there are no examples of sporadic simple groups $G$ with $\kappa_w(G)\neq \kappa(G)$. 
We then propose  the following question.
\begin{problem}\label{proble1-w}
{\rm Classify the finite non-abelian simple groups $G$ with $\kappa_w(G)\ne \gamma_w(G)$ and those with $\kappa_w(G)\ne \kappa(G)$.}
\end{problem}
Of course Table \ref{tableSporadic} completely classifies the sporadic simple groups $G$ with $\kappa_w(G)\ne \gamma_w(G)$.

Moreover, motivated by Theorem~\ref{main theorem}, by  \eqref{new-kappa}, \eqref{kappa-weak-simple} and \eqref{kappa-weak-ort}, we propose the following last problem.
\begin{problem}\label{proble2-w}
{\rm Classify the finite non-abelian simple groups $G$ with $\kappa_w(G)=2$.}
\end{problem}
Certainly, by\eqref{new-kappa}, some of the finite non-abelian simple groups $G$ such that $\kappa_w(G)=2$ come as those satisfying $\gamma_w(G)=2$ and hence  are described in Theorem~\ref{main theorem}.
\begin{table}[h!]
\begin{tabular}{c|c|c|c||c|c|c}
\toprule[1.5pt]
Group & $\gamma(-)$&$\gamma_w(-)$&$\kappa(-)=\kappa_w(-)$&Group &$\gamma(-)=\gamma_w(-)$&$\kappa(-)=\kappa_w(-)$\\
\midrule[1.5pt]
$M_{11}$&2&2&2&$M_{12}.2$&3&2\\
$M_{12}$&3&2&2&$M_{22}.2$&3&2\\
$M_{22}$&3&3&3&$J_2.2$&3&3\\
$M_{23}$&3&3&3&$Suz.2$&4&4\\
$M_{24}$&3&3&3&$HS.2$&3&3\\
$J_{1}$&4&4&4&$McL.2$&3&2\\
$J_{2}$&3&3&2&$He.2$&4&3\\
$J_{3}$&3&3&3&$Fi_{22}.2$&4&3\\
$J_{4}$&7&7&7&$Fi_{24}$&5&5\\
$Co_{1}$&5&5&4&$HN.2$&4&4\\
$Co_{2}$&4&4&3&$O'N.2$&3&3\\
$Co_{3}$&3&3&3&$J_3.2$&3&3\\
$Fi_{22}$&4&4&3&&&\\
$Fi_{23}$&5&5&5&&&\\
$Fi_{24}'$&5&5&5&&&\\
$Suz$&4&4&4&&&\\
$Ru$&3&3&3&&&\\
$Ly$&5&5&5&&&\\
$O'N$&3&3&3&&&\\
$McL$&3&3&3&&&\\
$HS$&3&3&2&&&\\
$HN$&4&4&4&&&\\
$He$&3&3&3&&&\\
$Th$&5&5&4&&&\\
$B$&7&7&7&&&\\
$M$&9&9&9&&&\\
\bottomrule[1.5pt]
\end{tabular}
\caption{Values for $\gamma$, $\kappa$, $\gamma_w$, $\kappa_w$ for the almost simple groups with sporadic socle.}
\label{tableSporadic}
\end{table}

\begin{table}[ht]
\begin{tabular}{c|c|c|c|c|c}
\toprule[1.5pt]
Group& Comp. $H$&Comp. $K$&Comments&Normal&Nr. \\
\midrule[1.5pt]
$A_5$&$A_5\cap(S_2\times S_3)$&$D_{10}$&&1&2\\
                 &$A_4$&$D_{10}$&&1&\\
$A_6$&$A_6\cap(S_2\times S_4)$ &$A_5$ &&2&2\\
                 &$A_6\cap (S_3\mathrm{wr} S_2)$ &$A_5$ &&2&\\

$A_7$&$A_7\cap(S_2\times S_5)$&$\mathrm{SL}_3(2)$&&2&1\\

$A_8$&$A_8\cap(S_3\times S_5)$&$2^3:\mathrm{SL}_3(2)$&$A_8\cong \mathrm{PSL}_4(2)$&2&1\\

$A_9$&$A_9\cap(S_4\times S_5)$&$\mathrm{P}\Gamma\mathrm{L}_2(8)$&&0&1\\

\midrule[1.5pt]
$G_2(q)$&$\mathrm{SL}_3(q).2$&$\mathrm{SU}_3(q).2$ &$q\ge 4$, $q$ even&1&1\\
$G_2(2)'$&$\mathrm{PSL}_2(7)$&$4\cdot S_4$&$G_2(2)'\cong \mathrm{PSU}_3(3)$&1&2\\
&$\mathrm{PSL}_2(7)$&$3_+^{1+2}:8$&&1&\\
$G_2(3)$&$\mathrm{PSL}_2(13)$&$[q^5]:\mathrm{GL}_2(3)$&&0&2\\
&$\mathrm{PSL}_3(3):2$&$\mathrm{PSL}_2(8):3$&&0&\\
$^{2}G_2(3)'$&$D_{18}$&$D_{14}$&$^{2}G_2(3)'\cong \mathrm{PSL}_2(8)$&1&2\\
&$D_{18}$&$2^3:7$&&1&\\
$^{2}F_4(2)'$&$2.[2^8]:5:4$&$\mathrm{PSL}_3(3):2$&&1&2\\
&$2.[2^8]:5:4$&$\mathrm{PSL}_2(25)$&&1&\\
$F_4(q)$&${}^3D_4(q).3$&$\mathrm{Spin}_9(q)$&$q=3^a$&1&1\\
\midrule[1.5pt]
$M_{11}$&$M_8:S_3\cong 2^\cdot S_4$&$\mathrm{PSL}_2(11)$&notation from~\cite{atlas}&1&3\\
&$M_9:S_2\cong 3^2:Q_8.2$&$\mathrm{PSL}_2(11)$&&1&\\
&$M_{10}\cong A_6.2$&$\mathrm{PSL}_2(11)$&&1&\\
$M_{12}$&$M_{10}:2\cong A_6.2^2$&$\mathrm{PSL}_2(11)$&notation from~\cite{atlas}&0&2\\
&$M_{11}$&$2\times S_5$&&0&\\
\bottomrule[1.5pt]
\end{tabular}
\caption{Weak normal $2$-coverings of non-abelian simple groups: alternating, exceptional and sporadic simple groups}\label{00}
\end{table}

\begin{table}[ht]
\begin{tabular}{c|c|c|c|c|c}
\toprule[1.5pt]
Group& Comp. $H$&Comp. $K$&Comments&Normal&Nr.\\
\midrule[1.5pt]
$\mathrm{PSL}_2(7)$&$S_4 $ &parabolic &&$2$ &1\\
 $\mathrm{PSL}_2(q)$&$D_{q+1}$ &parabolic &$q$ odd, $q>9$ &$1$&1\\
$\mathrm{PSL}_2(q)$                  &$D_{2(q+1)}$&parabolic&$q>4$, $q$ even &1&2\\
                   &$D_{2(q+1)}$&$D_{2(q-1)}$&$q>4$, $q$ even&1&\\

$\mathrm{PSL}_3(q)$&$\left(\frac{q^2+q+1}{\gcd(3,q-1)}\right):3$&parabolic&$q\ne 4$  &2&$1$\\

$\mathrm{PSL}_3(4)$&$\mathrm{SL}_3(2)$&$A_6$&&0&$2$\\
$\mathrm{PSL}_3(4)$&$\mathrm{SL}_3(2)$&parabolic&&6&\\

$\mathrm{PSL}_4(q)$&$\frac{1}{d}\mathrm{SL}_2(q^2).(q+1).2$&$\frac{1}{d}E_q^3:\mathrm{GL}_3(q)$&$d:=\gcd(4,q-1)$& $2$ & $1$\\
\bottomrule[1.5pt]
\end{tabular}
\caption{Weak normal $2$-coverings of non-abelian simple groups: linear groups
(For $\mathrm{PSL}_2(4)$,
$\mathrm{PSL}_2(5)$,
$\mathrm{PSL}_2(9)$, see Table~\ref{00})}\label{0Linear}
\end{table}

\begin{table}[ht]
\begin{tabular}{c|c|c|c|c|c}
\toprule[1.5pt]
Group& Comp. $H$&Comp. $K$&Comments&Normal&Nr.\\
\midrule[1.5pt]
$\mathrm{PSU}_3(q)$&$(q^2-q+1):3$&$\mathrm{GU}_2(q)$&$q=3^a$,\ $a>1$&1&$1$\\
     
$\mathrm{PSU}_3(3)$&$\mathrm{PSL}_2(7)$&$\mathrm{GU}_2(3)$&&1&$2$\\
&$\mathrm{PSL}_2(7)$&$E_3^{1+2}:8$& &1&\\

$\mathrm{PSU}_3(5)$&$A_7$&$\frac{1}{3}\mathrm{GU}_2(5)$&&0&2\\
&$A_7$&$\frac{1}{3}E_5^{1+2}:24$& &3&\\

$\mathrm{PSU}_4(q)$&$\frac{1}{d}\mathrm{GU}_3(q)$&$\frac{1}{d}E_q^4:\mathrm{SL}_2(q^2):(q-1)$&$d:=\gcd(4,q+1)$&$1$&$1$\\
                   &             &                  &$q\ge 4$        & &   \\         

$\mathrm{PSU}_4(2)$&$\mathrm{GU}_3(2)$&$\mathrm{Sp}_4(2)$&&1&$2$\\
&$\mathrm{GU}_3(2)$&$E_2^4:\mathrm{SL}_2(4)$&&1&\\

$\mathrm{PSU}_4(3)$&$A_7$&$\frac{1}{4}E_3^{1+4}:\mathrm{SU}_2(3):8$&&4&4\\
&$\frac{1}{4}\mathrm{GU}_3(3)$&$\frac{1}{4}E_3^4:\mathrm{SL}_2(9):2$&&1&\\
&$\mathrm{PSL}_3(4)$&$\frac{1}{4}E_3^{1+4}:\mathrm{SU}_2(3):8$&&2&\\

&$\frac{1}{4}\mathrm{GU}_3(3)$&$\mathrm{PSU}_4(2)$&&0&\\

$\mathrm{PSU}_6(2)$&$\mathrm{Sp}_6(2)$&$\mathrm{PGU}_5(2)$&&0&2\\
&$\mathrm{PSU}_4(3)$&$\mathrm{PGU}_5(2)$&&0&\\

\bottomrule[1.5pt]
\end{tabular}
\caption{Weak normal $2$-coverings of non-abelian simple groups: unitary groups}\label{00000}
\end{table}

\begin{table}[ht]
\begin{tabular}{c|c|c|c|c|c}
\toprule[1.5pt]
Group& Comp. $H$&Comp. $K$&Comments&Normal&Nr.\\
\midrule[1.5pt]
$\mathrm{PSp}_4(3)$&$\frac{1}{2}E_3^{1+2}:(2\times\mathrm{Sp}_2(3))$&$\mathrm{PSp}_2(9):2$&&1&2\\
&$\frac{1}{2}E_3^{1+2}:(2\times \mathrm{Sp}_2(3))$&$2^4.A_5$&&1&\\
$\mathrm{Sp}_n(q)$& $\mathrm{SO}_n^-(q)$&$\mathrm{SO}_n^+(q)$&$n	\ge 6$&1&$1$\\
                   &             &                  &$q$ even        & &   \\         

$\mathrm{Sp}_4(q)$& $\mathrm{SO}_4^-(q)$&$\mathrm{SO}_4^+(q)$& $q$ even &2&$1$\\
                   &             &                  &$q>4$        & &   \\         

$\mathrm{Sp}_4(4)$& $\mathrm{SO}_4^-(4)$&$\mathrm{SO}_4^+(4)$&&2&$2$\\
&$\mathrm{Sp}_2(16):2$&$\mathrm{Sp}_4(2)$&&0&\\
$\mathrm{PSp}_6(3^f)$&$\frac{1}{2}\left(\mathrm{Sp}_2(3^f)\perp\mathrm{Sp}_4(3^f)\right)$&$\frac{1}{2}\mathrm{Sp}_2(3^{3f}):3$&&1&1\\
\bottomrule[1.5pt]
\end{tabular}
\caption{Weak normal $2$-coverings of non-abelian simple groups: symplectic groups
(For
$\mathrm{PSp}_4(2)'\cong A_6$, see Table~\ref{00})}\label{000=0=0}
\end{table}

\begin{table}[ht]
\begin{tabular}{c|c|c|c|c|c}
\toprule[1.5pt]
Group& Comp. $H$&Comp. $K$&Comments&Normal&Nr.\\
\midrule[1.5pt]
$\mathrm{P}\Omega_8^+(2)$&$A_9$&$E_2^{1+8}:(\mathrm{GL}_2(2)\times\Omega_4^+(2))$&$K$ stab. isotropic line&0&4\\
$\mathrm{P}\Omega_8^+(2)$&$\mathrm{Sp}_6(2)$&$E_2^{1+8}:(\mathrm{GL}_2(2)\times\Omega_4^+(2))$&$K$ stab. isotropic line&0&\\
$\mathrm{P}\Omega_8^+(2)$&$\mathrm{Sp}_6(2)$&$E_2^{6}:\Omega_6^+(2)$& $K$ stab. isotropic point&0&\\

$\mathrm{P}\Omega_8^+(2)$&$(\Omega_2^-(2)\times\Omega_6^-(2)).2$&$E_2^{6}:\Omega_6^+(2)$&$K$ stab. isotropic point&0&\\

$\mathrm{P}\Omega_8^+(3)$&$\Omega_7(3)$&$\frac{1}{2}(\Omega_3(3)\times\Omega_5(3)).[4]$&&0&4\\
$\mathrm{P}\Omega_8^+(3)$&$\Omega_7(3)$&parabolic&$K$ stab.  isotropic line&0&\\
$\mathrm{P}\Omega_8^+(3)$&$\Omega_7(3)$&parabolic &$K$ stab. isotropic point&0&\\
$\mathrm{P}\Omega_8^+(3)$&parabolic &$\frac{1}{2}(\Omega_2^-(3)\times\Omega_6^-(3)).[4]$&$H$ stab. isotropic point&0&\\

\bottomrule[1.5pt]
\end{tabular}
\caption{Weak normal $2$-coverings of non-abelian simple groups: orthogonal groups}\label{000====}
\end{table}

\subsection{Structure of the paper and  background comments}\label{sub:preliminaries}
In our proof of Theorems~\ref{main theorem} and~\ref{main theorem1}, we use the Classification of the Finite Simple Groups. Thus in Table~\ref{00} we have collected the alternating groups, the exceptional groups of Lie type and the sporadic simple groups, in Tables~\ref{0Linear}--\ref{000====} we have collected the classical groups. We take into account the various isomorphisms among finite simple groups. For instance, we do not list the weak normal $2$-coverings of $\mathrm{PSL}_2(4)$, $\mathrm{PSL}_2(5)$, $\mathrm{PSL}_2(9)$ and $\mathrm{PSp}_4(2)'$, because these already appear among the alternating groups.

The classification of the alternating groups admitting normal covering number $2$ appears in~\cite{b}. There it is shown that the alternating group $A_n$ has normal covering number $2$ if and only if $4\le n\le 8$. In this paper, we do not repeat the arguments in~\cite{b} to prove our  more general result for weak normal coverings. Indeed, it is easily checked that the whole reasoning in~\cite{b} works for our more general classification apart for the case  $A_9$, which is one of the groups appearing in Corollary~\ref{corollary}. The heart of the matter is that the $9$-cycles   in $A_9$ split into two conjugacy classes while the $9$-cycles in $\mathrm{P}\Gamma\mathrm{L}_2(8)$ are all $S_9$-conjugate. Thus  $\bigcup_{g\in A_9 }\mathrm{P}\Gamma\mathrm{L}_2(8)^g$  does not contain all the $9$-cycles of $A_9$, while $\bigcup_{g\in S_9 }\mathrm{P}\Gamma\mathrm{L}_2(8)^g$ does (for details see \cite[pages $17$-$18$]{b}).

The classification of the sporadic simple groups having normal covering number $2$ is in~\cite{pellegrini} and it is carried on in two parts. In the first general part, the arguments are via group orders and this part applies verbatim to our situation. In the second part the author analyses, often with the use of a computer, the exceptional cases not covered by the previous investigation. For these exceptional cases, reported in ~\cite[Table 2] {pellegrini} we have used a computer for finding weak normal 2-coverings. The only exceptional case that appears is the Mathieu group $M_{12}.$

In~\cite{blw,lucido,pellegrini}, the authors classify the finite simple exceptional groups of Lie type having covering number $2$. Again, we do not replicate the argument in these papers. We simply observe that, with minor modifications, the reasoning in~\cite{lucido,pellegrini} give a proof to Theorem~\ref{main theorem} in the case of finite simple exceptional groups of Lie type. The only exceptional case arising here is $G_2(3)$. The study of  $G_2(3)$ in~\cite{pellegrini} was carried on via a computer computation and ours too.

Therefore, the bulk of our work is the proof of Theorems~\ref{main theorem} and~\ref{main theorem1} for simple classical groups. In~\cite{bl}, it has been proved that, if the projective special linear group
$\mathrm{PSL}_{n}(q)$ admits a normal  $2$-covering then $2 \le n \le 4$. The same proof yields the same result for weak normal $2$-coverings.
Since in ~\cite{bl} it is also shown that  for $2 \le n \le 4$ there  exists a normal $2$-covering of $\mathrm{PSL}_{n}(q)$, we deduce that $\gamma_w(\mathrm{PSL}_{n}(q))\leq 2$ for those $n$.
Thus, in dealing with the simple groups $\mathrm{PSL}_n(q)$, we may suppose that $2\le n\le 4$ and concentrate on the explicit description of all the possible maximal components of a weak normal $2$-covering and on excluding $\gamma_w(\mathrm{PSL}_{n}(q))=1.$

The structure of the paper is straightforward. In Section~\ref{preliminaries}, we give some basic preliminaries which will also serve useful for setting some notation. Then, in Section~\ref{sec:linear}, we consider the linear groups $\mathrm{PSL}_n(q)$; in Section~\ref{sec:unitary}, we consider the unitary groups $\mathrm{PSU}_n(q)$; in Section~\ref{sec:symplectic}, we consider the symplectic groups $\mathrm{PSp}_n(q)$; in Section~\ref{sec:oddorthogonal}, we consider the odd dimensional orthogonal groups $\mathrm{P}\Omega_n(q)$;  in Sections~\ref{sec:orthogonal-} and~\ref{sec:orthogonal+}, we deal with even dimensional orthogonal groups $\mathrm{P}\Omega_n^-(q)$ and $\mathrm{P}\Omega_n^+(q)$; in Section \ref{sec:final}, we prove Theorem \ref{main theorem2};  in Section~\ref{section:new}, we investigate the almost simple groups with sporadic socle
giving the details for obtaining Table~\ref{tableSporadic}.

In Section~\ref{sec:appendix}, we use Theorem~\ref{main theorem1} to classify the weak normal $2$-coverings and the normal $2$-coverings of the non-abelian simple groups. This means that we drop the hypothesis about the maximality of the components and give the description of all components in a weak normal $2$-covering of $G$, when $G$ is a non-abelian simple group with $\gamma_w(G)=2$. Our main result is Theorem~\ref{main theoremgeneral} and to our surprise most weak normal $2$-coverings do require at least one of the components to be maximal, see Corollary~\ref{cor:21}.

In Section~\ref{sec:degenerateasdf}, we study the degenerate normal $2$-coverings of almost simple groups.

\section{Preliminaries}\label{preliminaries}

Given a group $G$, we denote as customary its derived group by $G'.$ A group is called perfect when $G'=G.$
\subsection{The players} \label{players}
Throughout the paper let $n, f$ be  positive integers, $p$ be a prime and $q=p^f$.
We consider the
following \textbf{\em classical groups}  $\tilde G$ of dimension $n$:
\begin{itemize}
\item$\mathrm{SL}_n(q)$ with $n\ge 1$,
\item$\mathrm{SU}_n(q)$  with $n\ge 1$,
\item$\mathrm{Sp}_{n}(q)$  with $n$ even and $n\ge 2$,
\item$\Omega_{n}(q)$  with $qn$ odd and $n\ge 1$, and
\item$\Omega_{n}^{\pm}(q)$ with $n$ even and $n\ge 2$.
\end{itemize}

The corresponding
\textbf{\em simple classical groups} $G:=\tilde G/ Z (\tilde G)$ are
$$
\mathrm{PSL}_n(q),\,
\mathrm{PSU}_n(q),\,
\mathrm{PSp}_{n}(q),\,
\mathrm{P}\Omega_{n}(q),\hbox{ and }
\mathrm{P}\Omega_{n}^{\pm}(q),$$
where we consider only $n\ge 3$ for unitary groups, $n\ge 4$ for symplectic groups, $n\ge 7$ for odd dimensional orthogonal groups and $n\ge 8$ for even dimensional orthogonal groups.
With the restrictions on $n$ as above, these are indeed non-abelian simple groups, except for $\mathrm{PSL}_2(2)$, $\mathrm{PSL}_2(3)$, $\mathrm{PSU}_3(2)$ and $\mathrm{PSp}_4(2)$. Moreover, taking into account the various isomorphisms among  simple groups (see~\cite[Section~2.9]{kl}), the choices for $n$ guarantee that every finite non-abelian simple group is considered just one time. 

For some of our preliminary results or some arguments, we need to deal with arbitrary classical groups as defined above and hence with no restrictions on $n$. 
However, recall that our main results are only concerned with non-abelian simple groups. 
 
We also need the \textbf{\em general classical groups}, which we denote by $\hat G$, defined by
$$
\mathrm{GL}_n(q),\,
\mathrm{GU}_n(q),\,
\mathrm{Sp}_{n}(q),\,
\mathrm{O}_{n}(q),\hbox{ and }
\mathrm{O}_{n}^{\pm}(q).$$
For the groups $\hat G$ we adopt the same limitations for $n$ adopted for the corresponding $\tilde G.$ As usual, for the notation we follow~\cite[Section~2.1]{kl}. 
Recall that $\hat {G}'=\tilde G$.

\subsection{Normal and weak normal coverings of classical and simple classical groups}\label{cove-classic-simple}
Let $\tilde G$ be a classical group such that $G$ is a non-abelian simple group. Then $\tilde G$ is a perfect group, see for instance~\cite[Proposition~$2.9.2$~(ii)]{kl}. As a consequence if $H$ is a proper subgroup of $\tilde G$, then $HZ(\tilde G)$ is also a proper subgroup of $\tilde G$. In particular, every  maximal subgroup of $\tilde G$ contains $Z(\tilde G)$. We illustrate the natural link between coverings of $\tilde G$ and coverings of $G$. That link is extensively used throughout the paper. To begin with, if $\pi:\tilde G\rightarrow G$ is the natural projection of $\tilde G$ onto $G$ and $X\leq G$, we define $\tilde X:=\pi^{-1}(X).$ Of course, $\tilde X$ is a subgroup of $\tilde G$ containing $Z(\tilde G).$ Note also that $\pi(\tilde X)=X$ and that  $X$ is  maximal in $G$ if and only if $\tilde X$ is maximal in $\tilde G$.

Let $k$ be a positive integer and  $ \mu=\{H_1,\ldots,H_k\}$ be a set of distinct proper subgroups of $G$. 
Then, $\tilde \mu:=\{\tilde H_1,\ldots,\tilde H_k\}$ is a set of distinct proper subgroups of $\tilde G$ containing $Z(\tilde G)$ and  the map $ \mu\mapsto \tilde \mu$ establishes a bijection between  sets of $k$ distinct proper subgroups of $G$ and sets of $k$ distinct proper subgroups  of $\tilde G$ containing $Z(\tilde G)$. It is immediate to check that $\mu$ is a normal covering of $G$ if and only if $\tilde \mu$ is a normal covering of $\tilde G$ with components containing $Z(\tilde G)$.
Hence,  if $\tilde \mu$ is a normal covering of $\tilde G$ with maximal components, then $\mu$ is a normal covering of $G$ (with maximal components). 
It follows that
$$\gamma(\tilde G)=\gamma(G).$$
Moreover our main result can be used to determine  the non-solvable classical groups having covering number $2$ and to characterize the maximal components of their normal $2$-coverings.

We now explore the link between the weak normal coverings of $\tilde G$ and $G$.  Since $Z(\tilde G)$ is a characteristic subgroup of $\tilde G$, we have a natural homomorphism of $\mathrm{Aut}(\tilde G)$ in $\mathrm{Aut}(G)$. It follows that if $\tilde \mu$ is a weak normal covering of $\tilde G$ with components containing $Z(\tilde G)$, then $\mu$ is a weak normal covering of $G$. In particular, we have
$$\gamma_w(\tilde G)\ge \gamma_w(G).$$
From~\cite[Chapter~2]{kl}, we see that, when $G$ is simple, the homomorphism of $\mathrm{Aut}(\tilde G)$ in $\mathrm{Aut}(G)$ is surjective, with the only exception of $G=\mathrm{P}\Omega_8^+(q)$ and $q$ odd. Using that fact it easily follows that, besides that exceptional case,  if $\mu$ is a weak normal covering of $G$, then $\tilde \mu$ is a weak normal covering of $\tilde G$.
In particular, besides that exceptional case, we have
\begin{equation}\label{weak-G-Gonda}
\gamma_w(\tilde G)=\gamma_w(G)
\end{equation}
and our main result can be used to determine the non-solvable classical groups having weak normal covering number $2$ and to characterize the maximal components of their weak normal $2$-coverings. 

In other words, besides the exceptional case, we may freely work between weak normal coverings of $G$ and $\tilde G$. Extra care has to be taken when $G=\mathrm{P}\Omega_8^+(q)$ with $q$ odd, because the triality automorphism of $G$ does not lift to an automorphism of $\tilde G$. From Table~\ref{000====}, we deduce that, among simple groups $G$ having weak normal covering number $2$, the only possible exception to~\eqref{weak-G-Gonda} is for $G:=\mathrm{P}\Omega_8^+(3)$. As we all know ``the devil is in the details'' and in fact, it can be checked  with a computer that $\gamma_w(\Omega_8^+(3))\geq 3>\gamma_w(\mathrm{P}\Omega_8^+(3))=2.$
\begin{theorem}Let $\tilde{G}$ be a classical group such that $G$ is a non-abelian simple group. Then the following hold
\begin{itemize}
\item $\gamma(\tilde{G})=\gamma(G)$,
\item$\gamma_w(\tilde{G})=\gamma_w(G)$ if and only if $G\not\cong\mathrm{P}\Omega_8^+(3)$.
\end{itemize}
\end{theorem}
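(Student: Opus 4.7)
The first bullet is a direct consequence of the bijection $\mu \leftrightarrow \tilde\mu$ established in Section~\ref{cove-classic-simple}. Since $\tilde G$ is perfect, every maximal subgroup of $\tilde G$ contains $Z(\tilde G)$, so this bijection restricts to a bijection between normal $k$-coverings of $G$ with maximal components and normal $k$-coverings of $\tilde G$ with maximal components. As the value $\gamma$ is always attained by a covering with maximal components, one concludes $\gamma(\tilde G)=\gamma(G)$.

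For the second bullet, the inequality $\gamma_w(\tilde G)\geq \gamma_w(G)$ is immediate: the characteristicity of $Z(\tilde G)$ in $\tilde G$ yields a natural homomorphism $\mathrm{Aut}(\tilde G)\to\mathrm{Aut}(G)$, and projecting any weak normal covering of $\tilde G$ produces a weak normal covering of $G$ of the same cardinality. For the reverse inequality, whenever this homomorphism is surjective, a weak normal covering of $G$ lifts component-wise to a weak normal covering of $\tilde G$ of the same size. By~\cite[Chapter~2]{kl}, surjectivity fails only when $G\cong \mathrm{P}\Omega_8^+(q)$ with $q$ odd (because the triality automorphism does not lift), so equality holds in all other cases.

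To settle the residual case $G=\mathrm{P}\Omega_8^+(q)$ with $q$ odd, I would split according to $q$. For $q=3$, Table~\ref{000====} shows that $\gamma_w(G)=2$ is realized only by weak normal $2$-coverings whose two $\mathrm{Aut}(G)$-orbits of components genuinely require triality to cover the remaining conjugacy classes; a direct \texttt{GAP}/\texttt{magma} search over pairs of maximal subgroups of $\tilde G=\Omega_8^+(3)$, modulo $\mathrm{Aut}(\tilde G)$-conjugacy, confirms that no weak normal $2$-covering of $\tilde G$ exists, so $\gamma_w(\tilde G)\geq 3>2=\gamma_w(G)$, producing the exception asserted in the theorem. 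For $q$ odd with $q>3$, by Theorem~\ref{main theorem1} we have $\gamma_w(G)\geq 3$, and I would then revisit the explicit minimal weak normal coverings of $\mathrm{P}\Omega_8^+(q)$ produced in Section~\ref{sec:orthogonal+} and verify that at least one may be chosen with components in distinct $\mathrm{P}\Gamma\mathrm{O}_8^+(q)$-orbits (equivalently, triality is not actually needed to cover any conjugacy class). Such a covering lifts to $\tilde G$ yielding $\gamma_w(\tilde G)\leq \gamma_w(G)$ and hence equality.

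The main obstacle is the last verification: it requires auditing the explicit constructions of minimal weak normal coverings for $\mathrm{P}\Omega_8^+(q)$ with $q$ odd, $q>3$, and checking in each case that conjugation by the triality automorphism is never essential. Although this is in essence a bookkeeping task against the classification developed in the preceding sections, the role of graph automorphisms here is precisely what Conjecture~\ref{conjecture1} flags as potentially subtle, which is why the isolation of $\mathrm{P}\Omega_8^+(3)$ as the unique exception is the substantive content of the theorem.
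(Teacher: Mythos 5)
Your first bullet and the generic part of the second coincide with the paper's own argument in Section~\ref{cove-classic-simple}: the bijection $\mu\mapsto\tilde\mu$, the fact that maximal subgroups of the perfect group $\tilde G$ contain $Z(\tilde G)$, the homomorphism $\mathrm{Aut}(\tilde G)\to\mathrm{Aut}(G)$ which is surjective except for $G=\mathrm{P}\Omega_8^+(q)$ with $q$ odd, and the computer verification $\gamma_w(\Omega_8^+(3))\ge 3>2=\gamma_w(\mathrm{P}\Omega_8^+(3))$ are exactly the ingredients used there.

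The genuine gap is your last step, the case $G=\mathrm{P}\Omega_8^+(q)$ with $q$ odd and $q>3$, which you defer to an ``audit'' of the minimal weak normal coverings of Section~\ref{sec:orthogonal+} and describe as bookkeeping. It is not, and as proposed the audit would fail: the only minimal (three-component) weak normal covering exhibited in the paper is the one of Theorem~\ref{covering3}, whose components are the stabilizers of a $1$-dimensional totally singular subspace and of non-degenerate minus-type subspaces of dimensions $2$ and $4$, and that covering uses triality in an essential way. Concretely, let $g$ be a Singer cycle of a Levi subgroup $\mathrm{GL}_4(q)$ and put $x:=g\oplus(g^{-1})^T$ with respect to a decomposition $V=U\oplus U'$ into maximal totally singular subspaces; by Lemma~\ref{lemma:new2022+} one has $x\in\mathrm{SO}_8^+(q)$, hence $x^2\in\Omega_8^+(q)$, and arguing as in claim $(\dag)$ of the proof of Proposition~\ref{ortogonali dispari} the only proper non-zero $x^2$-invariant subspaces of $V$ are $U$ and $U'$. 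Such an element is covered in Theorem~\ref{covering3} only through triality-conjugates of the parabolic component, and since the triality-free automorphisms act semilinearly on $V$, no $\mathrm{P}\Gamma\mathrm{O}_8^+(q)$-conjugate of it lies in any of the three components. Hence the covering of Theorem~\ref{covering3} does not descend to a weak normal covering of $\Omega_8^+(q)$, and to conclude $\gamma_w(\Omega_8^+(q))=3=\gamma_w(\mathrm{P}\Omega_8^+(q))$ for odd $q>3$ you would have to produce a new, triality-free covering of $\Omega_8^+(q)$ with three components (or some other upper bound on $\gamma_w(\Omega_8^+(q))$), which is a genuinely new argument and not a verification against the existing classification (which, for these $q$, only classifies weak normal $2$-coverings). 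In fairness, the paper's own written justification of this theorem also treats the exceptional family only through the observation that among groups with $\gamma_w=2$ the sole candidate exception is $\mathrm{P}\Omega_8^+(3)$, together with the $q=3$ computation; so you have correctly isolated where the delicate content lies, but your proposal does not close it.
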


\subsection{From weak normal coverings to normal coverings}\label{sec:newwen}
In this section, we describe a situation that we often face in our work.

Let $G$ be a non-abelian simple group and let $\{H,K\}$ be a weak normal $2$-covering of $G$. Observe that from the theorem of Saxl~\cite{Sa88}, $H$ and $K$ are in distinct $\mathrm{Aut}(G)$-classes. Clearly,  $\mathcal{C}:=\{\{H^\varphi,K^\psi\}\mid \varphi,\psi\in\mathrm{Aut}(G)\}$ is the $\mathrm{Aut}(G)$-class of weak normal $2$-coverings with representative $\{H,K\}$. Now, $\mathcal{C}$ is a union of, $C$ say, $G$-classes of normal $2$-coverings. The case $C=0$ is special and consists in the case where no normal $2$-covering can be extracted from this $\mathrm{Aut}(G)$-class.  Suppose then $C\ge 1$, that is, there exist $\varphi,\psi\in\mathrm{Aut}(G)$ such that $\{H^\varphi,K^\psi\}$ is a normal $2$-covering. For simplicity, we may suppose that $\{H,K\}$ itself is a normal $2$-covering. Here, we give a lower and an upper bound for $C$.

Let $h:=|\mathrm{Aut}(G):G{\bf N}_{\mathrm{Aut}(G)}(H)|$ and $k:=|\mathrm{Aut}(G):G{\bf N}_{\mathrm{Aut}(G)}(K)|$. In particular, $\{H^\varphi\mid \varphi\in\mathrm{Aut}(G)\}$ is a union of $h$ distinct $G$-conjugacy classes and similarly $\{K^\varphi\mid \varphi\in\mathrm{Aut}(G)\}$ is a union of $k$ distinct $G$-conjugacy classes. It is not hard to verify that
$$\max\{h,k\}\hbox{ divides } C\hbox{ and }C\le hk.$$
In particular, when $h=1$ (respectively $k=1$), we have $C=k$ (respectively $C=h$). In most of our arguments, we use this basic observation for extracting information about normal $2$-coverings from weak normal $2$-coverings.

For classical groups and for their maximal subgroups, the values of $h$ and $k$ is in  the literature in the ``c'' column in the tables in~\cite{bhr,kl}.

\subsection{Action of $\hat G$ on its natural module}

Set $\delta:=1$ when $\hat{G}\ne\mathrm{GU}_n(q)$ and set $\delta:=2$ when $\hat{G}=\mathrm{GU}_n(q)$. Let $V=(\mathbb{F}_{q^\delta})^n$ be the natural $n$-dimensional $\mathbb{F}_{q^\delta}\hat{G}$-module consisting of row vectors $x=(x_1,\dots,x_n)$ in $n$ components $x_i\in \mathbb{F}_{q^\delta}$ and endowed with the
suitable $\hat{G}$-invariant form. This form is non-degenerate, with the only exception of $\hat G=\mathrm{GL}_n(q)$. If $x\in \mathrm{GL}_n(q)$  admits the eigenvalue $\lambda\in \mathbb{F}_{q}$, we denote the corresponding  eigenspace by $V_{\lambda}(x)$. When $\lambda=1$, we also use ${\bf C}_V(x)$ to denote $V_1(x)$, because $V_1(x)$ is indeed the centralizer in $V$ of $x$ in the semidirect product $V\rtimes \mathrm{GL}_n(q)$.

In our proofs, for discussing weak normal coverings of $\tilde G$, we use the action of $\tilde G\leq \hat{G}$ on the natural module $V$. Observe, that except when $G=\mathrm{PSL}_n(q)$ with $n\ge 3$, $G=\mathrm{Sp}_4(2^f)$ and $G=\mathrm{P}\Omega_8^+(q)$, the group $\mathrm{Aut}(\tilde G)$ acts on the natural module as a semilinear group and hence we may also discuss the action of the elements of $\mathrm{Aut}(\tilde G)$ on $V$. Extra care has to be taken when $G=\mathrm{PSL}_n(q)$ with $n\ge 3$, $G=\mathrm{Sp}_4(2^f)$ and $G=\mathrm{P}\Omega_8^+(q)$.
\smallskip

 Given $g\in \hat{G}$, when $V$ is a completely reducible $\mathbb{F}_{q^\delta}\langle g\rangle$-module, that is, $V$ decomposes into the direct sum of
irreducible $\mathbb{F}_{q^\delta}\langle g\rangle$-submodules $V_i$ of dimensions $d_i$, for $i\in \{1,\dots,k\}$,
we say that \textbf{\em the action of $g$ is of
type $d_1\oplus\cdots\oplus d_k.$} Note that $g$ acts irreducibly (that is, $V$ is an irreducible $\mathbb{F}_{q^\delta}\langle g\rangle$-module)
 if and only if its characteristic polynomial is irreducible.

Since we usually work with semisimple elements, we shall use some
facts about the maximal tori of the classical groups,
 in particular their orders and their action on
the natural module $V$, which can be found in~\cite{GLS3}. We also make use of the work of Wall~\cite{wall0,wall}, which broadly speaking describes the Jordan forms of the matrices belonging to a given classical group. Incidentally, we denote by $\order g$ the order of a group element $g$. Another good source of detailed information on the conjugacy classes in classical groups is~\cite{burness}.

Since we want to determine whether there exist maximal subgroups $H,\ K$
of $G$ such that any element of $G$ is $\mathrm{Aut}(G)$-conjugate to an element of $H$
or $K$, we adopt the systematic description of
the maximal subgroups of the classical groups given by Aschbacher
in~\cite{a}. Actually, since Aschbacher does not consider almost simple groups containing a graph automorphism, we use the extended, and slightly different, notation in~\cite{kl}. The maximal subgroups of $G$, or of $\hat{G}$, or of any almost simple group with socle $G$ are divided into nine families. The families $\mathcal{C}_i$, with $i\in \{1,\dots,8\}$, are defined in terms of the geometric properties of
their action on  $V$ and the main result of Aschbacher
states that any maximal subgroup belongs to
$\bigcup_{i=1}^{8}\mathcal{C}_i$ or to an additional family
$\mathcal{S}$, consisting of groups satisfying certain irreducibility conditions. For notation and structure theorems on these
maximal subgroups and other details of our investigation, we refer to
the book of Kleidman and Liebeck~\cite{kl} or to the book of Bray, Holt and Roney-Dougal~\cite{bhr} when the rank is small. As we mentioned above,~\cite{bhr,kl} take also into account almost simple groups having socle a simple classical group and containing a graph automorphism.

\subsection{Huppert's Theorem and Singer cycles}\label{Huppert}
In this section the vector space $V$ admits a non-degenerate form or quadratic
form of classical type which is preserved by the group $G$.  We frequently make use of a theorem of
Huppert~\cite[Satz 2]{hu}, which we apply to semisimple elements $s\in G$.
Such elements generate a subgroup acting completely reducibly on $V$, and by Huppert's Theorem, $V$
admits an orthogonal decomposition of the following form:
\begin{eqnarray}\label{eq:decomp}
V&=&V_+\perp V_-\perp ((V_{1,1}\oplus V_{1,1}')\perp \cdots \perp (V_{1,m_1}\oplus V_{1,m_1}'))\perp \cdots \\
&&\perp ((V_{r,1}\oplus V_{r,1}')\perp \cdots \perp (V_{r,m_r}\oplus V_{r,m_r}'))\nonumber\\
&&\perp (V_{r+1,1}\perp \cdots \perp V_{r+1,m_{r+1}})
\perp\cdots \perp (V_{t',1}\perp \cdots \perp V_{t',m_{t'}}), \nonumber
\end{eqnarray}
where $V_{+}$ and $V_{-}$ are the  eigenspaces of $s$ for the eigenvalues
$1$ and $-1$, of dimensions $d_+$ and $d_-$, respectively (note $V_\pm$ is non-degenerate if
$d_\pm>0$ and we set $d_-=0$ if $q$ is even),
and each  $V_{i,j}$ is an irreducible $\mathbb{F}_{q^\delta}\langle s\rangle$-submodule.
Moreover for $i=r+1,\ldots,t'$, $V_{i,j}$ is non-degenerate of dimension $2d_i/\delta$  and $s$
induces an element $y_{i,j}$ of order dividing $q^{d_i}+1$ on $V_{i,j}$ (in the unitary case $\delta=2$ and
the dimension $d_i$  is odd).

For $i=1,\ldots,r$, $V_{i,j}$ and $V_{i,j}'$ are totally isotropic  of dimension $d_i/\delta$ (here $d_i$ is even if $\delta=2$),
$V_{i,j}\oplus V_{i,j}'$ is non-degenerate, and $s$
induces an element $y_{i,j}$ of order dividing $q^{d_i}-1$ on $V_{i,j}$ while
inducing the adjoint representation $(y_{i,j}^{-1})^{T}$ on  $V_{i,j}'$, where
$x^{T}$ denotes the transpose of the matrix $x$.

 For our claims about the
orders of the $y_{ij}$ and for some standard facts on the structure of the maximal tori of the finite classical groups, we also refer to~\cite{BC,KS}.

Some special semisimple elements help our task of identifying the components of a weak normal $2$-covering of a classical group. Among them a main role is played by the  \textbf{\em Singer cycles}, that is, those elements in $\hat {G}$ or $\tilde G$ acting irreducibly  on $V$ and having maximum order. These elements were intensively studied  by Huppert in~\cite{hu} and by Hestens in~\cite{hest}.

We recall now some main facts about Singer cycles. Other information needed for the paper is given in Section \ref{further-Singer}. As usual, let $\mathbb{F}_{q^{\delta n}}$ be the field with $q^{\delta n}$ elements. Let  $a\in \mathbb{F}_{q^n}$ and consider the multiplication $\pi_a:\mathbb{F}_{q^n}\rightarrow \mathbb{F}_{q^n}$ defined by $\pi_a(x)=ax$ for all $x\in \mathbb{F}_{q^n}$. The maps $\pi_a$ with $a\neq 0$ form a group isomorphic to $\mathbb{F}_{q^n}^*$.
For every  $d\mid n$, the set $V:=\mathbb{F}_{q^n}$ can be interpreted as a vector space of dimension $n/d$ over the field $\mathbb{F}_{q^d}$ and the map $\pi_a$ is a $\mathbb{F}_{q^d}$-linear transformation of $V$. Thus, once a base is fixed, $\pi_a$ induces a matrix belonging to
$ \mathrm{GL}_{n/d}(q^d)$. If $\langle a\rangle=\mathbb{F}_{q^n}^*$, by \cite[II. Satz 7.3]{hu-book}, we have that
\begin{equation}\label{piadet}
\mathrm{det}_{\mathbb{F}_{q^d}}(\pi_a)=a^{\frac{q^n-1}{q^d-1} }.
\end{equation}
Only when necessary, or useful for certain purposes, we keep track of the field $\mathbb{F}_{q^d}$ under consideration in the notation of the determinant of $\pi_a$.

It is well-known that the Singer cycles  in $\mathrm{GL}_n(q)$ are, up to conjugacy, exactly the matrices representing $\pi_a$ as a $\mathbb{F}_{q}$-linear transformation of $V$, for $a$  a generator of $\mathbb{F}_{q^n}^*$.  Hence if $s\in \mathrm{GL}_n(q)$ is any Singer cycle, by \eqref{piadet}, we get $\mathrm{det}(s)=\mathrm{det}_{\mathbb{F}_{q}}(\pi_a)=a^{\frac{q^n-1}{q-1} }$, for some  $a\in \mathbb{F}_{q^n}^*$ such that $\langle a\rangle=\mathbb{F}_{q^n}^*$. Thus  $\mathrm{det}(s)$ has order $q-1$ and generates $\mathbb{F}_{q}^*.$

There is a useful construction of Singer cycles by suitable multiplications also for the other general classical groups.

For the unitary case, with $n$ odd,  let  $\langle a\rangle=\mathbb{F}_{q^{2n}}^*$. Then, by \cite[Theorem 5.2]{hest},  the multiplication $\pi_{a^{q^n-1}}$ seen as a $\mathbb{F}_{q^2}$-linear transformation of $\mathbb{F}_{q^{2n}}$ induces a Singer cycle $s$ for  $\mathrm{GU}_n(q)$. Note that, by \eqref{piadet},
$$\mathrm{det}(s)=\left(\mathrm{det}_{\mathbb{F}_{q^{2}}}(\pi_a)\right)^{q^n-1}=\left(a^{\frac{q^{2n}-1}{q^2-1} }\right)^{{q^n-1}}=a^{\frac{(q^{2n}-1)(q^n-1)}{q^2-1} }\in  \mathbb{F}_{q^{2}}^*$$
  belongs to the subgroup of order $q+1$ of  $\mathbb{F}_{q^{2}}^*$.

  For the symplectic and orthogonal case, with $n$ even, let $\langle a\rangle=\mathbb{F}_{q^n}^*$. Then, by \cite[Theorem 5.6]{hest}, the multiplication $\pi_{a^{q^{n/2}-1}}$ seen as a $\mathbb{F}_{q}$-linear transformation of $V$ is a Singer cycle $s$ both for  $\mathrm{Sp}_n(q)$ and for  $\mathrm{O}_{n}^-(q)$ with $$\mathrm{det}(s)=\left(a^{\frac{q^{n}-1}{q-1} }\right)^{{q^{n/2}-1}}=1.$$ In particular $s\in \mathrm{SO}_{n}^-(q)$.

In Table~\ref{singer-order} we report the general classical groups $\hat{G}$ admitting Singer cycles, the order of the Singer cycles in $\hat G$ and the order of the Singer cycles in $\tilde G$ for every $n\geq 1$  with $n$ odd in the unitary case and $n$ even in the symplectic and orthogonal case.
While the information in the second column comes directly from ~\cite{hu} and no exception for the existence arises,  the third column needs a little explanation and two truly exceptions arise as clarified in  Lemma \ref{action} and Proposition \ref{sing-ord}.

 \begin{table}[ht]
\begin{tabular}{c|c|c|c}
\toprule[1.5pt]
$\hat{G}$ & order of  Singer cycle in $\hat G$
   &  order of Singer cycle in $\tilde G$& Comments \\
\midrule[1.5pt]
 $\mathrm{GL}_n(q)$ &  $q^n-1$ & $\frac{q^n-1}{q-1}$&  \\
$\mathrm{GU}_{n}(q)$ &  $q^n+1$  & $\frac{q^n+1}{q+1}$ & $n$ odd\\
                    &            &                     & $\tilde{G}\ne \mathrm{SU}_3(2)$\\
 $\mathrm{Sp}_{n}(q)$ &  $q^{\frac{n}{2}}+1$ & $q^{\frac{n}{2}}+1$ & \\
$\mathrm{O}_{n}^-(q)$ &  $q^{\frac{n}{2}}+1$  & $\frac{q^{\frac{n}{2}}+1}{\gcd(2,q-1)}$& $\tilde G\neq  \Omega^-_2(3)$ \\
\bottomrule[1.5pt]
\end{tabular}
\caption{Singer cycles}\label{singer-order}
\end{table}

\subsection{Some arithmetics}
The following lemma presents some arithmetical facts  which will be very useful throughout the paper. This is possibly part of folklore but we could not find a complete reference.
\begin{lemma}\label{aritme}
Let $q\geq 2$ be an integer. For every $a$ and $b$  positive integers we have
 \begin{enumerate}
\item\label{eq:arithme1} $\gcd(q^{a}-1, q^{b}-1)=q^{\gcd(a,b)} -1$,
 \item\label{eq:arithme2}
 \[
 \gcd(q^a+1,q^b+1)=
 \begin{cases}
 q^{\gcd(a,b)}+1&\mathrm{if }\,\frac{a}{\gcd(a,b)}\,\, \mathrm{ and }\,\,\frac{b}{\gcd(a,b)}\,\, \mathrm{are\, odd},\\
 \gcd(2,q-1)&\mathrm{otherwise},
 \end{cases}
 \]
 \item\label{eq:arithme3}
 \[
 \gcd(q^a+1,q^b-1)=
 \begin{cases}
 q^{\gcd(a,b)}+1&\mathrm{if }\,\,\frac{b}{\gcd(a,b)}\,\, \mathrm{is\, even},\\
 \gcd(2,q-1)&\mathrm{otherwise}.
 \end{cases}
 \]
\end{enumerate}
\end{lemma}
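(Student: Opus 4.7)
The plan is to establish Part (1) by the Euclidean algorithm on exponents and to treat Parts (2) and (3) by a uniform analysis of the multiplicative order of $q$ modulo primes dividing the relevant gcd. For Part (1), writing $a = bk + r$ with $0 \le r < b$, one has $q^a - 1 = q^r(q^{bk} - 1) + (q^r - 1)$; since $q^b - 1 \mid q^{bk} - 1$ this gives $\gcd(q^a - 1, q^b - 1) = \gcd(q^r - 1, q^b - 1)$, and iterating mirrors the Euclidean algorithm on the exponents, yielding $q^{\gcd(a,b)} - 1$.

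For Parts (2) and (3) set $d := \gcd(a, b)$, $a = d a'$, $b = d b'$ with $\gcd(a', b') = 1$, and for a prime $p$ dividing the gcd let $e$ denote the multiplicative order of $q$ modulo $p$. In Part (2), $p \mid q^a + 1$ is equivalent to $e \mid 2a$ and $e \nmid a$, i.e., $v_2(e) = v_2(a) + 1$, and the analogous relation holds for $b$; hence $v_2(a) = v_2(b)$, which combined with $\gcd(a', b') = 1$ forces $a'$ and $b'$ both odd. When both are odd, $q^d + 1$ divides both $q^a + 1$ and $q^b + 1$, so $q^d + 1$ divides the gcd; conversely the gcd divides $\gcd(q^{2a} - 1, q^{2b} - 1) = q^{2d} - 1 = (q^d - 1)(q^d + 1)$, and the residue $q^a + 1 \equiv 2 \pmod{q^d - 1}$ shows $\gcd(q^a + 1, q^d - 1) \mid 2$, pinning the gcd down to exactly $q^d + 1$. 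In the complementary case no odd prime divides the gcd, and a direct $2$-adic check (distinguishing $q$ even from $q$ odd and using $v_2(q^n + 1) = v_2(q+1)$ for $n$ odd while $v_2(q^n + 1) = 1$ for $n$ even with $q$ odd) leaves exactly $\gcd(2, q-1)$.

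Part (3) runs in parallel: $p \mid q^a + 1$ and $p \mid q^b - 1$ together force $v_2(e) = v_2(a) + 1$ and $e \mid b$, whence $v_2(b) > v_2(a)$, equivalently $b/d$ even (and $a/d$ odd). Under this condition $q^d + 1 \mid q^b - 1$ (since $2d \mid b$) and $q^d + 1 \mid q^a + 1$ (since $a/d$ is odd), giving the lower bound; the upper bound $\gcd \mid q^{\gcd(2a, b)} - 1 = q^{2d} - 1$ combined with $\gcd(q^a + 1, q^d - 1) \mid 2$ forces equality. The remaining case yields $\gcd(2, q-1)$ by the same two-step $2$-adic argument. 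The only mildly delicate point of the whole proof is the $2$-adic bookkeeping in the ``otherwise'' branches, which is routine once the order-theoretic skeleton is in place.
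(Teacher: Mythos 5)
Your argument is correct in outline and takes a genuinely different route from the paper. The paper proves all three identities simultaneously by induction on $a+b$: the identities $q^a-1=q^{a-b}(q^b-1)+(q^{a-b}-1)$, $q^a+1=q^{a-b}(q^b+1)-(q^{a-b}-1)$ and $q^a+1=q^{a-b}(q^b-1)+(q^{a-b}+1)$ shuttle each gcd to a smaller instance of (1), (3) or (2) respectively, so the three statements are established together by a Euclidean-type descent. Your part (1) is essentially that same descent, but for (2) and (3) you argue directly and non-inductively: an odd prime dividing the gcd forces conditions on the multiplicative order of $q$ modulo that prime, which translate into the parity conditions on $a/\gcd(a,b)$ and $b/\gcd(a,b)$; the lower bound $q^{\gcd(a,b)}+1$ comes from the standard factorizations, and the upper bound from part (1) applied to $q^{2a}-1,\,q^{2b}-1$ (resp.\ $q^{2a}-1,\,q^b-1$). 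This buys a self-contained treatment of (2) and (3) that makes the source of the $\gcd(2,q-1)$ term transparent, at the price of the $2$-adic case analysis that the paper's intertwined induction avoids entirely.

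One step should be tightened. Writing $d:=\gcd(a,b)$, in the main branch of (2) (and likewise of (3)) the three facts you invoke --- $q^d+1$ divides the gcd, the gcd divides $(q^d-1)(q^d+1)$, and $\gcd(q^a+1,q^d-1)\mid 2$ --- pin the gcd down only to a divisor of $2(q^d+1)$ in the case $q\equiv 3\pmod 4$ with $d$ odd, since there $v_2(q^d-1)=1$ and the last condition places no constraint on the $2$-part. The spurious factor $2$ is excluded by exactly the valuation fact you already quote in the ``otherwise'' branch, namely $v_2(q^a+1)=v_2(q+1)=v_2(q^d+1)$ for $a$ and $d$ odd; adding that one sentence in the main branch closes the argument.
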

\begin{proof} The proof of all the formulas (1)-(3) is by induction on $a+b$.  If $a+b=2$, then we have $a=b=1$ and all the formulas are clear. Now, let $n$ be a positive integer with $n\ge 2$. Assume that (1)-(3) hold for every $a$ and $b$ with $a+b\leq n$ and we show that they hold also for every $a$ and $b$ with $a+b=n+1.$

(1) The formula is symmetric in $a$ and $b$ and trivially holds for $a=b$. Thus we assume that $a>b.$ Since
$$q^a-1=q^{a-b}(q^b-1)+(q^{a-b}-1),$$
by the inductive hypothesis  for (1) we deduce that
 $$\gcd(q^a-1,q^b-1)=\gcd(q^b-1,q^{a-b}-1)=q^{\gcd(a,b)} -1,$$
because $a+(b-a)=b<a+b$.

(2) The formula is symmetric in $a$ and $b$ and trivially holds for $a=b$. Thus we assume that $a>b.$ Since
$$q^a+1=q^{a-b}(q^b+1)-(q^{a-b}-1),$$
by the inductive hypothesis  for (3) we deduce that
\begin{align*}
\gcd(q^a+1,q^b+1)=\gcd(q^b+1,q^{a-b}-1)&=\begin{cases}
q^{\gcd(a,b-a)}+1&\textrm{when } \frac{a-b}{\gcd(a,a-b)} \textrm{ is even},\\
\gcd(2,q-1)&\textrm{otherwise}.
\end{cases}
 \end{align*}
Now it is enough to observe that $\gcd(a,a-b)=\gcd(a,b)$ and $(a-b)/\gcd(a,b-a)$ is even if and only if both $a/\gcd(a,b)$ and $b/\gcd(a,b)$ are odd.

(3) In this case, the formula is not symmetric in $a$ and $b$ and hence we need to distinguish three cases: $a=b$, $a<b$ and $a>b$. When $a=b$, the formula can be easily checked. Assume next that $a< b.$
Since
$$q^b-1=q^{b-a}(q^a+1)-(q^{b-a}+1),$$
by the inductive hypothesis  for (2) we deduce that
\begin{align*}
\gcd(q^a+1,q^b-1)=\gcd(q^a+1,q^{b-a}+1)&=\begin{cases}
q^{\gcd(a,b-a)}+1&\textrm{when } \frac{a}{\gcd(a,b-a)} \textrm{ and}\\
&  \frac{b-a}{\gcd(a,b-a)} \textrm{ are odd},\\
\gcd(2,q-1)&\textrm{otherwise}.
\end{cases}
 \end{align*}
Now it is enough to observe that $\gcd(a,a-b)=\gcd(a,b)$ and that $\frac{a}{\gcd(a,b)}$, $\frac{b-a}{\gcd(a,b)}$  are odd if and only if $\frac{b}{\gcd(a,b)}$ is even.

Assume finally that $a>b.$ Since
$$q^a+1=q^{a-b}(q^b-1)+(q^{a-b}+1),$$
by the inductive hypothesis  for (3) we deduce that
\begin{align*}
\gcd(q^a+1,q^b-1)=\gcd(q^b-1,q^{a-b}+1)&=\begin{cases}
q^{\gcd(a,b
)}+1&\textrm{when } \frac{b}{\gcd(a,b)} \textrm{ is even},\\
\gcd(2,q-1)&\textrm{otherwise}.\qedhere
\end{cases}
 \end{align*}
\end{proof}
Given a positive integer $n$, we denote by
$\pi(n)$
the set of prime divisors of $n$.

\subsection{Primitive prime divisors}
Given  a prime power $q$ and an integer $t\geq 2$, a prime $r$ is called a \textit{\textbf{primitive prime divisor}} of $q^t-1$ if $r$ divides $q^t-1$ and $r$  does not divide $q^i-1$, for each $i\in \{1,\ldots,t-1\}$. We let $P_t(q)$ denote \textbf{\textit{the set of primitive prime divisors}} of $q^t-1$.
From a celebrated theorem of Zsigmondy~\cite{zs}, the following hold
\begin{itemize}
 \item for $t\geq 3$, we have $P_t(q)\neq \varnothing$ with the only exception of $(t,q)=(6,2)$,
\item $P_2(q)\neq \varnothing$ with the only exception of $q$ being a Mersenne prime.
\end{itemize}
Note that if $r$ is a primitive prime divisor of $q^t-1$, then $q$
has order $t$ modulo $r$ and thus $t$ divides $r-1.$ Hence there exists a positive integer $k$ such that $r=kt+1.$
In
particular,
\begin{equation}\label{boundppd}
r \ge t+1,
\end{equation}
and when $t$ is odd
\begin{equation}\label{boundppd-odd}
r\ge 2t+1.
\end{equation}
We collect in the following lemma some elementary facts about $P_t(q)$.
\begin{lemma}\label{primitivi} Let $s,t,k,b$ be positive integers with $s,t\geq 2$ and let $q$ be a prime power. Then the following hold
\begin{enumerate}
\item\label{primitivi1}  $P_{kt}(q)\subseteq P_t(q^k),$
\item\label{primitivi2} if $t\neq s$, then
$P_t(q)\cap P_s(q)=\varnothing$,
\item\label{primitivi3} given $r\in P_t(q)$, $r$ divides
$q^b-1$ if and only if $t$ divides $b$. In particular, if $r$ divides $q^b+1,$
then $t$ divides $2b.$
\end{enumerate}
\end{lemma}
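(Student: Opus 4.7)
The plan is to handle the three parts in turn, using elementary divisibility and the definition of primitive prime divisor, together with Lemma~\ref{aritme}(\ref{eq:arithme1}).

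For part~(\ref{primitivi1}), I would take $r\in P_{kt}(q)$ and rewrite $q^{kt}-1=(q^k)^t-1$, so that $r$ divides $(q^k)^t-1$. To see that $r\notin P_i(q^k)$ for any $i<t$ would be impossible, note that $(q^k)^i-1=q^{ki}-1$ with $ki<kt$; by primitivity of $r$ for $q^t-1$, $r$ cannot divide $q^{ki}-1$. Hence $r$ is primitive for $(q^k)^t-1$, giving $r\in P_t(q^k)$.

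For part~(\ref{primitivi2}), without loss of generality assume $s<t$ and $r\in P_t(q)$. By definition of primitivity for $q^t-1$, $r$ does not divide $q^i-1$ for any $i<t$; in particular $r\nmid q^s-1$, so $r\notin P_s(q)$.

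Part~(\ref{primitivi3}) is the main content. The forward direction is immediate since $t\mid b$ implies $q^t-1\mid q^b-1$. For the converse, suppose $r$ divides $q^b-1$. Since $r$ also divides $q^t-1$, Lemma~\ref{aritme}(\ref{eq:arithme1}) yields
\[
r \mid \gcd(q^t-1,q^b-1)=q^{\gcd(t,b)}-1.
\]
Setting $d:=\gcd(t,b)\le t$ and using that $r$ is primitive for $q^t-1$, we must have $d\ge t$, whence $d=t$ and $t\mid b$. For the final statement, if $r\mid q^b+1$ then $r\mid (q^b-1)(q^b+1)=q^{2b}-1$, so by the first part of (\ref{primitivi3}) we obtain $t\mid 2b$.

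The only mildly subtle point is the converse in~(\ref{primitivi3}), where one must invoke Lemma~\ref{aritme}(\ref{eq:arithme1}) rather than attempting a direct argument; everything else is a direct unpacking of definitions. No obstacle is expected.
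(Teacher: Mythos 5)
Your proposal is correct: all three parts follow exactly as you argue, and the converse in~(\ref{primitivi3}) via Lemma~\ref{aritme}(\ref{eq:arithme1}) is sound, since $\gcd(t,b)<t$ would contradict primitivity. The paper states this lemma without proof (it records these as elementary facts, having just noted that $r\in P_t(q)$ means $q$ has multiplicative order $t$ modulo $r$ — an observation equivalent to your gcd step), so your argument is precisely the standard intended one.
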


\subsection{Further facts on Singer cycles}\label{further-Singer}
The concept of primitive prime divisor allows, among other things, to obtain an easy test to recognize the Singer cycles by checking only the  element orders.
We first treat the action of $\Omega_2^-(q)$ on its natural module $V$.
\begin{lemma}\label{action}The cyclic group
$\Omega_2^-(q)\cong C_{\frac{q+1}{(2,q-1)}}$
acts irreducibly on $V$ if $q\neq
3$  and scalarly if $q=3$. In particular, $\Omega_2^-(3)$ admits no Singer cycle.
\end{lemma}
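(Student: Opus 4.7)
The plan is to realize $\Omega_2^-(q)$ concretely inside $\mathbb{F}_{q^2}^*$ and then do a one-line check on whether its generator lies in $\mathbb{F}_q$.

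First I would identify the natural module $V$ with $\mathbb{F}_{q^2}$, viewed as a $2$-dimensional $\mathbb{F}_q$-vector space, endowed with the quadratic form $Q(x) = x^{q+1}$ (the norm $\mathbb{F}_{q^2}\to\mathbb{F}_q$); this is an anisotropic form of minus type. Under this identification, the subgroup of $\mathrm{SO}_2^-(q)$ consists precisely of the multiplications $\pi_\alpha:x\mapsto \alpha x$ by elements $\alpha\in\mathbb{F}_{q^2}^*$ of norm $1$, hence $\mathrm{SO}_2^-(q)\cong C_{q+1}$. Passing to the kernel of the spinor norm, $\Omega_2^-(q)$ is the unique subgroup of index $\gcd(2,q-1)$, so it is generated by $\pi_\alpha$ for some $\alpha\in\mathbb{F}_{q^2}^*$ of order $(q+1)/\gcd(2,q-1)$.

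Next I would observe the following dichotomy for the action of $\pi_\alpha$ on $V$: its minimal polynomial over $\mathbb{F}_q$ is $x-\alpha$ if $\alpha\in\mathbb{F}_q$, and is the irreducible polynomial $(x-\alpha)(x-\alpha^q)$ if $\alpha\notin\mathbb{F}_q$. Hence $\pi_\alpha$ acts as a scalar when $\alpha\in\mathbb{F}_q$ and irreducibly otherwise. The key algebraic point is that, if $\alpha\in\mathbb{F}_q$ satisfies $\alpha^{q+1}=1$, then $\alpha^2=\alpha\cdot\alpha^q=\alpha^{q+1}=1$, forcing $\alpha=\pm1$. Consequently, $\Omega_2^-(q)$ fails to act irreducibly if and only if $\Omega_2^-(q)\subseteq\{\pm1\}$, i.e.\ $|\Omega_2^-(q)|\le 2$.

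A direct order comparison $|\Omega_2^-(q)|=(q+1)/\gcd(2,q-1)$ then gives the outcome: for $q=3$ the group has order $2$ and consists of $\{\pm1\}$, acting scalarly; for every $q\ne3$ (including $q=2$, where $|\Omega_2^-(2)|=3$) the order exceeds $2$, so the generator cannot lie in $\mathbb{F}_q$ and the action is irreducible. Finally, a Singer cycle must by definition act irreducibly on $V$, but every non-identity element of $\Omega_2^-(3)=\{\pm1\}$ acts as a scalar; thus $\Omega_2^-(3)$ contains no Singer cycle. There is no serious obstacle here: the only delicate point is pinning down which subgroup of the norm-$1$ group is cut out by the spinor norm in order to get the order formula right.
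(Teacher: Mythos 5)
Your proof is correct, but it proceeds along a genuinely different route from the paper's. You realize $V$ as $\mathbb{F}_{q^2}$ with the norm form $Q(x)=x^{q+1}$, identify $\Omega_2^-(q)$ with a group of multiplications $\pi_\alpha$ by norm-one elements, and then the whole question collapses to whether the generator $\alpha$ lies in $\mathbb{F}_q$ (scalar) or not (irreducible, since its minimal polynomial $(x-\alpha)(x-\alpha^q)$ is then irreducible over $\mathbb{F}_q$); the computation $\alpha\in\mathbb{F}_q,\ \alpha^{q+1}=1\Rightarrow\alpha^2=1$ reduces everything to the order comparison $(q+1)/\gcd(2,q-1)\le 2$, which singles out $q=3$. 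The paper instead argues abstractly by contradiction: if $V$ were reducible, complete reducibility (the group order is coprime to $p$) would split $V$ into two $1$-dimensional submodules, so the cyclic group would be conjugate into the diagonal torus and its order $(q+1)/\gcd(2,q-1)$ would divide $q-1$, which by Lemma~\ref{aritme} forces $q=3$; the $q=3$ case is then checked directly since $\Omega_2^-(3)=\{\pm I\}$. The paper's argument is shorter and needs no explicit model; yours buys an explicit description of $\Omega_2^-(q)$ as norm-one multiplications, which is more informative. The one place in your write-up that deserves a touch more care is exactly the point you flag: identifying which subgroup of the norm-one group $C_{q+1}$ is $\Omega_2^-(q)$. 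For $q$ odd this is the spinor-norm kernel, the unique index-$2$ subgroup of $\mathrm{SO}_2^-(q)\cong C_{q+1}$; for $q$ even the spinor norm is not the relevant invariant (the paper uses the ``even number of reflections''/Dickson description), but since $\mathrm{O}_2^-(q)\cong D_{2(q+1)}$ with $q+1$ odd, its unique subgroup of index $2$ is the cyclic group of multiplications, so your conclusion that $\Omega_2^-(q)=\langle\pi_\alpha\rangle$ with $\order{\alpha}=(q+1)/\gcd(2,q-1)$ stands in both characteristics.
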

\begin{proof}
This follows from~\cite[Section~$2.10$]{kl}, but here we give a direct proof. If $V$ is not an irreducible $\mathbb{F}_q\Omega_2^-(q)$-module, then $V$ decomposes as the direct sum of two $1$-dimensional $\mathbb{F}_q\Omega_2^-(q)$-submodules. Hence $\Omega_2^-(q)$ is conjugate to  a cyclic subgroup of the group of $2\times 2$ diagonal matrices. Therefore $(q+1)/\gcd(2,q-1)=|\Omega_2^-(q)|$ divides $q-1$. By Lemma \ref{aritme}, this yields $q=3$. On the other hand, $\Omega_2^-(3)\cong C_{2}$ is the group of scalar matrices of $\mathrm{GL}_{2}(3)$ so that obviously $\Omega_2^-(3)$ cannot admit cyclic subgroups acting irreducibly on $V.$
\end{proof}
In particular, Lemma~\ref{action} explains the exception of $\Omega_2^-(3)$ in Table~\ref{singer-order}.

\begin{proposition}\label{sing-ord} Let $\hat G$ be a general classical group and $\tilde G$ be the corresponding  classical group. Then the following hold:
\begin{enumerate}
\item Assume that $\hat G$ or $\tilde G$ are as in {\rm Table \ref{singer-order}}. If $x$ belongs to $\hat G$ or to $\tilde G$ and has the order of a Singer cycle in $\hat G$ or $\tilde G$ respectively, then $x$ is a Singer cycle.
\item $\tilde G$ admits a Singer cycle if and only if it appears in {\rm Table \ref{singer-order}}.
\item Let $\hat G=\mathrm{GU}_{n}(q),$ with $n$ odd and $\lambda\in \mathbb{F}_{q^2}^*$ be an element of order $q+1$. Then there exists a Singer cycle $s$ of $\hat G$ such that $\mathrm{det}(s)=\lambda.$
\item Let $\hat G=\mathrm{GL}_{n}(q),$  and $\lambda\in \mathbb{F}_{q}^*$ be an element of order $q-1$. Then there exists a Singer cycle $s$ of $\hat G$ such that $\mathrm{det}(s)=\lambda.$
\end{enumerate}
\end{proposition}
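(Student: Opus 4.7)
The plan is to prove the four claims in sequence. Parts (1) and (2) rest on Zsigmondy's theorem and primitive prime divisors, while parts (3) and (4) are direct calculations with the multiplicative maps $\pi_a$ introduced just before the statement.

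For (1), let $x \in \hat G$ (or $\tilde G$) have the same order $m$ as a Singer cycle of $\hat G$ (respectively $\tilde G$); the aim is to show that $\langle x\rangle$ acts irreducibly on $V$. The strategy is to pick a primitive prime divisor $r$ of the ``defining'' $q^t - 1$ from the Singer-cycle construction (so $r \in P_n(q)$ in the linear case, $r \in P_{2n}(q) \subseteq P_n(q^2)$ in the unitary case by Lemma~\ref{primitivi}~\eqref{primitivi1}, and analogously in the symplectic and $\mathrm{O}_n^-(q)$ cases), and to work with $x^{m/r}$, which has order $r$. If $\langle x\rangle$ preserved a proper $\mathbb{F}_{q^\delta}$-subspace of $V$ of dimension $d<n$, then $r$ would divide $|\mathrm{GL}_d(q^\delta)\times \mathrm{GL}_{n-d}(q^\delta)|$ and hence some $q^{\delta i}-1$ with $i<n$, contradicting primitivity. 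The Zsigmondy exceptions $(n,q)=(2,\text{Mersenne})$, $(n,q)=(6,2)$, and their analogues in the other families are handled by direct inspection: for example, in $\mathrm{GL}_6(2)$ no reducible element has order $63 = 7 \cdot 9$, since no $\mathrm{GL}_d(2)$ with $d\le 5$ contains an element of order $9$. The argument transfers to $\tilde G$ because the Singer-cycle order in $\tilde G$ still contains the same primitive prime divisors of $q^t - 1$.

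For (2), I must show that the classical groups absent from Table~\ref{singer-order} admit no Singer cycles. Lemma~\ref{action} covers $\Omega_2^-(3)$, and $\mathrm{SU}_3(2)$ (of order $216$) is a small finite check. For $\mathrm{O}_n^+(q)$ with $n$ even, an irreducible cyclic subgroup of $\mathrm{GL}_n(q)$ sits inside $\mathbb{F}_{q^n}^* \cong C_{q^n-1}$, and a direct analysis of the non-degenerate symmetric bilinear forms fixed by $\mathbb{F}_{q^n}^*$ rules out the plus type --- equivalently, such an irreducible subgroup cannot stabilise either of the two families of maximal totally singular subspaces. For odd-dimensional $\Omega_n(q)$, the characteristic polynomial of an irreducible element has odd degree and the invariant symmetric form then forces a fixed non-isotropic vector, contradicting irreducibility.

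For (3), take $a$ a generator of $\mathbb{F}_{q^{2n}}^*$ and set $s = \pi_{a^{q^n-1}}$, the $\mathbb{F}_{q^2}$-linear multiplication on $\mathbb{F}_{q^{2n}}$, which is a Singer cycle of $\mathrm{GU}_n(q)$. Formula~\eqref{piadet} gives
\[
\det(s) = a^{(q^{2n}-1)(q^n-1)/(q^2-1)},
\]
and Lemma~\ref{aritme}~\eqref{eq:arithme1} with $n$ odd shows that $\mu := \det(s)$ has order $(q^2-1)/(q-1) = q+1$. The remaining Singer cycles in $\langle s\rangle$ are the powers $s^k$ with $\gcd(k, q^n+1) = 1$, and they have determinant $\mu^k$. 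Given any $\lambda$ of order $q+1$, write $\lambda = \mu^{k_0}$ with $\gcd(k_0, q+1) = 1$ and find $k \equiv k_0 \pmod{q+1}$ with $\gcd(k, q^n+1) = 1$: primes $p \mid q+1$ are avoided automatically since $\gcd(k_0,q+1)=1$, and each prime $p \mid (q^n+1)/(q+1)$ with $p \nmid q+1$ forbids only a single residue class modulo $p$, so the Chinese remainder theorem produces such a $k$. Part (4) is proved identically, using $\det(\pi_a) = a^{(q^n-1)/(q-1)}$ of order $q-1$ on $\mathbb{F}_{q^n}$ viewed as an $\mathbb{F}_q$-vector space. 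The main obstacle will be part (2): ruling out Singer cycles in the orthogonal groups of plus type and of odd dimension must be extracted carefully from the structure of irreducible cyclic subgroups of $\mathrm{GL}_n(q)$ and the forms they preserve, whereas the remaining parts are essentially arithmetic manipulations once the Zsigmondy exceptions have been enumerated.
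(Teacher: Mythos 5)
Your parts (1), (3) and (4) are essentially sound. Part (1) is the same primitive-prime-divisor argument as the paper's (the paper routes the reducibility contradiction through Schur's lemma on an irreducible submodule, so that the whole order of $x$ divides $q^{\delta m}-1$, and it then works through every Zsigmondy-exceptional case explicitly, proving along the way that $\mathrm{SU}_3(2)$ admits no Singer cycle; you only gesture at the "analogues by direct inspection"). Parts (3)--(4) are correct and in fact take a lighter route than the paper: where the paper invokes Dirichlet's theorem to find a prime $k'\equiv k\pmod{q+1}$ exceeding $q^n+1$, your Chinese-remainder choice of an exponent congruent to $k_0$ modulo $q+1$ and coprime to $q^n+1$ does the same job with elementary means.

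The genuine gap is in part (2). First, you only plan the "only if" half of an equivalence: one must also show that every $\tilde G$ in Table~\ref{singer-order} really contains an element of the stated order, which for $\mathrm{SL}_n(q)$, $\mathrm{SU}_n(q)$ and $\Omega_n^-(q)$ requires exhibiting $\pi_{a^{q-1}}$, $\pi_{a^{(q^n-1)(q+1)}}$ and $\pi_{a^{2(q^{n/2}-1)}}$ and checking their orders (and, in the orthogonal case, membership in $\Omega$); nothing in your plan produces these, and combined with (1) this is exactly how the paper settles existence. Second, your list of groups to exclude omits $\mathrm{SU}_n(q)$ with $n$ even, which is absent from the table and must likewise be shown to contain no irreducible cyclic subgroup. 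Third, your exclusion of plus type is wrong as written: the relevant object is the set of forms invariant under the cyclic subgroup $\langle x\rangle$ itself, not under the full Singer torus $\mathbb{F}_{q^n}^*$ (the full torus preserves no non-degenerate symmetric form at all, whereas its subgroup of order $q^{n/2}+1$ does, and of minus type), and "cannot stabilise either family of maximal totally singular subspaces" is not equivalent to lying outside $\mathrm{O}_n^+(q)$, since elements of $\mathrm{O}_n^+(q)$ need not stabilise any maximal totally singular subspace. The correct statement is that an irreducible $x$ preserving a non-degenerate form must be conjugate to its inverse-transpose, which forces $\order x$ to divide $q^{n/2}+1$ and the invariant quadratic form to be of minus type (and, in odd dimension, forces an eigenvalue $\pm1$, killing irreducibility); the paper avoids all of this by quoting Huppert's classification in~\cite{hu} of the classical groups containing irreducible cyclic subgroups, which is the cleanest way to repair your part (2).
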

\begin{proof}  Let $V$ be the natural $n$-dimensional $\mathbb{F}_{q^\delta}\hat{G}$-module. If $n=1$ all the statements are trivial. So assume that  $n\geq 2$.

$(1)$ Regard $V$ as a $\mathbb{F}_{q^\delta}\langle x\rangle$-module and assume, by contradiction, that $V$ is not irreducible. Then there exists an irreducible $\mathbb{F}_{q^\delta}\langle x\rangle$-submodule $W$ of $V$ and  $m=\mathrm{dim}_{\mathbb{F}_{q^\delta}}W$ is a positive integer less than $n$. By Schur Lemma, we have that $\langle x\rangle\leq C_{q^{\delta m}-1}$ and hence
\begin{equation}\label{div}
\order s=\order x\mid q^{\delta m}-1.
\end{equation}

If $\hat G=\mathrm{GL}_n(q)$, then \eqref{div} becomes $q^n-1\mid q^{m}-1$, against $q^{m}-1<q^{n}-1.$

 If $\tilde G=\mathrm{SL}_n(q)$, then \eqref{div} means $\frac{q^n-1}{q-1}\mid q^{m}-1$. Assume first that $(n,q)\neq(6,2)$ and $(n,q)\neq (2,p)$ for all the Mersenne primes $p.$ Then there exists $r\in P_n(q)$. Since $r\nmid q-1$, we have that $r\mid \frac{q^n-1}{q-1}$ and $\frac{q^n-1}{q-1}\mid q^{m}-1$; however, putting together these divisibility conditions we obtain a contradiction. If $(n,q)=(6,2)$, then
$\mathrm{SL}_n(q)=\mathrm{SL}_6(2)=\mathrm{GL}_6(2)$ an we have seen before that the result holds. Let next $(n,q)= (2,p)$ for some Mersenne prime $p.$ Then $m=1$ and  \eqref{div} becomes $p+1\mid p-1$, which is obviously impossible.

If $\hat G=\mathrm{GU}_n(q)$, for some $n\geq 3$ odd, then \eqref{div} becomes $q^n+1\mid q^{2m}-1.$ Assume first that $(n,q)\neq(3,2)$. Then $(2n,q)\neq (6,2)$ and $2n\neq 2$ so that there exists $r\in P_{2n}(q)$. Since $r\mid q^n+1$ and $q^n+1\mid q^{2m}-1$ with $2m<2n$, we get a contradiction.
When $(n,q)=(3,2)$ we have  $\hat G=\mathrm{GU}_3(2)\leq \mathrm{GL}_3(4)$, $\order s=9$ and \eqref{div} becomes $9\mid 4^{m}-1,$ which is impossible for all $1\leq m\leq 2.$

If $\tilde G=\mathrm{SU}_n(q)$, for some $n\geq 3$ odd, then  \eqref{div} becomes $\frac{q^n+1}{q+1}\mid q^{2m}-1.$ Assume first $(n,q)\neq (3,2)$. Pick $r\in P_{2n}(q)\neq\varnothing$ and note that $r\nmid q^2-1$ so that $r\nmid q+1$. Since $r\mid q^n+1$, we also have that $r\mid \frac{q^n+1}{q+1}\mid q^{2m}-1,$ a contradiction.
When $(n,q)=(3,2)$ we have  $\tilde G=\mathrm{SU}_3(2)$ and $\order s=3$. 
This is a truly exception because $\mathrm{SU}_3(2)$ does not admit Singer cycles. Assume, on the contrary, that $y$ is a Singer cycle of $\mathrm{SU}_3(2)$ and let $v$ be an arbitrary non-zero vector in $V=\mathbb{F}_{4}^3$. Set $w:=v+vy+vy^2$. Observe that $wy=vy+vy^2+vy^3=w$. If $w\ne 0$, this shows that $y$ stabilizes the $1$-dimensional subspace of $V$ spanned by $w$, contradicting the fact that $y$ acts irreducibly. If $w=0$, then $vy^2=v+vy\in \langle v,vy\rangle$ and hence $y$ stabilizes the non-zero proper subspace $\langle v ,vy\rangle$ of $V$, contradicting the fact that $y$ acts irreducibly. Therefore, $\mathrm{SU}_3(2)$ has no Singer cycles and hence it is one of the exceptions in Table~\ref{singer-order}. \footnote{Singer elements in $\mathrm{SU}_3(q)$ are discussed in detail in ~\cite[Section $2$]{be}. However, there is a slight ambiguity in the discussion on $\mathrm{SU}_3(2)$ there and hence, rather than referencing directly to~\cite{be}, we have included our own argument.}

Let next $\hat G=\tilde G=\mathrm{Sp}_{n}(q)$. If $n=2$, then  \eqref{div} becomes $q+1\mid q-1$, which is absurd. Let then
 $n\geq 4$ be even.  Then \eqref{div} becomes $q^{n/2}+1\mid q^{m}-1.$ Assume first that $(n,q)\neq(6,2)$. Then, since $n\neq 2$, there exists $r\in P_{n}(q)$ and $r\mid q^{n/2}+1\mid q^{m}-1,$ a contradiction. Let now $(n,q)=(6,2)$, so that $\tilde G=\mathrm{Sp}_{6}(2)$. Then \eqref{div} becomes $9\mid 2^{m}-1,$ which is impossible for all $1\leq m\leq 5.$

Let now $\hat G=\mathrm{O}_{n}^-(q)$ with $n$ even. Then \eqref{div} becomes $q^{n/2}+1\mid q^{m}-1$ and the same arithmetic arguments used for the symplectic case apply.

Let finally consider $\tilde G=\mathrm{\Omega}_{n}^-(q)$ with $n$ even and $(n,q)\neq (2,3)$ as required in {\rm Table \ref{singer-order}}.
The possibility $n=2$ cannot arise because when $n=2$ we have $\order x=\frac{q+1}{\gcd(2,q-1)}$ so that $\langle x\rangle=\mathrm{\Omega}_{2}^-(q)$ and, by Lemma \ref{action}, $V$ is irreducible.
Assume then $n\geq 4$. Since we have observed that $9\mid 2^{m}-1$ is impossible for all $1\leq m\leq 5,$ we are left with $(n,q)\neq (6,2)$. Thus $P_{n}(q)\neq\varnothing$ and we argue as in the symplectic case because, by \eqref{boundppd}, any $r\in P_{n}(q)$ is odd and hence divides $\frac{q^{n/2}+1}{\gcd(2,q-1)}$.
\vspace{2mm}

$(2)$ Assume that $\tilde G$ appears in {\rm Table \ref{singer-order}}. By $(1)$ it is enough to exhibit an element of $\tilde G$ having the order of a Singer cycle of $\tilde G$. This is immediately done by suitable multiplications. Indeed, let $\langle a\rangle=\mathbb{F}_{q^{\delta n}}^*$. By what explained in Section \ref{Huppert} we have that $\pi_{a^{q-1}}\in \mathrm{SL}_n(q)$ and has order $\frac{q^n-1}{q-1}$; for $n$ odd, $\pi_{a^{(q^n-1)(q+1)}}\in \mathrm{SU}_n(q)$ and has order $\frac{q^n+1}{q+1}$; $\pi_{a^{q^{n/2}-1}}\in \mathrm{Sp}_n(q)$ and has order $q^{n/2}+1$;  $\pi_{a^{2(q^{n/2}-1)}}\in \mathrm{\Omega}^-_n(q)$ and has order $\frac{q^{\frac{n}{2}}+1}{\gcd(2,q-1)}$.

Conversely, assume that $\tilde G$ admits a Singer cycle $s$. Then $\tilde G\neq  \mathrm{\Omega}^-_2(3)$ because $ \mathrm{\Omega}^-_2(3)$ does not admit Singer cycles, by Lemma \ref{action}. Moreover, $\tilde G\neq  \mathrm{SU}_3(2)$, because above we have proved that $ \mathrm{SU}_3(2)$ does not admit Singer cycles. 
Observe that $\langle s\rangle$ is a cyclic subgroup of $\hat G$ acting irreducibly on $V$ and thus, by  the results in \cite{hu},  the possible groups $\hat G$ are those appearing in {\rm Table \ref{singer-order}}. But then also $\tilde G$ appear in {\rm Table \ref{singer-order}}.
\vspace{2mm}

$(3)$ Let $a\in\mathbb{F}_{q^{2n}}^*$ be  such that $\langle a\rangle=\mathbb{F}_{q^{2n}}^*$ and look at $\pi_a$ as a $\mathbb{F}_{q^2}$-linear transformation of $\mathbb{F}_{q^{2n}}$. By \eqref{piadet},  we have $\mathrm{det}_{\mathbb{F}_{q^2}}(\pi_a)=a^{\frac{q^{2n}-1}{q^2-1} }\in \mathbb{F}_{q^{2}}^* $ so that  $\order {\mathrm{det}_{\mathbb{F}_{q^2}}(\pi_a)}=q^2-1.$ By Section \ref{Huppert}, we know that $s:=\pi_{a^{q^n-1}}$ is a Singer cycle for $\mathrm{GU}_n(q)$. Let $\mu:=\mathrm{det}(s)=(\mathrm{det}_{\mathbb{F}_{q^2}}(\pi_a))^{q^n-1}\in \mathbb{F}_{q^2}^*$ and observe that, by Lemma \ref{aritme}, we have
 $$\order{\mu}=\frac{q^2-1}{\gcd(q^2-1, q^n-1)}=\frac{q^2-1}{q-1}=q+1,$$
so that $\mu$ generates the subgroup $U$ of $\mathbb{F}_{q^2}^*$ of size $q+1$. Since $\lambda$ also generates $U$, there exists an integer $k$ with $1\leq k\leq q+1$ and $\gcd(k, q+1)=1$ such that $\lambda=\mu^k.$ Consider now the arithmetic progression $k_{\ell}:= k+\ell(q+1)$. By Dirichlet's Theorem there exists $\ell'$ such that
 $k':=k+\ell'(q+1)$ is a prime number and $k'>q^n+1.$ Define then $s':=s^{k'}$. Since $k'$ is coprime with $\order s=q^n+1$, we have
  $\langle s'\rangle=\langle s\rangle$ and hence $s'$ is a Singer cycle for $\mathrm{GU}_n(q)$. Moreover $\mathrm{det}(s')=\mu^{k'}=\mu^{k}=\lambda.$
\vspace{2mm}

  $(4)$ The proof is similar to that of $(3)$  and thus omitted.
\vspace{2mm}
\end{proof}

\subsection{The $ppd$-elements} We introduce now  the fundamental facts about \textbf{\textit{primitive prime divisor elements}} (a.k.a. $ppd$-elements) developed by
Guralnick, Penttila, Praeger and Saxl in~\cite{gpps}.
 Throughout this paper, that theory is the main tool in finding the maximal
  subgroups containing elements with order divisible by certain ``large''
  primes in classical groups.

An element $x\in \mathrm{GL}_n(q)$ is said to be a $ppd(n,q;e)$\textit{\textbf{-element}}, for $e$ integer such that $n/2 < e \le n$,
if $\order x$ is divisible by some prime $r\in P_e(q)$. Note that this notion implicitly requires that $P_e(q)\neq \varnothing.$

 A subgroup $M$ of $\mathrm{GL}_n(q)$ containing a $ppd(n,q;e)$-element is said
a \textbf{\textit{$ppd(n,q;e)$-group}}. The main result in~\cite{gpps} gives
a satisfactory description of the $ppd(n,q;e)$-groups. There the $ppd(n,q;e)$-groups are divided into nine classes, described through the Examples~$2.1$--$2.9$, and it is shown that every $ppd(n,q;e)$-group belongs to one of these classes. We are particularly interested in maximal $ppd(n,q;e)$-groups. In particular, with a careful analysis of the  Examples~$2.1$--$2.9$ from~\cite{gpps}, we may deduce the following result.

\begin{theorem}[{{\em\cite[Main Theorem]{gpps}}}] \label{main}
Let $q=p^f$ be a prime power and let $n$ be an integer with $n \ge2$. Then  a subgroup $M$ of $\mathrm{GL}_n(q)$ is a $ppd(n,q;e)$-group if and only if $M$ is one of the groups in Examples~$2.1$--$2.9$ in~\cite{gpps}. Moreover,
\begin{enumerate}
\item\label{main1} $M\not \in \mathcal{C}_4\cup
\mathcal{C}_7;$
\item\label{main2}if $M\in\mathcal{S}$, then $M$ is one of
the groups described in Examples~$2.6$--$2.9$;
\item\label{main3} if $e \le n-3$ and $M$ is  one of
the groups described in Examples~$2.5$,~$2.6$~$b)$, $2.6$~$c)$,~$2.7$,~
$2.8$,~$2.9$, then using the notation in~\cite{gpps}
one of the followings holds:
\begin{itemize}
\item $n=7$, $e=4$, $M'\cong\mathrm{Sp}_6(2)$ and $p>2$,
\item $n=9$, $e=6$, $M'\cong\mathrm{SL}_3(q)$, $f$ is even and $\sqrt{q}\equiv 1\pmod 3$,
\item $n=9$, $e=6$, $M'\cong\mathrm{PSL}_3(q)$, $f$ is even and $\sqrt{q}\not\equiv 1\pmod 3$,
\item $n=s+1$, $e=s-2$, $M'\cong\mathrm{PSL}_2(s)$, $s\ge 7$, $s=2^c$ with $c$ a prime.
\end{itemize}
\end{enumerate}
Moreover, when $e\le n-4$, $M$ does not lie in $\mathcal{C}_6$ and if $M$ does lie in $\mathcal{S}$, then $M$ appears in Example~$2.6$~$a)$ of~\cite{gpps}.
\end{theorem}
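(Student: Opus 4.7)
Since the statement is explicitly labelled as the \emph{Main Theorem} of Guralnick, Penttila, Praeger and Saxl~\cite{gpps}, the proof essentially amounts to quoting their result and extracting the refined book-keeping in items (1)--(3). I would organise the argument as follows.

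First, the equivalence ``$M$ is a $ppd(n,q;e)$-group if and only if $M$ is one of the groups in Examples~2.1--2.9 of~\cite{gpps}'' is the verbatim content of the Main Theorem of~\cite{gpps}; the converse direction is built into the statements of the nine examples themselves, each of which exhibits an element of order divisible by a suitable primitive prime divisor. For item~(1), I would give a short direct proof independent of the case-list. If $M\in\mathcal{C}_4$, then $M$ stabilises a tensor decomposition $V=V_1\otimes V_2$ with $1<\dim V_i =: n_i$ and $n_1n_2=n$, so every element of $M$ has order dividing a product of numbers of the form $q^{n_i}-1$; by Lemma~\ref{primitivi}\eqref{primitivi3} any $r\in P_e(q)$ with $e>n/2\ge\max\{n_1,n_2\}$ fails to divide such a number. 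The analogous argument, using the tensor-induced structure, eliminates $\mathcal{C}_7$. Item~(2) is then immediate from inspection of Examples~2.1--2.9: only Examples~2.6--2.9 list subgroups belonging to $\mathcal{S}$.

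The bulk of the work lies in item~(3) and its strengthening to $e\le n-4$. Here I would walk through Examples~2.5, 2.6(b), 2.6(c), 2.7, 2.8, 2.9 one at a time, use the dimension and degree constraints built into each example, and compute the maximum value of $e$ for which a primitive prime divisor $r\in P_e(q)$ can divide the order of an element of $M$. In Example~2.5 ($M$ of $\mathcal{C}_6$ type), $n$ is a prime power and $M/\mathbf{Z}(M)$ involves a symplectic group whose element orders force $e\ge n-2$. In Examples~2.6--2.9 ($M$ almost simple) the possible representation dimensions of the socle pin $e$ very close to $n$, and a case-by-case computation using the element orders of $\mathrm{Sp}_6(2)$ in dimension~$7$, of $\mathrm{SL}_3(q)$ and $\mathrm{PSL}_3(q)$ in dimension~$9$, and of $\mathrm{PSL}_2(s)$ in dimension $s+1$, leaves exactly the four sporadic configurations listed in the theorem. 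Strengthening the hypothesis to $e\le n-4$ then discards the $n=7$ and $n=9$ exceptions, rules out $\mathcal{C}_6$ entirely (since Example~2.5 never produces $e<n-3$), and among the $\mathcal{S}$-examples leaves only the infinite family in Example~2.6(a), namely the $\mathrm{PSL}_2(s)$ series.

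The main obstacle is precisely this bookkeeping in item~(3): Examples~2.5--2.9 are parametrised by several arithmetic conditions and have multiple subcases, and each must be checked against the tight constraint $e\le n-3$ (respectively $e\le n-4$) using the explicit element orders of the relevant almost simple or $\mathcal{C}_6$-type groups. The remainder of the proof is essentially a citation of the Main Theorem of~\cite{gpps} combined with elementary divisibility arithmetic from Lemmas~\ref{aritme} and~\ref{primitivi}.
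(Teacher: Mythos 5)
Your overall strategy coincides with the paper's: the paper offers no independent proof of Theorem~\ref{main}, but simply quotes the Main Theorem of~\cite{gpps} and extracts parts~\eqref{main1}--\eqref{main3} and the final clause by a careful inspection of Examples~2.1--2.9 there. Your plan (cite the Main Theorem, give the short ppd-divisibility argument for $\mathcal{C}_4$ and $\mathcal{C}_7$, then run the case-by-case bookkeeping against the constraints $e\le n-3$ and $e\le n-4$) is exactly this, so in structure there is nothing to object to.

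There is, however, a concrete error in your handling of the final clause. Example~2.6~a) of~\cite{gpps} is \emph{not} the $\mathrm{PSL}_2(s)$ series: it is the alternating-group example coming from the fully deleted permutation module, i.e.\ $A_m\le M\le S_m\times {\bf Z}(\mathrm{GL}_n(q))$ with $m\in\{n+1,n+2\}$, and this is precisely how the paper uses it afterwards (see the $\mathcal{S}$-cases in the proofs of Lemmas~\ref{bertrand-sp},~\ref{bertrand-odd} and~\ref{max-y}, where these $S_m$-type subgroups are the ones that survive and must be eliminated by Landau-type bounds on element orders of symmetric groups). The $\mathrm{PSL}_2(s)$ configuration in item~\eqref{main3} has $e=s-2=n-3$, so under the stronger hypothesis $e\le n-4$ it is discarded together with the $n=7$ and $n=9$ exceptions; what remains in class $\mathcal{S}$ is exactly the family of Example~2.6~a), which item~\eqref{main3} leaves completely unconstrained (as the remark immediately following Theorem~\ref{main} emphasises). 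Your write-up inverts this: it presents the $\mathrm{PSL}_2(s)$ family as the surviving Example~2.6~a) case and implicitly discards the alternating family, so as written your justification of the last sentence of the theorem is wrong, even though the intended conclusion would follow from the same bookkeeping carried out with the correct labelling of the examples in~\cite{gpps}.
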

Observe that, when $e\le n-3$ and $M$ is one of the groups described in Examples~$2.6 a)$, Theorem~\ref{main} does not give any additional information. This lack of additional information will play little role in our proofs later.

In order to exclude the Aschbacher class $\mathcal{C}_5$, it is useful to introduce a further notion. A $ppd(n,q;e)$-element is called a
\textbf{\textit{strong $ppd(n,q;e)$-element}} if its order is divisible by every
$r\in P_e(q)$. Of course also this notion implicitly requires that $P_e(q)\neq \varnothing.$

\begin{lemma}\label{no-c5}
Let $\tilde G\leq \mathrm{GL}_n( q^{\delta})$ be a classical group and  let $n/2 < e \le n$. Then the following hold
\begin{enumerate}
\item If $M\in \mathcal{C}_5$ is a maximal subgroup of $\tilde G$, then there exists a prime $k\mid \delta f$ such that $M\leq \mathrm{GL}_n( q^{\delta/k})=\mathrm{GL}_n( p^{(\delta f)/k}).$
\item Let $M\in \mathcal{C}_5$ be a maximal subgroup of $\tilde G$ with $M\leq \mathrm{GL}_n( q^{\delta/k}),$ where $k\mid \delta f$ is a prime. Then $M$ contains no element of $\tilde G$ having order divisible by some prime in $ P_{ke}(q^{\delta/k})$.
Moreover, if $P_{ke}(q^{\delta/k})\neq \varnothing$, then $M$ contains no strong $ppd(n,q;e)$-element.
\item If $n\geq 6$, then a maximal subgroup of $\tilde G$ in the class $\mathcal{C}_5$ never contains a strong
$ppd(n,q;e)$-element.
\end{enumerate}
\end{lemma}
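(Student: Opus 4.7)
The plan is to treat the three parts in order: (1) is purely structural, (2) reduces to a direct divisibility computation on element orders, and (3) follows by combining (2) with Zsigmondy's theorem.

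For (1), I would invoke the description of the $\mathcal{C}_5$-maximal subgroups of classical groups from~\cite[Chapter~4, \S4.5]{kl}. Each such $M$ is the normalizer in $\tilde G$ of a classical group of the same type defined over a proper subfield $\mathbb{F}_{q_0}$ of $\mathbb{F}_{q^\delta}$, and maximality forces $[\mathbb{F}_{q^\delta}:\mathbb{F}_{q_0}]$ to be a prime $k$ dividing $\delta f$; in particular $q_0=q^{\delta/k}$ and $M\le\mathrm{GL}_n(q_0)$.

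For the first claim in~(2), write $q_0:=q^{\delta/k}$, so $q_0^k=q^\delta$. Any $g\in M\le\mathrm{GL}_n(q_0)$ is similar over $\mathbb{F}_{q_0}$ to a direct sum of blocks attached to the irreducible $\mathbb{F}_{q_0}\langle g\rangle$-constituents of $\mathbb{F}_{q_0}^n$; letting $d_1,\dots,d_s$ be their $\mathbb{F}_{q_0}$-dimensions, one has $\sum_j d_j\le n$ and $\order g$ divides $\mathrm{lcm}(q_0^{d_1}-1,\dots,q_0^{d_s}-1)$. For $r\in P_{ke}(q_0)$, by definition $r\mid q_0^d-1$ iff $ke\mid d$; since $k\ge 2$ and $e>n/2$ we have $ke>n\ge d_j$, whence $r\nmid \order g$. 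For the second claim, Lemma~\ref{primitivi}(1) yields $P_{ke}(q_0)\subseteq P_e(q_0^k)=P_e(q^\delta)$, and a strong $ppd(n,q;e)$-element of $\tilde G\le\mathrm{GL}_n(q^\delta)$ has order divisible by every prime in $P_e(q^\delta)$, hence by every $r\in P_{ke}(q_0)$; if $P_{ke}(q_0)\ne\varnothing$, this contradicts the first claim.

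For~(3), combining (1) with the previous paragraph we are reduced to checking $P_{ke}(q_0)\ne\varnothing$. From $k\ge 2$, $n\ge 6$ and $e>n/2$ we get $ke\ge 2e\ge n+1\ge 7$, so Zsigmondy excludes the only possible failure $(ke,q_0)=(6,2)$, and (2) applies. The main obstacle is really in part~(1): one must read off the $\mathcal{C}_5$-classification in~\cite{kl} carefully enough to confirm that every $\mathcal{C}_5$-maximal subgroup of $\tilde G$ sits inside $\mathrm{GL}_n$ over a prime-index subfield (particularly in the unitary and orthogonal cases, where several subfield-type constructions appear), and to fix the convention that, since $\tilde G\le\mathrm{GL}_n(q^\delta)$, a strong $ppd(n,q;e)$-element of $\tilde G$ is understood with respect to $P_e(q^\delta)$, so that Lemma~\ref{primitivi}(1) bridges the two $ppd$-sets cleanly; once this is in place, the arithmetic steps in~(2) and~(3) are immediate.
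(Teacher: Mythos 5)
Your proposal is correct and follows the paper's own proof essentially step for step: part (1) by reading off the Kleidman--Liebeck description of $\mathcal{C}_5$-subgroups, part (2) by showing that a prime $r\in P_{ke}(q^{\delta/k})$ dividing an element order in $M$ would force $ke\le n$, contradicting $ke\ge 2e>n$, and then handling strong $ppd$-elements via $P_{ke}(q^{\delta/k})\subseteq P_e(q^{\delta})$ (with exactly the convention you fix), and part (3) by Zsigmondy since $ke\ge 2e\ge n+1\ge 7$. The only cosmetic difference is that in (2) you argue at the level of a single element's constituent decomposition (where, strictly, the divisibility of $\order g$ by $\mathrm{lcm}(q_0^{d_1}-1,\dots,q_0^{d_s}-1)$ holds only for the $p'$-part of $g$, which is harmless here since $r\ne p$), whereas the paper argues more crudely via $r\mid |M|\mid|\mathrm{GL}_n(q^{\delta/k})|$; both yield the same contradiction.
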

\begin{proof} $(1)$ This follows immediately by the definition of the maximal subgroups $M$ of $\tilde G$ belonging to class $\mathcal{C}_5$ as given in~\cite[Chapters~3 and~4]{kl}. Note that in the unitary case $\delta=2$ and there are three cases to consider: one with $k$ odd and two with $k=2.$

  $(2)$  Assume, by contradiction, that  there exist $y\in \tilde G$ and  $r\in P_{ke}(q^{\delta/k})$ such that $r\mid \order y$ and $y\in M.$ Then we have
  \begin{equation*}\label{divisibility}
  p\neq r\mid |M|\mid |\mathrm{GL}_n( q^{\delta/k})|.
  \end{equation*}
 Hence $r\mid (q^{\delta/k})^i-1$ for some $i\in \{1,\dots,n\}$. By the definition of primitive prime divisor, this implies $ke\leq n$. Then we get $n\ge ke\geq 2e>n$, a contradiction.

  Assume next that $M$ contains a strong $ppd(n,q;e)$-element $y\in \tilde G$. By Lemma~\ref{primitivi}~\eqref{primitivi1}, we
have $P_{ke}(q^{\delta/k})\subseteq P_{e}(q^{\delta})$. Since every prime in  $P_{e}(q^{\delta})$ divides $\order y$ and since $P_{ke}(q^{\delta/k})\neq \varnothing$, there exists $r\in P_{ke}(q^{\delta/k})$ such that $r\mid \order y,$ in contrast to what previously shown.

 $(3)$ When $n\geq 6$, we have $ke\geq 2e\geq n+1\geq 7>6$ and thus, by Zsigmondy theorem, $P_{ke}(q^{\delta/k})\neq \varnothing$. Now apply $(2)$.
\end{proof}

\subsection{Bertrand elements}\label{bernum}
We use $ppd(d,q;e)$-elements only for some
special values $t$ of $e$, which we now describe.
Recall that, for every $n\geq 7,$  Bertrand's postulate guarantees the existence of a prime number $t$ such that $n/2< t \leq n-2.$
When  $n \ge 8,$ it follows from Bertrand's postulate that there exists a prime $t$ such that $n/2< t \leq n-3$, see~\cite[page~273]{HW}. The relevance of $n-3$  here is related to $n-3$ appearing in  Theorem~\ref{main} part~\eqref{main3}, but this will be more clear when we will embark in the proof of Theorems~\ref{main theorem} and~\ref{main theorem1}.
We call such a prime $t$ a  \textbf{\textit{Bertrand number}} for $n$. We extend the notion of Bertrand numbers also when $n=5,6,7$ by setting $t:=3, 4, 5$, respectively. We do not define Bertrand numbers when $n<5$. Note that if $t$ is a Bertrand number for $n\geq 5$, then $t\nmid n$ and $n/2< t \leq n-2$.

In Table~\ref{2}, we define  an element  $z$ in some classical groups depending on a certain Bertrand number $t$. We refer to such a $z$ as a \textbf{\textit{Bertrand element}}. In this preliminary section, we only give a very rough description of $z$. More details are given inside each section dealing with a certain family of classical groups.
In particular, the existence of the Bertrand elements and their action on $V$ will be specified in due course.
Strictly speaking, in the second column of Table~\ref{2} we only define the order of $z$.  In the third column of Table~\ref{2} we define the natural number with respect to which $t$ is a Bertrand number.  Apart from a few degenerate cases, Bertrand elements are $ppd(n,q;e)$-elements and we define in the fourth column of Table~\ref{2} the value of $e$. The degenerate cases will be discussed later.
 \begin{table}[ht]
\begin{tabular}{c|c|c|c}
\toprule[1.5pt]
$G$ & order of $z$ & $t$ & $ e$ \\
\midrule[1.5pt]
 $\mathrm{SU}_n(q),\ n\neq 6$ & $\frac{( q^t +1)(
q^{n-t} +(-1)^n)}{ \gcd(q^t+1,q^{n-t}+(-1)^n)}$ & $t$ Bertrand number for $n$ & $t$\\[2mm]
$\mathrm{Sp}_{n}(q)$  & $\frac{(q^t + 1)(q^{n/2-t} + 1)}{\gcd(q^t +
1,q^{n/2-t} + 1)}$ & $t$ Bertrand number for $\frac{n}{2}$ & $2t$ \\[2mm]
$\Omega_{n}( q)$ & $\frac{(q^t + 1)(q^{(n-1)/2-t}+  1)}{\gcd(q^t +
1,q^{(n-1)/2-t} + 1)}$ & $t$ Bertrand number for $\frac{n-1}{2}$ & $2t$ \\
\bottomrule[1.5pt]
\end{tabular}
\caption{Bertrand elements $z$} \label{2}\end{table}

\subsection{The spinor norm and the Bertrand elements}\label{spinor-ber-sec}
We recall the definition and properties of the spinor norm  following the description in~\cite[pages~29,~30]{kl}. Let $\varepsilon\in \{+,-\}$, let $\ell$ be an even positive integer  and $q$ be odd. Consider the special orthogonal group $\mathrm{SO}_\ell^\varepsilon(q)$ with respect to a non-degenerate quadratic form $Q:\mathbb{F}_q^\ell\to\mathbb{F}_q$ on $\mathbb{F}_q^\ell$. We let $$\langle\cdot,\cdot\rangle:\mathbb{F}_q^\ell\times\mathbb{F}_q^\ell\to\mathbb{F}_q$$ be the non-degenerate symmetric form polarizing to $Q$. This is clearly well-defined and unique because $q$ is odd. Since $q$ is odd, we also infer that the multiplicative group $\mathbb{F}_q^\ast$ of $\mathbb{F}_q$ has even order $q-1$ and hence the subgroup $$(\mathbb{F}_q^\ast)^2:=\{x^2\mid x\in \mathbb{F}_q^\ast\}$$ of $\mathbb{F}_q^\ast$ consisting of the square elements of the field $\mathbb{F}_q$  different from $0$ has index $2$ in $\mathbb{F}_q^\ast$. In other words,
$\mathbb{F}_q^\ast/(\mathbb{F}_q^\ast)^2$ is a cyclic group of order $2$. Let $\lambda\in\mathbb{F}_q^\ast\setminus(\mathbb{F}_q^\ast)^2$. Then $\langle \lambda (\mathbb{F}_q^\ast)^2\rangle=\mathbb{F}_q^\ast/(\mathbb{F}_q^\ast)^2$ and $\lambda^2\in (\mathbb{F}_q^\ast)^2$.

From~\cite[Proposition~2.5.6]{kl}, every element of $g\in \mathrm{SO}_\ell^\varepsilon(q)$ can be written as the product of an even number of reflections, that is, $g=r_{v_1}\cdots r_{v_{2\kappa}}$ for some reflections $r_{v_1},\ldots,r_{v_{2\kappa}}$ and for some non-negative integer $\kappa$. Here, following the notation in~\cite{kl}, we are denoting with $r_v$ the reflection with respect to the axis $v\in \mathbb{F}_q^\ell$. Observe that the reflection $$x\mapsto xr_v=x-\frac{\langle x,v\rangle}{Q(v)}v$$ is well-defined only when $v$ is a non-degenerate vector with respect to $Q$. The spinor norm is the mapping
$$\theta:\mathrm{SO}_\ell^\varepsilon(q)\to \mathbb{F}_q^\ast/(\mathbb{F}_q^\ast)^2$$
defined by
$$g=r_{v_1}\cdots r_{v_{2\kappa}}\mapsto\prod_{i=1}^{2\kappa}\langle v_i,v_i\rangle\pmod{(\mathbb{F}_q^\ast)^2}.$$
Observe that since  $v_i$ is non-degenerate for each $i$, we have $\prod_{i=1}^{2\kappa}\langle v_i,v_i\rangle\ne 0$ and hence  $\prod_{i=1}^{2\kappa}\langle v_i,v_i\rangle\in\mathbb{F}_q^\ast$. There are various things remarkable about this mapping. First, $\theta$ is well defined, that is, the value of $\theta(g)$ does not depend on the way we express $g$ as a product of reflections.
Second, $\theta$ is a group homomorphism from the special orthogonal group $\mathrm{SO}_\ell^\varepsilon(q)$ to the cyclic group $\mathbb{F}_q^\ast/(\mathbb{F}_q^\ast)^2$ of order $2$. Third, $\theta$ is surjective. Fourth, the kernel of $\theta$ is a subgroup of $\mathrm{SO}_\ell^\varepsilon(q)$ having index $2$ and it is indeed our player $\Omega_\ell^\varepsilon(q)$. All of these facts can be found in~\cite[pages~29,~30]{kl} and in the bibliography therein.

For every $q$, since $|\mathrm{SO}_\ell^\varepsilon(q):\Omega_\ell^\varepsilon(q)|$ is a power of $2$, we deduce that $\Omega_\ell^\varepsilon(q)$ contains all the odd order elements of $\mathrm{SO}_\ell^\varepsilon(q)$.

The situation when $q$ is even is much simpler. Indeed, except for $\mathrm{O}_4^+(2)$, when when $q$ is even,  $\Omega_\ell^\varepsilon(q)$ is the subgroup of $\mathrm{SO}_\ell^\varepsilon(q)$ consisting of products of an even number of reflections. For more details see~\cite[page~30]{kl} and~\cite[Proposition~2.5.9]{kl}.

We now present a result which, among other things, will help us in proving the existence of Bertrand elements for the orthogonal groups throughout the paper.

\begin{proposition}\label{spinor-ber-prop} Let $n,m$ be even positive integers with $2\leq m\leq n/2$ and consider the embedding of $\mathrm{SO}^-_{m}(q)\perp \mathrm{SO}^-_{n-m}(q)$ in $\mathrm{SO}^{+}_{n}(q)$. 
Let $s_m\in \mathrm{SO}^-_{m}(q)$ and $s_{n-m}\in \mathrm{SO}^-_{n-m}(q)$ be Singer cycles and define $x:=s_m\oplus s_{n-m}\in\mathrm{SO}_n^+(q).$ Then $x$ has action type $m\oplus(n-m)$, 
$${\bf o}(x)=\frac{(q^{\frac{m}{2}}+1)(q^{\frac{n}{2}-\frac{m}{2}}+1)}{\gcd(q^{\frac{m}{2}}+1,q^{\frac{n}{2}-\frac{m}{2}}+1)}$$
and $x\in\Omega_n^+(q)$. 
\end{proposition}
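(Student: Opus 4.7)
The three claims will be proved in order, with the containment in $\Omega_n^+(q)$ requiring a split into the cases $q$ even and $q$ odd.

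The action type is immediate from the construction: the ambient space decomposes orthogonally as $V=V_m\perp V_{n-m}$, where $V_m$ and $V_{n-m}$ are the natural modules for $\mathrm{SO}_m^-(q)$ and $\mathrm{SO}_{n-m}^-(q)$ respectively. Since $s_m$ is a Singer cycle of $\mathrm{SO}_m^-(q)$, it acts irreducibly on $V_m$, and similarly $s_{n-m}$ acts irreducibly on $V_{n-m}$. Hence $x$ has action type $m\oplus(n-m)$. For the order, I would note that $s_m$ and $s_{n-m}$ commute (they act on complementary summands), so ${\bf o}(x)$ is the least common multiple of ${\bf o}(s_m)$ and ${\bf o}(s_{n-m})$. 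By Table~\ref{singer-order} (combined with the explicit construction via $\pi_{a^{q^{m/2}-1}}$ in Section~\ref{Huppert}, which shows $s_m\in\mathrm{SO}_m^-(q)$ with ${\bf o}(s_m)=q^{m/2}+1$), we obtain ${\bf o}(s_m)=q^{m/2}+1$ and ${\bf o}(s_{n-m})=q^{n/2-m/2}+1$. The standard identity $\mathrm{lcm}(a,b)=ab/\gcd(a,b)$ gives the stated formula.

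For the containment $x\in\Omega_n^+(q)$, assume first $q$ is even. Then ${\bf o}(s_m)=q^{m/2}+1$ is odd, and since $|\mathrm{SO}_m^-(q):\Omega_m^-(q)|=2$, the element $s_m$ lies in $\Omega_m^-(q)$; the same holds for $s_{n-m}$. The Dickson invariant $D:\mathrm{SO}_n^+(q)\to\mathbb{Z}/2\mathbb{Z}$, whose kernel is $\Omega_n^+(q)$, is additive on orthogonal direct sums, so $D(x)=D(s_m)+D(s_{n-m})=0$ and $x\in\Omega_n^+(q)$.

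Assume now $q$ is odd, so that the spinor norm $\theta:\mathrm{SO}_n^+(q)\to\mathbb{F}_q^\ast/(\mathbb{F}_q^\ast)^2$ of Section~\ref{spinor-ber-sec} is available. The key observation is that, by Table~\ref{singer-order}, the maximum order of an element of $\Omega_m^-(q)$ acting irreducibly is $(q^{m/2}+1)/2$, strictly smaller than ${\bf o}(s_m)=q^{m/2}+1$; hence $s_m\notin\Omega_m^-(q)$ and $\theta_m(s_m)$ is the non-trivial element of $\mathbb{F}_q^\ast/(\mathbb{F}_q^\ast)^2$. The same reasoning gives $\theta_{n-m}(s_{n-m})\ne 1$. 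Since $\theta$ is multiplicative on orthogonal direct sums (each $s_m, s_{n-m}$ can be written as a product of reflections supported on $V_m$, $V_{n-m}$ respectively, and the reflecting vectors retain their norms in the ambient form), we have
\[
\theta(x)=\theta_m(s_m)\,\theta_{n-m}(s_{n-m})=1
\]
in the group of order $2$. Therefore $x\in\ker\theta=\Omega_n^+(q)$.

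The main obstacle I anticipate is the odd-$q$ step: one must combine the order comparison from Table~\ref{singer-order} (to rule out $s_m\in\Omega_m^-(q)$) with the multiplicativity of the spinor norm under the block-diagonal embedding $\mathrm{SO}_m^-(q)\perp\mathrm{SO}_{n-m}^-(q)\hookrightarrow\mathrm{SO}_n^+(q)$. Modulo that, the proof is a clean case analysis on the parity of $q$.
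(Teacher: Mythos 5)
Your proof is correct and follows essentially the same route as the paper: the order and action type are read off from the construction of the Singer cycles, the even-$q$ case is handled via odd order (the paper simply notes $x$ has odd order and $|\mathrm{SO}_n^+(q):\Omega_n^+(q)|=2$, while your detour through the Dickson invariant of the two blocks amounts to the same thing), and the odd-$q$ case uses exactly the paper's argument — Table~\ref{singer-order} to see that a Singer cycle of $\mathrm{SO}_\ell^-(q)$ never lies in $\Omega_\ell^-(q)$, hence has non-trivial spinor norm, combined with multiplicativity of the spinor norm on the orthogonal decomposition to get $\theta(x)=1$. No gaps; this matches the paper's proof.
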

\begin{proof}
We have observed in Section~\ref{Huppert} that $\mathrm{SO}^-_{m}(q)$ and $\mathrm{SO}^-_{n-m}(q)$ contain Singer cycles $s_m$ and $s_{n-m}$ having order $q^{m/2}+1$ and $q^{n/2-m/2}+1$, respectively. Thus ${\bf o}(x)$ and its action immediately follows and we only need to show that $x\in\Omega_n^+(q)$.
From~\cite[Table~2.1C]{kl}, we have $|\mathrm{SO}_n^+(q):\Omega_n^+(q)|=2$.  If $q$ is even, we immediately get $x\in \Omega_n^+(q)$, because ${\bf o}(x)$ is odd.  Suppose next that  $q$ is odd.
Let $\theta:\mathrm{SO}_n^+(q)\to \mathbb{F}_q^\ast/(\mathbb{F}_q^\ast)^2$  be the spinor norm of $\mathrm{SO}_n^+(q)$. Observe that the spinor norms of $\mathrm{SO}_m^-(q)$ and $\mathrm{SO}_{n-m}^-(q)$ are simply the restrictions of the spinor norm $\theta$ to $\mathrm{SO}_m^-(q)$ and $\mathrm{SO}_{n-m}^-(q)$, respectively, because the quadratic forms defining $\mathrm{SO}_m^-(q)$ and $\mathrm{SO}_{n-m}^-(q)$ are simply the restrictions of the quadratic form defining $\mathrm{SO}_n^+(q)$.

From Table~\ref{singer-order},
we see that, for every even positive integer $\ell$, when $q$ is odd,  a Singer cycle in $\mathrm{O}_\ell^-(q)$ is never contained in $\Omega_\ell^-(q)$. Hence
$\Omega_m^-(q)$ and $\Omega_{n-m}^-(q)$ do not contain $s_m$ and $s_{n-m}$ and hence the spinor norms of $s_m$ and $s_{n-m}$ are not the identity, that is,
$$\theta(s_m)=\lambda(\mathbb{F}_q^\ast)^2\quad \hbox{and}\quad \theta(s_{n-m})=\lambda(\mathbb{F}_q^\ast)^2,$$
where $\lambda \in \mathbb{F}_q^\ast$ and  $\langle\lambda (\mathbb{F}_q^\ast)^2\rangle=\mathbb{F}_q^\ast/(\mathbb{F}_q^\ast)^2.$

It follows that $$\theta(x)=\theta(s_m\oplus s_{n-m})=\theta(s_m)\theta(s_{n-m})=\lambda^2(\mathbb{F}_q^\ast)^2=(\mathbb{F}_q^\ast)^2.$$
This shows that $\theta(x)$ is the identity element of  $\mathbb{F}_q^\ast/(\mathbb{F}_q^\ast)^2$ and hence $x\in \Omega_n^+(q)$.
\end{proof}
\smallskip

Sometimes, throughout the paper, we will freely use some of the facts in Section~\ref{preliminaries} with no further reference.

\section{Linear groups}\label{sec:linear}
In Section~\ref{sub:preliminaries}, we have explained  that, by \cite{bl}, we know that $\gamma_w(\mathrm{PSL}_n(q))\geq3$ for $n\geq 5$ and $\gamma_w(\mathrm{PSL}_n(q))\leq2$ for $2\leq n\le 4$.
Thus we only need to deal with projective special linear groups $\mathrm{PSL}_n(q)$ with $2\leq n\le 4$, where  $q\notin\{2,3\}$ when $n=2,$ with the purpose to exclude $\gamma_w(\mathrm{PSL}_n(q))=1$ and to describe the maximal components of a weak normal $2$-covering.

Since $\mathrm{PSL}_2(4)\cong\mathrm{PSL}_2(5)\cong A_5$ and $\mathrm{PSL}_2(9)\cong A_6$ and since we have already discussed alternating groups in Section~\ref{sub:preliminaries}, we may suppose that, when $n=2$, we have $q\notin\{2,3,4,5,9\}$. 

Recall that, given a subgroup $X$ of $\mathrm{PSL}_n(q)$, we denote by $\tilde{X}$ its preimage under the natural projection $\mathrm{SL}_n(q)\to \mathrm{PSL}_n(q)$. Moreover, since the automorphism group of $\mathrm{Aut}(\mathrm{SL}_n(q))$ projects onto the automorphism group of $\mathrm{PSL}_n(q)$, in the proofs of the lemmas in this section we may work with the linear group $\mathrm{SL}_n(q)$ (see Section \ref{cove-classic-simple}).

For some of the facts that we use in the proof of Lemmas~\ref{dimension2linear},~\ref{dimension3linear} and~\ref{dimension4linear}, we could simply refer to some results already known in the literature. However, we have decided to include full arguments of these lemmas because this will serve as a warm up for later.

\begin{lemma}\label{dimension2linear}
Let $q$ be a prime power with $q\notin\{2,3,4,5,9\}$. Then  $\gamma_w(\mathrm{PSL}_2(q))=2$. Moreover, if $H$ and $K$ are the two maximal components  of a weak normal $2$-covering of $\mathrm{PSL}_2(q)$, then up to $\mathrm{Aut}(\mathrm{PSL}_2(q))$-conjugacy  one of the following holds
\begin{enumerate}
\item\label{eq:sl1} $q\ne 7$, $H\cong D_{2(q+1)/\gcd(2,q-1)}$ is in class $\mathcal{C}_3$ and $K$ is a parabolic subgroup in class $\mathcal{C}_1$,
\item\label{eq:sl2} $q$ is even, $H\cong D_{2(q+1)}$ is in class $\mathcal{C}_3$ and $K\cong D_{2(q-1)}$ is in class $\mathcal{C}_2$, 
\item\label{eq:sl4}$q=7$, $H\cong S_4$ is in  class $\mathcal{C}_6$ and $K$ is a parabolic subgroup in  class $\mathcal{C}_1$.
\end{enumerate}
Moreover, each of these weak normal $2$-coverings gives rise to a single normal covering. 

\end{lemma}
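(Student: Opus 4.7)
The plan is to work in $\mathrm{SL}_2(q)$ (then transfer to $\mathrm{PSL}_2(q)$ via the bijection described in Section~\ref{cove-classic-simple}) and to rely on Dickson's classical list of maximal subgroups of $\mathrm{PSL}_2(q)$: Borel (class $\mathcal{C}_1$), split and non-split dihedrals $D_{2(q\mp1)/d}$ (classes $\mathcal{C}_2$ and $\mathcal{C}_3$), subfield subgroups $\mathrm{PSL}_2(q_0)$ or $\mathrm{PGL}_2(q_0)$ (class $\mathcal{C}_5$), and the exceptional $A_4,S_4,A_5$ (class $\mathcal{S}$), with $d=\gcd(2,q-1)$. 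First I would record that the non-trivial elements of $\mathrm{PSL}_2(q)$ split into three disjoint families: unipotents of order $p$; split semisimple elements whose order divides $(q-1)/d$; non-split semisimple elements whose order divides $(q+1)/d$. Moreover, since $\mathrm{PGL}_2(q)$ fuses the two classes of elements of each given semisimple order when $q$ is odd, the $\mathrm{Aut}(G)$-classes of elements are parametrised simply by element order. The lower bound $\gamma_w(\mathrm{PSL}_2(q))\ge 2$ I would take from Saxl's theorem quoted in the introduction.

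For sufficiency, I verify directly that each pair in (i)--(iii) covers every element order of $\mathrm{PSL}_2(q)$ up to $\mathrm{Aut}(G)$-conjugacy. In case (i) the dihedral $H\cong D_{2(q+1)/d}$ meets every non-split semisimple class and contains the (unique up to $\mathrm{Aut}$) involution for $q$ odd, while the Borel $K$ contains every unipotent and the full split torus. In case (ii), with $q$ even and $d=1$, both $D_{2(q+1)}$ and $D_{2(q-1)}$ together exhaust the semisimple elements, and involutions in either dihedral are unipotent in characteristic $2$. In case (iii) the element orders of $\mathrm{PSL}_2(7)$ are $\{1,2,3,4,7\}$, and the union of the orders occurring in $S_4$ and in a Borel is $\{1,2,3,4\}\cup\{1,3,7\}$.

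For necessity, suppose $\{H,K\}$ is a weak normal $2$-covering with maximal components. Some component, say $H$, must contain an element $x$ of order $(q+1)/d$; picking a primitive prime divisor $r$ of $q^2-1$ (which exists unless $q$ is a Mersenne prime, a case handled separately), the maximal subgroups of $\mathrm{PSL}_2(q)$ containing an element of order divisible by $r$ lie in $\mathcal{C}_3$, $\mathcal{C}_5$ or $\mathcal{S}$; Lemma~\ref{no-c5} eliminates $\mathcal{C}_5$ outside of small exceptions, while $A_4,S_4,A_5$ can contain such elements only when $(q+1)/d\le 5$. Thus for generic $q$ we are forced to have $H\cong D_{2(q+1)/d}$. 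A parallel argument applied to an element of order $(q-1)/d$ on $K$ produces either a Borel or the split dihedral $D_{2(q-1)/d}$; the split dihedral fails to contain unipotents except when $q$ is even, giving (i) for $q$ odd and both (i) and (ii) for $q$ even. The small $q$ left over after removing the excluded values $\{2,3,4,5,9\}$, namely $q\in\{7,8,11,13,16,17,\dots\}$, are treated individually, and the only extra solution that survives comes from the fusion of element orders in $\mathrm{PSL}_2(7)$ yielding (iii).

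To pass from weak to normal coverings, I apply the $h,k$ framework of Section~\ref{sec:newwen}. In each of the three rows, at least one component is preserved by $\mathrm{Out}(\mathrm{PSL}_2(q))$ up to $G$-conjugacy: the Borel and both dihedrals form single $G$-conjugacy classes stable under $\mathrm{PGL}_2$ and the field automorphisms, giving $h=k=1$ in (i) and (ii); for (iii) the class of $S_4$ in $\mathrm{PSL}_2(7)$ splits into two $G$-classes fused by the diagonal automorphism but the Borel is stable, and a direct check confirms that each fixed pair $\{H,K\}$ is already a normal $2$-covering of $G$. The main technical obstacle is to handle uniformly the small-$q$ range, where subfield and exceptional subgroups could in principle substitute for $H$ or $K$; this reduces to a finite, explicit verification using Zsigmondy's theorem together with the order bounds on $A_4,S_4,A_5$ and on the subfield $\mathrm{PSL}_2(q_0)$.
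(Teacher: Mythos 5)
Your argument is correct in substance and follows essentially the same route as the paper's proof: force the component containing an element of order $(q+1)/\gcd(2,q-1)$ to be the non-split dihedral, force the other component, via an element of order $(q-1)/\gcd(2,q-1)$, to be a Borel subgroup or the split dihedral, eliminate the split dihedral for odd $q$ using unipotent elements, verify directly that the surviving pairs are genuine (normal) coverings, and treat the small leftover values of $q$ separately. The differences are mild: you re-derive the constraint on the first component from Zsigmondy's theorem, Lemma~\ref{no-c5} and order bounds on $A_4$, $S_4$, $A_5$, where the paper simply reads off from the tables in~\cite{bhr} which maximal subgroups contain a Singer cycle, and you handle $q\in\{7,11\}$ by hand where the paper delegates them to a computer. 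One side remark you should delete or repair: it is not true that the $\mathrm{Aut}(\mathrm{PSL}_2(q))$-classes of semisimple elements are parametrised by element order (for instance $\mathrm{PSL}_2(11)$ has two $\mathrm{Aut}$-classes of elements of order $5$, since inversion is the only fusion available in a torus normalizer and there are no field automorphisms); this is harmless here because each of your candidate components contains a full maximal torus of the relevant type and the Borel contains a full Sylow $p$-subgroup, so coverage can, and should, be checked at the level of tori and unipotent classes rather than element orders, which is what your verification in effect does. Finally, note that your own $h,k$ computation in case~\eqref{eq:sl4} gives $h=2$, $k=1$, hence two $G$-classes of normal $2$-coverings for $\mathrm{PSL}_2(7)$, in agreement with the entry in Table~\ref{0Linear}; keep that in mind when matching your count against the final sentence of the lemma.
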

\begin{proof}
We use the list of the maximal subgroups of $\mathrm{PSL}_2(q)$ in~\cite[Tables~8.1,~8.2]{bhr} and the notation therein.

When $q\in \{ 7, 11\}$, the proof follows with a computer computation. For the rest of the proof, we suppose that $q\ge 13$. Let $\mu$  be a weak normal covering of $\mathrm{PSL}_2(q)$  of minimum size and maximal components and $\tilde \mu$ be the corresponding weak normal covering of $\mathrm{SL}_2(q)$. Let $x$ be an element of $\mathrm{SL}_2(q)$ having order $q+1$, that is, a Singer cycle for $\mathrm{SL}_2(q)$. 
From~\cite[Tables~8.1,~8.2]{bhr} and from the fact that $q\notin\{5,7,9\}$, we see that the only maximal subgroups of $\mathrm{SL}_2(q)$ containing a Singer cycle form  a unique $\mathrm{SL}_2(q)$-conjugacy class of subgroups  given by the maximal subgroups in class $\mathcal{C}_3$. Therefore, without loss of generality, we may suppose that $x\in \tilde H$ for some $\tilde H\in \tilde \mu$ belonging to class $\mathcal{C}_3$.

From~\cite[Table~$8.1$,~$8.2$]{bhr}, we deduce $\tilde H\cong Q_{2(q+1)}$ when $q$ is odd and  $\tilde H\cong D_{2(q+1)}$ when $q$ is even. In both cases, $\tilde H\cong (q+1).2$.  Let $y$ be an element of $\mathrm{SL}_2(q)$ having order $q-1$. Using the fact  that $q\notin\{5,7,9,11\}$, another case-by-case analysis on the maximal subgroups of $\mathrm{SL}_2(q)$ in~\cite[Tables~8.1,~8.2]{bhr} reveals that there are two $\mathrm{Aut}(\mathrm{SL}_2(q))$-conjugacy classes of maximal subgroups containing an  $\mathrm{Aut}(\mathrm{SL}_2(q))$-conjugate of $y$. Namely,
\begin{itemize}
\item[i)] $E_q:(q-1)\in \mathcal{C}_1$,
\item[ii)]  $Q_{2(q-1)}\in \mathcal{C}_2$, when $q$ is odd, 
\item[iii)]  $D_{2(q-1)}\in \mathcal{C}_2$, when $q$ is even.
\end{itemize}
As $\tilde H$ is not in this list, we deduce that there exists a further component $\tilde K\in \tilde \mu.$ Thus $\gamma_w(\mathrm{PSL}_2(q))=2$ and $\tilde K$ must be in one of the above two possibilities.
Suppose first that $\tilde K=Q_{2(q-1)}$ when $q$ is odd. Now, $\tilde H$ and $\tilde K$ do not contain unipotent elements and hence this case does not give rise to a weak normal $2$-covering of $\mathrm{SL}_2(q)$. Therefore, when $q$ is odd, the only possibility is $\tilde K=E_q:(q-1).$ Hence we obtain as candidates for the weak normal $2$-coverings  of  $\mathrm{PSL}_2(q)$ those in~\eqref{eq:sl1} and~\eqref{eq:sl2}. 

We show that actually, in both cases, $\tilde{\mu}=\{\tilde H, \tilde K\}$ is a normal $2$-covering of $\mathrm{SL}_2(q)$. Assume first that case ~\eqref{eq:sl1} holds. Then $\tilde H$ contains a Singer cycle and $\tilde K$ is parabolic. Let $z\in \mathrm{SL}_2(q)$. If $z$ has no eigenvalues then $\langle z\rangle$ acts irreducibly on the natural module  $V=\mathbb{F}_q^2$ so that $z$ is the power of a Singer cycle (\cite[ Lemma 2.4]{bl}). Since Singer subgroups of $\mathrm{SL}_2(q)$ are $\mathrm{SL}_2(q)$-conjugate, we deduce that $z$ belongs to a $\mathrm{SL}_2(q)$-conjugate of $\tilde H$. If $z$ admits an eigenvalue $\lambda\in\mathbb{F}_q^*$, then choosing a suitable eigenvector for $\lambda$ and completing to a basis for $V$ we construct a matrix $c\in \mathrm{SL}_2(q)$ such that $c^{-1}zc$ is an upper triangular matrix with diagonal entries $\lambda,\lambda^{-1}$ which belongs to $\tilde K$.

Assume next that case~\eqref{eq:sl2} holds. Thus $q$ is even. Let $z\in \mathrm{SL}_2(q)$. If $z$ has no eigenvalues, arguing as in case~\eqref{eq:sl1}, we deduce that $z$ belongs to an $\mathrm{SL}_2(q)$-conjugate of $\tilde H$. By~\cite[Lemma 3.1.3]{bhr}, we know the standard copy of $D_{2(q-1)}$ inside $\mathrm{SL}_2(q)$ and thus we can assume that $\tilde K$ is this standard copy. In other words, we  can assume  $\tilde K=\langle \mathrm{diag}(\omega,\omega^{-1}),  \mathrm{antidiag}(1,1)\rangle $, where $\omega$ generates the multiplicative group $\mathbb{F}_q^*$. Suppose now that $z$ has an eigenvalue $\lambda\in\mathbb{F}_q$. If $\lambda=1$, then $z$ is unipotent and, since involutions in $\mathrm{SL}_2(q)$ form a conjugacy class, we obtain that $z$ has a conjugate in $\tilde H$ and $\tilde K$. If $\lambda\ne 1$, then $z$ has two distinct eigenvalues $\lambda,\lambda^{-1}$. From this it follows that $z$ is $\mathrm{SL}_2(q)$-conjugate to the diagonal matrix $\mathrm{diag}(\lambda,\lambda^{-1})\in\tilde{K}$. 
\end{proof}

\begin{lemma}\label{dimension3linear} Let $q$ be a prime power. Then  $\gamma_w(\mathrm{PSL}_3(q))=2$. Moreover, if $H$ and $K$ are the two maximal  components  of a weak normal $2$-covering of $\mathrm{PSL}_3(q)$, then up to $\mathrm{Aut}(\mathrm{PSL}_3(q))$-conjugacy  one of the following holds
\begin{enumerate}
\item\label{eq:sl31} $q\ne 4$, $H\cong\left(\frac{q^2+q+1}{\gcd(3,q-1)}\right):3$ is in class $\mathcal{C}_3$ and $K$ is a parabolic subgroup  in class $\mathcal{C}_1$,
\item\label{eq:sl32}$q=4$, $H\cong\mathrm{SL}_3(2)$ is in class $\mathcal{C}_5$ and $K$ is a parabolic subgroup in  class $\mathcal{C}_1$,
\item\label{eq:sl33}$q=4$, $H\cong\mathrm{SL}_3(2)$ is in  class $\mathcal{C}_5$ and $K\cong A_6$ is in class $\mathcal{S}$.
\end{enumerate}
In ~\eqref{eq:sl31} the weak normal $2$-covering gives rise to two normal coverings; in ~\eqref{eq:sl32} the weak normal $2$-covering gives rise to six normal coverings; in ~\eqref{eq:sl33} the weak normal $2$-covering does not produce a normal covering.

In particular, up to $\mathrm{Aut}(\mathrm{PSL}_3(q))$-conjugacy,  there exists a unique weak normal $2$-covering of $\mathrm{PSL}_3(q)$ by maximal components when $q\neq 4$; whereas, there are exactly two weak normal $2$-coverings of $\mathrm{PSL}_3(q)$ by maximal components when $q=4$.
\end{lemma}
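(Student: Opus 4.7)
I will work with $\tilde G=\mathrm{SL}_3(q)$ via the correspondence $\mu\leftrightarrow\tilde\mu$ of Section~\ref{cove-classic-simple}, so that it suffices to classify weak normal $2$-coverings of $\tilde G$ whose components are maximal subgroups containing $Z(\tilde G)$. The lower bound $\gamma_w(\mathrm{PSL}_3(q))\geq 2$ is Saxl's theorem, so the task is to exhibit the coverings listed and show there are no others.

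For existence the argument is uniform. Any $g\in\tilde G$ has characteristic polynomial of degree~$3$ over $\mathbb{F}_q$, so $g$ either acts irreducibly on $\mathbb{F}_q^3$ or fixes a $1$-dimensional subspace. Irreducible elements are powers of Singer cycles, hence $\tilde G$-conjugate into the Singer normalizer $(q^2+q+1){:}3$ when $q\neq 4$; for $q=4$ the Singer normalizer has order $21$ and is properly contained in the maximal subfield subgroup $\mathrm{SL}_3(2)\in\mathcal{C}_5$, so one uses $\mathrm{SL}_3(2)$ in its place. Reducible elements fix a line and are conjugate into a parabolic $\tilde K\in\mathcal{C}_1$. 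This already gives the coverings of~\eqref{eq:sl31} and~\eqref{eq:sl32}; the covering of~\eqref{eq:sl33} is then checked by verifying that the $\mathrm{Aut}$-saturation of $A_6$ inside $\mathrm{PSL}_3(4)$ contains a representative of every reducible conjugacy class, combined with the fact that $\mathrm{SL}_3(2)$ covers every irreducible element.

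For uniqueness, let $\{\tilde H',\tilde K'\}$ be an arbitrary weak normal $2$-covering with maximal components, and pick a Singer cycle $x\in\tilde G$. Since $x$ is a $ppd(3,q;3)$-element (note $P_3(q)\neq\varnothing$ for all $q\geq 2$ by Zsigmondy), Theorem~\ref{main} together with the lists of maximal subgroups of $\mathrm{SL}_3(q)$ in~\cite{bhr} restrict the possibilities for the component containing $x$: up to $\mathrm{Aut}(\tilde G)$-conjugacy it must be the Singer normalizer when $q\neq 4$, or $\mathrm{SL}_3(2)$ when $q=4$. All other maximal classes either stabilise a proper subspace (so contain no irreducible element) or have order not divisible by a prime in $P_3(q)$. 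With this component fixed as $\tilde H'$, the other component $\tilde K'$ must cover the elements missed: a regular unipotent $u$ and a semisimple element $\mathrm{diag}(\lambda,\lambda^{-1},1)$ of order $q-1$. Running through the maximal subgroups of $\tilde G$ containing such elements forces $\tilde K'$ to be a parabolic for all $q$; for $q=4$ the $\mathcal{S}$-class of $A_6$ also survives, since its elements of order~$4$ are regular unipotents of $\mathrm{SL}_3(4)$ and its order-$5$ elements are conjugate to suitable diagonal matrices.

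Finally, the counts $2$, $6$, and $0$ in the fifth column are obtained by the bookkeeping of Section~\ref{sec:newwen}: for each pair $(\tilde H',\tilde K')$ one reads off $h=[\mathrm{Aut}(\mathrm{PSL}_3(q)):\mathrm{PSL}_3(q)\cdot\mathbf{N}_{\mathrm{Aut}(\mathrm{PSL}_3(q))}(H')]$ and $k$ likewise using the ``c'' column of~\cite{bhr}, and then checks which choices of $\mathrm{Aut}$-representatives within this class yield normal $2$-coverings. I expect the main obstacle to be tracking the action of the duality automorphism, which swaps line- and plane-stabilising parabolics and, in the $q=4$ case, exchanges the two $G$-classes of $A_6$; ensuring that the combinatorics of which pairs cover every $G$-conjugacy class is handled correctly is the most delicate point, and is what produces the value $0$ for case~\eqref{eq:sl33} (the irreducible element of order $5$ in $A_6$ and its image under duality fail to be $G$-conjugate).
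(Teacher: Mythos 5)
Your skeleton is the same as the paper's (pass to $\mathrm{SL}_3(q)$, pin one component with a Singer cycle, pin the other with test elements, settle $q=4$ by direct verification, count via Section~\ref{sec:newwen}), but two of the justifications you give would fail. First, the dichotomy you use to isolate the Singer component --- ``every other maximal class either stabilises a proper subspace or has order not divisible by a prime in $P_3(q)$'' --- is false: when $q=q_0^2$ the subfield subgroup $\mathrm{SL}_3(q_0)\in\mathcal{C}_5$ and the subgroup $\mathrm{SU}_3(q_0)\in\mathcal{C}_8$ are irreducible and their orders are divisible by primes of $P_3(q)$ (for instance $13\in P_3(9)$ divides $|\mathrm{SL}_3(3)|$ inside $\mathrm{SL}_3(9)$), and the same happens for the $\mathcal{S}$-class subgroup $\mathrm{PSL}_2(7)$ for suitable $q$. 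What actually eliminates these groups is not $ppd$-divisibility but the fact that they contain no element of the full Singer order $q^2+q+1$, which is exactly what the paper reads off from the tables in \cite{bhr}; this part of your argument is repairable but not correct as stated.

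The more serious gap is the step pinning the second component. Your test elements (a regular unipotent $u$ and $\mathrm{diag}(\lambda,\lambda^{-1},1)$ of order $q-1$) do not force $\tilde K'$ to be parabolic. For odd $q$ the $\mathcal{C}_8$-subgroup $\mathrm{SO}_3(q)\cong\mathrm{PGL}_2(q)$ (maximal for $q\ge 5$) contains regular unipotent elements of $\mathrm{SL}_3(q)$ --- the symmetric square of a regular unipotent of $\mathrm{SL}_2(q)$ is a single Jordan block in odd characteristic --- as well as semisimple elements with eigenvalues $\lambda,1,\lambda^{-1}$ of order $q-1$; the monomial subgroup $(q-1)^2{:}S_3\in\mathcal{C}_2$ also contains the latter; and when $q$ is a power of $3$ the Singer normalizer itself contains regular unipotents, so that test element imposes no constraint on $\tilde K'$ at all. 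Hence ``running through the maximal subgroups containing such elements'' does not yield the parabolic. The paper avoids this by using a single element $y$ of order $q^2-1$: for $q\notin\{2,4\}$, among maximal subgroups only the two parabolic classes (fused by the graph automorphism) contain a conjugate of $y$, and that is the step that forces $\tilde K'\cong E_q^2{:}\mathrm{GL}_2(q)$. (Two smaller slips: $\mathrm{PSL}_3(4)$ has three, not two, classes of $A_6$, and its order-$5$ elements are reducible, so your parenthetical explanation of the count $0$ in case~\eqref{eq:sl33} is not right; the paper obtains the $q\in\{2,4\}$ cases and the counts $6$ and $0$ by computer.)
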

\begin{proof}
We use the list of the maximal subgroups of $\mathrm{PSL}_3(q)$ in~\cite[Tables~8.3,~8.4]{bhr} and the notation therein.

When $q\in\{2, 4\}$, the proof follows with a computation using the computer algebra system \texttt{magma}~\cite{magma}. We give some details of our computation when $q=4$. The weak normal $2$-covering in~\eqref{eq:sl32} splits into six normal $2$-coverings of $\mathrm{PSL}_3(q)$. There are three conjugacy classes of subgroups $\mathrm{SL}_3(2)$  and two conjugacy classes of parabolic subgroups and choosing a representative for $\mathrm{SL}_3(2)$ and one for a parabolic gives always rise to a normal covering. Those six normal coverings are distinct up to $\mathrm{PSL}_3(4)$-conjugacy. 

For the rest of the proof, we suppose that $q\notin\{2, 4\}$.
Let $\mu$  be a weak normal covering of $\mathrm{PSL}_3(q)$  of minimum size and maximal components and $\tilde \mu$ be the corresponding weak normal covering of $\mathrm{SL}_3(q)$.

Let $x$ be an element of $\mathrm{SL}_3(q)$ having order $q^2+q+1$, that is, a Singer cycle for $\mathrm{SL}_3(q)$.  From~\cite[Tables~8.3,~8.4]{bhr} and from the fact that $q\ne 4$, we see that 
the only maximal subgroups of $\mathrm{SL}_3(q)$ containing a Singer cycle form  a unique $\mathrm{SL}_3(q)$-conjugacy class of subgroups  given by the maximal subgroups in class $\mathcal{C}_3$. Therefore, without loss of generality, we may suppose that $x\in \tilde H\cong (q^2+q+1):3$ for some $\tilde H\in \tilde \mu$ belonging to class $\mathcal{C}_3$.

 Let $y$ be an element of $\mathrm{SL}_3(q)$ having order $q^2-1$. Using the fact  that $q\notin \{2,4\}$, another case-by-case analysis on the maximal subgroups of $\mathrm{SL}_3(q)$ in~\cite[Tables~8.2,~8.3]{bhr} reveals that there are two $\mathrm{SL}_3(q)$-conjugacy classes of maximal subgroups $M$ of $\mathrm{SL}_3(q)$ containing a conjugate of $y$: namely, the two parabolic subgroups of $\mathrm{SL}_3(q)$, both having structure $E_q^2:\mathrm{GL}_2(q).$ These two conjugacy classes are fused by a graph automorphism. As $\tilde H\not\cong E_q^2:\mathrm{GL}_2(q)$, the weak normal covering number of $\mathrm{SL}_3(q)$ is $2$ and  there is a unique possibility for the further component $\tilde K\in \tilde \mu$. 
 
It is finally easy to see that for every $q$, $\tilde H\cong(q^2+q+1):3$ and $\tilde K\cong E_q^2:\mathrm{GL}_2(q)$ are components of a normal $2$-covering for  $\mathrm{SL}_3(q)$ (see \cite[Proposition 4.2, Corollary 4.3]{bl}) and hence,  $H$ and $K$ as in the statement $(1)$ are maximal components of a weak normal $2$-covering for  $\mathrm{PSL}_3(q)$. That covering is, for $q\neq 4$, the unique weak normal $2$-covering for  $\mathrm{PSL}_3(q)$ by maximal components.
\end{proof}

Note that $\tilde G:=\mathrm{SL}_3(3^f)$ admits also the normal covering with components given by
 $\tilde H\cong q^2+q+1:3$ and the Levi subgroup $L$ of the parabolic subgroup $K$
\cite[Thm.~A, Table 1.2]{blw}. However, this example is not considered in Table~\ref{00} because $L$ is not
maximal. Of course, this example does appear in our classification of normal $2$-coverings in Section~\ref{sec:appendix}, see Table~\ref{00:linear}.

\begin{lemma}\label{dimension4linear} Let $q$ be a prime power. Then  $\gamma_w(\mathrm{PSL}_4(q))=2$.
 Moreover, if $H$ and $K$ are the two maximal components of a weak normal $2$-covering of $\mathrm{PSL}_4(q)$, then  up to $\mathrm{Aut}(\mathrm{PSL}_4(q))$-conjugacy  $\tilde{H}\cong \mathrm{SL}_2(q^2).(q+1).2$ is in class $\mathcal{C}_3$ and $\tilde{K}\cong E_q^3:\mathrm{GL}_3(q)$ is in class $\mathcal{C}_1$.
This weak normal covering gives rise to two normal coverings.
\end{lemma}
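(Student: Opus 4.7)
The proof will follow the same template as Lemmas~\ref{dimension2linear} and~\ref{dimension3linear}. By the result of~\cite{bl} recalled in Section~\ref{sub:preliminaries} we already have $\gamma_w(\mathrm{PSL}_4(q))\le 2$, so it suffices to exclude $\gamma_w(\mathrm{PSL}_4(q))=1$ and to classify the maximal components up to $\mathrm{Aut}(\mathrm{PSL}_4(q))$-conjugacy. By Section~\ref{cove-classic-simple}, I will work inside $\mathrm{SL}_4(q)$ and lift the covering.

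First I would identify $\tilde H$. Pick a Singer cycle $x\in\mathrm{SL}_4(q)$ of order $(q^4-1)/(q-1)=q^3+q^2+q+1=(q^2+1)(q+1)$, which exists by Table~\ref{singer-order}. Every $r\in P_4(q)$ divides $q^2+1$ and hence $\order x$, so $x$ is a strong $ppd(4,q;4)$-element. Applying Theorem~\ref{main} together with Lemma~\ref{no-c5} to exclude $\mathcal{C}_5$, and then scanning the list of maximal subgroups of $\mathrm{SL}_4(q)$ in \cite[Tables~8.8--8.10]{bhr} to dispose of the residual $\mathcal{C}_6$, $\mathcal{C}_8$ and $\mathcal{S}$ possibilities, I expect to obtain a unique $\mathrm{Aut}(\mathrm{SL}_4(q))$-class of maximal subgroups containing $x$, namely the $\mathcal{C}_3$ subgroup arising from the extension-of-scalars embedding $\mathrm{GL}_2(q^2)\le\mathrm{GL}_4(q)$. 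Intersecting with $\mathrm{SL}_4(q)$ gives $\mathrm{GL}_2(q^2)\cap\mathrm{SL}_4(q)\cong\mathrm{SL}_2(q^2).(q+1)$ via the norm determinant, and adjoining the Frobenius twist yields $\tilde H\cong\mathrm{SL}_2(q^2).(q+1).2$, so one component must be $\mathrm{Aut}$-conjugate to $\tilde H$.

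Next I would identify $\tilde K$. Pick $y\in\mathrm{SL}_4(q)$ with $\order y$ divisible by some $r\in P_3(q)$ (nonempty by Zsigmondy). Such $y$ has action type $3\oplus1$ on the natural module, so it fixes a $1$-dimensional subspace and lies in the parabolic $E_q^3{:}\mathrm{GL}_3(q)$. Since the only even-degree decomposition of a $ppd(4,q;3)$-element would have to accommodate an irreducible factor of degree $3$, the element $y$ cannot lie in (a conjugate of) $\tilde H$, which forces the existence of a second component and hence $\gamma_w(\mathrm{PSL}_4(q))=2$. A further application of Theorem~\ref{main} and \cite[Tables~8.8--8.10]{bhr} to the $ppd(4,q;3)$-element $y$ shows that the only maximal subgroups of $\mathrm{SL}_4(q)$ containing an $\mathrm{Aut}$-conjugate of $y$ are the point-stabiliser and hyperplane-stabiliser parabolics, fused into a single $\mathrm{Aut}$-class by the inverse-transpose graph automorphism. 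Hence $\tilde K\cong E_q^3{:}\mathrm{GL}_3(q)$, as claimed.

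To close, I would verify that $\{\tilde H,\tilde K\}$ really is a weak normal $2$-covering: every $z\in\mathrm{SL}_4(q)$ either admits an eigenvalue in $\mathbb{F}_q^\ast$ (in which case it stabilises a $1$-dimensional subspace and lies in a conjugate of $\tilde K$) or has all irreducible factors of its characteristic polynomial over $\mathbb{F}_q$ of even degree, in which case $V$ carries an $\mathbb{F}_{q^2}$-structure preserved by $z$ and $z$ lies in a conjugate of $\tilde H$. For the count of normal coverings I would invoke the bookkeeping of Section~\ref{sec:newwen}: with $h=1$ (a single $G$-class of $\tilde H$) and $k=2$ (the two parabolic $G$-classes fused into one $\mathrm{Aut}$-class), the inequality $\max\{h,k\}\mid C\le hk$ pins down $C=2$. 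The main obstacle is the small-$q$ bookkeeping in the $ppd$-analysis at the boundary $n=4$, where Theorem~\ref{main}\eqref{main3} does not apply and residual exceptional maximal subgroups must be ruled out one by one from \cite[Tables~8.8--8.10]{bhr}, with tiny cases handled, if necessary, by computer.
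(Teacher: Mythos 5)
Your overall route is the one the paper takes: pin down one component by a Singer cycle of $\mathrm{SL}_4(q)$ (unique class, in $\mathcal{C}_3$, read off from~\cite[Tables~8.8,~8.9]{bhr}), pin down the other by an element whose order involves a primitive prime divisor of $q^3-1$, verify the covering by the eigenvalue dichotomy (this is exactly~\cite[Proposition~4.2]{bl}, which the paper simply cites; note that in the non-semisimple case where the characteristic polynomial is the square of an irreducible quadratic you still owe a short argument that $z$ preserves an $\mathbb{F}_{q^2}$-structure), and count the normal coverings via Section~\ref{sec:newwen} with $h=1$, $k=2$, giving $C=2$. All of that matches the paper.

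There is, however, a genuine gap in your identification of $\tilde K$. Taking $y$ of order merely divisible by some $r\in P_3(q)$ does not force its maximal overgroups to be parabolic, and the $ppd$ machinery cannot rescue this at $n=4$: Theorem~\ref{main}~\eqref{main3} needs $e\le n-3$ and Lemma~\ref{no-c5}(3) needs $n\ge 6$, so neither $\mathcal{C}_5$ nor $\mathcal{S}$ is excluded. Concretely, for $q=4$ the only member of $P_3(4)$ is $7$, and the $\mathcal{C}_5$ subfield subgroup $\mathrm{SL}_4(2)$ contains elements of order $7$, so no choice of $r$ works there; for every prime $q\equiv 2,4\pmod 7$ the class-$\mathcal{S}$ subgroups of type $d\cdot A_7$ and $d\cdot\mathrm{PSL}_2(7)$ contain elements of order $7\in P_3(q)$; and for $q=9$ the class-$\mathcal{S}$ subgroup $4\cdot\mathrm{PSL}_3(4)$ contains elements of order $7\in P_3(9)$. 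These are not finitely many ``tiny cases'' to be swept up by computer — the class-$\mathcal{S}$ obstruction occurs for infinitely many $q$. The fix is what the paper does: take $y$ of order exactly $q^3-1$ (action type $3\oplus 1$). That order is too large to divide the order of any subfield or class-$\mathcal{S}$ maximal subgroup (for $q\ge 3$; the only coincidence is $q=2$, where $\mathrm{PSL}_4(2)\cong A_8$ and the covering is the one in Table~\ref{00}), and then~\cite[Tables~8.8,~8.9]{bhr} give exactly the two parabolic classes $E_q^3{:}\mathrm{GL}_3(q)$, fused by the graph automorphism, which is the conclusion you want. Your lower bound $\gamma_w\ge 2$ is unaffected, since $r\nmid|\tilde H|$ already shows $y$ has no $\mathrm{Aut}$-conjugate in $\tilde H$.
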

\begin{proof}
We use the list of the maximal subgroups of $\mathrm{PSL}_4(q)$ in~\cite[Tables~$8$,~$9$]{bhr} and the notation therein. Let $\mu$  be a weak normal covering of $\mathrm{PSL}_4(q)$  of minimum size and maximal components and $\tilde \mu$ be the corresponding weak normal covering of $\mathrm{SL}_4(q)$.

Let $x$ be a Singer cycle of $\mathrm{SL}_4(q)$. From~\cite[Tables~8.8,~8.9]{bhr}, we see that there exists a unique $\mathrm{SL}_4(q)$-conjugacy class of maximal subgroups containing a Singer cycle, that is, the maximal subgroups in class $\mathcal{C}_3$. Hence there exists $\tilde H\in \tilde \mu$ belonging to class $\mathcal{C}_3$ and, without loss of generality, we may suppose that $$x\in \tilde H\cong\mathrm{SL}_2(q^2).(q+1).2.$$

Let $y$ be an element of $\mathrm{SL}_4(q)$ having order $q^3-1$. Another case-by-case analysis on the maximal subgroups of $\mathrm{SL}_4(q)$ in~\cite[Tables~8.8,~8.9]{bhr} reveals that there are two $\mathrm{SL}_4(q)$-conjugacy classes of maximal subgroups containing a conjugate of $y$: namely, the parabolic subgroups of $\mathrm{SL}_4(q)$ isomorphic to $E_q^3:\mathrm{GL}_3(q)$. These two conjugacy classes are fused by a graph automorphism. 
As $\tilde H\not\cong E_q^3:\mathrm{GL}_3(q)$, the weak normal covering number of $\mathrm{SL}_4(q)$ is $2$ and  the unique possibility for the further component  $\tilde K\in \tilde \mu$ is given by $\tilde K\cong E_q^3:\mathrm{GL}_3(q).$

The fact that $\tilde H$ and $\tilde K$ are indeed components of a normal $2$-covering for $\mathrm{PSL}_4(q)$ follows by \cite[Proposition~4.2]{bl}. To deduce the number of $\tilde G$-classes of normal $2$-coverings, we use the observations in Section~\ref{sec:newwen}. Indeed, by~\cite{bhr}, the $\mathrm{Aut}(\tilde G)$-class  $\{\tilde{H}^\varphi\mid \varphi\in \mathrm{Aut}(\tilde{G})\}$ is a $\tilde{G}$-conjugacy class, whereas  the $\mathrm{Aut}(\tilde G)$-class  $\{\tilde{K}^\varphi\mid \varphi\in \mathrm{Aut}(\tilde{G})\}$ is a union of two distinct $\tilde{G}$-conjugacy classes. Therefore, the weak normal covering $\{\tilde{H},\tilde{K}\}$ gives rise to two distinct $\tilde{G}$-classes of normal $2$-coverings.
\end{proof}

Now, the veracity of Table~\ref{0Linear} follows from the results in this section.

\section{Unitary  groups}\label{sec:unitary}

Recall again that given a subgroup $X$ of $\mathrm{PSU}_n(q)$, we denote by $\tilde{X}$ its preimage under the natural projection $\mathrm{SU}_n(q)\to \mathrm{PSU}_n(q)$. Moreover, since the automorphism group of $\mathrm{SU}_n(q)$ projects onto the automorphism group of $\mathrm{PSU}_n(q)$, we may work with the special unitary group $\mathrm{SU}_n(q)$ (see Section \ref{cove-classic-simple}).

We start with a lemma that, except for a handful of  few small cases, will help us to identify one of the components of a weak normal covering of the unitary group.
\begin{lemma}[{\cite[Theorem 1.1]{msw}}]\label{malleu}
Let $n$ be an integer with $n\geq 3$ and $(n,q)\neq (3,2)$. Let $M$ be a maximal subgroup
of $\mathrm{SU}_n(q)$ containing a Singer cycle when $n$ is odd, and  a semisimple element of order $q^{n-1}+1$ and action type $1\oplus (n-1)$ when $n$ is even.
Then one of the following holds
\begin{enumerate}
\item\label{malleu:1} $n$ is odd,  $$M
\cong
\mathrm{SU}_{n/k}(q^k).\,\frac{q^k+1}{q+1}\,.\,k,$$
for some prime number $k$  with $k\mid n$, and $M$ is in  class $\mathcal{C}_3$,
\item\label{malleu:2} $n$ is  even and  $M \cong \mathrm{GU}_{n-1}(q)$ is in class $\mathcal{C}_1$,
\item\label{malleu:33}$(n,q)=(3,3)$ and $M\cong\mathrm{PSL}_2(7)$ is in  class $\mathcal{S}$,
\item\label{malleu:3}$(n,q)=(3,5)$ and $M\cong 3.A_7$ is in class $\mathcal{S}$,
\item\label{malleu:333}$(n,q)=(4,2)$ and $M\cong 3^3.S_4$ is in  class $\mathcal{C}_2$,
\item\label{malleu:3333}$(n,q)=(4,3)$ and $M\cong 4. A_7$ is in  class $\mathcal{S}$,
\item\label{malleu:33333}$(n,q)=(4,3)$ and $M\cong 4.\mathrm{PSL}_3(4)$ is in  class $\mathcal{S}$,
\item\label{malleu:4}$(n,q)=(5,2)$ and $M\cong\mathrm{PSL}_2(11)$ is in class $\mathcal{S}$,
\item\label{malleu:5}$(n,q)=(6,2)$ and $M\cong 3.\mathrm{M}_{22}$ is in class $\mathcal{S}$.
 \end{enumerate}
 \end{lemma}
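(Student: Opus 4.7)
The plan is to combine Aschbacher's classification of maximal subgroups of $\mathrm{SU}_n(q)$ with the primitive-prime-divisor ($ppd$) machinery of Guralnick--Penttila--Praeger--Saxl (Theorem~\ref{main}), applied to the element $s\in M$ described in the hypotheses.

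First I would check that $s$ is a $ppd$-element in a suitably strong sense. When $n$ is odd, the Singer cycle $s$ has order $(q^n+1)/(q+1)$; by Zsigmondy's theorem $P_{2n}(q)\ne\varnothing$ for $(n,q)\ne(3,2)$, and any $r\in P_{2n}(q)$ divides $q^n+1$ but not $q+1$, so $r\mid \order s$. Since $\mathrm{SU}_n(q)\le \mathrm{GL}_n(q^2)$ and $P_{2n}(q)\subseteq P_n(q^2)$ by Lemma~\ref{primitivi}~\eqref{primitivi1}, the element $s$ is a $ppd(n,q^2;n)$-element acting irreducibly on $V$. When $n$ is even, $s$ has order $q^{n-1}+1$, and every $r\in P_{2(n-1)}(q)$ divides $\order s$; here $s$ is a $ppd(n,q^2;n-1)$-element with action type $1\oplus(n-1)$ on $V$, imposed by hypothesis.

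Next I would eliminate most Aschbacher classes for $M$. By Theorem~\ref{main}~\eqref{main1}, $M\notin\mathcal{C}_4\cup\mathcal{C}_7$, and a strong-$ppd$ argument in the style of Lemma~\ref{no-c5} excludes $\mathcal{C}_5$. For $\mathcal{C}_1$: when $n$ is odd the irreducibility of $s$ forbids $M$ from being reducible; when $n$ is even, $s$ fixes a non-degenerate $1$-space, and its stabiliser in $\mathrm{SU}_n(q)$ is exactly $\mathrm{GU}_{n-1}(q)$, producing~\eqref{malleu:2}. For $\mathcal{C}_3$ (field extension): using the standard description of Singer cycles in $\mathrm{SU}_n(q)$ via the embeddings $\mathrm{GU}_{n/k}(q^k)\le \mathrm{GU}_n(q)$ for primes $k\mid n$, we obtain case~\eqref{malleu:1}. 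The classes $\mathcal{C}_2,\mathcal{C}_6,\mathcal{C}_8$ can be ruled out by comparing the orders of the subgroups in these classes with $\order s$, except for the small exception $(n,q)=(4,2)$ which gives~\eqref{malleu:333} (arising from the $\mathcal{C}_2$ decomposition of $V$ into four orthogonal non-degenerate $1$-spaces).

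The main obstacle is the novelty class $\mathcal{S}$. By Theorem~\ref{main}~\eqref{main2}, any $M\in\mathcal{S}$ containing $s$ must appear among Examples~$2.6$--$2.9$ of~\cite{gpps}. One then has to walk through each of these examples and use the divisibility $r\mid |M|$ for some $r\in P_{2n}(q)$ (respectively $P_{2(n-1)}(q)$), together with the embedding $M\le \mathrm{SU}_n(q)$, to bound the possibilities. The crucial point is that $e=n$ (for $n$ odd) or $e=n-1$ (for $n$ even) is very close to $n$, so the arithmetic constraints force such $\mathcal{S}$-examples to live only in very small-dimensional configurations with $n\le 6$ and $q$ small; these produce precisely the exceptional items~\eqref{malleu:33},~\eqref{malleu:3},~\eqref{malleu:3333},~\eqref{malleu:33333},~\eqref{malleu:4} and~\eqref{malleu:5}. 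The final identification of these small exceptions, and in particular the separation of the different $\mathrm{SU}_n(q)$-conjugacy classes of such embeddings, is most efficiently completed using the \textsc{Atlas} together with direct computation in \texttt{magma} or \textrm{GAP}.
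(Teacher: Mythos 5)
You should first be aware that the paper does not prove this statement at all: the lemma is quoted verbatim from Malle--Saxl--Weigel \cite[Theorem~1.1]{msw}, so there is no internal proof to compare with. What you propose is an independent reproof via the Guralnick--Penttila--Praeger--Saxl machinery, in the same spirit as the paper's own recognition lemmas (Lemmas~\ref{uni-z}, \ref{bertrand-sp}, \ref{max-y}); as a strategy this is reasonable, and your handling of $\mathcal{C}_1$, $\mathcal{C}_2$, $\mathcal{C}_4$, $\mathcal{C}_6$, $\mathcal{C}_7$ and of the $\mathcal{C}_3$ examples for $n$ odd is essentially sound (though you should also say a word about $\mathcal{C}_3$ when $n$ is even, which the same order argument dispatches).

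However, as written there are genuine gaps. (i) Your $\mathcal{C}_5$ exclusion rests on $s$ being a strong $ppd$-element, and it is not: since $e\in\{n,n-1\}$ is odd, $P_e(q^2)=P_{2e}(q)\cup P_e(q)$, and the primes in $P_e(q)$ divide $q^e-1$, hence do not divide $\order s$. Worse, a single $ppd$ prime does not suffice here: when $k$ is an odd prime divisor of $f$ with $k\nmid e$, any $r\in P_{2e}(q^{1/k})$ lies in $P_{2e}(q)$, so it divides $\order s$ \emph{and} divides $|\mathrm{SU}_n(q^{1/k})|$ (because $q_0^{e}+1$ divides $q^{e}+1$ for $q=q_0^k$, $k$ odd). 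To exclude the subfield subgroups you must argue as in the first clause of Lemma~\ref{no-c5}(2), with a prime in $P_{2ke}(q^{1/k})$, or compare with the full order of $s$. (ii) For the class $\mathcal{S}$, with $e\in\{n-1,n\}$ part~\eqref{main3} of Theorem~\ref{main} is unavailable ($e>n-3$), and the divisibility constraints alone do not force $n\le 6$: Example~2.6~a) of \cite{gpps} provides the candidates $A_{n+1},A_{n+2}$ for \emph{every} $n$, and eliminating them requires comparing $\order s$ with the maximal element order of $S_{n+2}$ (a Landau-type estimate, exactly as the paper does in Lemma~\ref{bertrand-sp}), and the remaining Examples~2.6~b)--2.9 must be sifted through Tables~2--8 of \cite{gpps} before the finitely many survivors can be identified in the \textsc{Atlas}. (iii) For $(n,q)=(4,2)$ one has $P_6(2)=\varnothing$, so the element of order $9$ is not a $ppd$-element and the whole GPPS framework says nothing; this case, which is precisely the source of the $3^3.S_4$ example, must be settled by a direct (computational) analysis of $\mathrm{SU}_4(2)$ rather than appearing as an exception to an order comparison inside an otherwise running argument.
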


\subsection{Small dimensional unitary groups}\label{sec:smallunitary}
We start our analysis with small dimensional unitary groups $\mathrm{PSU}_n(q)$ with $3\le n\le 6$. Observe that $\mathrm{SU}_n(q)$ is solvable when $(n,q)=(3,2)$ and thus we do not consider that case. For the subgroup structure of $\mathrm{SU}_3(q)$, $\mathrm{SU}_4(q)$, $\mathrm{SU}_5(q)$ and $\mathrm{SU}_6(q)$ we use~\cite{bhr}.

\begin{lemma}\label{dimension3unitary}
The weak normal covering number of $\mathrm{PSU}_3(q)$ for $q>2$ is at least $2$. Moreover, if $H$ and $K$ are maximal components  of a weak normal $2$-covering of $\mathrm{PSU}_3(q)$, then up to $\mathrm{Aut}(\mathrm{PSU}_3(q))$-conjugacy one of the following holds
\begin{enumerate}
\item\label{3unitary:1} $q=3$,  $\tilde H\cong\mathrm{PSL}_2(7)$ is in  class $\mathcal{S}$ and $\tilde K\cong \mathrm{GU}_2(q)$ is in  class $\mathcal{C}_1$,
\item\label{3unitary:2}  $q=3$,  $\tilde H\cong\mathrm{PSL}_2(7)$ is in  class $\mathcal{S}$ and $\tilde K\cong E_{q}^{1+2}:(q^2-1)$ is in class $\mathcal{C}_1$,
\item\label{3unitary:2bis}  $q$ is a power of $3$, $q>3$,  $\tilde H\cong(q^2-q+1):3$ is in class $\mathcal{C}_3$ and $\tilde K\cong\mathrm{GU}_2(q)$ is in class $\mathcal{C}_1$,
\item\label{3unitary:3}  $q=5$,  $\tilde H\cong3.A_7$ is in  class $\mathcal{S}$ and $\tilde K\cong E_{q}^{1+2}:(q^2-1)$ is in  class $\mathcal{C}_1$,
\item\label{3unitary:4}  $q=5$,  $\tilde H\cong3.A_7$ is in class $\mathcal{S}$ and $\tilde K \cong\mathrm{GU}_2(q)$ is in  class $\mathcal{C}_1$.
\end{enumerate}
In cases~\eqref{3unitary:1}-\eqref{3unitary:2bis} each weak normal $2$-covering of $\mathrm{PSU}_3(q)$ gives rise to a single normal $2$-covering, in case \eqref{3unitary:3} the weak normal $2$-covering of $\mathrm{PSU}_3(q)$ gives rise to three normal $2$-coverings and in case ~\eqref{3unitary:4} no normal $2$-covering arises.
\end{lemma}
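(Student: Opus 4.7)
The plan is to mimic the strategy used in Lemmas~\ref{dimension2linear}--\ref{dimension4linear}: lift the problem to $\tilde G=\mathrm{SU}_3(q)$ via the machinery of Section~\ref{cove-classic-simple}, pin down one component by forcing a Singer-type element to lie in it, pin down the other by forcing a second suitably chosen semisimple element to lie in it, and then exclude $\gamma_w=1$. First I would dispose of the tiny cases $q\in\{3,5\}$ with a direct \texttt{magma}/\textsc{GAP} computation, which delivers the sporadic components $\mathrm{PSL}_2(7)$ and $3.A_7$ and the corresponding counts of $\tilde G$-classes of normal $2$-coverings; these are precisely the situations where Lemma~\ref{malleu} parts~\eqref{malleu:33}--\eqref{malleu:3} allow exceptional maximal subgroups containing a Singer cycle.

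For the generic case $q=3^a$ with $a\geq 2$ (and more generally any $q$ not in $\{2,3,5\}$), I would choose a Singer cycle $x\in \mathrm{SU}_3(q)$, i.e.\ an element of order $(q^3+1)/(q+1)=q^2-q+1$ acting irreducibly on the natural module. By Lemma~\ref{malleu}\eqref{malleu:1} with $n=3$ and $k=3$, the only maximal overgroup of $x$ (up to conjugacy) is a $\mathcal{C}_3$-subgroup isomorphic to $(q^2-q+1){:}3$, so a component $\tilde H$ of any weak normal $2$-covering must be $\mathrm{Aut}(\tilde G)$-conjugate to this subgroup. Next I would pick a semisimple element $y$ of action type $1\oplus 2$ whose order is $(q^2-1)$ (or a suitable divisor), which necessarily stabilizes a nondegenerate $1$-space or an isotropic $1$-space. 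A case-by-case inspection of the maximal subgroups of $\mathrm{SU}_3(q)$ in~\cite[Tables~8.5,~8.6]{bhr} shows that, outside the listed exceptions, the only maximal subgroups containing such $y$ are the $\mathcal{C}_1$ subgroups $\mathrm{GU}_2(q)$ and the parabolic $E_q^{1+2}{:}(q^2-1)$. Of these, only the nondegenerate-point stabilizer $\mathrm{GU}_2(q)$ covers all the necessary classes together with $\tilde H$: the Sylow $p$-elements of $\tilde G$ must be recovered, and $\mathrm{GU}_2(q)$ contains the transvections while $\tilde H$ does not, forcing the second component to be $\mathrm{GU}_2(q)$ (case~\eqref{3unitary:2bis}). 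The impossibility of $\gamma_w(\mathrm{PSU}_3(q))=1$ follows immediately since neither $(q^2-q+1){:}3$ nor $\mathrm{GU}_2(q)$ separately covers both classes.

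To check that the candidate pairs really do produce a weak normal $2$-covering, I would use Huppert's decomposition~\eqref{eq:decomp} of an arbitrary semisimple $z\in\tilde G$: up to $\tilde G$-conjugacy $z$ either acts irreducibly (and so is a power of a Singer cycle, hence in an $\mathrm{Aut}(\tilde G)$-conjugate of $\tilde H$) or stabilizes a nondegenerate decomposition and thus lies in an $\mathrm{Aut}(\tilde G)$-conjugate of $\mathrm{GU}_2(q)$; unipotent elements all lie in the parabolic, and using a field/graph automorphism one can fuse the parabolic and $\mathrm{GU}_2(q)$ only in the small-$q$ exceptional rows. The exceptional rows \eqref{3unitary:1}--\eqref{3unitary:4} are then read off from Lemma~\ref{malleu}\eqref{malleu:33}--\eqref{malleu:3} by repeating the same analysis, replacing the Singer overgroup by $\mathrm{PSL}_2(7)$ (for $q=3$) or $3.A_7$ (for $q=5$), and observing that for $q=3$ the second component can be either $\mathrm{GU}_2(3)$ or the parabolic $E_3^{1+2}{:}8$, while for $q=5$ both $\mathrm{GU}_2(5)/3$ and the parabolic $E_5^{1+2}{:}24/3$ occur.

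Finally, to compute the number of $\tilde G$-classes of normal $2$-coverings contained in each $\mathrm{Aut}(\tilde G)$-class of weak normal $2$-coverings, I would apply the bounds of Section~\ref{sec:newwen}, reading off the indices $h=|\mathrm{Aut}(\tilde G):\tilde G\,{\bf N}_{\mathrm{Aut}(\tilde G)}(\tilde H)|$ and $k=|\mathrm{Aut}(\tilde G):\tilde G\,{\bf N}_{\mathrm{Aut}(\tilde G)}(\tilde K)|$ from the ``c''-column of~\cite{bhr}; combined with a verification whether at least one normal $2$-covering actually arises (zero for case~\eqref{3unitary:3} since the required fusion of $3.A_7$-classes is only realized by an outer automorphism), this produces the counts $1,1,1,3,0$. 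The main obstacle I expect is the $q=5$ case~\eqref{3unitary:3}: showing that the $\mathrm{Aut}(\tilde G)$-class splits into exactly three $\tilde G$-classes of genuine normal $2$-coverings (rather than $0$ or $6$) will require a careful tracking of how the diagonal/field/graph automorphisms permute the classes of $3.A_7$ and of the parabolic, which is best handled computationally.
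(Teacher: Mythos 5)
Your overall skeleton (computer for $q\in\{3,5\}$, Singer cycle plus Lemma~\ref{malleu} to force $\tilde H\cong(q^2-q+1){:}3$, an element of order $q^2-1$ to restrict $\tilde K$ to $\mathrm{GU}_2(q)$ or the parabolic $E_q^{1+2}{:}(q^2-1)$) matches the paper, but the step where you eliminate the parabolic and settle the generic case contains a genuine gap. You argue that the second component must be $\mathrm{GU}_2(q)$ because ``the Sylow $p$-elements must be recovered, and $\mathrm{GU}_2(q)$ contains the transvections while $\tilde H$ does not.'' This is backwards: the parabolic $E_q^{1+2}{:}(q^2-1)$ contains a full Sylow $p$-subgroup of $\mathrm{SU}_3(q)$, hence every transvection and every unipotent class, so no argument about unipotent elements can rule it out; meanwhile $\mathrm{GU}_2(q)$ contains only transvection-type unipotents (fixed space of dimension $2$) and misses the regular ones. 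The paper excludes the parabolic by a completely different device: it shows $\tilde H$ has no elements of order $q+1$ at all, that $\mathrm{SU}_3(q)$ contains elements of order $q+1$ with three distinct eigenvalues, and that every element of order $q+1$ in the parabolic (reduce to a Levi complement) has a repeated eigenvalue, a property preserved under $\mathrm{Aut}(\mathrm{SU}_3(q))$ because the outer automorphisms are field automorphisms. Your alternative remark that a ``field/graph automorphism'' could fuse the parabolic with $\mathrm{GU}_2(q)$ is also not available: they stabilize a totally singular and a non-degenerate $1$-space respectively and are never $\mathrm{Aut}$-conjugate.

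The second, related, gap is that your generic argument is claimed to work ``for any $q\notin\{2,3,5\}$'' and concludes case~\eqref{3unitary:2bis} without ever deriving the restriction that $q$ is a power of $3$. That restriction is not an assumption but a conclusion of the lemma, and it is exactly where the unipotent elements enter: once $\tilde K\cong\mathrm{GU}_2(q)$ is forced, the regular unipotent elements of $\mathrm{SU}_3(q)$ (fixed space of dimension $1$) lie in no conjugate of $\tilde K$, so they must lie in $\tilde H\cong(q^2-q+1){:}3$, which forces $\gcd(q,3)\neq 1$, i.e.\ $q=3^a$. Without this step your proposal would assert weak normal $2$-coverings of $\mathrm{PSU}_3(q)$ for all $q\notin\{2,3,5\}$, contradicting both the statement and Table~\ref{00000} (for $q$ not a power of $3$ and $q\neq 5$ one has $\gamma_w(\mathrm{PSU}_3(q))\geq 3$). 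Your final verification paragraph does not repair this, since it simultaneously asserts that all unipotent elements lie in the parabolic and that the covering uses $\mathrm{GU}_2(q)$ instead; the paper instead cites \cite[Table~1.2]{blw} for the existence of the covering when $q=3^a$, $q>3$. (The class-counting paragraph is fine in spirit, though note the paper obtains the counts for $q\in\{3,5\}$ computationally, and your parenthetical attaches the count $0$ to case~\eqref{3unitary:3} rather than~\eqref{3unitary:4}.)
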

\begin{proof}
With the help of the computer algebra system \texttt{magma}~\cite{magma} we find that  for $q\in \{3,5\}$, the weak normal $2$-coverings of $\mathrm{SU}_3(q)$ are the ones reported above. We have also verified that the weak normal coverings in~\eqref{3unitary:1},~\eqref{3unitary:2} gives rise to a single normal $2$-covering and that in ~\eqref{3unitary:3}  to three normal $2$-coverings, whereas the weak normal covering in~\eqref{3unitary:4} does not give rise to a  normal covering. See also Section~\ref{sec:newwen}.
 
 Therefore, for the rest of the proof, we suppose $q\notin \{2,3,5\}$.

Let $\mu$  be a weak normal covering of $\mathrm{PSU}_3(q)$  of minimum size and maximal components. Let
$H\in\mu$ containing a Singer cycle of $\mathrm{PSU}_3(q)$.
Then $\tilde H$ is a maximal component of a weak normal covering $\tilde\mu$ of $\mathrm{SU}_3(q)$ containing a Singer cycle of $\mathrm{SU}_3(q)$.
From Lemma~\ref{malleu}, we have $$\tilde H\cong\left(\frac{q^3+1}{q+1}\right):3=(q^2-q+1):3.$$

Consider now the elements of $\mathrm{SU}_3(q)$ having order $q^2-1$. Using the fact  that $q\notin\{2, 3,5\}$, a case-by-case analysis on the maximal subgroups of $\mathrm{SU}_3(q)$ in~\cite[Tables~8.5,~8.6]{bhr} reveals that there are two $\mathrm{SU}_3(q)$-conjugacy classes of maximal subgroups containing elements of order $q^2-1$.
Namely,
\begin{itemize}
\item[i)] $E_{q}^{1+2}:(q^2-1)\in \mathcal{C}_1$,
\item[ii)] $\mathrm{GU}_2(q)\in \mathcal{C}_1$.
\end{itemize}
As $\tilde H$ is in not in this list, there  must exist a further maximal component  $\tilde K$ of $\tilde\mu$ that contains an element of order $q^2-1$. Thus
$\gamma_w(\mathrm{SU}_3(q))=\gamma_w(\mathrm{PSU}_3(q))\geq 2$ and $\tilde K$ is in one of the above possibilities i),\,ii).

Assume now that $\gamma_w(\mathrm{SU}_3(q))=2$. Then $\tilde\mu=\{\tilde H, \tilde K\}.$
Suppose first that
$$\tilde K\cong E_q^{1+2}:(q^2-1).$$
We investigate elements having order $q+1$. Assume, by contradiction, that $\tilde H$ contains an element of order $q+1$. Then,
an easy calculation gives
$$q+1=\gcd(|\tilde H|,q+1)=\gcd(3(q^2-q+1),q+1)\mid \gcd(9,q+1)\mid 9.$$
It follows that $q=8$ and $\tilde H\cong 57:3$ contains an element of order $9$. However the $3$-Sylow subgroup of a semidirect product $57:3$ cannot be cyclic of order $9$. Therefore, we deduce that every element of  $\mathrm{SU}_3(q)$ of order $q+1$  has a conjugate in  $\tilde K$ via an element in the automorphism group of $\mathrm{SU}_3(q)$. The group $\mathrm{SU}_3(q)$ contains elements $g$ with $\order g=q+1$ and with $g$ having three distinct eigenvalues in $\mathbb{F}_{q^2}$. Indeed, suppose that the Hermitian matrix preserved by $\mathrm{SU}_3(q)$ is
\[
J=\begin{pmatrix}
1&0&0\\
0&1&0\\
0&0&1
\end{pmatrix}.
\]
Let $\lambda\in \mathbb{F}_{q^2}$ be an element of order $q+1$. Then it is easy to verify that
\[
\begin{pmatrix}
\lambda&0&0\\
0&\lambda^{-1}&0\\
0&0&1
\end{pmatrix}\in\mathrm{SU}_3(q)
\] 
and its three  eigenvalues are distinct.  However, we now show that all elements of $\tilde K\cong E_{q}^{1+2}:(q^2-1)$ having order $q+1$ admit an eigenvalue with multiplicity $\geq 2$. Since the outer automorphisms of $\mathrm{SU}_3(q)$ consist of field automorphisms,  this property is preserved by conjugation under $\mathrm{Aut}(\mathrm{SU}_3(q))$. Therefore, this case will not arise. Suppose that the Hermitian form preserved by $\mathrm{SU}_3(q)$ has matrix
\[J=\begin{pmatrix}
0&0&1\\
0&1&0\\
1&0&0
\end{pmatrix}.\]
As $\tilde K$ is the stabilizer of a totally singular $1$-dimensional subspace of $\mathbb{F}_{q^2}^3$, we may assume that $\tilde{K}$ is the stabilizer of $\langle (1,0,0)\rangle_{\mathbb{F}_{q^2}}$. Now, a computation with the matrix $J$ shows that  $\tilde K$ consists of the matrices 
\[\begin{pmatrix}
a&0&0\\
b&c&0\\
e&f&g
\end{pmatrix},
\]
where $ a,b,c,e,f,g\in \mathbb{F}_{q^2}$, with $acg=1$, $g=a^{-q}$,  $c^{q+1}=1$, $cf^q+bg^q=ge^q+f^{q+1}+eg^q=0$.  
A Levi complement of $\tilde K$ is
\[
\left\{
\begin{pmatrix}
a&0&0\\
0&a^{q-1}&0\\
0&0&a^{-q}
\end{pmatrix}\mid a\in \mathbb{F}_{q^2}^*
\right\}.\]
The elements having order $q+1$ in this Levi complement are of the form
\[\begin{pmatrix}
a^{q-1}&0&0\\
0&a^{(q-1)^2}&0\\
0&0&a^{-q(q-1)}
\end{pmatrix},\]
where $a$ is a generator of the multiplicative group $\mathbb{F}_{q^2}^*$.
As $$a^{-q(q-1)}=a^{-q^2+q}=a^{-1+q}=a^{q-1},$$ we see that the elements of order $q+1$ in the Levi complement have an eigenvalue with multiplicity $\ge 2$. Finally, observe that every element of order $q+1$ in $\tilde{K}$ is $\tilde{K}$-conjugate to an element of the Levi complement.

\smallskip

Suppose next that
$$\tilde K\cong \mathrm{GU}_2(q).$$ We now consider unipotent elements. The group $\mathrm{SU}_3(q)$ contains unipotent elements $u_1$ and $u_2$ with $\dim_{\mathbb{F}_{q^2}} \cent V {u_1}=1$ and $\dim_{\mathbb{F}_{q^2}} \cent V{u_2}=2$.  The former are precisely the regular unipotent elements.
Since every non-identity unipotent element $u$ of $\mathrm{GU}_2(q)$ has the property that $\dim_{\mathbb{F}_{q^2}}\cent V u=2$, we deduce $\tilde H$ contains non-identity unipotent elements. Thus,
$$ 1\neq\gcd(|\tilde H|,q)=\gcd(3,q)$$
and hence $q$ is a power of $3$. From~\cite[Table~$1.2$]{blw}, we see that when $q>3$ is a power of $3$ we have that  $\tilde H\cong(q^2-q+1):3$ and $\tilde K\cong\mathrm{GU}_2(q)$ are indeed the components of a normal $2$-covering of $\mathrm{SU}_3(q)$ and hence also of a weak normal $2$-covering of $\mathrm{SU}_3(q)$. Section~\ref{sec:newwen} justifies the fact that we only have one $G$-conjugacy class of such coverings.
\end{proof}
\begin{lemma}\label{dimension4unitary}
The weak normal covering number of $\mathrm{PSU}_4(q)$ is  $2$. Moreover, if $H$ and $K$ are maximal components  of a weak normal $2$-covering of $\mathrm{PSU}_4(q)$, then up to $\mathrm{Aut}(\mathrm{PSU}_4(q))$-conjugacy  one of the following holds
\begin{enumerate}
\item\label{4unitary:1} $q=2$,  $\tilde H\cong\mathrm{GU}_3(q)\in \mathcal{C}_1$ and $\tilde K\cong \mathrm{Sp}_4(q)\in\mathcal{C}_5$,
\item\label{4unitary:2} $q=3$,  $\tilde H\cong 4.A_7\in\mathcal{S}$ and $\tilde K\cong E_{q}^{1+4}:\mathrm{SU}_2(q):(q^2-1)\in\mathcal{C}_1$,
\item\label{4unitary:3} $q=3$,  $\tilde H\cong\mathrm{GU}_3(q)\in\mathcal{C}_1$ and $\tilde K\cong4.\mathrm{PSU}_4(2)\in\mathcal{S}$ and, 
\item\label{4unitary:4} $q=3$,  $\tilde H\cong4.\mathrm{PSL}_3(4)\in\mathcal{S}$ and $\tilde K\cong E_{q}^{1+4}:\mathrm{SU}_2(q):(q^2-1)\in\mathcal{C}_1$,
\item\label{4unitary:5} $\tilde H\cong\mathrm{GU}_3(q)$ and $\tilde K\cong E_{q}^{4}:\mathrm{SL}_2(q^2):(q-1)\in\mathcal{C}_1.$ 
\end{enumerate}
Except for~\eqref{4unitary:3}, each weak normal $2$-covering of $\mathrm{PSU}_4(q)$ gives rise to a single normal $2$-covering. In case ~\eqref{4unitary:3} no normal $2$-covering of $\mathrm{PSU}_4(q)$  arises. 
\end{lemma}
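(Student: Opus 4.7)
The plan is to follow the template of Lemma~\ref{dimension3unitary}: dispose of the small-$q$ cases by a direct computation and handle $q\geq 4$ by combining Lemma~\ref{malleu} with element-order arithmetic, closing with a covering verification.

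For $q\in\{2,3\}$, I would run \texttt{magma} on $\tilde G:=\mathrm{SU}_4(q)$, enumerating the maximal subgroups and, for each unordered pair $\{\tilde H,\tilde K\}$ up to $\mathrm{Aut}(\tilde G)$-conjugacy, checking whether $\bigcup_{\sigma\in\mathrm{Aut}(\tilde G)}(\tilde H^\sigma\cup\tilde K^\sigma)$ exhausts $\tilde G$. This produces items~\eqref{4unitary:1}--\eqref{4unitary:4} (plus item~\eqref{4unitary:5} when $q=3$), and applying the principle of Section~\ref{sec:newwen} converts each weak class into the correct number of $\tilde G$-classes of genuine normal $2$-coverings (in particular showing that item~\eqref{4unitary:3} gives none).

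For $q\geq 4$, let $\tilde\mu$ be a weak normal $2$-covering of $\tilde G$ by maximal components (working at the $\tilde G$-level as permitted by Section~\ref{cove-classic-simple}). I would first exhibit a semisimple $x\in\tilde G$ of order $q^3+1$ with action type $1\oplus 3$ on the natural $\mathbb{F}_{q^2}$-module: take $x=\lambda\oplus s$ where $s$ is a Singer element of $\mathrm{SU}_3(q)$ (existent by Proposition~\ref{sing-ord} since $q\geq 4$) and $\lambda\in\mathbb{F}_{q^2}^\ast$ of order $q+1$ is adjusted so that $\det(x)=1$, using Proposition~\ref{sing-ord}(3). Applying Lemma~\ref{malleu} with $n=4$ and $q\geq 4$, the only possible maximal overgroup of $x$ is the $\mathcal{C}_1$ subgroup $\mathrm{GU}_3(q)$ stabilizing a non-degenerate $1$-space, so up to $\mathrm{Aut}(\tilde G)$-conjugacy we may take $\tilde H\cong\mathrm{GU}_3(q)\in\tilde\mu$.

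Next, select a semisimple $y\in\tilde G$ of order $q^2+1$ arising from a cyclic subgroup of the maximal torus corresponding to a totally isotropic pair of $2$-spaces (whose order is divisible by $q^4-1$). Lemma~\ref{aritme} gives $\gcd(q^2+1,|\mathrm{GU}_3(q)|)\leq 2$, hence no $\mathrm{Aut}(\tilde G)$-conjugate of $y$ lies in $\tilde H$, forcing a second maximal component $\tilde K\in\tilde\mu$ containing such a $y$ up to automorphism. Running through the maximal subgroup list of $\mathrm{SU}_4(q)$ in~\cite[Tables~8.10,~8.11]{bhr}, the $\mathcal{C}_5$ members are excluded by Lemma~\ref{no-c5}, and the $\mathcal{C}_2$, $\mathcal{C}_3$ and $\mathcal{S}$ candidates are eliminated for $q\geq 4$ by comparing $q^2+1$ with the subgroup orders via Theorem~\ref{main} and elementary divisibility. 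The only survivors are the two $\tilde G$-classes of parabolic stabilizers of totally singular $2$-subspaces, each isomorphic to $E_q^4{:}\mathrm{SL}_2(q^2){:}(q-1)$, and fused by a graph automorphism. This forces $\gamma_w(\tilde G)=2$ and $\tilde K$ as in item~\eqref{4unitary:5}.

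To close the argument, I would verify that $\{\tilde H,\tilde K\}$ does cover $\tilde G$ for every $q\geq 4$: given $g\in\tilde G$, using Huppert's decomposition~\eqref{eq:decomp} for the semisimple part and the Jordan form for the unipotent part, either $g$ preserves some non-degenerate $1$-space (so an $\tilde G$-conjugate lies in $\tilde H$), or every $\langle g\rangle$-invariant $1$-space is totally singular, in which case $g$ stabilizes a totally singular $2$-subspace and an $\tilde G$-conjugate lies in $\tilde K$. The $\tilde G$-class count for item~\eqref{4unitary:5} is extracted from the ``c'' columns of~\cite{bhr}: $\tilde H$ is a single $\tilde G$-class and the two $\tilde G$-classes of totally singular $2$-space stabilizers fuse under the graph automorphism, giving $h=1$ and $k=2$, hence the single $\tilde G$-class of normal $2$-coverings asserted. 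The main obstacle is the third step: cleanly discarding every non-parabolic maximal overgroup of an element of order $q^2+1$ while the small-$q$ exceptions of Lemma~\ref{malleu} continue to produce items~\eqref{4unitary:2}--\eqref{4unitary:4}; a secondary obstacle is the unipotent part of the covering verification, which has to be handled by direct Jordan-block inspection rather than by torus considerations.
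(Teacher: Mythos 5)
Your identification of $\tilde H\cong\mathrm{GU}_3(q)$ via Lemma~\ref{malleu} matches the paper, and the small-$q$ computations are handled the same way; but the step that pins down $\tilde K$ contains a genuine gap. An element $y$ of order $q^2+1$ does \emph{not} force $\tilde K$ to be the parabolic $E_q^4{:}\mathrm{SL}_2(q^2){:}(q-1)$: such elements also lie in the $\mathcal{C}_2$ subgroup $\mathrm{SL}_2(q^2).(q-1).2$ (its $\mathrm{SL}_2(q^2)$ contains Singer elements of order $q^2+1$) and in the $\mathcal{C}_5$ subgroups $\mathrm{Sp}_4(q)$ and, for $q$ odd, $\mathrm{SO}_4^-(q)$. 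Your appeal to Theorem~\ref{main} and Lemma~\ref{no-c5} does not apply here, because the primes dividing $q^2+1$ lie in $P_2(q^2)$, i.e.\ $e=2=n/2$, outside the range $n/2<e\le n$ required by the $ppd$ machinery; and Lemma~\ref{no-c5} says nothing about the symplectic/orthogonal $\mathcal{C}_5$ members. (Also, $\mathrm{SU}_4(q)$ has no $\mathcal{C}_3$ maximal subgroups to eliminate, since $4$ has no odd prime divisor.) The paper instead works with elements of order $(q^4-1)/(q+1)=(q^2+1)(q-1)$, which genuinely occur only in the parabolic and in $\mathrm{SL}_2(q^2).(q-1).2$; and even then the exclusion of the latter is the heart of the proof and is entirely missing from your proposal: for $q$ odd it uses regular unipotent elements (absent from both $\mathrm{GU}_3(q)$ and the $\mathcal{C}_2$ subgroup), and for $q$ even it constructs an explicit element of order $2(q^2-1)$ with no conjugate in $\mathrm{GU}_3(q)$ and then shows, via a centralizer computation and a Lang--Steinberg/non-abelian cohomology argument on the involutions in $\mathrm{SL}_2(q^2).(q-1).2\setminus\mathrm{SL}_2(q^2)$, that this $\mathcal{C}_2$ group has no element of that order.

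Two further points. First, your class count rests on a wrong picture: in $\mathrm{SU}_4(q)$ there is a \emph{single} conjugacy class of stabilizers of totally singular $2$-spaces (the two fused parabolic classes you describe belong to $\mathrm{SL}_4(q)$, as in Lemma~\ref{dimension4linear}); the correct count of one $G$-class in case~\eqref{4unitary:5} comes from Section~\ref{sec:newwen} with $h=k=1$, not $h=1$, $k=2$. Second, your closing covering verification (``every element either fixes a non-degenerate $1$-space or a totally singular $2$-space'') is stated too glibly for mixed semisimple--unipotent elements; it can be made to work with Huppert's decomposition and a case analysis, but the paper simply cites \cite[Proposition~5.1]{blw}, and you should either do the same or supply the missing case analysis.
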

\begin{proof}
When $q\in \{2,3\}$, the result follows with a computation with the computer algebra system \texttt{magma}~\cite{magma}. Therefore, for the rest of the argument, we suppose $q\ge 4$.

Let $\mu$  be a weak normal covering of $\mathrm{PSU}_4(q)$  of minimum size and maximal components. Let
$\tilde H\in\tilde \mu$ containing some element of order $q^3+1$ and action type $1\oplus 3$. From Lemma~\ref{malleu},  we deduce that $$\tilde
H\cong\mathrm{GU}_3(q).$$

Using the fact  that $q\notin\{2, 3\}$, a case-by-case analysis on the maximal subgroups of $\mathrm{SU}_4(q)$ in~\cite[Tables~8.10,~8.11]{bhr} reveals that there are precisely  two $\mathrm{SU}_4(q)$-conjugacy classes of maximal subgroups  containing  elements of order $(q^4-1)/(q+1)$. Namely,
\begin{itemize}
\item[i)] $E_{q}^{4}:\mathrm{SL}_2(q^2):(q-1)\in\mathcal{C}_1$,
\item[ii)]  $\mathrm{SL}_2(q^2).(q-1).2\in \mathcal{C}_2$.
\end{itemize}
As $\tilde H$ is in not in this list, there  must exist a further maximal
component  $\tilde K$ of $\tilde\mu$ that contains an element of order $(q^4-1)/(q+1)$. Thus $\gamma_w(\mathrm{SU}_4(q))=\gamma_w(\mathrm{PSU}_4(q))\geq 2$.
Assume now that $\gamma_w(\mathrm{SU}_4(q))=2$. Then $\tilde\mu=\{\tilde H, \tilde K\}$ and $\tilde K$ is in one of the above possibilities i),\,ii). We show that necessarily $\tilde{K}\cong E_{q}^{4}:\mathrm{SL}_2(q^2):(q-1)\in\mathcal{C}_1$.

Suppose first that $q$ is odd. As neither $\mathrm{GU}_3(q)$ nor $\mathrm{SL}_2(q^2).(q-1).2\in \mathcal{C}_2$
contain regular unipotent elements, $\tilde{K}$ is isomorphic  to $E_{q}^{4}:\mathrm{SL}_2(q^2):(q-1)\in\mathcal{C}_1.$
By~\cite[Proposition~5.1]{blw},
$$\tilde H\cong\mathrm{GU}_3(q) \hbox{  and  }\tilde K\cong E_{q}^4:\mathrm{SL}_2(q^2):(q-1)$$
are indeed the components of a normal $2$-covering for $\mathrm{SU}_4(q)$ and hence also of a weak normal $2$-covering of $\mathrm{SU}_3(q)$. As usual Section~\ref{sec:newwen} yields that this weak normal $2$-covering gives rise to one $G$-class of normal $2$-coverings. For the rest of our argument we deal with the case $q$ even.

\smallskip

We claim that $\mathrm{SU}_4(q)$ contains an element $g$ with $\order g=2(q^2-1)$. To see this, we suppose that the Hermitian form preserved by $\mathrm{SU}_4(q)$ is given by the matrix
\[
J:=\begin{pmatrix}
0&0&0&1\\
0&0&1&0\\
0&1&0&0\\
1&0&0&0
\end{pmatrix}.
\]
Let $\mathbb{F}_{q^2}^*=\langle \lambda\rangle$ and set $\mu:=\lambda^{q}$. Consider now the matrix
\[
g:=\begin{pmatrix}
0&0&0&\lambda\\
0&\mu&0&0\\
0&0&\mu^{-q}&0\\
\lambda^{-q}&0&0&0
\end{pmatrix}.
\]
An easy computation shows that $g$ preserves the Hermitian form given by $J$ and hence $g\in\mathrm{GU}_4(q)$. Moreover, $$\det (g)=-\mu^{1-q}\lambda^{1-q}=-\lambda^{q-q^2}\lambda^{1-q}=-\lambda^{1-q^2}=-1=1$$
and thus $g\in\mathrm{SU}_4(q)$. Moreover, as
\begin{equation*}g^2=\begin{pmatrix}
\lambda^{1-q}&0&0&0\\
0&\mu^2&0&0\\
0&0&\mu^{-2q}&0\\
0&0&0&\lambda^{1-q}
\end{pmatrix},
\end{equation*}
 one sees easily that $g$ has order $2(q^2-1)$.

 Assume, by contradiction, that $g$ belongs to a stabilizer $\tilde H\cong\mathrm{GU}_3(q)$ of some non-degenerate $1$-dimensional $\mathbb{F}_{q^2}$-subspace $V_1$ of $V$. Then we have $y:=g^2\in \tilde H$ and $g\in \cent {\tilde H}{ y}$. In order to reach a contradiction, it is enough to see that $\cent {\tilde H}{ y}$ is formed by diagonal matrices. Note that we have $\order {\lambda^{1-q}}=q+1$, $\order \mu=q^2-1$ and $$\order {\mu^2}=\order {\mu^{-2q}}=\frac{q^2-1}{\gcd(2, q^2-1)}=q^2-1.$$ 
Since $q\geq 4$, this implies that $\lambda^{1-q}, \mu^2$ and $ \mu^{-2q}$ are distinct, and that the only one among them with order divisible by $q+1$ is $\lambda^{1-q}$. Clearly we have $V_{\lambda^{1-q}}(y)=\langle e_1,e_4\rangle$, $V_{\mu^{2}}(y)=\langle e_2\rangle$ and $V_{\mu^{-2q}}(y)=\langle e_3\rangle$. Moreover, taking into account our Hermitian form, we see  that  $V=V_{\lambda^{1-q}}(y)\perp (V_{\mu^{2}}(y)\oplus V_{\mu^{-2q}}(y)).$ Since $y$ stabilizes $V_1$, we have that $V_1$ is an eigenspace for $y$ corresponding to an eigenvalue whose order must be a divisor of $q+1$, because $V_1$ is non-degenerate. Thus that eigenvalue necessarily  equals $\lambda^{1-q}$ and then  $V_1\subset V_{\lambda^{1-q}}(y)$.
Pick then $w_1\in V_{\lambda^{1-q}}(y)$ orthogonal to $V_1$ and note that the set of vectors $\{w_1,e_2,e_3\}$  is
a basis of $V_1^\perp$  consisting of eigenvectors of $y$ with respect to the pairwise distinct eigenvalues $\lambda^{1-q}, \mu^2$ and $ \mu^{-2q}$. Then if
$$d:=\begin{pmatrix}
\mu^2&0&0\\
0&\mu^{-2q}&0\\
0&0&\lambda^{1-q}\\
\end{pmatrix},$$
we have that $\cent {\tilde H}{ y}\cong \cent {\mathrm{GU}(V_1^\perp)}{ d}$ is isomorphic to a subgroup of $\cent {\mathrm{GL}_3(q^2)}{d}$ and
since the diagonal entries of $d$ are distinct, $\cent {\mathrm{GL}_3(q^2)}{d}$ is formed only by diagonal matrices. Summing up, we have shown that $g$ does not belong to the stabilizer of a $1$-dimensional non-degenerate subspace.

Thus $g$ must be $\mathrm{Aut}(\mathrm{SU}_4(q))$-conjugate to an element of $\tilde K$. We now show that $\mathrm{SL}_2(q^2).(q-1).2$ has no element of order $2(q^2-1)$. Suppose that the Hermitian form preserved by $\mathrm{SU}_4(q)$ has matrix \[
J:=\begin{pmatrix}
0&0&1&0\\
0&0&0&1\\
1&0&0&0\\
0&1&0&0
\end{pmatrix}.
\]
As $\mathrm{SL}_2(q^2).(q-1).2\in\mathcal{C}_2$, we see that $\mathrm{SL}_2(q^2).(q-1).2$ is the stabilizer of a direct sum decomposition of totally singular subspaces of dimension $2$, which we may assume to be $\langle e_1,e_2\rangle\oplus\langle e_3,e_4\rangle$, where $e_1,e_2,e_3,e_4$ is the canonical basis for $V=\mathbb{F}_{q^2}^4$. A direct computation with $J$ shows that the subgroup of $\mathrm{SU}_4(q)$ preserving this direct sum decomposition is
$$\tilde K=\left\langle
\begin{pmatrix}
A&0\\
0&\bar{A}^{-T}
\end{pmatrix},
\begin{pmatrix}
0&I\\
I&0
\end{pmatrix}
\mid A\in \mathrm{GL}_2(q^2), \det (A)\in \mathbb{F}_q
\right\rangle,$$
here we are denoting by $\bar A$ the image of $A$ under the involutory field automorphism.
We argue by contradiction and we let $x=us=su\in \tilde K$ having order $2(q^2-1)$, with $u$ unipotent and $s$ semisimple. If $u\in \mathrm{SL}_2(q^2)$, then $\cent {\mathrm{SL}_2(q^2)}u$ is the Sylow $2$-subgroup of order $q^2$ and hence $|\cent {\tilde K}u|$ divides $q^2(q-1)2$, contradicting the fact that $s\in \cent {\tilde K}u$ has order $q^2-1$. Therefore, $u$ is an involution in $\tilde{K}\setminus\mathrm{SL}_2(q^2)$.

By~\cite[p.~46, Exercise 1]{jps}, the $\mathrm{SL}_2(q^2)$-conjugacy classes of involutions in $\tilde{K}\setminus\mathrm{SL}_2(q^2)$
are in $1$-to-$1$ correspondence to elements in
 the non-abelian cohomology set $H^1(\mathrm{Gal}(\mathbb{F}_{q^2}/\mathbb{F}_q),\mathrm{SL}_2(q^2))$. By
the Lang-Steinberg theorem $H^1(\mathrm{Gal}(\mathbb{F}_{q^2}/\mathbb{F}_q),\mathrm{SL}_2(q^2))$  consists of a single element (see
\cite[\S 2.3, Theorem 1']{jps}).
Hence we may suppose that
$$u=\begin{pmatrix}
0&I\\
I&0
\end{pmatrix}.
$$Now, from a straightforward computation one concludes that
$$\cent{\tilde K} u=\left\langle
\begin{pmatrix}
A&0\\
0&A
\end{pmatrix}
\mid A\in \mathrm{GL}_2(q^2), \det (A)\in \mathbb{F}_q,A=(\bar{A}^{-1})^T
\right\rangle.
$$Now the condition $A=\bar{A}^{-T}$ implies that $A\in \mathrm{GU}_2(q)$. As in dimension $2$ we have $\mathrm{GU}_2(q)=(q+1)\times \mathrm{SU}_2(q)$ and as $\det (A)\in \mathbb{F}_q$, we have $A\in \mathrm{SU}_2(q)$. However, there are no elements of order $q^2-1$ in $\mathrm{SU}_2(q)\cong\mathrm{SL}_2(q)$. From this argument,
one concludes that $\mathrm{SL}_2(q^2).(q-1).2$ does not contain elements of order $2(q^2-1)$. Thus we deduce that
$$\tilde K\cong E_{q}^{4}:\mathrm{SL}_2(q^2):(q-1).$$
Finally, by \cite[Proposition~5.1]{blw},
$$\tilde H\cong\mathrm{GU}_3(q) \hbox{  and  }\tilde K\cong E_{q}^4:\mathrm{SL}_2(q^2):(q-1)$$
are indeed the components of a normal $2$-covering for $\mathrm{SU}_4(q)$ and hence also of a weak normal $2$-covering of $\mathrm{SU}_3(q)$. As usual Section~\ref{sec:newwen} yields that this weak normal $2$-covering gives rise to one $G$-class of normal $2$-coverings.
\end{proof}

\begin{lemma}\label{dimension5unitary}
The weak normal covering number of $\mathrm{PSU}_5(q)$ is at least $3$.
\end{lemma}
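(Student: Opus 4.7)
The plan is to argue by contradiction. Suppose $\{H,K\}$ is a weak normal $2$-covering of $\mathrm{PSU}_5(q)$ by maximal components, and lift it to $\{\tilde H,\tilde K\}$ of $\tilde G:=\mathrm{SU}_5(q)$. The small cases $q\in\{2,3\}$, where $P_6(q)$ degenerates and where Lemma~\ref{malleu}\eqref{malleu:4} introduces the $\mathrm{PSL}_2(11)$-exception at $q=2$, will be disposed of by direct computation in \texttt{magma}; assume $q\geq 4$.

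First, consider a Singer cycle $x\in\tilde G$ of order $(q^5+1)/(q+1)$. Since $n=5$ is an odd prime, Lemma~\ref{malleu}\eqref{malleu:1} forces $\tilde H$, up to $\mathrm{Aut}(\tilde G)$-conjugacy, to be the Singer normalizer $C_{(q^5+1)/(q+1)}\rtimes C_5$ in class $\mathcal{C}_3$, whose order $5(q^4-q^3+q^2-q+1)$ is coprime to both $p$ and $q^2-q+1$.

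Second, I construct $w\in\tilde G$ of action type $3\oplus 1\oplus 1$: pick an orthogonal decomposition $V=V_3\perp V_1\perp V_1'$ of the natural module into non-degenerate subspaces of the indicated dimensions and let $w$ act on $V_3$ as a Singer cycle of $\mathrm{GU}_3(q)$ of order $q^3+1$ and on $V_1,V_1'$ as two distinct scalars $\mu,\mu'\in\mathbb{F}_{q^2}^\ast$ of order dividing $q+1$, chosen via Proposition~\ref{sing-ord}(3) so that $\det(w)=1$. Then $\order{w}=q^3+1$, and since every prime of $q^2-q+1$ lies in $P_6(q)\subseteq P_3(q^2)$ by Lemma~\ref{primitivi}\eqref{primitivi1}, $w$ is a $ppd(5,q^2;3)$-element. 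Because $q^2-q+1$ is coprime to $|\tilde H|$, some $\mathrm{Aut}(\tilde G)$-conjugate of $w$ lies in $\tilde K$. Combining Theorem~\ref{main} (eliminating $\mathcal{C}_4$ and $\mathcal{C}_7$), Lemma~\ref{no-c5} applied for each $k\in\{2,3,5\}$ to primes of $q_0^6-q_0^3+1$ or of $q^2-q+1$ inside $P_{3k}(q^{2/k})$ (eliminating every subfield subgroup in $\mathcal{C}_5$), the arithmetic bounds $\gcd(q^2-q+1,(q+1)^5\cdot 120)\leq 3$ and $\gcd(q^2-q+1,q^4-1)\leq 3$ for $q\geq 4$ via Lemma~\ref{aritme} (eliminating $\mathcal{C}_2$ and the parabolic $P_2$), the bounded order of $\mathcal{C}_6$-subgroups, and the inspection of the low-rank examples listed in \cite[Examples~2.6--2.9]{gpps} for $\mathcal{S}$, one confines $\tilde K$ (up to $\mathrm{Aut}(\tilde G)$-conjugacy) to one of: the parabolic $P_1$ (stabilizer of an isotropic $1$-subspace), $(\mathrm{GU}_1(q)\perp\mathrm{GU}_4(q))\cap\tilde G$, or $(\mathrm{GU}_2(q)\perp\mathrm{GU}_3(q))\cap\tilde G$.

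Third, for each remaining candidate $\tilde K$ I exhibit an element of $\tilde G$ uncovered by $\tilde H\cup\tilde K$ under $\mathrm{Aut}(\tilde G)$-conjugation. If $\tilde K\cong P_1$, the element $w$ itself works: by construction the only $w$-invariant proper subspaces of $V$ are $V_3$, $V_1$, $V_1'$, and their orthogonal sums, which are all non-degenerate, so $w$ fixes no isotropic line and lies in no $\mathrm{Aut}(\tilde G)$-conjugate of $P_1$; moreover $w\notin\tilde H$ by the coprimality observed above. If instead $\tilde K$ is one of the two non-degenerate reducible stabilizers, a regular unipotent element $u\in\tilde G$ works: its single Jordan block of size $5$ forces $u$ to preserve no proper non-degenerate subspace of $V$, and $u\notin\tilde H$ since $p\nmid|\tilde H|$. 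These contradictions establish $\gamma_w(\mathrm{PSU}_5(q))\geq 3$. The main obstacle is Step~2: compiling the shortlist of candidates for $\tilde K$ requires a careful traversal of every Aschbacher class, combining Theorem~\ref{main}, Lemma~\ref{no-c5}, several cyclotomic identities from Lemma~\ref{aritme}, and the low-rank exceptional $\mathcal{S}$-subgroups from~\cite{gpps}; once the shortlist is in hand, Step~3 is immediate from the invariant-subspace structure of the witnesses $w$ and $u$.
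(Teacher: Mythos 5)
Your overall strategy (lift to $\mathrm{SU}_5(q)$, use the Singer cycle and Lemma~\ref{malleu} to force $\tilde H\cong\frac{q^5+1}{q+1}{:}5$, then hunt for an uncovered element) is sound, and your route differs from the paper's: the paper simply reads off from the tables in \cite{bhr} that the only maximal subgroups containing an element of order $q^4-1$ are $E_q^{4+4}{:}\mathrm{GL}_2(q^2)$ and $\mathrm{GU}_4(q)$, and then kills both candidates at once with a single semisimple witness of order $(q^2-q+1)(q-1)$ and type $3\oplus 1\oplus 1$ whose only invariant lines are singular, whereas you rebuild the shortlist from scratch via the $ppd$ machinery around an element of order $q^3+1$ (note that with $n=5$, $e=3=n-2$ the refinement in Theorem~\ref{main}\eqref{main3} is unavailable, so this traversal is genuinely heavier than you suggest; also, in your $\mathcal{C}_5$ step the prime $k$ must range over all prime divisors of $2f$, not just $\{2,3,5\}$, and $3$ may divide $q^2-q+1$ without lying in $P_6(q)$ — harmless, but the statements need adjusting).

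The genuine gap is in your final step, in characteristic $5$. You dismiss the regular unipotent $u$ from $\tilde H$ ``since $p\nmid|\tilde H|$'', but $|\tilde H|=5\,\frac{q^5+1}{q+1}$, so this is false when $q$ is a power of $5$; worse, the intended conclusion itself fails there. Indeed, an element of order $5$ of $\tilde H$ lying above the Singer torus acts on $V=\mathbb{F}_{q^{10}}$, viewed as a $5$-dimensional $\mathbb{F}_{q^2}$-space, as $x\mapsto c\,x^{q^{2i}}$ for some $1\le i\le 4$; its fixed vectors are $0$ together with the solutions of $x^{q^{2i}-1}=c^{-1}$, which form (at most) a coset of $\mu_{\gcd(q^{2i}-1,\,q^{10}-1)}=\mathbb{F}_{q^2}^{\ast}$, so the fixed space is exactly $1$-dimensional and the element is a \emph{regular} unipotent. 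Hence for $p=5$ every regular unipotent element of $\mathrm{SU}_5(q)$ is covered by the $\mathrm{Aut}$-class of $\tilde H$, and your witness yields no contradiction against the candidates $\mathrm{GU}_1(q)\perp\mathrm{GU}_4(q)$ and $\mathrm{GU}_2(q)\perp\mathrm{GU}_3(q)$. The gap is repairable: for $p=5$ use instead unipotent elements of Jordan type $3\oplus 2$ (not realizable in $\mathrm{GU}_1\perp\mathrm{GU}_4$) and $4\oplus 1$ (not realizable in $\mathrm{GU}_2\perp\mathrm{GU}_3$), neither of which lies in $\tilde H$ since all non-identity unipotent elements of $\tilde H$ are regular; or avoid unipotents altogether, as the paper does, with the semisimple element of order $(q^2-q+1)(q-1)$.
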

\begin{proof}
When $q\in \{2,3\}$, the result follows with a computation with the computer algebra system \texttt{magma}~\cite{magma}. Therefore, for the rest of the argument, we suppose $q\ge 4$.

We argue by contradiction and suppose that  $H$ and $K$ are the maximal components of a weak normal $2$-covering $\mu$ of $\mathrm{PSU}_5(q)$. Then $\tilde \mu=\{\tilde H, \tilde K\}$ is a weak normal $2$-covering with maximal components of $\mathrm{SU}_5(q)$.
Since $\mathrm{SU}_5(q)$ contains Singer cycles, from Lemma~\ref{malleu}, replacing $H$ with $K$ if necessary, we have $$\tilde H\cong\left(\frac{q^5+1}{q+1}\right):5.$$

Consider the elements of $\mathrm{SU}_5(q)$ having order $q^4-1$. Using the fact  that $q\neq 2$, another case-by-case analysis on the maximal subgroups of $\mathrm{SU}_5(q)$ in~\cite[Tables~8.20,~8.21]{bhr} reveals that there are two $\mathrm{SU}_5(q)$-conjugacy classes of maximal subgroups containing elements of  order $q^4-1$. Namely,
\begin{itemize}
\item[i)] $E_{q}^{4+4}:\mathrm{GL}_2(q^2)\in\mathcal{C}_1$,
\item[ii)]  $\mathrm{GU}_4(q)\in\mathcal{C}_1$.
\end{itemize}
In particular, $\tilde K$ must be in one of these two possibilities.

The group $\mathrm{SU}_5(q)$ contains semisimple elements $g$ having order $(q^2-q+1)\cdot(q-1)$. To obtain such elements it suffices to start with an orthogonal decomposition $V=W\perp W'$, with $\dim_{\mathbb{F}_{q^2}}W=3$, $\dim_{\mathbb{F}_{q^2}}W'=2$, and then take a Singer cycle of order $q^2-q+1$ for $\mathrm{SU}_3(q)$ on $W$ (which exists because $q\ne 2$) and a diagonal matrix for $\mathrm{SU}_2(q)$ on $W'$. Actually, if the Hermitian form induced on $W'$ is
\[
\begin{pmatrix}
0&1\\
1&0
\end{pmatrix},
\]
then we may suppose that the matrix induced on $W'$ is
\[
\begin{pmatrix}
a&0\\
0&a^{-1}
\end{pmatrix},
\]
for $a\in\mathbb{F}_q^\ast$ with $\langle a\rangle=\mathbb{F}_q^\ast$. Observe that,  since $q\notin\{2,3\}$, we have  $a\ne a^{-1}$ and thus the only irreducible subspaces of $V$ left invariant by $g$ are $W$, and two totally singular subspaces of $W'$. In particular, no $\mathrm{Aut}(\mathrm{SU}_5(q))$-conjugate of $g$ lies in $\mathrm{GU}_4(q)$, because $\mathrm{GU}_4(q)$ is a stabilizer of a non-degenerate $1$-dimensional subspace of $V$.

Finally, since $E_{q}^{4+4}:\mathrm{GL}_2(q^2)$ contains no elements having order divisible by $q^2-q+1$, we see that no $\mathrm{Aut}(\mathrm{SU}_5(q))$-conjugate of $g$ lies in $E_{q}^{4+4}:\mathrm{GL}_2(q^2)$.
\end{proof}

\begin{lemma}\label{dimension6unitary}
The weak normal covering number of $\mathrm{PSU}_6(q)$ is at least $2$. Moreover, if $H$ and $K$ are maximal components  of a weak normal $2$-covering of $\mathrm{PSU}_6(q)$, then up to $\mathrm{Aut}(\mathrm{PSU}_6(q))$-conjugacy one of the following holds
\begin{enumerate}
\item\label{6unitary:1} $q=2$,  $\tilde H\cong 3\times \mathrm{Sp}_6(q)\in \mathcal{C}_5$ and $\tilde K\cong\mathrm{GU}_5(q)\in\mathcal{C}_1$,
\item\label{6unitary:2} $q=2$,  $\tilde H\cong 3.\mathrm{PSU}_4(3)\in\mathcal{S}$ and $\tilde K\cong \mathrm{GU}_5(q)\in\mathcal{C}_1$.
\end{enumerate}
The weak normal $2$-coverings in~\eqref{6unitary:1} and~\eqref{6unitary:2} do not give rise to normal coverings. In particular $\gamma(\mathrm{PSU}_6(q))\geq 3.$
\end{lemma}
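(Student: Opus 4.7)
The case $q=2$ will be handled with the computer algebra system \texttt{magma}, enumerating all $\mathrm{Aut}(\mathrm{PSU}_6(2))$-classes of pairs of maximal subgroups and testing the covering condition against a transversal of conjugacy classes; this produces exactly the two weak normal $2$-coverings in the statement and confirms that neither promotes to a $G$-normal $2$-covering. For $q\ge 3$, I plan to argue by contradiction: assume $\tilde\mu=\{\tilde H,\tilde K\}$ is a weak normal $2$-covering of $\mathrm{SU}_6(q)$ with maximal components. Apply Lemma~\ref{malleu} to a semisimple element of order $q^5+1$ and action type $1\oplus 5$ (built from a Singer cycle of $\mathrm{GU}_5(q)$ acting on a non-degenerate $5$-dimensional subspace of $V$): since the small exception~\eqref{malleu:5} occurs only at $q=2$, we may assume up to relabeling that $\tilde H\cong \mathrm{GU}_5(q)$ (class $\mathcal{C}_1$).

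Next, I would construct a ``test element'' ruling out any such $\tilde K$. Using Proposition~\ref{sing-ord}(3), choose Singer cycles $s_1,s_2\in \mathrm{GU}_3(q)$ of order $q^3+1$ with $\det(s_1)\det(s_2)=1$, and set $g:=s_1\oplus s_2\in \mathrm{SU}_6(q)$. The element $g$ acts irreducibly on two non-degenerate $3$-dimensional orthogonal summands, so its only non-trivial proper invariant subspaces are these two summands; in particular $g$ stabilizes no non-degenerate $1$-dimensional subspace, hence no $\mathrm{Aut}(\mathrm{SU}_6(q))$-conjugate of $g$ lies in $\mathrm{GU}_5(q)$. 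Thus $\tilde K$ must contain an $\mathrm{Aut}$-conjugate of $g$. Since $q\ge 3$, Zsigmondy supplies a prime $r\in P_6(q)$, and $r\mid q^2-q+1\mid \order g$, so $\tilde K$ contains a ``$ppd$-element'' with respect to the top cyclotomic factor. By inspection of \cite[Tables~8.26,~8.27]{bhr} together with Lemma~\ref{no-c5} (ruling out subfield $\mathcal{C}_5$-subgroups) and the classification of $ppd$-containing subgroups in \cite{gpps}, the possibilities for $\tilde K$ reduce to a short list: essentially the $\mathcal{C}_2$ subgroup $\mathrm{GU}_3(q)\wr S_2$, certain $\mathcal{C}_3$ field-extension subgroups such as $\mathrm{GU}_2(q^3).3$, and a handful of $\mathcal{S}$-class candidates controlled by Theorem~\ref{main}.

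Finally, to eliminate each surviving candidate for $\tilde K$, I would exhibit a second element of $\mathrm{SU}_6(q)$ lying neither in $\mathrm{GU}_5(q)$ nor in the candidate. A regular unipotent element $u$ of $\mathrm{SU}_6(q)$, having a single Jordan block of size $6$, cannot lie in $\mathrm{GU}_5(q)$ (Jordan blocks of size at most $5$), cannot lie in $\mathrm{GU}_3(q)\wr S_2$ (imprimitive decomposition forces Jordan blocks to respect a non-trivial direct sum), and cannot lie in a $\mathcal{C}_3$ field-extension subgroup since unipotents there have Jordan blocks of sizes dividing the inner dimension. A complementary element of order $q^5+1$ or $(q^2+q+1)(q-1)$ with action type $1\oplus 5$ or $3\oplus 3$ (tailored to the specific candidate) disposes of the residual $\mathcal{S}$-class cases by order considerations. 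Combining this with the forced $\tilde H\cong \mathrm{GU}_5(q)$ yields the desired contradiction and hence $\gamma_w(\mathrm{PSU}_6(q))\ge 3$ for $q\ge 3$.

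The conclusion $\gamma(\mathrm{PSU}_6(q))\ge 3$ for all $q$ follows by combining the above with the computer verification at $q=2$ that neither of the two weak normal $2$-coverings lifts to a $\mathrm{PSU}_6(2)$-normal $2$-covering (in the notation of Section~\ref{sec:newwen}, both weak classes yield $C=0$). The main obstacle I anticipate is Step~3: producing a uniformly uniform elimination across all $\mathcal{S}$-class possibilities thrown up by \cite{gpps}, and checking that no exceptional near-Singer structure at intermediate primes $q$ produces a stray weak normal $2$-covering beyond the $q=2$ examples already enumerated.
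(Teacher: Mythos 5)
Your overall skeleton (computer check at $q=2$, then Lemma~\ref{malleu} forcing $\tilde H\cong\mathrm{GU}_5(q)$, then test elements to constrain $\tilde K$) matches the paper's, but your choice of second test element creates genuine gaps that the paper's proof is specifically engineered to avoid. The paper's second element has order $(q^6-1)/(q+1)$, and by \cite[Tables~8.26,~8.27]{bhr} it lies only in the parabolic $E_q^{9}:\mathrm{SL}_3(q^2):(q-1)$, the $\mathcal{C}_2$ subgroup $\mathrm{SL}_3(q^2).(q-1).2$ and the $\mathcal{C}_3$ subgroup $\mathrm{SU}_2(q^3).(q^2-q+1).3$. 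Your element $g=s_1\oplus s_2$ has order dividing $q^3+1$ and lies in the imprimitive maximal subgroup of type $\mathrm{GU}_3(q)\,\mathrm{wr}\,S_2$, which therefore survives as a candidate for $\tilde K$; moreover the $ppd$ machinery of Theorem~\ref{main} is not applicable to $g$, since the relevant parameter is $e=3=n/2$ rather than $e>n/2$. Your proposed elimination of the survivors by a regular unipotent element is where the argument breaks. An element of $\mathrm{GU}_3(q)\,\mathrm{wr}\,S_2$ interchanging the two summands can be a regular unipotent of $\mathrm{SU}_6(q)$ when $q$ is even (its square is a pair of regular unipotents of $\mathrm{GU}_3(q)$ and its fixed space is $1$-dimensional), so ``Jordan blocks respect the decomposition'' is true only for elements of the base group. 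Similarly, in $\mathrm{SU}_2(q^3).(q^2-q+1).3$ the unipotent elements lying in the coset of the order-$3$ field automorphism, which exist exactly when $q$ is a power of $3$, need not have Jordan structure inherited from $\mathrm{SU}_2(q^3)$, so your claim that regular unipotents cannot occur there is unjustified in characteristic $3$.

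This last case is precisely where the paper works hardest: its unipotent argument only shows that $\tilde K\cong\mathrm{SU}_2(q^3).(q^2-q+1).3$ forces $q$ to be a power of $3$, and it then kills that residual case with a further element $z$ (a Jordan block of size $2$ glued to a Singer cycle of $\mathrm{SL}_2(q^2)$) of order $3(q^2+1)$, which fixes only a totally singular line (hence has no $\mathrm{Aut}$-conjugate in $\mathrm{GU}_5(q)$) and satisfies $\gcd(q^2+1,|\tilde K|)=\gcd(2,q-1)$ (hence has no conjugate in $\tilde K$). Your proposal has no counterpart to this step, and the promise to dispose of the $\mathcal{S}$-class candidates ``by order considerations'' is likewise unsupported, since Theorem~\ref{main} is out of range for your element. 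Unless you replace the $3\oplus 3$ element by one of order $(q^6-1)/(q+1)$, or supply separate arguments for $\mathrm{GU}_3(q)\,\mathrm{wr}\,S_2$ in even characteristic and for $\mathrm{SU}_2(q^3).(q^2-q+1).3$ in characteristic $3$, the proof for $q\ge 3$ is incomplete.
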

\begin{proof}
When $q=2$, the result follows with a computation with the computer algebra system \texttt{magma}~\cite{magma}. Therefore, for the rest of the argument, we suppose $q\ge 3$.

Let $\mu$  be a weak normal covering of $\mathrm{PSU}_6(q)$  of minimum size and maximal components. Let
$\tilde H\in\tilde \mu$ containing some element of order $q^5+1$ and action of type $1\oplus 5$. From Lemma~\ref{malleu},  we then have
$$\tilde H\cong\mathrm{GU}_5(q).$$
Consider the elements of $\mathrm{SU}_6(q)$ having order $(q^6-1)/(q+1)$. A case-by-case analysis on the maximal subgroups of $\mathrm{SU}_6(q)$ in~\cite[Tables~8.26,~8.27]{bhr} reveals that there are three $\mathrm{SU}_6(q)$-conjugacy classes of maximal subgroups containing elements of order $(q^6-1)/(q+1)$. Namely,
\begin{itemize}
\item[i)] $E_{q}^{9}:\mathrm{SL}_3(q^2):(q-1)\in \mathcal{C}_1$,
\item[ii)] $\mathrm{SL}_3(q^2).(q-1).2\in\mathcal{C}_2$,
\item[iii)] $\mathrm{SU}_2(q^3).(q^2-q+1).3\in \mathcal{C}_3$.
\end{itemize}
Since none of the above groups is isomorphic to $\tilde H$, we deduce that there exists a further component
$\tilde K\in \tilde \mu$.  Hence $\gamma_w(\mathrm{PSU}_6(q))\geq 2$, which confirms the result in~\cite{Sa88}. Assume now that there exists a weak normal $2$-covering of $\mathrm{PSU}_6(q)$ with maximal components. By the above arguments, we have $\tilde \mu=\{\tilde H,\tilde K\}$ and $\tilde K$  must be in one of the three possibilities i)-iii).

Let $W_1$ be an arbitrary $3$-dimensional non-degenerate subspace of $V$ and let $W_2:=W_1^\perp$. Now, let $g\in \mathrm{SU}_6(q)$ be a semisimple element such that $W_1,W_2$ are the only $g$-invariant  irreducible subspaces of $V$. This can be arranged by inducing on $W_1$ and on $W_2$ two suitable distinct Singer cycles. Since all elements of $\tilde H\cong\mathrm{GU}_5(q)$ fix a $1$-dimensional non-degenerate subspace of $V$ and since this property is preserved by $\mathrm{Aut}(\mathrm{SU}_6(q))$-conjugation, we deduce that $g$ is $\mathrm{Aut}(\mathrm{SU}_6(q))$-conjugate to an element in $\tilde K$.

As $E_{q}^{9}:\mathrm{SL}_3(q^2):(q-1)$ is the stabilizer of a $3$-dimensional totally singular subspace of $V$, $g$ cannot be $\mathrm{Aut}(\mathrm{SU}_6(q))$-conjugate to an element in $E_{q}^{9}:\mathrm{SL}_3(q^2):(q-1)$. Thus $\tilde K\ncong E_{q}^{9}:\mathrm{SL}_3(q^2):(q-1)$.

As $\mathrm{SL}_3(q^2).(q-1).2$ is the stabilizer of a direct decomposition $V=V_1\oplus V_2$ where $V_1$ and $V_2$ are $3$-dimensional totally singular subspaces of $V$, $g$ cannot be $\mathrm{Aut}(\mathrm{SU}_6(q))$-conjugate to an element in $\mathrm{SL}_3(q^2).(q-1).2\in\mathcal{C}_2$. Thus $\tilde K\ncong \mathrm{SL}_3(q^2).(q-1).2$.

Therefore $\tilde K$ is isomorphic to the extension field subgroup $\mathrm{SU}_2(q^3).(q^2-q+1).3$.
Now, for every $i\in \{1,2,3,4,5\}$, $\mathrm{SU}_6(q)$ contains a unipotent element  $u$ with $\dim_{\mathbb{F}_{q^2}}\cent V{u}=i$. Clearly, every unipotent element $u$ of $\tilde H\cong \mathrm{GU}_5(q)$ satisfies $\dim_{\mathbb{F}_{q^2}}\cent V u\ge 2$. Therefore, all unipotent elements $u$ of $\mathrm{SU}_6(q)$ satisfying $\dim_{\mathbb{F}_{q^2}}\cent Vu=1$ are conjugate, via an element in $\mathrm{Aut}(\mathrm{SU}_6(q))$, to an element of $\tilde{K}$. As  $\tilde K\cong \mathrm{SU}_2(q^3).(q^2-q+1).3$ and as every unipotent element $u\in\mathrm{SU}_2(q^3)$ satisfies $\dim_{\mathbb{F}_{q^2}}\cent Vu\ge 2$, this case can arise only when $\gcd (q,(q^2-q+1)3)\ne 1$, that is, when $q$ is a power of $3$. Summing up, $q$ is a power of $3$ and $\tilde K\cong \mathrm{SU}_2(q^3).(q^2-q+1).3$.
Without loss of generality, we may suppose that the Hermitian form left invariant by $\mathrm{SU}_6(q)$ is given via the matrix
\[
\begin{pmatrix}
0&1&0&0&0&0\\
1&0&0&0&0&0\\
0&0&0&0&1&0\\
0&0&0&0&0&1\\
0&0&1&0&0&0\\
0&0&0&1&0&0\\
\end{pmatrix}.
\]
Using this Hermitian form, it is readily seen that $\mathrm{SU}_6(q)$ contains the matrix
\[
z:=\begin{pmatrix}
1&1&0&0\\
0&1&0&0\\
0&0&s&0\\
0&0&0&(\bar{s}^{-1})^T
\end{pmatrix},
\]
where $s\in\mathrm{SL}_2(q^2)$ is a Singer cycle having order $q^2+1$.
 The only $1$-dimensional subspace of $V$ stabilized by $z$ is $\langle (0,1,0,0,0,0)\rangle$, which is totally singular. Therefore $z$ has no conjugate in $\tilde H\cong \mathrm{GU}_5(q)$, via elements in $\mathrm{Aut}(\mathrm{SU}_6(q))$. By construction, $\order z=3(q^2+1)$. With a computation, using Lemma~\ref{aritme}, we obtain 
\begin{align*}\gcd (q^2+1,|\tilde K|)&=\gcd(q^2+1,q^3(q^6-1)(q^2-q+1)3)\\
&=\gcd(q^2+1,(q^6-1)(q^2-q+1))\\
&=\gcd(2,q-1)=2.
\end{align*}
Therefore, $z$ is not $\mathrm{Aut}(\mathrm{SU}_6(q))$-conjugate to an element of $\tilde K$.
\end{proof}

\subsection{Large dimensional unitary groups}\label{sec:largeunitary}

In this section we deal with large dimensional unitary groups $\mathrm{SU}_n(q)$ with  $n\ge 7$.

Let $t$ be a Bertrand number for $n$. As $n\ge 7$, $t$ is an odd prime with $\gcd(t,n)=1$, $t\ge 5$ and $n/2< t \leq n-2$. Recall that  if  $n\geq 8$, then the stronger inequality $n/2< t \leq n-3$ holds.

Moreover, from Lemma~\ref{aritme}~\eqref{eq:arithme2} and~\eqref{eq:arithme3}, we have $\gcd(q^{t}+1,q^{n-t}+(-1)^n)=q+1$.
In order to find the components of a  weak normal covering of $\mathrm{SU}_n(q)$, we consider a Bertrand element $z\in\mathrm{SU}_n(q)$ such that
\begin{itemize}
\item $\order z=\frac{(q^t
+1)(q^{n-t} +(-1)^n)}{\gcd(q^t+1,q^{n-t}+(-1)^n)}=\frac{(q^t
+1)(q^{n-t} +(-1)^n)}{q+1}$ (see Table~\ref{2}),
\item the action of $z$ on $V$ is of type $t\oplus \frac{n-t}{2}\oplus\frac{n-t}{2}$ if $n$ is odd and of type
$t\oplus (n-t)$ if $n$ is even. We write $V=V_t\perp W_1\perp W_2$ when $n$ is odd and $V=V_t\perp W$ when $n$ is even, where $\dim_{\mathbb{F}_{q^2}}V_t=t$, $\dim_{\mathbb{F}_{q^2}}W_1=\dim_{\mathbb{F}_{q^2}}W_2=(n-t)/2$, $\dim_{\mathbb{F}_{q^2}}W=n-t$ and $V_t,W_1,W_2,W$ are $z$-invariant submodules of $V$,  with $W$ and $V_t$ non-degenerate and $W_1$, $W_2$ totally singular.
\item $z$ induces a matrix of order $q^t+1$ on $V_t$, of order $q^{n-t}+1$ on $W$ and of order $q^{n-t}-1$ on both $W_1$ and $W_2$.
\end{itemize}
We first justify the existence of the Bertrand elements.

Let first $n$ be even.
Then both $t$ and $n-t$ are odd.  Pick a generator $\lambda$ for the subgroup of order $q+1$ in $\mathbb{F}_{q^2}^*$. Then, by Proposition \ref{sing-ord}$\,(3)$, these exist
Singer cycles $s_1\in\mathrm{GU}_t(q)$ and $s_2\in\mathrm{GU}_{n-t}(q)$ such that $\mathrm{det}(s_1)=\lambda$ and $\mathrm{det}(s_2)=\lambda^{-1}.$ Thus, by the embedding of $\mathrm{GU}_t(q) \perp \mathrm{GU}_{n-t}(q)$ in $\mathrm{GU}_n(q)$, we get an element $z\in \mathrm{SU}_n(q)$ having all the properties required to be a Bertrand element.

Let next $n$ be odd. The embedding of $\mathrm{GL}_{\frac{n-t}{2}}(q^2)$ in $\mathrm{GU}_{n-t}(q)$ determines a block matrix $x$ of order $q^{n-t}-1$ and determinant $b=a^{-q+1}$, where $\langle a \rangle=\mathbb{F}_{q^2}^*$. Since $b$ generates the subgroup of order $q+1$ of $\mathbb{F}_{q^2}^*$, by Proposition \ref{sing-ord}$\,(3)$, there exists a Singer cycle $s$ of $\mathrm{GU}_{t}(q)$ having determinant equal to $b^{-1}$. Thus the block matrix constructed using $x$ and $s$ gives the required Bertrand element $z$.

In the next result, we use~\cite{kl} for the labeling of the parabolic subgroups of $\mathrm{SU}_n(q)$.
\begin{lemma}\label{uni-z}
Let $M$ be a maximal subgroup of $\mathrm{SU}_n(q)$ with $n\ge 7$ containing a Bertrand element $z$ as in Table~$\ref{2}$ and satisfying the conditions above. Then one of the following holds
\begin{enumerate}
\item\label{uni-z:1} $M$ is of type $\mathrm{GU}_t(q)\perp \mathrm{GU}_{n-t}(q)$ and lies in class $\mathcal{C}_1$,
\item\label{uni-z:2} $n$ is odd,
$M$ is a parabolic subgroup of type $P_{(n-t)/2}$ and lies in class $\mathcal{C}_1$.
\end{enumerate}
\end{lemma}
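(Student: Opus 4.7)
My plan is to apply the $ppd$-element machinery of Guralnick, Penttila, Praeger, and Saxl (Theorem~\ref{main}), together with the action data of the Bertrand element, to force $M\in\mathcal{C}_1$ and then identify $M$ geometrically. To set this up, since $t\ge 5$ (because $n\ge 7$), Zsigmondy's theorem gives $P_{2t}(q)\neq\varnothing$, and by Lemma~\ref{primitivi}\eqref{primitivi1} with $k=2$ we have $P_{2t}(q)\subseteq P_t(q^2)$. As $q^t+1$ divides $\order z$, any $r\in P_{2t}(q)$ divides $\order z$, so $z$ is a $ppd(n,q^2;t)$-element when $\mathrm{SU}_n(q)$ is viewed inside $\mathrm{GL}_n(q^2)$. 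Since $n/2<t\le n-2$ (and $t\le n-3$ when $n\ge 8$), the hypotheses of Theorem~\ref{main} are in force; moreover $r\ge 2t+1>n$, a fact I will repeatedly exploit in controlling the prime divisors of $|M|$.

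Next I would rule out all Aschbacher classes other than $\mathcal{C}_1$. Theorem~\ref{main}\eqref{main1} immediately removes $\mathcal{C}_4$ and $\mathcal{C}_7$, and class $\mathcal{C}_8$ is absent for unitary groups. For $\mathcal{C}_5$ I would appeal to part~(2) of Lemma~\ref{no-c5}: if $M\le\mathrm{GL}_n(q^{2/k})$ for some prime $k\mid 2f$, then a case-by-case check (with $k=2$ as the principal case) shows that a prime in $P_{kt}(q^{2/k})$ exists and divides $\order z$, contradicting $z\in M$. For $\mathcal{C}_2$, the $z$-irreducible summand $V_t$ of dimension $t>n/2$ cannot fit inside a single block of any imprimitive decomposition, and the incompatibility of $t$ (an odd prime coprime to $n$) with the cycle structure forced on the blocks excludes this possibility. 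Class $\mathcal{C}_3$ is ruled out because any extension-field subgroup with extension of prime degree $k$ would force $k$ to divide both $t$ and $n-t$, contradicting $\gcd(t,n)=1$. Class $\mathcal{C}_6$ is excluded by order and dimension considerations, using that $r>n$ cannot divide the order of an extraspecial-normalizer subgroup. Finally, when $n\ge 8$, Theorem~\ref{main}\eqref{main3} together with $t\le n-3$ reduces the class $\mathcal{S}$ to Example~2.6(a) of~\cite{gpps} and a short list of sporadic cases, each of which I would rule out by comparing element orders and action types with the explicit structure of $z$. The residual case $n=7$, where $t=n-2=5$ so Theorem~\ref{main}\eqref{main3} does not apply, requires a separate finite check against Examples~2.5--2.9 of~\cite{gpps}.

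It then remains to handle $M\in\mathcal{C}_1$. By Huppert's Theorem, using the orthogonal decomposition $V=V_t\perp W$ for $n$ even or $V=V_t\perp W_1\perp W_2$ for $n$ odd, together with the fact that the summands carry pairwise non-isomorphic irreducible $\langle z\rangle$-modules, the proper nonzero $z$-invariant subspaces of $V$ are exactly the direct sums of these summands and (for $n$ odd) the totally singular subspaces $W_1,W_2$ themselves. Stabilizing the non-degenerate $V_t$ (or its perpendicular complement) realizes the $\mathcal{C}_1$-subgroup of type $\mathrm{GU}_t(q)\perp\mathrm{GU}_{n-t}(q)$ in conclusion~\eqref{uni-z:1}; and for $n$ odd, stabilizing the totally singular $W_1$ (or the $\mathrm{SU}_n(q)$-conjugate $W_2$) gives the parabolic $P_{(n-t)/2}$ of conclusion~\eqref{uni-z:2}.

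The main obstacle will be the careful class-by-class exclusion in the second step. In particular, the Aschbacher class $\mathcal{S}$ requires close attention because a strong-$ppd$ variant of Lemma~\ref{no-c5}(3) is not directly available (our $z$ need not be a strong $ppd$-element), and the subfield class $\mathcal{C}_5$ demands delicate arithmetic with primitive prime divisors. The borderline small case $n=7$, excluded from the $t\le n-3$ hypothesis of Theorem~\ref{main}\eqref{main3}, is the other point that needs separate bookkeeping.
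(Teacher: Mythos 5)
Your skeleton coincides with the paper's: you produce a witness prime $r\in P_{2t}(q)\subseteq P_t(q^2)$ dividing $\order z$ with $r\ge 2t+1$, so that $M$ is a $ppd(n,q^2;t)$-group, you feed this into Theorem~\ref{main}, you read off conclusions~\eqref{uni-z:1} and~\eqref{uni-z:2} from the $\mathbb{F}_{q^2}\langle z\rangle$-module structure of $V$ when $M\in\mathcal{C}_1$, and you dispose of $\mathcal{C}_5$ via Lemma~\ref{no-c5}. The real divergence is in everything outside $\mathcal{C}_1\cup\mathcal{C}_5$: the paper never argues class by class, but checks against Tables~2--8 of~\cite{gpps} that each of Examples~2.3--2.9 only occurs under one of the conditions ``$t>n-3$ and $n\ne 7$'', ``$t=n-3$ even'', ``$r=t+1$'' or ``$r=2t+1$'', all incompatible with $t$ being a Bertrand number for $n$ (prime, coprime to $n$, with $n/2<t\le n-2$, and $t\le n-3$ for $n\ge 8$) and with $r\ge 2t+1$; this kills $\mathcal{C}_2$, $\mathcal{C}_3$, $\mathcal{C}_6$, $\mathcal{S}$ and the case $n=7$ simultaneously, so no separate small-case bookkeeping is needed.

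Two of your bespoke exclusions are genuinely gappy. First, your reason for $\mathcal{C}_3$ is wrong as stated: for $z$ inside a degree-$k$ extension-field subgroup, the irreducible $\mathbb{F}_{q^2}\langle z\rangle$-summands need not have dimension divisible by $k$ (a semisimple element of $\mathrm{GU}_{n/k}(q^k)$ whose eigenvalues lie in a small subfield has small summands), so ``$k$ divides $t$ and $n-t$'' does not follow; the correct argument is arithmetic: $r\mid |\mathrm{GU}_{n/k}(q^k).k|$ forces either $r=k\le n$, impossible since $r\ge 2t+1>n$, or $2t\mid 2ki$ for some $i\le n/k$, whence $t\mid ki$, and since $t$ is a prime with $\gcd(t,n)=1$ we get $t\le n/k\le n/2$, a contradiction. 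Second, and more seriously, your treatment of class $\mathcal{S}$ and of the residual case $n=7$ is a promissory note exactly where the content lies: in the configuration $r=2t+1=n+2$ prime (perfectly compatible with a Bertrand number, e.g.\ $n=9$, $t=5$, $q=2$, where $P_{10}(2)=\{11\}$), an almost simple group with socle $A_{n+2}$ does contain elements of order $r$, so comparing divisibility by the witness prime cannot exclude Example~2.6~a); you need either the extra side conditions recorded in the tables of~\cite{gpps} (the paper's route here) or a Landau-type estimate comparing the full order $\order z$ with the maximal element order of $S_{n+2}$, as the paper does in the symplectic and orthogonal analogues (Lemmas~\ref{bertrand-sp} and~\ref{bertrand-odd}). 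A smaller point on $\mathcal{C}_5$ with $k\ge 3$: to apply the first assertion of Lemma~\ref{no-c5}(2) you must exhibit a prime of $P_{kt}(q^{2/k})$ dividing $\order z$; a primitive prime divisor of $p^{2ft}-1$ does the job, but this is not automatic from $P_{kt}(q^{2/k})\ne\varnothing$ alone, since primes in that set may divide $q^t-1$ rather than $q^t+1$, and the strong-$ppd$ clause is unavailable because, as you note, $z$ is not a strong $ppd(n,q^2;t)$-element.
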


\begin{proof}
As $t\ge 5$, from Zsigmondy's theorem and from Lemma \ref{primitivi}, we deduce that
\begin{equation}\label{5}
\varnothing \neq P_{2t}(q)\subseteq P_{t}(q^2).
\end{equation}
Let $r\in P_{2t}(q)$. Then
$r\mid q^t+1$ and $r\nmid q+1$. Hence $r\mid \frac{q^t+1}{q+1}$ and thus $r\mid \order z=\frac{q^t
+1}{q+1}(q^{n-t} +(-1)^n).$ Moreover, by \eqref{boundppd}, we have $r\geq 2t+1$.
In particular, $z$ is a $ppd(n,q^2;t)$-element with order divisible by an $r\in P_t(q^2)$ with $r\geq 2t+1.$
It follows that  $M$ is a $ppd(n,q^2;t)$-group and, by Theorem \ref{main},  we search
$M$ among the groups in the Examples~2.1--2.9 of~\cite{gpps}, with $n\geq 7$ and $r\geq 2t+1.$
From the definition of Bertrand number and from $n\ge 7$, we have either $n/2< t \le n-3$  or $(n,t)=(7,5).$  In any case we also have $\gcd(t,n)=1.$ Now, looking
at Tables~2-8 of~\cite{gpps}, it is easily checked that $r\neq 2 t+1$ since $t\leq n-2$. These facts rule out the groups of Examples~2.3, 2.4, 2.5, 2.6, 2.7, 2.8, 2.9, because they are
all given under at least one of the conditions:
\begin{itemize}
\item $t> n-3$ and
$n\neq 7,$ or
\item  $t=n-3$ even, or
\item $r = t+1,$ or
\item $r=2t+1.$
\end{itemize}
Hence we may reduce our considerations to Examples~2.1--2.2, that is, to $M\in \mathcal{C}_1$ or $M \in \mathcal{C}_5.$
\smallskip

\noindent\textsc{The subgroup $M$ lies in class $\mathcal{C}_1$. }We have two cases to consider: $M$ is the stabilizer of a proper totally singular subspace or of a proper non-degenerate subspace of $V$. We consider first the second possibility. Thus  $M$ is of type $\mathrm{GU}_m(q) \perp \mathrm{GU}_{n-m}(q)$, with $1\le m<n/2$.
Recall that the action of $z$ on $V$ is of type $t\oplus \frac{n-t}{2}\oplus \frac{n-t}{2}$ or $t\oplus n-t$ depending on whether $n$ is odd or even. When $n$ is even, $z$ leaves invariant exactly two proper subspaces of $V$, one having dimension $t$ and the other having dimension $n-t$; moreover, these subspaces are both non-degenerate. Therefore, if $z\in M$, then $t=n-m$, that is, $m=n-t$ and we obtain part~\eqref{uni-z:1}. Suppose $n$ odd. By definition,  $V$ has exactly three irreducible $\mathbb{F}_{q^2}\langle z\rangle$-submodules: namely, $W_1,W_2$ and $V_t$, where $V_t$ and $W_1\oplus W_2$ are non-degenerate and $W_1,W_2$ are totally singular. Therefore, if $z\in M$, then $t=n-m$, that is, $m=n-t$ and we obtain part~\eqref{uni-z:1}.
We consider now the case that $M$ is the stabilizer of a proper totally singular subspace of $V$. Thus  $M$ is a parabolic subgroup $P_m$, with $1\le m\le\lfloor n/2\rfloor$.  When $n$ is even, the two proper subspaces of $V$ left invariant by $z$ are both non-degenerate and hence  $z$ lies in no parabolic subgroup. Thus $n$ is odd.  The only totally singular proper subspaces of $V$ which are $z$-invariant are $W_1$ and $W_2$.  As $\dim_{\mathbb{F}_{q^2}} W_1=\dim_{\mathbb{F}_{q^2}} W_2=(n-t)/2$, we deduce  $m=(n-t)/2$ and we obtain part~\eqref{uni-z:2}.

\smallskip
\noindent\textsc{The subgroup $M$ lies in  class $\mathcal{C}_5$. }  By Lemma \ref{no-c5}, we have $M\leq \mathrm{GL}_n( q^{\delta/k})$, where
 $\delta=2$ and $k$ is a prime with $k\mid 2f$. When $k=2$, the possibility $M\in\mathcal{C}_5$  is ruled out by \eqref{5} and Lemma \ref{no-c5}\,(2). When $k\geq 3$, as  $kt\geq 15$, Lemma~\ref{no-c5}\,(2) applies because $P_{kt}(q^{2/k})\neq\varnothing$.
\end{proof}

\begin{proposition}\label{unitari}
For every $n\ge 7$, the weak normal covering number of $\mathrm{PSU}_n(q)$ is at least $3$.
\end{proposition}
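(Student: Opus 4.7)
The plan is to mirror the contradiction proofs of Lemmas~\ref{dimension5unitary} and~\ref{dimension6unitary}, using the two identification results Lemmas~\ref{malleu} and~\ref{uni-z} to pin down both components of a hypothetical weak normal $2$-covering and then to exhibit an element that neither can absorb. Suppose $\mu=\{H,K\}$ is a weak normal $2$-covering of $\mathrm{PSU}_n(q)$ with maximal components, and lift it to $\tilde\mu=\{\tilde H,\tilde K\}$ on $\mathrm{SU}_n(q)$. The Bertrand element $z$ of Table~\ref{2} lies, up to $\mathrm{Aut}$-conjugacy, in $\tilde H$ or $\tilde K$, so after relabeling Lemma~\ref{uni-z} forces $\tilde K$ to be of type $\mathrm{GU}_t(q)\perp\mathrm{GU}_{n-t}(q)$ or, when $n$ is odd, the parabolic $P_{(n-t)/2}$. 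Consider also $y\in\mathrm{SU}_n(q)$, taken to be a Singer cycle when $n$ is odd and a semisimple element of order $q^{n-1}+1$ and action type $1\oplus(n-1)$ when $n$ is even; such $y$ exists by Table~\ref{singer-order} and the block constructions of Section~\ref{Huppert}. Since $y$ acts irreducibly on $V$ (odd case) or stabilizes only nondegenerate subspaces of dimensions $1$, $n-1$, $n$ (even case), and since $5\leq t\leq n-2$, neither form of $\tilde K$ can absorb $y$; hence $y\in\tilde H$ up to $\mathrm{Aut}$, and Lemma~\ref{malleu} pins $\tilde H$ down to $\mathrm{SU}_{n/k}(q^k).\tfrac{q^k+1}{q+1}.k$ for some prime $k\mid n$ (odd case) or to $\mathrm{GU}_{n-1}(q)$ (even case), the small exceptions of that lemma all having $n<7$.

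With both components identified, I would construct a semisimple $g\in\mathrm{SU}_n(q)$ whose invariant-subspace lattice is incompatible with both. For $n\geq 9$ odd, take $g$ of action type $(n-2)\oplus 2$ with $g|_{V_{n-2}}$ a Singer cycle of $\mathrm{GU}_{n-2}(q)$ (which exists since $n-2$ is odd) and $g|_{V_2}$ diagonal in a hyperbolic basis $(e_1,e_2)$ of $V_2$ with eigenvalues $\lambda,\lambda^{-q}$ of order $q^2-1$, adjusting determinants on the two blocks so that $g\in\mathrm{SU}_n(q)$. For $n\geq 8$ even the same decomposition works once the unavailable $\mathrm{GU}_{n-2}(q)$-Singer is replaced by a matched pair $(h,(\bar h^{-1})^T)$ on a hyperbolic splitting $V_{n-2}=W\oplus W'$ with $h\in\mathrm{GL}_{(n-2)/2}(q^2)$ a Singer cycle. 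An eigenvalue analysis shows that the only nondegenerate $g$-invariant subspaces have dimensions in $\{2,n-2,n\}$ and the only totally singular ones have dimension $1$; since $t\geq 5$ and $(n-t)/2\geq 2$ for $n\geq 9$, this rules out $g\in\tilde K$ up to $\mathrm{Aut}$. The fact that $g\notin\tilde H$ up to $\mathrm{Aut}$ is geometric in the even case (no nondegenerate $1$-subspace is $g$-invariant) and, in the odd case, arithmetic: $\order g$ is divisible by some $r\in P_{2(n-2)}(q)$ (nonempty by Zsigmondy), and a computation using Lemma~\ref{aritme} together with $r>n\geq k$ shows that $r$ divides neither $|\mathrm{SU}_{n/k}(q^k)|$ nor $(q^k+1)/(q+1)$ nor $k$.

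The case $n=7$ requires a separate construction and is the main technical obstacle, since there $t=n-2=5$ collapses the Bertrand interval and the element above falls inside $\mathrm{GU}_5(q)\perp\mathrm{GU}_2(q)=\tilde K$. Instead I would take $g$ of action type $3\oplus 4$, using a Singer of $\mathrm{GU}_3(q)$ on the $3$-block and the matched-pair construction on the $4$-block; the nondegenerate $g$-invariants then have dimensions $\{3,4,7\}$ and the totally singular ones have dimension $2$, avoiding both $\mathrm{GU}_5(q)\perp\mathrm{GU}_2(q)$ and the parabolic $P_1$. To rule out $g\in\tilde H\cong (q^7+1)/(q+1).7$ I would use any $r\in P_4(q)$ (nonempty by Zsigmondy), noting that $r\nmid q^7+1$ because $4\nmid 14$ and $r\neq 7$ because $(\mathbb{Z}/7\mathbb{Z})^\ast$ has no element of order $4$. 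Beyond the $n=7$ case, the remaining delicate point common to all cases is the verification of the $g$-invariant-subspace lattice: the claimed list of nondegenerate and totally singular dimensions requires that the Singer cycles be chosen so their eigenvalues do not collide across the blocks, which in turn reduces to an arithmetic coprimeness check via Lemma~\ref{aritme}.
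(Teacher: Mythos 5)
Your proof is correct, and its first half is exactly the paper's: both arguments pin down the two putative maximal components by routing a Singer-type element through Lemma~\ref{malleu} and the Bertrand element through Lemma~\ref{uni-z}. The difference is the witness used for the final contradiction. The paper takes, for $n$ even, a semisimple element of order $(q^n-1)/(q+1)$ of type $\frac{n}{2}\oplus\frac{n}{2}$ and, for $n$ odd, an element of order $q^{n-2}+1$ of type $1\oplus 1\oplus(n-2)$, excluded from the $\mathcal{C}_3$ component by a primitive prime divisor of $q^{2(n-2)}-1$; you use instead a type $(n-2)\oplus 2$ element whose $2$-block has totally singular eigenlines, with the same ppd argument against the field-extension component in the odd case and a purely geometric exclusion of $\mathrm{GU}_{n-1}(q)$ in the even case. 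What your version buys is the case $n=7$: there $t=5=n-2$, so any witness stabilizing a non-degenerate $(n-2)$-space -- your generic element, and for the same reason the odd-case element of type $1\oplus1\oplus(n-2)$ -- lies in a conjugate of the $\mathrm{GU}_5(q)\perp\mathrm{GU}_2(q)$ component; your dedicated $3\oplus 4$ element with $r\in P_4(q)$ (so $r\equiv 1\pmod 4$, whence $r\ne 7$ and $r\nmid q^7+1$) is therefore genuinely necessary, and this is a point where your write-up is more careful than a uniform treatment. Two small repairs: the blanket claim that the only totally singular $g$-invariant subspaces have dimension $1$ fails for your even-$n$ matched-pair construction (for instance $W\oplus\langle e_1\rangle$ is a totally singular invariant subspace of dimension $n/2$), but this is harmless because the parabolic option of Lemma~\ref{uni-z} only arises for $n$ odd, where the claim is true; and the determinant adjustments and the non-collision of eigenvalues across blocks, which you flag at the end, do follow at once from Proposition~\ref{sing-ord}(3),(4) together with the fact that the block orders $q^{n-2}\pm 1$ exceed $q^2-1$.
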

\begin{proof}
As usual, we argue with $\mathrm{SU}_n(q)$. Let $\tilde H$ be a component of a weak normal covering of $\mathrm{SU}_n(q)$ containing a semisimple element having order $(q^n+1)/(q+1)$ and of type $n$ in its action on $V$ when $n$ is odd, and having order $q^{n-1}+1$ and of type $1\oplus (n-1)$ in its action on $V$ when $n$ is even. Moreover, let $\tilde K$ be a component containing the Bertrand element defined in Lemma~\ref{uni-z}.

Assume first $n$ even. From Lemma~\ref{malleu}, we have $\tilde H\cong\mathrm{GU}_{n-1}(q)$ and, from Lemma~\ref{uni-z}, we have that $\tilde K$ is of type $\mathrm{GU}_t(q)\perp \mathrm{GU}_{n-t}(q)$. Let $x\in \mathrm{SU}_n(q)$ be a semisimple element having order $(q^n-1)/(q+1)$ and having type $\frac{n}{2}\oplus \frac{n}{2}$ on $V$. Thus $V=W_1\perp W_2$, $\dim_{\mathrm{F}_{q^2}}W_1=\dim_{\mathrm{F}_{q^2}}W_2=n/2$ and $W_1$, $W_2$ are $x$-invariant. We also choose $x$ so that the matrix induced in its action on $W_1$ and on $W_2$ has order $(q^{n}-1)/(q+1)$. Now, the only proper $x$-invariant subspaces of $V$ are $W_1$ and $W_2$ and hence $x$ cannot have an $\mathrm{Aut}(\mathrm{SU}_n(q))$-conjugate in $\tilde{H}\cong\mathrm{GU}_{n-1}(q)$ because $n/2\notin\{1,n-1\}$. Similarly, $x$ cannot have an $\mathrm{Aut}(\mathrm{SU}_n(q))$-conjugate in $\tilde{K}$ because $t\ne n/2$. Therefore a weak normal covering of $\mathrm{SU}_n(q)$ contains at least one more component.

Assume $n$ odd. From Lemma~\ref{malleu}, we have $\tilde H\cong\mathrm{SU}_{n/k}(q^k).((q^k+1)/(q+1)).k$ for some prime divisor $k$ of $n$ and, from Lemma~\ref{uni-z}, we have that $\tilde K$ is a parabolic subgroup $P_{(n-t)/2}$ or is of type $\mathrm{GU}_t(q)\perp\mathrm{GU}_{n-t}(q)$. Let $x\in \mathrm{SU}_n(q)$ be a semisimple element having order $q^{n-2}+1$ and having type $1\oplus 1\oplus (n-2)$ on $V$. Thus $V=W_1\perp W_2\perp W$, $\dim_{\mathrm{F}_{q^2}}W_1=\dim_{\mathrm{F}_{q^2}}W_2=1$, $\dim_{\mathrm{F}_{q^2}}W=n-2$ and $W_1$, $W_2$, $W$ are $x$-invariant. We also choose $x$ so that the matrix induced in its action on $W_1$ and on $W_2$ has order $q+1$ and the matrix induced by $x$ on $W$ has order $q^{n-2}+1$. Now, the only proper $x$-invariant subspaces of $V$ are $W_1$, $W_2$ and $W$. Hence $x$ cannot have an $\mathrm{Aut}(\mathrm{SU}_n(q))$-conjugate in $\tilde{K}$ because $x$ does not stabilize a totally isotropic subspace of dimension $(n-t)/2$, nor a non-degenerate subspace of dimension $t$. Suppose $x$ is $\mathrm{Aut}(\mathrm{SU}_n(q))$-conjugate to an element in $\tilde H$. Then $q^{n-2}+1$ divides the order of $\tilde{H}$. From Zsigmondy's theorem, we may choose $q_{2(n-2)}\in P_{2(n-2)}(q)$. Now, the definition of primitive prime divisor yields that $q_{2(n-2)}$ is relatively prime to the order of $\mathrm{SU}_{n/k}(q^k).((q^k+1)/(q+1))$. Thus $q_{2(n-2)}$ divides $k$ and hence $q_{2(n-2)}\le k$. However,~\eqref{boundppd} yields $q_{2(n-2)}\ge 2(n-2)+1=2n-3$ and hence $k\ge 2n-3>n$, a contradiction. Therefore a weak normal covering of $\mathrm{SU}_n(q)$ contains at least one more component. Thus $\gamma_w(\mathrm{SU}_n(q))=\gamma_w(\mathrm{PSU}_n(q))\geq 3.$
\end{proof}

Now, the veracity of Table~\ref{00000} follows from the results in this section.

\section{Symplectic  groups}\label{sec:symplectic}

As usual, given a subgroup $X$ of $\mathrm{PSp}_n(q)$, we denote by $\tilde{X}$ its preimage under the natural projection $\mathrm{Sp}_n(q)\to \mathrm{PSp}_n(q)$. Exactly as for the unitary groups, since the automorphism group of $\mathrm{Aut}(\mathrm{Sp}_n(q))$ projects onto the automorphism group of $\mathrm{PSp}_n(q)$, we may work with  $\mathrm{Sp}_n(q)$. Extra care must to be taken when $n=4$ and $q$ is even, because graph-field automorphisms of $\mathrm{Sp}_4(q)$ do not act on the vector space $V$. 

\begin{lemma}[{{\cite[Theorem 1.1]{msw}}}]\label{malles} 
Let $n$ be an integer with $n\geq 4$, $(n,q)\ne (4,2)$ and let $M$ be a maximal subgroup
of $\mathrm{Sp}_n(q)$ containing  a Singer cycle.
Then one of the following holds
\begin{enumerate}
\item\label{malles:1} $M\cong\mathrm{Sp}_{n/k}(q^k).k$ for some prime number $k$  with $k\mid n$ and $n/k$ even, and $M\in \mathcal{C}_3$,
\item\label{malles:2} $nq/2$ is  odd and  $M \cong \mathrm{GU}_{n/2}(q).2$ is in  class $\mathcal{C}_3$,
\item\label{malles:3} $q$ is even and  $M \cong \mathrm{SO}_{n}^-(q)$ is in class $\mathcal{C}_8$,
\item\label{malles:4}$(n,q)=(8,2)$ and $M\cong\mathrm{PSL}_2(17)$ is in  class $\mathcal{S}$,
\item\label{malles:5}$(n,q)=(4,3)$ and $M\cong 2^{1+4}_-.A_5$ is in  class $\mathcal{C}_6$.
 \end{enumerate}
 \end{lemma}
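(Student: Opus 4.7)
The plan is to mirror the strategy of [msw] and reduce the question to the primitive-prime-divisor framework of [gpps], then eliminate Aschbacher classes one by one. A Singer cycle $s\in\mathrm{Sp}_n(q)$ has order $q^{n/2}+1$ and acts irreducibly on the natural module $V$. For $n\ge 4$ with $(n,q)\ne(4,2)$, Zsigmondy's theorem (barring the easily handled $(6,2)$ case) furnishes a prime $r\in P_n(q)$ dividing $q^{n/2}+1$, so $s$ is a strong $ppd(n,q;n)$-element. Hence any maximal $M$ containing $s$ is a $ppd(n,q;n)$-group and, by Theorem \ref{main}, falls into one of the classes described in Examples~2.1--2.9 of [gpps].

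First I would rule out the classes that are manifestly incompatible with the irreducible action of $s$: classes $\mathcal{C}_1$ and $\mathcal{C}_2$ stabilise proper subspaces or direct-sum decompositions and so cannot contain $s$. Theorem~\ref{main}\eqref{main1} excludes $\mathcal{C}_4$ and $\mathcal{C}_7$, and Lemma~\ref{no-c5} excludes $\mathcal{C}_5$ via the strong $ppd$ condition (for $n\ge 6$ directly; $n=4$ requires a separate order check). Extraspecial normalisers in $\mathcal{C}_6$ have order bounded by a fixed polynomial in $q$ and a $2$-power in $n$, so an order comparison against $q^{n/2}+1$ discards all but tiny cases; what survives is exactly $2^{1+4}_-.A_5\le\mathrm{Sp}_4(3)$, yielding part~\eqref{malles:5}. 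Next I would recover the surviving geometric possibilities constructively: for $\mathcal{C}_3$, the extension-field embedding $\mathrm{Sp}_{n/k}(q^k)\hookrightarrow\mathrm{Sp}_n(q)$ with $k\mid n$ and $n/k$ even carries Singer cycles to Singer cycles (part~\eqref{malles:1}); when $nq/2$ is odd, restriction of scalars from $\mathbb{F}_{q^2}$ to $\mathbb{F}_q$ embeds $\mathrm{GU}_{n/2}(q).2\hookrightarrow\mathrm{Sp}_n(q)$, and the Singer cycle of $\mathrm{GU}_{n/2}(q)$ of order $q^{n/2}+1$ witnesses part~\eqref{malles:2}. For $\mathcal{C}_8$ in even characteristic, $\mathrm{SO}_n^-(q)\le\mathrm{Sp}_n(q)$ and the Singer cycle of $\mathrm{SO}_n^-(q)$ listed in Table~\ref{singer-order} has order $q^{n/2}+1$, hence coincides with a Singer cycle of $\mathrm{Sp}_n(q)$; this yields part~\eqref{malles:3}. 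A short arithmetical argument using Lemma~\ref{aritme}, comparing $q^{n/2}+1$ against $|M|$ for each alternative $\mathcal{C}_3$ or $\mathcal{C}_8$ shape, shows these are the only possibilities in those classes.

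The main obstacle is class $\mathcal{S}$: Theorem~\ref{main}\eqref{main2} with $e=n$ restricts $M$ to Examples~2.6--2.9 of [gpps], and one must bound $|M|$ from below by $q^{n/2}+1$ while keeping $M$ irreducible and symplectic on $V$. For $n\ge 10$ elementary bounds on the ppd exponent (via \eqref{boundppd} and \eqref{boundppd-odd}) together with the classification of low-dimensional faithful representations of quasisimple groups force $M$ not to exist. The small residual range $4\le n\le 8$ requires a case analysis using the atlas of irreducible quasisimple subgroups of $\mathrm{Sp}_n(q)$: the only survivor is $\mathrm{PSL}_2(17)\le\mathrm{Sp}_8(2)$ via its $8$-dimensional self-dual representation in characteristic $2$, accounting for part~\eqref{malles:4}. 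The $(n,q)=(4,3)$ exception would be confirmed by a direct check (e.g.\ in \texttt{magma}) against the maximal subgroups listed in~\cite{bhr}. This finishes the enumeration; the bulk of the technical effort is the $\mathcal{S}$ analysis, which is precisely the content of [msw].
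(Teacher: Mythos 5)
The first thing to note is that the paper does not prove Lemma~\ref{malles} at all: it is quoted verbatim from Malle--Saxl--Weigel \cite[Theorem 1.1]{msw}, so there is no internal proof to compare your argument against. What you have written is a plausible reconstruction of how such a result is proved, and it is in the same spirit as the $ppd$-arguments the paper does carry out for its own analogous statements (Lemmas~\ref{uni-z}, \ref{bertrand-sp}, \ref{max-y}): a Singer cycle of $\mathrm{Sp}_n(q)$ is a strong $ppd(n,q;n)$-element (modulo the Zsigmondy exception $(n,q)=(6,2)$, which you rightly set aside), Theorem~\ref{main} and Lemma~\ref{no-c5} then dispose of $\mathcal{C}_4$, $\mathcal{C}_5$, $\mathcal{C}_7$, order bounds handle $\mathcal{C}_6$, and the surviving $\mathcal{C}_1$, $\mathcal{C}_3$, $\mathcal{C}_8$ possibilities are pinned down as in the statement. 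So as a strategy your sketch is sound and consistent with the known conclusion.

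Two caveats. First, your exclusion of $\mathcal{C}_2$ is wrongly justified: an element acting irreducibly on $V$ can perfectly well lie in an imprimitive subgroup (it may permute the summands of the decomposition transitively), so ``stabilises a direct-sum decomposition, hence cannot contain $s$'' is not a valid argument. The correct exclusion is arithmetic, exactly as the paper does for its Bertrand elements in Lemma~\ref{bertrand-sp}: a prime $r\in P_n(q)$ divides ${\bf o}(s)=q^{n/2}+1$ but divides neither $|\mathrm{Sp}_m(q)|$ for $m<n$ nor the order of the top symmetric group (since $r\ge n+1$), and likewise not $|\mathrm{GL}_{n/2}(q).2|$; the conclusion survives, but the stated reason does not. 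Second, the entire $\mathcal{S}$-class analysis -- which is where the real content of the theorem lies, including isolating $\mathrm{PSL}_2(17)\le\mathrm{Sp}_8(2)$ and eliminating everything else in Examples 2.6--2.9 of~\cite{gpps} -- is only gestured at (``bounds \dots force $M$ not to exist'', ``case analysis using the atlas of irreducible quasisimple subgroups''). Deferring this to \cite{msw} is legitimate if one is content to cite the theorem, as the paper does, but as a self-contained proof the proposal has not actually done that work.
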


\subsection{Small dimensional symplectic groups}\label{sec:smallsymp}
We start our analysis with small dimensional symplectic groups $\mathrm{PSp}_n(q)$ with $4\le n\le 8$. Observe that $\mathrm{Sp}_4(2)\cong S_6$ is not simple and that $\mathrm{PSp}_4(3)\cong\mathrm{PSU}_4(2)$.
 For the subgroup structure of $\mathrm{Sp}_4(q)$, $\mathrm{Sp}_6(q)$ and $\mathrm{Sp}_8(q)$ we use~\cite{bhr}.

\begin{lemma}\label{dimension4oddsymplectic}
Let $q$ be odd. The weak normal covering number of $\mathrm{PSp}_4(q)$  is at least $2$. Moreover, if $H$ and $K$ are maximal components of a weak normal $2$-covering of $\mathrm{PSp}_4(q)$, then we have $q=3$ and up to  $\mathrm{Aut(PSp}_4(q))$-conjugacy one of the following holds
\begin{enumerate}
\item $\tilde H\cong E_3^{1+2}:(2\times\mathrm{Sp_2(3)})\in\mathcal{C}_1$ and $\tilde K\cong \mathrm{Sp}_2(9):2\in\mathcal{C}_3$,
\item $\tilde H\cong E_3^{1+2}:(2\times\mathrm{Sp_2(3)})\in\mathcal{C}_1$ and $\tilde K\cong 2_{-}^{1+4}.A_5\in \mathcal{C}_6$.
\end{enumerate}

Each of the two weak normal $2$-coverings  of $\mathrm{PSp}_4(3)$ in $(1)$ and $(2)$ gives rise to a single normal $2$-covering.
\end{lemma}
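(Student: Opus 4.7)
When $q=3$, the classification is obtained by a direct computation with the computer algebra system \texttt{magma}: one enumerates the conjugacy classes of maximal subgroups of $\mathrm{PSp}_4(3)$, forms all unordered pairs, and checks the covering property. The passage from weak normal $2$-coverings to counts of $G$-conjugacy classes of normal $2$-coverings follows from the observations in Section~\ref{sec:newwen}. The output may be cross-checked against Lemma~\ref{dimension4unitary} via the isomorphism $\mathrm{PSp}_4(3)\cong\mathrm{PSU}_4(2)$.

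For $q\geq 5$ odd, the plan is to prove the stronger inequality $\gamma_w(\mathrm{PSp}_4(q))\geq 3$ by contradiction. Suppose $\{\tilde H,\tilde K\}$ is a weak normal $2$-covering of $\mathrm{Sp}_4(q)$ with maximal components, and let $s\in\mathrm{Sp}_4(q)$ be a Singer cycle, of order $q^2+1$. Under the hypotheses $q$ odd and $q\geq 5$, items (2)--(5) of Lemma~\ref{malles} are vacuous, since they require respectively $nq/2$ odd, $q$ even, $(n,q)=(8,2)$ and $(n,q)=(4,3)$. Hence the only maximal subgroups of $\mathrm{Sp}_4(q)$ containing $s$ are, up to conjugacy, of class $\mathcal{C}_3$ and structure $\mathrm{Sp}_2(q^2).2$, and relabeling if necessary we may take $\tilde H\cong\mathrm{Sp}_2(q^2).2$.

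Next I would exhibit two further elements of $\mathrm{Sp}_4(q)$, neither of which has an $\mathrm{Aut}(\mathrm{Sp}_4(q))$-conjugate in $\tilde H$, whose sets of containing maximal subgroups are disjoint. The first candidate is a regular unipotent element $u$ (a single Jordan block of size $4$ on the natural module $V=\mathbb{F}_q^4$): since every unipotent element of $\mathrm{SL}_2(q^2)$ satisfies $(u-I)^2=0$ as an $\mathbb{F}_q$-linear map on $V$, and since the outer involution of $\tilde H$ has order coprime to $p$, no regular unipotent of $\mathrm{Sp}_4(q)$ is in $\tilde H$. A case analysis on the lists in~\cite[Tables~8.12,~8.13]{bhr} then shows that for $q\geq 5$ odd the maximal subgroups of $\mathrm{Sp}_4(q)$ containing a regular unipotent are, up to conjugacy, exactly the two parabolic subgroups $P_1\cong E_q^{1+2}{:}(\mathrm{GL}_1(q)\times\mathrm{Sp}_2(q))$ and $P_2\cong E_q^3{:}\mathrm{GL}_2(q)$. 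For the second candidate I would take a semisimple $g=g_1\oplus g_2\in\mathrm{Sp}_4(q)$ coming from the embedding $\mathrm{Sp}_2(q)\perp\mathrm{Sp}_2(q)\hookrightarrow\mathrm{Sp}_4(q)$ associated to an orthogonal decomposition $V=V_1\perp V_2$ into non-degenerate hyperbolic planes, where $g_1,g_2\in\mathrm{SL}_2(q)$ are non-conjugate Singer cycles of order $q+1$. Then the only proper non-zero $\langle g\rangle$-invariant $\mathbb{F}_q$-subspaces of $V$ are $V_1$ and $V_2$, both non-degenerate; so $g$ stabilizes no totally singular subspace and lies in no parabolic subgroup, while a comparison of $\mathbb{F}_q$-irreducible decompositions rules out a conjugate in $\tilde H$. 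Hence no single maximal subgroup $\tilde K$ can cover both $u$ and $g$, producing the required contradiction.

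The main obstacle I foresee is the routine but tedious verification that no $\mathcal{S}$-subgroup of $\mathrm{Sp}_4(q)$ from~\cite{bhr} contributes an alternative $\tilde K$ for small $q\geq 5$ odd, together with the symmetric check that the proposed semisimple $g$ is not absorbed by a non-parabolic $\mathcal{C}$- or $\mathcal{S}$-candidate. Both reduce to order comparisons and Sylow-theoretic arguments, exploiting that the $\mathcal{S}$-subgroups of $\mathrm{Sp}_4(q)$ have order bounded independently of $q$, while the orders of the test elements grow with $q$.
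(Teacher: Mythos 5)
Your handling of $q=3$ by computer and your identification of $\tilde H\cong\mathrm{Sp}_2(q^2).2$ via Lemma~\ref{malles} agree with the paper, but the core of your argument for $q\ge 5$ rests on a claim that is false: it is not true that the only maximal subgroups of $\mathrm{Sp}_4(q)$ containing a regular unipotent element are the two parabolic subgroups. A regular unipotent element of a subfield subgroup $\mathrm{Sp}_4(q_0)\in\mathcal{C}_5$ (present whenever $q=q_0^r$ is a proper power) is still a single Jordan block of size $4$ over $\mathbb{F}_q$; for $p\ge 5$ and $q\ge 7$ the class-$\mathcal{S}$ maximal subgroup $\mathrm{SL}_2(q)$, embedded via the symmetric cube of its natural module, contains regular unipotent elements (the symmetric cube of a $2\times 2$ Jordan block is one block of size $4$ when $p\ge 5$); and for $q=5$ the $\mathcal{C}_6$-subgroup $2^{1+4}_-.A_5$ contains order-$5$ elements whose eigenvalues in a characteristic-$0$ lift are the four primitive fifth roots of unity, hence regular unipotent mod $5$. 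Your fallback remark that the $\mathcal{S}$-subgroups have order bounded independently of $q$ is also wrong, precisely because of this $\mathrm{SL}_2(q)$, and your test elements have small order ($p$ or $9$, and $q+1$), so no order comparison rescues the claim. Worse, these extra overgroups destroy the disjointness you need: an element of order $q+1$ in the symmetric-cube copy of $\mathrm{SL}_2(q)$ has eigenvalues $\mu^{\pm1},\mu^{\pm3}$, so when $3\nmid q+1$ it is exactly of the shape of your $g$ (type $2\oplus 2$ with non-conjugate blocks of order $q+1$), and when $q=q_0^2$ a Singer cycle of $\mathrm{Sp}_4(q_0)$ viewed over $\mathbb{F}_q$ has the same shape. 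Thus a single candidate $\tilde K$ (for instance $\mathrm{SL}_2(q)$, or $\mathrm{Sp}_4(\sqrt q)$ when $q$ is a square) can contain $\mathrm{Aut}$-conjugates of both of your test elements, and the contradiction does not follow without a much finer choice of $g$ and a genuine fusion analysis against the $\mathcal{C}_5$, $\mathcal{C}_6$ and $\mathcal{S}$ candidates.

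This is exactly the trap the paper's proof is built to avoid: instead of a regular unipotent and a semisimple element of order $q+1$, it tests with mixed elements of order $p(q+1)$ and $p(q-1)$, which are too large to lie in subfield, $\mathcal{C}_6$ or $\mathcal{S}$ subgroups, so the case analysis in~\cite[Tables~8.12, 8.13]{bhr} leaves only $E_q^{1+2}{:}((q-1)\times\mathrm{Sp}_2(q))$, $E_q^{3}{:}\mathrm{GL}_2(q)$, $\mathrm{Sp}_2(q)^2{:}2$ and $\mathrm{GU}_2(q).2$ as possible second components; it then rules out $\mathrm{GU}_2(q).2$ by a centralizer computation with elements of order $p(q-1)$, and concludes by noting that $\mathrm{Sp}_4(q)$ has two $\mathrm{Aut}$-invariant kinds of order-$p(q+1)$ elements (with or without eigenvalues in $\mathbb{F}_q$), each remaining candidate for $\tilde K$ meeting only one kind. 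To repair your argument you would either have to switch to such large-order test elements, or carry out the missing conjugacy/fusion checks for regular unipotents and for your $g$ inside the $\mathcal{C}_5$, $\mathcal{C}_6$ and $\mathcal{S}$ subgroups, which is substantially more work than the order comparisons you envisage.
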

\begin{proof}
It follows with the help of the computer algebra system \texttt{magma}~\cite{magma} that $\mathrm{Sp}_4(3)$ does admit weak normal $2$-coverings and the only weak normal $2$-coverings are reported above. These are also normal coverings. Therefore, for the rest of the proof, we suppose $q\ne 3$.

Let $\mu$ be a weak normal $2$-covering  of $\mathrm{PSp}_4(q)$ with maximal components. Let $\tilde H$ be a component of $\tilde \mu$
containing a Singer cycle of $\mathrm{Sp}_4(q)$. From Lemma~\ref{malles}, we have $$\tilde H\cong \mathrm{Sp}_2(q^2):2.$$

Write $q=p^f$, with $p$ an odd prime number and $f\geq 1$. Using the fact  that $q\geq 5$, a case-by-case analysis on the maximal subgroups of $\mathrm{Sp}_4(q)$ in~\cite[Tables~8.12,~8.13]{bhr} reveals that there are four $\mathrm{Sp}_4(q)$-conjugacy classes of maximal subgroups containing elements of order $p(q+1)$. Namely,
\begin{itemize}
\item[i)] $E_q^{1+2}:((q-1)\times \mathrm{Sp}_2(q))\in\mathcal{C}_1$,
\item[ii)]  $E_q^{3}:\mathrm{GL}_2(q)\in\mathcal{C}_1$,
\item[iii)]  $\mathrm{Sp}_2(q)^2:2\in \mathcal{C}_2$,
\item[iv)]  $\mathrm{GU}_2(q).2\in\mathcal{C}_3$.
\end{itemize}
As $\tilde H$ is in not in this list, the weak normal covering number of $\mathrm{Sp}_4(q)$ is at least $2$. Assume now, by contradiction, that $\gamma_w(\mathrm{PSp}_4(q))=2$. Then we have $\tilde\mu=\{\tilde H, \tilde K\}$, with $\tilde K$ containing an element of order $p(q+1)$.
In particular, $\tilde K$ must be in one of the above four possibilities i)-iv).

We now consider elements of order $p(q-1)$ in $\mathrm{Sp}_4(q)$. Recall that $\tilde H\cong\mathrm{Sp}_2(q^2):2$ and that $\mathrm{Sp}_2(q^2)\cong\mathrm{SL}_2(q^2)$. A non-identity unipotent element $u$ of $\mathrm{SL}_2(q^2)$ has centralizer of order $2q^2$, given by the product of the Sylow $p$-subgroup of $\mathrm{SL}_2(q^2)$ containing $u$ and of $Z(\mathrm{SL}_2(q^2))\cong C_2$.

From this we easily deduce that $\tilde H$ contains no elements of order $p(q-1)$.  Indeed assume that there exist $y\in \tilde H$ with $o(y)=p(q-1)$. Then $u:=y^{q-1}$ is a non-identity unipotent element of order $p$. Since $p$ is odd, we necessarily have that $u\in \mathrm{SL}_2(q^2)$. By the above argument, we get $|\cent{\mathrm{SL}_2(q^2)}{u}|=2q^2$ and thus $|\cent{\mathrm{\tilde H}}{u}|\mid 4q^2$. On the other hand, we have $\langle y\rangle\subseteq \cent{\mathrm{\tilde H}}{u}$ and thus $p(q-1)\mid 4q^2$. It follows that $q-1\mid 4$ and so $q\leq 5$. On the other hand it is directly checked that
 $\mathrm{SL}_2(25):2$ does not contain an element of order $5\cdot 4=20$.

Therefore, elements of order $p(q-1)$ all have an $\mathrm{Aut}(\mathrm{Sp}_4(q))$-conjugate in $\tilde K$.
We show now that $\mathrm{GU}_2(q).2$ admits no element of order $p(q-1)$. Let $\mathrm{GU}_2(q)$ be defined by the Hermitian form having matrix
\[
J:=\begin{pmatrix}
0&1\\
1&0
\end{pmatrix}
\]
and note that a non-identity unipotent element of order $p$ of $\mathrm{GU}_2(q)$ is conjugate to $u=\begin{pmatrix}
1&a\\
0&1
\end{pmatrix}$, for some $a\in  \mathbb{F}_{q^2}^*$ with $a^q+a=0.$
By an elementary computation one checks that, if $ \mathbb{F}_{q^2}^*=\langle t\rangle$, then
\begin{equation*}
\cent{\mathrm{GU}_2(q)}{u}\leq \left\{\begin{pmatrix}
\alpha&\beta\\
0&\alpha
\end{pmatrix}\ \mid\ \alpha\in \langle t^{q-1}\rangle, \ \beta\in  \mathbb{F}_{q^2}\right \}\leq \mathrm{GL}_2(q^2),
\end{equation*}
and thus
\begin{equation}\label{conto}
|\cent{\mathrm{GU}_2(q)}{u}|\mid (q+1)q^2.
\end{equation}
Assume now, by contradiction, that there exists $x\in \mathrm{GU}_2(q).2$ with $\order x=p(q-1)$. Then $x^2\in  \mathrm{GU}_2(q)$ has order $p(q-1)/2$ and thus there exists a non-identity unipotent element $u$ of order $p$ whose centralizer in $\mathrm{GU}_2(q)$ has size divisible by $p(q-1)/2$. By \eqref{conto}, we deduce $p(q-1)/2\mid (q+1)q^2$ and therefore $(q-1)/2\mid (q+1)$. It follows that $(q-1)/2\mid \gcd(q-1, q+1)=2$ and thus $q=5$. On the other hand it is directly checked that $\mathrm{GU}_2(5).2$ does not contain an element of order $5\cdot 4=20$.

Since $\mathrm{GU}_2(q).2$ has no elements of order $p(q-1)$, $\tilde K$ cannot be isomorphic to $\mathrm{GU}_2(q).2$ and hence we exclude this case from any further analysis.

Now, $\mathrm{Sp}_4(q)$ admits two types of elements having order $p(q+1)$: elements admitting eigenvalues in the ground field $\mathbb{F}_q$ and elements admitting no eigenvalues in the ground field $\mathbb{F}_q$.
This can be more easily seen by considering two symplectic forms. Using the symplectic form having matrix
\[
\begin{pmatrix}
0&1&0&0\\
-1&0&0&0\\
0&0&0&1\\
0&0&-1&0
\end{pmatrix},
\]
we can check with an easy computation that the matrices
\[
\begin{pmatrix}
1&1&0\\
0&1&0\\
0&0&s
\end{pmatrix},
\]
where $s\in \mathrm{SL}_2(q)$ is a Singer cycle, are symplectic of order $p(q+1)$ and have eigenvalues in the ground field $\mathbb{F}_q$. Whereas, using the symplectic form having matrix
\[
\begin{pmatrix}
0&0&1&0\\
0&0&0&1\\
-1&0&0&0\\
0&-1&0&0
\end{pmatrix},
\]
we can check with a computation that the $2\times 2$ block matrices
\[
\begin{pmatrix}
s&s\\
0&s
\end{pmatrix},
\]
where $s\in \mathrm{O}^-_2(q)$ is a Singer cycle are symplectic of order $p(q+1)$  and have no eigenvalues in the ground field $\mathbb{F}_q$.

However, the elements having order $p(q+1)$ in $\mathrm{Sp}_2(q)^2:2$ and in $E_{q}^{1+2}:((q-1)\times \mathrm{Sp}_2(q))$ are only of the first type, that is, admit eigenvalues in $\mathbb{F}_q$. On the other hand, the elements having order $p(q+1)$ in
$E_{q}^{3}:\mathrm{GL}_2(q)$ are only of the second type, that is, admit  no eigenvalues in $\mathbb{F}_q$. Since having eigenvalues in $\mathbb{F}_q$ is a property preserved by $\mathrm{Aut}(\mathrm{Sp}_4(q))$-conjugacy, we deduce that whatever one chooses for $\tilde K$, in the possibilities i),~ii),~iii) and~iv) above,
the $\mathrm{Aut}(\mathrm{Sp}_4(q))$-conjugates of $\tilde K$ contain only one type of elements of order $p(q+1)$, and the contradiction is found.
\end{proof}

We now deal with even characteristic $4$-dimensional symplectic groups. Here the result is rather different from Lemma~\ref{dimension4oddsymplectic} because of the peculiar subgroup lattice arising in  characteristic $2$. Recall that $\mathrm{PSp}_4(q)=\mathrm{Sp}_4(q)$, when $q$ is even.
We start with a technical lemma which follows from the work of Enomoto~\cite{enomoto} on the character table of $\mathrm{Sp}_4(q)$. However, to explain how the tables in~\cite{enomoto} can be used to deduce this lemma would take us too far astray. Therefore, here, we give a direct proof.

\begin{lemma}\label{technical}
Let $f$ be a positive integer and let $q:=2^f$. The following holds
\begin{enumerate}
\item\label{eq:technical1} $\mathrm{Sp}_4(q)$ contains a family $\mathcal{D}\neq \varnothing$ of elements $h$ having order $q+1$ and with $|\cent {\mathrm{Sp}_4(q)}h|=(q+1)^2$ if and only if $f>1$,
\item\label{eq:technical2} for every $h\in \mathcal{D}$, $|\nor{\mathrm{Sp}_4(q)}{\langle h\rangle}:\cent {\mathrm{Sp}_4(q)}h|\in \{2,4\}$,
\item\label{eq:technical3}when $f>2$, $\mathrm{Sp}_4(q)$ contains elements $h_1$ and $h_2$ belonging to $\mathcal{D}$ with  $|\nor{\mathrm{Sp}_4(q)}{\langle h_1\rangle}:\cent {\mathrm{Sp}_4(q)}{h_1}|=2$ and
 $|\nor{\mathrm{Sp}_4(q)}{\langle h_2\rangle}:\cent {\mathrm{Sp}_4(q)}{h_2}|=4$,
 \item\label{eq:technical4} when $f$ is even, each element $h\in \mathcal{D}$  which is  $\mathrm{Aut(Sp}_4(q))$-conjugate to an element of the maximal subgroup $\mathrm{Sp}_4(\sqrt{q})\in\mathcal{C}_5$ satisfies $|\nor{\mathrm{Sp}_4(q)}{\langle h\rangle}:\cent {\mathrm{Sp}_4(q)}{h}|=4 $.
 \end{enumerate}
\end{lemma}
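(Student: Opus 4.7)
The plan is to realise the maximal torus $T$ of $\mathrm{Sp}_4(q)$ of order $(q+1)^2$ concretely via the embedding $\mathrm{Sp}_2(q)\wr C_2\le \mathrm{Sp}_4(q)$ stabilising a symplectic decomposition $V=V_1\perp V_2$ into two $2$-dimensional subspaces. Inside each $\mathrm{Sp}_2(V_i)\cong\mathrm{SL}_2(q)$ the non-split torus $T_i=\Omega_2^-(V_i)\cong C_{q+1}$ gives $T:=T_1\times T_2$. Fixing an isometry $\phi\colon V_1\to V_2$ produces a swap element $\tau\in\mathrm{Sp}_4(q)$; after identifying $T_1$ with $T_2$ through $\phi$, the conjugation action reads $\tau(a,b)\tau^{-1}=(b,a)$.

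For part~(1), the elements of $\mathcal{D}$ are the regular semisimple elements of $T$ of order $q+1$, equivalently the pairs $h=(a,b)$ of order $q+1$ with four distinct eigenvalues $\{\lambda,\lambda^{-1},\mu,\mu^{-1}\}$ in $\overline{\mathbb{F}}_q$; distinctness amounts to the two Galois orbits in the cyclic $(q+1)$-subgroup of $\mathbb{F}_{q^2}^\ast$ being different. Two such distinct non-trivial Galois orbits exist iff $q+1\ge 5$, which is exactly $f>1$. For $f=1$ the unique non-trivial Galois orbit in $C_3$ forces a repeated eigenvalue, and the centraliser of $h$ strictly contains $T$, so $\mathcal{D}=\varnothing$.

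For part~(2), since $C_{\mathrm{Sp}_4(q)}(h)=T$, the normaliser $N_{\mathrm{Sp}_4(q)}(\langle h\rangle)$ sits inside $N_{\mathrm{Sp}_4(q)}(T)$. The wreath structure $(\mathrm{O}_2^-(q)\wr C_2)\cap\mathrm{Sp}_4(q)$ yields $N_{\mathrm{Sp}_4(q)}(T)/T\cong D_8$, the Weyl group, generated by $\sigma_1,\sigma_2$ (inverting the respective factors via $\mathrm{O}_2^-(V_i)\setminus T_i$) and $\tau$. Writing $h=(a_0^i,a_0^j)$ for a generator $a_0$ of $T_1=T_2$, I enumerate the eight Weyl actions on $h$ and identify those with $whw^{-1}\in\langle h\rangle$. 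The elements of $\{1,\sigma_1\sigma_2\}$ always contribute (sending $h$ to $h^{\pm1}$), while $\sigma_1,\sigma_2$ individually never do: their defining equations become incompatible because $q+1$ is odd. The four swap-coset elements contribute exactly two more elements of $N(\langle h\rangle)/C(h)$ precisely when $(j/i)^2\equiv\pm 1\pmod{q+1}$, yielding $|N(\langle h\rangle)/C(h)|\in\{2,4\}$.

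For parts~(3) and~(4), generic choices $(i,j)$ with $(j/i)^2\not\equiv\pm 1\pmod{q+1}$ provide index-$2$ examples whenever $q\ge 5$. For the index-$4$ example and for~(4), the clean construction uses a Singer cycle $s$ of the subfield subgroup $\mathrm{Sp}_4(\sqrt q)\in\mathcal{C}_5$ (available when $f$ is even): $s$ has order $(\sqrt q)^2+1=q+1$, and its characteristic polynomial over $\mathbb{F}_q$ splits the single $\mathbb{F}_{\sqrt q}$-irreducible Galois orbit $\{\zeta,\zeta^{\sqrt q},\zeta^{-1},\zeta^{-\sqrt q}\}$ into the two $\mathbb{F}_q$-orbits $\{\zeta,\zeta^{-1}\}$ and $\{\zeta^{\sqrt q},\zeta^{-\sqrt q}\}$, corresponding to the pair $(a,a^{\sqrt q})$ in the torus parametrisation. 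Since $(\sqrt q)^2=q\equiv -1\pmod{q+1}$ and $\sqrt q\not\equiv\pm 1\pmod{q+1}$ for $f\ge 2$, the element $s$ lies in $\mathcal{D}$ and the criterion in~(2) gives $|N/C|=4$, establishing~(4) and the index-$4$ half of~(3) when $f$ is even. The remaining case in~(3) for odd $f>2$ is the main obstacle: it requires producing a non-trivial solution $m\in(\mathbb{Z}/(q+1))^\ast\setminus\{\pm 1\}$ to $m^2\equiv\pm 1\pmod{q+1}$, and this must be handled by a careful analysis of the prime factorisation of $q+1=2^f+1$ combined with the Chinese remainder theorem applied to $(\mathbb{Z}/(q+1))^\ast$.
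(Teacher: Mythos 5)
Your construction is essentially the paper's: the paper also works inside the block-diagonal torus $\langle\zeta\rangle\times\langle\zeta\rangle\cong C_{q+1}\times C_{q+1}$ built from a Singer cycle $\zeta$ of $\mathrm{Sp}_2(q)$, shows the centralizer has order exactly $(q+1)^2$ precisely when the two eigenvalue Galois orbits differ, gets the index $|\nor{\mathrm{Sp}_4(q)}{\langle h\rangle}:\cent{\mathrm{Sp}_4(q)}{h}|\in\{2,4\}$ from the same normalizer-of-torus analysis, derives the identical criterion ``index $4$ if and only if $\ell^2\equiv\pm1\pmod{q+1}$'', and proves part~(4) from the Singer cycle of the subfield subgroup $\mathrm{Sp}_4(\sqrt q)$ (by a divisibility argument, where you use $q\equiv-1\pmod{q+1}$; the two are interchangeable). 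The genuine gap is the one you flag yourself: you never produce the index-$4$ element of part~(3) when $f$ is odd, and this step cannot be completed by analysing the factorisation of $2^f+1$. For $f=3$ one has $q+1=9$, the only solutions of $m^2\equiv\pm1\pmod 9$ are $m\equiv\pm1$, and a short check of all remaining shapes of elements of order $9$ with centralizer of order $81$ shows none has index $4$; so the index-$4$ assertion of part~(3) actually fails at $q=8$, and no amount of Chinese-remainder care will rescue it. (The paper's own proof of~(3) only exhibits the index-$2$ element for $q>4$ and says nothing about the index-$4$ one; this is harmless, since the later application in Lemma~\ref{dimension4evensymplectic} only uses the index-$2$ element together with part~(4), for $f$ even.) For odd $f\ge5$ your plan does work: $3$ divides $2^f+1$ and $2^f+1$ is a power of $3$ only for $f\in\{1,3\}$, so $q+1$ has two distinct prime factors and $m^2\equiv1\pmod{q+1}$ has a solution $m\not\equiv\pm1$.

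Two smaller points. Because you take $\mathcal{D}$ to be all regular semisimple elements of $T$ of order $q+1$, your assertion in~(2) that $\sigma_1,\sigma_2$ never normalize $\langle h\rangle$ is false in that generality: if the two components have coprime orders (for instance $q+1=33$ with components of orders $3$ and $11$), the two congruences defining $\sigma_1$ are compatible by the Chinese remainder theorem, and your quantity $(j/i)$ need not even be defined. The conclusion $\{2,4\}$ survives, but the clean criterion requires restricting $\mathcal{D}$, as the paper does, to $h=\mathrm{diag}(\zeta,\zeta^\ell)$ with $\gcd(\ell,q+1)=1$ and $\ell\notin\{1,q\}$. Finally, for the ``only if'' half of~(1) at $f=1$ you must also dispose of elements of action type $1\oplus1\oplus2$, whose centralizer contains a full $\mathrm{Sp}_2(q)$ factor and hence has order larger than $(q+1)^2$; the paper handles this in its preliminary claims before defining $\mathcal{D}$.
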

\begin{proof}
We let the symplectic form of $\mathrm{Sp}_4(q)$ be defined by the matrix
\[
\begin{pmatrix}
0&1&0&0\\
1&0&0&0\\
0&0&0&1\\
0&0&1&0
\end{pmatrix}.
\]
 We first claim that
\begin{itemize}
\item[$(\dag)$]The centralizer of every element $h\in \mathrm{Sp}_4(q)$ of order $q+1$ has size divisible by $(q+1)^2$.
\item[$(\dag\dag )$]If the size of the centralizer of $h$ is exactly $(q+1)^2$, then $h$ is conjugate  to a matrix of the form
\begin{equation}\label{eq:matrix}
h_{\ell_1,\ell_2}=\begin{pmatrix}
\zeta^{\ell_1}&0\\
0&\zeta^{\ell_2}
\end{pmatrix},
\end{equation}
where $\zeta\in \mathrm{Sp}_2(q)$ is a Singer cycle, $\ell_1,\ell_2\in \{1,\dots,q\}$ and $\order {h_{\ell_1,\ell_2}}=\mathrm{lcm}\{\order{\zeta^{\ell_1}},\order{\zeta^{\ell_2}}\}=q+1$.\end{itemize}

Let $h\in\mathrm{Sp}_4(q)$ be of order $q+1$. Then $h$ is semisimple and hence $V$ decomposes as a direct sum of irreducible $\mathbb{F}_q\langle h\rangle$-modules. Thus we have four cases to consider:
\begin{itemize}
\item $V$ is irreducible,
\item $V=V_1\oplus V_2\oplus V_3\oplus V_4$ with $\dim_{\mathbb{F}_q}V_i=1,$ for $i\in\{1,\dots,4\}$,
\item $V=V_1\oplus V_2\oplus V_3$ with $\dim_{\mathbb{F}_q}V_1=\dim_{\mathbb{F}_q}V_2=1$ and $\dim_{\mathbb{F}_q}V_3=2$,
\item $V=V_1\oplus V_2$ with $\dim_{\mathbb{F}_q}V_1=\dim_{\mathbb{F}_q}V_2=2$.
\end{itemize}
The first case does not arise because in that case we would have $\order h\mid q^2+1$ and hence $q+1=\order h\mid \gcd(q^2+1,q+1)=1$, which is a contradiction. Similarly the
 second case does not arise because in that case we would have $\order h\mid q-1$ and hence $q+1=\order h\mid \gcd(q-1,q+1)=1$, which is a contradiction.  In the third case, by  Section~\ref{Huppert}, we have that $V_3$ is non-degenerate, $V_1\oplus V_2=V_3^\perp$ and $V_1$, $V_2$ are totally isotropic. Therefore the centralizer of $h$  has order $(q+1)|\mathrm{Sp}_2(q)|$, which is a proper multiple of $(q+1)^2$. Note that the factor $q+1$ arises from the centralizer of the Singer cycle induced on $V_3$, and $\mathrm{Sp}_2(q)$ arises from the centralizer  of the matrix induced on $V_1\oplus V_2$. Finally, in the fourth case $h$ is $\mathrm{Sp}_4(q)$-conjugate to a $2\times 2$-block matrix of the form
\[
\begin{pmatrix}
\xi&0\\
0&\mu
\end{pmatrix},
\]
where $\xi,\mu\in \mathrm{Sp}_2(q)$ have irreducible characteristic polynomials and $q+1=\mathrm{lcm}\{\order \xi,\order \mu\}$. Thus $\xi$ and $\mu$  are powers of suitable Singer cycles. Let $\zeta\in \mathrm{Sp}_2(q)$ be a fixed Singer cycle in  $\mathrm{Sp}_2(q)$.
Since all Singer cycles in $\mathrm{Sp}_2(q)$ are $\mathrm{Sp}_2(q)$-conjugate to some Singer cycle in $\langle \zeta\rangle$,  we have that
$h$ is $\mathrm{Sp}_4(q)$-conjugate to a matrix of the form \eqref{eq:matrix}.

Clearly,
\begin{equation}\label{boundmatrix}\cent {\mathrm{Sp}_4(q)}{h_{\ell_1,\ell_2}}\ge
\left\{\begin{pmatrix}
\zeta^i&0\\
0&\zeta^j
\end{pmatrix}\mid  i,j\in \{1,\dots,q+1\}
\right\}
\end{equation}
and the group on the right hand side has order $(q+1)^2$. This proves our claims~$(\dag)$ and $(\dag \dag)$.

\smallskip

Let now, for simplicity, be $h_\ell:=h_{1,\ell}$ and consider
$$\mathcal{E}:=\left\{h_\ell=\begin{pmatrix}
\zeta&0\\
0&\zeta^\ell
\end{pmatrix}
 \in \mathrm{Sp}_4(q) \mid\  \ell\in \{1,\dots,q\},\  \gcd(\ell,q+1)=1\right\}.$$

 We claim that a matrix $h_\ell\in \mathcal{E}$ has centralizer of size $(q+1)^2$ if and only if $\ell\notin\{1,q\}$. In order to show that, we identify the natural module $\mathbb{F}_q^2$ for  $\mathrm{Sp}_2(q)$ with $\mathbb{F}_{q^2}$ and take $\zeta$  to be the matrix representing  the multiplication by the $(q-1)^{\mathrm{th}}$ power of a generator of $\mathbb{F}_{q^2}^*$, again denoted by $\zeta$. When $\ell\in \{1,q\}$, we can find $A,B\in\mathrm{Sp}_2(q)$ with $\zeta^\ell=A^{-1}\zeta A$ and $\zeta^\ell=B\zeta B^{-1}$.
Indeed, when $\ell=1$ this is obvious because we may take $A=B=I$, and when $\ell=q$ we may take the matrix $B$ induced by the action on $\mathbb{F}_{q^2}$ of the Galois group of $\mathbb{F}_{q^2}/\mathbb{F}_q$ and $A=B^{-1}$. Then, the matrix
$$\begin{pmatrix}0&A\\B&0\end{pmatrix}$$
lies in $\cent {\mathrm{Sp}_4(q)}{h_\ell}$ and hence $|\cent {\mathrm{Sp}_4(q)}{h_\ell}|>(q+1)^2$.

Conversely, let $\ell\in\{1,\ldots,q\}\setminus \{1,q\}$ with $ \gcd(\ell,q+1)=1$. To prove that the equality in~\eqref{boundmatrix} is attained it suffices to show that $\langle
 e_1,e_2\rangle$ and $\langle e_3,e_4\rangle$ are the only proper $\mathbb{F}_q\langle h_\ell\rangle$-submodules of $V=\langle e_1,e_2,e_3,e_4\rangle$.  Suppose that $W$ is a proper $\mathbb{F}_q\langle h_\ell\rangle$-submodule different from $\langle
 e_1,e_2\rangle$ and $\langle e_3,e_4\rangle$. Observe that $W$ is a $2$-dimensional irreducible $\mathbb{F}_q\langle h_\ell\rangle$-submodule because $h_\ell$ has no eigenvalues in $\mathbb{F}_q$.
 Using the decomposition $V=\langle e_1,e_2\rangle\oplus \langle e_3,e_4\rangle$, for the computations that follow, we identify the relevant matrices of $\mathrm{Sp}_4(q)$ as $2\times 2$ block matrices and the vectors of $\mathbb{F}_q^4$ as block vectors $(x,\,y)\in (\mathbb{F}_{q}^2)^2\cong (\mathbb{F}_{q^2})^2$. Let $w\in W\setminus\{0\}$. By the irreducibility of $\langle e_1,e_2\rangle$ and of $\langle e_3,e_4\rangle$, we have $W\cap \langle
 e_1,e_2\rangle=0$ and $W\cap \langle e_3,e_4\rangle=0$. Thus we can write
 $$w=(x,\,y),$$
  with $x\in\langle e_1,e_2\rangle\setminus\{0\}$ and $y\in \langle e_3,e_4\rangle\setminus\{0\}$. We identify then $x$ and $y$ with elements in $\mathbb{F}_{q^2}^*$.

  Since $W$ has dimension $2$ and is irreducible, we have
 $$W=\mathrm{Span}_{\mathbb{F}_q}\{ w,h_\ell w\}=
 \mathrm{Span}_{\mathbb{F}_q}\left\{
(x,\,y),
 (\zeta x,\, \zeta^\ell y)\right\}.$$

Thus $h_\ell^2w\in \langle w,h_\ell w\rangle$ and hence, there exist $a,b\in\mathbb{F}_q$ with $h_\ell^2 w=aw+b h_\ell w$. We obtain the equalities
\begin{align*}
\zeta^2 x&=ax+b\zeta x,\\
\zeta^{2\ell} y&=ay+b\zeta^\ell y
\end{align*}
in the field $\mathbb{F}_{q^2}$.
By canceling $x$ and $y$, we deduce $\zeta^2-b\zeta-a=0$ and $\zeta^{2\ell}-b\zeta^\ell-a=0$. Therefore, $\zeta$ and $\zeta^\ell$ are roots of the polynomial $X^2-bX-a\in\mathbb{F}_q[x]$. Since the Galois group $\mathrm{Gal}(\mathbb{F}_{q^2}/\mathbb{F}_q)$ is cyclic of order $2$ generated by $\lambda\mapsto \lambda^q$, we deduce that the roots of $X^2-bX-a$ are $\zeta$ and $\zeta^q$. Therefore $\zeta^\ell\in \{\zeta,\zeta^q\}$ and $\ell\in \{1,q\}$, which is a contradiction.

We are now ready to prove part~\eqref{eq:technical1}. Indeed when $f=1$, we have $q=2$ and by  $(\dag \dag)$ every $h\in \mathrm{Sp}_4(2)$ with $\order h=3$ and centralizer of order $9$ is conjugate to a matrix of type $$h_\ell=\begin{pmatrix}
\zeta&0\\
0&\zeta^\ell
\end{pmatrix}$$
for a suitable Singer cycle $\zeta\in \mathrm{Sp}_2(2) $ and with $\ell\in\{1,2\}$. But we have seen above that those matrices have centralizer of size greater than $9$.

Let next  $f>1$. We then have
$$\mathcal{D}:=\left\{h_\ell\in \mathcal{E} \mid\  \ell\in \{1,\dots,q\}\setminus\{1,q\}\right\}\neq \varnothing$$
and, by the above considerations, $h_\ell\in \mathcal{D}$ implies $\order{h_\ell}=q+1$ and $|\cent {\mathrm{Sp}_4(q)}{h_\ell}|=(q+1)^2.$

\smallskip

We are now ready to prove part~\eqref{eq:technical2}. We explore the normalizers of $\langle h_\ell\rangle$ for  $ h_\ell\in \mathcal{D}$.
Inside $\mathrm{Sp}_2(q)$ the normalizer of the Singer cycle $\langle \zeta\rangle$ has order $2(q+1)$, where the ``$2$'' arises (as described above) from a Galois action. In  particular, $\zeta^\eta\in \{\zeta,\zeta^q=\zeta^{-1}\}$, for every $\eta\in \nor {\mathrm{Sp}_2(q)}{\langle\zeta\rangle}$. From this, it follows that
$$|\nor {\mathrm{Sp}_2(q)\perp\mathrm{Sp}_2(q)}{\langle h_\ell \rangle}:\cent {\mathrm{Sp}_4(q)}{h_\ell}|=2.$$
Since the elements in $\nor{\mathrm{Sp}_4(q)}{\langle h_\ell\rangle}$ permute the two subspaces left invariant by $h_\ell$, we deduce
$$|\nor{\mathrm{Sp}_4(q)}{\langle h_\ell\rangle}:\nor {\mathrm{Sp}_2(q)\perp\mathrm{Sp}_2(q)}{h_\ell}|\le 2$$
and hence~\eqref{eq:technical2} follows.

\smallskip

Before proving part~\eqref{eq:technical3} and~\eqref{eq:technical4}. We first claim that
\begin{itemize}
\item[$(\dag\dag\dag)$]
Let $\ell\in \{1,\ldots,q\}\setminus\{1,q\}$. Then $h_{\ell}\in \mathcal{D}$ satisfies $|\nor{\mathrm{Sp}_4(q)}{\langle h_\ell\rangle}:\cent {\mathrm{Sp}_4(q)}{h_\ell}|=4$ if and only if $\ell^2\equiv \pm 1 \ \mathrm{mod }(q+1)$.
\end{itemize}
From above, $|\nor{\mathrm{Sp}_4(q)}{\langle h_\ell\rangle}:\cent {\mathrm{Sp}_4(q)}{h_\ell}|=4$ if and only if there exists a matrix
\[
g:=\begin{pmatrix}
0&X\\
Y&0
\end{pmatrix}\in \mathrm{Sp}_4(q),
\]
for suitable $2\times 2$ matrices in $\mathrm{Sp}_2(q)$.  A computation yields that $Y^{-1}\zeta^\ell Y,\ X^{-1}\zeta X\in \langle \zeta\rangle= \langle \zeta^{\ell}\rangle$ and hence  $Y^{-1}\zeta Y,\ X^{-1}\zeta X\in\{\zeta,\zeta^{-1}\}$. This shows that $g^{-1}h_\ell g$ is one of the following four matrices
\[
\begin{pmatrix}
\zeta^\ell&0\\
0&\zeta
\end{pmatrix},
\begin{pmatrix}
\zeta^\ell&0\\
0&\zeta^{-1}
\end{pmatrix},
\begin{pmatrix}
\zeta^{-\ell}&0\\
0&\zeta
\end{pmatrix},
\begin{pmatrix}
\zeta^{-\ell}&0\\
0&\zeta^{-1}
\end{pmatrix}.
\]
An easy computation shows that at least one of these matrices is in $\langle h_\ell\rangle$ if and only if $\ell^2\equiv \pm 1 \ \mathrm{mod }(q+1).$

\smallskip 

To prove part~\eqref{eq:technical3} observe that when $q=4$ every element $\ell$ in $\{1,\ldots,q\}\setminus\{1,q\}$ with  $\gcd(\ell,q+1)=1$ satisfies $\ell^2\equiv \pm 1 \ \mathrm{mod }(q+1)$; however, when $q>4$, we may always choose  $\ell$ in $ \{1,\ldots,q\}\setminus\{1,q\}$ with $\gcd(\ell,q+1)=1$ and $\ell^2\not \equiv \pm 1 \ \mathrm{mod }(q+1)$.

\smallskip

Finally, we show part~\eqref{eq:technical4}. Let $f$ be even and let $h\in \mathcal{D}$ be $\mathrm{Sp}_4(q)$-conjugate to an element $x$ of the maximal subgroup $\mathrm{Sp}_4(\sqrt{q})\in\mathcal{C}_5$. Then  $\order x=q+1=(\sqrt{q})^2+1$  and, by Proposition \ref{sing-ord}, $x$ is a Singer cycle of $\mathrm{Sp}_4(\sqrt{q})$. Thus  $|\nor {\mathrm{Sp}_4(\sqrt{q})}{\langle x\rangle}:\langle x\rangle|=4$, where the ``$4$'' arises from the Galois action of $\mathrm{Gal}(\mathbb{F}_{(\sqrt{q})^4}/\mathbb{F}_{\sqrt{q}})$. Then
$$4\mid |\nor {\mathrm{Sp}_4(q)}{\langle h\rangle}:\langle h\rangle|=|\nor {\mathrm{Sp}_4(q)}{\langle h\rangle}:\cent {\mathrm{Sp}_4(q)}{h}|(q+1). $$
Since $q$ is even, this implies  $4\mid |\nor {\mathrm{Sp}_4(q)}{\langle h\rangle}:\cent {\mathrm{Sp}_4(q)}{h}|$ and thus, by \eqref{eq:technical2}, we get  $ |\nor {\mathrm{Sp}_4(q)}{\langle h\rangle}:\cent {\mathrm{Sp}_4(q)}{h}|=4$ which proves part~\eqref{eq:technical4}.
\end{proof}

\begin{lemma}\label{dimension4evensymplectic}
Let $f$ be a positive integer with $f\ge 2$ and let $q:=2^f$.  The weak normal covering number of $\mathrm{Sp}_4(q)=\mathrm{PSp}_4(q)$  is $2$.
Moreover, if $H$ and $K$ are maximal components of a weak normal $2$-covering of $\mathrm{Sp}_4(q)$, then up to $\mathrm{Aut}(\mathrm{Sp}_4(q))$-conjugacy one of the following holds
\begin{enumerate}
\item\label{case2} $H\cong\mathrm{SO}_4^-(q)$ and $K\cong\mathrm{SO}_4^+(q)$ are in class $\mathcal{C}_8$. In this case the weak normal covering gives rise, up to $\mathrm{Sp}_4(q)$-conjugacy, to the two normal coverings having components
\begin{itemize}
\item[(a)] \label{casea}$H\cong\mathrm{SO}_4^-(q)$ and $K\cong\mathrm{SO}_4^+(q);$
\item[(b)] \label{caseb}$H\cong\mathrm{Sp}_2(q^2):2\in\mathcal{C}_3$ and $K\cong\mathrm{Sp}_2(q)\mathrm{wr} \,2\in\mathcal{C}_2.$
\end{itemize}
\item\label{casesp1} $q=4$, $H\cong\mathrm{Sp}_2(16):2\in\mathcal{C}_3$ and $K\cong\mathrm{Sp}_4(2)\in\mathcal{C}_5$. In this case the weak normal covering does not give rise to normal coverings.
\end{enumerate}
 In particular, $\gamma(\mathrm{Sp}_4(q))=2.$
\end{lemma}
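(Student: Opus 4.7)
The strategy is to adapt the template of Lemmas~\ref{dimension2linear}--\ref{dimension4oddsymplectic}, paying particular attention to the exceptional graph automorphism of $\mathrm{Sp}_4(q)$ that exists only in characteristic~$2$ and fuses several Aschbacher classes of maximal subgroups; this fusion is what ultimately produces the pair~(a),(b) in case~\eqref{case2}. The upper bound $\gamma_w(\mathrm{Sp}_4(q))\leq 2$ is immediate from Dye's theorem~\cite{Dye}, which shows that $\{\mathrm{SO}_4^+(q),\mathrm{SO}_4^-(q)\}$ is already a normal $2$-covering (proving simultaneously $\gamma(\mathrm{Sp}_4(q))=2$ and exhibiting normal covering~(a) of case~\eqref{case2}), while the matching lower bound $\gamma_w\geq 2$ is Saxl's theorem~\cite{Sa88}.

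To classify the pairs $(H,K)$ of maximal components, I would first locate a Singer cycle $s$ of $\mathrm{Sp}_4(q)$ of order $q^2+1$. By Lemma~\ref{malles} (parts~\eqref{malles:1} and~\eqref{malles:3}), combined with Proposition~\ref{sing-ord}\,(1) to account for subfield subgroups when $q$ is a proper square, the possible $\mathrm{Aut}(\mathrm{Sp}_4(q))$-classes of maximal subgroups of $\mathrm{Sp}_4(q)$ containing $s$ are $\mathrm{Sp}_2(q^2){:}2\in\mathcal{C}_3$, $\mathrm{SO}_4^-(q)\in\mathcal{C}_8$, and (only when $q=4$) $\mathrm{Sp}_4(2)\in\mathcal{C}_5$. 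Crucially, the exceptional graph automorphism of $\mathrm{Sp}_4(q)$, available because $q$ is even, swaps $\mathrm{Sp}_2(q^2){:}2$ with $\mathrm{SO}_4^-(q)$, so these belong to a single $\mathrm{Aut}(\mathrm{Sp}_4(q))$-class, and likewise swaps $\mathrm{Sp}_2(q)\,\mathrm{wr}\,2\in\mathcal{C}_2$ with $\mathrm{SO}_4^+(q)\in\mathcal{C}_8$.

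To force $K$, I would select a semisimple element $y$ of order $q+1$ lying in $\mathrm{SO}_2^-(q)\perp\mathrm{SO}_2^-(q)\leq\mathrm{SO}_4^+(q)$, with action type $2\oplus 2$ on $V$ and both summands non-degenerate of minus type; such a $y$ lies in Class~$\mathcal{D}$ of Lemma~\ref{technical}\,\eqref{eq:technical1}. Because $y$ admits no $1$-dimensional and no isotropic invariant $\mathbb{F}_q$-subspace, a case-by-case inspection of~\cite[Tables~8.12,~8.13]{bhr}, parallel to the argument in Lemma~\ref{dimension4oddsymplectic}, shows that the only maximal subgroups containing an $\mathrm{Aut}$-conjugate of $y$ are those in the $\mathrm{Aut}$-class of $\mathrm{SO}_4^+(q)$: parabolic subgroups, $\mathrm{Sp}_2(q^2){:}2$, and $\mathrm{SO}_4^-(q)$ are all excluded by eigenvalue and invariant-subspace obstructions. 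This forces $K$ into the $\mathrm{Aut}$-class of $\mathrm{SO}_4^+(q)$, yielding case~\eqref{case2}. For the exceptional value $q=4$, the subfield subgroup $\mathrm{Sp}_4(2)\in\mathcal{C}_5$ also contains a Singer cycle, and by Lemma~\ref{technical}\,\eqref{eq:technical4} every Class-$\mathcal{D}$ element conjugate into it has $|N{:}C|=4$; a direct check with \texttt{magma}~\cite{magma} on $\mathrm{Sp}_4(4)$ then identifies the extra weak covering~\eqref{casesp1} and verifies that it fails to lift to a normal covering, because the two $\mathrm{Sp}_4(4)$-conjugacy classes of $\mathrm{Sp}_4(2)$ are fused only by an outer field automorphism, not by inner conjugation.

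Finally, the counting of $G$-classes follows the recipe of Section~\ref{sec:newwen}: from~\cite{bhr} one reads $h=k=2$, the graph fusion splitting each $\mathrm{Aut}$-class of components into two $G$-classes, so the unique $\mathrm{Aut}$-class of weak coverings in case~\eqref{case2} splits into exactly two $G$-classes of normal coverings, namely~(a) and~(b); the mixed pairs such as $\{\mathrm{SO}_4^-(q),\mathrm{Sp}_2(q)\,\mathrm{wr}\,2\}$ cover the same $\mathrm{Aut}$-classes of elements but fail to cover all $G$-classes simultaneously, hence are not normal coverings. The main obstacle I anticipate is tracking this graph-automorphism fusion consistently, distinguishing $\mathrm{Aut}$-class from $G$-class at both the subgroup level and the element level, with a subsidiary difficulty being the delicate small-case analysis at $q=4$, where Lemma~\ref{technical}\,\eqref{eq:technical3},\eqref{eq:technical4} predicts precisely the anomalies that produce the additional weak covering~\eqref{casesp1}.
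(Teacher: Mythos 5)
Your overall route matches the paper's (Singer cycle pins $H$ to the graph-fused class $\{\mathrm{SO}_4^-(q),\mathrm{Sp}_2(q^2){:}2\}$, an order-$(q+1)$ element with centralizer $(q+1)^2$ pins $K$, Dye gives existence, $q=4$ goes to \texttt{magma}), but there is a genuine gap in how you force $K$: you never exclude the $\mathcal{C}_5$ subgroup $\mathrm{Sp}_4(\sqrt{q})$ for square $q>4$ (e.g.\ $q=16,64,\dots$), where it is a maximal subgroup just as at $q=4$. A Singer cycle of $\mathrm{Sp}_4(\sqrt{q})$, viewed in $\mathrm{Sp}_4(q)$, has order $q+1$, action type $2\oplus 2$ and centralizer of order $(q+1)^2$, i.e.\ it lies in the class $\mathcal{D}$ of Lemma~\ref{technical}; so your claim that the only maximal subgroups containing an $\mathrm{Aut}$-conjugate of your element $y$ lie in the $\mathrm{Aut}$-class of $\mathrm{SO}_4^+(q)$ is false for a generic choice of $y\in\mathcal{D}$. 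Closing this requires exactly the refinement the paper built Lemma~\ref{technical} for: by part~\eqref{eq:technical3}, when $q>4$ one can pick $y\in\mathcal{D}$ with normalizer-to-centralizer index $2$, while by part~\eqref{eq:technical4} every $\mathcal{D}$-element with an $\mathrm{Aut}$-conjugate in $\mathrm{Sp}_4(\sqrt{q})$ has index $4$; since this index is $\mathrm{Aut}$-invariant, such a $y$ rules out $K\cong\mathrm{Sp}_4(\sqrt{q})$. You invoke part~\eqref{eq:technical4} only at $q=4$, where the conclusion runs the other way (all $\mathcal{D}$-elements have index $4$, which is precisely why the extra weak covering survives there), and your side remark that $\mathrm{Sp}_4(2)$ contains a Singer cycle of $\mathrm{Sp}_4(4)$ is false ($17\nmid|\mathrm{Sp}_4(2)|=720$; in the exceptional covering the Singer cycle sits in $\mathrm{Sp}_2(16){:}2$ and $\mathrm{Sp}_4(2)$ is the \emph{other} component) --- harmless only because you, like the paper, settle $q=4$ by computer.

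A second, smaller gap is the final counting. Knowing $h=k=2$ only gives, via Section~\ref{sec:newwen}, that the number $C$ of $G$-classes of normal coverings satisfies $2\mid C\le 4$; to get $C=2$ one must actually show that the mixed pairs such as $\{\mathrm{SO}_4^-(q),\mathrm{Sp}_2(q)\,\mathrm{wr}\,2\}$ are not normal $2$-coverings, which you assert without argument. The paper does this with an explicit element of order $q^2-1$ and type $2\oplus 2$ with no eigenvalues in $\mathbb{F}_q$ (namely $s\oplus(s^{-1})^T$ for $s$ a Singer cycle of $\mathrm{GL}_2(q)$), checking it has no $\mathrm{Sp}_4(q)$-conjugate in either component; some such argument is needed. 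Finally, a minor slip: for $q$ even the relevant table in \cite{bhr} is Table~8.14, not Tables~8.12--8.13 (those are the odd-characteristic tables).
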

\begin{proof} When $q=4$, the proof follows with a computer computation. Therefore, for the rest of the proof we assume that $q>4.$

Let $H$ be a maximal component of a weak normal covering $\mu$ of $\mathrm{Sp}_4(q)$ of minimum size, with $H$ containing a Singer cycle. From Lemma~\ref{malles}, up to $\mathrm{Sp}_4(q)$-conjugacy, we have
 $$H\cong\mathrm{SO}_4^-(q)\in\mathcal{C}_8 \,\,\mathrm{or}\,\,H\cong\mathrm{Sp}_2(q^2):2\in\mathcal{C}_3.$$
Observe that a graph automorphism $\gamma$ of $\mathrm{Sp}_4(q)$ fuses  $\mathrm{SO}_4^-(q)$ and $\mathrm{Sp}_2(q^2):2$, see~\cite[Table~8.14]{bhr}.
Therefore, replacing $H$ with a suitable $\mathrm{Aut}(\mathrm{Sp}_4(q))$-conjugate, we may suppose that $H\cong\mathrm{SO}_4^-(q)$.
From Lemma~\ref{technical}, the group $\mathrm{Sp}_4(q)$ contains an element $y$ having order $q+1$ and with $|\cent {\mathrm{Sp}_4(q)}{y}|=(q+1)^2$. Now  assume, by contradiction, that $H\cong\mathrm{SO}_4^-(q)\cong \mathrm{SL}_2(q^2):2$ contains an element $y$  with $\order y=q+1$ and  $|\cent {\mathrm{Sp}_4(q)}{y}|=(q+1)^2$. Since $q^2-1\mid |\cent {\mathrm{SL}_2(q^2)}{y}|$, we then have $q^2-1\mid (q+1)^2$ and thus $q-1\mid q+1$, against $\gcd(q-1,q+1)=1$. Hence we deduce that $\mu$ contains a second maximal component $K$, with $y\in K.$ A case-by-case analysis on the maximal subgroups of
$\mathrm{Sp}_4(q)$ in~\cite[Table~$8.14$]{bhr} reveals that, up to $\mathrm{Sp}_4(q)$-conjugacy, there are three possibilities for $K$. Namely
\begin{itemize}
\item[i)] $K\cong\mathrm{Sp}_2(q)\mathrm{wr }\, 2\in\mathcal{C}_2$,
\item[ii)]  $K\cong\mathrm{SO}_4^+(q)\in\mathcal{C}_8$,
\item[iii)]  $f$ is even and $K\cong\mathrm{Sp}_4(\sqrt{q})\in \mathcal{C}_5$.
\end{itemize}
Since the graph automorphism $\gamma$ of $\mathrm{Sp}_4(q)$ fuses $\mathrm{SO}_4^+(q)$ and $\mathrm{Sp}_2(q)\mathrm{wr }\, 2$, up to  $\mathrm{Aut}(\mathrm{Sp}_4(q))$-conjugacy there are only two possibilities for $K$.

Consider first the case $f$ even and $K\cong\mathrm{Sp}_4(\sqrt{q})\in\mathcal{C}_5$. From Lemma~\ref{technical}, we see that, when $q>4$, not every element of order $q+1$ and centralizer of cardinality $(q+1)^2$ is $\mathrm{Aut}(\mathrm{Sp}_4(q))$-conjugate to an element of $K$. Thus $H,K$ are not components of a weak normal $2$-covering. Thus we exclude from any further consideration the choice $K\cong\mathrm{Sp}_4(\sqrt{q})\in\mathcal{C}_5$.

The choice $K\cong\mathrm{SO}_4^+(q)$ leads, by~\cite{Dye}, to a weak normal covering which is also a normal covering. Moreover, since $\gamma$ simultaneously fuses  $\mathrm{SO}_4^-(q)$ and $\mathrm{Sp}_2(q^2):2$ as well as $\mathrm{SO}_4^+(q)$ and $\mathrm{Sp}_2(q)\mathrm{wr }\, 2$, the weak normal covering in \eqref{case2} gives rise to at least  the two distinct normal coverings (a) and (b).

At this point, it suffices to show that the subgroups $\mathrm{SO}_4^-(q),\mathrm{Sp}_2(q)\mathrm{wr}2$  are not the components of a normal $2$-covering. Indeed, once this is established, the same conclusion follows for the subgroups
$\mathrm{SO}_4^+(q), \mathrm{Sp}_2(q^2):2$.
Now let $s$ be a Singer cycle of $\mathrm{GL}_2(q)$ and consider the matrix
\[
x:=\begin{pmatrix}
s&0\\
0&(s^{-1})^T
\end{pmatrix}.
\]
The matrix $x$ lies in $\mathrm{Sp}_4(q)$ for a suitable non-degenerate symplectic form, has order $q^2-1$ and action $2\oplus 2$. In particular, $x$ has no eigenvalues in the ground field $\mathbb{F}_q$. Assume, by contradiction, that  $x$ is $\mathrm{Sp}_4(q)$-conjugate to an element $y\in \mathrm{Sp}_2(q)\mathrm{wr}2$. Since $q$ is even, then $y$ lies in the base group $\mathrm{Sp}_2(q)\perp\mathrm{Sp}_2(q)$ so that $y$ has the form $s_1\perp s_2$, where $s_1,s_2\in\mathrm{Sp}_2(q)$
with $\langle s_i\rangle$  irreducible cyclic subgroups of $\mathrm{Sp}_2(q)$. Thus $\order {s_i}\mid q+1$ for $i\in\{1,2\}$ and hence also $q^2-1=\order y\mid q+1$, against $q>4.$

Assume next, by contradiction, that  $x$ is $\mathrm{Sp}_4(q)$-conjugate to an element $y\in \mathrm{SO}_4^-(q)$. Using  the structure of the semisimple elements in $\mathrm{SO}_4^-(q)$, we see that $\order y= q^2-1$ implies that $y$ has eigenvalues in the ground field $\mathbb{F}_q$, a contradiction.
\end{proof}

We now examine the group $\mathrm{PSp}_6(q)$.

\begin{lemma}\label{dimension6symplectic}
The weak normal covering number of $\mathrm{PSp}_6(q)$  is at least $2$. Moreover, if $H$ and $K$ are maximal components of a weak normal $2$-covering of $\mathrm{PSp}_6(q)$, then up to $\mathrm{Aut}(\mathrm{PSp}_6(q))$-conjugacy one of the following holds
\begin{enumerate}
\item\label{case11}$q$ is even, $ H\cong \mathrm{SO}_6^-(q)$ and $ K\cong\mathrm{SO}_6^+(q)$ are in class $\mathcal{C}_8$,
\item\label{case22} $q=3^f$ for some $f\geq 1$, $\tilde H\cong\mathrm{Sp}_2(q)\perp \mathrm{Sp}_4(q)\in\mathcal{C}_1$ and $\tilde K\cong\mathrm{Sp}_2(q^3):3\in\mathcal{C}_3$.
\end{enumerate}
The coverings in \eqref{case11} and \eqref{case22}  give both rise to a single normal covering.
\end{lemma}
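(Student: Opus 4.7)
The plan is to follow the template used in the preceding lemmas (e.g., Lemma~\ref{dimension4evensymplectic} and Lemma~\ref{dimension6unitary}): first dispose of very small $q$ by computer, then identify one maximal component via a Singer cycle, restrict the second component to a short list of candidates via a well-chosen semisimple element, and finally rule out the spurious candidates using unipotent and eigenvalue considerations.

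Concretely, after handling $q=2$ computationally, take a weak normal $2$-covering $\tilde\mu=\{\tilde H,\tilde K\}$ of $\mathrm{Sp}_6(q)$ with maximal components. I would first pick a Singer cycle $s$ of $\mathrm{Sp}_6(q)$ (order $q^3+1$, action type $6$) and, without loss of generality, require $s\in \tilde H$. By Lemma~\ref{malles}, the only possibilities are $\tilde H\cong\mathrm{Sp}_2(q^3){:}3\in\mathcal{C}_3$ (always), $\tilde H\cong\mathrm{GU}_3(q).2\in\mathcal{C}_3$ (when $q$ is odd), or $\tilde H\cong\mathrm{SO}_6^-(q)\in\mathcal{C}_8$ (when $q$ is even). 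Second, I would produce an element $z\in\mathrm{Sp}_6(q)$ of action type $2\oplus 4$, built as the perp-sum of a Singer cycle of $\mathrm{Sp}_2(q)$ (order $q+1$) and a Singer cycle of $\mathrm{Sp}_4(q)$ (order $q^2+1$), of order $(q+1)(q^2+1)/\gcd(q+1,q^2+1)$, and analyse the maximal overgroups of $z$ in $\mathrm{Sp}_6(q)$ from the tables in~\cite{bhr}; using Theorem~\ref{main} together with Lemma~\ref{no-c5}, the candidates for $\tilde K$ containing (a conjugate of) $z$ are essentially: $\mathrm{Sp}_2(q)\perp\mathrm{Sp}_4(q)\in\mathcal{C}_1$, the $\mathcal{C}_2$-subgroup $\mathrm{Sp}_2(q)\wr S_3$, the parabolic of type $P_2$, and, in even characteristic, $\mathrm{SO}_6^+(q)$.

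The third step is the case analysis. When $q$ is even I would show that the only surviving pair is $\{\mathrm{SO}_6^-(q),\mathrm{SO}_6^+(q)\}$, recovering the Dye covering: the $\mathcal{C}_1$ and $\mathcal{C}_2$ candidates for $\tilde K$ can be excluded because the element $z$ built above (or a suitable Singer cycle of $\mathrm{Sp}_6(q)$) cannot simultaneously land in an $\mathcal{C}_1$/$\mathcal{C}_2$ structure and satisfy the $\tilde H$-constraints. When $q$ is odd the situation splits according to characteristic: I would test $\tilde H\cong\mathrm{GU}_3(q).2$ against unipotent elements of $\mathrm{Sp}_6(q)$ with Jordan type forcing $\gcd(|\tilde H|\cdot|\tilde K|,q)>1$, which combined with the orders of the candidate $\tilde K$'s yields the divisibility $p\mid 3$, pinning $q$ to be a power of $3$. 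Then for $q=3^f$, I would exhibit $\tilde H\cong\mathrm{Sp}_2(q)\perp\mathrm{Sp}_4(q)$ and $\tilde K\cong\mathrm{Sp}_2(q^3){:}3$ as a normal $2$-covering (mirroring the $\mathrm{PSU}_3(3^a)$ phenomenon in Lemma~\ref{dimension3unitary} and referencing~\cite[Table~$1.2$]{blw} for the actual covering verification), and for $q$ odd not a power of $3$ derive a contradiction from the absence of common unipotent classes.

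The main obstacle will be step three in odd characteristic: the maximal subgroups containing a Singer cycle of $\mathrm{Sp}_6(q)$ are fewer than in the $\mathrm{SU}_n$ and large-dimensional cases, so I cannot use Bertrand numbers (since $n/2=3<5$) and must instead rely on careful bookkeeping with the element $z$ of type $2\oplus 4$ and with regular unipotent elements. The essential point is to reduce to the divisibility constraint $p=3$ by combining the forced absence of regular unipotent elements in extension-field $\mathcal{C}_3$-subgroups and in $\mathrm{SO}_6^\varepsilon(q)$ with the order calculation $\gcd(q^3+1,q^2-1)=q+1$ to rule out the remaining pairings. Once this reduction is achieved, applying the counting technique of Section~\ref{sec:newwen} (using the number of $\mathrm{Aut}(\tilde G)$-classes versus $\tilde G$-classes from~\cite{bhr}) yields in each case a single normal covering, as claimed.
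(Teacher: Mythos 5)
Your overall architecture (Singer cycle plus Lemma~\ref{malles} to pin $\tilde H$; an element of order $(q+1)(q^2+1)/\gcd(2,q-1)$ and type $2\oplus 4$ to constrain $\tilde K$ and get $\gamma_w\ge 2$; regular unipotent elements to force $p=3$; \cite{blw} and \cite{Dye} plus the counting of Section~\ref{sec:newwen} for the conclusions) is the paper's approach. But there is a genuine gap in your even-characteristic analysis. For $q$ even the candidate lists leave, besides the Dye pair, the pairing $\tilde H\cong\mathrm{Sp}_2(q^3){:}3$, $\tilde K\cong\mathrm{SO}_6^+(q)$, and none of the tools you name kills it: the Singer cycle of order $q^3+1$ lies in $\mathrm{Sp}_2(q^3){:}3$, your element $z$ of type $2\oplus 4$ lies in $\mathrm{O}_2^-(q)\perp\mathrm{O}_4^-(q)\le\mathrm{SO}_6^+(q)$, elements of order $q^3-1$ lie in both groups, and --- contrary to your closing claim about the ``forced absence of regular unipotent elements in $\mathrm{SO}_6^\varepsilon(q)$'' --- in characteristic $2$ the groups $\mathrm{SO}_6^\pm(q)$ do contain regular unipotent elements of $\mathrm{Sp}_6(q)$ (this is exactly Lemma~\ref{aggiunta}). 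The paper disposes of this pairing with an extra semisimple element $w$ of order $(q-1)(q^2+1)$ and action type $1\oplus 1\oplus 4$: a primitive prime divisor of $q^4-1$ divides ${\bf o}(w)$ but not $|\mathrm{Sp}_2(q^3){:}3|$, and the type forces any invariant quadratic form to be of minus type, so $w$ has no $\mathrm{Aut}(\mathrm{Sp}_6(q))$-conjugate in $\mathrm{SO}_6^+(q)$ either. Without $w$ (or an equivalent witness) your case~\eqref{case11} is not established. Relatedly, your stated mechanism for excluding the $\mathcal{C}_1$ pairing $\{\mathrm{Sp}_2(q^3){:}3,\ \mathrm{Sp}_2(q)\perp\mathrm{Sp}_4(q)\}$ when $q$ is even (``$z$ or a Singer cycle cannot land in a $\mathcal{C}_1$ structure'') does not work, since $z\in\mathrm{Sp}_2(q)\perp\mathrm{Sp}_4(q)$; you need the regular unipotent argument also in characteristic $2$, not only in the odd case where you placed it.

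Two smaller points. First, restricting the computer check to $q=2$ is not enough: the table-based claims you plan to quote from \cite{bhr} have small-$q$ exceptions (for instance $\mathrm{SL}_2(13)<\mathrm{Sp}_6(3)$ contains elements of order $26=3^3-1$, so the list of maximal overgroups of an element of order $q^3-1$ is larger at $q=3$), which is why the paper treats $q\in\{2,3,4\}$ computationally and assumes $q\ge 5$ in the general argument. Second, your candidate list for $\tilde K$ is inflated: the parabolic $P_2$ and the $\mathcal{C}_2$-subgroup $\mathrm{Sp}_2(q)\,\mathrm{wr}\,S_3$ contain no element of order $(q+1)(q^2+1)/\gcd(2,q-1)$ (a primitive prime divisor of $q^4-1$ does not divide their orders), so they are not genuine candidates; this is harmless for the conclusion but shows the element was not used at full strength.
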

\begin{proof}
When $q\in\{2,3,4\}$,  the result follows with a computation with the computer algebra system \texttt{magma}~\cite{magma}. Therefore, for the rest of the argument, we suppose $q\ge 5$.

Set $\kappa:=\gamma_w(\mathrm{PSp}_6(q))$ and let $H$ be a maximal component of a weak normal $\kappa$-covering $\mu$ of $\mathrm{PSp}_6(q)$, with $H$ containing a Singer cycle $x$. From Lemma~\ref{malles}, up to $\mathrm{Sp}_6(q)$-conjugacy, one of the following holds
\begin{enumerate}
\item[i)] $\tilde H\cong \mathrm{Sp}_2(q^3):3\in\mathcal{C}_3$,
\item[ii)]  $\tilde H\cong\mathrm{GU}_3(q).2\in\mathcal{C}_3$, with $q$ odd,
\item[iii)]  $\tilde H\cong \mathrm{SO}_6^-(q)\in\mathcal{C}_8$, with $q$ even.
\end{enumerate}

From~\cite[Tables~8.28,~8.29]{bhr}, we see that there exist two $\mathrm{Sp}_6(q)$-conjugacy classes of maximal subgroups  containing an element having order $(q+1)(q^2+1)/\gcd(q+1,q^2+1)=(q+1)(q^2+1)/ \gcd(2,q-1)$ and these are as follows
\begin{itemize}
\item[a)] $\mathrm{Sp}_2(q)\perp \mathrm{Sp}_4(q)\in\mathcal{C}_1$,
\item[b)] $\mathrm{SO}_6^+(q)\in\mathcal{C}_8$, with $q$ even.
\end{itemize}
This already shows that $\kappa\ge 2$, because none of the possibilities for $\tilde H$  is in this list.
Assume now that there exists a weak normal $2$-covering of $\mathrm{PSp}_6(q)$ with maximal components. By the above arguments, the corresponding  weak normal $2$-covering of $\mathrm{Sp}_6(q)$ is given by $\tilde \mu=\{\tilde H,\tilde K\}$, where $\tilde H$ must be in one of the three possibilities i)-iii) and $\tilde K$  must be in one of the two possibilities a)-b).

Let $z$ be an element of $\mathrm{Sp}_6(q)$ having order $q^3-1$.
 From~\cite[Tables~8.28,~8.29]{bhr} and from the fact that $q\ge 5$, we see that there exist four $\mathrm{Sp}_6(q)$-conjugacy classes of maximal subgroups containing an element $z$ having order $q^3-1$ and these are as follows
\begin{itemize}
\item[$\alpha$)] $E_q^6:\mathrm{GL}_3(q)\in\mathcal{C}_1$,
\item[$\beta$)] $\mathrm{Sp}_2(q^3):3\in\mathcal{C}_3$,
\item[$\gamma$)] $\mathrm{GL}_3(q).2\in\mathcal{C}_2$, with $q$ odd,
\item[$\delta$)] $\mathrm{SO}_6^+(q)\in\mathcal{C}_8$, with $q$ even.
\end{itemize}
Assume first that $\tilde K$ is of type
$$\mathrm{Sp}_2(q)\perp \mathrm{Sp}_4(q).$$
Now, $\tilde K$ contains no elements of order $q^3+1$ or $q^3-1$ and hence $x$ and $z$ are $\mathrm{Aut}(\mathrm{Sp}_6(q))$-conjugate to elements in $\tilde H$. By consulting the possibilities in the lists i)-iii) and $\alpha)$-$\gamma$), we deduce that $\mathrm{Sp}_2(q^3):3$ is the only possible group in both lists and hence  $\tilde H$ is of type $$\mathrm{Sp}_2(q^3):3.$$  Write $q=p^f$, where $p$ is a prime number and $f\geq 1$. Suppose that $p\ne 3$. Observe that the non-identity unipotent elements of $\tilde K$ centralize a subspace of dimension at least $2$ of $V$, whereas the non-identity unipotent elements of $\tilde H$ are contained in $\mathrm{Sp}_2(q^3)$ because $p\ne 3$ and hence centralize a subspace of dimension at least $3$ of $V$. As $\mathrm{Sp}_6(q)$ has unipotent elements $u$ with $\dim_{\mathbb{F}_q}V_1(u)=1$, as can be easily verified considering the possible Jordan forms of a $6\times 6$ symplectic matrix over $\mathbb{F}_q$, $u$ is not $\mathrm{Aut}(\mathrm{Sp}_6(q))$-conjugate to an element of $\tilde H$ or of $\tilde K$. Hence $\tilde H,\tilde K$  are not components of a normal $2$-covering. This contradiction says that  we have $$p=3,\  \tilde{H}\cong\mathrm{Sp}_2(q^3):3\ \mathrm{and}\ \tilde K\cong\mathrm{Sp}_2(q)\perp \mathrm{Sp}_4(q),\,\,\mathrm{or \,}q\mathrm{\  is\  even\  and }\  K\cong\mathrm{SO}_6^+(q).$$

When $q$ is a power of $3$, we obtain the weak normal covering in~\eqref{case22}. This is indeed a genuine normal $2$-covering of $\mathrm{Sp}_6(q)$, see~\cite{blw}.

Assume now $q$ even and $K\cong\mathrm{SO}_6^+(q)$. Now, $K$ contains no elements of order $q^3+1$ and hence $x$ is $\mathrm{Aut}(\mathrm{Sp}_6(q))$-conjugate to an element in $H$. By consulting the possibilities above, we deduce that $H$ is of type $\mathrm{Sp}_2(q^3):3$ or $\mathrm{SO}_6^-(q)$. In the second case, we get the normal covering in~\eqref{case11}, by ~\cite{Dye}. Assume now that $H$ is of type $\mathrm{Sp}_2(q^3):3$. Let $w\in\mathrm{Sp}_6(q)$ having order $(q-1)(q^2+1)$ and  action type $1\oplus 1\oplus 4$.
Clearly, $w$ cannot be $\mathrm{Aut}(\mathrm{Sp}_6(q))$-conjugate to an element of $K\cong\mathrm{Sp}_2(q^3):3$. Moreover, consulting the semisimple elements in the orthogonal group $K\cong\mathrm{SO}_6^+(q)$ (see~\cite{buturlakin} or~\cite{GLS3}), we see that $w$ is also not $\mathrm{Aut}(\mathrm{Sp}_6(q))$-conjugate to an element of $K$. Thus the pairing $\mathrm{SO}_6^+(q),\mathrm{Sp}_2(q^3):3$ does not give rise to a weak normal $2$-covering.
\end{proof}

Finally, we deal with $8$-dimensional symplectic groups.

\begin{lemma}\label{dimension8symplectic}
The weak normal covering number of $\mathrm{PSp}_8(q)$  is at least $2$. Moreover, if $H$ and $K$ are maximal components of a weak normal $2$-covering of $\mathrm{PSp}_8(q)$, then $q$ is even and up to $\mathrm{Aut}(\mathrm{Sp}_8(q))$-conjugacy $H\cong\mathrm{SO}_8^+(q)$ and $K\cong\mathrm{SO}_8^-(q)$ are in  class $\mathcal{C}_8$. This covering gives rise to a single normal covering.
\end{lemma}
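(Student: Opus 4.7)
The plan is to mirror the strategy of Lemmas~\ref{dimension4evensymplectic} and~\ref{dimension6symplectic}: first dispose of $q\in\{2,3,4\}$ by a direct computation with \texttt{magma}, and for $q\geq 5$ suppose $\tilde\mu=\{\tilde H,\tilde K\}$ is a weak normal $2$-covering of $\mathrm{Sp}_8(q)$ with maximal components. The Singer cycles of $\mathrm{Sp}_8(q)$ have order $q^4+1$, and by Lemma~\ref{malles} the component $\tilde H$ containing such a Singer cycle is, up to $\mathrm{Sp}_8(q)$-conjugacy, one of $\mathrm{Sp}_4(q^2).2\in\mathcal{C}_3$, $\mathrm{Sp}_2(q^4).4\in\mathcal{C}_3$, or (with $q$ even) $\mathrm{SO}_8^-(q)\in\mathcal{C}_8$. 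Since $\mathrm{Sp}_8(q)$ admits no graph automorphism, these three options are pairwise non-$\mathrm{Aut}(\mathrm{Sp}_8(q))$-conjugate.

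To constrain $\tilde K$, I will introduce a semisimple element $y\in\mathrm{Sp}_8(q)$ of order $q^3+1$ and action type $2\oplus 6$, built from a Singer cycle of $\mathrm{Sp}_2(q)$ of order $q+1$ on a non-degenerate $2$-dimensional summand and a Singer cycle of $\mathrm{Sp}_6(q)$ of order $q^3+1$ on its orthogonal complement. A primitive prime divisor $r\in P_6(q)$, which exists since $q\neq 2$, divides ${\bf o}(y)$; a direct application of Lemma~\ref{primitivi} shows $r\nmid |\mathrm{Sp}_4(q^2)|$ and $r\nmid|\mathrm{Sp}_2(q^4)|$, so $y$ admits no $\mathrm{Aut}$-conjugate in either $\mathrm{Sp}_4(q^2).2$ or $\mathrm{Sp}_2(q^4).4$; combined with the $ppd$-machinery (Lemma~\ref{no-c5} and Theorem~\ref{main}) this also excludes the $\mathcal{C}_5$, $\mathcal{C}_6$ and $\mathcal{S}$ candidates. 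A Witt-type computation then places $y$ inside $\mathrm{O}_2^-(q)\perp\mathrm{O}_6^-(q)\leq\mathrm{SO}_8^+(q)$ (using that a cyclic irreducible subgroup of $\mathrm{Sp}_{2k}(q)$ of order $q^k+1$ lies in $\mathrm{O}_{2k}^-(q)$ in even characteristic, and that orthogonal signs multiply under orthogonal direct sum), while $y\notin\mathrm{SO}_8^-(q)$ because both potential decompositions of $V$ of signs $(+,-)$ or $(-,+)$ would force an order-$(q+1)$ element into $\mathrm{O}_2^+(q)$, or an order-$(q^3+1)$ element into $\mathrm{O}_6^+(q)$, neither of which is possible. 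Hence $\tilde K$ lies, up to $\mathrm{Aut}$-conjugacy, in $\{\mathrm{Sp}_2(q)\perp\mathrm{Sp}_6(q),\ \mathrm{SO}_8^+(q)\}$, the latter only for $q$ even.

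I will then rule out the spurious pairs with three test elements. The regular unipotent $u\in\mathrm{Sp}_8(q)$ has Jordan type $(8)$ over $\mathbb{F}_q$, which becomes $(4,4)$ inside $\mathrm{Sp}_4(q^2)$, $(2,2,2,2)$ inside $\mathrm{Sp}_2(q^4)$ and at most $(6,2)$ inside $\mathrm{Sp}_2(q)\perp\mathrm{Sp}_6(q)$; consequently $u$ cannot be covered by any pair $(\tilde H,\tilde K)$ with $\tilde H\in\{\mathrm{Sp}_4(q^2).2,\mathrm{Sp}_2(q^4).4\}$ and $\tilde K=\mathrm{Sp}_2(q)\perp\mathrm{Sp}_6(q)$. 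For the $q$ even configurations, a semisimple element $w\in\mathrm{Sp}_8(q)$ of action type $3\oplus 3\oplus 2$ built from $\mathrm{GL}_3(q)\perp\mathrm{Sp}_2(q)$ has order divisible by a primitive prime divisor $r'\in P_3(q)$ satisfying $r'\nmid|\mathrm{Sp}_4(q^2)|,|\mathrm{Sp}_2(q^4)|$ by Lemma~\ref{primitivi}, and lies in $\mathrm{O}_6^+(q)\perp\mathrm{O}_2^-(q)\leq\mathrm{SO}_8^-(q)$ but not in $\mathrm{SO}_8^+(q)$ by the orthogonal-sign rule; this eliminates the pairs with $\tilde H\in\mathcal{C}_3$ and $\tilde K=\mathrm{SO}_8^+(q)$. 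Finally, the element $z\in\mathrm{Sp}_8(q)$ of order $q^4-1$ and action type $4\oplus 4$, obtained from $\mathrm{GL}_4(q)\hookrightarrow\mathrm{Sp}_8(q)$ as the stabilizer of a pair of complementary totally isotropic $4$-dimensional subspaces, satisfies $z\notin\mathrm{SO}_8^-(q)$ (since the Witt index of a non-degenerate quadratic form of type $-$ on an $8$-dimensional space is $3$, so no totally singular $4$-subspace exists) and $z\notin\mathrm{Sp}_2(q)\perp\mathrm{Sp}_6(q)$ for action-type reasons; this kills the remaining pair $(\mathrm{SO}_8^-(q),\mathrm{Sp}_2(q)\perp\mathrm{Sp}_6(q))$.

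The only surviving pair is $(\mathrm{SO}_8^-(q),\mathrm{SO}_8^+(q))$ with $q$ even, which is a genuine normal $2$-covering by Dye's theorem~\cite{Dye} and gives rise to a single $\mathrm{Sp}_8(q)$-class of normal coverings by the orbit counting in Section~\ref{sec:newwen}. The main obstacle will be the Witt-type bookkeeping needed to place each test element into $\mathrm{SO}_8^+(q)$ versus $\mathrm{SO}_8^-(q)$: in every case I must carefully track the type of each non-degenerate orthogonal summand in the decompositions $V_{2k}^\epsilon\perp V_{8-2k}^{\epsilon'}$ and verify that the irreducibly acting cyclic subgroups of $\mathrm{Sp}_{2k}(q)$ embed into the correct orthogonal group, while simultaneously controlling the $\mathcal{C}_5$, $\mathcal{C}_6$ and $\mathcal{S}$ candidates through the $ppd$-analysis for $y$ and $w$.
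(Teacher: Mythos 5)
Your overall strategy (pin down the component containing a Singer cycle via Lemma~\ref{malles}, then constrain the second component with a well-chosen element and eliminate the spurious pairs) is the same as the paper's, but your specific element choices create two genuine gaps. First, in odd characteristic your test element $y$ of order $q^3+1$ and type $2\oplus 6$ does \emph{not} confine $\tilde K$ to $\{\mathrm{Sp}_2(q)\perp\mathrm{Sp}_6(q),\ \mathrm{SO}_8^+(q)\}$: the $\mathcal{C}_3$-maximal subgroup $\mathrm{GU}_4(q).2$ of $\mathrm{Sp}_8(q)$ (present exactly when $q$ is odd) contains $\mathrm{Sp}_8(q)$-conjugates of $y$. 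Indeed a torus element of $\mathrm{GU}_1(q)\perp\mathrm{GU}_3(q)$ of orders $q+1$ and $q^3+1$ has, over $\mathbb{F}_q$, action type $2\oplus 6$, order $q^3+1$ and the same characteristic polynomial as $y$, hence (being semisimple with no eigenvalue $\pm1$) is $\mathrm{Sp}_8(q)$-conjugate to $y$; moreover your $ppd$ screen does not see it because $r\in P_6(q)$ divides $q^3+1$, which divides $|\mathrm{GU}_4(q)|$. Consequently the odd-$q$ analysis is incomplete (the pair $(\mathrm{Sp}_4(q^2).2,\ \mathrm{GU}_4(q).2)$ is never excluded). This is precisely why the paper works instead with an element of order $(q+1)(q^3-1)/\gcd(2,q-1)$, whose order is incompatible with $\mathrm{GU}_4(q).2$. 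Related but minor: $\mathrm{Sp}_2(q^4).4$ is not maximal (Lemma~\ref{malles} requires $k$ prime), and since here $e=6=n-2>n-3$, Theorem~\ref{main} alone does not dispose of the class $\mathcal{S}$ candidates for $y$ — that needs a (short, but absent) element-order check.

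Second, your regular-unipotent argument against the pair $(\mathrm{Sp}_4(q^2).2,\ \mathrm{Sp}_2(q)\perp\mathrm{Sp}_6(q))$ breaks down for $q$ even, which is the hard case. The Jordan types are only ``doubled'' for elements of the subgroup $\mathrm{Sp}_4(q^2)$; in characteristic $2$ the unipotent elements of the outer, semilinear coset are not controlled this way, and in fact such cosets do contain unipotent elements with a single Jordan block on the $\mathbb{F}_q$-space (already for $\mathrm{Sp}_2(q^2).2\le\mathrm{Sp}_4(q)$ the map $w\mapsto w^{(q)}\left(\begin{smallmatrix}1&a\\0&1\end{smallmatrix}\right)$ with $a\notin\mathbb{F}_q$ is a regular unipotent of $\mathrm{Sp}_4(q)$). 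So you cannot conclude that a regular unipotent of $\mathrm{Sp}_8(q)$ avoids $\mathrm{Sp}_4(q^2).2$, and none of your other test elements excludes this pair (your $z$ of order $q^4-1$ and type $4\oplus4$ lies in $\mathrm{Sp}_4(q^2)$). The paper handles exactly this case with a more delicate argument: an element $u\oplus s$ of order $4(q^2+1)$ together with a centralizer/Lang--Steinberg analysis inside $\mathrm{Sp}_4(q^2):2$, followed by the order-$(q^4-1)$, type-$4\oplus4$ element. Finally, your claim $z\notin\mathrm{SO}_8^-(q)$ is correct but needs the extra observation (missing in your sketch) that the $z$-invariant totally isotropic $4$-spaces are forced to be totally \emph{singular} for any $z$-invariant quadratic form — in characteristic $2$, isotropic for the bilinear form does not by itself mean singular for $Q$; here it follows because $Q$ restricted to such a subspace is additive and Frobenius-semilinear, so its kernel is a $z$-invariant hyperplane or the whole space, and irreducibility forces the latter.
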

\begin{proof}
When $q\in \{2,3\}$,  the result follows with a computation with the computer algebra system \texttt{magma}~\cite{magma}. Therefore, for the rest of the argument, we suppose $q\ge 4$.

Set $\kappa:=\gamma_w(\mathrm{PSp}_8(q))$ and let $H$ be a maximal component of a weak normal $\kappa$-covering $\mu$, with $\tilde H$ containing a Singer cycle $x$. From Lemma~\ref{malles}, up to $\mathrm{Sp}_8(q)$-conjugacy, $\tilde H$ is one of the following
\begin{itemize}
\item[i)] $\mathrm{Sp}_4(q^2):2\in\mathcal{C}_3$,
\item[ii)]  $\mathrm{SO}_8^-(q)\in\mathcal{C}_8$, with $q$ even.
\end{itemize}
Observe that, from~\cite[Tables~8.48,~8.49]{bhr}, we infer that in both possibilities\footnote{Note that in~\cite{bhr}, the normalizer is denoted with ``Stab''.}
\begin{equation}\label{eq:jolly}
\mathrm{Aut}(\tilde{G})={\bf N}_{\mathrm{Aut}(\tilde{G})}(\tilde{H})\tilde{G}.
\end{equation}
 This already shows that $\kappa\ge 2$.

 From~\cite[Tables~8.48,~8.49]{bhr} and from $q\ge 4$, we see that the $\mathrm{Sp}_8(q)$-conjugacy classes  of maximal subgroups containing an element having order $$\frac{(q+1)(q^3-1)}{\gcd(q+1,q^3-1)}=\frac{(q+1)(q^3-1)}{\gcd(2,q-1)}$$ are as follows
\begin{itemize}
\item[a)] $\mathrm{Sp}_2(q)\perp \mathrm{Sp}_6(q)\in\mathcal{C}_1$,
\item[b)]  $E_q^{6+6}:(\mathrm{GL}_3(q)\times \mathrm{Sp}_2(q))\in\mathcal{C}_1$,
\item[c)]  $\mathrm{SO}_8^-(q)$, with $q$ even, in class $\mathcal{C}_8$.
\end{itemize}
 Let now $\mu$ be a weak normal $2$-covering of $\mathrm{PSp}_8(q)$ with maximal components. By the above arguments, the corresponding  weak normal $2$-covering of $\mathrm{Sp}_8(q)$ is given by $\tilde \mu=\{\tilde H,\tilde K\}$, where $\tilde H$ must be in one of the two possibilities i)-ii) and $\tilde K$  must be in one of the three possibilities a)-c).

From~\cite[Tables~8.48,~8.49]{bhr} and from  $q\ge 4$,  we see that the $\mathrm{Sp}_8(q)$-conjugacy classes of maximal subgroups containing an element having order $q^4-1$ and having type $4\oplus 4$ are as follows
\begin{enumerate}
\item $E_q^{10}:\mathrm{GL}_4(q)\in\mathcal{C}_1$,
\item $\mathrm{Sp}_4(q^2):2\in\mathcal{C}_3$,
\item $\mathrm{GL}_4(q).2$, with $q$ odd, in  class $\mathcal{C}_2$,
\item $\mathrm{GU}_4(q).2$, with $q$ odd, in class $\mathcal{C}_3$,
\item $\mathrm{SO}_8^+(q)$, with $q$ even, in  class $\mathcal{C}_8$.
\end{enumerate}
This is not hard to check, but rather tedious. To exclude the subgroups in the Aschbacher class $\mathcal{S}$ it is more convenient and time efficient to observe that they do not contain elements having order $q^4-1$. Moreover, when $q$ is even, to exclude the subgroups $E_q^7:((q-1)\times \mathrm{Sp}_6(q))$, $E_q^{3+8}:(\mathrm{GL}_2(q)\times\mathrm{Sp}_4(q))$ and $\mathrm{Sp}_2(q)\perp \mathrm{Sp}_6(q)$ in the Aschbacher  class $\mathcal{C}_1$, it suffices to observe that none of the elements having order $q^4-1$ in these groups have type $4\oplus 4$.

Assume first that $$\tilde K\cong E_q^{6+6}:(\mathrm{GL}_3(q)\times \mathrm{Sp}_2(q)).$$ As $\tilde K$ contains no elements of order $q^4+1$ or $q^4-1$, $\tilde H$ contains elements having order $q^4+1$ and $q^4-1$. By consulting the lists of options i)-ii) and (1)-(5), we deduce that $\tilde H$ is of type $\mathrm{Sp}_4(q^2):2$. Now, consider elements $t\in \mathrm{Sp}_8(q)$ with $\order t=(q-1)(q^3+1)/\gcd(q-1,q^3+1)=(q-1)(q^3+1)/\gcd(2,q-1)$. Since $E_q^{6+6}:(\mathrm{GL}_3(q)\times \mathrm{Sp}_2(q))$ and $\mathrm{Sp}_4(q^2):2$ do not contain elements having this order, we deduce that $t$ is contained in no $\mathrm{Aut}(\mathrm{Sp}_8(q))$-conjugate of $\tilde H$ or $\tilde K$. Therefore, a weak normal $2$-covering cannot make use of  $E_q^{6+6}:(\mathrm{GL}_3(q)\times \mathrm{Sp}_2(q))$.

Assume now that $$\tilde K=\mathrm{Sp}_2(q)\perp\mathrm{Sp}_6(q).$$Therefore, $\tilde H$ is of type $\mathrm{Sp}_4(q^2):2$, or $\mathrm{SO}_8^-(q)$ with $q$ even. By considering the unipotent elements of $\mathrm{Sp}_8(q)$, we see that when $q$ is odd $\tilde H$ cannot be $\mathrm{Sp}_4(q^2):2$. Thus $q$ is even. Now, consider the non-degenerate symplectic  form on $\mathbb{F}_q^4$ having matrix
\[
J:=\begin{pmatrix}
0&0&0&1\\
0&0&1&0\\
0&1&0&0\\
1&0&0&0
\end{pmatrix}
\]
and let $\mathrm{Sp}_4(q)$ be the symplectic group with respect to this form. Now, using the matrix $J$, it can be easily verified that
\[
u:=\begin{pmatrix}
1&1&1&0\\
0&1&1&0\\
0&0&1&1\\
0&0&0&1
\end{pmatrix}\in\mathrm{Sp}_4(q)
\]
and that $u$ has order $4$. Moreover, $u$ has a unique $u$-invariant $2$-dimensional subspace (namely, the subspace spanned by the last two basis vectors)
 and that this subspace is totally isotropic with respect to $J$. Now, let $s$ be a Singer cycle of $\mathrm{Sp}_4(q)$ and set $$g:=u\oplus s\in\mathrm{Sp}_8(q).$$ Clearly, $\order g=4(q^2+1)$. Now, the element $g$ has no $\mathrm{Aut}(\mathrm{Sp}_8(q))$-conjugate in $\tilde{K}=\mathrm{Sp}_2(q)\perp \mathrm{Sp}_6(q)$ because $g$ fixes no non-degenerate $2$-subspace of $V$. Suppose that $g$ has an $\mathrm{Aut}(\mathrm{Sp}_8(q))$-conjugate in $\mathrm{Sp}_4(q^2):2$ and recall that $\mathrm{Sp}_4(q^2):2$ is a field extension subgroup. Without loss of generality, we may suppose that $g\in \mathrm{Sp}_4(q^2):2$.
 Now, when we see $g=u\oplus s$ as an element of $\mathrm{Sp}_4(q^2):2$, $s$ becomes a semisimple element fixing a $2$-dimensional non-degenerate subspace over $\mathbb{F}_{q^2}$. From this, we get
$${\bf C}_{\mathrm{Sp}_4(q^2)}(s)\cong \mathrm{Sp}_2(q^2)\perp\langle s\rangle.$$
Observe that the centralizer of $s$ in $\mathrm{Sp}_4(q^2):2$ is contained in $\mathrm{Sp}_4(q^2)$, because the ``:2'' on top acts via Galois conjugation and hence it cannot centralize the matrix $s$ being a Singer cycle of $\mathrm{Sp}_2(q^2)$. Thus
$$u\in{\bf C}_{\mathrm{Sp}_4(q^2):2}(s)= {\bf C}_{\mathrm{Sp}_4(q^2)}(s)\subseteq \mathrm{Sp}_2(q^2)\perp\langle s\rangle$$
and hence $ u\in \mathrm{Sp}_2(q^2)\cong\mathrm{SL}_2(q^2)$. However, non-identity unipotent elements of $\mathrm{SL}_2(q^2)$ have order $2$ and not $4$. This contradiction shows that the element $g$ has no $\mathrm{Aut}(\mathrm{Sp}_8(q))$-conjugate in $\mathrm{Sp}_4(q^2):2$. It remains to consider the case that $\tilde{H}\cong\mathrm{SO}_8^-(q)$. For dealing with this case, we turn to an element $y$ having order $q^4-1$ and type $4\oplus 4$. Since neither $\tilde{H}$ nor $\tilde{K}$ is one of the groups in~(1)-(5), we deduce that $y$ has no $\mathrm{Aut}(\mathrm{Sp}_8(q))$-conjugate in $\tilde{H}$ or in $\tilde{K}$. Hence this choice of $\tilde{H}$ and $\tilde{K}$ does not give rise to a weak normal $2$-covering.
 This whole paragraph has shown that a weak normal $2$-covering cannot make use of  $\mathrm{Sp}_2(q)\perp\mathrm{Sp}_6(q)$.

The only remaining possibility is that

$$\tilde K\cong\mathrm{SO}_8^-(q)\hbox{ and }q \hbox{ even}.$$ Since $\mathrm{Sp}_8(q)$ and $\mathrm{SO}_8^-(q)$ are isospectral we need to argue in more detail. Observe that the group $\tilde{H}$ appears in~(1)-(5) and, since $q$ is even, we may exclude (3) and~(4). Thus $\tilde{H}$ is one of the following groups:
\begin{equation}\label{eq25}\mathrm{Sp}_4(q^2):2,\,E_q^{10}:\mathrm{GL}_4(q) \hbox{ and }\mathrm{SO}_8^+(q).
\end{equation}

Finally we turn to the elements having order $q^3+1$ in $\mathrm{Sp}_8(q)$ and type $2\oplus 6$. The elements having type $2\oplus 6$ are not
conjugate to an element of $\mathrm{SO}_8^-(q)$ because they do not preserve a  non-degenerate quadratic form of ``minus type''. Therefore the elements of order $q^3+1$  and of type $2\oplus 6$ are all $\mathrm{Aut}(\mathrm{Sp}_8(q))$-conjugate to elements of $\tilde H$. Using elementary order considerations, from~\eqref{eq25}, we see that the only possibility for $\tilde H$ is $\mathrm{SO}_8^+(q)$. Now, by  Dye~\cite{Dye}, for $q$ even,  $\tilde{K}\cong\mathrm{SO}_8^-(q)$ and $\tilde H\cong\mathrm{SO}_8^+(q)$ do give rise to a normal covering of $\mathrm{Sp}_8(q)$.
\end{proof}

\subsection{Large dimensional symplectic groups}\label{sec:largesymp}
In this section we deal with large dimensional symplectic groups $\mathrm{Sp}_n(q)$ with $n\ge 10$, $n$ even. We consider the Bertrand number $t$ for the integer $n/2\geq 5$  (see Section \ref{bernum}). Thus for $n\geq 16$ we have $\frac{n}{4}<t\le \frac{n}{2}-3$ and $t$ is prime; for $n=14$, we have $t=5$; for $n=12$, we have $t=4$; for $n=10$ we have $t=3$. Note that, for  every $n\geq 10$, $\frac{n}{4}<t\le \frac{n}{2}-2$ holds. Moreover, $t$ is a prime, except when $n=12$.
Recall also that, for every $n\ge 10$, we have  $t\nmid n/2$.
 Moreover, since $\gcd(n/2,t)=1$ when $n\ne 12$ and $\gcd(n/2,t)=2$ when $n=12$,
 from Lemma~\ref{aritme}~\eqref{eq:arithme2} we get
\begin{align}\label{stupid}
\gcd(q^{t}+1,q^{\frac{n}{2}-t}+1)&=
\begin{cases}
\gcd(2,q-1)&\textrm{if }n=12 \textrm{ or }n/2 \textrm{ odd},\\
q+1&\textrm{if }n\ne 12 \textrm{ and }n/2 \textrm{ even}.\\
\end{cases}
\end{align}
In order to find the components of a  weak normal covering of $\mathrm{Sp}_n(q)$, we consider a Bertrand element $z\in\mathrm{Sp}_n(q)$ such that
\begin{itemize}
\item $\order z=\frac{(q^t
+1)(q^{\frac{n}{2}-t} +1)}{\gcd(q^t+1,q^{\frac{n}{2}-t}+1)}$ (see Table~\ref{2}),
\item the action of $z$ on $V$ is of type $2t\oplus (n-2t)$ and we write $V=U\perp W$, where $\dim_{\mathbb{F}_{q}}U=2t$, $\dim_{\mathbb{F}_{q}}W=n-2t$ and $U,W$ are irreducible $\mathbb{F}_q\langle z\rangle$-modules,
\item $z$ induces a matrix of order $q^t+1$ on $U$ and of order $q^{\frac{n}{2}-t}+1$ on $W$.
\end{itemize}

Note that the existence of Bertrand elements is an immediate consequence of the embedding of $\mathrm{Sp}_{2t}(q) \perp \mathrm{Sp}_{n-2t}(q)$ in $\mathrm{Sp}_{n}(q)$.





\begin{lemma}\label{bertrand-sp} If $M$ is a
maximal subgroup of $\mathrm{Sp}_n(q)$ with $n\ge 10$ containing a Bertrand
 element, then one of the following holds
\begin{enumerate}
\item\label{bertrand-sp1} $M \cong \mathrm{Sp}_{2t}(q) \perp \mathrm{Sp}_{n-2t}(q)\in\mathcal{C}_1$,
\item\label{bertrand-sp2} $n/2$ is  even, $n\neq 12$, $q$ is odd  and  $M \cong \mathrm{GU}_{n/2}(q).2\in\mathcal{C}_3$,
\item\label{bertrand-sp3} $q$ is even and $M \cong \mathrm{SO}^{+}_{n}(q)\in\mathcal{C}_8$,
\item\label{bertrand-sp4} $n=12$  and $M \cong \mathrm{Sp}_{6}(q^2).2\in\mathcal{C}_3$.
\end{enumerate}
\end{lemma}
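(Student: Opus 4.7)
The plan is to follow the blueprint of Lemma~\ref{uni-z}. First I would exploit the primitive prime divisor theory of Guralnick--Penttila--Praeger--Saxl (Theorem~\ref{main}) to cut down the list of possible Aschbacher classes for $M$, and then use the action type $2t\oplus (n-2t)$ of the Bertrand element $z$ to pin down the isomorphism type of $M$ within each surviving class.

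I would begin by showing that $z$ is a strong $ppd(n,q;2t)$-element. For $n\ge 10$ we have $2t\ge 6$, and setting aside the isolated case $(n,q)=(10,2)$ (where $P_{6}(2)=\varnothing$ and the result must be checked by hand), Zsigmondy's theorem gives $P_{2t}(q)\neq\varnothing$. Any $r\in P_{2t}(q)$ divides $q^t+1$ but not $q^2-1$, hence does not divide the $\gcd$ appearing in the denominator of $\order z$ computed via~\eqref{stupid}; thus $r\mid\order z$ and $r\ge 2t+1$ by~\eqref{boundppd}. Applying Theorem~\ref{main} with $e=2t$, and using the inequality $2t\le n-4$ built into the definition of Bertrand number, I conclude that $M\notin \mathcal{C}_{4}\cup\mathcal{C}_{6}\cup\mathcal{C}_{7}$ and that $M\in\mathcal{S}$ forces $M$ to belong to Example~$2.6$~$a)$ of~\cite{gpps}. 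Since $n\ge 10\ge 6$, Lemma~\ref{no-c5}\,(3) excludes $\mathcal{C}_{5}$.

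Next I would dispatch the remaining classes by examining the action of $z$. In class $\mathcal{C}_1$, the only proper $z$-invariant subspaces of $V$ are the non-degenerate summands $U$ and $W$, so $M$ must stabilize one of them, yielding item~\eqref{bertrand-sp1}; parabolic stabilizers are excluded because $U,W$ are non-degenerate. For class $\mathcal{C}_2$, the restriction $z|_U$ is an element of order $q^t+1$ in $\mathrm{Sp}_{2t}(q)$ acting irreducibly, hence a Singer cycle of $\mathrm{Sp}_{2t}(q)$ by Proposition~\ref{sing-ord}(1); this forces strong arithmetic constraints on the possible wreath-product structure, and combined with the non-degeneracy of $U,W$ (which kills the totally isotropic variant $\mathrm{GL}_{n/2}(q).2$) rules out $\mathcal{C}_2$. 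For class $\mathcal{C}_3$, one inspects the short list of field extension subgroups: in the generic range ($n/2$ even, $n\ne 12$, $q$ odd), the subgroup $\mathrm{GU}_{n/2}(q).2$ contains a conjugate of $z$ through its embedding as the centralizer of an $\mathbb{F}_{q^2}$-structure on $V$, giving item~\eqref{bertrand-sp2}; in the exceptional case $n=12$, the Bertrand number is $t=4$ and $2t=n/2=6$, so the arithmetic of~\eqref{stupid} degenerates and instead the symplectic extension field subgroup $\mathrm{Sp}_6(q^2).2$ accommodates $z$, producing item~\eqref{bertrand-sp4}. For class $\mathcal{C}_8$, which is non-empty only in characteristic two, the two ``Singer blocks'' of $z$ on $U$ and $W$ preserve quadratic forms of $-$-type whose orthogonal sum is of $+$-type, so $z$ lies in the embedding $\mathrm{SO}_n^+(q)\le\mathrm{Sp}_n(q)$, giving item~\eqref{bertrand-sp3} by an argument analogous to Proposition~\ref{spinor-ber-prop}.

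The main obstacle will be excluding the $\mathcal{S}$-family from Example~$2.6$~$a)$ of~\cite{gpps}: these are almost simple groups acting (quasi-)irreducibly on $V$ through a small-rank classical embedding, and one must verify that none of them admits an element whose action on the natural module has the prescribed decomposable type $2t\oplus (n-2t)$ and the given order. The most natural line of attack is a two-pronged arithmetic argument: $\order z$ is divisible by a prime in $P_{2t}(q)$ of size at least $2t+1$ and (whenever non-empty) by a prime in $P_{2(n/2-t)}(q)$, while the small candidate simple groups in Example~$2.6$~$a)$ have element orders so tightly constrained that they cannot simultaneously absorb two large primitive prime divisors of this kind. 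A secondary technical nuisance is the degenerate case $n=12$, where $t=4$ is not prime and $\gcd(t,n/2)=2$, demanding a separate treatment of the $\mathcal{C}_3$ analysis to extract item~\eqref{bertrand-sp4}.
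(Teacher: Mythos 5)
Your skeleton coincides with the paper's: show that $z$ is a strong $ppd(n,q;2t)$-element, invoke Theorem~\ref{main} with $e=2t\le n-4$ to confine $M$ to $\mathcal{C}_1\cup\mathcal{C}_2\cup\mathcal{C}_3\cup\mathcal{C}_5\cup\mathcal{C}_8$ or to Example~2.6~a) of~\cite{gpps}, kill $\mathcal{C}_5$ by Lemma~\ref{no-c5}\,(3), treat $(n,q)=(10,2)$ separately, and read $\mathcal{C}_1$ and $\mathcal{C}_8$ off the action type (your minus-plus type argument for $\mathcal{C}_8$ is a legitimate self-contained substitute for the paper's appeal to the element-order data in~\cite{GLS3,buturlakin}). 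However, the lemma is an exclusion statement, and at two points you only record which subgroups \emph{do} contain a Bertrand element rather than why the others cannot. In $\mathcal{C}_3$ the substantive work is to rule out $\mathrm{Sp}_{n/r}(q^r).r$ for every prime $r\mid n/2$ when $n\ne 12$ (and $\mathrm{Sp}_4(q^3).3$ when $n=12$), and $\mathrm{GU}_{n/2}(q).2$ when $n/2$ is odd; ``one inspects the short list'' hides exactly the step where the primality of $t$ and $t\nmid n/2$ are needed: if $s\in P_{2t}(q)$ divides $|\mathrm{Sp}_{n/r}(q^r).r|$ then either $s\mid q^{2ri}-1$, whence $t\mid ri$, $\gcd(t,r)=1$, so $t\mid i\le n/4$, contradicting $t>n/4$, or $s=r\le n/2<2t+1\le s$; and $\mathrm{GU}_{n/2}(q)$ has an element of order $\order z$ and type $2t\oplus(n-2t)$ only when $n/2-t$ is odd, i.e.\ $n/2$ even and $n\ne 12$. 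The same vagueness affects $\mathcal{C}_2$ (``strong arithmetic constraints''): the clean argument is again a $ppd$ count, namely $r\in P_{2t}(q)$ dividing $|\mathrm{Sp}_m(q)\mathrm{wr}\,S_\ell|$ forces $t\le n/4$ or $\ell\ge r>2t>n/2$, and no such $r$ divides $|\mathrm{GL}_{n/2}(q).2|$; your geometric alternative must also handle an element that \emph{swaps} the two totally isotropic summands.

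The second, more serious gap is class $\mathcal{S}$, which you rightly call the main obstacle but whose proposed resolution does not close. First, Example~2.6~a) is not a family of small-rank classical embeddings: it consists of $A_m\le M\le S_m\times {\bf Z}(\mathrm{Sp}_n(q))$ with $m\in\{n+1,n+2\}$ (the fully deleted permutation module). Second, the two-primitive-prime count $m\ge r_1+r_2$ with $r_1\ge 2t+1$ and $r_2\ge n-2t+1$ only gives $m\ge n+2$, which is compatible with $m=n+2$, and the boundary cases are real: for $n=10$, $q=3$, $t=3$ one has $P_6(3)=\{7\}$, $P_4(3)=\{5\}$, and $S_{12}$ does contain elements of order $35$, so your argument yields no contradiction; here $\order z=140$, and the contradiction comes only from the paper's finer requirement that $\order z/\gcd(2,q-1)=70$ be an element order of $S_{12}$, which fails since $2+5+7>12$. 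This is precisely why the paper works with the full order, bounds it below by roughly $q^{n/2}/(q+1)$, compares with the Massias--Nicolas--Robin refinement of Landau's bound~\cite{landau,landau1} to reduce to $q\le 4$ and small $n$, and settles the finitely many residual cases by computer; without some substitute for this quantitative step your treatment of $\mathcal{S}$ remains incomplete.
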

\begin{proof}
Observe that $P_{2t}(q)\ne \varnothing$ as long as $(t,q)\ne (3,2)$, that is, $(n,q)\ne(10,2)$.
When $(n,q)=(10,2)$, we have verified the veracity of this result with the help of a computer; the maximal subgroup $M$ is isomorphic to either $\mathrm{Sp}_6(2)\perp \mathrm{Sp}_4(2)$ as in~\eqref{bertrand-sp1}, or $\mathrm{SO}_{10}^+(2)$ as in~\eqref{bertrand-sp3}. Therefore, for the rest of the proof, we suppose that $(n,q)\ne (10,2)$. From~\eqref{stupid}, we immediately see that
$$z \textrm{  is a  strong}\   ppd(n,q;2t)\textrm{-element}.$$

Since $2t\leq n-4,$ by Theorem~\ref{main},
$M$ belongs to one of the Aschbacher classes $\mathcal C_i$, with $i\in\{1,2,3,5,8\}$ or to
$\mathcal S$ as described in Example 2.6 a) of~\cite{gpps}. Note that the  class $\mathcal{C}_6$ is excluded using $2t\leq n-4$, see the last assertion in Theorem~\ref{main}.
 We now divide the proof depending on the Aschbacher class of $M$.

\smallskip

\noindent\textsc{The subgroup $M$ lies in class $\mathcal{C}_1$. }Here $M$ is the stabilizer of a proper totally isotropic subspace or of a non-degenerate subspace of $V$. As usual we use the notation in~\cite{kl}. By definition of $z$, the only $z$-invariant subspaces of $V$ are $U$ and $W$. As $U$ and $W$ are non-degenerate and $\dim_{\mathbb{F}_q}U=2t$, we deduce $M\cong\mathrm{Sp}_{2t}(q)\perp\mathrm{Sp}_{n-2t}(q)$.

\smallskip

\noindent\textsc{The subgroup $M$ lies in  class $\mathcal{C}_2$. }From~\cite[Section~$4.2$]{kl}, $M$ is of type $\mathrm{Sp}_{m}(q) \mathrm{wr}\, S_\ell$ for some even divisor $m$ of $n$ with $n=m\ell$ and $\ell\ge 2$, or of type $\mathrm{GL}_{n/2}(q).2$ with $q$ odd. The detailed structure of $M$ is described in~\cite[Propositions~$4.2.5$ and~$4.2.10$]{kl}.

Suppose $M$ is of type $\mathrm{Sp}_{m}(q) \mathrm{wr} S_\ell$. Let $r\in P_{2t}(q)$. Since $r$ divides the order of $M$ and $\gcd(r,q)=1$, we have that either $r$ divides $q^{2i}-1$, for some $i\in \{1,\ldots,m/2\}$, or $r\le \ell$. In the first case, by Lemma \ref{primitivi}\,(3),
we deduce $2t\mid 2i$ and thus
$t\le i\le m/2=n/(2\ell)\le n/4$, contradicting the fact that $t>n/4$. In the second case, as $r$ is a primitive prime divisor of $q^{2t}-1$, from~\eqref{boundppd}, we have $r>2t$. Hence $\ell \ge r>2t>n/2$, contradicting the fact that $\ell\le n/2$.

Suppose $M$ is of type $\mathrm{GL}_{n/2}(q).2$. Now, it is easily seen, arguing as in the previous paragraph, that no element in $P_{2t}(q)$ divides the order of $M$ and hence this case does not arise.

\smallskip

\noindent\textsc{The subgroup $M$ lies in  class $\mathcal{C}_3$. }From~\cite[Section~$4.3$]{kl}, $M$ is of type $\mathrm{Sp}_{n/r}(q^r).r$ for some prime divisor $r$ of $n/2$ or of type $\mathrm{GU}_{n/2}(q).2$ with $q$ odd. The detailed structure of $M$ is described in~\cite[Propositions~$4.3.7$ and~$4.3.10$]{kl}.

Suppose first that $M$ is of type $\mathrm{Sp}_{n/r}(q^{r}).r$. If $n=12$, then $t=4$, $r\in \{2,3\}$ and $$\order z=\frac{(q^4+1)(q^2+1)}{\gcd(2,q-1)}.$$ It is immediately checked that
$\mathrm{Sp}_{6}(q^{2}).2$ contains a conjugate of $z$, while $\mathrm{Sp}_{4}(q^{3}).3$ does not, which produces (4). Assume next that $n\neq 12$. Then $t$ is a prime. Let $s\in P_{2t}(q)$. Since $s$ divides the order of $M$,  we have that $s$ divides $q^{2ri}-1$, for some $i\in \{1,\ldots,n/(2r)\}$, or $s=r$. In the first case,  by Lemma \ref{primitivi}\,(3), we deduce $2t\mid 2ri$ and hence $t\mid ri$. We show that $\gcd(t,r)=1$. Assume the contrary. Then, since both $r$ and $t$ are prime, we have $t=r\mid n/2$, a contradiction. Thus we have $t\mid i$, which gives $t\leq n/4$, a contradiction.
It follows that $s=r$. By ~\eqref{boundppd}, we  then get $r=s>2t>n/2$, contradicting the fact that $r$ divides $n/2$.

Suppose next that $M$ is of type $\mathrm{GU}_{n/2}(q).2$. Now, from~\cite{GLS3} or from~\cite{buturlakin1}, $\mathrm{GU}_{n/2}(q)$ has elements of order $(q^{t} +1)(q^{\frac{n}{2}-t} +1)/\gcd(q^{t}
+1, q^{\frac{n}{2}-t} +1)$  if and only if $n/2-t$ is odd. Recall now that $t$ is an odd prime when $n\ne 12$ and $t=4$ when $n=12$. Thus $n/2-t$ is odd only when $n/2$ is even with $n\neq 12$. Thus part~\eqref{bertrand-sp2} holds.

\smallskip

\noindent\textsc{The subgroup $M$ lies in  class $\mathcal{C}_5$. }This case is ruled out by Lemma ~\ref{no-c5}\,(3), because $z$ is a strong $ppd(n,q;2t)$-element and $n\geq 10$.
\smallskip

\noindent\textsc{The subgroup $M$ lies in  class $\mathcal{C}_8$. }From~\cite[Section~$4.8$]{kl}, $q$ is even and $M$ is of type $\mathrm{SO}_n^\pm(q)$. Now, from~\cite{GLS3} or from~\cite{buturlakin}, $\mathrm{SO}_n^-(q)$ has no element having order $\order z=\frac{(q^{t} +1)(q^{\frac{n}{2}-t} +1)}{\gcd(q^{t}
+1, q^{\frac{n}{2}-t} +1)}$ and action type $2t\oplus (n-2t)$. Thus $M$ is of type $\mathrm{SO}_n^+(q)$ and part~\eqref{bertrand-sp3} holds.
\smallskip

\noindent\textsc{The subgroup $M$ lies in class $\mathcal{S}$. }From Example 2.6 a) of~\cite{gpps}, we obtain $$A_m\leq M \le S_m \times \Z{ \mathrm{Sp}_{n}(q)},$$ with $m\in\{
 n+1,\ n+2\}$.

 Thus the symmetric group $S_m$ contains an element having order
 $$\frac{\order z}{\gcd(2,q-1)}=\frac{(q^t+1)(q^{\frac{n}{2}-t}+1)}{\gcd(q^t+1,q^{\frac{n}{2}-t}+1)\gcd(2,q-1)}.$$

 We first deal with the case $n=12$. Thus $t=4$ and
 $$\order z=\frac{(q^4+1)(q^2+1)}{\gcd(2,q-1)}.$$
 Thus
 $\order z\ge q^6/2$. The maximal element order of $S_{14}$ is $84$ and hence $84\ge q^6/4$. This yields $q=2$. However, when $q=2$, $\order z=85$ and hence also this case does not arise. For the rest of our argument, we may suppose that $n\ne 12$. In particular, $t$ is an odd prime and $t\nmid n/2$.
Then~\eqref{stupid} yields that $\gcd(q^t+1,q^{\frac{n}{2}-t}+1)\le q+1$. Hence
 \begin{align*}
 \order z\ge \frac{q^{n/2}}{q+1}.
 \end{align*}
 Let $o$ be the maximal element order of an element in $S_m$.
 From~\cite{landau} and~\cite[Theorem~2]{landau1}, we have
 \begin{align*}
  \log o\le \sqrt{m\log m}\left(1+\frac{\log(\log (m))-a}{2\log (m)}\right),
 \end{align*}
 where $\log(m)$ denotes the logarithm of $m$ to the base $e$ and $a:=0.975$.
 Therefore
 \begin{align}\label{equation1}
 \log \left(\frac{q^{n/2}}{2(q+1)}\right)\le\sqrt{m\log m}\left(1+\frac{\log(\log (m))-a}{2\log (m)}\right).
 \end{align}
 This inequality holds true only when
 \begin{itemize}
 \item $q=4$ and $n\in \{10,12\}$, or
 \item $q=3$ and $n\in \{10,12,14,16, 18\}$, or
 \item $q=2$ and $n\le 46$.
 \end{itemize}For these remaining cases, we have computed explicitly $\order z$ and the order of the elements of
 $S_{n+2}$ and we have verified that in no case $\frac{\order z}{\gcd(2,q-1)}$ is the order of a permutation in
 $S_{n+2}$.
\end{proof}

\begin{proposition}\label{simplettici}
For every $n\ge 10$, the weak normal covering number of $\mathrm{PSp}_n(q)$ is at least $3$, unless $q$ is even.
When $q$ is even, if $H$ and $K$ are maximal components of a weak normal $2$-covering of $\mathrm{PSp}_n(q)=\mathrm{Sp}_n(q)$, then up to $\mathrm{Aut}(\mathrm{PSp}_n(q))$-conjugacy, we have  $H\cong\mathrm{SO}^-_{n}(q)$ and  $K\cong \mathrm{SO}^{+}_{n}(q)$. Such weak normal covering gives rise to a single normal covering.
\end{proposition}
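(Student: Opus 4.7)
My plan is to adapt the strategy already established in Lemma~\ref{dimension6symplectic} and Lemma~\ref{dimension8symplectic} to arbitrary dimension $n\geq 10$, using the two main technical tools prepared in Section~\ref{sec:largesymp}. Working with $\mathrm{Sp}_n(q)$ via the correspondence of Section~\ref{cove-classic-simple}, I assume $\{\tilde H,\tilde K\}$ is a weak normal $2$-covering of $\mathrm{Sp}_n(q)$ with maximal components. Since Singer cycles exist for every $n\geq 10$, up to relabeling I may assume $\tilde H$ contains a Singer cycle; by Lemma~\ref{malles}, since the small exceptions there require $n\le 8$, the component $\tilde H$ is $\mathrm{Sp}_n(q)$-conjugate to one of (a)~$\mathrm{Sp}_{n/k}(q^k).k$ with $k$ prime, $k\mid n$ and $n/k$ even, in class $\mathcal{C}_3$; (b)~$\mathrm{GU}_{n/2}(q).2$ in class $\mathcal{C}_3$ when $nq/2$ is odd; or (c)~$\mathrm{SO}_n^-(q)$ in class $\mathcal{C}_8$ when $q$ is even.

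Next, I consider the Bertrand element $z$ constructed in Section~\ref{sec:largesymp}. Since all three possibilities (a)--(c) for $\tilde H$ fail to meet the constraints of Lemma~\ref{bertrand-sp} (namely: (a) appears among the Bertrand-containing maximal subgroups only in the degenerate case $n=12$, $M\cong\mathrm{Sp}_6(q^2).2$, which would force $\tilde H=\tilde K$ and thus a weak normal $1$-covering, contradicting Saxl's theorem~\cite{Sa88}; (b) has $n/2$ odd whereas Lemma~\ref{bertrand-sp}\eqref{bertrand-sp2} demands $n/2$ even; (c) is of $-$ type whereas Lemma~\ref{bertrand-sp}\eqref{bertrand-sp3} gives $+$ type), the component $\tilde K$ must contain an $\mathrm{Aut}(\mathrm{Sp}_n(q))$-conjugate of $z$ and thus appear in the list~(1)--(4) of Lemma~\ref{bertrand-sp}. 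I will then examine the (at most) twelve pairs $(\tilde H,\tilde K)$ case by case. For each pair, the strategy is to exhibit a test element (semisimple or unipotent) witnessing that the pair fails to cover $\mathrm{Sp}_n(q)$ up to $\mathrm{Aut}$-conjugacy. Typical blocking elements I expect to use are: an element of order $q^{n/2}-1$ with action type $\tfrac{n}{2}\oplus\tfrac{n}{2}$ whose invariant subspaces match neither a Singer-like structure in $\tilde H$ nor the flag/perpendicular decomposition defining $\tilde K$; an element $\mathrm{diag}(u,s)$ with $u$ unipotent and $s$ semisimple inside $\mathrm{Sp}_{2t}(q)\perp\mathrm{Sp}_{n-2t}(q)$ carefully chosen so that its centralizer is too small to fit a $\mathrm{GU}$ or $\mathrm{Sp}(q^k)$ embedding (as done for $\mathrm{Sp}_8(q)$); and a regular unipotent element (which has a one-dimensional fixed subspace) for ruling out pairs of non-parabolic subgroups, which in characteristic $p\nmid$ (the relevant field degree) live only in specific components.

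Once all pairs are eliminated except $(\tilde H,\tilde K)=(\mathrm{SO}_n^-(q),\mathrm{SO}_n^+(q))$ with $q$ even, I invoke the classical result of Dye~\cite{Dye} to conclude that this pair is indeed a normal $2$-covering of $\mathrm{Sp}_n(q)$. The claim that this weak normal $2$-covering gives rise to a single normal $2$-covering up to $\mathrm{Sp}_n(q)$-conjugacy follows from the framework of Section~\ref{sec:newwen}: from~\cite{bhr,kl} both $\mathrm{SO}_n^-(q)$ and $\mathrm{SO}_n^+(q)$ satisfy $|\mathrm{Aut}(\mathrm{Sp}_n(q)):\mathrm{Sp}_n(q)\cdot\mathbf{N}_{\mathrm{Aut}(\mathrm{Sp}_n(q))}(\tilde X)|=1$, giving $h=k=C=1$. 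The hardest step will be the uniform treatment of the pairs with $q$ odd where both components are in extension-field classes $\mathcal{C}_3$ (combinations like $(\mathrm{Sp}_{n/k}(q^k).k,\mathrm{GU}_{n/2}(q).2)$): here eigenvalue arguments in overfields must be delicately balanced against Galois restrictions, and identifying a single test element that simultaneously defeats extension-field embeddings in both $\tilde H$ and $\tilde K$ — without invoking ad hoc computations for each value of $k$ — is the main obstacle I anticipate.
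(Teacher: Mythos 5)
Your skeleton is the same as the paper's (Singer cycle plus Lemma~\ref{malles} for one component, Bertrand element plus Lemma~\ref{bertrand-sp} for the other, test elements to kill the surviving pairs, Dye plus Section~\ref{sec:newwen} at the end), but the proposal stops exactly where the real work begins, and the one case you do argue concretely is handled with faulty logic. If $n=12$ and $\tilde H\cong\mathrm{Sp}_6(q^2).2$, the fact that this subgroup also appears in Lemma~\ref{bertrand-sp}~\eqref{bertrand-sp4} does not ``force $\tilde H=\tilde K$'': it means $\tilde H$ may absorb both the Singer cycle and the Bertrand element, in which case $\tilde K$ is left completely unconstrained by your two test elements, and Saxl's theorem says nothing about this situation. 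One needs a further element to pin $\tilde K$ down; the paper does this with $y$ of order $(q^{n/2-1}+1)(q+1)/\gcd(q^{n/2-1}+1,q+1)$ and action type $(n-2)\oplus 2$, showing via a primitive-prime-divisor computation that no $\mathrm{Sp}_{n/k}(q^k).k$ contains a conjugate of it, that $\mathrm{GU}_{n/2}(q).2$ contains one only for $n/2$ even, and that among $\mathrm{SO}_n^{\pm}(q)$ only the plus type does.

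The remaining pairs are then eliminated not by the blocking elements you sketch but by two further carefully chosen semisimple elements: $y'$ of order $(q^{n/2-2}+1)(q^2+1)/\gcd(2,q-1)$ and type $(n-4)\oplus 4$ kills the pair $(\mathrm{Sp}_{n/k}(q^k).k,\,\mathrm{GU}_{n/2}(q).2)$ with $n/2$ even and $q$ odd, and $y''$ of order $(q^{n/2-1}-1)(q+1)/\gcd(q^{n/2-1}-1,q+1)$ and type $(n/2-1)\oplus(n/2-1)\oplus 2$ kills $(\mathrm{Sp}_{n/k}(q^k).k,\,\mathrm{SO}_n^{+}(q))$ with $q$ even, the latter because $y''$ lies in $\mathrm{SO}_{n-2}^{+}(q)\perp\mathrm{SO}_2^{-}(q)$ and a divisibility argument with $P_{n/2-1}(q)$ rules out every extension-field subgroup (your suggested element of order $q^{n/2}-1$ and type $\frac n2\oplus\frac n2$ would not work here, since elements of that order do live inside $\mathrm{Sp}_{n/k}(q^k)$ through $\mathrm{GL}_{n/2k}(q^k)$). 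Note also that these ppd arguments need $P_{n-4}(q)\ne\varnothing$ and $P_{n/2-1}(q)\ne\varnothing$, so the Zsigmondy exceptions $(n,q)=(10,2)$ and $(14,2)$ must be split off and checked by computer, a step absent from your plan; likewise $\gamma_w\ge 2$ should be justified (the paper does it via $\mathrm{Aut}(\mathrm{Sp}_n(q))={\bf N}_{\mathrm{Aut}(\mathrm{Sp}_n(q))}(\tilde H)\,\mathrm{Sp}_n(q)$ for every candidate $\tilde H$). Since you yourself flag the uniform treatment of the mixed extension-field pairs as the main unsolved obstacle, the proposal as it stands is an outline of the paper's strategy rather than a proof.
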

\begin{proof}
When  $(n,q)\in \{(10,2),(14,2)\}$, the proof follows with a computer computation, therefore, for the rest of the proof we exclude the case $(n,q)\in\{ (10,2),(14,2)\}$.
 Let $\tilde H$ be a component of a weak normal covering  of $\mathrm{Sp}_n(q)$ containing a Singer cycle. Thus,  $\tilde H$ is one of the groups in  parts~\eqref{malles:1},~\eqref{malles:2} and~\eqref{malles:3} of Lemma~\ref{malles}. From~\cite[Table~$3.5$C]{kl}, we see that in each of these three possibilities we have
\begin{equation*}
\mathrm{Aut}(\mathrm{Sp}_n(q))={\bf N}_{\mathrm{Aut}(\mathrm{Sp}_n(q))}(\tilde{H})\mathrm{Sp}_n(q).
\end{equation*}
Therefore, $\tilde{H}$ on its own cannot give rise to a weak normal covering of $\mathrm{Sp}_n(q)$. Thus  $\gamma_w(\mathrm{Sp}_n(q))\ge 2$.
 Let now $\mu$ be a weak normal $2$-covering of $\mathrm{PSp}_n(q)$ with maximal components and with $\tilde{H}\in \tilde{\mu}$.

Now, the maximal
  subgroups of $\mathrm{Sp}_n(q)$
containing a Bertrand element are described in Lemma~\ref{bertrand-sp}.  By the above arguments, the corresponding  weak normal $2$-covering $\tilde \mu$ of $\mathrm{Sp}_n(q)$ is given by $\tilde \mu=\{\tilde H,\tilde K\}$, where $\tilde K$  must be in one of the  possibilities described in Lemma~\ref{bertrand-sp}.

Let $y\in \mathrm{Sp}_n(q)$ be  of order $$\frac{(q^{\frac{n}{2}-1}
+1)(q+1)}{\gcd(q^{\frac{n}{2}-1} +1,q+1)}$$  and action of type $(n-2)\oplus 2.$
In particular, $V$ has only two proper $y$-invariant subspaces, say $U$ and $W$, $V=U\perp W$, $\dim_{\mathbb{F}_q}U=n-2$ and $\dim_{\mathbb{F}_q}W=2$.  In particular, $y$ has no $\mathrm{Aut}(\mathrm{Sp}_n(q))$-conjugate in $\mathrm{Sp}_{2t}(q)\perp\mathrm{Sp}_{n-2t}(q)$ because $2t\notin\{2,n-2\}$.

Observe that $P_{n-2}(q)\ne\varnothing$ and let  $r\in P_{n-2}(q)$. If $r$ divides the order of $\mathrm{Sp}_{n/k}(q^k).k$, then either $r$ divides $q^{2ik}-1$ for some $i\in \{1,\ldots,n/(2k)\}$ or $r=k$. However, both possibilities are impossible. Indeed, the first possibility yields $2ik\ge n-2$ and hence $i=n/(2k)$. However, when $i=n/(2k)$, we have $q^{2ik}-1=q^n-1$ and $\gcd(q^{n}-1,q^{n-2}-1)=q^2-1$ by Lemma~\ref{aritme}~\eqref{eq:arithme1}. However, this yields that $r$ divides $q^2-1$, which is a contradiction. Similarly, when $k=r$, we contradict~\eqref{boundppd}.

The unitary group $\mathrm{GU}_{n/2}(q).2$ contains an element $\mathrm{Aut}(\mathrm{Sp}_n(q))$-conjugate to $y$ (that is, of order $(q^{n/2-1}+1)(q+1)/\gcd(q^{n/2-1}+1,q+1)$ and of action type $2\oplus (n-2)$) only when $n/2-1$ is odd, that is, $n/2$ is even.

When $q$ is even and $\varepsilon\in\{-,+\}$, the orthogonal group $\mathrm{SO}_{n}^\varepsilon(q)$ contains an element $\mathrm{Aut}(\mathrm{Sp}_n(q))$-conjugate to $y$ only when $\varepsilon=+$, because $y$ lies in $\mathrm{SO}_{n-2}^-(q)\perp\mathrm{SO}_2^-(q)$.

Suppose now that $\tilde{H}$ is as in part~\eqref{malles:2} of Lemma~\ref{malles}, that is, $\tilde{H}=\mathrm{GU}_{n/2}(q).2$, with $nq/2$ odd. The condition on $n$ and on $q$ and Lemma~\ref{bertrand-sp} yield $\tilde{K}=\mathrm{Sp}_{2t}(q)\perp\mathrm{Sp}_{n-2t}(q)$. However, neither $\tilde{H}$ nor $\tilde{K}$ contain an $\mathrm{Aut}(\mathrm{Sp}_n(q))$-conjugate of $y$, which is a contradiction. Thus, $\tilde{H}$ cannot be as in part~\eqref{malles:2} of Lemma~\ref{malles}. Using this fact and using the fact that an $\mathrm{Aut}(\mathrm{Sp}_n(q))$-conjugate of $y$ lies in $\tilde{H}$ or in $\tilde{K}$, by combining the possibilities for $\tilde{H}$ in Lemma~\ref{malles} with the possibilities for $\tilde{K}$ in Lemma~\ref{bertrand-sp}, we obtain that one of the following possibilities occurs
\begin{enumerate}
\item\label{largesp1}$\tilde{H}\cong\mathrm{Sp}_{n/k}(q^k).k$ and $\tilde{K}\cong\mathrm{GU}_{n/2}(q).2$, with $n/2$ even, $n\ne 12$ and $q$ odd,
\item\label{largesp2}$\tilde{H}\cong\mathrm{Sp}_{n/k}(q^k).k$ and $\tilde{K}\cong\mathrm{SO}_n^+(q)$, with $q$ even,
\item\label{largesp3}$\tilde{H}\cong\mathrm{SO}_{n}^-(q)$ and $\tilde{K}\cong\mathrm{SO}_n^+(q)$, with $q$ even,
\end{enumerate}
We now consider each possibility in turn.

Case~\eqref{largesp3} is the Dye normal covering of $\mathrm{Sp}_n(q)$ appearing in the statement of our proposition and hence it requires no further comment. As usual it gives rise to a unique $\mathrm{Sp}_n(q)$-class of normal coverings. To exclude further cases, we use an element similar to $y$. Let $y'\in \mathrm{Sp}_n(q)$ be  of order $$\frac{(q^{\frac{n}{2}-2}
+1)(q^2+1)}{\gcd(q^{\frac{n}{2}-2} +1,q^2+1)}$$  and action of type $(n-4)\oplus 4.$
In particular, $V$ has only two proper $y'$-invariant subspaces, say $U$ and $W$, $V=U\perp W$, $\dim_{\mathbb{F}_q}U=n-4$ and $\dim_{\mathbb{F}_q}W=4$.

Observe that $P_{n-4}(q)\ne\varnothing$ because we are excluding the case $(n,q)=(10,2)$. Let  $r\in P_{n-4}(q)$. If $r$ divides the order of $\mathrm{Sp}_{n/k}(q^k).k$, then either $r$ divides $q^{2ik}-1$ for some $i\in \{1,\ldots,n/(2k)\}$ or $r=k$. However, both possibilities are impossible. Indeed, the first possibility yields $2ik\ge n-4>n/2$ and hence $i=n/(2k)$. However, when $i=n/(2k)$, we have $q^{2ik}-1=q^n-1$ and $\gcd(q^{n}-1,q^{n-4}-1)=q^{\gcd(4,n)}-1$ by Lemma~\ref{aritme}~\eqref{eq:arithme1}. However, this yields that $r$ divides $q^4-1$, which is a contradiction because $r\in P_{n-4}(q)$. Similarly, when $k=r$, we contradict~\eqref{boundppd}.

The unitary group $\mathrm{GU}_{n/2}(q).2$ contains an element $\mathrm{Aut}(\mathrm{Sp}_n(q))$-conjugate to $y'$ (that is, of order $(q^{n/2-2}+1)(q^2+1)/\gcd(q^{n/2-2}+1,q^2+1)$ and of action type $(n-4)\oplus 4$) only when $n/2-2$ is odd, that is, $n/2$ is odd.


Using our considerations on $y'$, we have excluded Case~\eqref{largesp1} and hence we are left to discuss Case~\eqref{largesp2}.

Let $y''\in \mathrm{Sp}_n(q)$ be  of order $$\frac{(q^{\frac{n}{2}-1}
-1)(q+1)}{\gcd(q^{\frac{n}{2}-1} -1,q+1)}$$  and action of type $(n/2-1)\oplus (n/2-1)\oplus 2.$

Here we aim to prove that $y''$ has no $\mathrm{Aut}(\mathrm{Sp}_n(q))$-conjugate in $\mathrm{Sp}_{n/k}(q^k).k$. We argue by contradiction and we suppose, without loss of generality, that $y''\in \mathrm{Sp}_{n/k}(q^k).k$.
Observe that $P_{n/2-1}(q)\ne\varnothing$, because we are excluding the case $(n,q)=(14,2)$. Let  $r\in P_{n/2-1}(q)$. If $r$ divides the order of $\mathrm{Sp}_{n/k}(q^k).k$, then either $r$ divides $q^{2ik}-1$ for some $i\in \{1,\ldots,n/(2k)\}$ or $r=k$. The first possibility yields $r\mid \gcd(q^{2ik}-1,q^{n/2-1}-1)=q^{\gcd(2ik,n/2-1)}-1$, by Lemma~\ref{aritme}. Since $r\in P_{n/2-1}(q)$, we must have $\gcd(n/2-1,2ik)=n/2-1$ and hence
$$2ik=\ell\left(\frac{n}{2}-1\right),$$
for some $\ell\in\mathbb{N}\setminus\{0\}$. As $2ik\le n$, we obtain $\ell\in \{1,2\}$. When $\ell=1$, we get $2ik=n/2-1$ and $k$ divides $n/2-1$. As $k$ divides also $n/2$, we have $k\mid \gcd(n/2-1,n/2)=1 $, which is a contradiction. Analogously, when $\ell=2$, we get $2ik=n-2$ and $i=n/(2k)-1/k$. Since $n/2k$ is an integer, we obtain another contradiction.
 Similarly, when $k=r$,~\eqref{boundppd} yields $k=r\ge n/2-1+1=n/2$ and hence $r=k=n/2$. Summing up, $y''\in \mathrm{Sp}_2(q^{n/2}).(n/2)$ and hence
 \begin{align*}
 q^{\frac{n}{2}-1}-1&=\gcd\left(q^{\frac{n}{2}-1}-1,|\mathrm{Sp}_2(q^{n/2}).(n/2)|\right)\\
 &\le \gcd(q^{\frac{n}{2}-1}-1,|\mathrm{Sp}_2(q^{n/2})|)\cdot\frac{n}{2}\\
 &=\gcd(q^{\frac{n}{2}-1}-1,q^n-1)\cdot\frac{n}{2}\le (q^{\gcd(n/2-1,n)}-1)\frac{n}{2}\\
 &=(q^2-1)\frac{n}{2}.
\end{align*}
However, this inequality is never satisfied.

When $q$ is even and $\varepsilon\in\{-,+\}$, the orthogonal group $\mathrm{SO}_{n}^\varepsilon(q)$ contains an element $\mathrm{Aut}(\mathrm{Sp}_n(q))$-conjugate to $y''$ only when $\varepsilon=-$, because $y''$ lies in $\mathrm{SO}_{n-2}^+(q)\perp\mathrm{SO}_2^-(q)$.
Using our considerations on $y''$, we finally exclude Case~\eqref{largesp2}.
\end{proof}

Now, the veracity of Table~\ref{000=0=0} follows from the results in this section.

\section{Odd dimensional orthogonal  groups}\label{sec:oddorthogonal}
In this section we are concerned with $\mathrm{P}\Omega_n(q)$ with $nq$ odd and $n\ge 7$. Recall that $\mathrm{P}\Omega_n(q)=\Omega_n(q)$.
Note that the case $n=3$ is considered in Section~\ref{sec:linear}  because  $\Omega_3(q)\cong \mathrm{PSL}_2(q)$ and the case $n=5$ is considered in Section~\ref{sec:symplectic} because $\Omega_5(q)\cong \mathrm{PSp}_4(q)$.

\begin{lemma}[{{\cite[Theorem 1.1]{msw}}}] \label{msw-odd}
Let $M$ be a maximal subgroup of $\Omega_{n}(q)$ containing
a semisimple element having order $(q^{\frac{n-1}{2}}+1)/2$ and action type $1\oplus (n-1)$. Then, one of the following holds
\begin{enumerate}
\item\label{eq:msw-sp1}$M\cong\Omega_{n-1}^-(q).2\in\mathcal{C}_1$,
\item\label{eq:msw-sp2}$M\cong S_9\in\mathcal{S}$, $n=7$ and $q=3$.
\end{enumerate}
\end{lemma}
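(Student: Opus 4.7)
My plan is to proceed via the Aschbacher classification of maximal subgroups of $\Omega_n(q)$, using the primitive prime divisor machinery of Guralnick--Penttila--Praeger--Saxl encapsulated in Theorem~\ref{main}. Let $x \in M$ be the prescribed semisimple element, so $V = V_1 \perp W$ with $\dim V_1 = 1$ and $W$ an $(n-1)$-dimensional non-degenerate minus-type subspace on which $x$ acts as (the image of) a Singer cycle of $\Omega_{n-1}^-(q)$. Since $n$ is odd, $n \ge 7$, $q$ is odd and $(n-1,q) \ne (6,2)$, Zsigmondy's theorem yields $P_{n-1}(q) \ne \varnothing$. Any $r \in P_{n-1}(q)$ is odd and satisfies $r \mid q^{n-1}-1$ but $r \nmid q^{(n-1)/2}-1$, hence $r \mid q^{(n-1)/2}+1$ and $r \mid {\bf o}(x)$. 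Thus $x$ is a strong $ppd(n,q;n-1)$-element with $r \ge n$ by~\eqref{boundppd}, so by Theorem~\ref{main} parts~\eqref{main1}--\eqref{main2} and the standing Zsigmondy hypothesis, $M$ falls into one of the classes $\mathcal{C}_1,\mathcal{C}_2,\mathcal{C}_3,\mathcal{C}_5$ or $\mathcal{S}$ (with $\mathcal{S}$ constrained to the groups in Examples~2.6--2.9 of~\cite{gpps}); the class $\mathcal{C}_6$ is incompatible because it requires $e \le n-4 < n-1$, and $\mathcal{C}_8$ is vacuous for odd-dimensional orthogonal groups in odd characteristic.

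Next I would dispose of each surviving class in turn. For $\mathcal{C}_1$, the only non-zero proper $\langle x\rangle$-invariant subspaces of $V$ are $V_1$ and $W$, both non-degenerate and no totally singular $x$-invariant subspace exists; hence $M$ is the common stabilizer of $V_1$ and $W$, yielding $M \cong \Omega_{n-1}^-(q).2$, i.e.\ case~\eqref{eq:msw-sp1}. For $\mathcal{C}_2$, a stabilizer of a direct sum decomposition of $V$ into subspaces of equal dimension $m \mid n$ or of a pair of maximal totally singular subspaces cannot accommodate the irreducible $(n-1)$-dimensional action of $x$, and the standard $ppd$-order count (as in the proof of Lemma~\ref{bertrand-sp}) forces $r \le n/m$ or $r$ to divide a symmetric-group order controlled by the number of summands, both contradicting $r \ge n$. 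For $\mathcal{C}_3$, a field-extension subgroup $\Omega_{n/k}^{\pm}(q^k).k$ with $k$ an odd prime divisor of $n$ is ruled out by an analogous divisibility argument: $r$ must divide $q^{2ik}-1$ for some $i$ with $ik \le (n-1)/2$, forcing $2ik \ge n-1$ and yielding a contradiction, or else $r = k$, contradicting $r \ge n > k$. For $\mathcal{C}_5$, Lemma~\ref{no-c5}\,(3) applies directly since $n \ge 7 \ge 6$ and $x$ is a strong $ppd(n,q;n-1)$-element.

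Finally, for $\mathcal{S}$, one inspects Examples~2.6--2.9 of~\cite{gpps}. The large value $e = n-1$ together with $r \ge n$ severely restricts the almost simple possibilities; in particular, Example~2.6\,a) gives $A_m \le M \le S_m \times Z$ with $m \in \{n+1, n+2\}$, and one must check when $S_m$ contains an element of order ${\bf o}(x) = (q^{(n-1)/2}+1)/2$. Combining the lower bound ${\bf o}(x) \ge (q^{(n-1)/2}+1)/2$ with the Landau-type estimate $\log o(S_m) \le \sqrt{m \log m}(1 + O(1/\log m))$ as in the proof of Lemma~\ref{bertrand-sp}, the inequality restricts $(n,q)$ to finitely many small pairs, which a direct arithmetic check reduces to $(n,q) = (7,3)$. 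There ${\bf o}(x) = 14$, realised in $S_9$ by a product of a $7$-cycle and a transposition, and the known embedding of $S_9$ in $\Omega_7(3)$ via its $7$-dimensional deleted permutation module gives case~\eqref{eq:msw-sp2}. Examples~2.7--2.9 are eliminated by similar, but easier, comparisons of element orders and defining field sizes.

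The main obstacle will be the $\mathcal{S}$-analysis: the classes in Examples~2.6--2.9 list a variety of almost simple candidates, and the case-by-case elimination requires both sharp estimates on the maximal element orders in $S_{n+1}, S_{n+2}$ and explicit information on the minimal faithful representations in odd characteristic. Isolating exactly the $(n,q)=(7,3)$ exception, rather than proliferating sporadic possibilities, is the delicate step.
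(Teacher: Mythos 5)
The paper does not prove this lemma at all: it is imported verbatim as \cite[Theorem 1.1]{msw}, so your self-contained argument via Aschbacher classes and the $ppd$-machinery of \cite{gpps} is a genuinely different route. It is, in fact, the same strategy the paper itself uses for its homegrown analogues (Lemmas~\ref{uni-z},~\ref{bertrand-sp},~\ref{bertrand-odd},~\ref{msw+}), and most of your steps are sound: the element is indeed a strong $ppd(n,q;n-1)$-element, the $\mathcal{C}_1$, $\mathcal{C}_3$ and $\mathcal{C}_5$ analyses are correct (for $\mathcal{C}_5$ Lemma~\ref{no-c5}(3) applies as you say), and the Landau-type reduction of Example~2.6\,a) to $(n,q)=(7,3)$ is the right plan.

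Two steps, however, do not hold as written. First, your exclusion of $\mathcal{C}_6$ is justified backwards: Theorem~\ref{main} only rules out $\mathcal{C}_6$ when $e\le n-4$, which is exactly the regime you are \emph{not} in ($e=n-1$). The correct reason is structural: class $\mathcal{C}_6$ is empty for odd-dimensional orthogonal groups (cf.\ \cite[Table~3.5.D]{kl}), which is how the paper argues in Lemma~\ref{bertrand-odd}. Second, and more seriously, the $\mathcal{C}_2$ elimination is not a contradiction in the boundary case $m=1$: there the decomposition has $n$ summands, so ``$r$ divides $|S_n|$'' together with $r\ge n$ only forces $r=n$, which happens precisely when $n$ is prime and is not absurd. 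This is exactly the situation at $(n,q)=(7,3)$, where the $\mathcal{C}_2$-subgroup $2^6{:}A_7$ \emph{is} maximal in $\Omega_7(3)$ and \emph{is} a $ppd(7,3;6)$-group; to finish one must show it contains no element of order $(q^{(n-1)/2}+1)/2$ (e.g.\ an element of such order projecting onto an $n$-cycle forces an odd number of sign changes, hence determinant $-1$, hence it lies outside $\Omega_n(q)$), plus the few remaining small $(n,q)$ killed by element-order bounds. Finally, note that because $e=n-1>n-3$, Theorem~\ref{main}(3) imposes no restriction on $\mathcal{S}$, so all of Examples~2.6--2.9 of \cite{gpps} (including defining-characteristic candidates such as $G_2(q)\le\Omega_7(q)$ and $\mathrm{Sp}_6(2)\le\Omega_7(p)$) must be processed; your element-order comparisons do dispose of them, but this is where the bulk of the work in \cite{msw} lies, and a one-sentence dismissal understates it.
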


In earlier sections, in our analysis of linear, unitary and symplectic groups, we have used mostly semisimple elements. For the orthogonal groups $\Omega_n(q)$, it is instead convenient to make a massive use of unipotent elements. Therefore, before proving our main result for $\Omega_n(q)$, we collect here some basic remarks about those elements that we need in the sequel.

Recall that $u\in \mathrm{GL}_n(q)$ is \textit{\textbf{regular unipotent}}, if $u-I_n$ has rank $n-1$, where $I_n$ denotes the $n\times n$ identity matrix. The odd dimensional orthogonal group $\Omega_n(q)$ contains regular unipotent elements. Indeed, the existence of regular unipotent elements in $\mathrm{SO}_n(q)$ follows from~\cite[Proposition~$5.7.1$]{carter}; however, since $|\mathrm{SO}_n(q):\Omega_n(q)|=2$ (except when $n=1$) and since $q$ is odd, we deduce that the regular unipotent elements of $\mathrm{SO}_n(q)$ all lie in $\Omega_n(q)$. 

Let $u$ be a regular unipotent element. Clearly, the Jordan form of $u$ consists of a unique Jordan block, that is, $u$ is conjugate via an element of $\mathrm{GL}_n(q)$ to the matrix
\[
\begin{pmatrix}
1& 0&\cdots  &      & &\cdots &    0  \\
1& 1& 0&& & &    \vdots  \\
0& 1& 1&     0& & &        \\
\vdots&\ddots&\ddots&\ddots&&&\\
\vdots&&&\ddots&\ddots&\ddots&\vdots\\
0&\cdots&&0&1&1&0\\
0&\cdots&&\cdots&0&1&1\\
\end{pmatrix}.
\]
Using this representative of the conjugacy class of $u$ in $\mathrm{GL}_n(q)$, an elementary computation shows that, for every $i\in \{0,\ldots,n\}$, there exists a unique $\mathbb{F}_q\langle u\rangle$-submodule $V_i$ of $V=\mathbb{F}_q^n$ with $\dim_{\mathbb{F}_q}(V_i)=i$. Indeed, if we let $e_1,\ldots,e_n$ be the standard basis of $V$, we have $V_0=0$ and $V_i=\langle e_1,\ldots,e_i\rangle$ for $i\in \{1,\ldots,n\}$. In particular, the  $n+1$ $u$-invariant subspaces $V_0,\ldots,V_n$ of $V$ form the flag
$$0=V_0<V_1<V_2<\cdots<V_{n-1}<V_n=V$$
and $u$ has a unique $u$-invariant subspace of dimension $1$ given by $V_1$.

Suppose now that $u$ preserves the non-degenerate quadratic form $Q$ on $V$. As $V_1$ is $u$-invariant, so is $V_1^\perp$. Since $Q$ is non-degenerate,
$\dim_{\mathbb{F}_q}(V_1^\perp)=\dim_{\mathbb{F}_q}(V)-\dim_{\mathbb{F}_q}(V_1)=n-1$. Thus
$V_1^\perp=V_{n-1}$ and hence $V_1\le V_1^{\perp}$. Therefore, $V_1$ is totally singular.

\subsection{Small odd dimensional orthogonal groups}\label{sec:smalloddorthogonal}
We start our analysis with small odd dimensional orthogonal groups. For the subgroup structure of $\Omega_7(q)$ and $\Omega_9(q)$ we use~\cite{bhr}.

\begin{lemma}\label{dimension7oddorthogonal}
The weak normal covering number of $\Omega_7(q)$ is at least $3$.
\end{lemma}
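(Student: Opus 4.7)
The plan is to argue by contradiction: suppose $\{H,K\}$ is a weak normal $2$-covering of $\Omega_7(q)$ by maximal components and produce an element of $\Omega_7(q)$ with no $\mathrm{Aut}(\Omega_7(q))$-conjugate in $H\cup K$. As usual, the small case $q=3$ is handled separately by a direct \texttt{magma} computation, so from now on assume $q\ne 3$.

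First I would identify one of the components using Lemma~\ref{msw-odd}. The group $\Omega_7(q)$ contains a semisimple element $x$ of order $(q^3+1)/2$ and action type $1\oplus 6$ on $V=\mathbb{F}_q^7$, obtained by embedding a Singer cycle of $\Omega_6^-(q)$ via the decomposition $V=V_1\perp V_6$, where $V_1$ is non-degenerate and $V_6$ is of minus type. Since $q\ne 3$, Lemma~\ref{msw-odd} forces, up to $\mathrm{Aut}(\Omega_7(q))$-conjugacy,
\[
H\cong \Omega_6^-(q).2\in\mathcal{C}_1,
\]
the stabilizer of a non-degenerate $1$-dimensional minus-type subspace of $V$.

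Next I would pin down $K$ using a regular unipotent element $u\in\Omega_7(q)$, whose existence was recalled in the discussion preceding Lemma~\ref{dimension7oddorthogonal}. Such a $u$ admits a unique $u$-invariant $1$-dimensional subspace $V_1(u)$, and $V_1(u)$ is totally singular. Since every element of $H\cong \Omega_6^-(q).2$ stabilizes a specific non-degenerate $1$-dimensional subspace, and this property is preserved under $\mathrm{Aut}(\Omega_7(q))$-conjugacy, we conclude that no $\mathrm{Aut}(\Omega_7(q))$-conjugate of $H$ contains $u$. Therefore an $\mathrm{Aut}(\Omega_7(q))$-conjugate of $u$ must lie in $K$. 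Consulting the list of maximal subgroups of $\Omega_7(q)$ in~\cite[Tables~8.39,~8.40]{bhr} and using elementary order considerations together with the fact that $u$ has order a suitable power of $p$ and centralizes a $1$-dimensional totally singular subspace, one can show that $K$ must be a parabolic subgroup $P_i$, for $i\in\{1,2,3\}$, of $\Omega_7(q)$.

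Finally I would construct an element $y\in\Omega_7(q)$ which simultaneously fails to be $\mathrm{Aut}(\Omega_7(q))$-conjugate into $H$ and into any parabolic subgroup. A natural choice is a semisimple element of order $(q^3-1)/\gcd(2,q-1)$ and action type $3\oplus 3\oplus 1$ on $V$, built by embedding a Singer cycle of a $\mathrm{GL}_3(q)$ Levi of $\Omega_6^+(q)$; by construction $y$ fixes only two proper nonzero subspaces other than the non-degenerate $1$-dim summand, both totally singular of dimension $3$. Using Huppert's theorem and the decomposition~\eqref{eq:decomp}, one checks that the fixed non-degenerate $1$-dim summand of $y$ is of \emph{plus} type, and hence $y$ cannot be conjugated into $H$; and since $y$ stabilizes no totally singular subspace of dimension $1$ or $2$, and the two totally singular $3$-spaces it stabilizes are permuted non-trivially by $\langle y\rangle$, $y$ cannot be conjugated into any $P_1$, $P_2$ or $P_3$ either. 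This yields the required contradiction.

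The main obstacle I anticipate is the bookkeeping required in the second step: one must carefully rule out, using Zsigmondy primitive prime divisors for $q^3-1$ and $q^3+1$ and the centralizer structure of unipotent elements, every non-parabolic maximal subgroup of $\Omega_7(q)$ as a candidate for $K$, including the small-field and geometric exceptions (subgroups in classes $\mathcal{C}_2$, $\mathcal{C}_3$, $\mathcal{S}$, and the $\mathrm{G}_2(q)$ subgroup in class $\mathcal{S}$). This last subgroup in particular requires a dedicated argument, since $\mathrm{G}_2(q)\le\Omega_7(q)$ does contain a regular unipotent of $\Omega_7(q)$ and elements of both the orders used above.
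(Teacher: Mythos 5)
Your first step (disposing of $q=3$ by computer and then forcing $H\cong\Omega_6^-(q).2$ via Lemma~\ref{msw-odd}) agrees with the paper, but both of your remaining steps have genuine problems, and the second one is fatal. The final element $y$, built as $s\oplus(s^{-1})^T\oplus 1$ (suitably squared so as to land in $\Omega_7(q)$) from a Singer cycle $s$ of a $\mathrm{GL}_3(q)$-Levi, stabilizes \emph{each} of the two totally singular $3$-spaces $W_1,W_2$ individually; the assertion that $\langle y\rangle$ permutes them non-trivially is false. Consequently $y$ lies in the parabolic $P_3\cong E_q^{6+6}:\frac{1}{2}\mathrm{GL}_3(q)$, so if $K$ is that parabolic your argument produces no contradiction at all. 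Indeed, semisimple elements of order $(q^3-1)/2$ are exactly the elements the paper uses to show that a second component must be this parabolic or $\Omega_6^+(q).2$, so no element of that order can do the job you want. The paper's contradiction element is instead $z=u\oplus s$ with $u$ a regular unipotent of $\Omega_3(q)$ and $s$ irreducible of order $(q^2+1)/2$ on a minus-type $4$-space: its unique invariant $1$-space is totally singular, which rules out $\Omega_6^{\pm}(q).2$, while $(q^2+1)/2$ divides the order of no element of the parabolic (this is where $q\ge 5$ odd enters).

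The identification of $K$ as a parabolic via the regular unipotent is also not achieved. The subgroup $G_2(q)$, maximal in $\Omega_7(q)$ in class $\mathcal{S}$, does contain regular unipotent elements of $\Omega_7(q)$ (for $p\ge 7$ the regular unipotent of $G_2(q)$ has a single Jordan block of size $7$ on the natural module, as in the Saxl--Seitz classification of subgroups containing regular unipotents), and for small $p$ further candidates must be checked (for instance, in characteristic $7$ an element of order $7$ permuting the summands of an orthogonal decomposition into seven non-degenerate $1$-spaces is a single Jordan block of size $7$, so the $\mathcal{C}_2$ subgroup cannot be dismissed out of hand). You acknowledge the $G_2(q)$ obstruction, but the promised ``dedicated argument'' never appears, so the reduction to $K\in\{P_1,P_2,P_3\}$ is an unfilled gap; your parenthetical claim that $G_2(q)$ contains elements of orders $(q^3\pm1)/2$ is in fact false for $q\ge 5$ (its semisimple element orders are bounded by $q^2+q+1$), but noting this does not repair the step. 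The paper avoids both difficulties: it locates the second component through elements of order $q^3-1$, whose only maximal overgroups are the $P_3$ parabolic and $\Omega_6^+(q).2$, and only then brings in the mixed element $z$ above to exclude both.
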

\begin{proof}
When $q=3$, the result follows with a computation with the computer algebra system \texttt{magma}~\cite{magma}. Therefore, for the rest of the argument, we suppose $q\ge 5$.

Let $\mu$ be a weak normal covering of $\Omega_7(q)$ of minimum cardinality and maximal components. Let $H\in \mu$ with $H$ containing a semisimple element of order $(q^3+1)/2$ and action type $1\oplus 6$. From Lemma~\ref{msw-odd}, we have that
 $$H\cong\Omega_6^-(q).2.$$

A case-by-case analysis on the maximal subgroups of $\Omega_7(q)$ in~\cite[Tables~8.39,~8.40]{bhr} reveals that there are two $\Omega_7(q)$-conjugacy classes of maximal subgroups  containing elements of order $q^3-1$. Namely,
\begin{enumerate}
\item $E_{q}^{6+6}:\frac{1}{2}\mathrm{GL}_3(q)\in\mathcal{C}_1$,
\item $\Omega_6^+(q).2\in\mathcal{C}_1$.
\end{enumerate}
In particular, since $H$ is not in this list, we deduce that there exists a further component $K\in \mu$ and that $K$ is in one of the above two possibilities.

Now, let $u\in \Omega_3(q)$ be a regular unipotent element, let $s\in \Omega_4^-(q)$ be a semisimple element of order $(q^2+1)/2$ and having type $4$ on $\mathbb{F}_q^4$ and
let $z=u\oplus s$.   By construction, $z\in\Omega_7(q)$ with respect to a suitable non-degenerate quadratic form. Since $u$ has order $p$, we have ${\bf o}(z)=p(q^2+1)/2$.

As we observed above, the only $1$-dimensional subspace of $\mathbb{F}_q^3$ left invariant by $u$  is totally singular.
Therefore, the only $1$-dimensional subspace of $V$ left invariant by $z$ is
totally singular and hence $z$ is not $\mathrm{Aut}(\Omega_7(q))$-conjugate to elements in $\Omega_6^+(q).2$ or $\Omega_6^-(q).2$, because these groups stabilize a $1$-dimensional non-singular subspace. However, no element in $E_{q}^{6+6}:\frac{1}{2}\mathrm{GL}_3(q)$ has order divisible by $(q^2+1)/2$: this can be seen using Lemma~\ref{aritme} and using the fact that $q\ge 5$ is odd.
Thus $\gamma_w(\Omega_7(q))\ge 3$.
\end{proof}

\begin{lemma}\label{dimension9oddorthogonal}
The weak normal covering number of $\Omega_9(q)$ is at least $3$.
\end{lemma}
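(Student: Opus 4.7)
The plan mirrors the argument used for $\Omega_7(q)$ in Lemma~\ref{dimension7oddorthogonal}. The small case $q=3$ would be dispatched by a direct \texttt{magma} computation; for the rest of the argument I assume $q\ge 5$. Let $\mu$ be a weak normal covering of $\Omega_9(q)$ of minimum cardinality with maximal components, and let $H\in \mu$ contain a semisimple element of order $(q^4+1)/2$ and action type $1\oplus 8$, obtained by pairing the trivial action on a non-degenerate $1$-dimensional subspace with a Singer cycle of $\Omega_8^-(q)$. Lemma~\ref{msw-odd}, whose sporadic exception only arises for $\Omega_7(3)$, then forces $H\cong \Omega_8^-(q).2$.

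Next I would pick an element of $\Omega_9(q)$ of order $q^4-1$ obtained from a Singer cycle of $\mathrm{GL}_4(q)\le \Omega_8^+(q)\le \Omega_9(q)$, and scan the maximal subgroups of $\Omega_9(q)$ in~\cite[Tables~8.58,~8.59]{bhr} to determine which $\Omega_9(q)$-classes can contain such an element. With $q\ge 5$, the primitive prime divisor machinery of Section~\ref{preliminaries} rules out the $\mathcal{C}_3$, $\mathcal{C}_5$ and $\mathcal{S}$ families, and $\Omega_8^-(q).2$ is excluded because $\gcd(q^4-1,q^4+1)\le 2$; the only survivors are the parabolic subgroup $P_4$, of structure $E_q^{10}{:}\tfrac{1}{2}\mathrm{GL}_4(q)$, and the stabiliser $\Omega_8^+(q).2\in\mathcal{C}_1$. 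Hence if $\gamma_w(\Omega_9(q))=2$, the second component $K\in \mu$ is $\mathrm{Aut}(\Omega_9(q))$-conjugate to one of these two groups.

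The heart of the argument is to exhibit one element that defeats both admissible $K$'s at once. I would set $z:=u\oplus s\in \Omega_3(q)\perp \Omega_6^-(q)\le \Omega_9(q)$, where $u\in \Omega_3(q)$ is regular unipotent and $s\in \Omega_6^-(q)$ is a Singer cycle of order $(q^3+1)/2$ acting irreducibly on $\mathbb{F}_q^6$, so that $\order z=p(q^3+1)/2$. As recalled just before Lemma~\ref{dimension7oddorthogonal}, the unique $\langle u\rangle$-invariant $1$-dimensional subspace of $\mathbb{F}_q^3$ is totally singular, and combined with the irreducibility of $s$ this gives that the unique $\langle z\rangle$-invariant $1$-dimensional subspace of $V$ is totally singular; consequently $z$ has no $\mathrm{Aut}(\Omega_9(q))$-conjugate in either $\Omega_8^+(q).2$ or $\Omega_8^-(q).2$, since both stabilise a non-degenerate $1$-dimensional subspace. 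Choosing finally $r\in P_6(q)$ (non-empty by Zsigmondy's theorem for every odd $q$), we have $r\mid (q^3+1)/2$ while $r$ divides none of $q$ and $q^i-1$ for $1\le i\le 4$, so $r\nmid |P_4|$ and $z$ has no conjugate in $P_4$ either; this contradicts $\gamma_w(\Omega_9(q))=2$.

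The step most likely to require care is the exhaustive reduction in the second paragraph: specifically, verifying that no $\mathcal{C}_3$ extension-field subgroup (for instance one of type $\Omega_3(q^3).3$) and no $\mathcal{S}$-subgroup contains an $\mathrm{Aut}(\Omega_9(q))$-conjugate of a torus element of order $q^4-1$ will require a careful application of Theorem~\ref{main} rather than naive order comparisons.
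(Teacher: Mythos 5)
Your first step (forcing $H\cong\Omega_8^-(q).2$ via Lemma~\ref{msw-odd}) agrees with the paper, but the reduction in your second paragraph rests on an element that does not exist: since $\Omega_9(q)$ is only defined for $q$ odd, the group $\Omega_9(q)$ contains \emph{no} element of order $q^4-1$. Indeed, any such element is semisimple, and an eigenvalue/$2$-part analysis shows it must be conjugate in $\mathrm{SO}_9(q)$ to $\mathrm{diag}\bigl(s,(s^{-1})^T,1\bigr)$ with $s\in\mathrm{GL}_4(q)$ of order $q^4-1$; then $\det(s)$ generates $\mathbb{F}_q^\ast$, so the spinor norm is non-trivial and the element lies in $\mathrm{SO}_9(q)\setminus\Omega_9(q)$ --- this is exactly the content of Lemma~\ref{lemma:new2022+} (for $q$ odd the Singer cycle of $\mathrm{GL}_4(q)$ you embed lands in $\mathrm{SO}_8^+(q)\setminus\Omega_8^+(q)$, and appending the trivial $1$-space does not change its spinor norm). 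Consequently your scan of \cite[Tables~8.58,~8.59]{bhr} imposes no constraint on $K$ whatsoever, and the clean two-candidate list $\{P_4,\ \Omega_8^+(q).2\}$ is unfounded. (A repair with elements of order $(q^4-1)/2$ also fails at the first hurdle: $H\cong\Omega_8^-(q).2$ itself contains such elements, e.g.\ of type $4\oplus 2\oplus 2$ built from Singer cycles of $\mathrm{SO}_4^-(q)$ and $\mathrm{GL}_2(q)$, so nothing forces them into $K$. Incidentally, the remark that $\gcd(q^4-1,q^4+1)\le 2$ excludes $\Omega_8^-(q).2$ is a non sequitur even granting existence.)

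This gap is fatal to the rest of the argument, because the element $z=u\oplus s$ of order $p(q^3+1)/2$ cannot finish the job on its own. The paper uses this very element at the \emph{earlier} stage, to determine the genuine candidates for $K$: the maximal subgroups of $\Omega_9(q)$ containing a conjugate of $z$ are $P_1\cong E_q^7:\bigl(\frac{q-1}{2}\times\Omega_7(q)\bigr).2$ and $(\Omega_3(q)\times\Omega_6^-(q)).2^2$, and both of them \emph{do} contain $z$, so your closing contradiction cannot eliminate them. The paper then needs two further elements: a regular unipotent element of $\Omega_9(q)$ (whose unique invariant $1$-space is totally singular and whose fixed space is $1$-dimensional) rules out $H$ and $(\Omega_3(q)\times\Omega_6^-(q)).2^2$, pinning $K$ down to $P_1$; and a semisimple element of type $4\oplus4\oplus1$ lying in $\Omega_8^+(q).2$ and fixing only a vector of non-square norm, which lies in no conjugate of $H\cong\Omega_8^-(q).2$ and stabilizes no $7$-dimensional subspace, hence escapes $P_1$ as well. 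You would need analogues of these two extra elements (or some substitute for them) to close the proof; as written, the argument does not establish $\gamma_w(\Omega_9(q))\ge 3$.
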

\begin{proof} Let $\mu$ be a weak normal covering of $\Omega_9(q)$ of minimum cardinality and maximal components. Let
$H\in \mu$ with $H$ containing a semisimple element of order $(q^4+1)/2$ and action type $1\oplus 8$. From Lemma~\ref{msw-odd}, we have that $$H\cong\Omega_8^-(q).2.$$

Now, let $u\in \Omega_3(q)$ be a regular unipotent element, let $s\in \Omega_6^-(q)$ be a semisimple element of order $(q^3+1)/2$ and having type $6$ on $\mathbb{F}_q^6$ and
let $y=u\oplus s\in\mathrm{GL}_9(q)$.   By construction, $y\in\Omega_9(q)$ with respect to a suitable non-degenerate quadratic form. Since $u$ has order $p$, we have ${\bf o}(y)=p(q^3+1)/2$.

 A case-by-case analysis on the maximal subgroups of $\Omega_9(q)$ in~\cite[Tables~8.58,~8.59]{bhr} reveals that there are two $\Omega_9(q)$-conjugacy classes of maximal subgroups containing a conjugate of $y$. Namely,
\begin{enumerate}
\item $E_{q}^{7}:\left(\frac{q-1}{2}\times \Omega_7(q)\right).2\in \mathcal{C}_1$,
\item $(\Omega_3(q)\times \Omega_6^-(q)).2^2\in\mathcal{C}_1$.
\end{enumerate}
In particular, since $H$ is not in this list, there  exists a further component $K\in \mu$  such that $K$ is in one of the above two possibilities. Suppose $\mu=\{H,K\}$.

Let $u'$ be a regular unipotent element of $\Omega_9(q)$. As we observed above, the only $1$-dimensional subspace of $V=\mathbb{F}_q^9$ left invariant by $u'$  is totally singular. Every unipotent element $v$ in $(\Omega_3(q)\times \Omega_6^-(q)).2^2$ lies in $\Omega_3(q)\times \Omega_6^-(q)$ and hence satisfies $\dim_{\mathbb{F}_q}\cent V v\ge 2$. In particular, $u'$ has no $\mathrm{Aut}(\Omega_9(q))$-conjugate in $H$ (because $u'$ does not fix a non-degenerate vector) or in $(\Omega_3(q)\times \Omega_6^-(q)).2^2$. We deduce that
$$K\cong E_{q}^{7}:\left(\frac{q-1}{2}\times \Omega_7(q)\right).2.$$

Now, let $z$ be a semisimple element of $\Omega_9(q)$ belonging to the maximal subgroup $\Omega_8^+(q).2$ and having type $4\oplus 4\oplus 1$. This element can be explicitly constructed by taking  the quadratic form $Q$ preserved by $\Omega_9(q)$ and  letting $z\in \Omega_9(q)$ be a semisimple element fixing a vector $v\in V$ with $Q(v)$ not a square in $\mathbb{F}_q$ and with $z$ inducing on $\langle v\rangle^\perp$ a semisimple matrix having type $4\oplus 4$. Now, $z$ lies in no conjugate of $H\cong \Omega_8^-(q).2$ because $z$ does not fix any non-zero vector $w$ with $Q(w)$ a square in $\mathbb{F}_q$. As we are assuming that $\mu=\{H,K\}$ is a weak normal covering of $\Omega_{9}(q)$, $z$ is $\mathrm{Aut}(\Omega_9(q))$-conjugate to an element of $K$. Since $z$ is semisimple, $z$ is $\mathrm{Aut}(\Omega_9(q))$-conjugate to a semisimple element in $(\frac{q-1}{2}\times \Omega_7(q)).2$. However, since $z$ has type $4\oplus 4\oplus 1$, $z$ preserves no $7$-dimensional subspace of $V$.
\end{proof}

\subsection{Large odd dimensional orthogonal groups}\label{sec:largeoddorthogonal}
In this section, we deal with large odd dimensional orthogonal groups $\Omega_n(q)$ with $n\ge 11$. We consider the Bertrand number $t$ for the integer $\frac{n-1}{2}\geq 5$  (see Section \ref{bernum}). Thus for $n\geq 17$ we have $\frac{n-1}{4}<t\le \frac{n-1}{2}-3$ and $t$ is an odd prime; for $n=15$, we have $t=5$; for $n=13$, we have $t=4$; for $n=11$ we have $t=3$. In particular $t$ is always an odd prime apart the case $n=13$.
Moreover, for  every $n\geq 11$,  we have $\frac{n-1}{4}<t\le \frac{n-1}{2}-2$ and  $t\nmid \frac{n-1}{2}$.
Thus, from Lemma~\ref{aritme}~\eqref{eq:arithme2}, we have
\begin{align}\label{ber-unit}
\gcd(q^t+1,q^{\frac{n-1}{2}-t}+1)=
\begin{cases}
q+1&\textrm{if }\frac{n-1}{2} \textrm{ is even and }n\neq 13,\\
2&\textrm{if }\frac{n-1}{2} \textrm{ is odd or } n=13.\\
\end{cases}
\end{align}

We consider a Bertrand element $z$ of $\mathrm{\Omega}_n(q)$ such that
\begin{itemize}
\item $\order z=\frac{(q^t+1)(q^{\frac{n-1}{2}-t}+1)}{\gcd(q^t+1,q^{\frac{n-1}{2}-t}+1)}$ (see Table~\ref{2}),
\item the action of $z$ on $V$ is of type $1\oplus 2t\oplus (n-2t-1)$ and we write $V=U_1\perp U_2\perp U_3$, where $\dim_{\mathbb{F}_q}U_1=1$, $\dim_{\mathbb{F}_q}U_2=2t$, $\dim_{\mathbb{F}_q}U_3=n-2t-1$ and $U_1,U_2,U_3$ are irreducible $\mathbb{F}_q\langle z\rangle$-modules,
\item $z$ induces a matrix of order $q^t+1$ on $U_2$ and a matrix of order $q^{\frac{n-1}{2}-t}+1$ on $U_3$.
\end{itemize}
The existence of Bertrand elements is easily proved. Consider a Singer cycle $s_{2t}\in \mathrm{SO}^-_{2t}(q)$ and 
 and a Singer cycle $s_{n-2t+1}\in \mathrm{SO}^-_{n-2t-1}(q)$. Then by Proposition \ref{spinor-ber-prop},  we have  $s_{2t}\oplus s_{n-2t+1}\in\mathrm{\Omega}_{n-1}^+(q)$ so that $z:=I_1\oplus s_{2t}\oplus s_{n-2t+1}\in\mathrm{\Omega}_{n}(q)$ is a Bertrand element of $\mathrm{\Omega}_n(q)$.

Observe that, since $q$ is
odd and both $2t$ and $n-2t-1$ are at least $4$, the sets $P_{2t}(q)$ and $P_{n-2t-1}(q)$ are both non-empty. Moreover, since $2t\neq n-2t-1$, Lemma~\ref{primitivi}~$(2)$ implies that $P_{2t}(q)\cap P_{n-2t-1}(q)=\varnothing.$ 
In particular, for every $a\in P_{2t}(q)$ and $b\in P_{n-2t-1}(q)$, the product $ab$ divides $\order z$. In particular
$$z \textrm{ is a strong }ppd(n,q;2t)\textrm{-element}.$$
\begin{lemma}\label{bertrand-odd}
If $M$ is a maximal subgroup of $\Omega_n(q)$ with $n\ge 11$ containing a Bertrand element, then one of the following holds
\begin{enumerate}
\item\label{bertrand-odd:1} $M$ is of type $\mathrm{O}_{2t+1}(q)\perp \mathrm{O}^{-}_{n-2t-1}(q)\in\mathcal{C}_1$,
\item\label{bertrand-odd:2} $M$ is of type $\mathrm{O}_{n-2t}(q)\perp \mathrm{O}^{-}_{2t}(q)\in\mathcal{C}_1$,
\item\label{bertrand-odd:3} $M$ is of type $\mathrm{O}_{n-1}^+(q)\in\mathcal{C}_1$.
\end{enumerate}
\end{lemma}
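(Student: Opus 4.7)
The plan is to follow the template already laid out in Lemma~\ref{uni-z} and Lemma~\ref{bertrand-sp}. Since the Bertrand element $z$ has been shown to be a strong $ppd(n,q;2t)$-element and we have $2t\leq n-5\leq n-4$, Theorem~\ref{main} restricts $M$ to lie in one of the Aschbacher classes $\mathcal{C}_1$, $\mathcal{C}_2$, $\mathcal{C}_3$, $\mathcal{C}_5$, or in $\mathcal{S}$ via Example~2.6~a) of~\cite{gpps}; recall that $\mathcal{C}_8$ is empty for $\Omega_n(q)$ in odd dimension and odd characteristic. Lemma~\ref{no-c5}\,(3) immediately rules out $\mathcal{C}_5$, since $n\geq 11\geq 6$ and $z$ is a strong $ppd$-element.

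For the class $\mathcal{C}_1$, by construction the only proper $z$-invariant subspaces of $V$ are the sums of nonempty proper subsets of $\{U_1,U_2,U_3\}$, and all of these are non-degenerate (each $U_i$ is non-degenerate by Huppert's decomposition, and the sum of non-degenerate summands in an orthogonal decomposition is non-degenerate). In particular $z$ fixes no totally singular subspace, so no parabolic subgroup contains $z$. Stabilizers of the three atomic summands give, respectively: the stabilizer of $U_1$, which yields $\mathrm{O}^+_{n-1}(q)$ as in~\eqref{bertrand-odd:3} (the complement $U_2\perp U_3$ has $+$ type by Proposition~\ref{spinor-ber-prop}); the stabilizer of $U_2$, which is of type $\mathrm{O}_{n-2t}(q)\perp\mathrm{O}^{-}_{2t}(q)$, giving~\eqref{bertrand-odd:2}; and the stabilizer of $U_3$, which is of type $\mathrm{O}_{2t+1}(q)\perp\mathrm{O}^{-}_{n-2t-1}(q)$, giving~\eqref{bertrand-odd:1}. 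Stabilizers of the three larger $z$-invariant subspaces coincide with these by taking perpendicular complements.

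For $\mathcal{C}_2$, the maximal subgroups are of type $\mathrm{O}_m^\varepsilon(q)\mathrm{wr}\, S_\ell$ with $m\ell=n$, or of type $\mathrm{GL}_{(n-1)/2}(q).2^2$ (the latter via the embedding obtained by stabilizing a pair of maximal totally singular subspaces). Picking any $r\in P_{2t}(q)$ (non-empty since $2t\geq 6$ and $q$ is odd), the divisibility $r\mid|M|$ forces either $r\mid q^{2i}-1$ for some $i\in\{1,\dots,m/2\}$, leading to $2t\mid 2i$ and the contradiction $t\leq m/2\leq (n-1)/4$ since $m\mid n$ and $n$ is odd; or $r\leq\ell$, giving $2t<r\leq\ell\leq n$ which, combined with $t>(n-1)/4$ and $\ell\mid n$, yields a contradiction by a parity/divisibility count. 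The $\mathrm{GL}_{(n-1)/2}(q).2^2$ case is excluded similarly, using that no such $r$ divides $|\mathrm{GL}_{(n-1)/2}(q)|$.

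For $\mathcal{C}_3$, the maximal subgroups are of type $\mathrm{O}_{n/k}(q^k).k$ with $k$ an odd prime divisor of $n$ (note $n$ is odd, so there is no $\mathrm{GU}_{n/2}(q)$-type subgroup here). Any $r\in P_{2t}(q)$ dividing $|\mathrm{O}_{n/k}(q^k).k|$ must satisfy $r\mid q^{2ik}-1$ for some $i$ or $r=k$; the first leads via Lemma~\ref{primitivi}\,\eqref{primitivi3} to $t\mid ik$, and since $\gcd(t,k)=1$ (otherwise $t=k\mid n$ while $\gcd(t,(n-1)/2)=1$ and $t$ does not divide $n-1$, hence not $n$, a contradiction), we get $t\mid i\leq (n-1)/(2k)\leq (n-1)/6$, contradicting $t>(n-1)/4$; the second yields $k=r\geq 2t+1>(n-1)/2$, contradicting $k\mid n$. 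Finally, Example~2.6~a) of~\cite{gpps} forces $A_m\leq M\leq S_m\times\Z{\Omega_n(q)}$ with $m\in\{n,n+1\}$; using $\order z\geq q^{(n-1)/2}/(q+1)$ from~\eqref{ber-unit}, Landau's bound $\log o\leq\sqrt{m\log m}(1+(\log\log m - a)/(2\log m))$ forces $(n,q)$ into a small finite list, and for each remaining pair one compares $\order z/2$ directly with the maximum element order in $S_{n+1}$ and finds no match. The main obstacle in carrying this out will be the case-by-case verification that the $\mathcal{S}$ possibilities do not survive in the genuinely small cases $(n,q)\in\{(11,3),(13,3),(11,5)\}$ and a handful more, which can be handled by explicit computation.
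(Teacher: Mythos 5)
Your overall template (make the Bertrand element a strong $ppd(n,q;2t)$-element, invoke Theorem~\ref{main} and Lemma~\ref{no-c5} to restrict the Aschbacher class, analyse $\mathcal{C}_1$ via the three non-degenerate irreducible summands, and kill $\mathcal{S}$ with a Landau-type bound) is the same as the paper's, and your $\mathcal{C}_1$ discussion is essentially correct. The genuine gap is in your $\mathcal{C}_3$ case. You dispose of the possibility $k=t$ by arguing that $t\nmid n-1$ and ``hence'' $t\nmid n$; this inference is invalid ($\gcd(n,n-1)=1$, so divisibility of $n-1$ says nothing about $n$), and the Bertrand number only satisfies $t\nmid (n-1)/2$, not $t\nmid n$. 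In fact $t$ divides $n$ exactly when $n=3t$ (e.g.\ $n=15$, $t=5$ and $n=21$, $t=7$), and then the $\mathcal{C}_3$ subgroup $M\cong\Omega_3(q^t).t$ has order divisible by $q^{2t}-1$, so every prime in $P_{2t}(q)$ divides $|M|$ and your primitive-prime-divisor argument yields nothing. This is precisely the delicate case in the paper: one has to use the full order of the Bertrand element to extract the divisibility condition $\frac{q^{\frac{n-1}{2}-t}+1}{\gcd(q^t+1,q^{\frac{n-1}{2}-t}+1)}\mid t$, reduce to $q=3$ with $n\in\{15,21\}$, and then exclude these by a structural argument (a power of $z$ would generate the field-automorphism part of $\Omega_3(3^t).t$, and the centralizer $\Omega_3(3).t$ is too small to contain $z$). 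Without an argument of this kind your proof does not rule out $\mathcal{C}_3$.

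A secondary problem is your $\mathcal{C}_2$ case. The only $\mathcal{C}_2$ possibility surviving the analysis of~\cite{gpps} is the imprimitive group with blocks of size $1$, i.e.\ $M\le \mathrm{O}_1(q)\,\mathrm{wr}\,S_n$, and there a single prime $r\in P_{2t}(q)$ only gives $2t+1\le r\le n$, which is no contradiction since $2t\le n-5$; the ``parity/divisibility count'' you allude to cannot be completed with one prime. The paper's fix is to use two primes: $P_{2t}(q)$ and $P_{n-2t-1}(q)$ are non-empty and disjoint, both divide $\order z$, so $S_n$ would need an element of order $ab$ with $a\ge 2t+1$ and $b\ge n-2t$, forcing $n\ge a+b\ge n+1$. (Two minor slips: for $\Omega_n(q)$ with $n$ odd there is no $\mathcal{C}_2$ subgroup of type $\mathrm{GL}_{(n-1)/2}(q)$, and in Example~2.6~a) of~\cite{gpps} the relevant degrees are $m\in\{n+1,n+2\}$, not $\{n,n+1\}$.)
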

\begin{proof} Let $z$ be a Bertrand element of $\Omega_n(q)$ with $n\ge 11.$
By Theorem~\ref{main}, Lemma~\ref{no-c5}\,(3)
and~\cite[Table~3.5D]{kl}, $M$ belongs to one of the Aschbacher classes
$\mathcal C_i$, $i\in \{1,2,3\}$ or to $\mathcal{S}$ and is described in
Example 2.6 a) of \cite{gpps}.
\smallskip

\noindent\textsc{The subgroup $M$ lies in class  $\mathcal{C}_1$. } Suppose that $M$ is the stabilizer of a totally singular subspace of $V$ of dimension $m$ with $1\le m\le (n-1)/2$, that is, $M$ is a parabolic subgroup $P_m$. The element $z$ acts irreducibly on $U_1$, $U_2$ and $U_3$ and $U_1,U_2,U_3$ are non-degenerate of dimension $1$, $2t$ and $n-2t-1$ respectively. Hence $U_1,U_2,U_3$ are pairwise non-isomorphic and they are the only irreducible $\mathbb{F}_q\langle z\rangle$-submodules of $V$. If $z\in M$, then $z$ fixes a non-trivial totally singular subspace $U$ and hence $U_i\le U$, for some $i$. However, this contradicts the fact that $U_i$ is non-degenerate.

Suppose now that $M$ is the stabilizer of a non-degenerate subspace $U$ of $V$ of dimension $2k+1$, that is, $M$ is of type
$\mathrm{O}_{2k+1}(q) \perp\mathrm{O}^{\epsilon}_{n-2k-1}( q)$, with $\epsilon\in\{+,-\}$.  We  refer to~\cite[Proposition~$4.1.6$]{kl} for the precise  structure of $M$. 

Since $U$ must be the sum of irreducible $\mathbb{F}_q\langle z\rangle$-submodules of $V$, and the only possible dimensions for an irreducible $\mathbb{F}_q\langle z\rangle$-submodule of $V$ are $1$, $2t$ and $n-2t-1$, then the only possibilities for the dimension of $U$ are $2k+1=1$, $2k+1=2t+1$, $2k+1=n-2t$, that is, $k\in \{0,t,\frac{n-2t-1}{2}\}$.
Thus in order to reach ~\eqref{bertrand-odd:1}-\eqref{bertrand-odd:3}, we just need to show that when $k=0$ the only possibility is $\epsilon=+$, while for the other two cases the only possibility is 
$\epsilon=-$. 

We distinguish the three cases. Suppose first that $k=0$. Now, $z\in \mathrm{O}_1(q)\perp\mathrm{O}^\epsilon_{n-1}(q)$. The $(n-1)$-dimensional subspace of $V$ fixed by $z$ is $U_2\oplus U_3$. Observe that $z$ restricted to $U_2$ is the Singer cycle $s_{2t}$ and $z$ restricted to $U_3$ is the Singer cycle $s_{n-2t-1}$. In particular, the quadratic form restricted to $U_2$ and to $U_3$ must be of minus type. Therefore, the quadratic form induced on $U_2\oplus U_3$ is of plus type and hence $\epsilon=+$. Suppose next that $k=t$. Now, $z\in \mathrm{O}_{2t+1}(q)\perp\mathrm{O}^\epsilon_{n-2t-1}(q)$. The $(n-2t-1)$-dimensional subspace of $V$ fixed by $z$ is $U_3$. Observe that $z$ restricted to $U_3$ is the Singer cycle $s_{n-2t-1}$. In particular, the quadratic form restricted to $U_3$ must be of minus type and hence $\epsilon=-$. The argument when $t=(n-2t-1)/2$ is similar to the previous case and thus omitted.

\smallskip

\noindent\textsc{The subgroup $M$ lies in class $\mathcal{C}_2$. } These are the groups in the Example
2.3 of \cite{gpps}. Thus  $M \le
\mathrm{GL}_1(q) \mathrm{wr} S_n$. It follows that, for every $a\in P_{2t}(q)$ and $b\in P_{n-2t-1}(q)$, the product $ab$ divides the order of an element
in $S_n$. Since $a$ and $b$ are distinct, that requires $n \ge a + b$ and hence, by \eqref{boundppd}, $n\ge 2t +1 +n -2t =n +1$, which is a
contradiction.
\smallskip

\noindent\textsc{The subgroup $M$ lies in class $\mathcal{C}_3$. }These groups are described in~\cite[Example 2.4]{gpps}. Recall that $z$ is a $ppd(n,q;2t)$ element.  Since $n\neq 2t+1$, we consider only the case b) of
Example 2.4 in~\cite{gpps}. Hence $M\leq \mathrm{GL}_{n/b}(q^b).b,$ where  $b$ is a divisor of  $\gcd(n, 2t)$ with $b\ge 2$. As $n$ is odd, $b$ is odd and hence $b=t\mid n$ so that $n=kt$, for some positive integer $k$.
 Since $(n-1)/4<t< (n-1)/2$, we have that $k=3$ divides $n$ and $b=t=n/3$. Thus $n\geq 15,$ so that $t$ is an odd prime and, by Proposition 4.3.17 in \cite{kl}, we have
$$M\cong \Omega_3( q^t).\,t$$ and $M$ has size $\frac{q^{t}(q^{2t}-1)\,t}{2}$. Assume, by contradiction, that $z\in M.$ Then $\order z\mid |M|$ and thus
\begin{equation}\label{bfact1}
\frac{q^{\frac{n-1}{2}-t}+1}{\gcd(q^t+1,q^{\frac{n-1}{2}-t}+1)}\mid \frac{q^t-1}{2} t.
\end{equation}
 By Lemma \ref{aritme}\,(3), recalling that $t$ is odd, we have that
 \begin{equation*}\label{bfact2}
\gcd(q^{\frac{n-1}{2}-t}+1, q^t-1)=2.
\end{equation*}
Moreover, by \eqref{ber-unit}, we have that $2\mid \gcd(q^t+1,q^{\frac{n-1}{2}-t}+1)$.
Hence, by \eqref{bfact1},
we deduce that
\begin{equation}\label{bfact3}
\frac{q^{\frac{n-1}{2}-t}+1}{\gcd(q^t+1,q^{\frac{n-1}{2}-t}+1)}\mid t.
\end{equation}
Recalling that $t=n/3$ and that $\gcd(q^t+1,q^{\frac{n-1}{2}-t}+1)\leq q+1$,
it follows that
\begin{equation}\label{bfact4}
\frac{q^{\frac{n-3}{6}}+1}{q+1}\leq \frac{n}{3}.
\end{equation}
Now if $n\geq 27$ we have
$$\frac{q^{\frac{n-3}{6}}+1}{q+1}\geq q^{\frac{n-3}{6}-2}(q-1)\geq 2q^{\frac{n-15}{6}}\geq 2\cdot 3^{\frac{n-15}{6}}>\frac{n}{3},$$
which  contradicts \eqref{bfact4}.

It remains to consider the cases $n\in \{15,21\}.$
 Let $n=15$. Then $t=5$ and \eqref{bfact3} becomes
 \begin{equation*}\label{bfact3-15}
\frac{q^{2}+1}{2}\mid 5,
\end{equation*}
which gives $q=3$. But then we have $\order z=5(3^5+1)$ and $M\cong \Omega_3( 3^5).\,5.$ Since $\Omega_3( 3^5)$ has size $\frac{3^{5}(3^{10}-1)}{2}$, we have that $z\notin \Omega_3( 3^5)$ and that a power of $z$ is a generator of $\mathrm{Gal}(\mathbb{F}_{3^5}/\mathbb{F}_{3})=\langle \alpha\rangle.$ Now $\cent{M}{\alpha}\cong \Omega_3( 3).\,5$ and since $z$ centralizes $\alpha$ we reach the contradiction $5(3^5 +1)\mid 15(3^2-1)$. 

Let finally $n=21$. Then $t=7$ and \eqref{bfact3} becomes
 \begin{equation*}\label{bfact3-21}
\frac{q^{3}+1}{q+1}\mid 7,
\end{equation*}
which gives $q=3$. But then we have $\order z=7(3^7+1)$ and $M\cong \Omega_3( 3^7).\,7.$ Since $ \Omega_3( 3^7)$ has size $\frac{3^{7}(3^{14}-1)}{2}$, we have that $z\notin \Omega_3( 3^7)$ and that a power of $z$ is a generator of $\mathrm{Gal}(\mathbb{F}_{3^7}/\mathbb{F}_{3})=\langle \alpha\rangle.$ Now $\cent{M}{\alpha}\cong \Omega_3( 3).\,7$ and since $z$ centralizes $\alpha$ we reach the contradiction $7(3^7 +1)\mid 21(3^2-1)$.

\smallskip

\noindent\textsc{The subgroup $M$ lies in class $\mathcal{S}$. }Let $p$ be the characteristic of $\mathbb{F}_q$. The maximal subgroups
$M$ in Example 2.6 a) satisfy  $ M \le S_m ,$ with $m\in\{n+1,n+2\}$. 

From~\eqref{ber-unit} and from a computation, we have  $$\order z>\frac{q^{\frac{n-1}{2}}}{q+1}.$$ Observe that $\order z$ is the order of an element in $S_m$.
Arguing as in Lemma \ref{bertrand-sp},  we obtain
\begin{align}\label{equation2}
 \log \left(\frac{q^{\frac{n-1}{2}}}{q+1}\right)\le\sqrt{m\log m}\left(1+\frac{\log(\log (m))-0.975}{2\log (m)}\right).
 \end{align}
 This inequality holds true only when $n\in \{11,13,15,17,19\}$ and $q=3$. We have checked those remaining cases  with a computer and in no case $S_{n+2}$ contains an element having order $\order z$.
\end{proof}

\begin{proposition}\label{ortogonali dispari}For every
$n\ge 11$, the weak normal covering number of $\Omega_n(q)$ is at least $3$.
\end{proposition}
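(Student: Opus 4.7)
The plan is to argue by contradiction: suppose $\gamma_w(\Omega_n(q))=2$ and let $\mu=\{H,K\}$ be a weak normal $2$-covering of $\Omega_n(q)$ with maximal components. The strategy mirrors Propositions~\ref{unitari} and~\ref{simplettici}: use two carefully chosen semisimple elements (a ``type $1\oplus(n-1)$'' element and the Bertrand element) to pin down $H$ and $K$ respectively, then exhibit a third element that evades both. The main obstacle is that Lemma~\ref{bertrand-odd} leaves three possibilities for $K$, all of them stabilizers of proper non-degenerate subspaces, so we need a single element that refuses to lie in \emph{any} such stabilizer.

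First I would locate $H$ and $K$ inside the known lists. The group $\Omega_n(q)$ contains a semisimple element of order $(q^{(n-1)/2}+1)/2$ and action type $1\oplus(n-1)$ (take the direct sum of $I_1$ and a Singer cycle of $\Omega_{n-1}^-(q)$, which by Proposition~\ref{sing-ord} and the spinor-norm discussion lies in $\Omega_{n-1}^-(q)$). By Lemma~\ref{msw-odd}, since $n\ge 11$ excludes the $(n,q)=(7,3)$ anomaly, some component, say $H$, is $\mathrm{Aut}(\Omega_n(q))$-conjugate to $\Omega_{n-1}^-(q).2$. Next, consider the Bertrand element $z$ constructed before Lemma~\ref{bertrand-odd}, with action type $1\oplus 2t\oplus(n-2t-1)$. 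I claim $z$ has no $\mathrm{Aut}(\Omega_n(q))$-conjugate in $H$: indeed, the $(n-1)$-dimensional subspace on which $z$ acts non-trivially decomposes as the perpendicular sum of two non-degenerate spaces of minus type (of even dimensions $2t$ and $n-2t-1$), so it has plus type, whereas the $(n-1)$-dimensional subspace stabilized by $H$ is of minus type. Hence $z$ lies in some $\mathrm{Aut}(\Omega_n(q))$-conjugate of $K$, and Lemma~\ref{bertrand-odd} forces $K$ to be (the stabilizer of) one of
$$\mathrm{O}_{2t+1}(q)\perp\mathrm{O}^{-}_{n-2t-1}(q),\qquad \mathrm{O}_{n-2t}(q)\perp\mathrm{O}^{-}_{2t}(q),\qquad \text{or}\qquad \mathrm{O}_{n-1}^{+}(q).$$

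The decisive step is to invoke a regular unipotent element $u\in\Omega_n(q)$, whose existence was recorded at the start of Section~\ref{sec:oddorthogonal}. As explained there, the unique $u$-invariant flag $0<V_1<V_2<\cdots<V_{n-1}<V$ has $V_1$ totally singular, and since $V_1^{\perp}=V_{n-1}$ we get $V_1\subseteq V_i\cap V_i^{\perp}$ for every $i$ with $1\le i\le n-1$. Consequently \emph{every} proper $u$-invariant subspace of $V$ is degenerate. Because $\mathrm{Aut}(\Omega_n(q))$ acts semilinearly on $V$ (recall $n$ is odd and $q$ is odd, so no triality or graph automorphism intervenes), the property ``fixes a proper non-degenerate subspace'' is $\mathrm{Aut}(\Omega_n(q))$-invariant. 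But $H\cong\Omega_{n-1}^-(q).2$ and each of the three possibilities for $K$ are stabilizers of proper non-degenerate subspaces, so $u$ has no $\mathrm{Aut}(\Omega_n(q))$-conjugate in either component. This contradicts $\{H,K\}$ being a weak normal $2$-covering, so $\gamma_w(\Omega_n(q))\ge 3$. The only thing that requires care is checking that the $(n,q)=(7,3)$ anomaly in Lemma~\ref{msw-odd} and small cases where Zsigmondy or Bertrand hypotheses could fail are excluded by $n\ge 11$; these are easy verifications from the setup in Section~\ref{bernum}.
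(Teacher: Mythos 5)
Your proof is correct, and it shares the paper's skeleton — Lemma~\ref{msw-odd} pins $H$ down to $\Omega_{n-1}^-(q).2$, and the Bertrand element together with Lemma~\ref{bertrand-odd} forces $K$ to be one of the three stabilizers of proper non-degenerate subspaces — but your decisive step is genuinely different and simpler. The paper splits into cases: for the two shapes $\mathrm{O}_{2t+1}(q)\perp\mathrm{O}^-_{n-2t-1}(q)$ and $\mathrm{O}_{n-2t}(q)\perp\mathrm{O}^-_{2t}(q)$ it builds a semisimple element $y$ of type $\frac{n-1}{2}\oplus\frac{n-1}{2}\oplus 1$ from a Singer cycle of $\mathrm{GL}_{(n-1)/2}(q)$ and runs a fairly delicate argument (showing the two totally isotropic constituents are non-isomorphic $\mathbb{F}_q\langle y\rangle$-modules via an eigenvalue computation) to see that $y$ avoids both components, and it reserves the regular unipotent element only for the shape $\mathrm{O}_{n-1}^+(q)$, where it uses exactly your observation in the $1$-dimensional case. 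You instead observe that a regular unipotent $u$ stabilizes no proper non-degenerate subspace at all — since $V_i^\perp=V_{n-i}$ forces $V_1\subseteq V_i\cap V_i^\perp$ for every $i$ — while $H$ and all three candidates for $K$ are, by definition, stabilizers of proper non-degenerate subspaces, and this property is preserved under the semilinear action of $\mathrm{Aut}(\Omega_n(q))$; one element therefore dispatches every case uniformly, making the paper's element $y$ and its accompanying computation unnecessary for this proposition. Your explicit verification that the Bertrand element avoids $H$ (its unique $(n-1)$-dimensional invariant subspace is of plus type, being a perpendicular sum of two even-dimensional minus-type spaces, whereas $H$ stabilizes a minus-type $(n-1)$-space) is a small point left implicit in the paper and is welcome. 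The paper's extra machinery does pay off elsewhere (for $n=9$ a parabolic candidate survives the Bertrand analysis and contains regular unipotents, so a unipotent argument alone cannot close that case), but for $n\ge 11$ your shortcut is sound.
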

\begin{proof}
Assume, by contradiction, that  $H$ and $K$ are maximal components of a weak
normal $2$-covering of $\Omega_{n}(q)$. By Lemma~\ref{msw-odd}, we have
 $H\cong\Omega_{n-1}^-(q).2$ and $K$ is given by one of the possibilities in Lemma~\ref{bertrand-odd}.
 
 Suppose first that $K$ is of type $\mathrm{O}_{2t+1}(q)\perp
\mathrm{O}^{-}_{n-2t-1}(q)$ or $\mathrm{O}_{n-2t}(q) \perp \mathrm{O}^{-}_{2t}(q)$. Without loss of generality, we may suppose that the orthogonal form for $\Omega_n(q)$ is given by the matrix 
\[
J:=\begin{pmatrix}
0_{\frac{n-1}{2}}&I_{\frac{n-1}{2}}&0\\
I_{\frac{n-1}{2}}&0_{\frac{n-1}{2}}&0\\
0&0&1
\end{pmatrix},
\]
where $0_{\frac{n-1}{2}}$ and $I_{\frac{n-1}{2}}$ are the $(n-1)/2\times (n-1)/2$ zero and identity matrix respectively. We let $e_1,\ldots,e_n$ be the canonical bases of $V=\mathbb{F}_q^n$. Now, given a Singer cycle $s$ for $\mathrm{GL}_{(n-1)/2}(q)$, we get
\[
y':=\begin{pmatrix}
s&0&0\\
0&(s^{-1})^T&0\\
0&0&1
\end{pmatrix}\in \mathrm{SO}_n(q).
\]

Set $y:=y'^2$ and observe that  $y$ is a semisimple element in $\mathrm{\Omega}_n(q)$.
We claim that 
\begin{itemize}
\item[$(\dag)$]  $U_1:=\langle e_1,\ldots,e_{(n-1)/2}\rangle$, $U_2:=\langle e_{(n+1)/2},\ldots,e_{n-1}\rangle$ and $U_3:=\langle e_n\rangle$ are  the only irreducible $\mathbb{F}_q\langle y\rangle$-submodules of $V$.
\end{itemize}
 Indeed, $s^{q-1}$ is a Singer cycle for $\mathrm{SL}_{(n-1)/2}(q)$ so that it acts irreducibly on $U_1$.
Since the order of $((s^{-1})^T)^{q-1}$ is equal to the order of $s^{q-1}$, by Proposition \ref{sing-ord}, we have that also $((s^{-1})^T)^{q-1}$ is a Singer cycle  for $\mathrm{SL}_{(n-1)/2}(q)$ so that it acts irreducibly on $U_2$.

Since $q$ is odd, we have that
 $\langle s^2\rangle \geq \langle s^{q-1}\rangle$ and thus $s^2$ acts irreducibly on $U_1$ and, similarly, $(s^{-2})^T$ acts irreducibly on $U_2$. As $\dim_{\mathbb{F}_q}(U_3)=1$, it follows that  $U_1,U_2,U_3$ are irreducible $\mathbb{F}_q\langle y\rangle$-submodules of $V$. It remains to prove the uniqueness. To this end, let $W$ be an irreducible $\mathbb{F}\langle y\rangle$-submodule of $V$. If $W\cap U_3\ne 0$, then $U_3=W$. Assume then $W\cap U_3=0$. Now, $W\cap U_3^\perp$ is an $\mathbb{F}_q\langle y\rangle$-submodule; since $W$ is irreducible, we deduce $W\le U_3^\perp=U_1\oplus U_2$, or $W\cap U_3^\perp=0$.
  The latter case is impossible because it implies $\dim_{\mathbb{F}_q}(W)=1$; but it can be readily seen that the only eigenvalue of $y$ in $\mathbb{F}_q$ is $1$ and its eigenspace is $U_3$. Thus we are left with $W\le U_1\oplus U_2$.  If $W\cap U_i\ne 0$, for some $i\in \{1,2\}$, then $W=U_i$ because $W\cap U_i$ is a submodule of the irreducible module $W$ and of the irreducible module $U_i$. Assume then $W\cap U_1=W\cap U_2=0$. This implies
 $$U_1\cong \frac{U_1}{W\cap U_1}\cong \frac{U_1\oplus W}{U_1}\leq \frac{U_1\oplus U_2}{U_1}\cong U_2.$$
 Since the module $U_2$ is irreducible and $U_1\neq 0$, this implies that 
 $U_1$ and $U_2$ are isomorphic $\mathbb{F}_q\langle y\rangle$-modules. Let $\varphi:U_1\to U_2$ be a $\mathbb{F}_q\langle y\rangle$-isomorphism, that is, $\varphi(uz)=\varphi(u)z$  holds for all $ u\in U_1$ and $z\in \langle y\rangle$. Represent now $\varphi$, with respect to the basis  $(e_1,\ldots,e_{(n-1)/2})$ of $U_1$ and to the basis $(e_{(n+1)/2},\ldots,e_{n-1})$ of $U_2$, via the matrix $B\in\mathrm{GL}_{(n-1)/2}(q)$. Then we have
 \begin{equation}\label{equaB}
 B^{-1}s^{2}B=(s^{-2})^{T}.
 \end{equation}
We know that, up to conjugacy, the Singer cycle  $s\in \mathrm{GL}_{(n-1)/2}(q)$ is given by $\pi_a$ where $a$ is  a generator of $\mathbb{F}_{q^{(n-1)/2}}^*$. In particular, the eigenvalues of $s$ in the field $\mathbb{F}_{q^{(n-1)/2}}$ form the set $\{a, a^q,\dots, a^{q^{\frac{n-1}{2}-1}}\}$.
Thus  the eigenvalues of $s^{2}$ in $\mathbb{F}_{q^{(n-1)/2}}$ form the set $$\{a^2,a^{2q},\ldots,a^{2q^{\frac{n-1}{2}-1}}\},$$ and the eigenvalues of $(s^{-2})^T$ in $\mathbb{F}_{q^{(n-1)/2}}$ form the set $$\{a^{-2},a^{-2q},\ldots,a^{-2q^{\frac{n-1}{2}-1}}\}.$$ 
Now,~\eqref{equaB} implies that 
$$\{a^2,a^{2q},\ldots,a^{2q^{\frac{n-1}{2}-1}}\}=\{a^{-2},a^{-2q},\ldots,a^{-2q^{\frac{n-1}{2}-1}}\}.$$
Therefore, there exists $i\in \{1,\ldots,(n-3)/2\}$ such that  $a^2=a^{-2q^i}$, that is, $a^{2(q^i+1)}=1$. This implies that $q^{(n-1)/2}-1$ divides $2(q^i+1)$ so that, by Lemma \ref{aritme}, we deduce $$q^{(n-1)/2}-1=\gcd(q^{(n-1)/2}-1,2(q^i+1))\leq 2(q^{\gcd(i,(n-1)/2) } +1).$$ Since $i<(n-1)/2$, we have that $\gcd(i,(n-1)/2)\leq (n-1)/4$ and thus we get $$q^{(n-1)/2}-1\leq 2(q^{(n-1)/4}+1),$$ which implies $q^{(n-1)/4}\leq 3$. This is impossible because $q^{(n-1)/4}\geq 3^{10/4}.$ This concludes the proof of our claim~$(\dag)$.

\smallskip

 It is readily seen that the quadratic form induced by $J$ on the subspace of $V=\mathbb{F}_q^n$ spanned by $e_1,\dots, e_{n-1}$ has Witt defect $0$. We have proven above that $U_1,U_2,U_3$ are the only irreducible $\mathbb{F}_q\langle y\rangle$-submodules of the semisimple module $V$. Therefore, the only proper $\mathbb{F}_q\langle y\rangle$-submodules of the semisimple module $V$ are
$$U_1,U_2,U_3,U_1\oplus U_2,U_1\oplus U_3,U_2\oplus U_3.$$ 
Recalling that $U_1$ and $U_2$ are totally isotropic of dimension $(n-1)/2>1$, we deduce that the only proper non-degenerate $\mathbb{F}_q\langle y\rangle$-submodules of the semisimple module $V$ are $U_1\oplus U_2$ having dimension $n-1$ and $U_3$ having dimension $1$. What is more, the quadratic form induced on the only $(n-1)$-dimensional $y$-invariant subspace of $V$, that is on $U_1\oplus U_2$,  is of Witt defect 0.  From this, it follows that $y$ is not $\mathrm{Aut}(\Omega_n(q))$-conjugate to an element  in $H=\Omega_{n-1}^-(q).2$. Furthermore, recalling $t\le \frac{n-1}{2}-2$, we immediately check that
 $1,n-1\notin\{2t+1,n-2t-1,2t,n-2t\}$ and thus  $y$ is not $\mathrm{Aut}(\Omega_n(q))$-conjugate to an element  in $K$.

Finally, suppose that $K\cong\Omega_n^+(q).2$.  Here we argue using regular unipotent elements.
Let $u$ be a regular unipotent element of $\Omega_n(q)$. Recall that the only $1$-dimensional subspace of $V=\mathbb{F}_q^n$ left invariant by $u$  is totally singular.
 Since the elements in  $H\cong\Omega_n^-(q).2$ and in $K\cong\Omega_n^+(q).2$ all fix some non-degenerate $1$-dimensional subspace of $V$, we deduce that $u$ has no $\mathrm{Aut}(\Omega_n(q))$-conjugate in $H$ or in $K$.
\end{proof}

Summing up, when $n\ge 7$, $\mathrm{P}\Omega_n(q)=\Omega_n(q)$ has weak normal covering number at least $3$.

\subsection{An auxiliary result}
We include here a rather technical lemma which will only be used in the proof of Theorem~\ref{main theoremgeneral} for even characteristic symplectic groups.
\begin{lemma}\label{aggiunta}
Let $n\ge 6$ be even and let $q$ be an even prime power. Then $\mathrm{Sp}_n(q)$ contains two regular unipotent elements $u_+$ and $u_-$ such that
\begin{enumerate}
\item\label{aggiunta1}$u_+$ and $u_-$ lie in distinct $\mathrm{Aut}(\mathrm{Sp}_n(q))$-conjugacy classes,
\item\label{aggiunta2}$u_+$ preserves a quadratic form of Witt defect $0$, $u_-$ preserves a quadratic form of Witt defect $1$, thus $u_+\in\mathrm{SO}_n^+(q)$ and $u_-\in\mathrm{SO}_n^-(q)$ with respect to these quadratic forms,
\item\label{aggiunta3}$u_+$ has no $\mathrm{Aut}(\mathrm{Sp}_n(q))$-conjugate in $\mathrm{SO}_n^-(q)$ and $u_-$ has no $\mathrm{Aut}(\mathrm{Sp}_n(q))$-conjugate in $\mathrm{SO}_n^+(q)$,
\item\label{aggiunta4}$u_+\in\mathrm{SO}_n^+(q)\setminus \Omega_n^+(q)$ and $u_-\in\mathrm{SO}_n^-(q)\setminus\Omega_n^-(q)$.
\end{enumerate}
\end{lemma}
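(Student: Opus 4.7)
The plan is to exploit the classification of unipotent classes in $\mathrm{Sp}_n(q)$ in even characteristic, which tells us that the regular unipotent elements of $\mathrm{Sp}_n(q)$ form exactly two $\mathrm{Sp}_n(q)$-conjugacy classes distinguished by the Witt type of the invariant quadratic form polarizing to the ambient symplectic form $B$; see Wall~\cite{wall}. I would then define $u_+$ to be a representative of the class preserving a plus-type form and $u_-$ the analogue for minus type. Alternatively, one may construct them explicitly: in a standard symplectic basis, a Chevalley-generator product $u_+ := \prod_{i=1}^{n/2} x_{\alpha_i}(1)$ for the simple roots of $C_{n/2}$ gives a regular unipotent whose invariant quadratic forms are all of plus type (as can be verified directly by a matrix calculation, analogous to the $n=6$ case), while a parameter-twisted variant gives $u_-$. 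By definition of $\mathrm{SO}_n^\varepsilon(q)$ as the stabilizer of the quadratic form, this immediately establishes item~(2).

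For items~(1) and~(3), the plan is to show that the Witt type of the $u$-invariant quadratic form is a complete $\mathrm{Aut}(\mathrm{Sp}_n(q))$-invariant distinguishing the two regular unipotent classes. The space of quadratic forms polarizing to $B$ is an affine $\mathbb{F}_q$-space on which $\mathrm{Sp}_n(q)$ acts with two orbits (plus and minus type). In characteristic $2$, the translation group of this affine space is identified with $V^*$ via $\ell \mapsto \ell^2$. Since $u$ is regular unipotent, $(u-I)V$ has codimension $1$, so $\mathrm{Ann}((u-I)V) \subseteq V^*$ is $1$-dimensional, and hence the set of $u$-invariant forms polarizing to $B$ is a $1$-dimensional affine family. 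The key computation is to verify, via a direct Arf-invariant calculation in a Jordan basis of $u$, that all $q$ forms in this family share the same Witt type; this yields that each regular unipotent class $c \subseteq \mathrm{Sp}_n(q)$ preserves forms of only one type. Since for $n \ge 6$ the group $\mathrm{Aut}(\mathrm{Sp}_n(q))$ is generated by inner and field automorphisms (there is no graph automorphism), and field automorphisms manifestly preserve the Witt type, this invariant is $\mathrm{Aut}(\mathrm{Sp}_n(q))$-stable, proving simultaneously that $u_+$ and $u_-$ lie in distinct $\mathrm{Aut}$-classes and that no $\mathrm{Aut}$-conjugate of $u_+$ lies in $\mathrm{SO}_n^-(q)$ (and symmetrically for $u_-$).

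For item~(4), the plan is to compute the Dickson invariant $D(u_\varepsilon) \in \mathbb{F}_2$, recalling from Section~\ref{spinor-ber-sec} that $\Omega_n^\varepsilon(q) = \ker D$. Each orthogonal transvection (``reflection'' in characteristic $2$) has Dickson invariant $1$, so $D(g)$ equals the parity of the number of reflections in any factorization of $g$. The Cartan--Dieudonn\'e theorem (in its characteristic-$2$ form) asserts that the minimum number of reflections in a factorization of $g$ equals $\mathrm{rank}(g-I)$, and for the regular unipotent $u_\varepsilon$ we have $\mathrm{rank}(u_\varepsilon - I) = n - 1$, which is odd since $n$ is even; hence $D(u_\varepsilon) = 1$ and $u_\varepsilon \notin \Omega_n^\varepsilon(q)$.

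The main obstacle will be the Arf-invariant computation in the middle paragraph: showing that as the parameter varies through the $1$-parameter family of $u$-invariant forms polarizing to $B$, the Witt type stays constant. While this is intuitively clear from the splitting of the regular unipotent class into exactly two $\mathrm{Sp}_n(q)$-classes, pinning down a direct calculation in an arbitrary Jordan basis of $u$ requires some careful bookkeeping in characteristic $2$; the remaining steps reduce to standard facts about the Dickson invariant and the Cartan--Dieudonn\'e theorem.
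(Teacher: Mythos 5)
Your strategy is sound and it is genuinely different from the paper's. The paper writes down explicit zero--one matrices $u_+,u_-$ (a single Jordan block each) together with explicit quadratic forms $Q_+,Q_-$ polarizing to the symplectic form, checks $u_\pm\in\mathrm{SO}_n^\pm(q)$ by direct matrix computation, and then quotes De Franceschi's classification of unipotent classes for parts (1), (3), (4), passing from $\mathrm{Sp}_n(q)$-classes to $\mathrm{Aut}$-classes by observing that the Frobenius $x\mapsto x^2$ centralizes these particular matrices; your plan instead derives (1) and (3) from constancy of the Witt type along the affine line of $u$-invariant forms and gets (4) from a Dickson-invariant parity count, which is more self-contained but leaves the central Arf computation as an announced obstacle and is vague about producing the minus-type representative (``a parameter-twisted variant''). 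The good news is that your deferred step closes cleanly: the $u$-invariant forms polarizing to $B$ are $Q_c=Q+B(\cdot,cw_0)^2$ with $c\in\mathbb{F}_q$, where $w_0$ spans $\ker(u-1)=((u-1)V)^{\perp}$, and a symplectic-basis computation shows that the Arf invariants of $Q_c$ and $Q$ differ by $Q(cw_0)=c^2Q(w_0)$ in $\mathbb{F}_q/\{t^2+t:t\in\mathbb{F}_q\}$; since $u$ is a single Jordan block and $n\ge 3$, we have $w_0=x(u-1)$ for some $x\in\ker\bigl((u-1)^2\bigr)\subseteq (u-1)V$, whence $Q(w_0)=Q(xu)+Q(x)+B(xu,x)=B(w_0,x)=0$, so all $q$ invariant forms have the same type. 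This is genuinely dimension-dependent: for $n=2$ the fixed vector need not be singular, and a regular unipotent of $\mathrm{SL}_2(q)$ indeed preserves forms of both types, so the worry you flagged was legitimate but resolvable. Two further small points: in (4), the minimal number of reflections is $\mathrm{rank}(g-1)$ only when $(g-1)V$ is not totally singular (otherwise $\mathrm{rank}(g-1)+2$), but the parity, hence the Dickson invariant, is unaffected, and for a regular unipotent $(u-1)V$ is a hyperplane, so your count is exact and the $\mathrm{O}_4^+(2)$ exception is irrelevant for $n\ge 6$; and for the existence of representatives of both types you should either keep explicit matrices (as the paper does) or lean honestly on Wall/De Franceschi for the fact that both $\mathrm{O}_n^+(q)$ and $\mathrm{O}_n^-(q)$ contain single-Jordan-block elements, since Dye's covering theorem only guarantees that every regular unipotent preserves some form, not that both types actually occur.
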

\begin{proof}
Let $m:=-n/2$ and let 
\[
J:=\begin{pmatrix}
 & & & &1\\
 & & &1& \\
  & &\iddots&& \\
   &1& && \\
   1& & && \\
\end{pmatrix}\in\mathrm{GL}_{m}(q),
\]
where the elements off the diagonal are equal to $0$. We let $\mathrm{Sp}_n(q)$ be the symplectic group with respect to the symplectic form having matrix
\[\mathcal{J}:=
\begin{pmatrix}
0&J\\
J&0
\end{pmatrix}.
\]

We construct the matrices $u_+$ and $u_-$ explicitly. We let
\[
A_1:=\begin{pmatrix}
1&1&\cdots&\cdots&1\\
 &1&1&\cdots&1\\
 &&\ddots&\ddots&\vdots\\
 && &1&1\\
 && &&1
\end{pmatrix},
A_2:=
\begin{pmatrix}
1&1&& &\\
 &1&1& &\\
 &&\ddots&\ddots&\\
 && &1&1\\
 && &&1
\end{pmatrix}
\in\mathrm{GL}_{m}(q),\]
where as usual unmarked entries are tacitly assumed to be $0$. Moreover, we let
\[
j_1:=\begin{pmatrix}
1\\
\vdots\\
1
\end{pmatrix},
j_2:=\begin{pmatrix}
1\\
\vdots\\
1\\
0
\end{pmatrix}\in\mathbb{F}_q^{m}.
\]
Now we let
\[
B_+:=\begin{pmatrix}
j_1&0&\cdots&0
\end{pmatrix},
B_-:=\begin{pmatrix}
j_1&j_2&0&\cdots&0
\end{pmatrix}\in\mathrm{Mat}\left(m\times m,q\right).
\]
Finally, we define
\[
u_+:=
\begin{pmatrix}
A_1&B_+\\
0&A_2
\end{pmatrix},
u_+:=
\begin{pmatrix}
A_1&B_-\\
0&A_2
\end{pmatrix}\in\mathrm{SL}_n(q).
\]
For instance, when $n=8$, we have
\[
u_+=
\begin{pmatrix}
1&1&1&1&1&0&0&0\\
0&1&1&1&1&0&0&0\\
0&0&1&1&1&0&0&0\\
0&0&0&1&1&0&0&0\\
0&0&0&0&1&1&0&0\\
0&0&0&0&0&1&1&0\\
0&0&0&0&0&0&1&1\\
0&0&0&0&0&0&0&1
\end{pmatrix},
u_-=
\begin{pmatrix}
1&1&1&1&1&1&0&0\\
0&1&1&1&1&1&0&0\\
0&0&1&1&1&1&0&0\\
0&0&0&1&1&0&0&0\\
0&0&0&0&1&1&0&0\\
0&0&0&0&0&1&1&0\\
0&0&0&0&0&0&1&1\\
0&0&0&0&0&0&0&1
\end{pmatrix}.
\]
Observe that $u_+$ and $u_-$ consists of a unique Jordan block an hence $u_+$ and $u_-$ are regular unipotent elements. 

Using the matrix defining the symplectic form for $\mathrm{Sp}_n(q)$, it is easy to verify that $u_+,u_-\in\mathrm{Sp}_n(q)$. From~\cite[Proposition~7.1.9]{DeFranceschi} or~\cite[Proposition~5.1.7]{DeFranceschi1}, it can be verified that $u_+$ and $u_-$ lie in distinct $\mathrm{Sp}_n(q)$-conjugacy classes. Let $\phi:\mathbb{F}_q\to\mathbb{F}_q$ be defined by $x\mapsto x^2$, $\forall x\in \mathbb{F}_q$. Now $\phi$ induces a field automorphism on $\mathrm{Sp}_n(q)$ which, by abuse of notation, we still denote by $\phi$. As $\mathrm{Aut}(\mathrm{Sp}_n(q))=\mathrm{Sp}_n(q)\rtimes\langle\phi\rangle$ and as $\phi$ centralizes $u_+$ and $u_-$, we deduce that $u_+$ and $u_-$ are in distinct $\mathrm{Aut}(\mathrm{Sp}_n(q))$-conjugacy classes. In particular, part~\eqref{aggiunta1} holds true.

Now let $E_{11}$ be the $m\times m$-matrix having $1$ in position $(1,1)$ and $0$ anywhere else. Let 
\[
Q_+:=
\begin{pmatrix}
0&J\\
0&E_{11}
\end{pmatrix}\in \mathrm{Mat}(n\times n,q).
\]
Since $Q_++Q_+^T=\mathcal{J}$ is the matrix defining the symplectic form for $\mathrm{Sp}_n(q)$, we see that $Q_+$ defines a non-degenerate quadratic form polarizing to $\mathcal{J}$. We still denote with $Q_+$ this quadratic form. Observe that
$$Q_+(x_1,\ldots,x_{2m})=x_1x_{2m}+x_{2}x_{2m-2}+\cdots+x_{m}x_{m+1}+x_{m}^2.$$
Since the $\mathbb{F}_q$-subspace spanned by the standard vectors $e_1,\ldots,e_m$ is totally singular for $Q_+$, we see that $Q_+$ has Witt defect zero. Let $\mathrm{SO}_n+(q)$ be the special orthogonal group with respect to $Q_+$. Thus $\mathrm{SO}_n^+(q)\le \mathrm{Sp}_n(q)$. A matrix computation shows that $u_+$ preserves $Q_+$ and hence $u_+\in Q_+$.

Now let $a\in\mathbb{F}_q$ such that the degree $2$ polynomial $T^2+T+a\in\mathbb{F}_q[T]$ is irreducible. Now let $E_{mm}$ be the $m\times m$-matrix having $1$ in position $(m,m)$ and $0$ anywhere else. Let 
\[
Q_-:=
\begin{pmatrix}
E_{mm}&J\\
0&aE_{11}
\end{pmatrix}\in \mathrm{Mat}(n\times n,q).
\]
Since $Q_-+Q_-^T=\mathcal{J}$ is the matrix defining the symplectic form for $\mathrm{Sp}_n(q)$, we see that $Q_-$ defines a non-degenerate quadratic form polarizing to $\mathcal{J}$. We still denote with $Q_-$ this quadratic form. Observe that
$$Q_-(x_1,\ldots,x_{2m})=x_1x_{2m}+x_{2}x_{2m-1}+\cdots+x_{m-1}x_{m+2}+x_{m}x_{m+1}+x_m^2+ax_{m+1}^2.$$
In this case, we see that $Q_-$ has Witt defect $1$. Let $\mathrm{SO}_n^-(q)$ be the special orthogonal group with respect to $Q_-$. Thus $\mathrm{SO}_n^-(q)\le \mathrm{Sp}_n(q)$. A matrix computation shows that $u_-$ preserves $Q_-$ and hence $u_-\in Q_-$. In particular, part~\eqref{aggiunta2} holds true.

Parts~\eqref{aggiunta3} and~\eqref{aggiunta4} follow from~\cite[Proposition~7.1.9]{DeFranceschi} or~\cite[Proposition~5.1.9]{DeFranceschi1}.
\end{proof}
\section{Orthogonal groups with Witt defect $1$}\label{sec:orthogonal-}
In this section, we deal with even dimensional orthogonal groups $\mathrm{P}\Omega_n^-(q)$ of Witt defect $1$, for $n\ge 8$.

Note that the cases $n\in\{2,4,6\}$ are not considered here because   $\Omega_2^-(q)\cong C_{\frac{q+1}{\gcd(2,q-1)}}$ is abelian, while $\mathrm{P}\Omega_4^-(q)\cong \mathrm{PSL}_2(q^2)$ and $\mathrm{P}\Omega_6^-(q)\cong \mathrm{PSU}_4(q)$ are treated in the previous sections.

We begin with some preliminary lemmas.
\begin{lemma}\label{lemma:new2022+} Let  $s$  be a Singer cycle for $\mathrm{GL}_{\ell}(q)$, with $\ell\geq 1$ and $q$ odd, and let 
\[
x_\ell:=\begin{pmatrix}
s&0\\
0&(s^{-1})^T\\
\end{pmatrix}\in \mathrm{SO}_{2\ell}^+(q).
\]
Then $\order {x_\ell}=q^\ell-1$ and $x_\ell\notin\Omega_{2\ell}^+(q)$.
\end{lemma}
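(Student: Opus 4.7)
The order claim is immediate: $s$ is a Singer cycle of $\mathrm{GL}_\ell(q)$, so $\mathrm{o}(s)=q^\ell-1$, and $(s^{-1})^T$ has the same order; hence $\mathrm{o}(x_\ell)=q^\ell-1$. To see that $x_\ell$ lies in $\mathrm{SO}_{2\ell}^{+}(q)$ I realize the hyperbolic form via the Gram matrix $\mathcal{J}=\bigl(\begin{smallmatrix}0&I_\ell\\ I_\ell&0\end{smallmatrix}\bigr)$; a direct check gives $x_\ell^{T}\mathcal{J}x_\ell=\mathcal{J}$, and $\det(x_\ell)=\det(s)\det(s^{-1})=1$.

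The substantive assertion is $x_\ell\notin\Omega_{2\ell}^{+}(q)$. The plan is to compute the spinor norm $\theta(x_\ell)\in\mathbb{F}_q^{*}/(\mathbb{F}_q^{*})^2$ of Section~\ref{spinor-ber-sec}. Note that $x_\ell$ belongs to the embedded Levi subgroup
\[
\iota:\mathrm{GL}_\ell(q)\hookrightarrow\mathrm{SO}_{2\ell}^{+}(q),\qquad A\mapsto\begin{pmatrix}A&0\\ 0&(A^{-1})^{T}\end{pmatrix},
\]
which stabilizes the standard pair of maximal totally singular subspaces $U=\langle e_1,\dots,e_\ell\rangle$ and $U^{*}=\langle e_{\ell+1},\dots,e_{2\ell}\rangle$. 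I claim that $\theta\circ\iota=\det\pmod{(\mathbb{F}_q^{*})^2}$. For $\ell\ge 2$ the derived subgroup of $\mathrm{GL}_\ell(q)$ is $\mathrm{SL}_\ell(q)$, so $\theta\circ\iota$ factors through the determinant and is determined by its values on the diagonal torus; for $\ell=1$ the statement will be verified directly in the same computation. It then suffices to evaluate $\theta$ on $\iota(\mathrm{diag}(a,1,\dots,1))$ for arbitrary $a\in\mathbb{F}_q^{*}$. This element fixes $e_i,e_{\ell+i}$ for $i\ge 2$ and acts as $\mathrm{diag}(a,a^{-1})$ on the hyperbolic plane $H=\langle e_1,e_{\ell+1}\rangle$.

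To compute the spinor norm on $H$ I write $\mathrm{diag}(a,a^{-1})=r_v r_w$ for suitable non-singular vectors. With $v:=e_1+e_{\ell+1}$ and $w:=e_1+a\,e_{\ell+1}$ one gets $Q(v)=1$ and $Q(w)=a$, and a short computation (using $r_u(x)=x-\langle x,u\rangle Q(u)^{-1}u$) yields exactly $r_v r_w(e_1)=a\,e_1$ and $r_v r_w(e_{\ell+1})=a^{-1}e_{\ell+1}$. Hence $\theta(\iota(\mathrm{diag}(a,1,\dots,1)))=Q(v)Q(w)=a\pmod{(\mathbb{F}_q^{*})^2}$, proving $\theta\circ\iota=\det\pmod{(\mathbb{F}_q^{*})^2}$ in all cases.

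Specializing to $x_\ell=\iota(s)$, we obtain $\theta(x_\ell)=\det(s)\pmod{(\mathbb{F}_q^{*})^2}$. From Section~\ref{Huppert} (formula~\eqref{piadet}) the Singer cycle $s$ satisfies $\det(s)=a^{(q^\ell-1)/(q-1)}$ for some generator $a$ of $\mathbb{F}_{q^\ell}^{*}$, an element of order exactly $q-1$; hence $\det(s)$ generates $\mathbb{F}_q^{*}$, and since $q$ is odd it is a non-square. Therefore $\theta(x_\ell)\ne 1$, so $x_\ell\in\mathrm{SO}_{2\ell}^{+}(q)\setminus\Omega_{2\ell}^{+}(q)$. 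The only delicate step is the reflection computation identifying $\theta\circ\iota$ with the determinant; everything else is bookkeeping with the form and with the order of a Singer cycle.
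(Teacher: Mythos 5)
Your proof is correct, and its endgame coincides with the paper's: both realize the plus-type form via the Gram matrix $\bigl(\begin{smallmatrix}0&I_\ell\\ I_\ell&0\end{smallmatrix}\bigr)$, observe that $x_\ell$ preserves the two complementary maximal totally singular subspaces, and conclude via~\eqref{piadet} that $\det(s)$ generates $\mathbb{F}_q^\ast$ and is therefore a non-square because $q$ is odd. The difference lies in how the criterion ``$x_\ell\in\Omega_{2\ell}^+(q)$ if and only if the determinant of its action on one totally singular summand is a square'' is justified: the paper simply invokes Lemma~2.7.2 of~\cite{kl}, whereas you rederive the instance you need from the definition of the spinor norm in Section~\ref{spinor-ber-sec}, by showing that $\theta$ restricted to the Levi embedding $A\mapsto\mathrm{diag}(A,(A^{-1})^T)$ equals $\det$ modulo squares --- reducing to one element per determinant value via $\mathrm{GL}_\ell(q)'=\mathrm{SL}_\ell(q)$ (which does hold for all $\ell\ge 2$ with $q$ odd, and $\ell=1$ is covered directly by your computation) and then factoring $\mathrm{diag}(a,a^{-1})$ on a hyperbolic plane as the product of the two reflections with axes $v=e_1+e_{\ell+1}$ and $w=e_1+a\,e_{\ell+1}$. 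I checked that reflection computation: with $Q(v)=1$, $Q(w)=a$ it is right (the only ambiguity is left- versus right-action composition order, which swaps $a$ and $a^{-1}$ and is irrelevant to the spinor norm), and $\langle v,v\rangle\langle w,w\rangle=4Q(v)Q(w)\equiv a$ modulo squares as you claim. So your route buys self-containedness --- you effectively reprove the needed special case of the Kleidman--Liebeck lemma --- at the cost of a computation the paper replaces by a single citation; the order statement and the verification $x_\ell\in\mathrm{SO}_{2\ell}^+(q)$ are handled the same way in both arguments.
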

\begin{proof} We describe $\mathrm{SO}_{2\ell}^+(q)$ with respect to the  orthogonal  form having matrix
\[
J:=\begin{pmatrix}
0_{\ell}&I_{\ell}\\
I_{\ell}&0_{\ell}\\
\end{pmatrix},
\]
with respect to the basis $(e_1,\ldots,e_{\ell}, e_{\ell+1},\dots, e_{2\ell})$ of $V=\mathbb{F}_q^{\ell}$. Let $W:=\langle e_1,\ldots,e_{\ell}\rangle$ and $U:=\langle e_{\ell+1},\dots, e_{2\ell} \rangle$. Note that $x_\ell$ fixes both the subspaces $W$ and $U$. By Lemma 2.7.2 in \cite{kl}, we have that $x_\ell\in \Omega_{2\ell}^+(q)$ if and only if $\mathrm{det}_{W}(x_\ell)$ is a square in  $\mathbb{F}_q.$ Now, by \eqref{piadet}, we have 
$$\mathrm{det}_{W}(x_\ell)=\mathrm{det}_{\mathbb{F}_q}(s)=a^{\frac{q^\ell-1}{q-1}},$$
where $\langle a\rangle=\mathbb{F}_{q^\ell}^\ast.$ Note that $b:=a^{\frac{q^\ell-1}{q-1}}$ generates $\mathbb{F}_{q}^\ast$. Now recall that, since $q$ is odd,  the subgroup $(\mathbb{F}_q^\ast)^2$ has index $2$ in $\mathbb{F}_q^\ast$. Thus $b\notin (\mathbb{F}_q^\ast)^2$  and hence $x_\ell\notin\Omega_{2\ell}^+(q)$.
\end{proof}

\begin{lemma}\label{lemma:new2022}Let $n$ be an even positive integer and let $1\le \ell\le n/2-1$. Then $\Omega_n^-(q)$ contains an element having order
$$\frac{(q^\ell+1)(q^{\frac{n}{2}-\ell}-1)}{\gcd\left(q^\ell+1,q^{\frac{n}{2}-\ell}-1\right)}$$
and action type $2\ell\oplus \left(\frac{n}{2}-\ell\right)\oplus\left(\frac{n}{2}-\ell\right)$.
\end{lemma}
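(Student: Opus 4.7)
The plan is to construct the required element as a direct sum of two pieces supported on an orthogonal decomposition $V=V_1\perp V_2$, where $V_1$ is non-degenerate of minus type and dimension $2\ell$, and $V_2$ is non-degenerate of plus type and dimension $n-2\ell$. Such a decomposition exists because the Witt type is multiplicative under orthogonal direct sums, and minus combined with plus yields minus. Both $V_1$ and $V_2$ will be equipped with the standard non-degenerate quadratic forms of their respective types.

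On $V_1$, I would take a Singer cycle $x_1\in\mathrm{SO}_{2\ell}^-(q)$, which has order $q^\ell+1$ and acts irreducibly. On $V_2$, identified with $\mathbb{F}_q^{n/2-\ell}\oplus\mathbb{F}_q^{n/2-\ell}$ carrying the standard hyperbolic form, I would take $x_2:=\mathrm{diag}(s,(s^{-1})^T)$ with $s$ a Singer cycle of $\mathrm{GL}_{n/2-\ell}(q)$, exactly as in Lemma~\ref{lemma:new2022+}. This element has order $q^{n/2-\ell}-1$, and its action on $V_2$ splits into two irreducible totally singular submodules of dimension $n/2-\ell$ (namely the two hyperbolic summands). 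Setting $x:=x_1\oplus x_2\in\mathrm{SO}(V_1)\perp\mathrm{SO}(V_2)\le\mathrm{SO}_n^-(q)$, I would then verify that the action type on $V$ is $2\ell\oplus(n/2-\ell)\oplus(n/2-\ell)$ and that the order is $\mathrm{lcm}(q^\ell+1,q^{n/2-\ell}-1)$, which matches the expression in the statement.

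The only remaining task is to promote $x$ from $\mathrm{SO}_n^-(q)$ to $\Omega_n^-(q)$. When $q$ is even, both $q^\ell+1$ and $q^{n/2-\ell}-1$ are odd, so $x$ has odd order; since $|\mathrm{SO}_n^-(q):\Omega_n^-(q)|=2$, this immediately forces $x\in\Omega_n^-(q)$. When $q$ is odd, I would imitate the spinor-norm computation from Proposition~\ref{spinor-ber-prop}. Using the spinor norm $\theta:\mathrm{SO}_n^-(q)\to\mathbb{F}_q^\ast/(\mathbb{F}_q^\ast)^2$, whose restrictions to $\mathrm{SO}(V_1)$ and $\mathrm{SO}(V_2)$ are the spinor norms of those subgroups, one has $\theta(x)=\theta(x_1)\theta(x_2)$. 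Now Table~\ref{singer-order} gives $x_1\notin\Omega_{2\ell}^-(q)$ (a Singer cycle of a minus-type orthogonal group is not in $\Omega^-$ when $q$ is odd), while Lemma~\ref{lemma:new2022+} gives $x_2\notin\Omega_{n-2\ell}^+(q)$. Hence both $\theta(x_1)$ and $\theta(x_2)$ are the non-identity coset of $\mathbb{F}_q^\ast/(\mathbb{F}_q^\ast)^2$, and their product is trivial, so $\theta(x)=1$ and $x\in\Omega_n^-(q)$.

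The main obstacle, such as it is, is the odd-$q$ spinor-norm step, but this is entirely parallel to the argument already carried out in Proposition~\ref{spinor-ber-prop}; it relies only on two inputs already available in the paper, namely that a Singer cycle of $\mathrm{SO}_{2\ell}^-(q)$ lies outside $\Omega_{2\ell}^-(q)$ and that the $\mathrm{GL}$-induced element of Lemma~\ref{lemma:new2022+} lies outside $\Omega_{n-2\ell}^+(q)$.
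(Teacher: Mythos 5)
Your proposal is correct and follows essentially the same route as the paper: take a Singer cycle on a $2\ell$-dimensional minus-type summand, adjoin the element $\mathrm{diag}(s,(s^{-1})^T)$ of Lemma~\ref{lemma:new2022+} on a plus-type complement, and pass from $\mathrm{SO}_n^-(q)$ to $\Omega_n^-(q)$ by the odd-order observation for $q$ even and by the spinor-norm product argument for $q$ odd, using exactly the two inputs you cite. No gaps.
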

\begin{proof} Let $s$ be a Singer cycle of $\mathrm{SO}_{2\ell}^-(q)$; in particular, ${\bf o}(s):=q^\ell+1$. Let $x_{\frac{n}{2}-\ell}\in\mathrm{SO}_{n-2\ell}^+(q)\setminus \Omega_{n-2\ell}^-(q)$ having order ${\bf o}(x_{\frac{n}{2}-\ell})=q^{n/2-\ell}-1$ and action type $\left(\frac{n}{2}-\ell\right)\oplus\left(\frac{n}{2}-\ell\right)$ as described in Lemma \ref{lemma:new2022+}.
Set $x:=s\oplus x_{\frac{n}{2}-\ell}\in \mathrm{SO}_n^-(q)$. Then $x$ has the required order and action type and we only need to show that $x\in\Omega_n^-(q)$.
When $q$ is even, as ${\bf o}(x)$ is odd and $|\mathrm{SO}_n^-(q):\Omega_n^-(q)|$ is a power of $2$, we deduce $x\in \Omega_n^-(q)$. Suppose $q$ odd. Observe that $s\in \mathrm{SO}_{2\ell}^-(q)\setminus\Omega_{2\ell}^-(q)$ because the order of a Singer cycle in $\Omega_{2\ell}^-(q)$ for $q$ odd is $(q^\ell+1)/2$. 
Hence the spinor norm of $s$ and $x_{\frac{n}{2}-\ell}$ is non-trivial and hence the spinor norm of $x=s\oplus x_{\frac{n}{2}-\ell}$ is trivial, that is, $x\in \Omega_n^-(q)$.
\end{proof}
\begin{lemma} [{{\cite[Theorem 1.1]{msw}}}] \label{msw-} 
Let $n$ be an even integer with $n\ge 8$ and let $M$ be a maximal subgroup of $\Omega^-_{n}(q)$ containing a
 Singer cycle. Then one of the following holds
\begin{enumerate}
  \item\label{eq:msw-1}
$M\cong \Omega^-_{n/s}(q^s).s$ is in class $\mathcal{C}_3$ and $s$ is a prime divisor
of $n/2 $ with $n/s\ge 4$,
 \item\label{eq:msw-2} $n/2$ is odd and $M$ is of type $\mathrm{GU}_{n/2}(q)\in\mathcal{C}_3$.
\end{enumerate}
\end{lemma}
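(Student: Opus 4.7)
The strategy is the standard primitive prime divisor approach used elsewhere in the paper. Let $x$ be a Singer cycle of $\Omega_n^-(q)$. By the construction in Section~\ref{Huppert} and Proposition~\ref{sing-ord}, $x$ acts irreducibly on the natural module $V$ and has order $(q^{n/2}+1)/\gcd(2,q-1)$. Pick $r\in P_n(q)$; such an $r$ exists because $n\ge 8$, and since $r\mid q^n-1$ but $r\nmid q^{n/2}-1$, we have $r\mid q^{n/2}+1$, hence $r\mid \order x$. Thus $x$ is a strong $ppd(n,q;n)$-element and $M$ is a $ppd(n,q;n)$-group. By Theorem~\ref{main}, $M$ appears in one of Examples~$2.1$--$2.9$ of~\cite{gpps}, and $M\notin \mathcal C_4\cup\mathcal C_7$.

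The first step is to eliminate the easy Aschbacher classes. Since $\langle x\rangle$ acts irreducibly on $V$, $M$ cannot stabilise a proper non-zero subspace, which rules out $\mathcal C_1$; likewise $M$ cannot preserve a non-trivial direct sum decomposition of $V$, which rules out $\mathcal C_2$. Lemma~\ref{no-c5}\,(3) rules out $\mathcal C_5$ because $n\ge 8\ge 6$ and $x$ is a strong $ppd(n,q;n)$-element. For $\mathcal C_6$ and $\mathcal S$, I would argue from the very strong numerical constraint $e=n$: in the notation of~\cite{gpps} this is the extremal case $e=n$, and the relevant Examples~$2.5$--$2.9$ force $|M|$ (or rather the simple factor of $M$) to contain an element of order divisible by some $r\in P_n(q)$ with $r\ge n+1$. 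For $\mathcal C_6$ this is incompatible with the structure $r_0^{1+2a}.\mathrm{Sp}_{2a}(r_0)$ (since $n$ is even) and a direct inspection of Example~$2.5$ using $r\ge n+1$ eliminates it. For $\mathcal S$, the almost simple groups listed in Examples~$2.6$--$2.9$ are highly restricted when $e=n$; a case analysis, coupled with the fact that $x$ acts irreducibly in dimension $n$ and preserves a quadratic form of minus type, eliminates each family, leaving only the groups arising from $\mathcal C_3$.

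It remains to analyse $\mathcal C_3$, where $M$ is a field-extension subgroup. From~\cite[Section~4.3]{kl}, the $\mathcal C_3$-subgroups of $\Omega_n^-(q)$ fall into two families: those of type $\mathrm{O}_{n/s}^-(q^s)$ for a prime $s\mid n/2$ with $n/s\ge 4$, giving $M\cong \Omega_{n/s}^-(q^s).s$, and the family of type $\mathrm{GU}_{n/2}(q)$ which exists precisely when $n/2$ is odd. In the first family, a Singer cycle of $\Omega_{n/s}^-(q^s)$ has order $(q^{n/2}+1)/\gcd(2,q^s-1)$, which, after adjusting by the outer cyclic factor of order $s$, matches the order of a Singer cycle of $\Omega_n^-(q)$; by Proposition~\ref{sing-ord} such a cycle of $\Omega_{n/s}^-(q^s).s$ is indeed a Singer cycle of the ambient $\Omega_n^-(q)$, so this case genuinely occurs, yielding~\eqref{eq:msw-1}. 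In the second family, when $n/2$ is odd, the maximum element order in $\mathrm{GU}_{n/2}(q)$ equals $q^{n/2}+1$, which is the order of a Singer cycle; the embedding $\mathrm{GU}_{n/2}(q)\hookrightarrow \Omega_n^-(q)$ realises Singer cycles of the latter, giving~\eqref{eq:msw-2}. Conversely, any $\mathcal C_3$-subgroup containing an irreducible element of this order must be one of these two types (the type-$+$ field-extension subgroups contain no element of order $q^{n/2}+1$).

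The main obstacle is the treatment of class~$\mathcal S$: although Theorem~\ref{main} drastically narrows the list of candidates when $e=n$, one still has to eliminate each relevant family in Examples~$2.6$--$2.9$ by hand, controlling both the $ppd$ condition and the fact that $M$ must admit a faithful irreducible representation of degree $n$ over $\mathbb F_q$ preserving a quadratic form of Witt defect~$1$. Everything else is a routine application of the methods already developed in Sections~\ref{sec:symplectic} and~\ref{sec:oddorthogonal}.
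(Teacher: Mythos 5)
The paper does not reprove this statement at all: it is quoted directly as~\cite[Theorem~1.1]{msw}, so the ``official'' proof is a citation of Malle--Saxl--Weigel. Your proposal is therefore an attempt at an independent proof, and as it stands it has a genuine gap precisely where the cited theorem does the real work. A Singer cycle gives a strong $ppd(n,q;e)$-element only with $e=n$, and Theorem~\ref{main} part~\eqref{main3} (as well as the final assertion excluding $\mathcal{C}_6$) applies only when $e\le n-3$, respectively $e\le n-4$; with $e=n$ neither refinement is available. In this extremal range the Examples~$2.5$--$2.9$ of~\cite{gpps} contain a long list of live candidates --- the extraspecial normalizers, $A_{n+1}$ and $A_{n+2}$, cross-characteristic groups of Lie type such as $\mathrm{PSL}_2(r)$, sporadic groups, etc. --- and eliminating them requires a case-by-case comparison of the exact Singer order $(q^{n/2}+1)/\gcd(2,q-1)$ (and the irreducible, minus-type action) against the element orders of each candidate. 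You only assert that ``a case analysis \ldots eliminates each family''; that assertion is the theorem of~\cite{msw}, not a proof of it. Moreover your stated reason for killing $\mathcal{C}_6$, ``incompatible \ldots since $n$ is even'', is not valid: extraspecial normalizers in orthogonal groups occur exactly when $n=2^a$, which is even, and they must instead be excluded by the order bound $r\ge n+1$ from~\eqref{boundppd} against $|2^{1+2a}.\mathrm{O}^{\pm}_{2a}(2)|$.

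There is also a smaller gap in your $\mathcal{C}_3$ analysis. For $\Omega_n^-(q)$ with $n/2$ odd (and $q$ odd) there is a third field-extension family, of type $\mathrm{O}_{n/2}(q^2)$, i.e.\ $M\cong\Omega_{n/2}(q^2).2$; the paper itself lists it in Lemma~\ref{o-c3}. Your closing parenthesis about ``type-$+$ field-extension subgroups'' does not address it, so you still owe an argument that $\Omega_{n/2}(q^2).2$ contains no element of order $(q^{n/2}+1)/\gcd(2,q-1)$ acting irreducibly (for instance, every element of $\Omega_{n/2}(q^2)$ has eigenvalue $1$ in odd dimension, hence fixes a non-zero vector, while a Singer cycle and its square do not). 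The remaining steps --- strong $ppd$ property of the Singer cycle, exclusion of $\mathcal{C}_1$, $\mathcal{C}_2$, $\mathcal{C}_5$ via irreducibility and Lemma~\ref{no-c5}, and the verification that the two surviving $\mathcal{C}_3$ families do contain Singer cycles --- are fine, but without a genuine treatment of $\mathcal{S}$, $\mathcal{C}_6$ and the omitted $\mathcal{C}_3$ family the proposal does not yet establish the lemma independently of~\cite{msw}.
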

The detailed structure of $M$ in part~\eqref{eq:msw-2} can be found in~\cite[Proposition~$4.3.18$]{kl}.

\subsection{Small even dimensional orthogonal groups of Witt defect 1}\label{sec:smallevenorthogonal1}
We start our analysis with small even dimensional orthogonal groups of Witt defect $1$. For the subgroup structure of $\mathrm{P}\Omega_8^-(q)$, $\mathrm{P}\Omega_{10}^-(q)$ and $\mathrm{P}\Omega_{12}^-(q)$, we use~\cite{bhr}.

\begin{lemma}\label{dimension8orthogonal-}
The weak normal covering number of $\mathrm{P}\Omega_8^-(q)$ is at least $3$.
\end{lemma}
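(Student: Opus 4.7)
The plan is to follow the same template as in Lemmas~\ref{dimension7oddorthogonal} and~\ref{dimension9oddorthogonal}. First, I would settle the small cases $q\in\{2,3\}$ with the aid of \texttt{magma}, where the list of maximal subgroups is short enough that a direct check confirms $\gamma_w(\mathrm{P}\Omega_8^-(q))\ge 3$. For the rest of the proof I assume $q\ge 4$ and argue by contradiction: let $\mu=\{H,K\}$ be a weak normal $2$-covering of $\mathrm{P}\Omega_8^-(q)$ with maximal components.

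The first step is to locate the component containing a Singer cycle. Since $n=8$ and $n/2=4$ is even, part~\eqref{eq:msw-2} of Lemma~\ref{msw-} is vacuous and the only prime divisor of $n/2$ is $s=2$, so part~\eqref{eq:msw-1} forces (up to $\mathrm{Aut}$-conjugacy)
\[
H\cong \Omega_4^-(q^2).2\in\mathcal{C}_3.
\]
The second step is to identify the available possibilities for $K$ by means of a semisimple witness. I would pick an element $y\in\Omega_8^-(q)$ of action type $2\oplus 6$ and order
\[
\frac{(q+1)(q^3+1)}{\gcd(q+1,q^3+1)}
\]
obtained by taking a Singer cycle on a non-degenerate plus-type $2$-space and an element of order $q^3+1$ in a Singer cycle of $\Omega_6^-(q)$ on the orthogonal complement; the construction mirrors Lemma~\ref{lemma:new2022} and the fact that $y\in\Omega_8^-(q)$ follows from the spinor-norm calculation in Proposition~\ref{spinor-ber-prop}. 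Since the only irreducible $\mathbb{F}_q\langle y\rangle$-submodules of $V$ are a non-degenerate plus-type $2$-space and a non-degenerate minus-type $6$-space, the element $y$ has no conjugate in any field-extension subgroup of type $\Omega_4^-(q^2).2$ (the $\mathbb{F}_{q^2}$-structure would merge the two invariant summands inappropriately; more quantitatively, using Lemma~\ref{aritme} one checks that $q^3+1$ does not divide the order of $\Omega_4^-(q^2).2$ when $q\ge 4$). Hence some $\mathrm{Aut}(\Omega_8^-(q))$-conjugate of $y$ must lie in $K$, and consulting the tables of maximal subgroups of $\Omega_8^-(q)$ in~\cite[Tables~8.52,~8.53]{bhr}, the possibilities for $K$ reduce to a short list of $\mathcal{C}_1$- and $\mathcal{C}_3$-subgroups: essentially the stabilizers of the decomposition $V=U_2\perp U_6$ (namely $(\Omega_2^+(q)\times\Omega_6^-(q)).[\,\cdot\,]$), possibly a parabolic subgroup stabilizing a totally singular line inside $U_2$, and the field extension subgroup $\mathrm{GU}_4(q).2\in\mathcal{C}_3$.

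The third step is to eliminate each of these candidates for $K$ in turn. For the non-parabolic reducible candidates (of type $\Omega_2^+(q)\perp \Omega_6^-(q)$ or $\mathrm{GU}_4(q).2$), I would use an element $z$ constructed as in Lemma~\ref{lemma:new2022} with $\ell=2$, namely of action type $4\oplus 2\oplus 2$ and order $(q^2+1)(q^2-1)/\gcd(q^2+1,q^2-1)$: its unique non-degenerate invariant subspace decomposition is incompatible with a $2\oplus 6$ splitting and with a unitary $\mathbb{F}_{q^2}$-structure, and simultaneously $z$ has no $\mathrm{Aut}$-conjugate in $H\cong\Omega_4^-(q^2).2$, by the same minimality-of-invariant-subspaces argument (as in Proposition~\ref{ortogonali dispari}, building a diagonal Singer-like element and proving directly that its only proper invariant subspaces are the three summands above). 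For the remaining parabolic case, I would use a regular unipotent element $u$ of $\Omega_8^-(q)$; recalling from Section~\ref{sec:oddorthogonal} that the unique $1$-dimensional $u$-invariant subspace is totally singular, and that every unipotent element of $\Omega_4^-(q^2).2\cong \mathrm{SL}_2(q^4).2$ centralises a subspace of $V$ of $\mathbb{F}_q$-dimension at least $4$, while $u$ centralises only a $1$-dimensional subspace of $V$, we conclude that $u$ is not $\mathrm{Aut}$-conjugate to an element of $H$, and an order/Jordan-structure check shows it is not $\mathrm{Aut}$-conjugate to an element of the parabolic candidate either (its Jordan structure is inconsistent with the Levi decomposition of the parabolic).

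The main obstacle is the bookkeeping in the third step: there are several reducible and $\mathcal{C}_3$-candidates for $K$ that contain an element of the chosen signature order, and for each of them one must pinpoint a witnessing element incompatible simultaneously with $H\cong\Omega_4^-(q^2).2$ and with that particular $K$. The two witnesses above (the element of type $4\oplus 2\oplus 2$ and a regular unipotent element) are expected to cover all cases, but the unitary candidate $\mathrm{GU}_4(q).2$ and the parabolic candidates require the most delicate treatment, in the spirit of the arguments in Lemmas~\ref{dimension6symplectic} and~\ref{dimension8symplectic}.
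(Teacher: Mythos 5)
There is a genuine gap, and it sits at the heart of your second step. The witness $y$ you propose — order $\frac{(q+1)(q^3+1)}{\gcd(q+1,q^3+1)}$, action type $2\oplus 6$, built from a Singer cycle on the $2$-space and a Singer cycle on the $6$-space — does not exist in $\Omega_8^-(q)$. An element of order $q+1$ acting irreducibly on a $2$-space forces that $2$-space to be of minus type, and likewise the $6$-space carrying an element of order $q^3+1$ is of minus type; but minus $\perp$ minus has Witt defect $0$, so such an element preserves a \emph{plus}-type form on $V$. This is exactly what Proposition~\ref{spinor-ber-prop} (which you cite) says: $s_2\oplus s_6$ lands in $\Omega_8^{+}(q)$, not $\Omega_8^{-}(q)$; and Lemma~\ref{lemma:new2022} produces elements of order $\frac{(q^\ell+1)(q^{4-\ell}-1)}{\gcd(\cdot,\cdot)}$, not $\frac{(q^\ell+1)(q^{4-\ell}+1)}{\gcd(\cdot,\cdot)}$. (Your fallback of a plus-type $2$-space does not repair this, since $\mathrm{SO}_2^+(q)$ has no element of order $q+1$, and then the action type would not be $2\oplus 6$.) Since the whole list of candidates for $K$ is extracted from this non-existent element, the rest of the argument has no foundation; moreover that list is itself off: the unitary subgroup $\mathrm{GU}_4(q).2$ is not a subgroup of $\Omega_8^-(q)$ at all, because $\mathcal{C}_3$-subgroups of unitary type in $\Omega_n^-(q)$ occur only for $n/2$ odd (Lemma~\ref{msw-}\eqref{eq:msw-2}).

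Your third-step witness is also problematic. The element of action type $4\oplus2\oplus2$ from Lemma~\ref{lemma:new2022} with $\ell=2$ has order $\frac{(q^2+1)(q^2-1)}{\gcd(2,q-1)}=\frac{q^4-1}{\gcd(2,q-1)}$, which \emph{is} an element order of $H\cong\Omega_4^-(q^2).2\cong\mathrm{PSL}_2(q^4).2$; worse, the torus of type $\mathrm{O}_2^-(q^2)\perp\mathrm{O}_2^+(q^2)$ inside $\Omega_4^-(q^2)$ yields elements whose $\mathbb{F}_q$-action type is precisely $4\oplus2\oplus2$, so the claimed exclusion of $H$ by ``minimality of invariant subspaces'' is not just unproved but very likely false. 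The paper avoids both traps: it pins down $K$ using $y$ of order $\frac{(q^3-1)(q+1)}{\gcd(2,q-1)}$ and action type $2\oplus3\oplus3$ (which does exist in the minus-type group, by Lemma~\ref{lemma:new2022}), showing via \cite[Tables~8.52,~8.53]{bhr} that $K$ must be one of two $\mathcal{C}_1$-subgroups, and then kills the covering with $z$ of order $\frac{(q^3+1)(q-1)}{\gcd(2,q-1)}$ and type $6\oplus1\oplus1$, whose order is divisible by a primitive prime divisor of $q^6-1$ and hence does not divide $|H|=|\Omega_4^-(q^2).2|$ nor the orders of either candidate for $K$. If you want to salvage your plan, you should replace your two witnesses by elements of these shapes (or at least by elements whose orders involve a primitive prime divisor of $q^6-1$), and re-derive the candidate list for $K$ from an element that actually lies in $\Omega_8^-(q)$.
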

\begin{proof}
As usual, we work with $\Omega_8^-(q)$. Consider a weak normal covering $\tilde{\mu}$ of $\Omega_8^-(q)$ of minimum cardinality and maximal components.
Let $\tilde H$ be a component of $\tilde{\mu}$ containing a Singer cycle. From Lemma~\ref{msw-}, we get $$\tilde H\cong \Omega_4^-(q^2).2\cong \mathrm{PSL}_2(q^4).2.$$

Let $y$ be an element of $\Omega_8^-(q)$ having order $(q^3-1)(q+1)/\gcd(q^3-1,q+1)=(q^3-1)(q+1)/\gcd(2,q-1)$ and action type $2\oplus 3\oplus 3$. The existence of such an element is guaranteed by Lemma \ref{lemma:new2022}.

Moreover, we have that $V=U_1\oplus U_2\oplus U_3$, with $U_1,U_2,U_3$ irreducible $\mathbb{F}_q\langle y\rangle$-modules, $\dim_{\mathbb{F}_q}U_1=2$,  $\dim_{\mathbb{F}_q}U_2=\dim_{\mathbb{F}_q}U_3=3$, the orthogonal form of $\Omega_8^-(q)$ restricted to $U_1$ is of Witt defect $1$ and restricted to $U_2\oplus U_3$ is of Witt defect $0$ where $U_2$ and $U_3$ are totally isotropic. A case-by-case analysis on the maximal subgroups of $\Omega_8^-(q)$ in~\cite[Tables~8.52,~8.53]{bhr} reveals that there are two $\Omega_8^-(q)$-conjugacy classes of maximal subgroups $M$ containing a conjugate of $y$. Namely,
\begin{enumerate}
\item $M\cong E_{q}^{3+6}:\left(\frac{1}{\gcd(2,q-1)}\mathrm{GL}_3(q)\times\Omega_2^-(q)\right).\gcd(2,q-1)\in \mathcal{C}_1$,
\item $M\cong (\Omega_2^-(q)\times \Omega_6^+(q)).2^{\gcd(2,q-1)}\in\mathcal{C}_1$.
\end{enumerate}
In particular, since $\tilde H$ is not in this list we deduce $\gamma_w(\Omega_8^-(q))\ge 2$. Let $\tilde K$ be a component of $\tilde{\mu}$ containing a conjugate of $y$. Thus, $\tilde K$ must be in one of the above two possibilities.

Let $z\in \Omega_8^-(q)$ having order $\frac{(q^3+1)(q-1)}{\gcd(2,q-1)}$ and action type $6\oplus 1\oplus 1$. The existence of such an element is guaranteed by Lemma \ref{lemma:new2022}.

The group $\tilde H$ does not contain elements having this order because $q^3+1$ does not divide $|\tilde H|$.  This can be easily verified when $q\ne 2$ by showing that a primitive prime divisor of $q^6-1$ does not divide $|\tilde H|=2(q^8-1)$ and it can be checked directly when $q=2$.
Moreover, none of the two possibilities above for $\tilde K$ contains elements of order $q^3+1$: this can be easily verified when $q\ne 2$ by showing that a primitive prime divisor of $q^6-1$ does not divide $|\tilde K|$ and it can be checked directly when $q=2$. Therefore $\gamma_w(\mathrm{P}\Omega_8^-(q))\ge 3$.
\end{proof}

\begin{lemma}\label{dimension10orthogonal-}
The weak normal covering number of $\mathrm{P}\Omega_{10}^-(q)$ is at least $3$.
\end{lemma}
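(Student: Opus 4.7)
The plan is to adapt the strategy of Lemma \ref{dimension8orthogonal-}, working with $\Omega_{10}^-(q)$ throughout. Let $\tilde\mu$ be a weak normal covering of minimum cardinality with maximal components, and let $\tilde H\in\tilde\mu$ be a component containing a Singer cycle. Since $n/2=5$ is an odd prime, the option in Lemma \ref{msw-}\eqref{eq:msw-1} would force $s=5$ and $n/s=2<4$, which is excluded; hence Lemma \ref{msw-}\eqref{eq:msw-2} applies and $\tilde H$ is of type $\mathrm{GU}_5(q)$.

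To force the existence of a second maximal component, I would apply Lemma \ref{lemma:new2022} with $\ell=2$ to produce $y\in\Omega_{10}^-(q)$ of order $(q^2+1)(q^3-1)/\gcd(2,q-1)$ and action type $4\oplus 3\oplus 3$. Any $r\in P_3(q)$ (non-empty by Zsigmondy) divides $\mathrm{ord}(y)$; but the prime factors of $|\mathrm{GU}_5(q)|$ distinct from the characteristic are drawn from $q^i-(-1)^i$ for $1\le i\le 5$, so their multiplicative order modulo $q$ lies in $\{1,2,4,6,10\}$, and $r\nmid|\mathrm{GU}_5(q)|$. Thus $y\notin\tilde H$. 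I would then run through the maximal subgroups of $\Omega_{10}^-(q)$ listed in \cite[Tables~8.66,~8.67]{bhr} and show, via standard order-divisibility and action-type considerations, that the only $\Omega_{10}^-(q)$-conjugacy classes of maximal subgroups containing $y$ are the parabolic $P_3$ (stabilizers of the $3$-dimensional totally singular summands of $y$) and the $\mathcal{C}_1$-subgroups of type $(\Omega_4^-(q)\times\Omega_6^+(q)).[c]$ (stabilizers of the $4$-dimensional minus type, equivalently $6$-dimensional plus type, invariant subspace).

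To rule out both candidates for the second component simultaneously, I would apply Lemma \ref{lemma:new2022} with $\ell=4$ to produce $z\in\Omega_{10}^-(q)$ of order $(q^4+1)(q-1)/\gcd(2,q-1)$ and action type $8\oplus 1\oplus 1$. Any $r'\in P_8(q)$ has multiplicative order $8\notin\{1,2,4,6,10\}$ modulo $q$, whence $z\notin\tilde H$. Moreover, $z$'s invariant non-degenerate subspaces are exactly $V$, the $8$-dimensional minus type subspace $U_1$, and the $2$-dimensional plus type subspace $U_2\oplus U_3$ (formed by two $1$-dimensional totally singular pieces); in particular $z$ preserves neither a $3$-dimensional totally singular subspace nor a $4$-dimensional minus type non-degenerate subspace. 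Hence no $\mathrm{Aut}(\Omega_{10}^-(q))$-conjugate of $z$ lies in either candidate for $\tilde K$, contradicting the assumption that $\tilde\mu=\{\tilde H,\tilde K\}$ is a weak normal covering.

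The principal technical step is the case analysis identifying the candidates for $\tilde K$. While each of the remaining Aschbacher classes ($\mathcal{C}_2$, the $\mathcal{C}_3$-subgroup $\Omega_5(q^2).2$, $\mathcal{C}_5$, $\mathcal{C}_6$, and $\mathcal{S}$) succumbs to primitive prime divisor arguments---for instance, $|\Omega_5(q^2).2|$ has prime factors of $q$-order in $\{1,2,4,8\}$, excluding $P_3(q)$---handling this uniformly in $q$, including the small values where Zsigmondy has exceptions and sporadic $\mathcal{S}$-type subgroups can appear, is the main obstacle; small cases such as $q=2$ may require direct verification with \texttt{magma}, following the convention adopted elsewhere in the paper.
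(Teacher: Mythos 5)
Your argument is correct and uses exactly the paper's ingredients --- Lemma~\ref{msw-} to force $\tilde H$ of type $\mathrm{GU}_5(q)$, the two elements of Lemma~\ref{lemma:new2022} with $\ell=2$ and $\ell=4$, a case-by-case run through the tables of~\cite{bhr}, and a computer check for small $q$ --- but with the roles of the two auxiliary elements interchanged. The paper first lists the maximal subgroups whose order is divisible by $q^4+1$ (six candidates: the parabolic $P_1$, $\mathrm{Sp}_8(q)$ for $q$ even, $\Omega_9(q).2$ for $q$ odd, $(\Omega_2^+(q)\times\Omega_8^-(q)).2^{\gcd(2,q-1)}$, and the two $\Omega_5(q^2)$-type subgroups) and then eliminates each with the type $4\oplus3\oplus3$ element; you first pin down the subgroups containing the type $4\oplus3\oplus3$ element (only $P_3$ and the stabilizer of type $\mathrm{O}_4^-(q)\perp\mathrm{O}_6^+(q)$, which I confirm is the right list) and then kill both by the purely geometric observation that the type $8\oplus1\oplus1$ element fixes no totally singular $3$-space and no non-degenerate $4$-space of minus type, together with the $P_8(q)$-argument against $\tilde H$, which is the same as the paper's. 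Both orderings work; yours buys a shorter elimination at the price of a heavier identification step, since the condition ``contains a conjugate of $y$'' is not a ppd-condition with $e>n/2$ (its primitive prime divisors lie in $P_3(q)$ and $P_4(q)$), so Theorem~\ref{main} and Lemma~\ref{no-c5} are unavailable and every line of the relevant tables of~\cite{bhr} (these are Tables~8.68 and~8.69, not 8.66--8.67) must be checked directly, including class $\mathcal{S}$ and the subfield subgroups; for the latter, divisibility by a prime of $P_3(q)$ alone does not suffice, because when $q=q_0^k$ with $k$ an odd prime coprime to $3$ every prime of $P_3(q_0)$ lies in $P_3(q)$ and divides $|\Omega_{10}^-(q_0)|$, so there you must compare actual element orders ($\order y$ is roughly $q_0^{5k}$, far beyond any element order of $\Omega_{10}^-(q_0)$). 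With that understood, your deferred case analysis does go through, and $q\in\{2,3\}$ are handled by computer exactly as in the paper.
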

\begin{proof}
We have verified the veracity of this lemma with a computer when $q\in \{2,3\}$ and hence in the rest of the proof we assume $q\ge 4$.  Let $\tilde \mu$ be a weak normal covering of $\Omega_{10}^-(q)$ of minimum cardinality and maximal components and let $\tilde H$ be a  component containing a Singer cycle. From Lemma~\ref{msw-} and by \cite{bhr}, we obtain
$$\tilde H\cong \left(\frac{q+1}{\gcd(2,q+1)}\circ \mathrm{SU}_5(q)\right).\gcd(q+1,5).$$

Let $z$ be an element of $\Omega_{10}^-(q)$ such that
\begin{itemize}
\item $\order z=(q^4+1)(q-1)/\gcd(q^4+1,q-1)=(q^4+1)(q-1)/\gcd(2,q-1)$,
\item the action of $z$ on $V$ is of type $ 1\oplus 1\oplus8$,
\item $V=U_1\oplus U_2\oplus U_3$, with $U_1,U_2,U_3$ irreducible $\mathbb{F}_q\langle z\rangle$-modules, $\dim_{\mathbb{F}_q}U_1=\dim_{\mathbb{F}_q}U_2=1$,  $\dim_{\mathbb{F}_q}U_3=8$, the orthogonal form of $\Omega_{10}^-(q)$ restricted to $U_1\oplus U_2$ is non-degenerate of Witt defect $0$ (with $U_1$ and $U_2$ totally singular) and restricted to $U_3$ is non-degenerate of Witt defect $1$.
\end{itemize}
The existence of such an element is guaranteed by Lemma \ref{lemma:new2022}.


We prove  that $\tilde{H}$ contains no element having order divisible by $q^4+1$. Actually, we prove that the order of $\tilde H$ is not divisible by a primitive prime divisor
\ $r\in P_8(q)$. Clearly $r$ does not divide the size of $\frac{q+1}{\gcd(2,q+1)}\circ \mathrm{SU}_5(q)$. Moreover, by \eqref{boundppd}, we have that $r\geq 9$ so that $r\nmid \gcd(q+1,5).$ Thus $r\nmid |\tilde H|.$

 Therefore, no $\mathrm{Aut}(\Omega_{10}^-(q))$-conjugate of $z$ lies in $\tilde H$.

Assume that $\gamma_w(\Omega_{10}^-(q))=2$ so that $\tilde \mu=\{\tilde H,\tilde K\},$ where
$\tilde K$ is  a maximal component containing an $\mathrm{Aut}(\Omega_{10}^-(q))$-conjugate of $z$. Our task is reaching a contradiction.

Using the fact that $q\ge 4$, a case-by-case analysis on the maximal subgroups of $\Omega_{10}^-(q)$ in~\cite[Tables~8.68,~8.69]{bhr} reveals that the maximal subgroups $M$ having order divisible by $q^4+1$ are, up to $\Omega_{10}^-(q)$-conjugacy, as follows
\begin{enumerate}
\item $M\cong E_{q}^{8}:\left(\frac{q-1}{\gcd(2,q-1)}\times\Omega_8^-(q)\right).\gcd(2,q-1)\in\mathcal{C}_1$,
\item $q$ is even and $M\cong \mathrm{Sp}_8(q)\in\mathcal{C}_1$
\item $q$ is odd and $M\cong \Omega_9(q).2\in\mathcal{C}_1$,
\item $M\cong (\Omega_2^+(q)\times \Omega_8^-(q)).2^{\gcd(2,q-1)}\in \mathcal{C}_1$,
\item $q\equiv 1\pmod 4$ and $M\cong \Omega_5(q^2).2\in\mathcal{C}_3$,
\item $q\equiv 3\pmod 4$ and $M\cong 2\times \Omega_5(q^2).2\in\mathcal{C}_3$.
\end{enumerate}
In particular, $\tilde{K}$ is one of these groups.

Let $x$ be an element of $\Omega_{10}^-(q)$ such that
\begin{itemize}
\item ${\bf o}(x)= (q^3-1)(q^2+1)/\gcd(2,q-1)$ and $x$ has action type $4\oplus 3\oplus 3$;
\item $V=U_1\perp (U_2\oplus U_3)$, with $U_1,U_2,U_3$ irreducible $\mathbb{F}_q\langle x\rangle$-modules, $\dim_{\mathbb{F}_q}U_1=4$, $\dim_{\mathbb{F}_q}U_2=\dim_{\mathbb{F}_q}U_3=3$, the orthogonal form of $\Omega_{10}^-(q)$ restricted to $U_1$ is of Witt defect $1$ and restricted to $U_2\oplus U_3$ is of Witt defect $0$ where $U_2$ and $U_3$ are totally isotropic;
\item $x$ induces on $U_2$ and $U_3$ a matrix of order $q^3-1$ and on $U_1$ a Singer cycle of order $q^2+1$.
\end{itemize}
The existence of such an element is guaranteed by Lemma \ref{lemma:new2022}.

Now, $x$ does not fix non-zero totally isotropic subspaces of dimension less or equal to $ 2$ and hence $x$ has no $\mathrm{Aut}(\Omega_{10}^-(q))$-conjugate in
\begin{itemize}
\item $M\cong E_{q}^{8}:\left(\frac{q-1}{\gcd(2,q-1)}\times\Omega_{8}^-(q)\right).\gcd(2,q-1)\in\mathcal{C}_1$, or in
\item $M\cong \mathrm{Sp}_{8}(q)\in\mathcal{C}_1$ with $q$ even.
\end{itemize}
Similarly, $x$ does not fix a non-degenerate subspace of dimension less or equal to $2$ and hence $x$ has no $\mathrm{Aut}(\Omega_{10}^-(q))$-conjugate in
\begin{itemize}
\item $M\cong \Omega_{9}(q).2\in\mathcal{C}_1$ with $q$ odd, or in
\item $M\cong (\Omega_2^+(q)\times \Omega_{8}^-(q)).2^{\gcd(2,q-1)}\in \mathcal{C}_1$.
\end{itemize}

In the remaining two possibilities for $\tilde K$ we have that $|\tilde K|\in \{2(q^4-1)(q^8-1), 4(q^4-1)(q^8-1)\}.$
We show that $q^3-1$ does not divide $4(q^4-1)(q^8-1)$, so that   $\tilde K$ does not contain $x$ or any of its conjugates.
Let $r\in P_3(q)$. Then, by \eqref{boundppd}, $ r\geq 5$ so that $r\nmid 4(q^4-1)(q^8-1).$  By the above considerations on $r\in P_3(q)$, we also have that $r\nmid (q^2-1)(q^4-1)(q^5+1)$. Moreover, by Lemma ~\ref{aritme}, we also have $r\nmid q^3+1$ and hence $x$ has no $\mathrm{Aut}(\Omega_{10}^-(q))$-conjugate in $\tilde{H}$.

 Therefore $\gamma_w(\mathrm{P}\Omega_{10}^-(q))\ge 3$.
\end{proof}

\begin{lemma}\label{dimension12orthogonal-}
The weak normal covering number of $\mathrm{P}\Omega_{12}^-(q)$ is at least $3$.
\end{lemma}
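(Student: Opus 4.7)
The plan is to adapt the template of Lemmas~\ref{dimension8orthogonal-} and~\ref{dimension10orthogonal-}: I would exhibit two semisimple elements of $\Omega_{12}^-(q)$ whose orders and action types jointly force the components of any hypothetical weak normal $2$-covering into incompatible positions. First I would handle the cases $q\in\{2,3\}$ by a direct computation with \texttt{magma}, and assume $q\ge 4$ for the rest.

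Working in $\Omega_{12}^-(q)$, let $\tilde\mu$ be a weak normal covering of minimum cardinality with maximal components, and let $\tilde H\in\tilde\mu$ contain a Singer cycle. Since $n/2=6$ is even, part~\eqref{eq:msw-2} of Lemma~\ref{msw-} does not apply and Lemma~\ref{msw-} forces
\[
\tilde H\cong \Omega_6^-(q^2).2 \qquad\text{or}\qquad \tilde H\cong \Omega_4^-(q^3).3.
\]
By Lemma~\ref{lemma:new2022} with $\ell=5$, pick $z\in\Omega_{12}^-(q)$ of order $(q^5+1)(q-1)/\gcd(2,q-1)$ and action type $10\oplus 1\oplus 1$, writing $V=U_1\perp(U_2\oplus U_3)$ with $U_1$ non-degenerate of minus type and $U_2,U_3$ totally singular lines spanning a $+$-type hyperbolic plane. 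Choose $r\in P_{10}(q)$ (non-empty by Zsigmondy); then $r\ge 11$, $r$ is odd, and $r\mid q^5+1\mid\order z$. Since $\mathrm{ord}_r(q)=10$, the only exponents $i$ for which $r\mid q^i\pm 1$ are multiples of $10$; this immediately gives $r\nmid |\Omega_6^-(q^2).2|$ and $r\nmid |\Omega_4^-(q^3).3|$, so $z$ has no $\mathrm{Aut}(\Omega_{12}^-(q))$-conjugate in $\tilde H$. Moreover every prime of $P_{10}(q)$ divides $q^5+1$, so $z$ is a strong $ppd(12,q;10)$-element and Lemma~\ref{no-c5}(3) rules out $\mathcal{C}_5$ candidates.

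Assume by contradiction that $\gamma_w(\mathrm{P}\Omega_{12}^-(q))=2$, so $\tilde\mu=\{\tilde H,\tilde K\}$ and $\tilde K$ contains a conjugate of $z$. The only totally singular $z$-invariant subspaces of $V$ are $U_2$ and $U_3$ (dimension~$1$), and the only proper non-degenerate $z$-invariant subspaces are $U_1$ (dimension $10$, minus type) and $U_2\oplus U_3$ (dimension $2$, plus type). Combined with $r\mid |\tilde K|$ and inspection of the maximal subgroups in \cite[Tables~8.82,~8.83]{bhr}---$\mathcal{C}_2$ ruled out by the non-equidimensional action, $\mathcal{C}_3$ by $r\nmid |\Omega_{12/k}^{\eta}(q^k).k|$, $\mathcal{C}_5$ by the strong-$ppd$ property, and $\mathcal{C}_6,\mathcal{C}_8,\mathcal{S}$ by order constraints---this forces $\tilde K$, up to conjugacy, to be either the parabolic $P_1$ stabilizing a totally singular line, or the stabilizer of type $\mathrm{O}_2^+(q)\perp\mathrm{O}_{10}^-(q)$.

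Finally, apply Lemma~\ref{lemma:new2022} with $\ell=1$ to produce $x\in\Omega_{12}^-(q)$ of order $(q+1)(q^5-1)/\gcd(2,q-1)$ and action type $2\oplus 5\oplus 5$, with $V=U_1'\perp(U_2'\oplus U_3')$, $U_1'$ non-degenerate of minus type and dimension~$2$, and $U_2',U_3'$ totally singular of dimension~$5$. For $r'\in P_5(q)$ we have $r'\ge 11$ and $r'\mid q^5-1\mid\order x$; the check $\mathrm{ord}_{r'}(q)=5$ against the relevant exponents gives $r'\nmid |\tilde H|$, so $x\notin\tilde H$ up to conjugacy. The totally singular $x$-invariant subspaces are $U_2',U_3'$ (dimension $5$), so $x$ stabilizes no totally singular line and thus lies in no conjugate of $P_1$; and the only non-degenerate proper $x$-invariant subspaces are $U_1'$ (minus type) and $U_2'\oplus U_3'$ (plus type of dimension $10$), so $x$ stabilizes no non-degenerate $2$-space of $+$ type and lies in no conjugate of $\mathrm{O}_2^+(q)\perp\mathrm{O}_{10}^-(q)$. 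This contradiction yields $\gamma_w(\mathrm{P}\Omega_{12}^-(q))\ge 3$. The main obstacle will be the thorough enumeration of candidates for $\tilde K$: although the classes $\mathcal{C}_2$--$\mathcal{C}_8$ are dispatched by order and Lemma~\ref{no-c5}, one still needs to comb through the $\mathcal{S}$-class entries in \cite[Tables~8.82,~8.83]{bhr} to confirm that no almost simple subgroup of order divisible by $r\in P_{10}(q)$ escapes, and this is also why the cases $q\in\{2,3\}$ are separated and handled computationally.
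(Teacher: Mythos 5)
Your proposal is correct and follows essentially the same strategy as the paper's proof: small $q$ by computer, Lemma~\ref{msw-} to restrict the Singer-cycle component to $\Omega_6^-(q^2).2$ or $\Omega_4^-(q^3).3$, the element of order $(q^5+1)(q-1)/\gcd(2,q-1)$ and type $10\oplus1\oplus1$ from Lemma~\ref{lemma:new2022} to pin down $\tilde K$ via the tables in \cite{bhr}, and the element of order $(q^5-1)(q+1)/\gcd(2,q-1)$ and type $5\oplus5\oplus2$ for the final contradiction. The only deviations are harmless streamlinings: you use the invariant-subspace structure of $z$ to cut the candidates for $\tilde K$ to two (the paper, sieving only by divisibility of $|M|$ by $q^5+1$, keeps four, including the stabilizers of a non-singular point), and you omit the paper's intermediate element of order $(q^6-1)/\gcd(2,q-1)$ and type $6\oplus3\oplus3$, which the paper uses to force $\tilde H\cong\Omega_4^-(q^3).3$; this is indeed unnecessary, since a primitive prime divisor of $q^5-1$ already excludes $x$ from both candidates for $\tilde H$, exactly as you argue. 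Only correct the table reference: the maximal subgroups of $\Omega_{12}^-(q)$ are in Tables~8.84--8.85 of \cite{bhr}, not 8.82--8.83.
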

\begin{proof}
We have verified the veracity of this lemma with a computer when $q\in \{2,3\}$ and hence in the rest of the proof we assume $q\ge 4$.
As usual we may work with $\Omega_{12}^-(q)$. Let $\tilde \mu$ be a weak normal covering of $\Omega_{12}^-(q)$ of minimum cardinality and maximal components and let $\tilde H$ be a  component containing a Singer cycle. 
 From Lemma~\ref{msw-}, we obtain
$$\tilde H\cong \Omega_4^-(q^3).3\,\hbox{ or }\,\tilde H\cong \Omega_6^-(q^2).2.$$

Let $z\in \Omega_{12}^-(q)$ having order $\frac{(q^5+1)(q-1)}{\gcd(2,q-1)}$ and action type $10\oplus 1\oplus 1$. The existence of such an element is guaranteed by Lemma \ref{lemma:new2022}.

The group $\tilde H$ does not contain elements having this order because $q^5+1$ does not divide $|\tilde H|$: this can be easily verified using the structure of $\tilde{H}$ and  considering a primitive prime divisor of $q^{10}-1$.
In particular, $\gamma_w(\Omega_{12}^-(q))\ge 2$.

Let $\tilde K$ be a component of $\tilde \mu$ containing an $\mathrm{Aut}(\Omega_{12}^-(q))$-conjugate of $z$.
Using the fact that $q\ge 4$, a case-by-case analysis on the maximal subgroups of $\Omega_{12}^-(q)$ in~\cite[Tables~8.84,~8.85]{bhr} reveals that the maximal subgroups $M$ having order divisible by $q^5+1$ are as follows
\begin{enumerate}
\item $M\cong E_{q}^{10}:\left(\frac{q-1}{\gcd(2,q-1)}\times\Omega_{10}^-(q)\right).\gcd(2,q-1)\in\mathcal{C}_1$,
\item $q$ is even and $M\cong \mathrm{Sp}_{10}(q)\in\mathcal{C}_1$
\item $q$ is odd and $M\cong \Omega_{11}(q).2\in\mathcal{C}_1$,
\item $M\cong (\Omega_2^+(q)\times \Omega_{10}^-(q)).2^{\gcd(2,q-1)}\in \mathcal{C}_1$.
\end{enumerate}
In particular, $\tilde{K}$ is one of these groups and in all cases $\tilde{K}$ is in the Aschbacher class $\mathcal{C}_1$. Furthermore, in the first two cases $\tilde{K}$ fixes a non-zero totally isotropic subspace of dimension at most $2$ and in the last two cases $\tilde{K}$ fixes a non-degenerate subspace of dimension at most $2$.

Assume $\gamma_w(\Omega_{12}^-(q))=2$. Let $y$ be an element of $\Omega_{12}^-(q)$ having order $$\frac{(q^3-1)(q^3+1)}{\gcd(q^3-1,q^3+1)}=\frac{q^6-1}{\gcd(2,q-1)},$$ action type $6\oplus 3\oplus 3$ and decomposing $V=U_1\perp (U_2\oplus U_3)$, with $U_1,U_2,U_3$ irreducible $\mathbb{F}_q\langle y\rangle$-modules, $\dim_{\mathbb{F}_q}U_1=6$, $\dim_{\mathbb{F}_q}U_2=\dim_{\mathbb{F}_q}U_3=3$, the orthogonal form of $\Omega_{12}^-(q)$ restricted to $U_1$ has Witt defect $1$ and restricted to $U_2\oplus U_3$ has Witt defect $0$, where $U_2$ and $U_3$ are totally isotropic.
The existence of such an element is guaranteed by Lemma \ref{lemma:new2022}.

Now, $y$ does not fix non-zero subspaces of dimension $\le 2$ and hence $y$ has no $\mathrm{Aut}(\Omega_{12}^-(q))$-conjugate in $\tilde{K}$. On the other hand, by the usual considerations on a primitive prime divisor $r$ of $q^6-1$,  we see that $r$ divides the order of $y$ but does not divide the order of $\Omega_6^-(q^2).2$. Note that in the critical case for Zsigmondy theorem, that is, $q^6-1$ has no primitive prime divisors,  we have that $\order y=63$ and $7\nmid |\Omega_6^-(4).2|.$
Since $\gamma_w(\Omega_{12}^-(q))=2$, we have $$\tilde{H}\cong \Omega_4^-(q^3).3.$$

Let $x$ be an element of $\Omega_{12}^-(q)$ having order $$\frac{(q^5-1)(q+1)}{\gcd(q^5-1,q+1)}=\frac{(q^5-1)(q+1)}{\gcd(2,q-1)},$$ action type $5\oplus 5\oplus 2$ and decomposing $V=U_1\perp (U_2\oplus U_3)$, with $U_1,U_2,U_3$ irreducible $\mathbb{F}_q\langle x\rangle$-modules, $\dim_{\mathbb{F}_q}U_1=2$, $\dim_{\mathbb{F}_q}U_2=\dim_{\mathbb{F}_q}U_3=5$, the orthogonal form of $\Omega_{12}^-(q)$ restricted to the non-degenerate submodule  $U_1$ is of Witt defect $1$ and $x$ induces on $U_1$ a Singer cycle of order $q+1$, while  the orthogonal form of $\Omega_{12}^-(q)$ restricted to $U_2\oplus U_3$ is of Witt defect $0$, where $U_2$ and $U_3$ are totally isotropic and $x$ induces on them a matrix of order $q^5-1$. The existence of such an element is guaranteed by Lemma \ref{lemma:new2022}.

Now, $x$ does not fix non-zero totally isotropic subspaces of dimension at most  $ 2$ and hence $x$ has no $\mathrm{Aut}(\Omega_{12}^-(q))$-conjugate in
\begin{itemize}
\item $M\cong E_{q}^{10}:\left(\frac{q-1}{\gcd(2,q-1)}\times\Omega_{10}^-(q)\right).\gcd(2,q-1)\in\mathcal{C}_1$, or in
\item $M\cong \mathrm{Sp}_{10}(q)\in\mathcal{C}_1$ with $q$ even.
\end{itemize}
Similarly, $x$ does not fix a non-degenerate subspace of dimension $1$ and hence $x$ has no $\mathrm{Aut}(\Omega_{12}^-(q))$-conjugate in
\begin{itemize}
\item $M\cong \Omega_{11}(q).2\in\mathcal{C}_1$ with $q$ odd.
\end{itemize}
Finally, the only $2$-dimensional non-degenerate subspace of $V$ fixed by $x$ is of ``minus type'' by construction and hence $x$  has no $\mathrm{Aut}(\Omega_{12}^-(q))$-conjugate in
\begin{itemize}
\item $M\cong (\Omega_2^+(q)\times \Omega_{10}^-(q)).2^{\gcd(2,q-1)}\in \mathcal{C}_1$.
\end{itemize}
Therefore, regardless of the structure of $\tilde{K}$, we have shown that  $x$  has no $\mathrm{Aut}(\Omega_{12}^-(q))$-conjugate in $\tilde{K}$.

On the other hand, the group $\tilde H\cong \Omega_4^-(q^3).3$ does not contain elements having the order  of $x$, because $q^5-1$ does not divide $|\tilde H|$ by  the usual considerations on a primitive prime divisor of $q^5-1$.
 Therefore $\gamma_w(\mathrm{P}\Omega_{12}^-(q))\ge 3$.
\end{proof}

\subsection{Large even dimensional orthogonal groups of Witt defect 1}\label{sec:largeevenorthogonal1}

We start this section with a technical lemma; in what follows, we only need a few special cases of Lemma~\ref{o-c3}.  

\begin{lemma}\label{o-c3}
Let $m$ be an integer with $m\ge 6$, let $\ell$ be odd with $1\le \ell\le m$ and $\gcd(m,\ell)=1$, and let $x\in \Omega_{2m}^-(q)$ with $$\order x=\frac{(q^{\ell}-1)(q^{m-\ell}+1)}{\gcd(q^\ell-1,q^{m-\ell}+1)}.$$ If  $x\in M$ with $M$ a maximal subgroup of  $\Omega_{2m}^-(q)$ in class $\mathcal{C}_3$, then $m$ is odd, $\ell< m/2$ and $M\cong\Omega_m(q^2).2$. 
\end{lemma}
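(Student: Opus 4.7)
The plan is to pin down the action type of $x$ on $V=\mathbb{F}_q^{2m}$, enumerate the $\mathcal{C}_3$ maximal subgroups of $\Omega^-_{2m}(q)$ via the Kleidman--Liebeck tables, and rule out all but the claimed one using Galois-orbit arguments together with primitive prime divisors.

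First I would determine the action type of $x$. Since $\gcd(\ell,m)=1$ and $\ell$ is odd, Lemma~\ref{aritme} gives $\gcd(q^\ell-1,q^{m-\ell}+1)=\gcd(2,q-1)$, so $\order x=(q^\ell-1)(q^{m-\ell}+1)/\gcd(2,q-1)$. Arguing as in Lemma~\ref{lemma:new2022}, any semisimple element of this order must admit an orthogonal decomposition $V=U_1\perp U_2$ with $\dim U_1=2\ell$ of plus type (on which $x$ acts via a pair of Singer cycles of $\mathrm{GL}_\ell(q)$ on a totally singular pair) and $\dim U_2=2(m-\ell)$ of minus type (on which $x$ acts as a Singer cycle of $\mathrm{SO}^-_{2(m-\ell)}(q)$). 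Uniqueness of this shape follows by solving $k\ell+j(m-\ell)=m$ in positive integers, whose only solution for $m\ge 6$ and $\gcd(\ell,m)=1$ is $k=j=1$.

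Next, by~\cite[Propositions~4.3.14, 4.3.16, 4.3.18]{kl}, the $\mathcal{C}_3$ maximal subgroups of $\Omega^-_{2m}(q)$ are of type
(a) $\Omega^-_{2m/r}(q^r).r$ for odd primes $r\mid m$ with $2m/r\ge 4$;
(b) $\mathrm{GU}_m(q).2$ for $m$ odd;
(c) $\Omega_m(q^2).2$ for $m$ odd and $q$ odd; or
(d) $\Omega^\epsilon_m(q^2).2$ for $m$ even.
For type~(a), if $x$ lies in the base subgroup $\Omega^-_{2m/r}(q^r)$, then $x$ is $\mathbb{F}_{q^r}$-linear and the standard tensor-product identification $\overline{\mathbb{F}}_q\otimes_{\mathbb{F}_q}\mathbb{F}_{q^r}\cong\overline{\mathbb{F}}_q^{\,r}$ shows that the $\mathbb{F}_q$-eigenvalues of $x$ arise from the $\mathbb{F}_{q^r}$-eigenvalues by applying $\mathrm{Gal}(\mathbb{F}_{q^r}/\mathbb{F}_q)$. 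The $\mathbb{F}_q$-Galois orbit of a primitive $(q^\ell-1)$-th root of unity $\zeta$ appearing on $U_1$ has size exactly $\ell$, each element with multiplicity one; partitioning this orbit into $\mathrm{Gal}(\mathbb{F}_{q^r}/\mathbb{F}_q)$-orbits of size $r$ forces $r\mid \ell$, contradicting $\gcd(r,\ell)=1$ (which holds because $r\mid m$ and $\gcd(\ell,m)=1$). If instead $x$ lies in the outer coset, I replace $x$ by $x^r\in\Omega^-_{2m/r}(q^r)$ and rerun the argument, using that for $\ell\ge 3$ any element $r_\ell\in P_\ell(q)$ (which exists by Zsigmondy since $\ell$ is odd and hence $(\ell,q)\ne(6,2)$) still divides $\order{x^r}$. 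For types~(b) and~(d), both $\mathrm{GU}_m(q)$ and $\Omega^\epsilon_m(q^2)$ are $\mathbb{F}_{q^2}$-linear, so $\mathbb{F}_q$-eigenvalues must group into $\mathrm{Gal}(\mathbb{F}_{q^2}/\mathbb{F}_q)$-orbits of size at most $2$; the $\ell$-element orbit of $\zeta$ cannot be so partitioned when $\ell$ is odd and no $\zeta^{q^i}$ lies in $\mathbb{F}_q$, ruling these out.

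This leaves type~(c) and forces $m$ and $q$ odd. For $\order x$ (resp.\ $\order{x^2}$ if $x$ lies outside the base) to divide $|\Omega_m(q^2)|=\frac{1}{2}q^{(m-1)^2/2}\prod_{i=1}^{(m-1)/2}(q^{4i}-1)$, I invoke primitive prime divisors: for $\ell\ge 3$, pick $r\in P_\ell(q)$, which is odd and divides $\order x$ (and hence $\order{x^2}$); since $r\mid q^{4i}-1$ iff $\ell\mid 4i$ iff $\ell\mid i$ (because $\ell$ is odd), we need $\ell\le(m-1)/2<m/2$. For $\ell=1$ the inequality is automatic from $m\ge 6$. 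The main obstacle will be the careful Galois-orbit bookkeeping in the outer-coset case of (a), where passage to $x^r$ must not destroy the primitive-prime-divisor footprint, together with ensuring that the odd-orbit obstruction in (b) still bites for the edge value $\ell=1$ by exploiting the $U_2$-orbit of size $2(m-\ell)$ and its incompatibility with the Hermitian pairing structure.
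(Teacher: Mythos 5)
The decisive flaw is your opening step: you assert that any element of $\Omega_{2m}^-(q)$ whose order equals $\frac{(q^\ell-1)(q^{m-\ell}+1)}{\gcd(2,q-1)}$ must have action type $2\ell\oplus 2(m-\ell)$, with a plus-type $2\ell$-piece carrying a dual pair of $\mathrm{GL}_\ell(q)$-Singer cycles and a minus-type $2(m-\ell)$-piece carrying a Singer cycle, each relevant eigenvalue occurring with multiplicity one. The hypothesis of the lemma constrains only $\order x$, not its module structure, and the order does not force this shape: the dimensions of the irreducible $\mathbb{F}_q\langle x\rangle$-constituents need not be multiples of $\ell$ or $m-\ell$ at all (an element can realize the same order with constituents whose induced orders merely have the right least common multiple), so your ``uniqueness'' argument via solving $k\ell+j(m-\ell)=m$ is a non sequitur. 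Everything downstream rests on this unproved structure — the multiplicity-one appearance of a primitive $(q^\ell-1)$-th root of unity, the Galois-orbit partition forcing $r\mid\ell$ in case (a), and the odd-orbit parity obstruction in cases (b)/(d) — and you yourself leave open the edge case $\ell=1$ for type (b) and the outer-coset bookkeeping in type (a) (where, for instance, your chosen $r_\ell\in P_\ell(q)$ need not survive if it happens to equal the field-extension degree, and no primitive prime divisor exists when $\ell=1$). As written, the argument therefore does not establish the lemma.

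For comparison, the paper's proof uses no information about the action of $x$ on $V$: since $\ell$ is odd, $\gcd(q^\ell-1,q^{m-\ell}+1)=\gcd(2,q-1)$ by Lemma~\ref{aritme}, and one argues purely with divisibility of $|M|$ by primitive prime divisors. When $\ell>m/2$ one takes $r\in P_\ell(q)$ (nonempty as $\ell$ is odd and $\ell\ge 5$), and when $\ell<m/2$ one takes $r\in P_{2m-2\ell}(q)$ (nonempty since $2m-2\ell\ge m+1\ge 7$); in either case $r$ divides $\order x$, hence $|M|$. Running $r$ against the orders of the three families of $\mathcal{C}_3$-maximal subgroups of $\Omega_{2m}^-(q)$ — type $\mathrm{GU}_m(q)$ with $m$ odd, $\Omega_{2m/s}^-(q^s).s$ with $s$ a prime dividing $m$, and $\Omega_m(q^2).2$ with $m$ odd — and using Lemma~\ref{primitivi} together with $\gcd(\ell,m)=1$ eliminates every possibility except $\Omega_m(q^2).2$ with $\ell<m/2$; this treats $\ell=1$ and the outer cosets uniformly, since only group orders enter. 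If you wish to keep your Galois-orbit viewpoint you would first have to prove the action-type claim, which the hypothesis simply does not supply; the cleaner repair is to recast your case analysis entirely in terms of order divisibility as above.
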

Note that the existence of at least one  $x\in\Omega_{2m}^-(q)$ of the above order is guaranteed by Lemma~$\ref{lemma:new2022}$. 
\begin{proof}
From Lemma~\ref{aritme}~\eqref{eq:arithme3}, we have $\gcd(q^\ell-1,q^{m-\ell}+1)=\gcd(2,q-1)$ because $\ell$ is odd. Therefore,
$$\order x=\frac{(q^{\ell}-1)(q^{m-\ell}+1)}{\gcd(2,q-1)}.$$

 Suppose that $x\in M$, where $M$ is a maximal subgroup of $\Omega_{2m}^-(q)$ in the Aschbacher class $\mathcal{C}_3$. From~\cite{kl}, we have three cases to consider:
\begin{itemize}
\item $m$ is odd and $M$ is of type $\mathrm{GU}_m(q)$,
\item $M\cong \Omega_{2m/s}^-(q^s).s$, where $s$ is a prime divisor of $m$ with $m/s\ge 2$,
\item $m$ is odd and $M\cong \Omega_m(q^2).2$.
\end{itemize}
We show that the first two cases cannot occur and, if the third case does occur, then $\ell< m/2$. We divide the proof into two cases, depending on whether $\ell>m/2$ or $\ell<m/2$. Observe that the case $\ell=m/2$ is excluded because $\gcd(m,\ell)=1$.

\bigskip
\noindent\textsc{Case $\ell>m/2$.}
\smallskip

\noindent As $\ell$ is odd and $\ell>m/2\geq 3$, from Zsigmondy's theorem, $P_{\ell}(q)\ne \varnothing$ and hence $x$ is a strong $ppd(2m,q;\ell)$-element. Let $r\in P_{\ell}(q)$. Then $r$ divides $|M|$ and, by ~\eqref{boundppd}, $r\geq \ell+1\geq 5.$

Suppose first that $m$ is odd and $M$ is of type $\mathrm{GU}_m(q)$. As above, we have
$$M\cong \frac{q+1}{a}\cdot \mathrm{PSU}_m(q).\gcd(q+1,m),$$
where $a:=\gcd(q+1,4)$. Thus
\begin{itemize}
\item $r$ divides $q^k+1$, for some odd $k$ with $1\le k\le m$, or
\item $r$ divides $q^k-1$ for some even $k$ with $1\le k\le m$.
\end{itemize} In the first case, by Lemma~\ref{aritme}~\eqref{eq:arithme3}, $r$ divides $\gcd(q^\ell-1,q^k+1)=\gcd(2,q-1)$, against $r\geq 5.$ In the second case, by Lemma~\ref{primitivi}~\eqref{primitivi3}, $\ell$ divides $k$. As $\ell$ is odd and $k$ is even, we deduce that $2\ell$ divides $k$. Thus $m<2\ell\le k\le m$, which is a contradiction. Therefore, this case does not arise.

Suppose next  that $M\cong \Omega_{2m/s}^-(q^s).s$, for some prime divisor  $s$ of $m$ with $m/s\ge 2$. Thus
\begin{itemize}
\item $r$ divides $q^m+1$, or
\item $r$ divides $q^{2sk}-1$ for some $k$ with $1\le k\le m/s-1$, or
\item $r=s$.
\end{itemize}
If $r$ divides $q^m+1$, then using the fact that $\ell$ is odd, by Lemma~\ref{aritme}~\eqref{eq:arithme3}, we deduce that $r$ divides $\gcd(q^\ell-1,q^k+1)=\gcd(2,q-1)$, which is impossible.

If $r$ divides $q^{2sk}-1$, then by Lemma \ref{primitivi}\,(3), we have $\ell \mid 2sk$. As $\ell$ is odd, we get $\ell\mid sk$.
Since $sk<m$ and $\ell>m/2$, we get $sk=\ell$. Thus $s\mid \gcd(m,\ell)=1$, which is a contradiction.

Suppose finally that $M\cong \Omega_m(q^2).2$ with $m$ odd. Since $r\ne 2$ and since $r$ is relatively prime to $q$, we have that
\begin{center}
\item $r$ divides $q^{4k}-1$ for some $k$ with $1\le k\le (m-1)/2$.
\end{center}
Then, by Lemma \ref{primitivi}\,(3), we have $\ell \mid 4k$. As $\ell$ is odd, we get $\ell\mid k$ and hence $m/2<\ell\le k\le (m-1)/2$, which is clearly a contradiction.

\bigskip
\noindent\textsc{Case $\ell<m/2$.}
\smallskip

\noindent As $m\ge 6$ and $1\le \ell<m/2$, we have  $2m-2\ell\geq m+1\geq 7$  and thus, from Zsigmondy's theorem, $P_{2m-2\ell}(q)\ne \varnothing$ and hence $x$ is a strong $ppd(2m,q;2m-2\ell)$-element. Let $r\in P_{2m-2\ell}(q)$. Then $r$ divides $|M|$.

Suppose first that $m$ is odd and $M$ is of type $\mathrm{GU}_m(q)$. From~\cite[Proposition~$4.3.18$]{kl}, we have
$$M\cong \frac{q+1}{a}\cdot \mathrm{PSU}_m(q).\gcd(q+1,m),$$
where $a:=\gcd(q+1,4)$. Thus 
\begin{itemize}
\item $r$ divides $q^k+1$, for some odd $k$ with $1\le k\le m$, or
\item $r$ divides $q^k-1$ for some even $k$ with $1\le k\le m$.
\end{itemize} 
 In the first case, as $r$ is a primitive prime divisor of $q^{2m-2\ell}-1$, by Lemma \ref{primitivi}\,(3), we get $2m-2\ell\mid 2k$ so that the even number $m-\ell$ divides the odd number $k$, which is absurd.

In the second case, using again the fact that $r$ is a primitive prime divisor of $q^{2m-2\ell}-1$, we get  $k\ge 2m-2\ell>m$, against $k\le m.$

Therefore, this case does not arise.

Suppose next that $M\cong \Omega_{2m/s}^-(q^s).s$, for some prime divisor  $s$ of $m$ with $m/s\ge 2$. Thus
\begin{itemize}
\item $r$ divides $q^m+1$, or
\item $r$ divides $q^{2sk}-1$ for some $k$ with $1\le k\le m/s-1$, or
\item $r=s$.
\end{itemize}
 When $r$ divides $q^m+1$, note that  by Lemma \ref{primitivi}\,(3), we have $2m-2\ell\mid 2m$ and thus $m-\ell\mid m$. Since $m-\ell>m/2$, this implies $m-\ell=m$, which is clearly impossible.

 When $r$ divides $q^{2sk}-1$, observe that by Lemma \ref{primitivi}\,(3), we have $2m-2\ell\mid 2sk$ and thus $m-\ell\mid sk\leq m$. Since $m-\ell>m/2$, this implies $m-\ell=sk$. As $s$ divides $m$, we deduce that $s$ divides $\ell$. However, as $\gcd(\ell,m)=1$, this yields $s=1$, contradicting the fact that $s$ is a prime number.

 When $r=s$, from~\eqref{boundppd}, we get $s\ge 2(m-\ell)+1>m+1$ against the fact that $s\leq m$.

Finally, when $m$ is odd and $M\cong\Omega_{m}(q^2).2$, we reach the conclusion of the
lemma.

\end{proof}

\begin{proposition}\label{ortogonali-}
For every $n\ge 14$, the weak normal covering number of $\mathrm{P}\Omega^-_{n}(q)$ is at least $3$.
\end{proposition}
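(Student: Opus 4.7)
The plan is to argue by contradiction, mimicking the scheme used in Lemmas~\ref{dimension8orthogonal-}, \ref{dimension10orthogonal-} and~\ref{dimension12orthogonal-}. First I would dispose of the small Zsigmondy exceptions and the small $q$ cases (say $q\in\{2,3\}$ for a handful of small $n$) by direct verification with a computer, and for the remainder suppose that $\tilde{\mu}=\{\tilde{H},\tilde{K}\}$ is a weak normal $2$-covering of $\Omega^-_n(q)$ with maximal components. Without loss of generality, $\tilde H$ contains a Singer cycle; Lemma~\ref{msw-} then gives that $\tilde H\cong \Omega^-_{n/s}(q^s).s$ with $s$ a prime divisor of $n/2$ and $n/s\ge 4$, or (when $n/2$ is odd) $\tilde H$ is of type $\mathrm{GU}_{n/2}(q)$.

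Next I would plug in the element $z\in\Omega^-_n(q)$ provided by Lemma~\ref{lemma:new2022} (with $\ell=1$), of action type $1\oplus 1\oplus (n-2)$ and order $(q^{n/2-1}+1)(q-1)/\gcd(2,q-1)$. Picking $r\in P_{n-2}(q)$ (which is non-empty for $n\ge 14$ outside a few Zsigmondy exceptions), a short arithmetic computation based on Lemmas~\ref{aritme} and~\ref{primitivi} together with the bound~\eqref{boundppd} rules out $r\mid |\tilde H|$ in both Lemma~\ref{msw-} possibilities (the extension-field case forces $n-2\mid 2sk$ for some $k\le n/(2s)-1$, which collapses to $s=1$; the unitary case forces either $r\mid\gcd(q^{n-2}-1,q^j+1)=\gcd(2,q-1)$ for odd $j$, or $r\mid q^j-1$ with even $j\le n/2$, which contradicts $n-2>n/2$). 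Hence an $\mathrm{Aut}$-conjugate of $z$ lies in $\tilde K$, and a case-by-case scan of the maximal subgroups of $\Omega^-_n(q)$ in~\cite{kl} whose order is divisible by $q^{n/2-1}+1$ pins down $\tilde K$ to a member of $\mathcal{C}_1$: a parabolic $P_1$ stabilizing a totally singular $1$-space, a stabilizer of type $\Omega_1(q)\perp\Omega^-_{n-1}(q)$ (odd $q$), $\mathrm{Sp}_{n-2}(q)$ (even $q$), or $(\Omega^+_2(q)\times \Omega^-_{n-2}(q)).2^{\gcd(2,q-1)}$.

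Finally, to produce the contradiction I would take $x\in\Omega^-_n(q)$ of action type $(n/2-1)\oplus(n/2-1)\oplus 2$ (again supplied by Lemma~\ref{lemma:new2022}), with the $2$-dimensional component non-degenerate of minus type carrying a Singer cycle of order $q+1$, and totally singular $(n/2-1)$-dimensional components on which $x$ acts with order $q^{n/2-1}-1$, giving $\order x=(q^{n/2-1}-1)(q+1)/\gcd(2,q-1)$. The only proper non-zero $x$-invariant subspaces are the two totally singular $(n/2-1)$-dimensional ones, their sum, and the $2$-dimensional non-degenerate minus-type summand; in particular $x$ stabilizes no non-zero subspace of dimension $\le 1$, no non-degenerate subspace of dimension $\le 2$ of plus type, and no $(n-1)$-dimensional subspace, so no $\mathrm{Aut}$-conjugate of $x$ lies in any of the four candidates for $\tilde K$ above. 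To show $x$ has no conjugate in $\tilde H$ either, in the extension-field case I would invoke Lemma~\ref{o-c3} (with $m=n/2$ and $\ell=n/2-1$) together with $\gcd(n/2,n/2-1)=1$ to conclude $\ell<m/2$, which fails, ruling out the $\mathcal{C}_3$ possibility; in the $\mathrm{GU}_{n/2}(q)$ case a primitive prime divisor in $P_{n/2-1}(q)$ divides $\order x$ but not $|\tilde H|$ by the usual Lemma~\ref{aritme}/\ref{primitivi} argument. The main obstacle will be the bookkeeping of the sporadic Zsigmondy exceptions and the few $(n,q)$ for which the primitive prime divisor arguments at either stage degenerate; these will be handled individually with \texttt{magma}, following the pattern already established in the proofs for $n\in\{8,10,12\}$.
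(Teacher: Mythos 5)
There are two genuine gaps. The first is structural: for unbounded $n$ you cannot ``pin down $\tilde K$ by a case-by-case scan of the maximal subgroups of $\Omega_n^-(q)$ in~\cite{kl}''. The tables in \cite{kl} only describe the geometric classes $\mathcal{C}_1$--$\mathcal{C}_8$; the class $\mathcal{S}$ is not listed there for $n\ge 14$ (the \cite{bhr} tables you lean on for $n\in\{8,10,12\}$ stop at dimension $12$), so the scan simply cannot exclude, say, $A_{n+1}$ or $A_{n+2}$ acting irreducibly, nor does it by itself dispose of $\mathcal{C}_2$ or $\mathcal{C}_3$. This is exactly why the paper's proof first shows that $\tilde K$ is a (strong) $ppd$-group for two well-chosen parameters ($e=n-2$ and $e=\ell$ with $\ell$ a prime in $(n/4,n/2)$), then invokes Theorem~\ref{main} and Lemma~\ref{no-c5} to restrict $\tilde K$ to $\mathcal{C}_1\cup\mathcal{C}_2\cup\mathcal{C}_3\cup\mathcal{S}$, and finally eliminates $\mathcal{C}_2$ (the $\mathrm{O}_1\mathrm{wr}S_n$ case, where the prime $n-1$ genuinely can divide $|\tilde K|$ and a separate argument with $q\equiv 3\pmod 4$, $n\equiv 2\pmod 4$ is needed), $\mathcal{C}_3$ (via Lemma~\ref{o-c3} for the element of type $\ell\oplus\ell\oplus(n-2\ell)$), and $\mathcal{S}$ (a Landau-type bound plus computer checks up to $n\le 36$ for $q=2$). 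None of this is a finite list of ``small Zsigmondy exceptions'' that \texttt{magma} can absorb, so your proposal is missing the core of the argument that identifies $\tilde K$.

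The second gap is in your final contradiction. When $n\equiv 2\pmod 4$ the candidate $\tilde H$ of type $\mathrm{GU}_{n/2}(q)$ from Lemma~\ref{msw-} is available, $n/2-1$ is even, and primes in $P_{n/2-1}(q)$ \emph{do} divide $|\mathrm{GU}_{n/2}(q)|$ (through the factor $q^{n/2-1}-1$); in fact $\mathrm{GU}_1(q)\perp\mathrm{GU}_{n/2-1}(q)$ with $\mathrm{GL}_{(n/2-1)/2}(q^2)\le\mathrm{GU}_{n/2-1}(q)$ produces elements with exactly the order and the action type $(n/2-1)\oplus(n/2-1)\oplus 2$ of your element $x$, so $x$ does not exclude this $\tilde H$. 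Moreover your appeal to Lemma~\ref{o-c3} with $\ell=n/2-1$ is illegitimate in the same congruence class, since that lemma requires $\ell$ odd. The paper sidesteps both problems by taking instead the element $y$ of type $\ell\oplus\ell\oplus(n-2\ell)$ for a prime $\ell$ with $n/4<\ell<n/2$: Lemma~\ref{o-c3} then applies and its conclusion ($\ell<m/2$) fails, ruling $y$ out of \emph{both} $\mathcal{C}_3$-type candidates for $\tilde H$, while $y$ still stabilizes no subspace of dimension $1$ or $2$ and hence kills all the $\mathcal{C}_1$ candidates for $\tilde K$. You would need to replace your final element by one with these properties (or add a separate argument for the unitary-type $\tilde H$) for the proof to close.
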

\begin{proof}

As usual we may argue with $\Omega_n^-(q)$.
 Let $\tilde H$ be a maximal component of a weak normal covering of $\Omega_n^-(q)$ containing a Singer cycle. From Lemma~\ref{msw-}, we have that $\tilde H$  is in class $\mathcal{C}_3$ and
\begin{itemize}
\item $\tilde H\cong\Omega_{n/s}^-(q^s).s$ for some prime divisor $s$ of $n/2$ with $n/s\ge 4$, or
\item $\tilde H$ is of type $\mathrm{GU}_{\frac{n}{2}}(q)$ and $n/2$ is odd.
\end{itemize}

Let  $x\in \Omega_n^-(q)$  with
$$\order x={\displaystyle\frac{(q^{\frac{n}{2}-1} +1)(q-1)}{\gcd(q^{\frac{n}{2}-1}+1,q-1)}}=
{\displaystyle\frac{(q^{\frac{n}{2}-1} +1)(q-1)}{\gcd(2,q-1)}}$$
and action type $(n-2)\oplus 1\oplus 1$ on the natural module $V$.  The existence of such an element is guaranteed by Lemma \ref{lemma:new2022}.

For later, we need to observe that, by Section \ref{Huppert}, the quadratic form of $\Omega_n^-(q)$ restricted to the $(n-2)$-dimensional $x$-invariant subspace of $V$ has Witt defect $1$.
As $\tilde H$ is  in class $\mathcal{C}_3$, Lemma~\ref{o-c3} applied with $\ell:=1$ yields that $\tilde H$ contains no elements having order $\order x$. 

Therefore, there exists a second component, $\tilde K$ say, containing an $\mathrm{Aut}(\Omega_n^-(q))$-conjugate of $x$. This shows $\gamma_w(\Omega_n^-(q))\ge 2$. For the rest of the proof, we suppose that $\gamma_w(\Omega_n^-(q))=2$ and that $\tilde H,\tilde K$ are the maximal  components of a weak normal $2$-covering.

As $n\ge 14$, $P_{n-2}(q)\ne \varnothing$ and hence
\begin{align*}
\tilde{K} \textrm{ is a }ppd(n,q;n-2)\textrm{-group.}
\end{align*}

Let $\ell$ be a prime number with $n/4<\ell<n/2$.
Observe that the existence of $\ell$ is guaranteed by Bertrand's postulate  and that obviously $\ell$ is odd. Let $y\in \Omega_n^-(q)$ with
$$\order y=\frac{(q^{\frac{n}{2}-\ell}+1)(q^\ell-1)}{\gcd(q^{\frac{n}{2}-1},q^\ell-1)}=\frac{(q^{\frac{n}{2}-\ell}+1)(q^\ell-1)}{\gcd(2,q-1)}$$
and type $\ell\oplus\ell\oplus (n-2\ell)$. The existence of such an element is guaranteed by Lemma \ref{lemma:new2022}.

As $\tilde H$ is in class $\mathcal{C}_3$ and $\ell>n/4$, it follows from Lemma~\ref{o-c3} that $y$  does not have a conjugate in $\tilde H$. Hence $y$ has an $\mathrm{Aut}(\Omega_n^-(q))$-conjugate  in $\tilde K$. From Zsigmondy's theorem, we have that $P_{\ell}(q)\ne\varnothing$, $y$ is a strong $ppd(n,q;\ell)$-element and 
\begin{align*}
\tilde{K} \textrm{ is a }ppd(n,q;\ell)\textrm{-group.}
\end{align*}
As $n-\ell>3$, by Theorem~\ref{main}, Lemma~\ref{no-c5}\,(3) and Table $3.5.$ F in
\cite{kl}, $\tilde K$ belongs to one of the classes $\,\mathcal C_i\,$ for some
$i\in \{1,2,3\}$ or to $\mathcal S$ and it is described in the Example~2.6 a) of~\cite{gpps}.

\smallskip

\noindent\textsc{The subgroup $\tilde K$ lies in class $\mathcal{C}_1$. }Here $\tilde K$ is the stabilizer of a totally singular or of a non-degenerate subspace of $V$. 

Recall $V=U_1\oplus U_2\oplus U_3$, with $U_1,U_2,U_3$ irreducible $\mathbb{F}_q\langle x\rangle$-submodules of $V$, $\dim_{\mathbb{F}_q}U_1=n-2$, $\dim_{\mathbb{F}_q}U_2=\dim_{\mathbb{F}_q}U_3=1$  and with the restriction of the quadratic form on $U_1$ having Witt defect $1$.

Now, we use the fact that $\tilde K$ contains an $\mathrm{Aut}(\Omega_n^-(q))$-conjugate of $x$. From the paragraph above, we deduce that one of the following holds
\begin{itemize}
\item $\tilde K$ is a parabolic subgroup $P_1\in\mathcal{C}_1$,
\item $q$ is even and $\tilde K\cong\mathrm{Sp}_{n-2}(q)\in\mathcal{C}_1$,
\item $q$ is odd and $\tilde K$ is of type $\mathrm{O}_1(q)\perp\mathrm{O}_{n-1}(q)\in\mathcal{C}_1$,
\item $\tilde K$ is of type $\mathrm{O}_2^+(q)\perp\mathrm{O}_{n-2}^-(q)\in\mathcal{C}_1$.
\end{itemize}
 However, since $\ell,2\ell,n-2\ell,n-\ell\notin\{1,2,n-1,n-2\}$, $y$ stabilizes no subspaces of $V$ of dimension $1$ or $2$. Therefore, this case does not arise.
\smallskip

\noindent\textsc{The subgroup $\tilde K$ lies in class $\mathcal{C}_2$. } These are the groups in the Example
2.3 of \cite{gpps}. Hence $\tilde K\leq \mathrm{GL}_1(q) \mathrm{wr} S_n$  is a $ppd(n,q;n-2)$-group of order divisible by the unique primitive prime divisor of $q^{n-2}-1$ given by $n-1\geq 13$.
Moreover, by  Proposition 4.2.15 in \cite{kl},  we have that $q\equiv 3\pmod 4$
 is a prime, $n\equiv 2 \pmod4$ and $\tilde K\leq E_2^{n}.\,S_n.$ Let $n=2+4k$ and note that, since $n\geq 14$, we have $k\geq 3.$ By the fact that $\tilde K$ contains $x$, it follows that $\tilde K$ contains an element of order $q^{\frac{n}{2}-1} +1$. Obviously $n-1\mid q^{\frac{n}{2}-1} +1=q^{2k} +1.$ Now note that  $q^{2k} +1\equiv 3^{2k} +1\pmod 4 \equiv 2\pmod 4$ so that $2$ is the maximum power of $2$ dividing  $q^{2k} +1.$
Moreover, $q^{2k} +1>2(n-1)=8k+2$ because we  have
 $$q^{2k} +1\geq 3^{2k} +1>8k+2,$$
 for every  $k\geq 3.$

 Thus there exists an odd prime $r$ with $r(n-1)\mid q^{2k} +1$, and hence such that $r(n-1)\mid  \order x$. Hence we also have an element of order $r(n-1)$ in $E_2^{n}.\,S_n.$ It follows that there is an element of such an order in $S_n$. Recall that $n-1$ is prime, see~\cite[Example~$2.3$]{gpps}.  When $r\neq n-1$, this implies $n\geq r+n-1\geq n+2$, which is a contradiction; when $r=n-1$, this implies $(n-1)^2\leq n$, which is again a contradiction.

\smallskip

\noindent\textsc{The subgroup $\tilde K$ lies in class $\mathcal{C}_3$. } Here we use the fact that $\tilde K$ contains an $\mathrm{Aut}(\Omega_n^-(q))$-conjugate of $y$. We conclude that this case cannot arise by Lemma~\ref{o-c3}, because $\ell>n/4$.

\smallskip

\noindent\textsc{The subgroup $\tilde K$ lies in class $\mathcal{S}$.} Since $\tilde K$ is described in  Example 2.6 a) of~\cite{gpps}, we have $$ A_m\le \tilde K \le S_m
\times \Z{\Omega_n^-(q)},$$ with $m\in\{ n+1,n+2\}$. 
In particular,  since ${\bf Z}(\Omega_n^-(q))$ divides $2$ by~\cite[Table 2.1.D]{kl}, we deduce $x^2\in S_m$. In particular,  $S_m$ contains an element having order
 $$\order {x^2}=\frac{{\bf o}(x)}{2}=\frac{(q^{\frac{n}{2}-1}+1)(q-1)}{4},$$
 when $q$ is odd, and having order  $$\order {x^2}=\order x=(q^{\frac{n}{2}-1}+1)(q-1),$$
 when $q$ is even. In both cases, when $q\neq 3$, we have that  $S_m$ contains an element of order at least $q^{\frac{n}{2}-1}+1$; when $q=3$, we have that $S_m$ contains an element of order $(q^{\frac{n}{2}-1}+1)/2$.
 Set $c:=1$ when $q\ne 3$ and $c:=2$ when $q=3$.
 Arguing as in the symplectic case, we deduce
\begin{align}\label{equation22}
 \log\left(\frac{q^{\frac{n}{2}-1}+1}{c}\right)\le\sqrt{m\log m}\left(1+\frac{\log(\log (m))-0.975}{2\log (m)}\right).
 \end{align}
 This inequality holds true only when $n\le 36$ and $q=2$, or $q=3$ and $n\le 16$. We have checked these cases  with a computer and, when $m\in \{n+1,n+2\}$, $S_m$ contains no elements having order $2^{\frac{n}{2}-1}+1$ when $14\le n\le 36$ and having order $(3^{\frac{n}{2}-1}+1)/2$ when $14\le n\le 16$. Therefore, also this case does not arise.
\end{proof}

Summing up, when $n\ge 8$, $\mathrm{P}\Omega_n^-(q)$ has weak normal covering number at least $3$.

\section{Orthogonal groups with Witt defect $0$}\label{sec:orthogonal+}
In this section, we deal with even dimensional orthogonal groups $\mathrm{P}\Omega_n^+(q)$ of Witt defect $0$, with $n\geq 8$. Note that the case $n=6$ is considered in Section~\ref{sec:unitary}  because  $\mathrm{P}\Omega_6^+(q)\cong \mathrm{PSU}_4(q)$. The cases $n\in \{2,4\}$ are not considered because the groups $\mathrm{P}\Omega_4^+(q)$ and $\mathrm{P}\Omega_2^+(q)$ are not simple.

When $n\ne 8$, we may work with $\Omega_n^+(q)$ because the automorphism group of $\Omega_n^+(q)$ projects onto the automorphism group of $\mathrm{P}\Omega_n^+(q)$. The case $n=8$ is rather special. Triality automorphisms of $\mathrm{P}\Omega_8^+(q)$ cannot be lifted to  automorphisms of $\Omega_8^+(q)$ (unless $q$ is even, because in that case $\Omega_8^+(q)=\mathrm{P}\Omega_8^+(q)$). Moreover, triality automorphisms do not act on $V$ and hence, for instance, an element and its image under a triality automorphism might have a rather different action on $V$.

Before embarking in the proofs of our main result, we define a family of elements in $\Omega_n^+(q)$.
Given an even integer $m$ with $2\leq m\leq n/2$, we consider the embedding of
$\mathrm{SO}^-_{m}(q)\perp \mathrm{SO}^-_{n-m}(q)$ in $\mathrm{SO}^{+}_{n}(q)$. Let $s_m\in \mathrm{SO}^-_{m}(q)$ and $s_{n-m}\in \mathrm{SO}^-_{n-m}(q)$ be Singer cycles 
 having order $q^{m/2}+1$ and $q^{n/2-m/2}+1$, respectively. By Proposition~\ref{spinor-ber-prop}, we have that 
  $x:=s_m\oplus s_{n-m}\in\Omega_n^+(q)$ has action type $m\oplus(n-m)$ and
$${\bf o}(x)=\frac{(q^{\frac{m}{2}}+1)(q^{\frac{n}{2}-\frac{m}{2}}+1)}{\gcd(q^{\frac{m}{2}}+1,q^{\frac{n}{2}-\frac{m}{2}}+1)}.$$
We will need the element $x$ defined above with $m=2$ in Lemma \ref{msw+} and with $m=4$ in Lemma \ref{max-y}.
Note that, by Lemma \ref{aritme}, when $m=2$, we have
$${\bf o}(x)=\frac{(q+1)(q^{\frac{n}{2}-1}+1)}{\gcd(q+1,q^{\frac{n}{2}-1}+1)}=
\begin{cases}
q^{\frac{n}{2}-1}+1&\textrm{if }\frac{n}{2} \textrm{ is even},\\
\frac{(q+1)(q^{\frac{n}{2}-1}+1)}{\gcd(2,q-1)}&\textrm{if }\frac{n}{2} \textrm{ is odd},\\
\end{cases}
$$
while, when $m=4$, we have
$${\bf o}(x)=\frac{(q^2+1)(q^{\frac{n}{2}-2}+1)}{\gcd(q^2+1,q^{\frac{n}{2}-2}+1)}=\frac{(q^2+1)(q^{\frac{n}{2}-2}+1)}{\gcd(2,q-1)}.
$$

\begin{lemma}\label {msw+} 
Let $M$ be a  maximal subgroup of $\Omega^+_{n}(q)$ with $n \ge 10$ containing an element $x$ of order
$\frac{(q+1)(q^{\frac{n}{2}-1}+1)}{\gcd(q+1, q^{\frac{n}{2}-1}+1)}$ and action type
$2\oplus (n-2)$.
Then, one of the following holds
\begin{enumerate}
\item\label {msw+1} $M\cong(\Omega_2^-( q)\perp\Omega^-_{n-2}(q)).2^{\gcd(2,q-1)}\in\mathcal{C}_1$,
\item\label {msw+3} $n/2$ is even and $M$ is of type $\mathrm{GU}_{n/2}(q).2\in\mathcal{C}_3$.
\end{enumerate}
\end{lemma}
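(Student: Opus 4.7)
The plan is to combine the theory of $ppd$-elements with the rigidity of the orthogonal decomposition $V = U_1 \perp U_2$ imposed by the action type of $x$.

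I would first verify that $x$ is a strong $ppd(n,q;n-2)$-element. Since $n-2 \ge 8$, Zsigmondy's theorem gives $P_{n-2}(q) \ne \varnothing$, and for any $r \in P_{n-2}(q)$ the factorisation $q^{n-2}-1=(q^{n/2-1}-1)(q^{n/2-1}+1)$ combined with primitivity forces $r \mid q^{n/2-1}+1$, which divides $\order x$. Applying Theorem~\ref{main} together with Lemma~\ref{no-c5}(3) and \cite[Table~3.5E]{kl}, the maximal subgroup $M$ must lie in an Aschbacher class $\mathcal{C}_i$ with $i \in \{1,2,3,6\}$, or in $\mathcal{S}$ with $M$ appearing in Examples~2.6--2.9 of \cite{gpps}.

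For class $\mathcal{C}_1$: by Proposition~\ref{spinor-ber-prop} the only proper $\mathbb{F}_q\langle x\rangle$-submodules of $V$ are the non-degenerate minus-type pieces $U_1$ (of dimension $2$) and $U_2$ (of dimension $n-2$). This excludes every parabolic stabiliser and forces $M$ to be of type $\mathrm{O}_2^-(q) \perp \mathrm{O}_{n-2}^-(q)$, giving case~(1). Classes $\mathcal{C}_2$ and $\mathcal{C}_6$ are handled by the arithmetic method used in Lemmas~\ref{bertrand-sp} and~\ref{bertrand-odd}: a large primitive prime $r \in P_{n-2}(q)$ cannot divide the order of a wreath product $\mathrm{O}_m^\varepsilon(q) \mathrm{wr} S_{n/m}$, of $\mathrm{GL}_{n/2}(q).2$, nor of an extraspecial normaliser in $\mathcal{C}_6$, once Lemma~\ref{primitivi}(3) and the bound $r \ge n-1$ are invoked. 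For $\mathcal{S}$: mimicking Proposition~\ref{ortogonali-}, the lower bound $\order x \ge q^{n/2-1}$ combined with Landau's estimate on element orders in the symmetric group produces a logarithmic inequality which is satisfied only for finitely many pairs $(n,q)$, and these can be ruled out by direct inspection.

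The main obstacle is class $\mathcal{C}_3$. From \cite[Section~4.3]{kl} the candidates are $\Omega_{n/s}^\varepsilon(q^s).s$ for primes $s \mid n/2$ with $n/s \ge 4$, the subgroup $\Omega_{n/2}(q^2).2$ when $n/2$ is odd, and $\mathrm{GU}_{n/2}(q).2$ when $n/2$ is even. For the first two families I would follow the template of Lemma~\ref{o-c3}: choosing a primitive prime divisor of $q^{n-2}-1$ that divides $\order x$ and applying Lemma~\ref{primitivi}(3) to the cyclotomic factors dividing $|M|$ forces $s \mid n-2$ (respectively an analogous constraint for the quadratic extension), which together with $s \mid n/2$ and $n-2 \ne n/2$ quickly collapses into a contradiction. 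For $\mathrm{GU}_{n/2}(q).2$, the standard description of maximal tori of unitary groups (see \cite{GLS3} or \cite{buturlakin1}) shows that a semisimple element of order $\frac{(q+1)(q^{n/2-1}+1)}{\gcd(q+1,q^{n/2-1}+1)}$ acting with type $2\oplus(n-2)$ on the natural $\Omega_n^+(q)$-module exists in $\mathrm{GU}_{n/2}(q).2$ precisely when $n/2-1$ is odd, that is, when $n/2$ is even; this yields case~(2).
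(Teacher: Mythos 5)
Your overall strategy (showing $x$ is a strong $ppd(n,q;n-2)$-element and then running the Aschbacher/\cite{gpps} machinery) is reasonable, and your treatment of $\mathcal{C}_1$, $\mathcal{C}_2$, the subgroups $\Omega_{n/s}^{\varepsilon}(q^s).s$ and the unitary case is essentially fine. The genuine gap is your claim that the quadratic-extension subgroup $\Omega_{n/2}(q^2).2$ in class $\mathcal{C}_3$ (which arises precisely when $nq/2$ is odd) ``quickly collapses'' under the primitive-prime-divisor arithmetic. It does not: when $n/2$ is odd we have $n\equiv 2\pmod 4$, and $|\Omega_{n/2}(q^2)|$ contains the cyclotomic factor $(q^2)^{2i}-1$ with $i=(n-2)/4$, that is, $q^{n-2}-1$ itself; moreover $\Omega_{n/2}(q^2)$ has a cyclic maximal torus of order $q^{\frac{n}{2}-1}+1$ up to a factor $2$, so it contains elements whose order is divisible by every prime in $P_{n-2}(q)$. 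No order-divisibility argument can dispose of this subgroup — this is exactly why it survives as a listed possibility in \cite[Table II]{msw}. The paper eliminates it by exploiting the action type of $x$: setting $x':=x^2\in\Omega_{n/2}(q^2)$, the $2$-dimensional constituent of $x$ becomes a non-degenerate $1$-dimensional $\mathbb{F}_{q^2}$-subspace $U$ fixed by $x'$; by \cite[Proposition~4.1.6]{kl} its stabilizer is $\Omega_{n/2-1}^-(q^2).2$, which induces on $U$ only elements of order at most $2$, whereas $x'$ induces order $(q+1)/2$ there. This forces $q=3$, and that residual case is then killed using the precise identification $M\cong\mathrm{SO}_{n/2}(q^2)$ from \cite[Lemma~5.3.5]{burness}. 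Nothing in your proposal plays this role, so the $\mathcal{C}_3$ case is not closed.

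Two secondary issues. First, since $e=n-2>n-3$, Theorem~\ref{main}~(3) gives no restriction inside $\mathcal{S}$, so a priori all of Examples~2.6--2.9 of \cite{gpps} must be examined, not only the alternating/symmetric Example~2.6~a) covered by your Landau-type inequality; likewise the $\mathcal{C}_6$ normalizer can have order divisible by $r$ when $r=n-1$ is prime (here $n$ is a $2$-power), so your arithmetic dismissal of $\mathcal{C}_6$ needs more than divisibility by $r$. The paper sidesteps the entire re-run of \cite{gpps} by observing that the proof of \cite[Theorem~1.1]{msw} uses only the strong $ppd(n,q;n-2)$ property of the low-Singer cycle, so its conclusion ($M\in\mathcal{C}_1\cup\mathcal{C}_3$, with the $\mathcal{C}_3$ candidates read off from \cite[Table~II]{msw}) applies verbatim to $x$; the only genuinely new work is the action-type argument described above.
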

For the detailed structure of $M$ in part~\eqref{msw+3} see~\cite[Propositions~$4.3.18$,~$4.3.20$]{kl}. In particular, in~\eqref{msw+3}, we stress here the fact that $$|M|\hbox{ divides } |\mathrm{GU}_{n/2}(q).2|.$$
\begin{proof}We start by observing that~\cite[Theorem~$1.1$]{msw}
 describes the maximal subgroups of $\Omega_n^+(q)$ containing a low-Singer  cycle $s$,
 that is, an element of $\Omega_n^+(q)$ of order $q^{n/2-1}+1$ and action $(n-2)\oplus 1\oplus 1$ inducing a Singer cycle belonging to $\mathrm{O}_{n-2}^-(q)$ on its invariant subspace of dimension $n-2$ and the identity matrix on its invariant subspace of dimension $2$. Clearly both $x$ in the statement of this lemma and $s$ in \cite[Theorem 1.1]{msw} are strong $ppd(n,q;n-2)$-elements. Now, the proof of \cite[Theorem 1.1]{msw} just uses this fact. Hence we reach  the same conclusion for the maximal subgroups containing $s$ and those containing $x$: namely, they belong to $\mathcal{C}_1\cup \mathcal{C}_3$. Now, using  \cite[Table II]{msw}, we have that either part~\eqref{msw+3} holds or
 \begin{itemize}
 \item[$(\dag)$]$nq/2$ is odd and $M$ is of type $\Omega_{n/2}(q^2).2\in\mathcal{C}_3$. 
 \end{itemize} 
Furthermore, with a direct inspection on $\mathcal{C}_1$, we get  \eqref{msw+1}. Therefore, to conclude the proof we need to exclude the case described in~$(\dag)$: to do this we use the action type of $x$.

Suppose $nq/2$ is odd, $M$ is of type $\Omega_{n/2}(q^2).2\in\mathcal{C}_3$ and $M$ contains an element $x$  of order
$\frac{(q+1)(q^{\frac{n}{2}-1}+1)}{\gcd(q+1, q^{\frac{n}{2}-1}+1)}$ and action type
$2\oplus (n-2)$. As $nq/2$ is odd, from Lemma~\ref{aritme}, we have $${\order x}=\frac{(q+1)(q^{\frac{n}{2}-1}+1)}{2}.$$
Let $x':=x^2$ and let  $M_0$ be the subgroup of $M$ with $|M:M_0|=2$ and with $M_0\cong\Omega_{n/2}(q^2)$. Observe that $x'\in M_0$. Now, $x'$ fixes a $1$-dimensional  non-degenerate $\mathbb{F}_{q^2}$-subspace $U$ of $\mathbb{F}_{q^2}^{n/2}$. Let $(M_0)_U$ be the stabilizer in $M_0$ of $U$. In particular, $x'\in (M_0)_U$. The action of $x'$ on $U$ induces a matrix having order $(q+1)/2$ because the element $x$ induces by hypothesis a matrix having order $q+1$ on $U$. From~\cite[Proposition~4.1.6]{kl},  we have $(M_0)_U\cong \Omega_{n/2-1}^-(q^2).2$.  Therefore, the elements of $(M_0)_U$ induce in their action on $U$ elements having order $1$ or $2$.  Thus, we must have
$$\frac{q+1}{2}=2,$$
that is, $q=3$. Now that we have pinned down the exact value of $q$, we use the explicit description of $M$ in~\cite[Lemma~5.3.5]{burness}.  Indeed, since $q=3$, we have $q\equiv -1\pmod 4$ and hence~\cite[Lemma~5.3.5]{burness} gives $M\cong \mathrm{SO}_{n/2}(q^2)$. As $x\in M_U$ and $M_U\cong (\mathrm{SO}_1(q^2)\perp\mathrm{SO}_{n/2-1}^-(q^2)).2$, we deduce that the action of $x$ on $U$ induces a matrix of order $2$. However, this gives $q+1=2$, which is clearly a contradiction. 

From~\cite[Proposition~4.1.6]{kl}, we have $(\mathrm{GO}_{n/2}(q^2))_U\cong \Omega_{n/2-1}^-(q^2).2$. Therefore, the elements of $(\mathrm{GO}_{n/2}(q^2))_U$ induce in their action on $U$ elements having order $1$ or $2$. Thus, we must have
$$\frac{q+1}{2}=2,$$
that is, $q=3$. Now that we have pinned down the exact value of $q$, we use the explicit description of $M$ in~\cite[Lemma~5.3.5]{burness}.  Indeed, since $q=3$, we have $q\equiv -1\pmod 4$ and hence~\cite[Lemma~5.3.5]{burness} gives $M\cong \mathrm{SO}_{n/2}(q^2)$. 
As $x\in M_U$ and $M_U\cong (\mathrm{SO}_1(q^2)\perp\mathrm{SO}_{n/2-1}^-(q^2)).2$, we deduce that the action of $x$ on $U$ induces a matrix of order $2$. However, this gives $q+1=2$, which is clearly a contradiction.
\end{proof} 
In order to explain the notation in the rest of this section, we recall that for $n$ a positive integer, $[n]$ denotes an arbitrary group of order $n.$
\begin{lemma}\label{max-y}
Let $M$ be a maximal subgroup of $\Omega^{+}_{n}(q)$ with  $n \ge 10$  containing an element $y$ of order
$$\frac{(q^2+1)(q^{\frac{n}{2}-2} +1)}{\gcd( q^2 +1,q^{\frac{n}{2}-2}+1)}=\frac{(q^2+1)(q^{\frac{n}{2}-2}+1)}{\gcd(2,q-1)}$$
 and action type $4\oplus
(n-4)$. Then, one of the following holds
\begin{enumerate}
\item\label{max-y1} $M\cong(\Omega^{-}_4(q) \perp \Omega^{-}_{n -4} (q)).2^{\gcd(2,q-1)}\in\mathcal{C}_1$,
\item\label{max-y2} $n/2$ is even and $M\cong\Omega^+_{n/2}( q^2).[4]\in\mathcal{C}_3$.
\end{enumerate}
\end{lemma}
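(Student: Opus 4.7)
The approach mirrors the proof of Lemma~\ref{msw+} and Lemma~\ref{bertrand-sp}: treat $y$ as a strong $ppd$-element and apply Theorem~\ref{main} to restrict $M$ to a short list of Aschbacher classes, then rule out all but~(1) and~(2) by inspecting the $y$-invariant subspace structure and comparing element orders. Concretely, since $n-4\ge 6$, Zsigmondy gives $P_{n-4}(q)\neq \varnothing$ (the pair $(n,q)=(10,2)$ will need a separate computer verification, and for this range an explicit check of the maximal subgroups of $\Omega_{10}^+(2)$ is trivial). For every $r\in P_{n-4}(q)$, Lemma~\ref{aritme} forces $r\mid q^{n/2-2}+1$, so $y$ is a strong $ppd(n,q;n-4)$-element. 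Combined with Lemma~\ref{no-c5}~(3) and~\cite[Table~3.5E]{kl}, Theorem~\ref{main} then confines $M$ to $\mathcal{C}_1\cup\mathcal{C}_2\cup\mathcal{C}_3\cup\mathcal{S}$, with $\mathcal{S}$ restricted to Example~2.6~(a) of~\cite{gpps}.

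For the $\mathcal{C}_1$ case I would first record that $V$ decomposes as $V=V_4\perp V_{n-4}$ with $V_4$ and $V_{n-4}$ the only proper $y$-invariant subspaces (these are irreducible since $y$ induces a Singer cycle on each, and are therefore non-degenerate), and moreover both have Witt defect~$1$ because the Singer cycle of order $q^{k/2}+1$ lives in $\mathrm{O}_k^-$ (cf.\ Section~\ref{Huppert} and Table~\ref{singer-order}). Hence no totally singular $y$-invariant subspace exists, so $M$ is of non-degenerate type, and matching dimensions and Witt defects forces $M\cong(\Omega_4^-(q)\perp\Omega_{n-4}^-(q)).2^{\gcd(2,q-1)}$, giving~\eqref{max-y1}. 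Class $\mathcal{C}_2$ is handled verbatim as in Lemma~\ref{bertrand-sp}: a prime $r\in P_{n-4}(q)$ satisfies $r\ge n-3$ by~\eqref{boundppd}, which is incompatible with both $\mathrm{O}_m(q)\mathrm{wr}\,S_\ell$ (any $r$ dividing $|M|$ is at most $\ell$ or divides $q^k\pm 1$ for $k\le m$) and $\mathrm{GL}_{n/2}(q).2$.

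The main work is in class~$\mathcal{C}_3$: by~\cite[Section~4.3]{kl} the candidates are $\Omega_{n/s}^{\pm}(q^s).s$ for $s$ a prime divisor of $n/2$, $\mathrm{GU}_{n/2}(q).2$ (with $n/2$ odd), and $\Omega_{n/2}(q^2).2$ (with $nq/2$ odd). The subgroup $\mathrm{GU}_{n/2}(q).2$ is excluded because every semisimple element has an $\mathbb{F}_{q^2}$-decomposition that forces action types over $\mathbb{F}_q$ of the form $2k\oplus\cdots$ with $k$ an odd integer (cf.~\cite{buturlakin1}), incompatible with the $4\oplus (n-4)$-shape of $y$ when $n/2-2$ is even. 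The $\Omega_{n/2}(q^2).2$ case is ruled out by the same argument used for $(\dagger)$ in the proof of Lemma~\ref{msw+}: the induced action on the $2$-dimensional $\mathbb{F}_{q^2}$-fixed subspace would force $q^2+1\le 4$. For $\Omega_{n/s}^\pm(q^s).s$ with odd prime $s$, any $r\in P_{n-4}(q)$ satisfies $(n-4)\mid 2sk$ by Lemma~\ref{primitivi}\eqref{primitivi3}, forcing $s=1$ or a size contradiction; for $s=2$ the extension-field decomposition of $y$ over $\mathbb{F}_{q^2}$ yields an $\mathbb{F}_{q^2}$-action of type $2\oplus(n/2-2)$ with the invariant pieces being minus-type Singer subspaces over $\mathbb{F}_{q^2}$, and then Proposition~\ref{spinor-ber-prop} (applied over $\mathbb{F}_{q^2}$) shows that the ambient group is $\Omega_{n/2}^+(q^2)$ (and $n/2$ must be even for this to embed), producing exactly case~\eqref{max-y2}; the precise index structure $.[4]$ then follows from~\cite[Proposition~4.3.14]{kl}.

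Class $\mathcal{S}$ forces $A_m\le M\le S_m\times\mathbf{Z}(\Omega_n^+(q))$ with $m\in\{n+1,n+2\}$, so $S_m$ contains an element whose order is $\ge q^{n/2-2}/\gcd(2,q-1)$. Using Landau's bound as in the symplectic and Witt-defect-one cases (cf.~\eqref{equation1} and~\eqref{equation2}), this forces $n$ and $q$ into a small finite list which can be checked explicitly against the maximal permutation orders in $S_{n+1}$ and $S_{n+2}$. The main obstacle I expect is the careful $\mathcal{C}_3$ bookkeeping for $s=2$, in particular verifying that the $y$-invariant $4$-dimensional $\mathbb{F}_q$-subspace is really the restriction of a $2$-dimensional non-degenerate $\mathbb{F}_{q^2}$-subspace of the correct Witt type, which pins down the ``$+$'' in $\Omega_{n/2}^+(q^2)$ and distinguishes~\eqref{max-y2} from a spurious minus-type alternative.
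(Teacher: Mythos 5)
Your skeleton is the paper's: the pair $(10,2)$ aside, $y$ is a strong $ppd(n,q;n-4)$-element, Theorem~\ref{main} (with Lemma~\ref{no-c5}) confines $M$ to $\mathcal{C}_1\cup\mathcal{C}_2\cup\mathcal{C}_3\cup\mathcal{S}$ with $\mathcal{S}$ as in Example~2.6~a), and the $\mathcal{C}_1$, the odd-$s$ and $s=2$ extension-field subcases of $\mathcal{C}_3$, and the $\mathcal{S}$ (Landau bound plus finite check) arguments you sketch match the paper. However, three of your elimination steps do not go through as written. First, in $\mathcal{C}_2$ the only surviving ppd-example (Example~2.3 of~\cite{gpps}) is $M\le \mathrm{O}_1(q)\,\mathrm{wr}\,S_n$, i.e.\ $m=1$, $\ell=n$, and there the inequality $r\ge n-3$ from~\eqref{boundppd} is perfectly compatible with $r\le \ell=n$: for instance $n=10$, $q=3$, $r=7\in P_6(3)$, and $2\,\mathrm{wr}\,S_{10}$ certainly contains elements of order $7$. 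So your single-prime bound cannot kill this class; the paper uses two primitive prime divisors, $r\in P_{n-4}(q)$ and $s\in P_4(q)$, both dividing the order of $y$, so that $S_n$ would need disjoint cycles of coprime prime lengths $r$ and $s$, forcing $n\ge r+s>(n-4)+4=n$.

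Second, in $\mathcal{C}_3$ your exclusion of the type $\mathrm{O}_{n/2}(q^2)$ subgroup ($nq/2$ odd) by the $(\dag)$ argument of Lemma~\ref{msw+} fails: here the $y$-invariant $\mathbb{F}_{q^2}$-subspace coming from the $4$-dimensional $\mathbb{F}_q$-piece is $2$-dimensional, not $1$-dimensional, and the stabilizer of a non-degenerate $2$-space contains $\mathrm{O}_2^-(q^2)$, which does contain elements inducing order $q^2+1$; no bound $q^2+1\le 4$ arises. The case must be excluded otherwise — the paper shows a prime in $P_{n-4}(q)$ would have to divide some $q^{4i}-1$ with $4i\le n-2$, and $n\equiv 2\pmod 4$ then forces $(n-4)\mid 2i$, hence $n\le 6$; alternatively one can note that $y$ would have to act $\mathbb{F}_{q^2}$-irreducibly on an odd-dimensional orthogonal space, which is impossible. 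Third, the $\mathcal{C}_3$ subgroup of type $\mathrm{GU}_{n/2}(q)$ exists in $\mathrm{O}_n^+(q)$ precisely when $n/2$ is \emph{even}, not odd as in your candidate list, so the exclusion you offer only ``when $n/2-2$ is even'' is inconsistent with the range in which you admit the subgroup, and the asserted shape of semisimple elements of $\mathrm{GU}_{n/2}(q)$ is not correct in general (invariant dual pairs of totally isotropic $\mathbb{F}_{q^2}$-subspaces give other constituent dimensions). The paper's exclusion is again arithmetic: since $4\mid n-4$, a prime in $P_{n-4}(q)$ divides no factor $q^i+1$ with $i$ odd, $i\le n/2-1$, nor any $q^i-1$ with $i\le n/2$, hence does not divide $|\mathrm{GU}_{n/2}(q).2|$.
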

Note that the existence of such an element $y$ in $\Omega^{+}_{n}(q)$ is guaranteed by Proposition \ref{spinor-ber-prop}.
\begin{proof}
When $(n,q)= (10,2),$ an inspection in~\cite{atlas} shows that $M=(\Omega^{-}_4(2) \perp \Omega^{-}_{6} (2)).2$. Assume then that  $(n,q)\ne (10,2)$.
Then $P_{n-4}(q)\neq \varnothing$ and $y$ is a strong $ppd(n,q;n-4)$-element. By Theorem~\ref{main},  $M$ belongs to one of
the Aschbacher classes $\mathcal C_i$,  for $i\in\{1,\,2,\,3,\, 5\}$ or to $\mathcal{S}$
and in such case it is described in Example 2.6 a) of~\cite{gpps}.

\smallskip

\noindent\textsc{The subgroup $M$ lies in  class $\mathcal{C}_1$. }We use~\cite[Propositions~$4.1.6$,~$4.1.7$ and~$4.1.20$]{kl} for the structure of $M$.

The $\mathbb{F}_q\langle y\rangle$-module $V$ decomposes as the sum of two irreducible submodules, say $U_1$ and $U_2$. Now, $\dim_{\mathbb{F}_q}U_1=4$, $\dim_{\mathbb{F}_q}U_2=n-4$ and the quadratic form preserved by $\Omega_n^+(q)$ restricted to both $U_1$ and $U_2$ is non-degenerate with Witt defect  $1$. Using this information and~\cite{kl}, it is readily seen that $M\cong(\Omega^{-}_4(q) \perp \Omega^{-}_{n -4} (q)).2^{\gcd(2,q-1)}$ and part~\eqref{max-y1} holds.

\smallskip

\noindent\textsc{The subgroup $M$ lies in class $\mathcal{C}_2$. }Here $M$ is described in Example 2.3 of~\cite{gpps}. Thus
$$M \le \mathrm{O}_1(q) \,\mathrm{wr}\, S_n.$$
Let $r\in P_{n-4}(q)$ and $s\in P_{4}(q)$ and observe that $n-4\neq 4$ implies $r\ne s$. Moreover, we have $\gcd(rs,|\mathrm{O}_1(q)|)=1$. As $rs$ divides $\order y$, we deduce that $S_n$ contains an element of order $rs$. Therefore, from~\eqref{boundppd} and from the fact that $r$ and $s$ are distinct primes, we deduce $n\ge r+s>(n-4)+4=n$, which is a contradiction.

\smallskip

\noindent\textsc{The subgroup $M$ lies in  class $\mathcal{C}_3$. }We use~\cite[Propositions~$4.3.14$,~$4.3.18$ and~$4.3.20$]{kl} for the structure of $M$. One of the following holds
\begin{itemize}
\item $n/2$ is even and $M$ is of type $\mathrm{GU}_{n/2}(q)$,
\item $M$ is of type $\mathrm{O}_{n/s}^+(q^s)$ for some prime divisor $s$ of $n$, $n/s$ even and $n/s\ge 4$,
\item $qn/2$ is odd and $M$ is of type $\mathrm{O}_{n/2}(q^2)$.
\end{itemize}

We show that the first possibility cannot arise. Indeed pick $r\in P_{n-4}(q)$ and note that $r\neq p$, where $p$ is the characteristic of $\mathbb{F}_q$. As $r$ divides $\order y$, $r$ divides also $|M|$. But $r$ cannot divide the factors of $|M|$ of type $q^i-1$ for $i $ even with $i\in \{2,\dots, n/2\}$ because, since $n\geq 10$, we have $n/2<n-4$. Moreover, $r$ cannot divide the factors of $|M|$ of type $q^i+1$ for $i $ odd with $i\in \{1,\dots, n/2-1\}$ because this implies $n-4\mid 2i$. But since $n \equiv 0\pmod 4$, we have that $4\mid n-4$ and thus $2\mid i$, against $i$ odd.

 Also the third possibility does not arise. Indeed, if $M$ is of type $\mathrm{O}_{n/2}( q^2)$ with $n/2$ odd, then
$$\pi(|M|)=\pi\left(p \cdot \prod_{i=1}^{\frac{n-2}{4}}(q^{4i} - 1)\right).
$$
 Let $r\in P_{n-4}(q)$. As before $r\neq p$ and $r$ divides $|M|$. Hence $n-4$ divides $4i$, for some $i\in \{1,\ldots,(n-2)/4\}$. Now $n \equiv 2\pmod 4$, so that also $n-4 \equiv 2\pmod 4$. It follows that $n-4\mid 2i$, which implies $n-4\leq (n-2)/2$ and hence $n\le 6$. However, this is a contradiction.

 Finally, suppose $M$ is of type $\mathrm{O}_{n/s}^+(q^s)$ for some prime divisor $s$ of $n$, $n/s$ even and $n/s\ge 4$. Then 
 $$\pi(|M|)=\pi\left(s\cdot p\cdot (q^{\frac{n}{2}}-1)\cdot \prod_{i=1}^{\frac{n}{2s}-1}(q^{2is} - 1)\right).
$$

Pick $r\in P_{n-4}(q)$. By  $n\geq 10$ and by \eqref{boundppd}, $r\nmid s\, p(q^{\frac{n}{2}}-1)$. If $r$ divides $q^{2is}-1$ for some $i\in \{1,\ldots,n/(2s)-1\}$ then $n-4\le 2is\le n-2s$ and hence $s=2$. In particular, $n/2$ is even and $M=\Omega^+_{n/2}( q^2).[4]$, that is, part~\eqref{max-y2} is satisfied.

\smallskip

\noindent\textsc{The subgroup $M$ lies in  class $\mathcal{C}_5$. } This case is ruled out by Lemma \ref{no-c5}\,(3).
\smallskip

\noindent\textsc{The subgroup $M$ lies in  class $\mathcal{S}$. } Here $M$ is described in  Example 2.6 a) of~\cite{gpps} and thus
$$A_m\le M \le S_m \times \Z {\Omega_n^+(q)},$$
with $m\in\{ n +1,n+2\}$.
Thus the symmetric group $S_m$ contains an element having order
 $$\frac{\order y}{\gcd(2,q-1)}=\frac{(q^2+1)(q^{\frac{n}{2}-2}+1)}{\gcd(2,q-1)^2}.$$
 Arguing as in the symplectic case, we deduce
  \begin{align*}
 \log \left(\frac{(q^2+1)(q^{\frac{n}{2}-2}+1)}{\gcd(2,q-1)^2}\right)\le\sqrt{m\log m}\left(1+\frac{\log(\log (m))-0.975}{2\log (m)}\right).
 \end{align*}
For $n\ge 10$, this inequality holds true only when $q=2$ and $n\le 32$, or $q=3$ and $n\le 12$.
 For these cases, we have computed explicitly $\order y$ and the order of the elements of
 $S_{n+2}$ and we have verified that in no case $\frac{\order y}{\gcd(2,q-1)}$ is the order of a permutation in
 $S_{n+2}$.
\end{proof}

\begin{proposition}\label{ortogonali +}
For every $n\ge 10$, the weak normal covering number of $\mathrm{P}\Omega_n^+(q)$ is at least $3$.
\end{proposition}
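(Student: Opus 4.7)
The plan is to argue by contradiction, assuming $\mu=\{H,K\}$ is a weak normal $2$-covering of $\mathrm{P}\Omega_n^+(q)$ with maximal components, and to work at the level of $\tilde{\mu}=\{\tilde H,\tilde K\}$ in $\Omega_n^+(q)$, as justified in Section~\ref{cove-classic-simple}; the hypothesis $n\ge 10$ excludes the triality subtleties peculiar to $n=8$. The strategy closely parallels Proposition~\ref{ortogonali-}: use Lemmas~\ref{msw+} and~\ref{max-y} to confine each component to a short list of possibilities, then exhibit a further element of $\Omega_n^+(q)$ lying in no $\mathrm{Aut}(\Omega_n^+(q))$-conjugate of either component. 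Small Zsigmondy-exceptional values of $(n,q)$ will be disposed of by direct computer verification, and the generic argument is then carried out uniformly.

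First, Proposition~\ref{spinor-ber-prop} applied with $m=2$ produces an element $x\in\Omega_n^+(q)$ of action type $2\oplus(n-2)$ and of the order required by Lemma~\ref{msw+}; without loss of generality $\tilde H$ contains an $\mathrm{Aut}$-conjugate of $x$, so Lemma~\ref{msw+} forces $\tilde H$ to be of type $(\Omega_2^-(q)\perp\Omega_{n-2}^-(q)).2^{\gcd(2,q-1)}$ (type~A) or of type $\mathrm{GU}_{n/2}(q).2$ with $n/2$ even (type~B). Proposition~\ref{spinor-ber-prop} applied with $m=4$ likewise produces an element $y$ of action type $4\oplus(n-4)$ eligible for Lemma~\ref{max-y}. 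Exploiting the irreducibility of the $4$-dimensional $\mathbb{F}_q\langle y\rangle$-block, together with the fact (Section~\ref{Huppert}) that elements of $\mathrm{GU}_{n/2}(q)$ restrict to $\mathbb{F}_q$ with invariant blocks coming in prescribed paired patterns, one checks that neither type~A nor type~B can contain an $\mathrm{Aut}$-conjugate of $y$; hence $\tilde K$ does, and by Lemma~\ref{max-y}, $\tilde K$ is of type $(\Omega_4^-(q)\perp\Omega_{n-4}^-(q)).2^{\gcd(2,q-1)}$ (type~C) or $\Omega_{n/2}^+(q^2).[4]$ with $n/2$ even (type~D).

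To reach a contradiction, construct a third element $z\in\Omega_n^+(q)$ whose action type is incompatible with all four combinations $(\tilde H,\tilde K)$. A natural candidate is an element with action type $(n/2-1)\oplus(n/2-1)\oplus 2$ built from two totally singular $(n/2-1)$-dimensional blocks (their sum being an $(n-2)$-dimensional plus-type subspace) and a $2$-dimensional plus-type block, giving order a suitable multiple of $q^{n/2-1}-1$. The only proper non-degenerate invariant subspaces of $z$ have dimension $2$ or $n-2$ and are both of plus type, so $z$ has no $\mathrm{Aut}$-conjugate in a type-A or type-C component (which stabilize a non-degenerate subspace of minus type). For the $\mathcal{C}_3$ types~B and~D, a primitive prime divisor argument modeled on Lemma~\ref{o-c3}, exploiting $\gcd(n/2-1,n/2)=1$ together with a prime $r\in P_{n/2-1}(q)$ dividing $\order z$, is used to show $\order z\nmid|\tilde K|$ in both subcases.

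The main obstacle will be precisely this final ppd analysis of the $\mathcal{C}_3$ subgroups $\mathrm{GU}_{n/2}(q).2$ and $\Omega_{n/2}^+(q^2).[4]$, which requires a lemma in the style of Lemma~\ref{o-c3} tailored to the $\Omega_n^+$-setting, and careful bookkeeping of the quadratic-form types (plus versus minus) of every block constructed via Proposition~\ref{spinor-ber-prop} to ensure each element lies in $\Omega_n^+(q)$ rather than only in $\mathrm{SO}_n^+(q)$. A cleaner bifurcation according to whether $n/2$ is odd (when types~B and~D disappear and the argument essentially reduces to the pattern of Proposition~\ref{ortogonali-}) or $n/2$ is even (when all four types genuinely occur) will likely streamline the write-up; and a short list of Zsigmondy exceptions will need to be cleared by machine computation before the uniform argument applies.
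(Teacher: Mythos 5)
Your proposal is correct and follows essentially the same route as the paper's proof: both pin down the two components via Lemmas~\ref{msw+} and~\ref{max-y} applied to the elements of action type $2\oplus(n-2)$ and $4\oplus(n-4)$ coming from Proposition~\ref{spinor-ber-prop}, then reach the contradiction with an element of type $(n/2-1)\oplus(n/2-1)\oplus 2$ whose order involves $q^{\frac{n}{2}-1}-1$, using a primitive prime divisor of $q^{\frac{n}{2}-1}-1$ against the $\mathcal{C}_3$-type possibilities and the Witt-defect-$0$ type of its unique invariant non-degenerate $2$-space (together with dimension considerations) against the $\mathcal{C}_1$-type ones, with $(n,q)=(14,2)$ cleared by computer. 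The only divergence is that the paper, not relying on the exclusion of the case $\Omega_{n/2}(q^2).2$ carried out inside the proof of Lemma~\ref{msw+}, additionally eliminates the residual pairing $\tilde H$ of type $\Omega_{n/2}(q^2).2$ with $\tilde K\cong(\Omega_4^-(q)\perp\Omega_{n-4}^-(q)).2^2$ by means of a unipotent element with Jordan blocks of sizes $n-1$ and $1$ --- a step you do not need if Lemma~\ref{msw+} is taken at its stated face value.
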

\begin{proof}
As $n\ne 8$, we may argue with $\Omega_n^+(q)$. When $(n,q)=(14,2)$, the veracity of the statement is confirmed with a computer computation; therefore, for the rest of our argument, we suppose $(n,q)\ne (14,2)$.

Let $\tilde H$ be a maximal component of a weak normal covering of $\Omega_n^+(q)$ containing an $\mathrm{Aut}(\Omega_n^+(q))$-conjugate of the  element $x$ as in Lemma~\ref{msw+}. Similarly, let $\tilde K$ be a maximal component of a weak normal covering of $\Omega_n^+(q)$ containing an $\mathrm{Aut}(\Omega_n^+(q))$-conjugate of the element $y$ as in Lemma~\ref{max-y}. It is readily seen in those lemmas that $\tilde H$ and $ \tilde K$ are not  $\mathrm{Aut}(\Omega_n^+(q))$-conjugate and hence $\gamma_w(\Omega_n^+(q))\ge 2$. Assume now, by contradiction, that $\gamma_w(\Omega_n^+(q))=2$. Thus a weak normal $2$-covering for $\Omega_n^+(q)$ is given by
$\tilde \mu=\{\tilde H,\tilde K\},$ where $\tilde H$ is one of the maximal subgroups appearing in Lemma~\ref{msw+} and $\tilde K$ is one of the maximal subgroups appearing in Lemma \ref{max-y}.

Let $g\in \Omega_n^+(q)$ with
$\order g=\frac{q^{\frac{n}{2}-1}-1}{\gcd(2,q-1)}$ and having type $(n/2-1)\oplus (n/2-1)\oplus 1\oplus 1$. This element $g$ arises from the embedding of $(\mathrm{O}_{n-2}^+(q)\perp\mathrm{O}_2^+(q))\cap \Omega_n^+(q)$ in $\Omega_n^+(q)$ and hence $V=U_1\oplus U_2\oplus U_3\oplus U_4$, where $U_1,U_2,U_3,U_4$ are irreducible $\mathbb{F}_q\langle g\rangle$-modules, $\dim_{\mathbb{F}_q}U_1=\dim_{\mathbb{F}_q}U_2=n/2-1$, $\dim_{\mathbb{F}_q}U_3=\dim_{\mathbb{F}_q}U_4=1$, $U_1,U_2,U_3,U_4$ are totally isotropic, $U_1\oplus U_2$ is non-degenerate and the orthogonal form preserved by $\Omega_n^+(q)$ restricted to $U_1\oplus U_2$ is of Witt defect $0$. 

Let $r$ be a primitive prime divisor of $q^{n/2-1}-1$: the existence of $r$ is guaranteed by Zsigmondy's theorem and by the fact that $(n,q)\ne (14,2)$. Observe that $r$ does not divide the order of $(\Omega_4^-(q)\perp\Omega_{n-4}^-(q)).2^{\gcd(2,q-1)}$ and of $\Omega_{n/2}^+(q^2).[4]$ when $n/2$ is even. Therefore, $g$ is not $\mathrm{Aut}(\Omega_{n}^+(q))$-conjugate to an element of $\tilde K.$

When $n/2$ is even, $r$ is also relatively prime to the order of $\mathrm{GU}_{n/2}(q).2$ and hence $\tilde H$ is not as in part~\eqref{msw+3} of Lemma~\ref{msw+}.

The only $2$-dimensional subspace of $\mathbb{F}_q^n$ left invariant by $g$ is $U_1\oplus U_2$. Since the quadratic form induced on $U_1\oplus U_2$ has Witt defect $0$, we deduce that $\tilde H$ is not as in part~\eqref{msw+1} of Lemma~\ref{msw+}. Therefore, $nq/2$ is odd and $\tilde H$ is of type $\Omega_{n/2}(q^2).2\in\mathcal{C}_3$. From Lemma~\ref{max-y}, we deduce that $\tilde K\cong(\Omega_4^-(q)\perp\Omega_{n-4}^-(q)).2^2$.

By~\cite{wall}, $\Omega_n^+(q)$ contains a unipotent element $u$  having Jordan blocks of size $n-1$ and $1$. Suppose that $u$ has an $\mathrm{Aut}(\Omega_n^+(q))$-conjugate in $\tilde H$. Since $q$ is odd, $\Omega_{n/2}(q^2)$ contains a unipotent element having Jordan blocks of size $n-1$ and $1$. Let the Jordan blocks of  $u$, viewed as a matrix of $\mathrm{GL}_{n/2}(\mathbb{F}_{q^2})$ have sizes $\ell_1,\ell_2,\ldots,\ell_s$. Then the Jordan blocks of  $u$, viewed as a matrix of $\mathrm{GL}_{n}(\mathbb{F}_{q})$ have sizes $\ell_1,\ell_1,\ell_2,\ell_2,\ldots,\ell_s,\ell_s$. However, this contradicts the fact that the Jordan blocks of  $u$, viewed as a matrix of $\mathrm{GL}_{n}(\mathbb{F}_{q})$ have sizes $n-1$ and $1$. Therefore,  $u$ has an $\mathrm{Aut}(\Omega_n^+(q))$-conjugate in $\tilde K\cong (\Omega_4^-(q)\perp\Omega_{n-4}^-(q)).2^2$. However, this is incompatible with the Jordan blocks of $u$.
\end{proof}

It remains to deal with the eight dimensional orthogonal group $\mathrm{P}\Omega_8^+(q)$.

\subsection{Eight dimensional orthogonal groups with Witt defect $0$} We recall that we have to take extra care in this case. In particular, the action of the automorphism group of $\mathrm{P}\Omega_8^+(q)$ on $\mathrm{P}\Omega_8^+(q)$ does not preserve the Jordan form of its elements. Thus we will be mainly arguing with $\mathrm{P}\Omega_8^+(q)$ and with element orders, rather than with $\Omega_8^+(q)$ and with action types. Nevertheless, it is is more convenient to give the structure of maximal subgroups of $\Omega_8^+(q)$, rather than of their projective image in $\mathrm{P}\Omega_8^+(q)$.

From the discussion at the beginning of the section, we have that, there exist semisimple elements $a,b\in\mathrm{SO}_8^+(q)$  such that
\begin{itemize}
\item $s:=ab\in\Omega_8^+(q)$,
\item $\order s=\order a=\order b=q^2+1$,
\item $s$ has action type $4\oplus 4$,
\item $\dim_{\mathbb{F}_q}\cent V a=\dim_{\mathbb{F}_q}\cent V b=4$, $V=\cent V a\perp \cent V b$ and
\item the quadratic form preserved by $\Omega_8^+(q)$ restricted to $\cent Va $ and $\cent V b$ is non-degenerate and of Witt defect $1$.
\end{itemize}

The projective image $\bar s$ of $s$ in $\mathrm{P}\Omega_8^+(q)$ has order $(q^2+1)/\gcd(2,q-1)$, because $\langle s\rangle$ contains the matrix $-I$, generator of $\Z{\Omega_8^+(q)}$.

We claim that, when $q>3$, we may choose $a$ and $b$ such that $a$ and $b$ are not in the same $\Omega_8^+(q)$-conjugacy class. The proof is very similar to the proof of Lemma~\ref{technical}, here we only give a sketch of the proof. Indeed, $|\nor {\Omega_8^+(q)}{\langle a\rangle}:\cent {\Omega_8^+(q)}{\langle a\rangle}|=4$ and the action of $\nor {\Omega_8^+(q)}{\langle a\rangle}$ on $\langle a\rangle$ is the cyclic action of the Galois group of $\mathbb{F}_{q^4}/\mathbb{F}_q$. In particular, as long as $4<\phi(\order a)$, we may always find $b$ not in the same $\Omega_8^+(q)$-conjugacy class of $a$. It follows from an easy computation that, if $\phi(x)\le 4$, then $x\le 12$.  Note now that  the only numbers $q^2+1$ that are less than or equal to $12$ arise when $q\in \{2,3\}$.

Suppose now that $q>3$  and let $s,a,b$ be as in the previous paragraphs and with $a$ and $b$ not in the same $\Omega_8^+(q)$-conjugacy class. A computation yields that
${\bf C}_{\mathrm{P}\Omega_8^+(q)}(\bar{s})$ is contained in the projective image of $(\mathrm{O}_4^-(q)\perp \mathrm{O}_4^-(q))\cap \Omega_8^+(q)$  in $\mathrm{P}\Omega_8^+(q)$. Hence
$${\bf C}_{\mathrm{P}\Omega_8^+(q)}(\bar{s})=\frac{(\langle a\rangle\times\langle b\rangle)\cap \Omega_8^+(q)}{{\bf Z}(\Omega_8^+(q))}$$
and
$$|\cent {\mathrm{P}\Omega_8^+(q)}{\bar{s}}|=\frac{(q^2+1)^2}{\gcd(2,q-1)}=
\begin{cases}
(q^2+1)^2 &\textrm{when }q\textrm{ is even,}\\
\frac{(q^2+1)^2}{2}&\textrm{when }q \textrm{ is odd.}
\end{cases}
$$

Using~\cite[Table~8.50]{bhr}, when $q>3$, we have selected in Table~\ref{o7q^2+1} the maximal subgroups $\tilde H$ of $\Omega_8^+(q)$ such that $H\le \mathrm{P}\Omega_8^+(q)$ contains a semisimple element $\bar s$ having order $(q^2+1)/\gcd(2,q-1)$ (regardless of the action of its lift $s$ to $\Omega_8^+(q)$ on $V$).

\begin{table}[ht]
\begin{tabular}{c|c|c||c|c|c}
\toprule[1.5pt]
\multicolumn{3}{c||}{$q$ odd, $q>3$}&\multicolumn{3}{|c}{ $q$ even, $q>2$} \\
\midrule[1.5pt]
Structure& class&nr. $\Omega_8^+(q)$-classes&Structure&class&nr. $\Omega_8^+(q)$-classes\\
\midrule[1.5pt]
$E_q^6:\left(\frac{q-1}{2}\times\Omega_6^+(q)\right).2$&$\mathcal{C}_1$&$1$&
$E_q^6:\left((q-1)\times\Omega_6^+(q)\right)$&$\mathcal{C}_1$&$1$\\
$E_q^{6}:\frac{1}{2}\mathrm{GL}_4(q)$&$\mathcal{C}_1$&$2$&
$E_q^{6}:\mathrm{GL}_4(q)$&$\mathcal{C}_1$&$2$\\
$2\times \Omega_7(q)$&$\mathcal{C}_1$&$2$& $\mathrm{Sp}_6(q)$&$\mathcal{C}_1$&$1$\\
$2^\cdot\Omega_7(q)$&$\mathcal{S}$&$4$& $\mathrm{Sp}_6(q)$&$\mathcal{S}$&$2$\\

$(\Omega_2^+(q)\times \Omega_6^+(q)).[4]$&$\mathcal{C}_1$&$1$&
$(\Omega_2^+(q)\times \Omega_6^+(q)).2$&$\mathcal{C}_1$&$1$\\

$\mathrm{SL}_4(q).\frac{q-1}{2}.2$&$\mathcal{C}_2$&$2$&
$\mathrm{SL}_4(q).(q-1).2$&$\mathcal{C}_2$&$2$\\

$(\Omega_2^-(q)\times \Omega_6^-(q)).[4]$&$\mathcal{C}_1$&$1$&
$(\Omega_2^-(q)\times \Omega_6^-(q)).2$&$\mathcal{C}_1$&$1$\\

$\mathrm{SU}_4(q).\frac{q+1}{2}.2$&$\mathcal{C}_3$&$2$&
$\mathrm{SU}_4(q).(q+1).2$&$\mathcal{C}_3$&$2$\\

$\Omega_4^-(q)^2.[4].S_2$&$\mathcal{C}_2$&1&
$\Omega_4^-(q)^2.2.S_2$&$\mathcal{C}_2$&1\\

$\Omega_4^+(q^2).[4]$&$\mathcal{C}_3$&2&
$\Omega_4^+(q^2).[4]$&$\mathcal{C}_3$&2\\

$2\times \Omega_8^-(\sqrt{q})$&$\mathcal{C}_5$&2&
$\Omega_8^-(\sqrt{q})$&$\mathcal{C}_5$&1\\

$2^\cdot \Omega_8^-(\sqrt{q})$&$\mathcal{S}$&4&
$\Omega_8^-(\sqrt{q})$&$\mathcal{S}$&2\\

$(\Omega_3(q)\times  \Omega_5(q)).[4]$&$\mathcal{C}_1$&2&&&\\
$(\mathrm{Sp}_2(q)\circ  \mathrm{Sp}_4(q)).2$&$\mathcal{C}_4$&4&&&\\
$q=5$, $2^\cdot \mathrm{Suz}(8)$&$\mathcal{S}$&8&&&\\

\bottomrule[1,5pt]
\end{tabular}
\caption{Maximal subgroups $\tilde H$ of $\Omega_8^+(q)$ such that $H$  contains a semisimple element of order $(q^2+1)/\gcd(2,q-1)$} \label{o7q^2+1}
\end{table}

Using Table~\ref{o7q^2+1}, it can be shown that the only maximal subgroups $\tilde H$ of $\Omega_8^+(q)$ such that $H\le\mathrm{P}\Omega_8^+(q)$ contains a semisimple element $\bar s$ with $\order {\bar s}=(q^2+1)/\gcd(2,q-1)$ and $|\cent {\mathrm{P}\Omega_8^+(q)}{\bar s}|=(q^2+1)^2/\gcd(2,q-1)$ are given in Table~\ref{o7q^2+11}.

\begin{table}[ht]
\begin{tabular}{c|c|c||c|c|c}
\toprule[1.5pt]
\multicolumn{3}{c||}{$q$ odd, $q>3$}&\multicolumn{3}{|c}{ $q$ even, $q>2$}\\
\midrule[1.5pt]
Structure& class&nr. $\Omega_8^+(q)$-classes&Structure&class&nr. $\Omega_8^+(q)$-classes\\
\midrule[1.5pt]
$\Omega_4^-(q)^2.[4].S_2$&$\mathcal{C}_2$&1&
$\Omega_4^-(q)^2.2.S_2$&$\mathcal{C}_2$&1\\
$\Omega_4^+(q^2).[4]$&$\mathcal{C}_3$&2&
$\Omega_4^+(q^2).[4]$&$\mathcal{C}_3$&2\\
$2\times \Omega_8^-(\sqrt{q})$&$\mathcal{C}_5$&2&
$\Omega_8^-(\sqrt{q})$&$\mathcal{C}_5$&1\\
$2^\cdot \Omega_8^-(\sqrt{q})$&$\mathcal{S}$&4&
$\Omega_8^-(\sqrt{q})$&$\mathcal{S}$&2\\
$q=5$, $2^\cdot \mathrm{Suz}(8)$&$\mathcal{S}$&8&&&\\

\bottomrule[1,5pt]
\end{tabular}
\caption{Maximal subgroups $\tilde H$ of $\Omega_8^+(q)$ such that $H$ contains a semisimple element of order $(q^2+1)/\gcd(2,q-1)$ and having small centralizer} \label{o7q^2+11}
\end{table}

We are now ready to prove our final lemma.

\begin{lemma}\label{dimension8orthogonal+}
The weak normal covering number of $\mathrm{P}\Omega_8^+(q)$ is at least $2$. Moreover, if $H$ and $K$ are maximal subgroups  of a weak normal $2$-covering of $\mathrm{P}\Omega_8^+(q)$, then $q\in \{2,3\}$ and, up to $\mathrm{Aut}(\mathrm{P}\Omega_8^+(q))$-conjugacy, $H$ and $K$ are one of the examples in Table~$\ref{000====}$.
\end{lemma}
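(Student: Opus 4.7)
When $q\in\{2,3\}$, the proof proceeds by a direct and exhaustive computation with the computer algebra systems \textrm{GAP} and \texttt{magma}: one enumerates all $\mathrm{Aut}(\mathrm{P}\Omega_8^+(q))$-conjugacy classes of maximal subgroups, and for each unordered pair $\{H,K\}$ of representatives one checks whether the union of the $\mathrm{Aut}(\mathrm{P}\Omega_8^+(q))$-conjugates of $H$ and of $K$ covers all conjugacy classes of $\mathrm{P}\Omega_8^+(q)$. The pairs reported in Table~\ref{000====} are exactly those that survive, and for each of them the fifth column is computed as in Section~\ref{sec:newwen} by comparing the number of $G$-classes inside each $\mathrm{Aut}(G)$-class of the components with the number of resulting normal $2$-coverings up to $G$-conjugacy. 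The lower bound $\gamma_w(\mathrm{P}\Omega_8^+(q))\ge 2$ for $q\in\{2,3\}$ follows at the same time, since no single maximal subgroup covers all $\mathrm{Aut}(G)$-classes.

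For $q>3$, assume by contradiction that $\{H,K\}$ is a weak normal $2$-covering of $\mathrm{P}\Omega_8^+(q)$ with maximal components. First I would apply the analysis preceding the lemma: the element $\bar{s}$ of order $(q^2+1)/\gcd(2,q-1)$ has centralizer of order $(q^2+1)^2/\gcd(2,q-1)$, and any maximal subgroup of $\mathrm{P}\Omega_8^+(q)$ containing (an $\mathrm{Aut}(G)$-conjugate of) such an element is the projective image of one of the subgroups listed in Table~\ref{o7q^2+11}. Replacing $H$ by an $\mathrm{Aut}(G)$-conjugate if necessary, I may therefore assume that $H$ is of one of the following types: $\Omega_4^-(q)^2.[4].S_2\in\mathcal{C}_2$, $\Omega_4^+(q^2).[4]\in\mathcal{C}_3$, $\Omega_8^-(\sqrt{q})\in\mathcal{C}_5\cup\mathcal{S}$ (when $q$ is a square), or $2^{\cdot}\mathrm{Suz}(8)\in\mathcal{S}$ when $q=5$.

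Next I would exhibit a second element $\bar{y}$ of $\mathrm{P}\Omega_8^+(q)$ tailored to pin down $K$. Using Proposition~\ref{spinor-ber-prop} with $m=2$, one produces $\bar{y}\in\mathrm{P}\Omega_8^+(q)$ of order $(q+1)(q^3+1)/(\gcd(q+1,q^3+1)\gcd(2,q-1))$ corresponding to action type $2\oplus 6$ in one of the three natural $8$-dimensional representations of $\mathrm{Spin}_8(q)$. Element-order divisibility and  Zsigmondy arguments (using a primitive prime divisor of $q^6-1$) show that $\bar y$ has no $\mathrm{Aut}(G)$-conjugate inside any of the candidate subgroups for $H$ listed above, forcing $\bar y\in K^\phi$ for some $\phi\in\mathrm{Aut}(G)$. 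Now the list of maximal subgroups of $\Omega_8^+(q)$ whose projective image meets the $\mathrm{Aut}(G)$-class of $\bar y$ is short, and is read off from~\cite[Table~8.50]{bhr}, analogously to how Table~\ref{o7q^2+11} was assembled; this restricts $K$ to a small, explicit list.

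Finally, for each remaining pair $(H,K)$ I would test an element whose $\mathrm{Aut}(G)$-conjugacy class cannot meet either factor. Natural candidates are a regular unipotent element $u$ of $\mathrm{P}\Omega_8^+(q)$ (whose image in every $8$-dimensional representation of $\mathrm{Spin}_8(q)$ has a single Jordan block of size $8$ or, in characteristic $2$, an action of Witt-defect type incompatible with every candidate subgroup), together with a semisimple element of order $(q^4-1)/\gcd(2,q-1)$ of action type $4\oplus 4$ whose characteristic polynomial rules out the remaining $\mathcal{C}_2$, $\mathcal{C}_3$ and $\mathcal{C}_5$ candidates. In each pair of candidate subgroups a contradiction is obtained, completing the exclusion of $q>3$.

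The main obstacle is the triality automorphism: because it does not act linearly on the natural module $V$, one element of $\mathrm{P}\Omega_8^+(q)$ can have three a priori different $\Omega_8^+(q)$-action types in its $\mathrm{Aut}(G)$-class, and the subgroup structure of $\Omega_8^+(q)$ alone is not preserved by $\mathrm{Aut}(G)$. Consequently, in the bookkeeping underlying Tables~\ref{o7q^2+1} and~\ref{o7q^2+11} — and in the element-order arguments above — one must work throughout with invariants that are stable under all of $\mathrm{Aut}(\mathrm{P}\Omega_8^+(q))$ (element orders, centralizer orders, conjugacy-class fusion), rather than with the action type on $V$ as was possible for the classical groups treated in Sections~\ref{sec:linear}--\ref{sec:orthogonal-}.
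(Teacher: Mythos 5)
Your opening matches the paper's strategy: small $q$ by computer, and for larger $q$ the component $H$ is pinned down exactly as you propose, via an element of order $(q^2+1)/\gcd(2,q-1)$ whose centralizer has order $(q^2+1)^2/\gcd(2,q-1)$, which restricts $\tilde H$ to the groups in Table~\ref{o7q^2+11}. Two remarks on the next step. Minor: your element $\bar y$ has order just $(q^3+1)/\gcd(2,q-1)$ (since $\gcd(q+1,q^3+1)=q+1$), and using it alone leaves a longer candidate list for $K$ (e.g.\ $(\Omega_2^-(q)\times\Omega_6^-(q)).2^c$ and the $\mathrm{SU}_4(q)$-type subgroups survive); the paper uses elements of \emph{both} orders $(q^3+1)/d$ and $(q^3-1)/d$, neither of which can meet $H$, so that $|\tilde K|$ must be divisible by $(q^6-1)/\gcd(2,q-1)$ and $K$ is forced to be of type $\Omega_7(q)$ ($q$ odd) or $\mathrm{Sp}_6(q)$ ($q$ even) only (Table~\ref{o7q^2+111}). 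Also, the paper treats $q=5$ by computer, whereas your plan keeps $2^{\cdot}\mathrm{Suz}(8)$ as a live candidate and never disposes of $q=5$.

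The genuine gap is your final exclusion step. For the surviving pairs, with $K$ of type $\Omega_7(q)$/$\mathrm{Sp}_6(q)$, neither of your test elements does the job: for $q$ odd a regular unipotent element of $\mathrm{P}\Omega_8^+(q)$ has Jordan type $(7,1)$ on $V$ and lies in the point-stabilizer copy of $\Omega_7(q)$, so it is covered by $K$; and although $\Omega_7(q)$/$\mathrm{Sp}_6(q)$ has no elements of order $(q^4-1)/\gcd(2,q-1)$, the candidates for $H$ such as $\Omega_4^+(q^2).[4]$ and $\Omega_4^-(q)^2.[4].S_2$ \emph{do} contain elements of that order, so excluding them would require separating $\mathrm{Aut}(G)$-classes of elements of equal order (and essentially equal centralizer order) — exactly the kind of action-type/characteristic-polynomial invariant that, as you yourself concede in your last paragraph, triality does not preserve. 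In fact no single-element argument suffices here, which is why the paper calls the proof "necessarily technical" (recall $\gamma_w(\mathrm{P}\Omega_8^+(q))=3$ for $q\ge 4$, by Theorem~\ref{covering3}): the decisive step is a counting argument. The elements of order $q^3+1$ possessing a $6$-dimensional irreducible constituent split into two families, of type $6\oplus 1\oplus 1$ and of type $6\oplus 2$, with $o_2=o_1q$ (resp.\ $o_2=o_1(q-1)/2$ for $q$ odd); since triality fuses the three $G$-classes of subgroups isomorphic to $K$, each class covers only $o_1$ such elements, so the union of all $\mathrm{Aut}(G)$-conjugates of $K$ misses some of the second family unless $q\le 2$ (resp.\ $q\le 5$), and the residual odd cases $q\in\{3,5\}$ are then settled by computer. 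This counting argument, which is the core of the exclusion of $q>3$, is absent from your plan.
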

\begin{proof}
When $q\in \{2,3,5\}$, we have verified the veracity of this lemma with a computer. Therefore, for the rest of the proof we suppose $q\notin \{2,3,5\}$.

Let $H$ be a maximal component of a weak normal $2$-covering of $\mathrm{P}\Omega_8^+(q)$ containing an element having order $(q^2+1)/\gcd(2,q-1)$ and having centralizer of order $(q^2+1)^2/\gcd(2,q-1)$. Then $\tilde H$ is described in Table~\ref{o7q^2+11}. In particular, the order of $H$ is given in Table~\ref{o7o7o7}.

\begin{table}[!h]
\begin{tabular}{c|c||c|c}
\toprule[1.5pt]
\multicolumn{2}{c||}{$q$ odd, $q>3$}&\multicolumn{2}{|c}{ $q$ even, $q>2$} \\
\midrule[1.5pt]
Order $H$& comments& Order $H$&comments\\
\midrule[1.5pt]
$2q^4(q^4-1)^2$&&
$4q^4(q^4-1)^2$&\\
$q^6(q-1)(q^3-1)(q^4+1)$&$q$ a square&
$q^6(q-1)(q^3-1)(q^4+1)$&$q$ a square\\
$29120$&$q=5$&&\\
\bottomrule[1,5pt]
\end{tabular}
\caption{Possible orders of $H$} \label{o7o7o7}
\end{table}

From the embedding of $\mathrm{O}_6^-(q)\perp\mathrm{O}_2^-(q)$ in $\mathrm{O}_8^+(q)$, using Proposition \ref{spinor-ber-prop}, we deduce that $\Omega_8^+(q)$ contains a semisimple element $s_-$ having order $q^3+1$ and  $\mathrm{P}\Omega_8^+(q)$ contains an element $\bar{s}_-$ having order $(q^3+1)/\gcd(2,q-1)$. Similarly, from the embedding of $\mathrm{O}_6^+(q)\perp\mathrm{O}_2^+(q)$ in $\mathrm{O}_8^+(q)$, 
using Lemma \ref{lemma:new2022+}, 
we deduce that $\Omega_8^+(q)$ contains a semisimple  element $s_+$ having order $q^3-1$ and $\mathrm{P}\Omega_8^+(q)$ contains an element $\bar{s}_+$ having order $(q^3-1)/\gcd(2,q-1)$. It is readily seen from Table~\ref{o7o7o7} that $|H|$ is not divisible by a primitive prime divisor of $q^6-1$ and hence $H$ does not contain an $\mathrm{Aut}(\mathrm{P}\Omega_8^+(q))$-conjugate of $\bar{s}_-$. Observe now that, if $H$ contains an $\mathrm{Aut}(\mathrm{P}\Omega_8^+(q))$-conjugate of $\bar{s}_+$, then $(q^3-1)/\gcd(2,q-1)$ divides $|H|$ and hence $\tilde{H}$ is isomorphic to either $2^.\Omega_8^-(\sqrt{q})$  or $2\times\Omega_8^-(\sqrt{q})$ when $q$ is odd, or to $\Omega_8^-(\sqrt{q})$ when $q$ is even. However, these groups do not contain elements having order as large as $(q^3-1)/\gcd(2,q-1)=(\sqrt{q}^6-1)/\gcd(2,\sqrt{q}-1)$. Therefore, $H$ does not contain  an $\mathrm{Aut}(\mathrm{P}\Omega_8^+(q))$-conjugate of $\bar{s}_+$.

The previous paragraph shows that $\gamma_w(\mathrm{P}\Omega_8^+(q))\ge 2$. We now argue by contradiction and we suppose that $\gamma_w(\mathrm{P}\Omega_8^+(q))=2$. Let $K$ be the second component in a weak normal $2$-covering of $\mathrm{P}\Omega_8^+(q)$, with $K$ maximal. By the above analysis, we get that $\tilde K$ contains elements having order $q^3-1$ and  $q^3+1$. Therefore, by Lemma~\ref{aritme}, $|\tilde K|$ is divisible by
$$
\frac{(q^3-1)(q^3+1)}{\gcd(q^3-1,q^3+1)}=
\frac{q^6-1}{\gcd(2,q-1)}.$$ A direct inspection on the maximal subgroups of $\Omega_8^+(q)$ in~\cite[Table~$8.50$]{bhr} reveals that $\tilde K$ is one of the groups in Table~\ref{o7q^2+111}.

\begin{table}[ht]
\begin{tabular}{c|c|c||c|c|c}
\toprule[1.5pt]
\multicolumn{3}{c||}{$q$ odd, $q>3$}&\multicolumn{3}{|c}{ $q$ even,  $q>2$} \\
\midrule[1.5pt]
Structure& class&nr. $\mathrm{O}_8^+(q)$-classes&Structure&class&nr. $\mathrm{O}_8^+(q)$-classes\\
\midrule[1.5pt]
$2\times \Omega_7(q)$&$\mathcal{C}_1$&$2$& $\mathrm{Sp}_6(q)$&$\mathcal{C}_1$&$1$\\
$2^\cdot\Omega_7(q)$&$\mathcal{S}$&$4$& $\mathrm{Sp}_6(q)$&$\mathcal{S}$&$2$\\
\bottomrule[1,5pt]
\end{tabular}
\caption{Maximal subgroups of size divisible by $(q^6-1)/\gcd(2,q-1)$} \label{o7q^2+111}
\end{table}

Suppose first that $q$ is even. In particular, $\Omega_8^+(q)=\mathrm{P}\Omega_8^+(q)$, $H=\tilde H$ and $K=\tilde K\cong \mathrm{Sp}_6(q)$.  Observe that the triality automorphism of $\Omega_8^+(q)$ fuses the three $\Omega_8^+(q)$-classes of maximal subgroups isomorphic to $\mathrm{Sp}_6(q)$. We now consider particular elements of order $q^3+1$. We consider the elements $g$ of order $q^3+1$ and with the property that $V$ has a $6$-dimensional irreducible $\mathbb{F}_q\langle g\rangle$-submodule $W$ with $g$ inducing on $W$ a matrix of order $q^3+1$.
These elements fall into two types: the elements having two eigenvalues in $\mathbb{F}_q$ (and hence having type $6\oplus 1\oplus 1$ on $V$) and the elements having no eigenvalues in $\mathbb{F}_q $ (and hence having type $6\oplus 2$ on $V$). The elements of the first type are covered by the $\Omega_8^+(q)$-class of $\mathrm{Sp}_6(q)$ in class $\mathcal{C}_1$. Therefore the remaining type is covered by the remaining two $\Omega_8^+(q)$-classes of $\mathrm{Sp}_6(q)$ in  class $\mathcal{S}$. Let us denote by $o_1$ the number of elements of the first type and $o_2$ the number of elements of the second type. We claim that $$o_2=o_1q.$$ Indeed, any element $g$ of the first type decomposes $V$ as the direct sum $V=W\oplus W^\perp$, where $\dim_{\mathbb{F}_q}(W) =6$, $\dim_{\mathbb{F}_q}(W^\perp)=2$ and the quadratic forms induced on $W$ and on $W^\perp$ are of Witt defect 1. As $\Omega_2^-(q)$ has order $q+1$, we can use the same $g$ to construct $(q+1)-1=q$ distinct elements of the second type. This proves our claim.

As the $\Omega_8^+(q)$-class of $\mathrm{Sp}_6(q)\in \mathcal{C}_1$ covers $o_1$ elements of order $q^3+1$, the remaining two classes cover at most $2o_1$ elements of order $q^3+1$. Observe that here we are using the fact that the triality automorphism of $\Omega_8^+(q)$ fuses the three $\Omega_8^+(q)$-classes of maximal subgroups isomorphic to $\mathrm{Sp}_6(q)$. Therefore, these two classes cover all elements of the second type only if
$$2o_1\ge o_2=o_1q,$$
that is, $q\le 2$, which is a contradiction. This argument explain also why the case $q=2$ is special and had to be dealt with a computer.

Suppose now that $q$ is odd. The argument above can be applied also when $q$ is odd, the only difference is that $q+1$ is replaced by $(q+1)/2$. Therefore we have a putative weak normal covering only if $2\ge (q-1)/2$, that is, $q\le 5$.
 As we mentioned in the opening paragraph of this proof, the cases $q\in \{3,5\}$ have been dealt with a computer.
\end{proof}

The proof of Lemma~\ref{dimension8orthogonal+} is necessarily technical because the weak normal covering number of $\mathrm{P}\Omega_8^+(q)$ is not that far from $2$. Indeed, $\mathrm{P}\Omega_8^+(q)$ has a weak normal covering having three components, see Theorem~\ref{covering3}. Therefore,  in view of Lemma~\ref{dimension8orthogonal+}, $\gamma_w(\mathrm{P}\Omega_8^+(q))=3$ when $q\ge 4$ and $\gamma_w(\mathrm{P}\Omega_8^+(q))=2$ when $q\le 3$. To prove Theorem~\ref{covering3}, we first need a preliminary observation concerning $\Omega_4^+(q)$.
\begin{lemma}\label{covering4}
Let $\tilde g\in \Omega_4^+(q)$ having order divisible by the characteristic $p$ of $\mathbb{F}_q$. Then $\tilde g$ in its action on the orthogonal space $\mathbb{F}_q^4$ fixes a $2$-dimensional totally isotropic subspace.
\end{lemma}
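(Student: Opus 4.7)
The plan is to exploit the well-known exceptional isomorphism $\Omega_4^+(q) \cong (\mathrm{SL}_2(q) \times \mathrm{SL}_2(q))/Z$, where $Z$ is the diagonal center of order $\gcd(2,q-1)$. Under this isomorphism, the natural orthogonal $4$-dimensional $\mathbb{F}_q$-module $V$ is realized as the tensor product $V \cong U_1 \otimes_{\mathbb{F}_q} U_2$ with $U_1, U_2 \cong \mathbb{F}_q^2$, the hyperbolic quadratic form being (a scalar multiple of) $Q(u_1 \otimes u_2) = \det$ on $2\times 2$ matrices after identifying $U_1 \otimes U_2 \cong \mathrm{Mat}_{2\times 2}(\mathbb{F}_q)$, and the action of $(g_1, g_2) \in \mathrm{SL}_2(q) \times \mathrm{SL}_2(q)$ sending $u_1 \otimes u_2 \mapsto (g_1 u_1) \otimes (g_2 u_2)$.

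The first key step is to identify the two rulings of totally isotropic $2$-dimensional subspaces of $V$. Under the tensor identification, the maximal isotropic subspaces form exactly two families: those of the form $\langle v_1 \rangle \otimes U_2$ for $0 \neq v_1 \in U_1$, and those of the form $U_1 \otimes \langle v_2 \rangle$ for $0 \neq v_2 \in U_2$. Each family is parametrized by $\mathbb{P}^1(\mathbb{F}_q)$. From the action formula above, a pair $(g_1, g_2)$ stabilizes a subspace of the first family if and only if $g_1$ stabilizes a $1$-dimensional subspace of $U_1$, and similarly for the second family via $g_2$.

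Now lift $\tilde g$ to a pair $(g_1, g_2) \in \mathrm{SL}_2(q) \times \mathrm{SL}_2(q)$. Since the central subgroup $Z$ has order coprime to $p$, the order of $\tilde g$ still being divisible by $p$ forces the order of $(g_1,g_2)$ to be divisible by $p$; hence at least one of $g_1, g_2$ has order divisible by $p$. Say $g_1$ does. The second key step is the elementary fact that any element $g_1 \in \mathrm{SL}_2(q)$ with order divisible by $p$ admits an invariant $1$-dimensional subspace. For this, use the Jordan decomposition $g_1 = s_1 u_1 = u_1 s_1$ inside $\mathrm{GL}_2(\overline{\mathbb{F}_q})$: the assumption on the order forces $u_1 \neq 1$, and both $s_1, u_1 \in \mathrm{SL}_2(q)$. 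The non-trivial unipotent $u_1$ has Jordan form $\bigl(\begin{smallmatrix} 1 & 1 \\ 0 & 1 \end{smallmatrix}\bigr)$, hence fixes a unique line $L \leq U_1$ pointwise; since $s_1$ commutes with $u_1$, the line $L$ is $s_1$-stable, so $L$ is $g_1$-invariant. Combining with the previous paragraph, $\tilde g$ stabilizes $L \otimes U_2$, which is a $2$-dimensional totally isotropic subspace of $V$, completing the proof.

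The main obstacle, such as it is, is purely bookkeeping: one must pin down the isomorphism $\Omega_4^+(q) \cong (\mathrm{SL}_2(q) \times \mathrm{SL}_2(q))/Z$ with enough care to see that both rulings are accounted for and that the center $Z$ has order prime to $p$, so that the divisibility of the order of $\tilde g$ by $p$ transfers to the lift. Once that setup is in place, the argument is the short Jordan-decomposition observation above.
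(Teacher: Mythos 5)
Your proof is correct and follows essentially the same route as the paper: both use the model $\Omega_4^+(q)\cong(\mathrm{SL}_2(q)\times\mathrm{SL}_2(q))/Z$ acting on $2\times 2$ matrices with the determinant form, reduce to one $\mathrm{SL}_2(q)$-component having order divisible by $p$, and observe that such an element fixes a line, whose corresponding ruling subspace is an invariant totally isotropic $2$-space. The only cosmetic difference is that you argue via Jordan decomposition in the tensor picture, while the paper conjugates so that the relevant component is an explicit upper-triangular unipotent and writes down the invariant subspace by hand.
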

\begin{proof}
Let $G:=\mathrm{SL}_2(q)\times \mathrm{SL}_2(q)$ and let $V$ be the $4$-dimensional vector space over $\mathbb{F}_q$ consisting of the $2\times2$ matrices with coefficients in $\mathbb{F}_q$. The group $G$ has a linear action on $V$ by setting
$$(a,b)\cdot v:=a^{-1}vb,$$
for each $(a,b)\in G$ and $v\in V$. Let $\tilde G$ be the group induced by the linear action of $G$ on $V$. 

Now, $V$ is endowed of a natural quadratic form $Q:V\to \mathbb{F}_q$ by setting
$$Q(v):=\det v=x_{11}x_{22}-x_{12}x_{21},\qquad \forall v=\begin{pmatrix}x_{11}&x_{12}\\x_{21}&x_{22}\end{pmatrix}\in G.$$
In particular, $v\in V$ is totally singular if and only if $\det v=0$.

The linear action of $G$ on $V$ preserves the quadratic form $Q$ because
$$Q((a,b)\cdot v)=Q(a^{-1}vb)=\det(a^{-1}vb)=\det(v)=Q(v).$$
In particular, $\tilde G\le \mathrm{SO}_4(q)$. It is not hard to verify that $\tilde G\le \Omega_4^+(q)$. Since $|\tilde G|=|G|/2=q^2(q^2-1)/2=|\Omega_4^+(q)|$, we deduce $\tilde G=\Omega_4^+(q)$. We use this model of $\Omega_4^+(q)$ to prove this lemma.

Let $\tilde g\in\tilde G$ be an element having order divisible by the characteristic of $\mathbb{F}_q$. In particular, $\tilde g$ is the projection of an element $g\in G$. Replacing $g$ by a suitable conjugate, we may suppose that
\[
g=\left(\begin{pmatrix}1&a\\0&1\end{pmatrix},c\right)\hbox{ or }g=\left(c,\begin{pmatrix}1&a\\0&1\end{pmatrix}\right),
\]
for some $a\in\mathbb{F}_q$ and some $b\in\mathrm{SL}_2(q)$.
In the first case, $g$ fixes the $2$-dimensional totally isotropic subspace
\[
\left\langle
\begin{pmatrix}
1&0\\
0&0
\end{pmatrix},
\begin{pmatrix}
0&1\\
0&0
\end{pmatrix}
\right\rangle.
\]
In the second case, $g$ fixes the $2$-dimensional totally isotropic subspace
\[
\left\langle
\begin{pmatrix}
0&1\\
0&0
\end{pmatrix},
\begin{pmatrix}
0&0\\
0&1
\end{pmatrix}
\right\rangle.\qedhere
\]
\end{proof}
\begin{theorem}\label{covering3}The group $\mathrm{P}\Omega_8^+(q)$ admits a weak normal covering having cardinality $3$.
\end{theorem}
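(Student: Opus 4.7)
The plan is to exhibit three explicit proper subgroups $H_1,H_2,H_3$ of $G:=\mathrm{P}\Omega_8^+(q)$ whose $\mathrm{Aut}(G)$-conjugates jointly cover $G$. The natural candidates arise from exploiting triality: the triality automorphism $\tau\in\mathrm{Aut}(G)$ cyclically permutes the three $G$-conjugacy classes of parabolic subgroups $P_1,P_4^+,P_4^-$ (stabilizers of totally singular subspaces of dimensions $1$, $4$ of each type), and similarly the three $G$-conjugacy classes of subgroups isomorphic to $\Omega_7(q)$ when $q$ is odd, or to $\mathrm{Sp}_6(q)$ when $q$ is even. Concretely, I would take
\[
H_1:=\mathrm{N}_G(\Omega_7(q))\ \text{(resp. }\mathrm{N}_G(\mathrm{Sp}_6(q))\text{ for }q\text{ even)},\quad H_2:=P_1,\quad H_3:=P_3,
\]
where $H_1$ is the stabilizer of a non-singular $1$-space of $V$, $H_2$ is the stabilizer of a totally singular $1$-space, and $H_3$ is the stabilizer of a totally singular $3$-space.

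The verification splits into the usual dichotomy between semisimple and ``mixed/unipotent'' elements. First I would show that every semisimple element $s\in G$ has an $\mathrm{Aut}(G)$-conjugate in $H_1$: by Huppert's decomposition~\eqref{eq:decomp} and the fact that $8$ is even, $s$ fixes some $1$-dimensional subspace $\langle v\rangle$; if $v$ can be chosen non-singular, then $s\in H_1^g$ for some $g\in G$, while if every fixed $1$-space of $s$ is totally singular, then $s\in P_1^g\le H_2^g$. (Here one handles the low-rank cases using the explicit description of semisimple conjugacy classes in $\Omega_8^+(q)$, invoking triality to identify fixed singular $1$-spaces across the three parabolic types.) Second, for an element $g\in G$ with nontrivial unipotent part $u$, one would argue that $u$ fixes either a non-singular vector (putting $g$ in a conjugate of $H_1$), a singular $1$-space (putting $g$ in a conjugate of $H_2$), or a totally singular $3$-space; the last case is covered by $H_3$, and this trichotomy is essentially forced by the possible Jordan forms of a unipotent element in $\Omega_8^+(q)$ (cf.\ Wall~\cite{wall}). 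When $\dim_{\mathbb{F}_q}\mathbf{C}_V(u)$ is too small to land inside $H_2$ or $H_3$ directly, I would apply Lemma~\ref{covering4} to the $\Omega_4^+(q)$-summand appearing in an orthogonal decomposition adapted to $u$, producing the required totally singular fixed subspace.

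The main obstacle is the combinatorial bookkeeping needed to verify that no element of $G$ escapes the three classes $H_1^{\mathrm{Aut}(G)}\cup H_2^{\mathrm{Aut}(G)}\cup H_3^{\mathrm{Aut}(G)}$; in particular, one must use triality essentially when handling semisimple elements whose only invariant $1$-spaces are totally singular, and here the cyclic $3$-cycle $P_1\mapsto P_4^+\mapsto P_4^-\mapsto P_1$ under $\tau$ ensures that all three $G$-classes of singular point stabilizers are accounted for by the single component $H_2$. A parallel argument in characteristic $2$ uses Lemma~\ref{aggiunta} to replace the $\Omega_7(q)$-component by $\mathrm{Sp}_6(q)$ and handle the regular unipotent elements $u_\pm$ separately. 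This yields $\gamma_w(G)\le 3$, and combined with Lemma~\ref{dimension8orthogonal+} gives the precise value $\gamma_w(G)=3$ whenever $q\ge 4$.
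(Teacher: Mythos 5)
Your proposed triple of components does not cover the group, and the step that fails is the assertion that every semisimple element of $G=\mathrm{P}\Omega_8^+(q)$ fixes a $1$-dimensional subspace ``by Huppert's decomposition and the fact that $8$ is even''. In the decomposition~\eqref{eq:decomp} one may have $d_+=d_-=0$ with all irreducible summands even-dimensional, non-degenerate and of minus type, and such elements fix no $1$-space (singular or not) and no totally singular subspace whatsoever. A decisive example is the family at the heart of Lemma~\ref{dimension8orthogonal+}: the semisimple elements $\bar s$ of order $(q^2+1)/\gcd(2,q-1)$, action type $4\oplus 4$ with both summands of minus type, and centralizer of order $(q^2+1)^2/\gcd(2,q-1)$. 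For $q>3$ the maximal subgroups containing such an element are exactly those in Table~\ref{o7q^2+11}, a list that contains no parabolic subgroup and no $\Omega_7(q)$- or $\mathrm{Sp}_6(q)$-type subgroup; since both order and centralizer order are $\mathrm{Aut}(G)$-invariant, no $\mathrm{Aut}(G)$-conjugate of $\bar s$ lies in your $H_1$ (non-singular point stabilizer), in $H_2=P_1$, or in $H_3$ (whose only maximal overgroups are the two stabilizers of maximal totally singular $4$-spaces, again parabolic and triality-equivalent to $P_1$). So triality cannot rescue the argument. In fact your $H_3$ is redundant: every element of the stabilizer of a totally singular $3$-space stabilizes each of the two maximal totally singular $4$-spaces containing it, whose stabilizers lie in the $\mathrm{Aut}(G)$-class of $P_1$; hence if your triple were a weak normal $3$-covering, then $\{H_1,H_2\}$ would already be a weak normal $2$-covering, contradicting $\gamma_w(\mathrm{P}\Omega_8^+(q))\ge 3$ for $q\ge 4$ (Lemma~\ref{dimension8orthogonal+} together with the reduction to maximal components). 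Your proposal could only survive for $q\le 3$, where $\gamma_w=2$.

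This obstruction is precisely what dictates the paper's choice of components: besides the singular point stabilizer (which, via triality, also accounts for elements fixing a totally singular subspace of dimension $2$, $3$ or $4$, using Lemma~\ref{covering4} in the dimension-$2$ case exactly as you sketch for the mixed elements), the other two components are the stabilizers of a \emph{non-degenerate} $2$-subspace of minus type and of a \emph{non-degenerate} $4$-subspace of minus type. These catch the elements whose irreducible invariant subspaces are all non-degenerate: since $\Omega_8^+(q)$ has no Singer cycle, every element has an irreducible invariant subspace of dimension at most $4$, and irreducibility on a non-degenerate summand forces even dimension and Witt defect $1$. The semisimple case of your argument cannot be repaired while keeping $H_1$ and $H_3$ as components; you would need to replace them by the minus-type $2$- and $4$-space stabilizers as in the paper.
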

\begin{proof}
Let $H$ be the stabilizer of a $1$-dimensional totally isotropic subspace of $V=\mathbb{F}_q^8$, let $J$ be the stabilizer of a $2$-dimensional non-degenerate subspace of $V$ of minus type, and let $K$ be the stabilizer of a $4$-dimensional non-degenerate subspace of $V=\mathbb{F}_q^8$ of minus type. We verify, using the triality,  that
$H, J,K$ are the components of
 a weak normal covering of $\mathrm{P}\Omega_8^+(q)$.

Let $g\in \mathrm{P}\Omega_8^+(q)$. We show that $g$ has an $\mathrm{Aut}(\mathrm{P}\Omega_8^+(q))$-conjugate in $H$, $J$ or $K$.

We consider the group $\Omega_8^+(q)$ and we let $\tilde g\in\Omega_8^+(q)$ projecting to $g$. Since $\Omega_8^+(q)$ admits no Singer cycle, $\tilde g$ does not act irreducibly on $V$ and hence $\tilde g$ fixes a non-zero proper subspace  $W$ of $V$ with $1\le \dim(W)\le 4$. We choose $W$ so that $W$ is an irreducible $\mathbb{F}_q\langle\tilde g\rangle$-submodule of $V$. In particular, $W$ is either a totally isotropic subspace of $V$, or $W$ is a non-degenerate subspace of $V$.

Assume that $W$ is a non-degenerate subspace of $V$. Observe that $\dim W$ is even and that the quadratic form induced by $V$ on $W$ has Witt defect $1$, because $\tilde g$ acts irreducibly on $W$ and because odd dimensional orthogonal groups $\Omega_\ell(q)$ with $\ell\ge 3$ and even dimensional orthogonal groups of plus type do not contain Singer cycles. In particular, $g$ is $\mathrm{P}\Omega_8^+(q)$-conjugate to an element of $J$ or $K$.

Assume that $W$ is a totally isotropic subspace of $V$. If $\dim W=1$, then $g$ is $\mathrm{P}\Omega_8^+(q)$-conjugate  to $H$. Observe now that the triality automorphism of $\mathrm{P}\Omega_8^+(q)$ maps stabilizers of $1$-dimensional totally isotropic subspaces of $V$ to either of the two families of maximal totally isotropic subspaces of $V$. Therefore, if $\dim W=4$, then $g$ is $\mathrm{Aut}(\mathrm{P}\Omega_8^+(q))$-conjugate to $H$. Suppose $\dim W=3$. From the polar geometry associated to $V$~\cite[Section~6.4]{Cameron}, we see that $g$ fixes a $4$-dimensional totally isotropic subspace. In particular, also in this case, $g$ is $\mathrm{Aut}(\mathrm{P}\Omega_8^+(q))$-conjugate to $H$. Suppose $\dim W=2$. Now, $\tilde g$ fixes $W^\perp$ and hence $\tilde g$ induces a linear action on the $4$-dimensional vector space $W^\perp/W$ having Witt defect zero: the fact that $W^\perp/W$ has the same type as $V$ follows from the discussion on polar spaces in~\cite[Section~6]{Cameron}. Clearly, $\tilde g$ is not semisimple and it does induce an element on $W^\perp/W$ having order divisible by the characteristic of $\mathbb{F}_q$: indeed, if $\tilde g$ induces a semisimple element on $W^\perp/W$, then $\tilde g$ is semisimple because $\tilde g$ acts on $W$ and on $V/W^\perp$ irreducibly. Therefore, by Lemma~\ref{covering4}, $\tilde{g}$ fixes a $2$-dimensional totally isotropic subspace of $W^\perp/W$. Hence $\tilde g$ fixes a $4$ dimensional totally isotropic subspace of $V$. In particular, arguing as above, $g$ is $\mathrm{Aut}(\mathrm{P}\Omega_8^+(q))$-conjugate to $H$.
\end{proof} 
Now, the veracity of Table~\ref{000====} follows from the results in this section.

\section{Proofs of the main theorems}\label{sec:final}

Theorem  \ref{main theorem}, Theorem \ref{main theorem1} and Corollary \ref{corollary} follow now immediately by inspection of the  Tables \ref{00}-\ref{000====}.

We devote the rest of the section to the proof of Theorem~\ref{main theorem2}. We start with an elementary observation. Recall that a group $Y$ is said to be \textit{\textbf{supersolvable}} if there exists a normal series
$1=Y_0\unlhd Y_1\unlhd \cdots \unlhd Y_{\ell-1}\unlhd Y_{\ell}=Y$
such that each quotient $Y_i/Y_{i-1}$ is cyclic.
\begin{lemma}\label{lemma-super}Let $Y$ be a supersolvable group, let $V$ and $Z$ be subgroups of $Y$ with $V<Z$ and $Z\unlhd Y$. Then
$$\bigcup_{y\in Y}V^y\subsetneq Z.$$
\end{lemma}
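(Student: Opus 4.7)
The plan is to proceed by induction on $|Y|$. The base cases are immediate: if $|Z|$ is prime, then $V=1$ and $\bigcup_{y\in Y}V^y=\{1\}\ne Z$, and if $|Z|=1$ the statement is vacuous.

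For the inductive step, I would exploit supersolvability of $Y$ to pick a minimal normal subgroup $N$ of $Y$ with $N\le Z$. Such an $N$ exists because $Z\unlhd Y$ is non-trivial, and the standard fact that minimal normal subgroups of a supersolvable group are cyclic of prime order gives $|N|=p$ for some prime $p$. I then split into two cases according to whether $N\le V$ or not.

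Case $N\le V$: I pass to the quotient $\bar Y=Y/N$, with $\bar V=V/N<\bar Z=Z/N$ and $\bar Z\unlhd \bar Y$; supersolvability is inherited by quotients, so the inductive hypothesis applies and yields a coset $zN\in\bar Z\setminus\bigcup_{\bar y}\bar V^{\bar y}$. Since $N\unlhd Y$ and $N\le V$, every conjugate $V^y$ contains $N$ and is therefore a union of $N$-cosets; consequently $z\in V^y$ would force $zN\in V^y/N=\bar V^{\bar y}$, contradicting the choice of $z$. Hence $z$ is an element of $Z$ missed by $\bigcup_{y\in Y}V^y$.

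Case $N\not\le V$: Here the prime order of $N$ forces $V\cap N=1$, and I do not need the inductive hypothesis. For any non-identity $n\in N$ and any $y\in Y$, if $n\in V^y$ then $y^{-1}ny\in V$; normality of $N$ in $Y$ places $y^{-1}ny\in N$, so $y^{-1}ny\in V\cap N=1$, contradicting $n\ne 1$. Therefore $N\setminus\{1\}$ is a non-empty subset of $Z\setminus\bigcup_{y\in Y}V^y$, completing this case.

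No real obstacle stands in the way once the correct minimal normal subgroup is chosen; the only point to get right is the rephrasing in Case $N\le V$, where one must verify that being missed by the union of conjugates passes correctly between $Z$ and $Z/N$, and this works because $V^y$ is already a union of cosets of $N$. The crucial input that makes the argument short is the supersolvability hypothesis, used exactly once and only to guarantee $|N|=p$.
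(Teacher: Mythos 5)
Your proof is correct and follows essentially the same route as the paper: an induction on $|Y|$ built around a normal subgroup of $Y$ of prime order contained in $Z$ (your minimal normal subgroup $N$; the paper takes the first term $Y_1$ of a chief-type series through $Z$), with the quotient-and-lift argument in one case and a direct use of normality ($N^y=N$) in the other. The only difference is cosmetic, namely how the two cases are split ($N\le V$ versus $N\not\le V$ for you, $VY_1<Z$ versus $VY_1=Z$ in the paper), so no further comment is needed.
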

\begin{proof}
We prove it by induction on $|Y|$. As $Y$ is a finite group, using~\cite[Lemma~$2$ parts~(ii) and~(iii)]{baer},
we see that there exists a normal series $$1=Y_0\unlhd Y_1\unlhd \cdots \unlhd Y_{\ell-1}\unlhd Y_{\ell}=Y$$
of $Y$, with $Y_i/Y_{i-1}$ cyclic of prime order for each $i\in \{1,\ldots,\ell\}$ and with $Z=Y_\kappa$, for some $\kappa\in \{1,\ldots,\ell\}$. We write $\bar{Y}:=Y/Y_1$ and we use the ``bar'' notation for the projection of $Y$ to $\bar{Y}$.

Now, $\bar{V}\le \bar{Z}\unlhd\bar{Y}$. Suppose first $\bar{V}\ne \bar{Z}$. As $\bar{Y}$ is a quotient of $Y$ and as $Y$ is supersolvable, by~\cite[page~150]{robinson}, we deduce that $\bar{Y}$ is supersolvable. Since $|\bar{Y}|<|Y|$, our inductive hypothesis gives
$$\bigcup_{\bar{y}\in \bar{Y}}
\bar{V}^{\bar{y}}\subsetneq \bar{Z}.$$
From this, it immediately follows that
 $$\bigcup_{y\in Y}V^y\subsetneq Z.$$
 Suppose next that $\bar{V}=\bar{Z}$.
 Since $Y_1\le Z$, we deduce $Z=VY_1$. Let $p$ be the order of $Y_1$ and assume, by contradiction, that $\bigcup_{y\in Y}V^y= Z$. Since $Y_1$ is cyclic included in $Z$, there exists $y\in Y$ with $Y_1^y\le V$. However, since $Y_1\unlhd Y$, we get $Y_1=Y_1^y\le V$ and thus $Z=V$, contrary to our hypothesis.
\end{proof}
Next, we need a rather technical result.
\begin{lemma}\label{l:technicalS4}
Let $U:=B\times C$ be a finite group with $B\cong S_4$ and with $C$ cyclic of order $f$. Let $V$, $Z$ and $Y$ be subgroups of $U$ with $V< Z\unlhd Y\le U$ and
$$Z=\bigcup_{y\in Y}V^y.$$
Then the following holds:
\begin{enumerate}
\item $Z=K\times C_1$, where $K$ is the Klein subgroup of $B$ and $C_1\le C$,
\item $V=K_0\times C_1$, where $K_0$ is a subgroup of order $2$ of $K$.
\end{enumerate}
\end{lemma}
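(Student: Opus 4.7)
My plan is to pull everything back via the projections $\pi_B\colon U\to B$ and $\pi_C\colon U\to C$, then reconstruct $V$ and $Z$ using centrality of $C$. Since $C$ is central in $U$, for every subgroup $X\le Y$ and every $y\in Y$ we have $X^y\cap C = X\cap C$ and $\pi_C(X^y)=\pi_C(X)$, so conjugation touches only the $B$-component. Setting $C_V:=V\cap C$ and $C_Z:=Z\cap C$, the inclusion $C_V\le C_Z$ is immediate; conversely, every $c\in C_Z\subseteq\bigcup_y V^y$ lies in some $V^y\cap C = C_V$, so $C_V=C_Z$, which I denote by $C_1$.

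Next I would pass to the images $\bar X:=\pi_B(X)$. From $|V|=|\bar V|\,|C_V|$ and $|Z|=|\bar Z|\,|C_Z|$ together with $V<Z$ and $C_V=C_Z$, one obtains $\bar V<\bar Z$. Since $\bar V<\bar Z\unlhd \bar Y$ in $B\cong S_4$ and $\bar Z=\bigcup_{\bar y\in\bar Y}\bar V^{\bar y}$, Lemma~\ref{lemma-super} forces $\bar Y$ to be non-supersolvable, hence $\bar Y\in\{A_4,S_4\}$, the only non-supersolvable subgroups of $S_4$. The candidates for $\bar Z$ are the normal subgroups of $\bar Y$, which lie in $\{1,K,A_4,S_4\}$. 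I would rule out $\bar Z=1$ (since $\bar V<\bar Z$), $\bar Z=S_4$ (which forces $\bar Y=S_4$ and contradicts the classical fact that a finite group is never the union of conjugates of a proper subgroup), and $\bar Z=A_4$ (a direct count shows that no proper subgroup of $A_4$ has enough conjugates to cover its $12$ elements: $C_3$ yields at most $9$, while $K$ and $C_2$ cover only $4$). This leaves $\bar Z=K$ and forces $\bar V$ to have order $2$; I set $K_0:=\bar V=\langle\tau\rangle$.

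The main remaining step, which I anticipate as the delicate heart of the proof, is a twisting argument showing that $V$ and $Z$ embed ``straight'' into $B\times C$. Writing $V=\{(1,c):c\in C_1\}\cup\{(\tau,c_0 c):c\in C_1\}$ for a suitable $c_0\in C$ with $c_0^2\in C_1$, my aim is to prove $c_0\in C_1$. Conjugation by $y\in Y$ gives
\[
V^y=\{(1,c):c\in C_1\}\cup\{(\bar y\tau\bar y^{-1},c_0 c):c\in C_1\},
\]
and as $\bar y$ runs over $\bar Y\supseteq A_4$ the element $\bar y\tau\bar y^{-1}$ ranges over the three non-identity involutions $\tau_1,\tau_2,\tau_3$ of $K$ (the $3$-cycles of $A_4$ cyclically permute them). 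Hence $(\tau_i,c_0)\in Z$ for $i=1,2,3$, and since $Z$ is a group and $\tau_1\tau_2=\tau_3$ in $K$, the product $(\tau_1,c_0)(\tau_2,c_0)=(\tau_3,c_0^2)$ lies in $Z$; but every element of $Z$ with $B$-part $\tau_3$ is of the form $(\tau_3,c_0 c)$ with $c\in C_1$, forcing $c_0^2\in c_0 C_1$ and thus $c_0\in C_1$. Feeding this back yields $V=K_0\times C_1$ and $Z=\bigcup_y V^y=K\times C_1$, as required. The main obstacle is exactly this concluding step, where the $A_4$-action on $K$ and the multiplication law in $K$ must be combined to convert the covering hypothesis into an algebraic identity that kills the potential twist.
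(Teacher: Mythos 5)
Your proof is correct and follows essentially the same route as the paper: you first use centrality of $C$ to see that the $C$-part of $V$ and $Z$ agree, reduce modulo $C$ (via $\pi_B$, where the paper works with $VC\le ZC\le YC$) and an inspection of $S_4$ to get $\bar Z=K$ and $|\bar V|=2$, and then eliminate the possible $C$-twist by conjugating the order-two part around the Klein group and multiplying two elements of $Z$ to pin down the $C$-component. Your explicit appeal to Lemma~\ref{lemma-super} and your bookkeeping with the coset representative $c_0$ (rather than the paper's projection $\pi:Z\to C$) are only cosmetic differences.
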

\begin{proof}
Since $C=Z(U)$ and since $Z=\bigcup_{y\in Y}V^y$, we get
\begin{align}\label{eq:pablo1}
C\cap V=C\cap Z.
\end{align}
Now, consider the subgroups of $U$ given by $Y_0=YC$, $Z_0=ZC$ and $V_0=VC$. Observe that $Y_0/C$ is isomorphic to a subgroup of $U/C\cong B\cong S_4$. Moreover,
$$Z_0=\bigcup_{y\in Y_0}V_0^y.$$
Now, a direct inspection on the subgroups of $S_4$ reveals that either $Z_0=V_0$, or

\begin{itemize}
\item $Y_0=B_1\times C$, where $B_1\cong A_4$ or $B_1\cong S_4$,
\item $Z_0=K\times C$, where $K$ is the Klein subgroup of $B$, and
\item $V_0=K_0\times C$, where $K_0$ is a subgroup of order $2$ of $K$.
\end{itemize}
The first possibility yields
\begin{align}\label{eq:pablo2}
VC=V_0=Z_0=ZC.
\end{align}
However,~\eqref{eq:pablo1} and~\eqref{eq:pablo2} yield $V=Z$, which is a contradiction. Thus
$$VC=K_0\times C\hbox{ and }ZC=K\times C.$$

Let $\pi:Z\to C$ be the natural projection and let $C_1$ be the image of $\pi$. To conclude the proof it suffices to show that $C_1\le V$ (indeed, this implies $Z=K\times C_1$ and $V=K_0\times C_1$). Let $c\in C_1$ with $C_1=\langle c\rangle$ and let $k_1,k_2,k_3$ be the three involutions in $K$. Since $c$ lies in the image of $\pi$, there exists $z\in Z$ with $c=\pi(z)$. Now, as $Z\le K\times C$, we have $z=k_ic$, for some $i\in \{1,2,3\}$. Without loss of generality, we may suppose that $i=1$. As $Z$ is a union of $Y$-conjugates, we may assume, replacing $z$ with a suitable $Y$-conjugate, that $z\in V$. Let $y\in Y$ with $k_1^y=k_2$. Observe that the existence of $y$ is guaranteed by the fact that $YC/C$ contains a subgroup isomorphic to $A_4$. Now,
$$z^y=k_1^yc^y=k_2c.$$
Thus $k_2c\in Z$ and hence $k_3=(k_1c)(k_2c)^{-1}\in Z$. In particular, $k_i\in Z$, for every $i\in \{1,2,3\}$. Thus $c=(k_1c)k_1^{-1}\in Z$. Now, $c$ has a $Y$-conjugate in $V$ and hence $c\in V$.
\end{proof}
\begin{proof}[Proof of Theorem~$\ref{main theorem2}$]
Let $X$ be an almost simple group with socle $G$, let $A_X:=\mathrm{Aut}(X)$ and $A_G:=\mathrm{Aut}(G)$.
We claim that
\begin{equation}\label{chain-alm}
G\le X\unlhd A_X\le A_G.
\end{equation}
Since $G$ is characteristic in $X$ we have the homomorphism $A_X\rightarrow A_G$ defined by $\varphi\mapsto\varphi_{\mid G}$. We show that this homomorphism is injective. Indeed, assume $\varphi_{\mid G}=id_{\mid G}$ for some $\varphi\in A_X$. Then for every $x\in G$, and $y\in X$ we have  $$x^y=\varphi(x^y)=\varphi(x)^{\varphi(y)}=x^{\varphi(y)}$$ so that $x^{y^{-1}\varphi(y)}=x$. Consider now the homomorphism $X\rightarrow A_G$ associating with $z\in X$ the restriction $(\sigma_z)_{\mid G}$ to $G$ of the  conjugation $\sigma_z$ via $z$. Assume that the kernel of this homomorphism is not trivial. Then it contains the only minimal normal subgroup of $X$, that is, it contains $G$. As a consequence $G$ would be abelian, a contradiction. Hence, coming back to our equality $x^{y^{-1}\varphi(y)}=x,$ we see it as $(\sigma_{y^{-1}\varphi(y)})_{\mid G}=id_{G}$  and thus we deduce $y^{-1}\varphi(y)=1$, that is, $\varphi(y)=y$ which means $\varphi=id_{X}.$ Hence we have $A_X\le A_G$. Finally note that, since $Z(X)=1$, we have $X=\mathrm{Inn}(X)\unlhd A_X.$ That shows \eqref{chain-alm}.
 In particular $A_X$ is almost simple with socle $G$ and $G$ is characteristic in every term of the chain \eqref{chain-alm}.

Suppose now that part~(1) of Theorem~\ref{main theorem2} does not hold. Then $\gamma_w(X)=1$. Let $\mu:=\{H\}$ be a weak normal $1$-covering of $X$ for some $H<X$. We need to show that $G=\mathrm{P}\Omega_8^+(q)$, $q$ is odd, $A_X$ contains a triality automorphism of $G$, $X$ does not contain a triality automorphism of $G$ and $G\le H$.

As
\begin{equation}\label{ausiliaria}X=\bigcup_{a\in A_X}H^a,\end{equation}
we get
\begin{equation*}
G=G\cap \bigcup_{a\in A_X}H^a=\bigcup_{a\in A_X}(G\cap H)^a\subseteq \bigcup_{a\in A_G}(G\cap H)^a\subseteq G
\end{equation*}
and hence
\begin{equation}\label{ausiliaria2}
G=\bigcup_{a\in A_G}(G\cap H)^a.
\end{equation}

Assume $G\nleq H$, that is, $G\cap H$ is a proper subgroup of $G$. From~\eqref{ausiliaria2}, $\{G\cap H\}$ is a weak normal $1$-covering of $G$ and $\gamma_w(G)=1$, contradicting Theorem~\ref{main theorem}. Therefore $$G\le H.$$

 Set $Y:=A_X/G$, $Z:=X/G$ and $V:=H/G$ and observe that $V<Z\unlhd Y$. Since $G$ is characteristic in $A_X$ and thus also normal in $A_X$, we obtain that $G^a=G$, for each $a\in A_X$. Thus, the elements  $a\in A_X$ induce a group automorphism on the quotient $X/G=Z$ by setting $(xG)^a=x^aG$, for every $x\in X$. In particular, we have a group action of $A_X$ on $X/G=Z$, that is,  a group homomorphism $A_X\to \mathrm{Aut}(Z)$. Observe that $G$ is contained in the kernel of this group homomorphism and hence we have a group action of $A_X/G=Y$ on $Z$ by setting $(xG)^{aG}=x^aG$, for every $x\in X$ and for every $a\in A_X$. As $(xG)^{aG}=(xG)^a$ for every $x\in X$ and for every $a\in A_X$, by quotienting the left and the right hand side of~\eqref{ausiliaria} with $G$, we obtain
$$Z=\frac{X}{G}=\bigcup_{a\in A_X}\left(\frac{H}{G}\right)^a=
\bigcup_{a\in A_X}\left(\frac{H}{G}\right)^{aG}=
\bigcup_{a\in Y}V^a.$$

Now, Lemma~\ref{lemma-super} shows that $Y$ is not supersolvable. Since subgroups of supersolvable groups are supersolvable (see~\cite[page 150]{robinson}), we deduce that  $A_G$ is not supersolvable. A direct inspection on the outer automorphisms of the non-abelian simple groups reveals that $G=\mathrm{P}\Omega_8^+(q)$ and  $q$ is odd. We postpone the proof that $A_X=\mathrm{Aut}(X)$ contains a triality automorphism, that $X$ does not contain a triality automorphism and that $\mathrm{PO}_8^+(q)\leq X$ to Remark~\ref{remark:final}, where we give slightly more information on $X$ and $H$.
\end{proof}
\begin{remark}\label{remark:final}
{\rm Let $X$ be an almost simple group with socle $G$ and  let $A_X:=\mathrm{Aut}(X)$. Suppose $\gamma_w(X)=1$ and let $\mu:=\{H\}$ be a weak normal $1$-covering of $X$.

Using Lemma~\ref{l:technicalS4} and the notation in the proof above, we can give a much more detailed information on $X$, $G$ and $H$ arising in part $(2)$ of Theorem~\ref{main theorem2}.

From~\cite[page~38]{kl}, we have
$$\mathrm{Out}(G)=\mathrm{Out}(\mathrm{P}\Omega_8^+(q))=B\times C,$$
where $B\cong S_4$ is the subgroup generated by the inner-diagonal and graph outer-automorphisms of $G$ and $C$ is cyclic of order $f$ and  is the subgroup generated by the field outer-automorphisms. Here, $q=p^f$, where $p$ is an odd prime number.

Set $U:=B\times C$ and observe that $V<Z\unlhd Y\le U$, where $V=H/G$, $Z=X/G$ and $Y=A_X/G$. In particular, Lemma~\ref{l:technicalS4} yields a very detailed information on the various possibilities for $Y$, $Z$ and $V$, and hence in turn for $A_X$, $X$ and  $H$.  From this, using \cite[Chapter 4]{GLS3}, it immediately follows that $A_X$ contains a triality automorphism, $X$ does not contain a triality automorphism and $H$ contains an $A_X$-conjugate of $\mathrm{PO}_8^+(q)$.
As the $A_X$-conjugates of $H$ cover $X$, we deduce   $\mathrm{PO}_8^+(q)\le X$.
}
\end{remark}

\section{Almost simple groups having socle a sporadic simple group}\label{section:new}
In this section we prove the correctness of Table~\ref{tableSporadic}. Our proof is mainly computational.

Let $G$ be an almost simple group having socle a sporadic simple group.
Except for the Baby Monster  and for the Monster, the computations are straightforward because in each case the character table and the list of the maximal subgroups of $G$ are stored in \textrm{GAP} and in \texttt{magma}. Therefore, for these groups, we may compute the list of all the permutation characters of the faithful primitive actions of $G$. Using this list, $\gamma(G)$ is simply the minimum number of permutation characters $\pi_1,\ldots,\pi_{\ell}$ such that the class function
$\pi_1+\cdots+\pi_\ell$ does not vanish in any conjugacy class. Similarly, using this list, we may easily determine the edges of the invariably generating graph $\Lambda(G)$. Indeed, given two conjugacy classes $C_1$ and $C_2$ of $G$, $\{C_1,C_2\}$ is an edge of $\Lambda(G)$ if and only if, for some $c_1\in C_1$ and $c_2\in C_2$, $$\pi(c_1)\pi(c_2)=0,$$
for every permutation character $\pi$ as above.

Now, with $\Lambda(G)$, we can use standard algorithms to compute the clique number $\kappa(G)$. 

The computation for the Baby Monster $B$ is slightly more involved. All the permutation characters of $B$ acting on the right cosets of a maximal subgroup are stored in \texttt{GAP}, except for the action of $B$ on the cosets of a maximal subgroup $M$ of type $(2^2\times F_4(2)):2$. This omission from the \texttt{GAP} library is due to the fact that the conjugacy fusion of some of the elements of $M$ in $B$ remains a mystery. Therefore, except for the action of $B$ on the right cosets of $(2^2\times F_4(2)):2$, we may compute all the remaining permutation characters of primitive faithful actions. We have computed the permutation character of the action of $B$ on the right cosets of $(2^2\times F_4(2)):2$ using the \texttt{GAP} command \texttt{PossibleClassFusions} as instructed in~\cite[Proposition~$3.4$]{burness2}. Despite the ambiguity on the fusion of the conjugacy classes of $(2^2\times F_4(2)):2$ in $B$, the command \texttt{PossibleClassFusions} shows that all of these possibilities give rise to the same permutation character. Now, with all of these permutation characters available, we may argue as in all other cases.

We now deal with the case that $G$ is the Monster group. From~\cite[Section~$3.6$]{Wilson} and~\cite{Wilson2}, we see that the classification, up to isomorphism  and up to conjugacy, of the maximal subgroups of $G$ is complete except for a few open cases. In particular, if $M$ is a maximal subgroup of $G$, then either $M$ is in~\cite[Section~$3.6$]{Wilson}, or the socle of $M$ is $\mathrm{PSL}_2(13)$ or $\mathrm{PSL}_2(16)$. However, things are not that simple because the conjugacy classes of the known maximal subgroups of $G$ are not yet fully understood. Moreover, to the best of our knowledge, we currently know only six permutation characters of primitive faithful actions of $G$: namely, the permutation characters for the actions of $G$ on the right cosets of maximal subgroups of type $2.\mathrm{B}$, $2^{1+24}.\mathrm{Co}_1$, $3.\mathrm{Fi}_{24}$, $2^2.{}^2E_6(2):S_3$ and $2^{5+10+20}.(S_3\times L_5(2))$ are determined in~\cite{breuer,breuer2}, and of type $3^{1+12}.2\mathrm{Suz}.2$ is determined in~\cite{Wilson3}.  Therefore, for the Monster $G$, we have argued differently.

Given a positive integer $x$, we let $\delta(x)$ be the set of divisors of $x$ and, given a set of positive integers $X$, we let $\Delta(X)=\bigcup_{x\in X}\delta(x)$.
In Table~\ref{tableaux}, we have collected some information on the maximal subgroups of $G$. In the first column we have given the name of the maximal subgroups of $G$ (up to the small ambiguities explained above) and in the second column we have given a set of integers $X$ such that $\Delta(X)$ is the set of element orders of the corresponding maximal subgroup. For instance, in the first row of Table~\ref{tableaux}, we see that the the second column is
$$\{ 32, 36, 38, 40, 44, 46, 48, 50, 54, 56, 60, 62, 66, 68, 70, 78, 84, 94, 104, 110 \}$$
and hence the order of the elements in $2.B$ are all of the divisors of the elements in this set.
The symbol $\Delta(X)$ help us to keep the notation short, so that we do not have to list all of the element orders of a maximal subgroup, but only some. In the last two lines, we have put a label next to the groups $L_2(13).2$ and $L_2(16).4$ because at the moment it is not known whether the Monster has maximal subgroups having socle $L_2(13)$ or $L_2(16)$. Moreover, in case that the Monster does contain such maximal subgroups, it is not clear what is the exact structure and the number of conjugacy classes. This ambiguity will play no role in our argument that follows.
\begin{center}
\begin{longtable}{c|c}
\caption[Maximal subgroups of the Monster and their element orders]{Maximal subgroups of the Monster and their element orders}\label{tableaux}\\
\toprule[1.5pt]
subgroup& element orders\\
\midrule[1.5pt]
\endfirsthead
\endhead

\endfoot
$2.B$&\{32, 36, 38, 40, 44, 46, 48, 50, 54, 56, 60, 62, 66,\\
& 68, 70, 78, 84, 94, 104, 110\}\\
$2^{1+24}.Co_1$&\{32, 36, 40, 48, 52, 56, 60, 66, 70, 78, 84, 88, 92\}\\
$3.Fi_{24}$&\{34, 36, 40, 45, 46, 48, 51, 54, 60, 66, 69, 70, 78, 84, 87, 105\}\\
$2^2.{}^2E_6(2):S_3$&\{32, 36, 38, 40, 48, 52, 56, 60, 66, 68, 70, 84\}\\
$2^{10+16}.O_{10}^+(2)$&\{32, 34, 36, 40, 45, 48, 51, 56, 60, 62, 84\}\\
$2^{2+11+22}.(M_{24}\times S_3)$&\{32, 40, 48, 56, 60, 66, 69, 84, 88, 92\}\\
$3^{1+12}.2Suz.2$&\{36, 40, 45, 48, 54, 56, 60, 66, 78, 84\}\\
$2^{5+10+20}.(S_3\times L_5(2))$&\{32, 40, 48, 56, 60, 62, 84, 93\}\\
$S_3\times Th$&\{24, 36, 38, 54, 57, 60, 62, 78, 84, 93\}\\
$2^{3+6+12+18}.(L_3(2)\times 3S_6)$&\{32, 40, 48, 56, 60, 70, 84, 105\}\\
$3^8.O_8^-(3).2_3$&\{24, 27, 36, 39, 40, 41, 42, 45, 52, 56, 60\}\\
$(D_{10}\times HN).2$&\{24, 36, 38, 40, 44, 45, 50, 60, 70, 84, 95, 105, 110\}\\
$(3^2:2\times O_8^+(3)).S_4$&\{24, 36, 40, 54, 56, 60, 78, 84, 104\}\\
$3^{2+5+10}.(M_{11}\times 2S_4)$&\{24, 36, 40, 45, 54, 60, 66, 88\}\\
$3^{3+2+6+6}.(L_3(3)\times SD_{16})$&\{24, 36, 54, 78, 104\}\\
$5^{1+6}:2J_2:4$&\{24, 40, 50, 56, 60, 70\}\\
$(7:3\times He):2$&\{48, 51, 56, 60, 70, 84, 105, 119\}\\
$(A_5\times A_{12}):2$&\{22, 24, 33, 36, 40, 45, 55, 60, 70, 84, 105\}\\
$5^{3+3}.(2\times L_3(5))$&\{24, 50, 60, 62\}\\
$(A_6\times A_6\times A_6).(2\times S_4)$&\{9, 24, 40, 60\}\\
$(A_5\times U_3(8):3_1):2$&\{24, 28, 36, 38, 42, 45, 57, 60, 95, 105\}\\
$5^{2+2+4}:(S_3\times GL_2(5))$&\{24, 50, 60\}\\
$(L_3(2)\times S_4(4):2).2$&\{40, 48, 51, 56, 60, 68, 70, 84, 105, 119\}\\
$7^{1+4}:(3\times 2S_7)$&\{24, 56, 60, 70, 84\}\\
$(5^2:[2^4]\times U_3(5)).S_3$&\{24, 40, 60, 70, 84\}\\
$(L_2(11)\times M_{12}).2$&\{24, 40, 60, 66, 88, 110\}\\
$(A_7\times (A_5\times A_5):2^2):2$&\{ 24, 40, 60, 70, 84, 105\}\\
$5^4:(3\times 2L_2(25)):2_2$&\{20, 24, 30, 78\}\\
$7^{2+1+2}.GL_2(7)$&\{42, 48, 56\}\\
$M_{11}\times A_6.2^2$&\{24, 30, 40, 66, 88, 110\}\\
$(S_5\times S_5\times S_5):S_3$&\{18, 24, 40, 60\}\\
$(L_2(11)\times L_2(11)):4$&\{24, 55, 60, 66\}\\
$13^2:2L_2(13).4$&\{12, 52, 56\}\\
$(7^2:(3\times 2A_4)\times L_2(7)).2$&\{24, 84\}\\
$(13:6\times L_3(3)).2$&\{24, 78, 104\}\\
$13^{1+2}:(3\times 4S_4)$&\{24, 52, 78\}\\
$L_2(71)$&\{35, 36, 71\}\\
$L_2(59)$&\{29, 30, 59\}\\
$11^2:(5\times 2A_5)$&\{20, 30, 55\}\\
$L_2(29):2$&\{28, 29, 30\}\\
$7^2:SL_2(7)$&\{6, 8, 14\}\\
$L_2(19):2$&\{18, 19, 20\}\\
$41:40$&\{40,41\}\\
$L_2(41)$&\{20, 21, 41\}\\
$L_2(13).2$ $^{\ast}$&\{12,13,14\}\\
$L_2(16).4$ $^{\ast}$&\{8,10,12,15,17\}\\
\bottomrule[1.5pt]
\end{longtable}
\end{center}
Using the information in Table~\ref{tableaux}, we now construct an auxiliary graph. We let $\Gamma$ be the graph whose vertices are the orders of the elements of the Monster and where two vertices $x$ and $y$ are declared to be adjacent, if there exists no maximal subgroup $M$ containing elements having order $x$ and $y$. It can be verified, by \texttt{magma}, that $\{41,59,71, 87, 92, 93, 94, 95,119\}$ is a clique of $\Gamma$ and hence
\begin{equation}\label{monster:1}\kappa(G)\ge 9.
\end{equation}
Indeed, let $x_1,\dots, x_9$ be the orders of elements forming a clique of size $9$ in $\Gamma$ and consider $g_i\in G$ such that $\order {g_i}=x_i$. The conjugacy classes $C_i=g_i^G$ are nine distinct vertices in $\Lambda(G)$. Pick $c_i\in C_i$ and $c_j\in C_j$ for $i\neq j.$ Then $\langle c_i, c_j\rangle=G$, otherwise $\langle c_i, c_j\rangle$ would be contained in a maximal subgroup $M$ of $G$. Since $\order {c_i}=x_i$ and $\order {c_j}=x_j$, by $\Gamma$ definition, there is no edge incident to $x_i, x_j$ in  $\Gamma$, against the fact  that they are part of a clique.

Next, we have computed the permutation characters of $G$ acting on the cosets of its maximal subgroups
$2.\mathrm{B}$, $2^{1+24}.\mathrm{Co}_1$, $3.\mathrm{Fi}_{24}$ and $2^{5+10+20}.(S_3\times L_5(2))$. Recall that these are some of the permutation characters of primitive faithful actions of $G$ already available in the literature. Using these characters, we have checked that, the elements of the Monster not belonging to some conjugate of these four maximal subgroups have order $$41,\,57,\,59,\,71,\,95,\,119.$$
Therefore, to obtain a normal covering of the Monster, using as components $2.\mathrm{B}$, $2^{1+24}.\mathrm{Co}_1$, $3.\mathrm{Fi}_{24}$ and $2^{5+10+20}.(S_3\times L_5(2))$, we need to cover these remaining elements. Consulting Table~\ref{tableaux}, we see that using $$(A_5\times U_3(8).3).2,\,(L_3(2)\times S_4(4)).2,\,L_2(71),\, L_2(59),\, 41:40,$$
we cover all the remaining elements.

 Therefore, we obtain a normal covering of the Monster with $9$ subgroups and hence
\begin{equation}\label{monster:2}\gamma(G)\le 9.
\end{equation}
From \eqref{kappa-gamma}, ~\eqref{monster:1} and~\eqref{monster:2}, we finally deduce $\gamma(G)=\kappa(G)=9$.

For sporadic simple groups $G$ admitting outer automorphisms, the computations for $\gamma_w(G)$, $\kappa_w(G)$, $\gamma(\mathrm{Aut}(G))$ and $\kappa(\mathrm{Aut}(G))$ are entirely similar. Indeed, we may compute $\gamma(\mathrm{Aut}(G))$ and $\kappa(\mathrm{Aut}(G))$ arguing exactly as above because the list of the maximal subgroups and the character table of $\mathrm{Aut}(G)$ is available and stored in \texttt{GAP}. For computing $\gamma_w(G)$, we determine the list of all the permutation characters of the actions of $\mathrm{Aut}(G)$ acting on the right cosets of $M$, where $M$ is a maximal subgroup of $G$. Then, $\gamma_w(G)$ is simply the minimum number of such characters $\pi_1,\ldots,\pi_\ell$ such that the class function $\pi_1+\cdots+\pi_\ell$ does not vanish in $G$. Similarly, using this list of permutation characters, we may also determine the edges of the $\mathrm{Aut}$-invariably generating graph. Indeed, two conjugacy classes $x_1^{\mathrm{Aut}(G)}$ and $x_2^{\mathrm{Aut}(G)}$, with $x_1,x_2\in G$, are adjacent in the $\mathrm{Aut}$-invariably generating graph if and only if $\pi(x_1)\pi(x_2)=0$, for every permutation character as above. Now, using built-in algorithms we may compute the clique number $\kappa_w(G)$ of this graph.

\section{Dropping the maximality}\label{sec:appendix}
In this section, we use Theorem~\ref{main theorem1} to classify the weak normal $2$-coverings and the normal $2$-coverings of the non-abelian simple groups. This means that we drop out the hypothesis about the maximality of the components and give the description of all the possible components in a weak normal $2$-covering of $G$, for $G$ a non-abelian simple group such that $\gamma_w(G)=2.$

\begin{theorem}\label{main theoremgeneral}
Let $G$ be a finite non-abelian simple group and let $\mu=\{H,K\}$ be  a weak normal $2$-covering of $G$. Then  the pair $(H,K)$ appears in {\rm Tables~\ref{00:Alt}--\ref{00:orthogonal}}, up to  $\mathrm{Aut}(G)$-conjugacy. Moreover, $\mu$ gives rise to at least a normal $2$-covering of $G$ if and only if in the corresponding row of the fifth column of those tables appears a number greater than $0.$
\end{theorem}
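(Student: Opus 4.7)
The strategy is to reduce the classification of arbitrary weak normal $2$-coverings to the already established maximal case. First, observe that if $\{H,K\}$ is any weak normal $2$-covering of $G$, and if $H^\ast$ and $K^\ast$ are maximal subgroups of $G$ containing $H$ and $K$ respectively, then $\{H^\ast,K^\ast\}$ is also a weak normal $2$-covering, since $H^\varphi\cup K^\psi\subseteq (H^\ast)^\varphi\cup (K^\ast)^\psi$ for all $\varphi,\psi\in\mathrm{Aut}(G)$. Consequently, by Theorem~\ref{main theorem1}, the pair $(H^\ast,K^\ast)$ appears, up to $\mathrm{Aut}(G)$-conjugacy, in one of the rows of Tables~\ref{00}--\ref{000====}. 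The task is therefore to determine, for each such admissible maximal pair, all pairs $(H,K)$ with $H\le H^\ast$, $K\le K^\ast$ that still form a weak normal $2$-covering of $G$.

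The key tool for this refinement is the notion of \emph{forced class}: a $G$-conjugacy class $\mathcal{C}$ is forced for $H^\ast$ (with respect to the pair $(H^\ast,K^\ast)$) if no $\mathrm{Aut}(G)$-conjugate of any element of $\mathcal{C}$ lies in $K^\ast$, and symmetrically for $K^\ast$. For a pair $(H,K)$ with $H\le H^\ast$, $K\le K^\ast$ to be a weak normal $2$-covering of $G$, it is necessary and sufficient that $H$ meets every forced class of $H^\ast$ and that $K$ meets every forced class of $K^\ast$, both up to $\mathrm{Aut}(G)$-conjugacy. The forced classes are already identified, at least implicitly, in the proofs of the lemmas across Sections~\ref{sec:linear}--\ref{sec:orthogonal+}: these are precisely the conjugacy classes for which one of Singer cycles, Bertrand elements, regular unipotent elements, or specific elements with a distinguished action on the natural module is produced to exclude one of the two components.

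With this framework in hand, the plan is to proceed row by row through Tables~\ref{00}--\ref{000====}. For each row, one lists the forced classes explicitly, then determines the minimal subgroups of $H^\ast$ (respectively of $K^\ast$) that still contain representatives of every such class up to $\mathrm{Aut}(G)$-conjugacy. In most rows, the forced elements generate the maximal component itself, forcing $H=H^\ast$ and $K=K^\ast$; in a handful of rows, exactly one of the components admits a canonical proper replacement, typically a Levi subgroup of a parabolic (as already noted for $\mathrm{SL}_3(3^f)$ in~\cite{blw}) or a Singer normalizer stripped of its field-automorphism factor. The resulting collection is then recorded as Tables~\ref{00:Alt}--\ref{00:orthogonal}. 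The passage from weak normal to normal $2$-coverings, in the last column of those tables, is handled exactly as in Section~\ref{sec:newwen}, by comparing the indices $|\mathrm{Aut}(G):G\,\mathbf{N}_{\mathrm{Aut}(G)}(H)|$ and $|\mathrm{Aut}(G):G\,\mathbf{N}_{\mathrm{Aut}(G)}(K)|$.

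The main obstacle will be the volume and delicacy of the case analysis for the classical groups, where $H^\ast$ or $K^\ast$ may be a parabolic with a large unipotent radical, a field-extension subgroup, or a centralizer of an involution, each possessing an intricate subgroup lattice. Ruling out unwanted proper subgroups requires a systematic use of the specific elements constructed earlier, together with the Jordan-form and Wall-type information on conjugacy classes cited in Section~\ref{preliminaries}. The small exceptional cases (notably $A_n$ for $4\le n\le 9$, the sporadic groups, $G_2(3)$, and $\mathrm{P}\Omega_8^+(q)$ for $q\in\{2,3\}$) are dispatched by direct computation in \textrm{GAP} and \texttt{magma}, exactly as in the rest of the paper. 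Once all rows are completed, Corollary~\ref{cor:21} follows by inspection of the refined tables: only a very small number of rows admit a pair in which neither component is maximal.
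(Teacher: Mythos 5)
Your overall reduction (pass to maximal overgroups $H^\ast,K^\ast$, invoke Theorem~\ref{main theorem1}, then refine row by row, with the sporadic/small cases done by computer) is the same as the paper's, but the engine you propose for the refinement is flawed: the ``forced class'' criterion you state is necessary but \emph{not} sufficient. If a conjugacy class has $\mathrm{Aut}(G)$-conjugate representatives in \emph{both} $H^\ast$ and $K^\ast$, nothing in your criterion forces the shrunken pair $(H,K)$ to cover it, yet the definition of a weak normal $2$-covering requires exactly that. Concretely, take $G=\mathrm{PSL}_2(7)$ with $H^\ast\cong S_4$ and $K^\ast$ a parabolic of order $21$: the forced classes for $H^\ast$ are the elements of order $2$ and $4$, and the forced class for $K^\ast$ is the class of elements of order $7$, while the elements of order $3$ lie in both maximal components. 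The pair $H=C_4<S_4$, $K=C_7<K^\ast$ meets every forced class, so your criterion would declare it a weak normal $2$-covering; but the elements of order $3$ have no $\mathrm{Aut}(G)$-conjugate in $C_4\cup C_7$, so it is not one, and indeed Table~\ref{00:linear} lists $\{C_4,\hbox{parabolic}\}$ and $\{S_4,C_7\}$ but not $\{C_4,C_7\}$. Applied as stated, your method would insert spurious rows into the tables.

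This is precisely where the real work of the paper's Section~\ref{sec:appendix} lies: after using elements with no conjugate in the opposite maximal component to bound $H$ and $K$ from below (your necessity direction, which the paper also uses), one must analyse the subgroup lattice between those lower bounds and $H^\ast$, $K^\ast$ and check coverage of the \emph{shared} classes. This is what produces the delicate case-specific arguments: semisimple elements of order $q-1$ (resp.\ $q+1$) with three distinct eigenvalues in the $\mathrm{PSL}_3(q)$ and $\mathrm{PSU}_3(3^a)$ cases, conjugacy of complements of unipotent radicals via~\cite{CPS}, Jordan-form comparisons for the linear and unitary parabolics, the two $\mathrm{Aut}$-classes of regular unipotents of Lemma~\ref{aggiunta} in the symplectic Dye case, and the explicit covering check for the sub-pair $(q^2+1).[4]$, $\mathrm{SO}_4^+(q)$ inside $\mathrm{Sp}_4(q)$. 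A related, smaller inaccuracy: determining the fifth column is not just a comparison of the two indices $h$ and $k$; Section~\ref{sec:newwen} only gives $\max\{h,k\}\mid C\le hk$ in general, and the exact counts in several rows again require ad hoc arguments or computation. To repair your proposal you must replace the claimed equivalence by the one-sided implication and add, for each row, an analysis of which subgroups of $H^\ast$ and $K^\ast$ jointly cover the non-forced classes.
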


From Theorem~\ref{main theoremgeneral}, we obtain the following rather remarkable corollary.
\begin{corollary}\label{cor:21}
Let $G$ be a finite non-abelian simple group and let $\mu=\{H,K\}$ be  a weak normal $2$-covering of $G$. Then either $H$ or $K$ is maximal in $G$, or $G\cong\mathrm{PSL}_3(4)$. When $G\cong\mathrm{PSL}_3(4)$, there exists a unique weak normal $2$-covering $\{H,K\}$, with $H$ and $K$ not maximal in $G$, see~Table~\ref{00:linear}.
\end{corollary}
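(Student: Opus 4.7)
The plan is to reduce everything to Theorem~\ref{main theorem1}. Given a weak normal $2$-covering $\mu=\{H,K\}$ of a non-abelian simple group $G$, I fix maximal overgroups $H\le H^\ast$ and $K\le K^\ast$ in $G$. Since enlarging components preserves the covering property, $\mu^\ast=\{H^\ast,K^\ast\}$ is still a weak normal $2$-covering of $G$, and now with maximal components. If $H^\ast$ and $K^\ast$ are $\mathrm{Aut}(G)$-conjugate, then $\{H^\ast\}$ would be a weak normal $1$-covering, contradicting Saxl's theorem~\cite{Sa88} (since $G$ is simple and in particular not one of the groups in Theorem~\ref{main theorem2}\,(2)); hence $H^\ast,K^\ast$ are in distinct $\mathrm{Aut}(G)$-classes and Theorem~\ref{main theorem1} forces the pair $(H^\ast,K^\ast)$, up to $\mathrm{Aut}(G)$-conjugacy, to appear in Tables~\ref{00}--\ref{000====}. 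The problem therefore reduces to the following finite check: for each admissible pair $(H^\ast,K^\ast)$ in those tables, determine all pairs $(H,K)$ with $H\le H^\ast$, $K\le K^\ast$, such that $\{H,K\}$ still covers $G$ under $\mathrm{Aut}(G)$-conjugation.

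For each row of the tables I would run the following analysis. Let $\mathcal{C}(H^\ast)$ (respectively $\mathcal{C}(K^\ast)$) be the set of $\mathrm{Aut}(G)$-classes of $G$ that meet $H^\ast$ (resp.\ $K^\ast$). The covering condition amounts to $\mathcal{C}(H)\cup\mathcal{C}(K)$ equalling the set of all non-trivial $\mathrm{Aut}(G)$-classes of $G$, so $H$ must cover at least $\mathcal{C}(H^\ast)\setminus\mathcal{C}(K^\ast)$, and similarly for $K$. The elements already used in the proofs of Lemmas~\ref{dimension2linear}--\ref{dimension8orthogonal+} (Singer cycles, Bertrand elements, regular unipotents, ``split'' semisimple elements with prescribed action type, etc.) play exactly the role of these obligatory classes: each of them forces a specific subgroup structure in $H$ or $K$. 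Working through the resulting constraints collapses the available subgroup lattice below $H^\ast$ and $K^\ast$ to a short list of candidates, which are then recorded in Tables~\ref{00:Alt}--\ref{00:orthogonal}. For the sporadic groups, small alternating groups, and small dimensional exceptional/classical cases the verification is a direct computation in \texttt{GAP}/\texttt{magma}; for the generic families I would reuse the module-theoretic arguments already in Sections~\ref{sec:linear}--\ref{sec:orthogonal+}, which were engineered to track exactly which elements live in which subgroups.

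The heart of the matter, and the source of the exception in Corollary~\ref{cor:21}, is the linear case $G=\mathrm{PSL}_n(q)$ with $n\in\{3,4\}$, where the maximal pair consists of a Singer normalizer $H^\ast\cong (\frac{q^n-1}{(n,q-1)(q-1)})\colon n$ in class $\mathcal{C}_3$ and a parabolic $K^\ast$ in class $\mathcal{C}_1$. Two phenomena conspire. First, $H^\ast$ can always be shrunk to its cyclic Singer subgroup without losing coverage of those elements that act irreducibly on $V$: in Theorem~\ref{main theoremgeneral} this appears as the possibility that $H$ is a cyclic torus. Second, in the parabolic $K^\ast$, one can often replace $K$ by its Levi subgroup (as already noted after Lemma~\ref{dimension3linear} in connection with \cite[Thm.~A]{blw}), provided that the unipotent radical is not needed to cover any class not handled by $H$; this is exactly what happens for $\mathrm{SL}_3(3^f)$. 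The genuinely exceptional situation is $\mathrm{PSL}_3(4)$: the isomorphism $\mathrm{PSL}_3(4)\cap$ structure (together with the maximal components $\mathrm{SL}_3(2)$ and a parabolic, and the extra coincidence $A_6\le \mathrm{PSL}_3(4)$ used in Lemma~\ref{dimension3linear}) produces a non-maximal pair $(H,K)$ — to be identified during the case analysis — where both $H<H^\ast$ and $K<K^\ast$ hold simultaneously and yet $\{H,K\}$ still covers. The hard part of the proof is precisely showing that $\mathrm{PSL}_3(4)$ is the \emph{only} simple group where both components can be shrunk below maximality: in every other row of Tables~\ref{00}--\ref{000====} one checks that shrinking one component forces the other to be maximal, either through an obligatory generic class (Singer or Bertrand) or through a low-characteristic exception that has already been enumerated in the tables.

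Granting this case-by-case verification, Theorem~\ref{main theoremgeneral} produces Tables~\ref{00:Alt}--\ref{00:orthogonal}; the fifth column, recording $G$-conjugacy of normal coverings inside an $\mathrm{Aut}(G)$-class, is computed uniformly by the index calculation in Section~\ref{sec:newwen} applied to the new non-maximal components $H$ and $K$, using the values of $|{\bf N}_{\mathrm{Aut}(G)}(H)G:G|$ and $|{\bf N}_{\mathrm{Aut}(G)}(K)G:G|$ read off from the same sources \cite{bhr,kl} already employed throughout the paper. Corollary~\ref{cor:21} then follows by direct inspection of the completed tables.
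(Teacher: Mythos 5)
Your proposal follows essentially the same route as the paper: pass to maximal overgroups $H\le H^\ast$, $K\le K^\ast$, invoke Theorem~\ref{main theorem1} to place $(H^\ast,K^\ast)$ in Tables~\ref{00}--\ref{000====}, then determine row by row which sub-pairs still cover (by computer for the isolated groups, by the module-theoretic element arguments for the infinite families), which is exactly how the paper proves Theorem~\ref{main theoremgeneral}, after which the corollary is read off Tables~\ref{00:Alt}--\ref{00:orthogonal}. Two of your heuristic side-claims are inaccurate, though not load-bearing since you rightly defer those cases to the explicit verification: for $\mathrm{PSL}_4(q)$ the $\mathcal{C}_3$ component can only be shrunk to $\frac{1}{d}\mathrm{SL}_2(q^2).(q+1)$, not to the cyclic Singer subgroup, and the unique doubly non-maximal covering of $\mathrm{PSL}_3(4)$ consists of $7{:}3<\mathrm{SL}_3(2)$ together with a subgroup of order $160$ of the parabolic which \emph{contains} the unipotent radical, so it is not a Levi-type shrinking; these slips would be corrected automatically by the computation, so the overall argument stands.
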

We adopt the same notation as in Notation~\ref{notation} for reading Tables~\ref{00:Alt}--\ref{00:orthogonal}. In these tables, when listing $H$ and $K$, in the case that either $H$ or $K$ is not maximal in $G$, we also give some embeddings in certain overgroups. Observe that in Table~\ref{00:orthogonal}, we are not listing all weak normal $2$-coverings, because $\mathrm{P}\Omega_8^+(2)$ has $60$ distinct $\mathrm{Aut}(\mathrm{P}\Omega_8^+(2))$-conjugacy classes of weak normal $2$-coverings and $\mathrm{P}\Omega_8^+(3)$ has $2019$ distinct $\mathrm{Aut}(\mathrm{P}\Omega_8^+(3))$-conjugacy classes of weak normal $2$-coverings. It is quite remarkable that, although $\mathrm{P}\Omega_8^+(2)$ and $\mathrm{P}\Omega_8^+(3)$ have so many weak normal $2$-coverings up to $\mathrm{Aut}$-conjugacy, it can be checked with a computer that in each weak normal $2$-covering $\{H,K\}$ either $H$ or $K$ is maximal. Actually more is true. Using the notation from~\cite{atlas}, let $x$ be an element of order $9$ in the class $9J$ of $\mathrm{P}\Omega_8^+(3)$. Then $\{\Omega_7(3),\langle x\rangle\}$ is a weak normal $2$-covering of $\mathrm{P}\Omega_8^+(3)$. In particular, for every subgroup $H$ of $\mathrm{P}\Omega_8^+(3)$ containing $x$, $\{\Omega_7(3),H\}$ is a weak normal $2$-covering. It turns out that, up to $\mathrm{Aut}(\mathrm{P}\Omega_8^+(3))$-conjugacy, there are $2011$ weak normal $2$-coverings of this type. 

We now give  the proof of Theorem~\ref{main theoremgeneral}. Let $G$ be a non-abelian simple group and let $\mu=\{H,K\}$ be a weak normal $2$-covering of $G$. Let $H_M$ and $K_M$ be maximal subgroups of $G$ with $H\le H_M$ and $K\le K_M$. Then $\{H_M,K_M\}$ is a weak normal $2$-covering of $G$ with maximal components and, by Theorem~\ref{main theorem1}, the group $G$ and the pair $(H_M,K_M)$ appear in Tables~\ref{00}--\ref{000====}.

 Except when $(G,H_M,K_M)$ is part of an infinite family of weak normal $2$-coverings, we may simply use the computer algebra system \texttt{magma} to prove Theorem~\ref{main theoremgeneral}. Hence, for proving Theorem~\ref{main theoremgeneral}, it suffices to consider the following cases:
\begin{enumerate}
\item\label{ddd1}$G=G_2(q)$, $H_M\cong\mathrm{SL}_3(q).2$, $K_M\cong\mathrm{SU}_3(q).2$, $q\ge 4$ and $q$ even;
\item\label{ddd2}$G=F_4(q)$, $H_M\cong\,{^3}D_4(q).3$, $K_M\cong \mathrm{Spin}_9(q)\cong 2.\Omega_9(q)$, $q=3^a$, $a\ge 1$;
\item\label{ddd3}$G=\mathrm{PSL}_2(q)$, $H_M\cong D_{q+1}$, $K_M\cong E_q:((q-1)/2)$, $q>9$  and $q$ odd;
\item\label{ddd4}$G=\mathrm{PSL}_2(q)$, $H_M\cong D_{2(q+1)}$, $K_M\cong E_q:(q-1)$, $q>4$ and $q$ even;
\item\label{ddd5}$G=\mathrm{PSL}_2(q)$, $H_M\cong D_{2(q+1)}$, $K_M\cong D_{2(q-1)}$, $q>4$ and $q$ even;
\item\label{ddd6}$G=\mathrm{PSL}_3(q)$, $H_M\cong \frac{q^2+q+1}{\gcd(3,q-1)}:3$, $K_M$ parabolic and $q\ne 4$;
\item\label{ddd7}$G=\mathrm{PSL}_4(q)$, $H_M\cong \frac{1}{d}\mathrm{SL}_2(q^2).(q+1).2$, $K_M\cong\frac{1}{d}E_q^3:\mathrm{GL}_3(q)$ and $d=\gcd(4,q-1)$;
\item\label{ddd8}$G=\mathrm{PSU}_3(q)$, $H_M\cong (q^2-q+1):3$, $K_M\cong\mathrm{GU}_2(q)$, $q=3^a$ and $a>1$;
\item\label{ddd9}$G=\mathrm{PSU}_4(q)$, $H_M\cong \frac{1}{d}\mathrm{GU}_3(q)$, $K_M\cong\frac{1}{d}E_q^4:\mathrm{SL}_2(q^2):(q-1)$, $d=\gcd(4,q+1)$ and $q\ge4$;
\item\label{ddd10}$G=\mathrm{Sp}_n(q)$, $H_M=\mathrm{SO}_n^-(q)$, $K_M=\mathrm{SO}_n^+(q)$, $n\ge 4$ even, $q$ even and $(n,q)\ne (4,2)$;
\item\label{ddd11}$G=\mathrm{PSp}_6(q)$, $H_M=\frac{1}{2}(\mathrm{Sp}_2(q)\perp \mathrm{Sp}_4(q))$, $K_M=\frac{1}{2}\mathrm{Sp}_2(q^3):3$ and $q=3^a$. 
\end{enumerate}
If $H=H_M$ and $K=K_M$, then the proof follows from Theorem~\ref{main theorem1}. Therefore we may suppose that either 
\begin{equation}\label{tired}H<H_M\,\hbox{ or }\,K<K_M.
\end{equation}

\smallskip

Recall that for $X\leq G$, we use the notation $\tilde X$ to denote $\pi^{-1}(X).$ By Section~\ref{cove-classic-simple}, we have
the weak normal $2$-covering of $\tilde G$ given by $\tilde \mu=\{\tilde H, \tilde K\}$. Moreover,  $\tilde H_M:=\pi^{-1}(H_M)$, $\tilde K_M:=\pi^{-1}(K_M)$ are maximal subgroups of $\tilde G$ containing $\tilde H$ and $\tilde K$ respectively and $\tilde \mu_M:=\{\tilde H_M, \tilde K_M\}$ is a weak normal $2$-covering of $\tilde G$ with maximal components.

\smallskip

\noindent\textsc{Linear groups: Cases~\eqref{ddd3},~\eqref{ddd4},~\eqref{ddd5},~\eqref{ddd6} and~\eqref{ddd7}.}

\smallskip

\noindent \textbf{Suppose $G=\mathrm{PSL}_2(q)$ and we are in Case~\eqref{ddd3} or~\eqref{ddd4}.} Set $d:=\gcd(2,q-1)$. Here, we have $$H\le H_M\cong D_{2(q+1)/d}\,\hbox{ and }\,K\le K_M\cong E_q:((q-1)/d).$$ Now, $G$ contains an element $x$ of order $(q+1)/d$ and an element $y$ of order $(q-1)/d$. As $K_M$ contains no element having order $(q+1)/d$ and $H_M$ contains no element having order $(q-1)/d$, up to replacing $x$ and $y$ with suitable $\mathrm{Aut}(G)$-conjugates, we deduce 
\begin{align*}
C_{(q+1)/d}&\cong\langle x\rangle\le H\le H_M={\bf N}_G(\langle x\rangle)\cong D_{2(q+1)/d},\\
C_{(q-1)/d}&\cong\langle y\rangle\le K\le K_M\cong E_q:((q-1)/d).
\end{align*}
Observe that $\langle x\rangle$ is a maximal subgroup of $H_M$ and $\langle y\rangle$ is a maximal subgroup of $K_M$. It follows that  $H\in \{\langle x\rangle,H_M\}$ and $K\in \{\langle y\rangle,K_M\}$.

Assume now $q$ odd. Then, $H$ contains no non-identity unipotent elements. Therefore, $K$ contains a non-identity unipotent element $u$. Thus $\langle y\rangle<\langle y,u\rangle\le K$. From the maximality of $\langle y\rangle$ in $K_M$, we get $K_M=\langle y,u\rangle= K$. Now,~\eqref{tired} gives $H=\langle x\rangle$. It is easy to verify that $\langle x\rangle$ and $K_M\cong E_q:((q-1)/2)$ is a normal $2$-covering of $G$.

Assume now $q$ even. If $H$ contains an involution $u$, then $H=\langle x,u\rangle={\bf N}_G(\langle x\rangle)=H_M\cong D_{2(q+1)}$ so that, again by ~\eqref{tired}, we get $K=\langle y\rangle$ and it is readily seen that $\{ H_M, \langle y\rangle\}$ is a normal $2$-covering of $G$. If $H$ contains no involution, then $H=\langle x\rangle\cong C_{q+1}$, $K$ must contain some involutions $u$ and $K=\langle y,u\rangle=K_M\cong E_q:(q-1)$. Again, it is readily seen that $\{\langle x\rangle, K_M\}$ is a normal $2$-covering of $G$.

\medskip

\textbf{Suppose $G=\mathrm{PSL}_2(q)$ and we are in Case~\eqref{ddd5}.} Here, $q$ is even and we have $H\le H_M\cong D_{2(q+1)}$ and $K\le K_M\cong D_{2(q-1)}$. As above, $G$ contains an element $x$ of order $q+1$ and an element $y$ of order $q-1$ and we have 
\begin{align*}
\langle x\rangle&\le H\le H_M={\bf N}_G(\langle x\rangle)\cong D_{2(q+1)},\\
\langle y\rangle&\le K\le K_M={\bf N}_G(\langle y\rangle)\cong D_{2(q-1)}.
\end{align*}
Since either $H$ or $K$ contains elements of order $2$, we have $H=H_M$ or $K=K_M$. If $K<K_M$, we obtain $H=H_M\cong D_{2(q+1)}$ and $K=\langle y\rangle\cong C_{q-1}$. Note that the covering $\{H_M,\langle y\rangle \}$ has already been considered in the paragraph above.
If $K=K_M$, we instead obtain one additional normal $2$-covering with $H=\langle x\rangle\cong C_{q+1}$.
\medskip

\textbf{Suppose $G=\mathrm{PSL}_3(q)$ and we are in Case~\eqref{ddd6}.} Here it is more convenient to work with $\tilde G=\mathrm{SL}_3(q)$ and the covering $\tilde \mu=\{\tilde H, \tilde{K}\}$. We have $\tilde H\le \tilde{H}_M\cong (q^2+q+1):3$ and $\tilde{K}\le \tilde{K}_M\cong E_q^2:\mathrm{GL}_2(q)$ is parabolic. Now, $\tilde{G}$ contains an element $x$ of order $q^2+q+1$ and an element $y$ of order $q^2-1$. As $\tilde{K}_M$ contains no element having order $q^2+q+1$ and $\tilde{H}_M$ contains no element having order $q^2-1$, up to replacing $x$ and $y$ with suitable $\mathrm{Aut}(\tilde{G})$-conjugates, we deduce 
\begin{align*}
C_{q^2+q+1}\cong \langle x\rangle&\le \tilde{H}\le \tilde{H}_M={\bf N}_{\tilde{G}}(\langle x\rangle) \cong (q^2+q+1):3,\\\,C_{q^2-1}\cong\langle y\rangle&\le \tilde{K}\le \tilde{K}_M\cong E_q^2:\mathrm{GL}_2(q).
\end{align*}
Observe that $C_{q^2+q+1}$ is a maximal subgroup of $\tilde{H}_M$. Thus $\tilde{H}\in \{\langle x\rangle,{\bf N}_{\tilde G}(\langle x\rangle)\}$. 

Assume first that $q$ is not a power of $3$, or that $\tilde{H}=\langle x\rangle\cong C_{q^2+q+1}$. Observe that $\tilde{H}$ has no non-identity unipotent elements. Therefore, all non-identity unipotent elements of $\tilde{G}$ have an $\mathrm{Aut}(\tilde{G})$-conjugate in $\tilde{K}$. Let $\tilde{P}\cong E_q^2$ be the Fitting subgroup of $\tilde{K}_M$. Note that the non-identity unipotent elements in $\tilde P$ are not regular. Since $\tilde{G}$ contains regular unipotent elements, $\tilde{K}$ contains a regular unipotent element  $u$ and $u\in \tilde{K}_M\setminus \tilde{P}$. Now, using the subgroup structure of $\mathrm{GL}_2(q)$ and using the fact that $\langle u,y\rangle\le \tilde{K}$, we deduce that $\tilde{K}_M=\langle y,u,\tilde{P}\rangle$, 
 that is, $\tilde{K}\tilde{P}=\tilde{K}_M$. Since $\mathrm{GL}_2(q)$ acts irreducibly on $E_q^2$, we deduce that $\mathrm{GL}_2(q)$ is a maximal subgroup of $E_q^2:\mathrm{GL}_2(q)$. Therefore, either $\tilde K=\tilde{K}_M\cong E_q^2:\mathrm{GL}_2(q)$ or $\tilde{K}\cong \mathrm{GL}_2(q)$ . In the former case, routine considerations of the possible Jordan forms of the elements in $\tilde{G}=\mathrm{SL}_3(q)$ show that $\{\langle x\rangle,\tilde{K}=\tilde{K}_M\}$ is a normal $2$-covering of $\tilde{G}$. In the latter case, $\tilde{K}$ is a complement of $\tilde{P}$ in $\tilde{K}_M$. Since all complements of $\tilde{P}$ in $\tilde{K}_M$ are $\tilde{K}_M$-conjugate, see~\cite{CPS}\footnote{Strictly speaking, the work in~\cite{CPS} deals with the Lie group $\mathrm{SL}_n(q)$, however, the results there can be used also for deducing the corresponding result for $\mathrm{GL}_n(q)$.}, we may suppose that
$$\tilde{K}=\left\{\begin{pmatrix}\det(A)^{-1}&0\\ 0&A\end{pmatrix}\mid A\in\mathrm{GL}_2(q)\right\}.$$
However, we see that $\tilde{K}$ contains no regular unipotent elements, which is a contradiction. Therefore no $2$-covering arises in this case.

Assume now that $q$ is a power of $3$ and that $\tilde{H}={\bf N}_{\tilde{G}}(\langle x\rangle)\cong C_{q^2+q+1}:C_3$. When $q=3$, the proof follows with a computer computation; therefore for the rest of the argument we suppose $q>3$. This will help for avoiding some degenerate cases. Here the argument is rather similar to the argument above, only slightly more delicate. By~\eqref{tired}, we have $\tilde{K}<\tilde{K}_M$. It can be shown that the non-identity unipotent elements of $\tilde{H}$ are regular. In particular, all non-identity non-regular unipotent elements of $\tilde{G}$ have an $\mathrm{Aut}(\tilde{G})$-conjugate in $\tilde{K}$. Thus, $\tilde{K}$ contains a non-identity non-regular unipotent element. As above, let $\tilde{P}$ be the Fitting subgroup of $\tilde{K}_M$. Observe that $\langle y\rangle\cong C_{q^2-1}$ acts by conjugation transitively on the non-identity elements of $\tilde{P}$. As a consequence, if there exists a non-identity non-regular unipotent element $u\in\tilde{P}\cap \tilde{K}$, then $\tilde{P}\le \tilde{K}$. Using this observation we now locate the non-identity non-regular unipotent elements in $\tilde{K}$ with respect to $\tilde{P}$. 

We first suppose that there exists a non-identity element $u\in\tilde{P}\cap\tilde{K}$. 
Then $\tilde{K}\ge \langle \tilde P,y\rangle\cong E_q^2:(q^2-1)$. All the elements in $E_q^2:(q^2-1)$ having order $q-1$ are $\mathrm{Aut}(\tilde{G})$-conjugate to an element in $\langle y^{q+1}\rangle$  and hence they have two distinct eigenvalues $b$ and $b^{-2}$ in $\mathbb{F}_q$ and their matrices are conjugate to 
\[
\begin{pmatrix}
b^{-2} &0&0\\
0&b&0\\
0&0&b
\end{pmatrix},
\]
with $b\in \mathbb{F}_q^\ast$ having order $q-1$. Since $\tilde{G}$ contains  elements having order $q-1$ and having three distinct eigenvalues in $\mathbb{F}_q$ (here we are using $q>3$) and since $\tilde{H}$ has no elements of order $q-1$, we deduce that $\tilde{K}$ contains a representative of each $\mathrm{Aut}(\tilde{G})$-conjugacy class of elements having order $q-1$ and having three distinct eigenvalues in $\mathbb{F}_q$. Let $\lambda$ be a generator of the multiplicative group $\mathbb{F}_{q}^\ast$ and consider
\[
z':=\begin{pmatrix}
1&0&0\\
0&\lambda&0\\
0&0&\lambda^{-1}
\end{pmatrix}.
\]
Since $q$ is a power of $3$ with $q>3$, it is not hard to verify that $$(\dag)\qquad\{1,\lambda,\lambda^{-1}\}\cap \{-1,-\lambda,-\lambda^{-1}\}=\varnothing.$$ Let $z\in \tilde K$ with $z$ $\mathrm{Aut}(\tilde{G})$-conjugate to $z'$.
Observe that $z$ has three distinct eigenvalues $1,\lambda,\lambda^{-1}$. 
 We now use the subgroup structure of $\mathrm{GL}_2(q)\cong \tilde{K}_M/\tilde{P}$ to prove that $\tilde{K}$ cannot contain such an element. There exists only one proper subgroup of $\mathrm{GL}_2(q)$ properly containing a cyclic subgroup of order $q^2-1$, that is, the normalizer of this subgroup of order $q^2-1$. In other words, as $\tilde{P}\le\tilde{K} $, we have $\tilde{K}\geq \langle y,u,z\rangle\cong E_q^2:((q^2-1).2)$. We identify $(q^2-1).2$ with the group $\mathbb{F}_{q^2}^\ast\rtimes \langle\phi\rangle$, where $\phi:\mathbb{F}_{q^2}\to\mathbb{F}_{q^2}$ is the Frobenius map $x\mapsto x^q$. The elements in $\mathbb{F}_{q^2}^\ast$ having order $q-1$ lie in $\mathbb{F}_{q}^\ast$ and hence have a unique eigenvalue with multiplicity $2$. Therefore our putative element $z$, interpreted as an element of $\mathbb{F}_{q^2}^\ast\rtimes\langle\phi\rangle$, can be written as $\lambda^\ell\phi$, for some $\ell\in \{1,\ldots,q^2-1\}$. Let $\mu_1$ and $\mu_2$ be the two distinct eigenvalues of $\lambda^\ell\phi$. Now,$$\lambda^\ell \phi\lambda^\ell\phi=\lambda^\ell\lambda^{\ell q}=\lambda^{\ell(q+1)}\in\mathbb{F}_q.$$ Therefore, $\lambda^\ell \phi$ squares to an element in $\mathbb{F}_{q}^\ast$ and hence $\mu_1^2=\mu_2^2$. This implies $\mu_2\in \{\mu_1,-\mu_1\}$, contradicting~$(\dag)$.

The only remaining case is that all the non-identity non-regular unipotent elements in $\tilde{K}$ lie in $\tilde{K}\setminus\tilde{P}.$ Thus such elements must be $\mathrm{Aut}(\tilde{G})$-conjugate to an element of $ \tilde{H}$. It follows that $3\mid |\tilde{H}|$ so that 
 $ \tilde{H}=  \tilde{H}_M\cong (q^2+q+1):3$ and, by ~\eqref{tired}, $\tilde K<\tilde K_M$.
As we have seen above (when dealing with the case $q$ not a power of $3$), this  yields $\tilde{K}\cong \mathrm{GL}_2(q)$. Therefore, $\tilde{K}$ is a complement of $\tilde{P}$ in $\tilde{K}_M$. Since all complements of $\tilde{P}$ in $\tilde{K}$ are conjugate, see~\cite{CPS}, we may suppose that
$$\tilde{K}=\left\{\begin{pmatrix}\det(A)^{-1}&0\\ 0&A\end{pmatrix}\mid A\in\mathrm{GL}_2(q)\right\}.$$
Routine considerations on the possible Jordan forms of the elements of $\tilde G=\mathrm{SL}_3(q)$ yield that $H={\bf N}_{\tilde{G}}(\langle x\rangle)\cong (q^2+q+1):3$ and $\tilde{K}\cong \mathrm{GL}_2(q)$ is a normal $2$-covering.

\medskip

\textbf{Suppose $G=\mathrm{PSL}_4(q)$ and we are in Case~\eqref{ddd7}.} The case $q=2$ is of no concern because $\mathrm{PSL}_4(2)\cong A_8$. The weak normal coverings of $\mathrm{PSL}_4(3)$ have been found with the auxiliary help of a computer. Therefore, for the rest of this case, we suppose $q\ge 4$. Here it is more convenient to work with $\tilde G=\mathrm{SL}_4(q)$. We have $\tilde H\le \tilde{H}_M\cong \mathrm{SL}_2(q^2).(q+1).2$ and $\tilde{K}\le \tilde{K}_M\cong E_q^3:\mathrm{GL}_3(q)$ is parabolic. The argument is very similar to the argument dealing with $\mathrm{PSL}_3(q)$, but with some minor differences. Now, $\tilde{G}$ contains an element $x$ of order $q^3+q^2+q+1$ and an element $y$ of order $q^3-1$. As $\tilde{K}_M$ contains no element having order $q^3+q^2+q+1$ and $\tilde{H}_M$ contains no element having order $q^3-1$, up to replacing $x$ and $y$ with suitable $\mathrm{Aut}(\tilde{G})$-conjugates, we deduce 
\begin{align*}
C_{q^3+q^2+q+1}\cong \langle x\rangle&\le \tilde{H}\le \tilde{H}_M\cong\mathrm{SL}_2(q^2).(q+1).2,\\
C_{q^3-1}\cong\langle y\rangle&\le \tilde{K}\le \tilde{K}_M\cong E_q^3:\mathrm{GL}_3(q).
\end{align*}

Assume first that $q$ is odd. We claim that $\tilde K=\tilde{K}_M$. Since $2(q+1)$ is relatively prime to $q$, the non-identity unipotent elements of $\tilde{H}$ all lie in $\mathrm{SL}_2(q^2)$. The Jordan form of the non-identity unipotent elements of $\mathrm{SL}_2(q^2)$, viewed as $2\times 2$-matrices over $\mathbb{F}_{q^2}$, is
\[
\begin{pmatrix}
1&1\\
0&1
\end{pmatrix}.
\]

These matrices, viewed as $4\times 4$-matrices over $\mathbb{F}_q$, have Jordan form
\[
\begin{pmatrix}
1&1&0&0\\
0&1&0&0\\
0&0&1&1\\
0&0&0&1
\end{pmatrix}.
\]

In particular, $\tilde H$ has no regular unipotent elements. Therefore $\tilde{K}$ contains a regular unipotent element $u$. Let $\tilde P\cong E_q^3$ be the Fitting subgroup of $\tilde{K}_M$. Using the subgroup structure of $\mathrm{GL}_3(q)$, which can be deduced from the subgroup structure of $\mathrm{SL}_3(q)$ in~\cite{bhr},  it is not hard to verify that one of the following holds:
\begin{itemize}
\item $\langle \tilde{P},y,u\rangle=\tilde{K}_M$,
\item $q$  is a power of $3$ and $\langle \tilde{P},y,u\rangle/\tilde{P}\cong (q^3-1):3$.
\end{itemize} 
In the first case, as we have seen similarly a few times in the case of $\tilde{G}=\mathrm{SL}_3(q)$, we deduce that either $\tilde{K}=\tilde{K}_M$ or $\tilde{K}\cong \mathrm{GL}_3(q)$ is a complement of $\tilde{P}$ in $\tilde{K}$.
 In the first possibility our claim is proved. In the second possibility, using~\cite{CPS}, we have that all the complements of $\tilde{P}$ in $\tilde{K}_M$ are conjugate and hence $\tilde{K}$ is conjugate to
$$\left\{\begin{pmatrix}\det(A)^{-1}&0\\ 0&A\end{pmatrix}\mid A\in\mathrm{GL}_3(q)\right\}.$$
Since $\mathrm{GL}_3(q)\le\tilde{G}=\mathrm{SL}_4(q)$ has no element having Jordan form consisting only of one block, we deduce that $u$ cannot lie in a conjugate to $\mathrm{GL}_3(q)$ and hence we reach a contradiction. 

In the second case,  that is when $q$  is a power of $3$ and $\langle \tilde{P},y,u\rangle/\tilde{P}\cong (q^3-1):3$, the same argument gives that either $\tilde{K}=\tilde{K}_M$ and our claim is proved, or $\tilde{K}=\langle y,u\rangle\cong E_q^3:(q^3-1):3$. If $\tilde{K}=\langle y,u\rangle$, we prove that we reach a contradiction. Recall that $q>3$. Let $z\in\tilde{G}=\mathrm{SL}_4(q)$, having type $2\oplus 1\oplus 1$, with $\mathbb{F}_q^4=V_1\oplus V_2\oplus V_3$ with $z$ inducing a matrix $s$ of order $q^2-1$ on $V_1$, the $1\times 1$ matrix $(\mathrm{det} (s))^{-1}$ on $V_2$ and the $1\times 1$  matrix $1$ on $V_3$. 
   We show that no $\mathrm{Aut}(\tilde{G})$-conjugate of $z$ in neither in $\tilde{H}_M$ nor in $\tilde{K}.$ For $\tilde{K}$ this comes immediately from  the fact that $\order z=q^2-1\nmid |\tilde{K}|$. Assume next, by contradiction, that  $z\in\tilde{H}_M=(\mathrm{GL}_2(q^2).2)\cap \mathrm{SL}_4(q)$.  Then $z^2\in \mathrm{GL}_2(q^2)$. Observe now that $s^2$ acts irreducibly  on $V_1$. Indeed, by~\cite[Lemma~2.4]{bl}, the minimal polynomial of $s^2$ is irreducible of degree $r\mid 2$, where $r\in \mathbb{N}$ is minimum such that $\frac{q^2-1}{q^r-1}\mid 2.$ The possibility $r=1$ implies the contradiction $q+1\leq 2$, so that $r=2$.  Moreover $$\order{(\mathrm{det} (s))^{-2}}=\frac{q-1}{2}\neq 1,$$ because $q\neq 3$. As a consequence, the action of  $z^2$  is  of type $2\oplus 1\oplus 1$ and $1$ is an eigenvalue of $z^2$ of multiplicity one. Since no matrix in $\mathrm{GL}_2(q^2)$ can have $1$ as eigenvalue of multiplicity one, we have reached a contradiction.
  
   Therefore,  for every $q$ odd, we have proved our claim, that is, $\tilde{K}=\tilde{K}_M$.

Again, under the assumption that $q$ is odd, we claim that $\mathrm{SL}_2(q^2).(q+1)= \tilde{H}$; recall $q>3$. From~\eqref{tired}, it suffices to show that $\mathrm{SL}_2(q^2).(q+1)\le \tilde{H}$. Let $w$ be an element of order $q^2-1$ in $\tilde{G}=\mathrm{SL}_4(q)$, having type $2\oplus 2$ and such that $\mathbb{F}_q^4$ has only two distinct irreducible $\mathbb{F}_q\langle z\rangle$-subspaces. To construct such an element $w$ it suffices to take a Singer cycle $s$ of $\mathrm{GL}_2(q)$  and consider $w=s\oplus (s^{-1})^T$. Now, by order considerations, $w$ has no $\mathrm{Aut}(\tilde{G})$-conjugate in $\tilde{K}$ and hence $w$ has an $\mathrm{Aut}(\tilde{G})$-conjugate in $\tilde{H}$. Without loss of generality, we may suppose that $w\in \tilde{H}$. Using the subgroup structure of $\mathrm{SL}_2(q^2)$ and recalling that $q^2\neq 9$, it is not hard to verify that $\langle x^{q+1},w\rangle\cong\mathrm{SL}_2(q^2)$. Therefore, $\tilde{H}\ge \langle x,w\rangle\ge \mathrm{SL}_2(q^2).(q+1)$.

Routine computations with the possible Jordan forms of the elements of $\mathrm{SL}_4(q)$ show that $\tilde{H}\cong\mathrm{SL}_2(q^2).(q+1)$ and $\tilde{K}\cong E_q^3:\mathrm{GL}_3(q)$ is a normal $2$-covering.

\medskip

Assume next that $q$ is even. Since $q+1$ is relatively prime to $q$, the non-identity unipotent elements of $\tilde{H}$ all lie in $\mathrm{SL}_2(q^2).2$. 
It can be verified that the Jordan forms of the non-identity unipotent elements of  $\mathrm{SL}_2(q^2).2$ are
\[
\begin{pmatrix}
1&1&0&0\\
0&1&0&0\\
0&0&1&1\\
0&0&0&1
\end{pmatrix},
\begin{pmatrix}
1&1&0&0\\
0&1&1&0\\
0&0&1&1\\
0&0&0&1
\end{pmatrix}. 
\]
Indeed, the Jordan form of the non-identity unipotent elements of $\mathrm{SL}_2(q)$, viewed as $2\times 2$-matrices over $\mathbb{F}_{q^2},$ is
\[
\begin{pmatrix}
1&1\\
0&1
\end{pmatrix}.
\]
These matrices, viewed as $4\times 4$-matrices over $\mathbb{F}_q$, have Jordan form consisting of two Jordan blocks of cardinality $2$. Now, let $\phi:\mathbb{F}_{q^2}\to\mathbb{F}_{q^2}$ be the Galois automorphism defined by $x\mapsto x^q$, $\forall x\in\mathbb{F}_{q^2}$. We now let $u\phi\in \mathrm{SL}_2(q^2).2$ be a $2$-element and we study the Jordan form of $u\phi$, when viewed as an element of $\mathrm{SL}_4(q)$. Replacing $u$ by a suitable $\mathrm{SL}_2(q^2)$-conjugate, we may suppose that
\[
\begin{pmatrix}
1&a\\
0&1
\end{pmatrix},
\]
for some $a\in\mathbb{F}_{q^2}$. We suppose that, for some $a\in\mathbb{F}_{q^2}$, the Jordan form of $u\phi$, viewed as an element of $\mathrm{SL}_4(q)$ does not have two Jordan blocks of size $2$. Therefore, $u\phi$ has either two Jordan blocks of size $1$ or $u\phi$ has a Jordan block of size at least $3$. In the first, case $u\phi$ has order $2$ and, in the second case, $u\phi$ has order $4$. Observe that
\[
(u\phi)^2=uu^\phi=
\begin{pmatrix}
1&a+a^q\\
0&1
\end{pmatrix}.
\]
In particular, $(u\phi)^2$ viewed as a matrix of $\mathrm{SL}_4(q)$ is either the identity matrix or has two Jordan blocks of size $2$. In the second case, $u\phi$ is a Jordan block of cardinality $4$, as we claimed above. In the first case, $a+a^q=0$, that is, $a\in\mathbb{F}_q$. Now, let $\varepsilon_1,\varepsilon_2$ be an $\mathbb{F}_{q^2}$-basis of $\mathbb{F}_{q^2}^2$ and let $\lambda,\lambda^q$ be a Galois basis for $\mathbb{F}_{q^2}/\mathbb{F}_q$. Observe that $\lambda\varepsilon_1,\lambda^q\varepsilon_1,\lambda\varepsilon_2,\lambda^q\varepsilon_2$ is an $\mathbb{F}_q$-basis of $\mathbb{F}_{q^2}^2. $With respect to this basis the Jordan form of $u\phi$ is
\[
\begin{pmatrix}
0&1&0&a\\
1&0&a&0\\
0&0&0&1\\
0&0&1&0
\end{pmatrix}.
\]
This matrix has two Jordan blocks of size $2$.

In particular, those Jordan forms have two blocks of size $2$ or just one block of size $4$. Note also that we have one more Jordan form than in the case of $q$ odd. Thus, we need extra care to treat this case.
Now, $\tilde H$ has no unipotent elements having two Jordan blocks of sizes $3$ and $1$. Therefore $\tilde{K}$ contains a unipotent element $u$ having two Jordan blocks of sizes $3$ and $1$. Let $\tilde P\cong E_q^3$ be the Fitting subgroup of $\tilde{K}_M$. Using the subgroup structure of $\mathrm{GL}_3(q)$ it is not hard to verify that $\langle \tilde{P},y,u\rangle=\tilde{K}_M$. 

We deduce that either $\tilde{K}=\tilde{K}_M$ or $\tilde{K}\cong \mathrm{GL}_3(q)$ is a complement of $\tilde{P}$ in $\tilde{K}$.
With~\cite{CPS}, we deduce that all complements of $\tilde{P}$ in $\tilde{K}_M$ are conjugate. Summing up, either
$$\tilde{K}=\tilde{K}_M\cong E_q^3:\mathrm{GL}_3(q)\hbox{ or }\mathrm{GL}_3(q)\cong \tilde{K}<\tilde{K}_M\cong E_q^3:\mathrm{GL}_3(q),$$
where in the second possibility we may assume that $\tilde{K}$ is the standard complement of $\mathrm{GL}_3(q)$ in $E_q^3:\mathrm{GL}_3(q)$.

We claim that $\mathrm{SL}_2(q^2).(q+1)\le  \tilde{H}$.  Let $w$ be an element of order $q^2-1$ in $\tilde{G}=\mathrm{SL}_4(q)$, having type $2\oplus 2$ and such that $\mathbb{F}_q^4$ has only two distinct irreducible $\mathbb{F}_q\langle w\rangle$-submodules. To construct such an element $w$ it suffices to take a Singer cycle $s$ of $\mathrm{GL}_2(q)$ and consider $w=s\oplus (s^{-1})^T$. Now, $w$ has no $\mathrm{Aut}(\tilde{G})$-conjugate in $\tilde{K}$ and hence $w$ has an $\mathrm{Aut}(\tilde{G})$-conjugate in $\tilde{H}$. Without loss of generality, we may suppose that $w\in \tilde{H}$. Using the subgroup structure of $\mathrm{SL}_2(q^2)$ it is not hard to verify that $\langle x^{q+1},w\rangle\cong\mathrm{SL}_2(q^2)$. Therefore, $\tilde{H}\ge \langle x,w\rangle\cong \mathrm{SL}_2(q^2).(q+1)$.

We claim that $\tilde K=\tilde K_M\cong E_q^3:\mathrm{GL}_3(q)$. We argue by contradiction and we suppose that $\tilde K<\tilde K_M$ and hence, from what we have seen before, $\tilde K\cong \mathrm{GL}_3(q)$. Now, let $$A=\begin{pmatrix}a_{11}&a_{12}\\a_{21}&a_{22}\end{pmatrix}\in\mathrm{GL}_2(q)$$ be a Singer cycle of order $q^2-1$ and let $a:=\det(A)\in\mathbb{F}_q$. From~\eqref{piadet}, $a$ is a generator of the multiplicative group $\mathbb{F}_q^\ast$ and hence $\order a=q-1$. Since $q$ is even, there exists $b\in\mathbb{F}_q$ with $b^2=a^{-1}$. Let
\[
g:=\begin{pmatrix}
b&1&0&0\\
0&b&0&0\\
0&0&a_{11}&a_{12}\\
0&0&a_{21}&a_{22}\\
\end{pmatrix}\in \mathrm{SL}_4(q).
\]
It is easy to verify that $g$ has no $\mathrm{Aut}(\tilde G)$-conjugate in $\tilde K\cong\mathrm{GL}_3(q)$. Therefore, $g$ has an $\mathrm{Aut}(\tilde G)$-conjugate in $\tilde H\le \tilde H_M\cong \mathrm{SL}_2(q).(q+1).2$. Replacing $g$ by a suitable $\mathrm{Aut}(\tilde G)$-conjugate, we may suppose that $g\in \tilde H_M\cong \mathrm{SL}_2(q^2).(q+1).2=(\mathrm{GL}_2(q^2)\cap\mathrm{SL}_4(q)).2$. Now, $\order g=2(q^2-1)$ and 
 \[g^{q^2-1}=
\begin{pmatrix}
1&1&0&0\\
0&1&0&0\\
0&0&1&0\\
0&0&0&1
\end{pmatrix}\in \tilde H_M.
\]
As we have seen above, $g^{q^2-1}$ is not the Jordan form of an element in $(\mathrm{GL}_2(q^2)\cap\mathrm{SL}_4(q)).2$: the Jordan form of the non-identity unipotent elements in this group consists of either two Jordan blocks of size $2$ or a single Jordan block of size $4$. This shows that $\tilde K$ cannot be isomorphic to $\mathrm{GL}_3(q)$.
 
Routine computations with the possible Jordan forms of the elements of $\mathrm{SL}_4(q)$ show that $\tilde{H}\cong\mathrm{SL}_2(q^2).(q+1)$ and $\tilde{K}\cong E_q^3:\mathrm{GL}_3(q)$ is a normal $2$-covering. 

\medskip

\noindent\textsc{Unitary groups: Cases~\eqref{ddd8} and~\eqref{ddd9}.}

\smallskip

\noindent Suppose $G=\mathrm{PSU}_3(q)$ and we are in Case~\eqref{ddd8}. Observe that $G\cong\tilde{G}$, because $q$ is a power of $3$ and $\gcd(3,q+1)=1$. Here, we have $H\le H_M\cong (q^2-q+1):3$ and $K\le K_M\cong \mathrm{GU}_2(q)$. Now, $G$ contains an element $x$ of order $q^2-q+1$ and an element $y$ of order $q^2-1$. As $K_M$ contains no element having order $q^2-q+1$ and $H_M$ contains no element having order $q^2-1$, up to replacing $x$ and $y$ with suitable $\mathrm{Aut}(G)$-conjugates, we deduce 
$$C_{q^2-q+1}\cong \langle x\rangle\le H\le H_M={\bf N}_{G}(\langle x\rangle) \cong (q^2-q+1):3$$ and $$\,C_{q^2-1}\cong\langle y\rangle\le K\le K_M\cong \mathrm{SU}_2(q).(q+1).$$
Observe that $\langle x\rangle$ is a maximal subgroup of $H_M$. In particular, $H\in \{\langle x\rangle,{\bf N}_{G}(\langle x\rangle)\}$.

Assume $H=\langle x\rangle\cong C_{q^2-q+1}$. Then all the unipotent elements of $G$ are $\mathrm{Aut}(G)$-conjugate to an element of $K$, however this is impossible because $K_M$ contains no regular unipotent elements. Thus $H=H_M={\bf N}_G(\langle x\rangle)$. The non-identity unipotent elements of $H=H_M$ are regular, and hence $H$ contains no non-identity non-regular unipotent elements. Therefore, $K$ contains at least one of these elements, say $u$. In particular $3$ divides $|K|$. Now, $|\mathrm{GU}_2(q):\mathrm{SU}_2(q)|=q+1$, $\mathrm{GU}_2(q)/\mathrm{SU}_2(q)$ is cyclic of order $q+1$ and $\mathrm{SU}_2(q)\cong \mathrm{SL}_2(q)$. Using the fact that $K$ contains an element of order $q^2-1$ and an element of order $3$ and using the subgroup structure of $\mathrm{SL}_2(q)$, we deduce that either $K=K_M$ or 
$K=P.(q+1)$, where $P\cong\langle y^{q+1},u\rangle$ is a parabolic subgroup of $\mathrm{SL}_2(q)$. Recalling~\eqref{tired},  we obtain $K\cong P.(q+1)$. In particular, $|K|=q(q^2-1)$. However, in this case, we run into trouble with the elements of order $q+1$. Indeed, as $q>2$, $G$ has two types of elements of order $q+1$: elements having $2$ eigenvalues in $\mathbb{F}_{q^2}$ and elements having $3$ distinct eigenvalues in $\mathbb{F}_{q^2}$, see the proof of Lemma \ref{dimension3unitary}. It can be verified that the group $K$ has no elements of the second type;  therefore we have no more additional weak normal $2$-coverings.

\medskip

Suppose $G=\mathrm{PSU}_4(q)$ and we are in Case~\eqref{ddd9}. Here it is more convenient to work with $\tilde G=\mathrm{SU}_4(q)$. We have $\tilde H\le \tilde{H}_M\cong \mathrm{GU}_3(q)$ and $\tilde{K}\le \tilde{K}_M\cong E_q^4:\mathrm{SL}_2(q^2):(q-1)$ is parabolic. Now, $\tilde{G}$ contains an element $x$ of order $q^3+1$ and an element $y$ of order $(q^4-1)/(q+1)$. Using the fact that $q\geq 4$ we deduce that $\tilde{K}_M$ contains no element having order $q^3+1$ and $\tilde{H}_M$ contains no element having order $(q^4-1)/(q+1)$, up to replacing $x$ and $y$ with suitable $\mathrm{Aut}(\tilde{G})$-conjugates, we deduce 
$$C_{q^3+1}\cong \langle x\rangle\le \tilde{H}\le \tilde{H}_M\cong\mathrm{GU}_3(q)$$
and $$C_{(q^4-1)/(q+1)}\cong\langle y\rangle\le \tilde{K}\le \tilde{K}_M\cong E_q^4:\mathrm{SL}_2(q^2):(q-1).$$

We claim that 
$$\tilde{K}=\tilde{K}_M.$$
As $\tilde H\le \tilde{H}_M\cong\mathrm{GU}_3(q)$, $\tilde H$ contains no regular unipotent elements. Therefore, $\tilde K$ contains a regular unipotent element $u$. Let $\tilde P$ be the Fitting subgroup of $\tilde{K}_M$.  Observe that $\order {y^{q-1}}=q^2+1$ is the order of a Singer cycle in $\mathrm{SL}_2(q^2)$ so that, by Proposition \ref{sing-ord}, it is indeed a Singer cycle of  $\mathrm{SL}_2(q^2)$. Using the subgroup structure of $\mathrm{SL}_2(q^2)$, it can be verified that either $\langle \tilde{P},u,y^{q-1}\rangle/\tilde{P}\cong\mathrm{SL}_2(q^2)$ or $\langle \tilde{P},u,y^{q-1}\rangle/\tilde{P}\cong \langle y^{q-1}\rangle.2 $ and $q$ is even.  We show that the second possibility is ridiculous. Indeed, if $\langle \tilde{P},u,y^{q-1}\rangle/\tilde{P}\cong \langle y^{q-1}\rangle.2$, then a field automorphism of $\mathrm{SL}_2(q^2)$, when viewed as an element of $\mathrm{SL}_4(q)$, has Jordan form consisting only of one Jordan block, because so does $u$. However, it follows with an explicit computation fixing a basis that a field automorphism consists of two Jordan blocks of size $2$.

Thus $\tilde{K}\tilde{P}\ge \langle \tilde P,u,y^{q-1}\rangle\cong E_q^4:\mathrm{SL}_2(q^2)$. As $y\in \tilde K$, we have $\tilde K \tilde P=\tilde{K}_M$. 
 In particular, $\tilde{K}$ acts by conjugation irreducibly on $\tilde{P}$. Therefore, either $\tilde{K}=\tilde{K}_M$ or $\tilde{K}\cap \tilde{P}=1.$ In the first case, our claim is proved. In the second case, $\tilde{K}\cong \mathrm{SL}_2(q^2):(q-1)$ is a complement of $\tilde{P}$ in $\tilde{K}_M$. By~\cite{CPS}, $\tilde{K}$ is conjugate to the standard complement of $\mathrm{SL}_2(q^2):(q-1)$ in $E_q^4:\mathrm{SL}_2(q^2):(q-1)$. However, we obtain a contradiction because this group contains no regular unipotent elements.

We now claim that
$$\tilde{H}=\tilde{H}_M$$
and hence~\eqref{tired} is not satisfied.
We use the subgroup structure of $\mathrm{SU}_3(q)$ and the fact that $\tilde{H}\cap \mathrm{SU}_3(q)$ contains the Singer cycle $x^{q+1}$. Now, with the restriction that $q\ge 4$, the overgroups of $\langle x^{q+1}\rangle$ in $\mathrm{SU}_3(q)$ consist of only three groups
$$\langle x^{q+1}\rangle<{\bf N}_{\mathrm{SU}_3(q)}(\langle x^{q+1}\rangle)\cong (q^2-q+1).3<\mathrm{SU}_3(q).$$
Therefore, $\tilde{H}=\langle x\rangle$, or $\tilde{H}={\bf N}_{\tilde{G}}(\langle x\rangle)$, or $\tilde{H}=\tilde{H}_M$. However, the last possibility is excluded by~\eqref{tired}  and by the fact that $\tilde K=\tilde{K}_M$. To conclude our analysis and reach a contradiction we look at elements having order $q+1$. The elements of order $q+1$ in $\tilde{G}=\mathrm{SU}_4(q)$ are of two types: 
\begin{itemize}
\item they have $4$ distinct eigenvalues in $\mathbb{F}_{q^2}$, or
\item they have $3$ (or less) distinct eigenvalues in $\mathbb{F}_{q^2}$.
\end{itemize}
The elements of order $q+1$ in $\tilde{H}$ when $\tilde{H}=\langle y\rangle$ or $\tilde{H}={\bf N}_{\tilde{G}}(\langle y\rangle)$ cannot have four distinct eigenvalues because these elements arise as the $q^{2}-q+1$ power of $x$. Similarly, the elements of order $q+1$ in $\tilde{K}=E_q^4:\mathrm{SL}_2(q^2):(q-1)$ cannot have four distinct eigenvalues.

Summing up, when $\tilde{G}=\mathrm{SU}_4(q)$, the only weak normal $2$-covering has maximal components.

\smallskip

\noindent\textsc{Exceptional groups: Cases~\eqref{ddd1} and~\eqref{ddd2}.}

\smallskip

\noindent Suppose we  are in Case~\eqref{ddd2}, that is, $G=F_4(q)$, $H_M\cong {}^3D_4(q).3$, $K_M\cong \mathrm{Spin}_9(q)\cong 2.\Omega_9(q)$, $q=3^a$ and $a\ge 1$. When $q=3$, we have verified that the only weak normal $2$-covering of $G$ uses maximal components. Assume now $q>3$. Now, $G$ contains elements of order $q^4 - q^2 + 1$
and elements of order $(q + 1)(q^3-1)$. Let $x,y\in G$ with $\order x=q^4-q^2+1$ and $\order y=(q+1)(q^3-1)$. As $K_M$ contains no element having order $q^4-q^2+1$ and $H_M$ contains no element having order $(q+1)(q^3-1)$, we deduce that $H$ contains elements of order $q^4-q^2+1$ and $K$ contains elements of order $(q+1)(q^3-1)$. Hence $(q^4-q^2+1)\le H\le H_M\cong {}^3D_4(q).3$ and $((q+1)(q^3-1))\le K\le K_M\cong 2.\Omega_9(q)$. 

We use the subgroup structure of ${}^3D_4(q).3$, see~\cite{bhr}. The overgroups of $\langle x\rangle$ in ${}^3D_4(q)$ consist of only three groups 
$$\langle x\rangle<{\bf N}_{{}^3D_4(q)}(\langle x\rangle)\cong (q^4-q^2+1).[4]<{}^3D_4(q).$$
In particular, $H\cap {}^3D_4(q)$ is one of these groups. Now, the unipotent elements of $2.\Omega_9(q)$ have order $1,3,9$ and the unipotent elements of ${}^3D_4(q)$ have also order $1,3,9$. However, in ${}^3D_4(q).3\setminus {}^3D_4(q)$ there are unipotent elements having order $27$. This shows that $H$ contains a unipotent element $u$ having order $27$. Thus $u^3\in {}^3D_4(q)$ has order $9$ and $\langle x,u^3\rangle\le H$. Therefore, $\langle x,u^3\rangle\cong {}^3D_4(q)$ and $H_M=H=\langle x,u\rangle={}^3D_4(q).3$.

We use the subgroup structure of $2.\Omega_9(q)$, see~\cite{bhr}. Now, $2.\Omega_9(q)$ contains an element $z$ of order $q^2 + 1$. As $H=H_M$ contains no element having order $q^2+1$, we deduce that $K$ contains elements of order $q^2+1$. In particular, we may suppose, replacing $z$ if necessary, that $z\in K$ and hence $\langle y,z\rangle\leq K$. In particular, $|K|$ is divisible by $(q^2+1)(q^2-1)(q^3-1)/2$. Using the subgroup structure of $2.\Omega_9(q)$ we deduce that $K=K_M\cong 2.\Omega_9(q)$.

Summing up, when $G=F_4(q)$, the only weak normal $2$-covering has maximal components.

\medskip

Suppose now that we are in Case \eqref{ddd1}.
Here, $G=G_2(q)$, $q\ge 4$ and $q$ is even. When $q=4$, we have used a computer to establish the result. Therefore, for the rest of the argument we suppose that $q\ge 8$. Here, we have $H\le H_M\cong \mathrm{SL}_3(q).2$ and $K\le K_M\cong\mathrm{SU}_3(q).2$. Now, $H_M$ contains elements having order $q^2+q+1$ and $K_M$ contains elements of order $q^2-q+1$. As $K_M$ contains no element having order $q^2+q+1$ and $H_M$ contains no element having order $q^2-q+1$, we deduce that $H$ contains elements of order $q^2+q+1$ and $K$ contains elements of order $q^2-q+1$. Hence $(q^2+q+1)\le H\le H_M\cong \mathrm{SL}_3(q).2$ and $(q^2-q+1)\le K\le K_M\cong \mathrm{SU}_3(q).2$.


Using~\cite{carter}, we see that there exists a $1$ to $1$ correspondence between the conjugacy classes of maximal tori in $G_2(q)$ and the conjugacy classes in the Weyl group $W$ of $G_2(q)$. As $W\cong D_{12}$ is dihedral of order $12$, we see that $W$ has two distinct conjugacy classes of reflections, and these are preserved by field automorphisms of $G_2(q)$. These two conjugacy classes correspond to maximal tori of order $q^2-1$ in $G_2(q)$. This yields that, if $y_1\in \mathrm{SL}_3(q).2$ has order $q^2-1$ and $y_2\in \mathrm{SU}_3(q).2$ has order $q^2-1$, then $\langle y_1\rangle$ and $\langle y_2\rangle$ are not $\mathrm{Aut}(G)$-conjugate. In particular, both $H$ and $K$ contain an element having order $q^2-1$. This yields
$\mathrm{SL}_3(q)\le H\le H_M\cong \mathrm{SL}_3(q).2$ and $\mathrm{SU}_3(q)\le K\le K_M\cong \mathrm{SU}_3(q).2$.

In the process of computing the character table for $G_2(q)$, Enomoto and Yamada~\cite[Section~1]{enomoto1} have shown that there exists two conjugacy classes of cyclic unipotent subgroups of order $8$, with representatives $\langle u_1\rangle$ and $\langle u_2\rangle$. Now, as $\mathrm{SL}_3(q).2$ and $\mathrm{SU}_3(q).2$ contain elements having order $8$, we deduce, replacing $u_1$ with $u_2$ if necessary, that $H$ contains a $\mathrm{Aut}(G)$-conjugate $u_1'$ of $u_1$ and $K$ contains a $\mathrm{Aut}(G)$-conjugate $u_2'$ of $u_2$. Thus
$H=\langle \mathrm{SL}_3(q),u_1'\rangle=H_M\cong\mathrm{SL}_3(q).2$ and $K=\langle \mathrm{SU}_3(q),u_2'\rangle=K_M\cong\mathrm{SU}_3(q).2$.
 


\smallskip

\noindent\textsc{Symplectic groups: Cases~\eqref{ddd10} and~\eqref{ddd11}.}

\smallskip

\noindent 
 We now consider Case~\eqref{ddd10}, with $n\ge 6$. When $q\in \{2,4\}$ and $n\in \{6,8\}$, we have determined the weak normal $2$-coverings with the help of a computer. In particular, when $n\in \{6,8\}$, we may suppose that $q\ge 8$.
 
 Here, we have $H\le H_M\cong \mathrm{SO}_n^-(q)$ and $K\le K_M\cong \mathrm{SO}_n^+(q)$.  Now, for each $\ell\in \{0,\ldots,n/2-1\}$, $G$ contains an element $x_\ell$ with 
 \[
 {\bf o}(x_\ell)
 =
 \begin{cases}
\frac{(q^\ell-1)(q^{n/2-\ell}+1)}{\gcd(q^\ell-1,q^{n/2-\ell}+1)}& \textrm{if } \ell>0,\\
q^{n/2}+1& \textrm{if } \ell=0.
\end{cases}
\] As $K_M$ contains no element $\mathrm{Aut}(G)$-conjugate to $x_\ell$ (one can see this for example using~\cite{guest}), replacing $x_\ell$ with a suitable $\mathrm{Aut}(G)$-conjugate, we have $x_\ell\in H$ for all $\ell\in \{0,\ldots,n/2-1\}$. In particular, $H$ contains the Singer cycle $x_0$ for $\Omega_{n}^-(q)$. By using Lemma~\ref{msw-} when $n\ge 8$ and~\cite{bhr} when $n=6$, we obtain that one of the following holds
\begin{itemize}
\item there exists a prime $s$ with $s$ dividing $n/2$, $n/s\ge 4$ such that 
$H\le \Omega_{n/s}^-(q^s).s$, 
\item $n/2$ is odd and $H\le \mathrm{GU}_{n/2}(q).2$, or
\item $\Omega_{n}^-(q)\le H$.
\end{itemize}
Here we are using the fact that, when $n=6$, $q\ge 8$. Now, it is not hard to check that it is impossible for $|\Omega_{n/s}^-(q^s).s|$ or for $|\mathrm{GU}_{n/2}(q).2|$ to be divisible by $(q^\ell-1)(q^{n/2-\ell}+1)/\gcd(q^\ell-1,q^{n/2-\ell}+1)$, for every $\ell\in \{1,\ldots,n/2-1\}$. Therefore, $$\Omega_{n}^-(q)\le H\le \mathrm{SO}_n^-(q)\cong\Omega_n^-(q).2.$$

Now, for each $\ell\in \{1,\ldots,n/2-1\}$, $G$ contains an element $y_\ell$ with $${\bf o}(y_\ell)=\frac{(q^\ell+1)(q^{n/2-\ell}+1)}{\gcd(q^\ell+1,q^{n/2-\ell}+1)}.$$ 
and type $2\ell\oplus (n-2\ell)$. As $H_M$ contains no element $\mathrm{Aut}(G)$-conjugate to $y_\ell$, replacing $y_\ell$ with a suitable $\mathrm{Aut}(G)$-conjugate, we have $y_\ell\in K$ for all $\ell\in \{1,\ldots,n/2-1\}$. In particular, $K$ contains $y_1$, which has type $2\oplus (n-2)$. By using Lemma~\ref{msw+} when $n\ge 10$ and~\cite{bhr} when $n\in\{6,8\}$, we obtain that one of the following holds
\begin{itemize}
\item $K\le (\Omega_{2}^-(q)\perp\Omega_{n-2}^-(q)).[4]$, or
\item $n/2$ is even and $K\le \mathrm{GU}_{n/2}(q).[4]$, or
\item $\Omega_n^+(q)\le K$.
\end{itemize}
Here we are using the fact that, when $n\in\{6,8\}$, $q\ge 8$. Now, it is not hard to check that it is impossible  for $|\mathrm{GU}_{n/2}(q).[4]|$ to be divisible by $(q^i+1)(q^{n/2-i}+1)/\gcd(q^i+1,q^{n/2-i}+1)$, for every $i\in \{1,\ldots,n/2-1\}$. Similarly, $|(\Omega_2^-(q)\perp\Omega_{n-2}^-(q)).[4]|$ is divisible by $(q^i+1)(q^{n/2-i}+1)/\gcd(q^i+1,q^{n/2-i}+1)$, for every $i\in \{1,\ldots,n/2-1\}$, if and only if $q\in \{2,4\}$. However, when $n\ge 8$, the group $\Omega_2^-(q)\perp\Omega_{n-2}^-(q)$ cannot contain $y_2$, which has type $4\oplus (n-4)$. Therefore, $$\Omega_{n}^+(q)\le K\le \mathrm{SO}_n^+(q)\cong\Omega_n^+(q).2.$$

We summarize here Lemma~\ref{aggiunta}. The group $G=\mathrm{Sp}_n(q)$ contains regular unipotent elements $u_+$ and $u_-$, with $u_+$ preserving a non-degenerate quadratic form having Witt index $0$ and polarizing the symplectic form on $G$ and with $u_-$ preserving a non-degenerate quadratic form having Witt index $1$ and polarizing the symplectic form on $G$. Furthermore, $u_+$ and $u_-$ are in distinct $\mathrm{Aut}(\mathrm{Sp}_n(q))$-conjugacy classes. Now, $\Omega_{n}^+(q)$ and $\Omega_n^-(q)$ do not contain regular unipotent elements and $u_+\in\mathrm{SO}_n^+(q)\setminus \Omega_n^+(q)$ and $u_-\in \mathrm{SO}_n^-(q)\setminus \Omega_n^-(q)$. Moreover, $u_+$ has no $\mathrm{Aut}(\mathrm{Sp}_n(q))$-conjugate in $\mathrm{SO}_n^-(q)$ and  $u_-$ has no $\mathrm{Aut}(\mathrm{Sp}_n(q))$-conjugate in $\mathrm{SO}_n^+(q)$. In particular, replacing $u_+$ and $u_-$ by suitable $\mathrm{Aut}(G)$-conjugates, we have that $u_-\in H\setminus\Omega_n^-(q)$ and $u_+\in K\setminus\Omega_n^+(q)$. Therefore,  $H=H_M\cong\mathrm{SO}_n^-(q)$  and $K=K_M\cong\mathrm{SO}_n^+(q)$.

\medskip

We now consider Case~\eqref{ddd10}, with $n= 4$. When $q=4$, we have determined the weak normal $2$-coverings with the help of a computer. Therefore, for the rest of the proof, we suppose $q\ge 8$.
 Here, we have $H\le H_M\cong \mathrm{SO}_4^-(q)$ and $K\le K_M\cong \mathrm{SO}_4^+(q)$. The group $G$ contains a Singer cycle $x$ having order $q^2+1$ and, in the proof of Lemma~\ref{dimension4evensymplectic}, we have proved that $G$ contains an element $x$ having order $q^2+1$ and an element $y$ having order $q+1$ and with $|{\bf C}_{G}(y)|=(q+1)^2$. 
  Moreover, we have shown that $y$ has no $\mathrm{Aut}(G)$-conjugate in $H_M\cong\mathrm{SO}_4^-(q)$. Therefore,
$$C_{q^2+1}\cong \langle x\rangle\le H\le H_M\cong\mathrm{SO}_4^-(q)\,\hbox{ and }\,C_{q+1}\cong \langle y\rangle\le K\le K_M\cong \mathrm{SO}_4^+(q).$$
Now, $\mathrm{SO}_4^-(q)\cong\mathrm{SL}_2(q^2).2$ and $\mathrm{SO}_4^+(q)\cong (\mathrm{SL}_2(q)\times \mathrm{SL}_2(q)).2$. By consulting the maximal subgroups of $\mathrm{SL}_2(q^2)$, we see that one of the following holds:
\begin{itemize}
\item $H\le {\bf N}_{\mathrm{SO}_4^-(q)}({\langle x\rangle})\cong (q^2+1).[4]$, or
\item $\Omega_4^-(q)\le H$.
\end{itemize}
Similarly, by consulting the maximal subgroups of $\mathrm{SL}_2(q)\times\mathrm{SL}_2(q)$, 
we see that one of the following holds:\begin{itemize}
\item $C_{q+1}\times C_{q+1}\le K\le (D_{2(q+1)}\times D_{2(q+1)}).2$, or
\item $\Omega_4^+(q)\le K$.
\end{itemize}

The group $G=\mathrm{Sp}_4(q)$ contains regular unipotent elements $u_+$ and $u_-$, with $u_+$ preserving a non-degenerate quadratic form having Witt index $0$ and polarizing the symplectic form on $G$ and with $u_-$ preserving a non-degenerate quadratic form having Witt index $1$ and polarizing the symplectic form on $G$. Now, $\Omega_{4}^+(q)$ and $\Omega_4^-(q)$ do not contain regular unipotent elements and $\mathrm{SO}_n^+(q)\setminus \Omega_4^+(q)$ and $\mathrm{SO}_4^-(q)\setminus \Omega_4^-(q)$ both contain regular unipotent elements. Additionally, it can be verified that $u_+$ and $u_-$ are not conjugate by a graph or a field automorphism of $G$. In particular, replacing $u_+$ and $u_-$ by suitable $\mathrm{Aut}(G)$-conjugates, we have that $u_-\in H\setminus\Omega_n^-(q)$ and $u_+\in K\setminus\Omega_n^+(q)$.  Taking this into account, we have that one of the followings holds:
\begin{itemize}
\item $H={\bf N}_{\mathrm{SO}_4^-(q)}({\langle x\rangle})\cong (q^2+1).[4]$, or
\item $H=\mathrm{SO}_4^-(q)$.
\end{itemize}
Moreover,
\begin{itemize}
\item $(C_{q+1}\times C_{q+1}).4\le K\le (D_{2(q+1)}\times D_{2(q+1)}).2$, or
\item $K=\mathrm{SO}_4^+(q)$.
\end{itemize} 

We claim  that $H\cong (q^2+1).[4]$ and $K\cong\mathrm{SO}_4^+(q)$ does give rise to a weak normal $2$-covering. Indeed, since $\mathrm{SO}_4^+(q)$ and $\mathrm{SO}_4^-(q)$ is a normal $2$-covering, it suffices to show that each element $g$ in $\mathrm{SO}_4^-(q)\setminus H$ has an $\mathrm{Aut}(\mathrm{Sp}_4(q))$-conjugate in either $H$ or $K\cong\mathrm{SO}_4^+(q)$. Using the fact that $\mathrm{SO}_4^-(q)\cong\mathrm{SL}_2(q^2):2$, we see that $\order g$ is a divisor of $q^2+1$, or $q^2-1$, or $4$, or $2(q-1)$, or $2(q+1)$. Now, the argument is a case by case analysis using also the parametrization of the $\mathrm{Sp}_4(q)$-conjugacy classes in~\cite{enomoto}. We just give details when $\order g=2(q+1)$ and omit the order cases. Here, $g=su$, where $\order u=2$ and $\order s=q+1$. In particular, $s$ is a semisimple element and hence the vector space $V$ decomposes as the direct sum of non-degenerate symplectic spaces $V_1\oplus V_2$ with $\dim V_1=\dim V_2=2$. Observe that $u$ either fixes setwise $V_1$ and $V_2$ or $u$ swaps $V_1$ and $V_2$. Now, a graph automorphism of $\mathrm{Sp}_4(q)$ maps the $\mathcal{C}_8$-subgroup $K=\mathrm{SO}_4^+(q)$ to a $\mathcal{C}_2$-subgroup $(\mathrm{Sp}_2(q)\times\mathrm{Sp}_2(q)):2$ and hence $g$ has an $\mathrm{Aut}(\mathrm{Sp}_4(q))$-conjugate in $K$.

We claim  that the weak normal $2$-covering $H\cong (q^2+1).[4]$ and $K\cong\mathrm{SO}_4^+(q)$ does not give rise to normal $2$-coverings. We argue by contradiction. Here we need to be careful with the role played by the graph automorphism of $\mathrm{Sp}_4(q)$. We argue using elements having order $q^2-1$. As $H$ contains no elements having order $q^2-1$, we deduce that each element having order $q^2-1$ in $\mathrm{Sp}_4(q)$ has a $\mathrm{Sp}_4(q)$-conjugate in $K$. Let $g:=s\oplus s(-1)^T$, where $s\in\mathrm{GL}_2(q)$ is a Singer cycle and $g$ fixes a $2$-dimensional totally isotropic subspace of $\mathbb{F}_q^4$. Since $g$ has a $\mathrm{Sp}_4(q)$-conjugate in $H$, we deduce that $K$ cannot be in the Aschbacher class $\mathcal{C}_2$. Thus $K$ must be in the Aschbacher class $\mathcal{C}_8$, see~\cite[Table~8.15]{bhr}. Now, we suppose that the matrix defining the symplectic form for $\mathrm{Sp}_4(q)$ is
\[
J:=\begin{pmatrix}
0&1&0&0\\
1&0&0&0\\
0&0&0&1\\
0&0&1&0
\end{pmatrix}.
\]
Without loss of generality, we may suppose that $\mathrm{SO}_4^+(q)$ preserves the quadratic form
$$x_1x_2+x_3x_4.$$
Let $A\in\mathrm{SL}_2(q)$ be a Singer cycle having order $q+1$ and let $a\in\mathbb{F}_q^\ast$ with $\order a=q-1$. Set
\[
g:=\begin{pmatrix}A&0&0\\0&a&0\\0&0&a^{-1}\end{pmatrix}
.\]
Now, $g$ induces on $\langle e_1,e_2\rangle$ a matrix having order $q+1$ and hence the putative quadratic form of Witt index 0 preserved by $g$ must restrict to $\langle e_1,e_2\rangle$ to a quadratic form having Witt index $1$, because $\mathrm{O}_2^+(q)$ has no elements of order $q+1$. However, this is impossible.

Recall that, by~\eqref{tired}, we have $H<H_M$ or $K<K_M$. Clearly, when 
 $H={\bf N}_{\mathrm{SO}_4^-(q)}({\langle x\rangle})\cong (q^2+1).[4]$ and $(C_{q+1}\times C_{q+1}).4\le K\le (D_{2(q+1)}\times D_{2(q+1)}).2$,  we have no weak normal $2$-coverings, because we are not covering elements having order $q-1$. Finally, suppose that $H\cong\mathrm{SO}_4^-(q)$ and $(C_{q+1}\times C_{q+1}).4\le K\le (D_{2(q+1)}\times D_{2(q+1)}).2$. 
we argue by contradiction and we suppose that $H$ and $K$ do form a weak normal $2$-covering. As $K$ has no element of order $q-1$, we deduce that each element having order $q-1$ in $\mathrm{Sp}_4(q)$ has an $\mathrm{Aut}(\mathrm{Sp}_4(q))$-conjugate in $H$. We use the matrix $J$ in the previous paragraph for the symplectic form on $\mathrm{Sp}_4(q)$. Let $a,b\in\mathbb{F}_q^\ast$ with $\order a=\order b=q-1$ and with $\{a,a^{-1},b,b^{-1}\}$ having cardinality $4$. Observe that this is possible because $q\ge 4$.
Set
\[
g:=\begin{pmatrix}a&0&0&0\\0&a^{-1}&0&0\\0&0&b&0\\0&0&0&b^{-1}\end{pmatrix}
.\]
Recall again that under a graph automorphism $H$ is in the Aschbacher class $\mathcal{C}_3$ or $\mathcal{C}_8$. The elements of order $q-1$ in the $\mathcal{C}_3$-class $\mathrm{SL}_2(q^2):2$ cannot have four distinct eigenvalues. Thus $g$ must be conjugate, under an inner or field automorphism, to a maximal subgroup $\mathrm{SO}_4^-(q)$ in the Aschbacher class $\mathcal{C}_8$. 
Now, $\langle e_1,e_2\rangle$ and $\langle e_3,e_4\rangle$ are two $2$-dimensional non-degenerate subspaces of $\mathbb{F}_q^4$ which are mutually orthogonal. Moreover, on each subspace $g$ induces a matrix of order $q-1$. Therefore, our putative quadratic form having Witt index $1$ must restrict to both $\langle e_1,e_2\rangle$ and $\langle e_3,e_4\rangle$ to a quadratic form having Witt index $1$, because $\mathrm{O}_2^+(q)$ does not contain elements of order $q+1$.
Thus we have reached in this way a contradiction.
 
\medskip

 We now consider Case~\eqref{ddd11}. Here it is more convenient to work with $\tilde{G}=\mathrm{Sp}_6(q)$. We have $\tilde{H}\le\tilde{H}_M\cong \mathrm{Sp}_2\perp\mathrm{Sp}_4(q)$ and $\tilde{K}\le \tilde{K}_M\cong \mathrm{Sp}_2(q^3).3$.  Now, $\tilde{G}$ contains an element $x$ of order $q^2+1$ and an element $y$ of order $q^3+1$. As $\tilde{K}_M$ contains no element having order $q^2+1$ and $\tilde{H}_M$ contains no element having order $q^3+1$, up to replacing $x$ and $y$ with suitable $\mathrm{Aut}(\tilde{G})$-conjugates, we deduce 
$$C_{q^2+1}\cong \langle x\rangle\le \tilde{H}\le \tilde{H}_M\cong \mathrm{Sp}_2(q)\perp\mathrm{Sp}_4(q)$$ and $$C_{q^3+1}\cong\langle y\rangle\le \tilde{K}\le \tilde{K}_M\cong \mathrm{Sp}_2(q^3).3.$$

We claim that $\tilde{K}=\tilde{K}_M$. The group $\tilde{G}$ contains unipotent elements $u_1$  having one Jordan block of size $6$. Elements of this type are not in $\tilde{H}_M\cong \mathrm{Sp}_2(q)\perp\mathrm{Sp}_4(q)$ and hence $u_1$ has an $\mathrm{Aut}(\tilde{G})$-conjugate in $\tilde{K}$. Observe that $u_1$ has order $9$ and hence
$\langle y,u_1^3\rangle$ is isomorphic to a subgroup of $\mathrm{Sp}_2(q^3)$ having order divisible by $3(q^3+1)$. Considering the subgroup structure of $\mathrm{Sp}_2(q^3)$, we deduce $\langle u_1^3,y\rangle\cong\mathrm{Sp}_2(q^3)$. Hence $\tilde{K}=\langle u_1,y\rangle=\tilde{K}_M$.

We claim that $\tilde{H}=\tilde{H}_M$, contradicting~\eqref{tired}. Among the elements of order $q^2+1$ we can choose $x$ fixing  pointwise a $2$-dimensional subspace of $\mathbb{F}_q^6$ and hence $x\in \mathrm{Sp}_4(q)$. The group $\tilde{G}$ contains unipotent elements $u_2$  having three Jordan blocks of size $1$, $1$ and $4$ respectively. Elements of this type are not in $\tilde{K}=\tilde{K}_M$ because the non-identity unipotent elements in $\tilde{K}$ have Jordan blocks of type $6$, or $2\oplus 2\oplus 2$, or $3\oplus 3$. Hence $u_2$ has an $\mathrm{Aut}(\tilde{G})$-conjugate in $\tilde{H}$. Observe that $u_2$ has order $9$. 
Hence $\langle x,u_2\rangle$ is isomorphic to a subgroup of $\mathrm{Sp}_4(q)$ having order divisible by $9(q^2+1)$ and containing a cyclic subgroup of order $9$. Considering the subgroup structure of $\mathrm{Sp}_4(q)$, we deduce $\langle u_2,x\rangle\cong\mathrm{Sp}_4(q)$. To show that the remaining $\mathrm{Sp}_2(q)$ direct factor of $\tilde{H}_M$ is also contained in $\tilde{H}$ it suffices to consider elements $x'$ having order $(q^2+1)(q+1)/2$ and type $2\oplus 4$ (where the matrix induced on the two invariant subspaces has order $q+1$ and $q^2+1$, respectively) and $u_2'$ having order $9$ and consisting of two Jordan blocks of size $2,4$. These elements are not $\mathrm{Aut}(\tilde{G})$-conjugate to elements in $\tilde{K}$ and hence we may assume that $u_2',x'\in\tilde{H}$. Now, $\mathrm{Sp}_2(q)\perp\mathrm{Sp}_4(q)\cong\langle u_2,x,u_2',x'\rangle\le \tilde{H}$.

\begin{table}[ht]
\begin{tabular}{c|c|c|c|c|c}
\toprule[1.5pt]
Group& Comp. $H$&Comp. $K$&Comments&Normal&Nr. \\
\midrule[1.5pt]
$A_5$&$A_5\cap(S_2\times S_3)$&$D_{10}$&&1&5\\
                 &$A_4$&$D_{10}$&&1&\\
    &$A_5\cap(S_2\times S_3)$&$5<D_{10}$&&1&\\
    &$A_4$&$5<D_{10}$&&1&\\ 
                 &$A_3<A_4$&$D_{10}$&&1&\\ 

$A_6$&$A_6\cap(S_2\times S_4)$ &$A_5$ &&2&8\\
                 &$A_6\cap (S_3\mathrm{wr} S_2)$ &$A_5$ &&2&\\
&$D_8<A_6\cap(S_2\times S_4)$ &$A_5$ &$|H|=8$&0&\\
&$A_6\cap (S_3\mathrm{wr} S_2)$ &$D_{10}<A_5$ &&1&\\
&$A_6\cap(S_2\times S_4)$ &$D_{10}<A_5$ &&0&\\
&$C_4<D_8<A_6\cap(S_2\times S_4)$ &$A_5$ &$|H|=4$&0&\\
&$A_6\cap (S_3\mathrm{wr} S_2)$ &$5<D_{10}<A_5$ &&1&\\
&$A_6\cap(S_2\times S_4)$ &$5<D_{10}<A_5$ &&0&\\

$A_7$&$A_7\cap(S_2\times S_5)$&$\mathrm{SL}_3(2)$&&2&2\\
 &$A_7\cap(S_2\times S_5)$&$7:3<\mathrm{SL}_3(2)$&&1&\\

$A_8$&$A_8\cap(S_3\times S_5)$&$2^3:\mathrm{SL}_3(2)$&&2&4\\
 &$A_3\times A_5<A_8\cap(S_3\times S_5)$&$2^3:\mathrm{SL}_3(2)$&&2&\\
 &$\langle(1\,2\,3)(4\,5\,6\,7\,8),(2\,3)(5\,7\,8\,6)\rangle<A_8\cap(S_3\times S_5)$&$2^3:\mathrm{SL}_3(2)$&$|H|=60$&2&\\
&$A_3\times D_{10}<A_8\cap(S_3\times S_5)$&$2^3:\mathrm{SL}_3(2)$&$|H|=30$&2&\\

$A_9$&$A_9\cap(S_4\times S_5)$&$\mathrm{P}\Gamma\mathrm{L}_2(8)$&&0&1\\
\bottomrule[1.5pt]
\end{tabular}
\caption{Components of the weak normal $2$-coverings of non-abelian simple groups: alternating groups}\label{00:Alt}
\end{table}

\begin{table}[ht]
\begin{tabular}{c|c|c|c|c|c}
\toprule[1.5pt]
Group& Comp. $H$&Comp. $K$&Comments&Normal&Nr. \\
\midrule[1.5pt]
$M_{11}$&$M_8:S_3\cong 2^\cdot S_4$&$\mathrm{PSL}_2(11)$&notation from~\cite{atlas}&1&8\\
&$M_9:S_2\cong 3^2:Q_8.2$&$\mathrm{PSL}_2(11)$&&1&\\
&$M_{10}\cong A_6.2$&$\mathrm{PSL}_2(11)$&&1&\\

&$\langle(1\, 5\, 3\, 8\, 9\, 2\, 7\, 11)(4\, 10),(1\, 6\, 9)(2\, 7\, 8)(3\, 5\, 11) \rangle<M_9:S_2$&$\mathrm{PSL}_2(11)$&$|H|=72$&1&\\
&$Q_8.2<(M_{8}:S_3)\cap (M_9:S_2)\cap M_{10}$&$\mathrm{PSL}_2(11)$&$|H|=16$&1&\\
&$C_8<Q_8.2<(M_{8}:S_3)\cap (M_9:S_2)\cap M_{10}$&$\mathrm{PSL}_2(11)$&$|H|=8$&1&\\
&$M_8:S_3\cong 2^\cdot S_4$&$C_{11}: C_5<\mathrm{PSL}_2(11)$&notation from~\cite{atlas}&1&\\
&$M_{10}\cong A_6.2$&$C_{11}: C_5<\mathrm{PSL}_2(11)$&&1&\\

$M_{12}$&$M_{10}:2\cong A_6.2^2$&$\mathrm{PSL}_2(11)$&notation from~\cite{atlas}&0&3\\
&$M_{11}$&$2\times S_5$&&0&\\
&$M_{11}$&$2\times A_5<2\times S_5$&&0&\\
\bottomrule[1.5pt]
\end{tabular}
\caption{Components of the weak normal $2$-coverings of non-abelian simple groups: sporadic groups}\label{00:sporadic}
\end{table}

\begin{table}[ht]
\begin{tabular}{c|c|c|c|c|c}
\toprule[1.5pt]
Group& Comp. $H$&Comp. $K$&Comments&Normal&Nr. \\
\midrule[1.5pt]
$G_2(q)$&$\mathrm{SL}_3(q).2$&$\mathrm{SU}_3(q).2$ &$q\ge 4$, $q$ even&1&1\\
$G_2(2)'$&$\mathrm{PSL}_2(7)$&$4\cdot S_4$&$G_2(2)'\cong \mathrm{PSU}_3(3)$&1&4\\
&$\mathrm{PSL}_2(7)$&$3_+^{1+2}:8$&&1&\\
&$\mathrm{PSL}_2(7)$&$3:8<(4.S_4)\cap (3_{+}^{1+2}:8)$&&1&\\
&$7:3<\mathrm{PSL}_2(7)$&$4.S_4$&&1&\\

$G_2(3)$&$\mathrm{PSL}_2(13)$&$[q^5]:\mathrm{GL}_2(3)$&&0&2\\
&$\mathrm{PSL}_3(3):2$&$\mathrm{PSL}_2(8):3$&&0&\\
$^{2}G_2(3)'$&$D_{18}$&$D_{14}$&$^{2}G_2(3)'\cong \mathrm{PSL}_2(8)$&1&5\\
&$D_{18}$&$2^3:7$&&1&\\
&$C_9<D_{18}$&$2^3:7$&&1&\\
&$C_9<D_{18}$&$D_{14}$&&1&\\
&$D_{18}$&$C_7<D_{14}$&&1&\\
$^{2}F_4(2)'$&$2.[2^8]:5:4$&$\mathrm{PSL}_3(3):2$&&1&2\\
&$2.[2^8]:5:4$&$\mathrm{PSL}_2(25)$&&1&\\
$F_4(q)$&${}^3D_4(q).3$&$\mathrm{Spin}_9(q)$&$q=3^a$&1&1\\
\bottomrule[1.5pt]
\end{tabular}
\caption{Components of the weak normal $2$-coverings of non-abelian simple groups: exceptional groups}\label{00:exceptional}
\end{table}

\begin{table}[!h]
\begin{tabular}{c|c|c|c|c|c}
\toprule[1.5pt]
Group& Comp. $H$&Comp. $K$&Comments&Normal&Nr.\\
\midrule[1.5pt]
$\mathrm{PSL}_2(7)$&$S_4 $ &parabolic &&$2$&4\\
&$D_8<S_4 $ &parabolic &&$1$&\\
&$C_4<D_8<S_4 $ &parabolic &&$1$&\\
&$S_4 $ &$C_7<$ parabolic&&$2$&\\

 $\mathrm{PSL}_2(q)$&$D_{q+1}$ &parabolic &$q$ odd, $q>9$&$1$&2\\
&$C_{(q+1)/2}<D_{q+1}$ &parabolic &$q$ odd, $q>9$&$1$&\\
$\mathrm{PSL}_2(q)$                  &$D_{2(q+1)}$&parabolic&$q>4$, $q$ even&1&5\\
&$C_{q+1}<D_{2(q+1)}$&parabolic&$q>4$, $q$ even&1&\\
&$D_{2(q+1)}$&$C_{q-1}<$parabolic&$q>4$, $q$ even&1&\\
                 &$D_{2(q+1)}$&$D_{2(q-1)}$&$q>4$, $q$ even&1&\\
&$C_{q+1}<D_{2(q+1)}$&$D_{2(q-1)}$&$q>4$, $q$ even&1&\\

$\mathrm{PSL}_3(q)$&$\left(\frac{q^2+q+1}{\gcd(3,q-1)}\right):3$&$E_q^2:\mathrm{GL}_2(q)$ parabolic&$\gcd(3,q)=1$, $q>4$&2&$2$\\
&$\frac{q^2+q+1}{\gcd(3,q-1)}$&$E_q^2:\mathrm{GL}_2(q)$ parabolic&$\gcd(3,q)=1$, $q>4$&2&\\

$\mathrm{PSL}_3(3)$&$13:3$& $E_3^2:\mathrm{GL}_2(3)$ parabolic&&2&$4$\\
&$13$&$E_3^2:\mathrm{GL}_2(3)$ parabolic&&2&\\
&$13:3$&$E_3^2:(8:2)<E_3^2:\mathrm{GL}_2(3)$ &&2&\\
&$13:3$&$\mathrm{GL}_2(3)<E_3^2:\mathrm{GL}_2(3)$&&1&\\

$\mathrm{PSL}_3(q)$&$\left(\frac{q^2+q+1}{\gcd(3,q-1)}\right):3$&$E_q^2:\mathrm{GL}_2(q)$ parabolic&$\gcd(3,q)=3$, $q>3$&2&$3$\\
&$\frac{q^2+q+1}{\gcd(3,q-1)}$&$E_q^2:\mathrm{GL}_2(q)$ parabolic&$\gcd(3,q)=3$, $q>3$&2&\\
&$\frac{q^2+q+1}{\gcd(3,q-1)}:3$&$\mathrm{GL}_2(q)<E_q^2:\mathrm{GL}_2(q)$&$\gcd(3,q)=3$, $q>3$ &1&\\

$\mathrm{PSL}_3(4)$&$\mathrm{SL}_3(2)$&$A_6$&&0&$13$\\
&$\mathrm{SL}_3(2)$&parabolic&&6&\\
&$7:3<\mathrm{SL}_3(2)$&parabolic&&2&\\
&$\mathrm{SL}_3(2)$&$K<$ parabolic&$|K|=60$&0&\\
&$\mathrm{SL}_3(2)$&$K<$ parabolic&$|K|=60$&0&\\
&$\mathrm{SL}_3(2)$&$K<$ parabolic&$|K|=80$&0&\\
&$\mathrm{SL}_3(2)$&$K<$ parabolic&$|K|=160$&6&\\
&$7:3<\mathrm{SL}_3(2)$&$K<$ parabolic&$|K|=160$&2&\\
&$\mathrm{SL}_3(2)$&$5:2$&&0&\\
&$\mathrm{SL}_3(2)$&$5<5:2$&&0&\\
&$7:3<\mathrm{SL}_3(2)$&$A_6$&&0&\\
&$7<7:3<\mathrm{SL}_3(2)$&$A_6$&&0&\\
&$7<7:3<\mathrm{SL}_3(2)$&parabolic&&2&\\

$\mathrm{PSL}_4(q)$&$\frac{1}{d}\mathrm{SL}_2(q^2).(q+1).2$&$\frac{1}{d}E_q^3:\mathrm{GL}_3(q)$&$d:=\gcd(4,q-1)$, $q\ge 4$& $2$ & $2$\\
&$\frac{1}{d}\mathrm{SL}_2(q^2).(q+1)<\frac{1}{d}\mathrm{SL}_2(q^2).(q+1).2$&$\frac{1}{d}E_q^3:\mathrm{GL}_3(q)$&$d:=\gcd(4,q-1)$, $q\ge 4$& $2$ & \\
$\mathrm{PSL}_4(3)$&$\frac{1}{2}\mathrm{SL}_2(9).4.2$&$\frac{1}{2}E_3^3:\mathrm{GL}_3(3)$& & $2$ & $4$\\
&$\frac{1}{2}\mathrm{SL}_2(9).4<\frac{1}{2}\mathrm{SL}_2(9).4.2$&$\frac{1}{2}E_3^3:\mathrm{GL}_3(3)$& & $2$ & \\
&$(2\times A_5).2^2<\frac{1}{2}\mathrm{SL}_2(9).4.2$&$\frac{1}{2}E_3^3:\mathrm{GL}_3(3)$& & $0$ & \\
&$A_5.4<(2\times A_5).2^2<\frac{1}{2}\mathrm{SL}_2(9).4.2$&$\frac{1}{2}E_3^3:\mathrm{GL}_3(3)$&& $0$ & \\
\bottomrule[1.5pt]
\end{tabular}
\caption{Components of the weak normal $2$-coverings of non-abelian simple groups: linear groups
(For $\mathrm{PSL}_2(4)\cong A_5$,
$\mathrm{PSL}_2(5)\cong A_5$,
$\mathrm{PSL}_2(9)\cong A_6$, $\mathrm{PSL}_4(2)\cong A_8$, see Table~\ref{00:Alt}. For $\mathrm{PSL}_3(2)$, use $\mathrm{PSL}_3(2)\cong\mathrm{PSL}_2(7)$.)}\label{00:linear}
\end{table}

\begin{table}[ht]
\begin{tabular}{c|c|c|c|c|c}
\toprule[1.5pt]
Group& Comp. $H$&Comp. $K$&Comments&Normal&Nr.\\
\midrule[1.5pt]
$\mathrm{PSU}_3(q)$&$(q^2-q+1):3$&$\mathrm{GU}_2(q)$&$3=\gcd(q,3)$, $q>3$&1&$1$\\

$\mathrm{PSU}_3(3)$&$\mathrm{PSL}_2(7)$&$\mathrm{GU}_2(3)$&&1&$4$\\
&$\mathrm{PSL}_2(7)$&$E_3^{1+2}:8$& &1&\\
&$\mathrm{PSL}_2(7)$&$3:8<\mathrm{GU}_2(3)\cap E_3^{1+2}:8$&&1&\\
&$7:3<\mathrm{PSL}_2(7)$&$\mathrm{GU}_2(3)$&&1&\\

$\mathrm{PSU}_3(5)$&$A_7$&$\frac{1}{3}\mathrm{GU}_2(5)$&&0&3\\
&$A_7$&$\frac{1}{3}E_5^{1+2}:24$& &3&\\
&$A_7$&$4:8<\frac{1}{3}E_5^{1+2}:24$&$|K|=40$ &0&\\

$\mathrm{PSU}_4(q)$&$\frac{1}{d}\mathrm{GU}_3(q)$&$\frac{1}{d}E_q^4:\mathrm{SL}_2(q^2):(q-1)$&$d:=\gcd(4,q+1)$, $q\ge 4$&$1$&$1$\\

$\mathrm{PSU}_4(2)$&$\mathrm{GU}_3(2)$&$\mathrm{Sp}_4(2)$&&1&$3$\\
&$\mathrm{GU}_3(2)$&$E_2^4:\mathrm{SL}_2(4)$&&1&\\
&$\mathrm{GU}_3(2)$&$S_5<\mathrm{Sp}_4(2)$&$|K|=120$&1&\\

$\mathrm{PSU}_4(3)$&$A_7$&$\frac{1}{4}E_3^{1+4}:\mathrm{SU}_2(3):8$&&4&4\\
&$\frac{1}{4}\mathrm{GU}_3(3)$&$\frac{1}{4}E_3^4:\mathrm{SL}_2(9):2$&&1&\\
&$\mathrm{PSL}_3(4)$&$\frac{1}{4}E_3^{1+4}:\mathrm{SU}_2(3):8$&&2&\\

&$\frac{1}{4}\mathrm{GU}_3(3)$&$\mathrm{PSU}_4(2)$&&0&\\

$\mathrm{PSU}_6(2)$&$\mathrm{Sp}_6(2)$&$\mathrm{PGU}_5(2)$&&0&2\\
&$\mathrm{PSU}_4(3):2$&$\mathrm{PGU}_5(2)$&&0&\\

\bottomrule[1.5pt]
\end{tabular}
\caption{Components of the weak normal $2$-coverings of non-abelian simple groups: unitary groups}\label{00:unitary}
\end{table}

\begin{table}[ht]
\begin{tabular}{c|c|c|c|c|c}
\toprule[1.5pt]
Group& Comp. $H$&Comp. $K$&Comments&Normal&Nr.\\
\midrule[1.5pt]
$\mathrm{PSp}_4(3)$&$\frac{1}{2}E_3^{1+2}:(2\times\mathrm{Sp}_2(3))$&$\mathrm{PSp}_2(9):2$&&1&3\\
&$\frac{1}{2}E_3^{1+2}:(2\times \mathrm{Sp}_2(3))$&$2^4.A_5$&&1&\\
&$\frac{1}{2}E_3^{1+2}:(2\times\mathrm{Sp}_2(3))$&$S_5<\mathrm{PSp}_2(9):2$&&1&3\\
$\mathrm{Sp}_n(q)$& $\mathrm{SO}_n^-(q)$&$\mathrm{SO}_n^+(q)$&$n	\ge 6$, $q$ even&1&$1$\\
$\mathrm{Sp}_4(q)$& $\mathrm{SO}_4^-(q)$&$\mathrm{SO}_4^+(q)$& $q\ge 8$, $q$ even &2&$2$\\
& $(q^2+1).[4]<\mathrm{SO}_4^-(q)$&$\mathrm{SO}_4^+(q)$& &0&\\

$\mathrm{Sp}_4(4)$& $\mathrm{SO}_4^-(4)$&$\mathrm{SO}_4^+(4)$&&2&$8$\\
&$\mathrm{Sp}_2(16):2$&$\mathrm{Sp}_4(2)$&&0&\\
&$\mathrm{Sp}_2(16):2$&$\mathrm{Sp}_4(2)'<\mathrm{Sp}_4(2)$&&0&\\
& $\mathrm{SO}_4^-(4)$&$K<\mathrm{SO}_4^+(4)$&$|K|=200$&0&\\
& $\mathrm{SO}_4^-(4)$&$K<\mathrm{SO}_4^+(4)$&$|K|=100$&0&\\
&$\mathrm{Sp}_2(16):2$&$S_5<\mathrm{SO}_4^+(4)\cap \mathrm{Sp}_4(2)$&&0&\\
&$\mathrm{Sp}_2(16):2$&$5:4<S_5<\mathrm{SO}_4^+(4)\cap \mathrm{Sp}_4(2)$&&0&\\
&$7:4<\mathrm{SO}_4^-(4)$&$\mathrm{SO}_4^+(4)$&&0&\\
$\mathrm{PSp}_6(3^f)$&$\frac{1}{2}\left(\mathrm{Sp}_2(3^f)\perp\mathrm{Sp}_4(3^f)\right)$&$\frac{1}{2}\mathrm{Sp}_2(3^{3f}):3$&&1&1\\
\bottomrule[1.5pt]
\end{tabular}
\caption{Components of the weak normal $2$-coverings of non-abelian simple groups: symplectic groups
(For
$\mathrm{PSp}_4(2)'\cong A_6$, see Table~\ref{00})}\label{00:symplectic}
\end{table}

\begin{table}[ht]
\begin{tabular}{c|c|c|c|c|c}
\toprule[1.5pt]
Group& Comp. $H$&Comp. $K$&Comments&Normal&Nr.\\
\midrule[1.5pt]
$\mathrm{P}\Omega_8^+(2)$&-&-&omitted&0&60\\
$\mathrm{P}\Omega_8^+(3)$&-&-&omitted&0&2019\\
\bottomrule[1.5pt]
\end{tabular}
\caption{Components of the weak normal $2$-coverings of non-abelian simple groups: orthogonal groups}\label{00:orthogonal}
\end{table}

\subsection{Proof of Theorem~2 of Burness and Tong-Viet}\label{BTproof}
In this section, we use Theorem~\ref{main theoremgeneral} to prove a rather interesting result of Burness and Tong-Viet.

 Let $A$ be a transitive permutation group on a finite set $\Omega$. We recall that, an element $x\in A$ is a derangement if it acts fixed-point-freely on $\Omega$, that is, for every $\omega\in \Omega$, $\omega^x\ne \omega$.  We write $\mathcal{D}(A)$ for the set of derangements in $A$. If $H$ is a point stabilizer, then $x$ is a derangement if and only if $x^A\cap H=\varnothing$, where $x^A$ is the $A$-conjugacy class of $x$. Therefore,
$$A=\mathcal{D}(A)\cup \bigcup_{a\in A}H^a,$$
where this union is disjoint. 

In~\cite{BT}, Burness and Tong-Viet are interested in transitive permutation groups $A$ with the special property that, for some prime number $r$, every derangement is an $r$-element, that is, has order a power of $r$. In particular, if $R$ is a Sylow $r$-subgroup of $A$, then
$$A=\bigcup_{a\in A}R^a\cup \bigcup_{a\in A}H^a.$$
The authors are interested only in primitive permutation groups and they show that, in this case,  the socle of $A$ is either abelian or non-abelian simple. Moreover, when the socle of $A$ is non-abelian simple, the group $A$ is almost simple and  they give a complete classification of the possible choices for the primitive group $A$ and for the prime $r$. We report here their main result for almost simple groups.
\begin{theorem}\label{BGPROOF}
Let $A$ be a finite almost simple primitive permutation group with point
stabilizer $H$. Then every derangement in $A$ is an $r$-element for some fixed prime $r$ if
and only if $(A, H, r )$ is one of the cases in Table~$\ref{bgtable}$.\footnote{We are reporting Table~$\ref{bgtable}$ using the notation in our work.}
\end{theorem}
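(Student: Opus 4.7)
The plan is to convert the derangement hypothesis into a normal $2$-covering problem and then invoke the classification already established in the paper. Concretely, fix $A$ almost simple with socle $G$, a primitive point stabilizer $H$ (which is maximal in $A$ since $A$ is primitive), a prime $r$ and a Sylow $r$-subgroup $R$ of $A$. Because every element of $A$ is either fixed-point-free on $A/H$ (hence a derangement, hence an $r$-element, hence $A$-conjugate into $R$) or has a fixed point (hence $A$-conjugate into $H$), the pair $\{R,H\}$ is a normal $2$-covering of $A$, with both components proper in $A$.

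Next I would perform the standard reduction to $G$. Since $A$ is primitive on $A/H$, the core of $H$ in $A$ is trivial, so $G\not\le H$. Also $G\not\le R$, because $G$ is non-abelian simple and thus not an $r$-group. A standard Sylow argument gives that $G\cap R$ is a Sylow $r$-subgroup of $G$, and in particular $G\cap R<G$, while $G\cap H<G$ as well. Intersecting the covering of $A$ with the normal subgroup $G$, and using that inner automorphisms of $A$ induce automorphisms of $G$ (so that $A$ embeds in $\mathrm{Aut}(G)$), one deduces
$$G=\bigcup_{a\in A}(G\cap H)^a\cup \bigcup_{a\in A}(G\cap R)^a\subseteq \bigcup_{\varphi\in\mathrm{Aut}(G)}(G\cap H)^\varphi\cup \bigcup_{\varphi\in\mathrm{Aut}(G)}(G\cap R)^\varphi,$$
so $\{G\cap H,\,G\cap R\}$ is a weak normal $2$-covering of $G$ with $G\cap R$ a Sylow $r$-subgroup of $G$. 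By Theorem~\ref{main theorem} this forces $\gamma_w(G)=2$, so $G$ is one of the simple groups in Tables~\ref{00}--\ref{000====}, and by Theorem~\ref{main theoremgeneral} the pair $(G\cap H,G\cap R)$ appears, up to $\mathrm{Aut}(G)$-conjugacy, in Tables~\ref{00:Alt}--\ref{00:orthogonal}.

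At this point the proof reduces to a scan of those tables. I would single out the rows in which one component $K$ is (or contains, up to conjugacy) a full Sylow $r$-subgroup of $G$ for some prime $r$: for the alternating, sporadic and exceptional entries this is a short finite check, and for the classical entries the Sylow $r$-subgroups of the listed components are transparent from their structure (for example, the parabolic rows in Tables~\ref{00:linear}, \ref{00:unitary}, \ref{00:symplectic}, \ref{00:orthogonal} are the only ones where a Sylow $p$-subgroup appears, $p$ being the defining characteristic, while other small primes show up only in the sporadic/small‑rank rows). For each candidate I would then determine the almost simple overgroups $A$ of $G$ inside $\mathrm{Aut}(G)$ for which a maximal subgroup $H$ with $H\cap G$ equal to the prescribed subgroup exists, and such that $R=G\cap R$ extends to a Sylow $r$-subgroup of $A$ (equivalently, $A/G$ is an $r$-group); this uses the ``c''-columns from \cite{bhr,kl} together with the information recorded in the fourth and fifth columns of our tables, via the dictionary of Section~\ref{sec:newwen}. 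Matching the survivors against Table~\ref{bgtable} finishes the classification.

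Finally, the converse direction (that every triple in Table~\ref{bgtable} really does have the property that all derangements are $r$-elements) is a direct verification: given the explicit $A$, $H$ and $r$, one checks that $\{R,H\}$ covers $A$ under $A$-conjugation, which for the tabulated pairs reduces to the same character/fusion computations already used in the proof of Theorem~\ref{main theoremgeneral}. The main obstacle I expect is bookkeeping rather than mathematical: extracting from Tables~\ref{00:Alt}--\ref{00:orthogonal} exactly those rows in which a Sylow $r$-subgroup occurs, and then, for each such row, identifying the possible almost simple $A$ and maximal $H\le A$ without double‑counting across $\mathrm{Aut}(G)$-fusion. The genuinely delicate case is $\mathrm{P}\Omega_8^+(q)$, where triality automorphisms permute the three $A$-classes of parabolic and stabilizer-type subgroups and therefore require the same careful treatment as in Section~\ref{sec:final}.
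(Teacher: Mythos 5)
Your overall route is the paper's: translate ``every derangement is an $r$-element'' into the statement that $\{H,R\}$ is a normal $2$-covering of $A$ (with $R$ a Sylow $r$-subgroup), intersect with the socle $G$ to get a weak normal $2$-covering $\{H\cap G, R\cap G\}$ of $G$ in which $R\cap G$ is a Sylow $r$-subgroup of $G$, and then read off the possibilities from Theorem~\ref{main theoremgeneral} and Tables~\ref{00:Alt}--\ref{00:orthogonal} (with the $\mathrm{P}\Omega_8^+$ rows handled by computer, exactly as the paper does). Up to that point your argument is correct and coincides with the published proof, which itself compresses the final step into ``direct inspection of the tables.''

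The genuine problem is the filter you propose for lifting the information back from $G$ to $A$: you require that ``$R=G\cap R$ extends to a Sylow $r$-subgroup of $A$ (equivalently, $A/G$ is an $r$-group).'' Neither condition is part of the correct criterion, they are not equivalent to each other, and applying them literally would delete valid rows of Table~\ref{bgtable}. For instance, for $A=\mathrm{P}\Gamma\mathrm{L}_2(8)$ with $H={\bf N}_A(D_{18})$ and $r=7$ one has $A/G\cong C_3$, which is not a $7$-group, yet this triple occurs in Table~\ref{bgtable}; and for $A=\mathrm{P}\Gamma\mathrm{L}_2(8)$ with $r=3$ the Sylow $3$-subgroup of $A$ has order $27$ and is not contained in $G$, so $R\neq G\cap R$, yet this triple also occurs. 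What is actually needed is only that every element of $A\setminus G$ which is not an $r$-element lies in some $A$-conjugate of $H$; this is precisely the exceptionality phenomenon of Guralnick--M\"{u}ller--Saxl~\cite{GMS} exploited in Section~\ref{sec:degenerateasdf}, and it can hold even when $A/G$ is far from being an $r$-group (the covering of the outer cosets is then done entirely by $H$, not by $R$). So the table-matching step of your proof must be redone with this correct lifting condition; as written, your criterion would produce a list strictly smaller than Table~\ref{bgtable} and the ``only if'' direction of the theorem would not be established.
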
 

\begin{table}[!h]
\begin{tabular}{ccccc}\hline
$A$&$H$&$r$&Conditions&Remarks\\\hline
$M_{11}$&$\mathrm{PSL}_2(11)$&2&&\\
$\mathrm{PSL}_2(8)$&$P_1$ or $D_{14}$&3&&$P_1$ parabolic\\
$\mathrm{PSL}_2(q)$&$P_1$&$r$&$q=2r^e-1$&$P_1$ parabolic\\
                   &$P_1$ or $D_{2(q-1)}$&$r$&$r=q+1$ Fermat prime&$P_1$ parabolic\\
                   &$D_{2(q+1)}$&$r$& $r=q-1$ Mersenne prime&\\
$\mathrm{PGL}_2(q)$&$P_1$&$2$&$q=2^{e+1}-1$ Mersenne prime&$P_1$ parabolic\\                   
$\mathrm{P}\Gamma\mathrm{L}_2(q)$&${\bf N}_G(D_{2(q+1)})$&$r$&$r=q-1$ Mersenne prime&\\  
$\mathrm{P}\Gamma\mathrm{L}_2(8)$&$P_1$ or ${\bf N}_G(D_{14})$&$3$&&$P_1$ parabolic\\  
$\mathrm{PSL}_3(q)$&$P_1$ or $P_2$&$r$&$q^2+q+1=\gcd(3,q-1)r$ or $q^2+q+1=3r^2$&$P_1,P_2$ parabolic\\  
\end{tabular}
\caption{Examples arising in Theorem~$\ref{BGPROOF}$}\label{bgtable}
\end{table}
\begin{proof}
From the paragraphs leading to the statement of Theorem~\ref{BGPROOF}, we see that given an almost simple primitive group $A$ with point stabilizer $H$, every derangement in $A$ is an $r$-element for a fixed prime $r$ if and only if $\{H,R\}$ is a normal $2$-covering of $A$, where $R$ is a Sylow $r$-subgroup. Note that $H$ is  a proper subgroup of $A$, because  $H$ is core-free in $A$, and $R$ is a proper subgroup of $A$, because $A$ is not an $r$-group.

Let $G$ be the socle of $A$ and observe that $G$ is a non-abelian simple group. Now, $H\cap G$ is a proper subgroup of $G$, because $H$ is a core-free subgroup of $A$. Moreover, $R\cap G$ is a proper subgroup of $G$, because $G$ is not an $r$-group. Furthermore,
\begin{align*}
G&=G\cap A=G\cap\left(\bigcup_{a\in A}H^a\cup\bigcup_{a\in A}R^a\right)=\bigcup_{a\in A}(H\cap G)^a\cap\bigcup_{a\in A}(R\cap G)^a\\
&=\bigcup_{a\in \mathrm{Aut}(G)}(H\cap G)^a\cap\bigcup_{a\in \mathrm{Aut}(G)}(R\cap G)^a
\end{align*}
and hence $\{H\cap G,R\cap G\}$ is a weak normal $2$-covering of $G$
appearing in Theorem~\ref{main theoremgeneral}, with the extra information that $R\cap G$ is a Sylow subgroup of $G$.

Now the proof follows with a direct inspection of {\rm Tables~\ref{00:Alt}--\ref{00:orthogonal}}. We do not give details of this computations, but they are all straightforward. Observe that the inspection of Table~\ref{00:orthogonal} is via a computer computation, because we have omitted in Table~\ref{00:orthogonal} structural information on the two components of the weak normal $2$-coverings.
\end{proof}

\section{Degenerate normal $2$-coverings}\label{sec:degenerateasdf}
In this section we aim to give some partial results concerning normal $2$-coverings of almost simple groups which, in some sense, we consider \textbf{\textit{degenerate}}. Let $A$ be an almost simple group with socle $G$ and let $\mu=\{H,K\}$ be a normal $2$-covering of $A$. We say that $\mu$ is degenerate if either $G\le H$ or $G\le K$. As we have discussed in Section~\ref{sec:introintro}, there are two cases to consider
\begin{itemize}
\item[DI:] $G\le H\cap K$, 
\item[DII:] replacing $H$ with $K$, if necessary, $H\ge G$ and $K\ngeq G$.
\end{itemize}

\subsection{The degenerate normal $2$-coverings of the first type}\label{primo-tipo}
To discuss the degenerate normal $2$-coverings of the first type we only need two preliminary remarks.
\begin{lemma}\label{lemma:nilpotent}
Let $X$ be a  finite nilpotent group. Then either $X$ is cyclic and hence $X$ has no normal coverings, or the normal covering number of $X$ is not $2$. 
\end{lemma}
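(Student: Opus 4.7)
The plan is to dispose of the two cases separately, the first being essentially tautological and the second reducing to a classical observation. If $X$ is cyclic, then any generator of $X$ lies in no proper subgroup, so by the very definition recorded in Definition~\ref{def:1} the group $X$ admits no normal $k$-covering for any $k$; this settles the first alternative.

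For the nontrivial half, I would assume $X$ is nilpotent and non-cyclic and argue by contradiction that $\gamma(X) \ne 2$. Supposing $\gamma(X) = 2$, I would take a normal $2$-covering $\{H_1, H_2\}$ and replace each component by a maximal overgroup $M_i \ge H_i$, obtaining a normal $2$-covering with maximal components. The two maximal subgroups $M_1, M_2$ must be distinct: otherwise $\{M_1\}$ would be a normal $1$-covering of $X$, contradicting the well-known fact recalled in the opening sentence of Section~\ref{s:intro} that no finite group is the union of conjugates of a single proper subgroup.

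The decisive step is the standard structure theorem for finite nilpotent groups: every maximal subgroup of a nilpotent group is normal. Applied to $M_1$ and $M_2$, this collapses each $\bigcup_{g\in X} M_i^g$ to $M_i$ itself, so that the covering condition becomes $X = M_1 \cup M_2$. But a group is never the union of two proper subgroups (if $x \in M_1 \setminus M_2$ and $y \in M_2 \setminus M_1$, then $xy$ lies in neither), and this final contradiction closes the argument.

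Since all the ingredients are classical, there is no genuine obstacle; the only care needed is in the reduction to maximal components, where one must rule out $M_1 = M_2$ via the impossibility of normal $1$-coverings.
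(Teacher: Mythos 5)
Your argument is correct and follows essentially the same route as the paper: pass to maximal components, use that maximal subgroups of a nilpotent group are normal so the covering collapses to $X=M_1\cup M_2$, and conclude via the impossibility of writing a group as the union of two proper subgroups. The only difference is that you make explicit the step (ruling out $M_1=M_2$ via the impossibility of a normal $1$-covering) that the paper hides behind a ``without loss of generality,'' which is a fair bit of extra care but not a different proof.
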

\begin{proof}
We argue by contradiction and we suppose that there exists a non-cyclic nilpotent group $X$ having normal covering number  $2$.  Let $\{V,W\}$ be a normal $2$-covering of $X$. Without loss of generality, we may suppose that $V$ and $W$ are both maximal subgroups of $X$. As $X$ is nilpotent, $V\unlhd X$ and $W\unlhd X$. Therefore $V$ and $W$ are normal proper subgroups of $X$ with $X=V\cup W$, however this is impossible.
\end{proof}

\begin{lemma}\label{lemma:silly2}
Let $G$ be a non-abelian simple group. Then the outer automorphism group of $G$ is non-nilpotent if and only if $G$ is isomorphic to one of the following groups:
\begin{enumerate}
\item\label{parT1} $\mathrm{PSL}_n(q)$ with $\gcd(n,q-1)$ divisible by an odd prime,
\item\label{parT2}  $\mathrm{PSU}_n(q)$   with $\gcd(n,q+1)$ divisible by an odd prime,
\item\label{parT3}  $\mathrm{P}\Omega_8^+(q)$,
\item\label{parT4}  $E_6(q)$ with $3$ dividing $q-1$,
\item\label{parT5}  ${}^2E_6(q)$ with $3$ dividing $q+1$. 
\end{enumerate}
\end{lemma}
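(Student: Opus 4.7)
The proof is an application of the Classification of Finite Simple Groups together with Steinberg's description of the outer automorphism group of a simple group of Lie type. The basic tool is the elementary fact that a finite group $X$ is nilpotent if and only if $X$ is the direct product of its Sylow subgroups; equivalently, $X$ is non-nilpotent if and only if there are two elements of coprime order in $X$ that do not commute. Using this, the plan is to run through each family of finite non-abelian simple groups and determine when $\mathrm{Out}(G)$ satisfies this property.

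First I would dispatch the alternating and sporadic groups. Here $\mathrm{Out}(A_n)$ has order at most $4$ and is elementary abelian (cyclic of order $\le 2$ except for $A_6$ where it is $C_2\times C_2$), while for sporadic $G$ one has $|\mathrm{Out}(G)|\le 2$. In both cases $\mathrm{Out}(G)$ is abelian, hence nilpotent, and no such group appears in the list.

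Next, for $G$ a simple group of Lie type over $\mathbb{F}_q$ with $q=p^f$, I would invoke the standard decomposition $\mathrm{Out}(G)=D\cdot\Phi\cdot\Gamma$ (see for instance \cite[Theorem~2.5.12]{GLS3} or \cite[Section~2.1]{kl}), where $D$ is the group of diagonal automorphisms, $\Phi$ is the cyclic group of field automorphisms, and $\Gamma$ is the graph automorphism group (of order $1$, $2$, or $6$). The strategy is then to peel off the easy cases where $\mathrm{Out}(G)$ is manifestly nilpotent:
\begin{itemize}
\item whenever $\Gamma=1$ and $\Phi$ centralizes $D$, the group $\mathrm{Out}(G)$ is abelian;
\item whenever $D$ is a $2$-group, $\Gamma$ has order a power of $2$, and $\Phi$ acts trivially on $D$, the group $\mathrm{Out}(G)$ is the direct product of a $2$-group and $\Phi$, hence nilpotent.
\end{itemize}
Consulting \cite[Tables~2.1.C, 2.1.D]{kl}, the first reduction disposes of $\mathrm{PSL}_2(q)$, $\mathrm{PSp}_{2m}(q)$, $\mathrm{P}\Omega_{2m+1}(q)$, $E_7(q)$, $E_8(q)$, $F_4(q)$, $G_2(q)$, ${}^2B_2(q)$, ${}^2F_4(q)$, ${}^2G_2(q)$ and ${}^3D_4(q)$, while the second reduction handles $\mathrm{P}\Omega^{\pm}_{2m}(q)$ with $m\geq 5$ and $(m,\pm)\neq(4,+)$.

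The remaining families are exactly those in which $\Gamma$ acts non-trivially on a cyclic $D$ of order potentially divisible by an odd prime, plus the exceptional triality case. Here I would use the fact (documented in \cite[Propositions~2.3.4, 2.7.3]{kl}) that the graph automorphism of $\mathrm{PSL}_n(q)$, $\mathrm{PSU}_n(q)$, $E_6(q)$, and ${}^2E_6(q)$ acts on the cyclic diagonal group $D\cong C_d$ by inversion. Writing $d=2^a m$ with $m$ odd, the graph generator inverts $C_m$, so $\mathrm{Out}(G)$ contains a dihedral-type subgroup $C_m\rtimes C_2$ and a standard calculation shows
\[
\mathrm{Out}(G)\textrm{ is nilpotent}\iff m=1\iff d\textrm{ has no odd prime divisor}.
\]
Substituting $d=\gcd(n,q-1)$, $d=\gcd(n,q+1)$, $d=\gcd(3,q-1)$, $d=\gcd(3,q+1)$ respectively yields cases \eqref{parT1}, \eqref{parT2}, \eqref{parT4}, \eqref{parT5}. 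For $\mathrm{P}\Omega_8^+(q)$, triality gives $\Gamma\cong S_3$ sitting inside $\mathrm{Out}(G)$ as a subquotient (indeed as a complement of $D\Phi$), so $\mathrm{Out}(G)$ contains a non-abelian subgroup of order $6$ and is therefore never nilpotent, giving \eqref{parT3}.

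The main obstacle is entirely bookkeeping: one must verify case by case, from \cite[Tables~2.1.C, 2.1.D]{kl} (or \cite[Theorem~2.5.12]{GLS3}), that the graph automorphism really does act by inversion on $D$ in the four families above, and that in all other families with a non-trivial graph or non-cyclic diagonal the action is either trivial on the odd-order part of $D$, or produces only a $2$-group. Once this is checked, the nilpotence dichotomy is immediate from the Sylow-product characterization.
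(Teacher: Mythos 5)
Your proposal is correct and follows essentially the same route as the paper: the paper's proof is simply a direct inspection of the known structure of $\mathrm{Out}(G)$ via the Classification, citing \cite[Chapter~2]{kl} for classical groups, \cite[Section~2.5]{GLS3} for exceptional groups and \cite{atlas} for alternating and sporadic groups, and your argument is the worked-out form of that inspection (the $D\cdot\Phi\cdot\Gamma$ decomposition, the inversion action of the graph automorphism on the odd part of the diagonal group, and triality for $\mathrm{P}\Omega_8^+(q)$). The only point to keep in mind when carrying out the bookkeeping is that one must also confirm, as your second reduction implicitly requires, that odd-order field automorphisms act trivially on the $2$-group $D$ in the even-dimensional orthogonal cases $D_m$, $m\ge 5$ (where $D\cong C_2\times C_2$ could in principle admit an order-$3$ action), which the tables indeed confirm.
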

\begin{proof}
The structure of the outer automorphism group of the non-abelian simple groups is in~\cite[Chapter~2]{kl} for classical groups, in~\cite[Section~2.5]{GLS3} for exceptional groups and, for instance in ~\cite{atlas}, for alternating and sporadic groups. Using these references, we deduce that  the outer automorphism group of $G$ is not nilpotent if and only if $G$ is isomorphic to one of the groups listed in the statement of this theorem.
\end{proof}

\begin{theorem}\label{theorem:silly}
Let $A$ be an almost simple group with socle $G$ and let $\{H,K\}$ be a normal $2$-covering of $A$ with $G\le H\cap K$. Then $A/G$ is not nilpotent and $G$ is isomorphic to one of the groups in~\eqref{parT1}--~\eqref{parT5} of Lemma~$\ref{lemma:silly2}$.

Conversely, for each of these five cases, $\mathrm{Aut}(G)$ admits a normal $2$-covering $\{H,K\}$ with $G\le H\cap K$, that is, $\mathrm{Aut}(G)$ admits a degenerate normal $2$-covering of the first type.
\end{theorem}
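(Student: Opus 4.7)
The proof splits naturally into two halves, both hinging on the elementary observation that if $N \trianglelefteq X$, then normal $2$-coverings of $X$ with both components containing $N$ correspond bijectively, via the preimage map, to normal $2$-coverings of $X/N$. Applied to $N = G \trianglelefteq A$, the forward direction reduces immediately: given a normal $2$-covering $\{H,K\}$ of $A$ with $G \leq H \cap K$, the pair $\{H/G, K/G\}$ is a normal $2$-covering of $A/G$, so $\gamma(A/G) \leq 2$. Since cyclic groups admit no normal coverings, $A/G$ is non-cyclic, and Lemma~\ref{lemma:nilpotent} then forces $A/G$ to be non-nilpotent. Because $A/G$ embeds in $\mathrm{Out}(G)$ and subgroups of nilpotent groups are nilpotent, $\mathrm{Out}(G)$ is non-nilpotent; Lemma~\ref{lemma:silly2} then identifies $G$ as one of the five families listed.

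For the converse direction, I would construct, for each family in~\eqref{parT1}--\eqref{parT5}, a non-cyclic quotient of $\mathrm{Out}(G)$ admitting a normal $2$-covering, and then lift through the preimage map to obtain a normal $2$-covering of $\mathrm{Aut}(G)$ whose components both contain $G$. In case~\eqref{parT3}, the quotient of $\mathrm{Out}(G)$ by its Sylow $2$-subgroup modulo the field part (or more directly, the triality quotient) is $S_3$, which admits the normal $2$-covering $\{A_3,\, \langle(1\,2)\rangle\}$: the three transpositions are precisely the conjugates of $(1\,2)$, and $A_3$ absorbs the identity together with the two $3$-cycles. In cases~\eqref{parT1}, \eqref{parT2}, \eqref{parT4}, \eqref{parT5}, using the explicit descriptions of $\mathrm{Out}(G)$ in~\cite[Chapter~2]{kl} and~\cite[Section~2.5]{GLS3}, one has a diagonal subgroup $C_d$ on which a graph (or graph-field) involution $\tau$ acts by inversion, with an odd prime $p$ dividing $d$. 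Quotienting $\mathrm{Out}(G)$ by an appropriate normal subgroup containing the field part and the index-$p$ subgroup of $C_d$ yields a quotient isomorphic to the dihedral group $D_{2p} = C_p \rtimes \langle\tau\rangle$, and $D_{2p}$ has the normal $2$-covering $\{C_p,\, \langle\tau\rangle\}$: the $p$ conjugates of $\langle\tau\rangle$ cover the $p$ involutions, and $C_p$ covers the rotations.

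The main obstacle I anticipate is on the converse side, namely verifying for cases~\eqref{parT1}, \eqref{parT2}, \eqref{parT4}, \eqref{parT5} that $\mathrm{Out}(G)$ admits $D_{2p}$ as an honest \emph{quotient} (not merely as a section), since the interaction of field, diagonal, and graph automorphisms inside $\mathrm{Out}(G)$ can be intricate — in particular the field automorphism may act non-trivially on the diagonal part. To handle this uniformly, I would identify an explicit normal subgroup of $\mathrm{Out}(G)$ containing all field automorphisms together with the unique index-$p$ subgroup of $C_d$ (using that this subgroup is characteristic in $C_d$ because $p$ is an odd prime divisor), and check case-by-case that the resulting quotient is $D_{2p}$. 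If the quotient approach becomes delicate in some case, a fallback is to directly exhibit a normal $2$-covering of $\mathrm{Out}(G)$: take $V := C_d$ and $W := \langle \tau\rangle \cdot \langle \mathrm{field}\rangle$, then verify that every coset of the derived subgroup is covered; the odd prime $p \mid d$ ensures both subgroups are proper and that non-trivial conjugation takes place.
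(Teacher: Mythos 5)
Your forward direction is exactly the paper's argument: pass to $A/G$ via the preimage correspondence, apply Lemma~\ref{lemma:nilpotent}, deduce that $\mathrm{Out}(G)$ is non-nilpotent, and invoke Lemma~\ref{lemma:silly2}; and your converse strategy (pull back a normal $2$-covering of a small, generically dihedral, quotient of $\mathrm{Out}(G)$) is also the paper's, which asserts a quotient $D_{2p}$ and pulls back $\{C_p,\langle\tau\rangle\}$. The gap is in the execution of the converse for families~\eqref{parT1} and~\eqref{parT2}: both concrete constructions you propose can fail, so the case-by-case check you plan would not close. For the primary construction, the subgroup generated by the field automorphisms is not normal in $\mathrm{Out}(G)$ once it acts non-trivially on the diagonal part, and worse, a dihedral quotient of order twice an odd prime may not exist at all. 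Take $G=\mathrm{PSL}_7(3^6)$, where $d=7$ is the only available odd prime: here $\mathrm{Out}(G)$ is an extension of the diagonal group $C_7$ by $C_6\times C_2$ in which the field generator acts on $C_7$ as multiplication by $3$, of order $6$ in $\mathrm{Aut}(C_7)$. If $\mathrm{Out}(G)/M\cong D_{2r}$ with $r$ an odd prime, then $C_7\nleq M$ (otherwise the quotient would be a quotient of the abelian group $C_6\times C_2$), hence $M\cap C_7=1$, hence $M$ centralizes $C_7$ and embeds in ${\bf C}_{\mathrm{Out}(G)}(C_7)/C_7$, which has order $2$; but $|M|=84/2r\ge 6$, a contradiction. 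So no $D_{2r}$ quotient exists for this $G$.

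Your fallback pair $V:=C_d$, $W:=\langle\tau\rangle\langle\phi\rangle$ also fails whenever some non-trivial field automorphism centralizes $C_d$: for $G=\mathrm{PSL}_3(49)$ one has $\mathrm{Out}(G)\cong S_3\times C_2$ with the field class central, and the elements of order $6$ lie in no conjugate of $V$ (which has exponent $3$) nor of $W$ (whose conjugates consist of the identity, $\phi$ and involutions). The converse you are proving is nevertheless true, and the repair is modest, because all one needs is \emph{some} pair of proper subgroups of $\mathrm{Out}(G)$ whose conjugates cover it. Working in $\Gamma:=\mathrm{Out}(G)/D_0$, where $D_0$ is the index-$r$ subgroup of $C_d$, every element outside $C:={\bf C}_\Gamma(C_r)$ induces a non-trivial, hence fixed-point-free, automorphism of $C_r$, so the map $\epsilon\mapsto(\epsilon^x)^{-1}\epsilon$ is onto $C_r$ and each such element is $C_r$-conjugate to every element of its $C_r$-coset; therefore $C$ together with any proper supplement $T$ of $C_r$ (for instance a complement, which exists by Schur--Zassenhaus whenever $r\nmid 2f$, in particular in the examples above; the residual configurations with $r\mid f$ need their own short check) yields a normal $2$-covering of $\Gamma$, which pulls back to $\mathrm{Aut}(G)$ with both components containing $G$. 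When $r=3$ this quotient is exactly $S_3=D_6$, so your dihedral picture is correct for~\eqref{parT3},~\eqref{parT4},~\eqref{parT5} and for much of~\eqref{parT1},~\eqref{parT2}. Be aware that the paper's own converse is a one-line assertion of a $D_{2p}$ quotient verified only for $E_6(q)$, so your level of detail matches it; but the verification you plan would genuinely break in the cases above, and an adjustment of the Frobenius type just described is needed to complete the argument.
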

\begin{proof}
Let $X:=A/G$, $V:=H/G$ and $W:=K/G$. Since $V$ and $W$ are proper subgroups of $X$, we have that $\{V,W\}$ are the components of a normal $2$-covering of $X$. It follows that  $X$ is not cyclic. Thus Lemma~\ref{lemma:nilpotent} implies that $X=A/G$ is not nilpotent. As a consequence  $\mathrm{Aut}(G)/G$ is not nilpotent and Lemma \ref{lemma:silly2} implies that $G$  is in one of the five cases ~\eqref{parT1}--~\eqref{parT5} of Lemma~$\ref{lemma:silly2}$.
Suppose now that $G$ is in one of the five cases ~\eqref{parT1}--~\eqref{parT5} of Lemma~$\ref{lemma:silly2}$. Using the same references appearing in the proof of Lemma~$\ref{lemma:silly2}$, it can be verified that $\mathrm{Aut}(G)$ admits a normal subgroup $N\geq G$ with $\mathrm{Aut}(G)/N$ isomorphic to a dihedral group $D$ of order $2p$, for some odd prime number $p$. For instance, when $G:=E_6(q)$, using~\cite[Theorem~2.5.12~(i)]{GLS3}, we see that $\mathrm{Aut}(G)/G$ has a quotient isomorphic to the dihedral group of order $2\cdot 3$. As $D$ has normal covering number $2$, we deduce that $\mathrm{Aut}(G)$ has also normal covering number $2$  and admits a degenerate normal $2$-covering of the first type.
\end{proof}
\subsection{The degenerate normal $2$-coverings of the second type}\label{secondo-tipo}
We now deal with the degenerate normal $2$-coverings of almost simple groups of the second type. A key tool to do this is the theory of \textrm{\textbf{Shintani descent}}, which we outline here following the work of Kawanaka~\cite[Section~2]{Ka}. We also refer to~\cite[Section~2]{BurGue} and the more recent improvements of Harper on the theory of Shintani descent~\cite{HarperScott}.

Let $X$ be a connected linear algebraic group defined over an algebraically closed field and let $\sigma:X\to X$ be a Frobenius morphism. Thus, by definition, $\sigma$ is a bijective endomorphism of algebraic groups with finite fixed point subgroup
$$X_\sigma:=\{x\in X\mid x^\sigma=x\}.$$
Let $e$ be a positive integer and set $\mathcal{G}:=X_{\sigma^e}$ and $\mathcal{H}:=X_{\sigma}\le \mathcal{G}\le X$. As $\mathcal{G}$ is $\sigma$-stable, the restriction $\sigma'$ of $\sigma:X\to X$ to $\mathcal{G}$ is an automorphism of $\mathcal{G}$ having order $e$. By abuse of notation, we identify $\sigma'$ with $\sigma$ when considering the action of $\sigma$ on $\mathcal{G}$. Let $\mathcal{A}:=\langle\sigma\rangle$ and consider the semidirect product $\mathcal{G}\rtimes \mathcal{A}$ with multiplication
$$\sigma^i s\cdot \sigma^j t=\sigma^{i+j}s^{\sigma^j}t,$$
for every $i,j\in\mathbb{Z}$ and $s,t\in \mathcal{G}$.

Let $s\in  \mathcal{G}$ and consider the element $\sigma s$ in the coset $\sigma \mathcal{G}=\mathcal{G}\sigma$ of $\mathcal{G}\rtimes \mathcal{A}$. Then  we have $(\sigma s)^2=\sigma^2 s^\sigma s$ and an inductive argument gives
\begin{equation}\label{orderelt}
(\sigma s)^e=s^{\sigma^{e-1}}s^{\sigma^{e-2}}\cdots s^\sigma s\in \mathcal{G}.
\end{equation}
By the Lang-Steinberg theorem~\cite[Theorem~2.1.1]{GLS3}, there exists $a\in X$ with $s=a^{-\sigma }a$. Using~\eqref{orderelt}, we find $a(\sigma s)^ea^{-1}\in X_\sigma=\mathcal{H}$. Indeed,
\begin{align*}
(a(\sigma s)^ea^{-1})^\sigma&=(a(s^{\sigma^{e-1}}s^{\sigma^{e-2}}\cdots s^\sigma s)a^{-1})^{\sigma}=a^\sigma s^{\sigma^e}s^{\sigma^{e-1}}\cdots s^{\sigma^2}s^\sigma a^{-\sigma}\\
&=as^{-1}s^{\sigma^e}s^{\sigma^{e-1}}\cdots s^{\sigma^2}s^\sigma sa^{-1}=as^{\sigma^{e-1}}\cdots s^{\sigma^2}s^\sigma sa^{-1}\\
&=a(\sigma s)^ea^{-1},
\end{align*}
where in the forth equality we used the fact that $s^{\sigma^e}=s$. Therefore, $a(\sigma s)^ea^{-1}\in \mathcal{H}$.

 This observation is the key remark for setting up the Shintani correspondence. Indeed, the Shintani correspondence, or Shintani descent, is the  map $f$ from the $\mathcal{G}\rtimes \mathcal{A}$-conjugacy classes in the coset $\sigma \mathcal{G}$ and the set of $\mathcal{H}$-conjugacy classes in $\mathcal{H}$. The mapping $f$ is defined by
$$f:(\sigma s)^{\mathcal{G}\rtimes \mathcal{A}}\mapsto (a(\sigma s)^ea^{-1})^{\mathcal{H}}.$$
We summarize the basic properties of this mapping in the following lemma, see for instance~\cite[Lemma~2.13]{BurGue}.
\begin{lemma}\label{shintani}
The mapping $f$ is a bijection between the the $\mathcal{G}\rtimes \mathcal{A}$-conjugacy classes in the coset $\sigma\mathcal{G}$ and the set of $H$-conjugacy classes $\mathcal{H}$-conjugacy classes. Moreover, ${\bf C}_{\mathcal{G}}(\sigma s)=a^{-1}{\bf C}_{\mathcal{H}}(f(\sigma s))a={\bf C}_{a^{-1}\mathcal{H}a}((\sigma s)^e)$.
\end{lemma}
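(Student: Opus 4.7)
The plan is to derive all three claims from a single telescoping identity together with one application of the Lang--Steinberg theorem. The workhorse is the following observation: given $s\in\mathcal{G}$ and $a\in X$ with $s=a^{-\sigma}a$, the relation $a^{\sigma}=as^{-1}$ combined with term-by-term cancellation yields
\[
(\sigma s)^{e}\;=\;s^{\sigma^{e-1}}s^{\sigma^{e-2}}\cdots s^{\sigma}s\;=\;a^{-\sigma^{e}}\,a,
\]
so inside $X$ I obtain the compact formula $f(\sigma s)=a(\sigma s)^{e}a^{-1}=a\cdot a^{-\sigma^{e}}$, which is visibly fixed by $\sigma$ as soon as $s\in\mathcal{G}$ (equivalently $a^{-\sigma}a$ is $\sigma^{e}$-fixed). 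This single identity drives everything that follows.

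For well-definedness I would note that any other $b\in X$ with $s=b^{-\sigma}b$ satisfies $ab^{-1}\in X_{\sigma}=\mathcal{H}$, so $bb^{-\sigma^{e}}$ and $aa^{-\sigma^{e}}$ are $\mathcal{H}$-conjugate. Independence on the $\mathcal{G}\rtimes\mathcal{A}$-conjugacy class of $\sigma s$ then reduces to two cases: conjugation by $r\in\mathcal{G}$ (replace $a$ by $ar$, giving the same image) and conjugation by $\sigma^{j}$ (replace $a$ by $a^{\sigma^{j}}$, giving the $\sigma^{j}$-image of $f(\sigma s)$, hence the same element since $f(\sigma s)\in\mathcal{H}$).

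For the bijection, surjectivity will be read off from Lang--Steinberg applied to the Frobenius morphism $\sigma^{e}$: given $h\in\mathcal{H}$, one produces $a\in X$ with $aa^{-\sigma^{e}}=h$, and the fixedness $h^{\sigma}=h$ forces $a^{-\sigma}a\in X_{\sigma^{e}}=\mathcal{G}$, so setting $s:=a^{-\sigma}a$ realises $f(\sigma s)=h$. For injectivity, if $aa^{-\sigma^{e}}$ and $bb^{-\sigma^{e}}$ are $\mathcal{H}$-conjugate I would absorb the conjugating element into $a$ (the replacement $a\mapsto h^{-1}a$ does not change $s$) so that $aa^{-\sigma^{e}}=bb^{-\sigma^{e}}$; this forces $c:=b^{-1}a\in\mathcal{G}$, and direct substitution gives $s=c^{-\sigma}tc$, which translates into $\sigma s=c^{-1}(\sigma t)c$ in $\mathcal{G}\rtimes\mathcal{A}$.

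For the centralizer identity I would analyse the inner-conjugation bijection $r\mapsto k:=ara^{-1}$ between $\mathcal{G}$ and $a\mathcal{G}a^{-1}\subseteq X$. Using $a^{\sigma}=as^{-1}$, a direct calculation shows that $k^{\sigma}=k$ is equivalent to $r^{\sigma}s=sr$, i.e.\ to $r\in{\bf C}_{\mathcal{G}}(\sigma s)$; under this condition $r$ automatically commutes with $(\sigma s)^{e}=a^{-1}ha$, and hence $k$ commutes with $h$. Running the argument in reverse (for $k\in{\bf C}_{\mathcal{H}}(h)$, the relation $hkh^{-1}=k$ forces $a^{-1}ka\in X_{\sigma^{e}}=\mathcal{G}$) supplies the converse implication and yields ${\bf C}_{\mathcal{G}}(\sigma s)=a^{-1}{\bf C}_{\mathcal{H}}(h)a$; the second equality ${\bf C}_{a^{-1}\mathcal{H}a}((\sigma s)^{e})=a^{-1}{\bf C}_{\mathcal{H}}(h)a$ will then be a direct transport of structure. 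The main obstacle is conceptual rather than computational: $\sigma s$ lives in the abstract semidirect product $\mathcal{G}\rtimes\mathcal{A}$ while $a(\sigma s)^{e}a^{-1}$ is computed inside $X$, and the whole argument depends on cleanly bridging these two arenas via the telescoping identity $(\sigma s)^{e}=a^{-\sigma^{e}}a$.
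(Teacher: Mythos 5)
Your argument is correct, and it is worth noting that the paper itself does not prove this lemma: it simply cites \cite[Lemma~2.13]{BurGue}, having only verified beforehand that $(\sigma s)^e\in\mathcal{G}$ and $a(\sigma s)^ea^{-1}\in\mathcal{H}$. Your blind proof therefore supplies exactly the details that are delegated to the reference, and it does so along the standard Shintani-descent lines: the telescoping identity $(\sigma s)^e=a^{-\sigma^e}a$, hence $f(\sigma s)=aa^{-\sigma^e}$; well-definedness by handling conjugation by $r\in\mathcal{G}$ (replace $a$ by $ar$) and by $\sigma^j$ (replace $a$ by $a^{\sigma^j}$) together with the freedom $a\mapsto ha$ for $h\in\mathcal{H}$; surjectivity via Lang--Steinberg for $\sigma^e$ (where $h^\sigma=h$ indeed forces $a^{-\sigma}a\in X_{\sigma^e}$); injectivity by absorbing the $\mathcal{H}$-conjugator into $a$ so that $c:=b^{-1}a\in\mathcal{G}$ and $\sigma s=c^{-1}(\sigma t)c$; and the centralizer statement via the equivalence, valid for all $r\in X$ with $k=ara^{-1}$, that $k^\sigma=k$ if and only if $r^\sigma s=sr$, combined with the observation that $hkh^{-1}=k$ for $k\in\mathcal{H}$ forces $a^{-1}ka\in X_{\sigma^e}=\mathcal{G}$. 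All of these computations check out (I verified each identity), so your proposal is a correct, self-contained substitute for the cited result; it is the same argument one finds in Kawanaka and Burness--Guest rather than a genuinely different route.
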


\begin{theorem}\label{theorem:silly2}
Let $A$ be an almost simple group with socle $G$ and let $\{H,K\}$ be a normal $2$-covering of $A$ with $G\le H$ and $G\nleq K$. Let $H_M$ be a maximal subgroup of $A$ with $H\le H_M$ and $K_M$ a maximal subgroup of $A$ with $K\le K_M.$ 
Then one of the following holds: 
\begin{enumerate}
\item\label{parTT1} $A/G$ is not nilpotent and $G$ is isomorphic to one of the groups in~\eqref{parT1}--~\eqref{parT5} of Lemma~$\ref{lemma:silly2}$;
\item\label{parTT2} $A/G$ is nilpotent and one of the following holds:
\begin{enumerate}
\item\label{parTTT2}$G$ is a group of Lie type and $K_M\cap G$ is the centralizer in $G$ of a field automorphism of odd prime order $r$. Moreover, $r$ is not the characteristic
of $G$, unless $G = \mathrm{PSL}_2(q)$;
\item\label{parTT3} $G = \mathrm{PSL}_2(2^f)$, $f>1$ is odd, $A=\mathrm{Aut}(\mathrm{PSL}_2(2^f))=\mathrm{PSL}_2(2^f)\rtimes\langle\phi\rangle$, $H=H_M=\mathrm{PSL}_2(2^f)\rtimes\langle \phi^p\rangle$ for some prime divisor $p$ of $f$ is normal in $A$ and $K_M\cong D_{2(2^f+1)}.\langle \phi\rangle$, where $\phi$ is a field automorphism of order $f$;
\item\label{parTT4}$G=\mathrm{PSL}_2(p^f)$, $p$ is an odd prime number, $f$ is an even natural number, $A=G\langle\iota \phi^i\rangle$, $H=H_M=G\rtimes\langle\phi^{2i}\rangle$ has index $2$ in $A$, $\iota\in\mathrm{PGL}_2(p^f)\setminus\mathrm{PSL}_2(p^f)$, $\phi$ is a field automorphism of $\mathrm{PSL}_2(p^f)$ of order $f$, $i$ is a divisor of $f$ with $f/i$ even and $K_M\cap G=D_{p^f-1}$;
\item\label{parTT6} $G = \mathrm{Sz}(2^f)$,  $A=\mathrm{Aut}(\mathrm{Sz}(2^f))=\mathrm{Sz}(2^f)\rtimes\langle\phi\rangle$, $f\equiv 3,5\pmod 8$, $H=\mathrm{Sz}(2^f)\rtimes\langle \phi^p\rangle$ for some prime divisor $p$ of $f$ and $K_M={\bf N}_A(P)\cong (q-\sqrt{2q}+1):4:f$, where $P$ is a Sylow $5$-subgroup of $G$ and $\phi$ is a field automorphism of order $f$;
\item\label{parTT7} $G = \mathrm{PSU}_3(2^f )$ with $f > 1$ odd, $f$ divisible by $3$, $A=\mathrm{PSU}_3(2^f)\langle\phi^2\iota\rangle$, where $\iota\in\mathrm{PGU}_3(q)\setminus\mathrm{PSU}_3(q)$ and $\phi$ is a field automorphism of order $2f$, $H=\mathrm{PSU}_3(q)\rtimes\langle \phi^6\rangle$  and $K_M\cap G$ is the stabilizer in $G$ of a
decomposition of the $3$-dimensional space underlying $G$ into the direct sum of three
orthogonal non-singular $1$-subspaces. 
\end{enumerate}
\end{enumerate}
\end{theorem}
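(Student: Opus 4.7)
The plan is to reduce the problem to a question about the quotient $A/G$ and then invoke a Shintani-descent-based classification. First, since enlarging either component of a normal $2$-covering preserves the covering property, we may assume $H = H_M$ is maximal in $A$. Because $G \le H$, the image $H/G$ is a maximal subgroup of $A/G$. Now $A/G$ embeds into $\mathrm{Out}(G)$, so if $A/G$ is not nilpotent, neither is $\mathrm{Out}(G)$, and Lemma~\ref{lemma:silly2} places $G$ among the groups listed in part~\eqref{parTT1} of the statement; this disposes of case~\eqref{parTT1}.

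Assume from now on that $A/G$ is nilpotent. Then the maximal subgroup $H/G$ is normal in $A/G$ of prime index $r$, so $H \unlhd A$ and $|A:H| = r$. Consequently $\bigcup_{a \in A} H^a = H$, and the normal $2$-covering hypothesis forces
\[
A \setminus H \;\subseteq\; \bigcup_{a \in A} K^a.
\]
Writing $A = \bigcup_{i=0}^{r-1} \alpha^i H$ for a fixed $\alpha \in A \setminus H$, this amounts to requiring that every coset $\alpha^i H$ with $1 \le i \le r-1$ is contained in $\bigcup_{a \in A} K^a$. In particular, the intersection $K \cap \alpha^i H$ must ``cover'' the entire coset under $A$-conjugation. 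If $A/G$ is non-cyclic, then applying the above to each cyclic-prime quotient of $A/G$ forces $K$ to simultaneously cover several cosets, and a short argument (using that proper subgroups of $A/G$ cannot together cover $A/G$ in a non-cyclic nilpotent group) will reduce us to the situation where $A/H$ generates $A/G$ up to what is already provided by $H$.

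At this stage the coset $\alpha H$ (with $\alpha$ of prime $r$-power order modulo inner automorphisms) should be thought of as a coset of a Frobenius-type twist: the pair $(G, \alpha)$ fits precisely the Shintani framework of Section~\ref{Huppert} (i.e.\ the discussion preceding Lemma~\ref{shintani}), with $\mathcal{G} = G = X_{\sigma^r}$ and $\mathcal{H} = X_\sigma$ the centralizer in $G$ of a field-type automorphism of order $r$. The Shintani map sends the $A$-conjugacy classes in $\alpha G$ bijectively to the $\mathcal{H}$-conjugacy classes of $\mathcal{H}$, preserving centralizer orders up to conjugation. Thus the condition ``every element of $\alpha G$ has an $A$-conjugate in $K$'' becomes the statement that $K \cap \alpha G$ meets every $A$-conjugacy class in $\alpha G$, which via Shintani translates to $\mathcal{H}$ being covered (in the sense of conjugacy classes) by a single subgroup coming from $K$. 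This is exactly the context of the Guralnick--M\"{u}ller--Saxl memoir~\cite{GMS}, whose classification of ``exceptional'' covering subgroups forces $\mathcal{H}$ (equivalently $G$) to be one of the groups in~\eqref{parTTT2}--\eqref{parTT7}, and simultaneously pins down the conjugacy type of $K_M$.

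The main obstacle will be the case-by-case verification in~\eqref{parTT3}--\eqref{parTT7}: once the Shintani reduction identifies $(G,\alpha)$ as a ``field-automorphism centralizer pair'', one must (i) distinguish whether $\alpha$ induces a pure field automorphism, a graph-field automorphism, or a diagonal-field automorphism (this is responsible for the $\iota$ in~\eqref{parTT4} and~\eqref{parTT7}); (ii) determine the arithmetic restrictions on $f$ (odd in~\eqref{parTT3}, $f \equiv 3,5 \pmod 8$ in~\eqref{parTT6}, $3 \mid f$ with $f$ odd in~\eqref{parTT7}) coming from requiring that the descended group $\mathcal{H}$ admits a proper subgroup whose $\mathcal{H}$-conjugates cover $\mathcal{H}$; and (iii) read off the precise structure of $K_M$ from the subgroup of $\mathcal{H}$ provided by the Guralnick--M\"{u}ller--Saxl classification. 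Each of~\eqref{parTT3}--\eqref{parTT7} is then checked to genuinely produce a degenerate normal $2$-covering by exhibiting the explicit pair $(H, K_M)$ and verifying conjugacy-class coverage using character tables (for small $f$) and Shintani correspondence (in general).
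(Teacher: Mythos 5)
There is a genuine gap at the heart of your reduction. The paper's key step in the nilpotent case is to pass to the faithful primitive action of $A$ on the cosets of $K_M$ (this requires first proving $A=KG$ via Lemma~\ref{lemma:nilpotent}, so that $K_M$ is core-free), to observe that the covering hypothesis forces every derangement to lie in $H_M$, hence $D(A)\le H_M<A$, and then to invoke the theorem of Bailey, Cameron, Giudici and Royle (that every permutation fixing at least two points lies in $D(A)$) to conclude that no $A$-orbit on ordered pairs of distinct points is an $H_M$-orbit, i.e.\ that $(A,H_M)$ is an \emph{exceptional} pair with cyclic quotient in the sense of Guralnick--M\"uller--Saxl; only then does \cite[Theorem~1.5]{GMS} apply and yield the list \eqref{parTTT2}--\eqref{parTT7}. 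Your proposal skips this bridge entirely: you assert that the covering condition, via Shintani descent, ``is exactly the context'' of~\cite{GMS}, but \cite{GMS} classifies exceptional permutation representations (no common orbits on pairs), not configurations in which a single subgroup covers the conjugacy classes of a fixed-point subgroup $\mathcal{H}$, and you give no argument converting one condition into the other.

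Moreover, your use of Shintani descent is circular at the point where you deploy it. Writing $A\setminus H$ as cosets $\alpha^i H$ and declaring that $(G,\alpha)$ ``fits precisely the Shintani framework'' presupposes that $G$ is a group of Lie type and that $\alpha$ acts as a Frobenius (field-type) twist of odd prime order; but both facts are \emph{conclusions} of the theorem (part~\eqref{parTTT2}), not hypotheses. Your argument cannot, as written, exclude $G$ alternating or sporadic with $A/G$ nilpotent, nor the possibility that $\alpha$ involves diagonal or graph automorphisms (which is exactly why the elements $\iota$ appear in~\eqref{parTT4} and~\eqref{parTT7}). In the paper, Shintani descent enters only afterwards, inside the case-by-case analysis of the GMS list, to manufacture elements of prescribed order in twisted cosets (e.g.\ elements of order $p$, or of order $2^p-1$) that cannot lie in $K_M$, thereby pinning down $A$, forcing $H=H_M$ in cases \eqref{parTT3}--\eqref{parTT4}, eliminating $\mathrm{PSL}_2(3^f)$ with $K_M\cap G=D_{3^f+1}$, and fixing the arithmetic constraints on $f$. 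So while your final paragraph correctly anticipates the role of Shintani descent in the verification stage, the reduction that makes \cite{GMS} applicable is missing, and without it the proof does not go through.
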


\begin{proof}
The hypothesis that $\{H,K\}$ is a normal $2$-covering of $A$ implies that $H\ne A$. In particular, since we are assuming $G\le H$, we have $$G\ne A$$
and $A/G$ is a non-identity group.

If $A/G$ is not nilpotent, then the outer automorphism group of $G$ is not nilpotent and hence, using Lemma~\ref{lemma:silly2}, part~\eqref{parTT1} holds. 

Suppose next that $A/G$ is nilpotent.
As  $G\le H\le H_M<A$, we have that $H_M\unlhd A$ and $A/H_M$ is cyclic of prime order.
Let $\bar A:=A/G$ and let us adopt the ``bar'' notation for the projection of $A$ onto $\bar{A}$. As $\{H,K\}$ are components of a normal $2$-covering of $A$, we have
\begin{equation}\label{bar-equation}
\bar{A}=\bigcup_{x\in \bar A}\bar H^x\cup\bigcup_{x\in \bar A}\bar K^x.
\end{equation}
We claim that
\begin{equation}\label{eq:elephant}
A=KG.
\end{equation}
Note that since $H$ is a proper subgroup of $A$ containing $G$, we have that $\bar H$ is a proper subgroup of $\bar{A}$. If $\bar{A}$ is cyclic, then  \eqref{bar-equation} implies $\bar{A}=\bar H\cup\bar K$ and hence we deduce $\bar{A}=\bar K$, that is,~\eqref{eq:elephant} is satisfied. If  $\bar{A}$ is not cyclic, then by Lemma~\ref{lemma:nilpotent} $\bar{A}$ admits no normal $2$-covering and thus \eqref{bar-equation} implies $\bar{A}=\bar K$, that is,~\eqref{eq:elephant} is satisfied.

 If $G\le K_M$, then~\eqref{eq:elephant} gives $A=KG\le K_MG=K_M$, contradicting the maximality of $K_M$. Therefore $G\nleq K_M$. Since $G$ is the unique minimal normal subgroup of $A$, we deduce that $K_M$ is a core-free subgroup of $A$.

Let $\Omega$ be the set of right cosets of $K_M$ in $A$. As $K_M$ is core-free in $A$, we may identify $A$ with the primitive permutation group induced by the faithful action of $A$ on $\Omega$.
Recall that an element $g\in A$ is said to be a \textbf{\textit{derangement}} if $g$ fixes no point of $\Omega$. Following the notation in~\cite{BCGR}, we denote by  $\mathcal{D}(A)$ the subset of $A$ consisting of the derangements of $A$ and by $D(A)$ the subgroup of $A$ generated by the derangements of $A$, that is, $$D(A):=\langle g\in A\mid g\textrm{  derangement}\rangle.$$
Clearly $D(A)\unlhd A$. Observe that $A\setminus \mathcal{D}(A)$ consists of all the permutations of $A$ fixing some point of $\Omega$ and hence 
$$A\setminus \mathcal{D}(A)=\bigcup_{g\in A}K_M^g.$$ Therefore, 
\begin{equation}\label{endless}
\mathcal{D}(A)=A\setminus\bigcup_{g\in A}K_M^g\subseteq\bigcup_{g\in A}H^g\subseteq H_M.
\end{equation}
As a consequence, $D(A)\le H_M<A$. 

Let $\Omega_*^2:=\{(\alpha,\beta)\in \Omega\times \Omega\mid \alpha\ne \beta\}$  and consider the action of $A$ and $H_M$ on $\Omega_*^2$. We claim that no orbit of $A$ on  $\Omega_*^2$  coincides with an orbit of $H_M$ on  $\Omega_*^2$. Assume, by contradiction, that there exists $(\alpha, \beta)\in \Omega_*^2$ such that $(\alpha, \beta)^{H_M}=(\alpha, \beta)^{A}$. Let $x\in A\setminus H_M$. Then there exists $y\in H_M$ such that $(\alpha^y, \beta^y)=(\alpha^x, \beta^x)$. Thus we have $\alpha^{xy^{-1}}=\alpha$ and $\beta^{xy^{-1}}=\beta$ so that $xy^{-1}\in A$ fixes at least  two points of $\Omega$. Now by the main result ~\cite[Theorem~1.1~(b)]{BCGR} of Bailey, Cameron, Giudici and Royle, all the permutations of $A$ fixing at least two points of $\Omega$ belong to $D(A)$. It follows that $xy^{-1}\in D(A)\leq H_M$. Since $y\in H_M$ we deduce the contradiction $x\in H_M.$ Recall that $A$ acts primitively on $\Omega$ so that its normal subgroup $H_M$ acts transitively on $\Omega$.
Hence, within the terminology of Guralnick, M\"{u}ller and Saxl~\cite{GMS}, we have that the pair $(A, H_M)$ is \textbf{\textit{exceptional}}  with cyclic quotient $A/H_M$. Those pairs are classified in ~\cite[Theorem~1.5]{GMS}, which implies\footnote{Here we are actually slightly weakening the full strength of the theorem of Guralnick, M\"{u}ller and Saxl. For example, we are including the groups appearing in~\cite[Theorem~1.5 part~(g)]{GMS} in our part~\eqref{parTTTT2}.} that one of the following holds:

\begin{enumerate}
\item\label{parTTTT2}$G$ is a group of Lie type and $K_M\cap G$ is  the centralizer in $G$ of a field automorphism of odd prime order $r$. Moreover, $r$ is not the characteristic
of $G$, unless $G = \mathrm{PSL}_2(q)$;
\item\label{parTTT3} $G = \mathrm{PSL}_2(2^f)$ with $f>1$ odd and $K_M\cap H_M=D_{2(2^f +1)}$;
\item\label{parTTT4} $G = \mathrm{PSL}_2(p^f )$, $H_M\in\{G, \mathrm{PGL}_2(p^f )\}$ and $K_M\cap G=D_{p^f -1}$ with $p$ odd and $f$ even;
\item\label{parTTT5} $G = \mathrm{PSL}_2(3^f )$ and $K_M \cap G=D_{3^f +1}$, with $f \ge 3$  odd;
\item\label{parTTT6} $G = \mathrm{Sz}(2^f )$ and $K_M\cap G$ is the normalizer of a Sylow $5$-subgroup of $G$;
\item\label{parTTT7} $G = \mathrm{PSU}_3(2^f )$ with $f > 1$ odd and $K_M\cap G$ is the stabilizer in $G$ of a
decomposition of the $3$-dimensional space underlying $G$  into the direct sum of three
orthogonal non-singular 1-subspaces.
\end{enumerate}

We now deal with each of these six possibilities in turn, starting with the easiest cases. In particular, in part~\eqref{parTTTT2}, we obtain that part~\eqref{parTTT2} holds.

\medskip

Suppose that $G=\mathrm{PSL}_2(2^f)$ and that we are in part~\eqref{parTTT3}. Observe that $\mathrm{PSL}_2(2^f)=\mathrm{SL}_2(2^f)$.
We aim to prove that part~\eqref{parTT3} holds. Now, $\mathrm{Aut}(\mathrm{SL}_2(2^f))=\mathrm{SL}_2(2^f)\rtimes\langle\phi\rangle$, where $\phi$ is a field automorphism of odd order $f>1$. 
 We have $A=\mathrm{SL}_2(2^f)\rtimes\langle\phi^e\rangle$, for some divisor $e$ of $f$ with $e\geq 1$. The nature of $K_M$ is easily found. Indeed, we know that $K_M\not \geq G$ and thus $A=K_M G$, so that $K_M\cong (K_M\cap G).A/G=(K_M\cap G). f/e$. Since $K_M\cap G\leq D_{2(2^f+1)}$, we have that the proper subgroup of $A$ given by $D_{2(2^f+1)}.\langle\phi^e\rangle$ contains $K_M$ and, by the maximality of $K_M$, we deduce that $K_M\cong D_{2(2^f+1)}.\langle\phi^e\rangle\cong D_{2(2^f+1)}.f/e.$
 We claim that $A=\mathrm{Aut}(\mathrm{SL}_2(2^f))$.
Suppose, by contradiction, that $A<\mathrm{Aut}(\mathrm{SL}_2(2^f))$ so that $e>1$.
As the outer automorphism group of $\mathrm{SL}_2(2^f)$ is cyclic and $H_M$ is a maximal subgroup of $A$ containing $\mathrm{SL}_2(2^f)$, we have $H_M=\mathrm{SL}_2(2^f)\rtimes\langle\phi^{pe}\rangle$, for some prime divisor $p$ of $f/e$.  Let $x\in {\bf C}_{\mathrm{SL}_2(2^f)}(\phi^{e})=\mathrm{SL}_2(2^e)$ having order $2^e-1$ and consider $g:=x\phi^e\in A$. Now, $g\notin \mathrm{SL}_2(2^f)\rtimes\langle\phi^{pe}\rangle=H_M\unlhd A$ and hence $g$ has a conjugate in $K_M$. 
 From $x\phi^e\in D_{2(2^f+1)}.\langle\phi^{e}\rangle$, it follows now that $x\in D_{2(2^f+1)}$, so that ${\bf o}(x)=2^e-1$ divides $2^f+1$, which is impossible because $e$ is odd and $e>1$. 
Therefore, $A=\mathrm{SL}_2(2^f)\rtimes\langle\phi\rangle=\mathrm{Aut}(\mathrm{SL}_2(2^f))$.

Now, $H=\mathrm{SL}_2(2^f)\rtimes\langle\phi^e\rangle$, where $e$ is a divisor of $f$ with $e>1$. Suppose that $e$ is not a prime number and let $p$ be a prime dividing $e$.  Let $x\in {\bf C}_{\mathrm{SL}_2(2^f)}(\phi^{p})=\mathrm{SL}_2(2^p)$ having order $2^p-1$ and consider $g:=x\phi^p\in A$. Now, $g\notin H=\mathrm{SL}_2(2^f)\rtimes\langle\phi^e\rangle$, because $p<e$. Hence $g=x\phi^p$ has a conjugate in $K_M\cong D_{2(2^f+1)}.\langle\phi\rangle$.  It follows that $x\in D_{2(2^f+1)}$, so that ${\bf o}(x)=2^p-1$ divides $2^f+1$, which is impossible because $p>1$ is odd.
Therefore, $e=|A:H|$ is prime and thus $H=H_M$ is normal in $A$.

\medskip

Suppose that $G=\mathrm{Sz}(2^f)$ and that we are in part~\eqref{parTTT6}. Here the argument follows verbatim the proof for the case~\eqref{parTTT3}.  Set $q:=2^f$.
We aim to prove that part~\eqref{parTT6} holds. Now, $\mathrm{Aut}(\mathrm{Sz}(q))=\mathrm{Sz}(q)\rtimes\langle\phi\rangle$, where $\phi$ is a field automorphism of odd order $f>1$. 
 We have $A=\mathrm{Sz}(q)\rtimes\langle\phi^e\rangle$, for some divisor $e$ of $f$ with $e> 1$ because $A\ne G$. The nature of $K_M$ is easily found. We know that $K_M\cap G={\bf N}_G(P)$, where $P$ is a Sylow $5$-subgroup of $G$. In particular, $P\ne 1$, because $K_M$ is a proper subgroup of $A$. As 
$$|G|=q^2(q^2-1)(q-1)=q^2(q+\sqrt{2q}+1)(q-\sqrt{2q}+1)(q-1)$$
and $5$ divides $|G|$,
 it is easily seen that $f\equiv 3,5\pmod 8$. With this information, it is easily seen that $5$ divides $q-\sqrt{2q}+1$ and $5$ is relatively prime to $(q+\sqrt{2q}+1)(q-1)$. Now, using~\cite[Table~8.16]{bhr}, we deduce
$$K_M={\bf N}_A(P)\cong (q-\sqrt{2q}+1):4: \frac{f}{e}.$$ 

 We claim that $A=\mathrm{Aut}(\mathrm{Sz}(q))=\mathrm{Sz}(q)\rtimes\langle\phi\rangle$. 
Suppose, by contradiction, that $A<\mathrm{Aut}(\mathrm{Sz}(q))$ so that $e>1$.
As the outer automorphism group of $\mathrm{Sz}(q)$ is cyclic and $H_M$ is a maximal subgroup of $A$ containing $\mathrm{Sz}(q)$, we have $H_M=\mathrm{Sz}(q)\rtimes\langle\phi^{pe}\rangle$, for some prime divisor $p$ of $f/e$.  Let $x\in {\bf C}_{\mathrm{Sz}(q)}(\phi^{e})=\mathrm{Sz}(2^e)$ having order $2^e-1$ and consider $g:=x\phi^e\in A$. Now, $g\notin \mathrm{Sz}(q)\rtimes\langle\phi^{pe}\rangle=H_M\unlhd A$ and hence $g$ has a conjugate in $K_M$. 
 From $x\phi^e\in {\bf N}_A(P)=((q-\sqrt{2q}+1):4):\langle\phi^{e}\rangle$, it follows now that $x\in (q-\sqrt{2q}+1):4$, so that ${\bf o}(x)=2^e-1$ divides $q-\sqrt{2q}+1=2^{f}-2^{(f+1)/2}+1$. A computation shows that this is possible only when $e=1$, which is a contradiction.  Therefore, $A=\mathrm{Sz}(q)\rtimes\langle\phi\rangle=\mathrm{Aut}(\mathrm{Sz}(q))$.

Now, $H=\mathrm{Sz}(q)\rtimes\langle\phi^e\rangle$, where $e$ is a divisor of $f$ with $e>1$. Suppose that $e$ is not a prime number and let $p$ be a prime dividing $e$.  Let $x\in {\bf C}_{\mathrm{Sz}(q)}(\phi^{p})=\mathrm{Sz}(2^p)$ having order $2^p-1$ and consider $g:=x\phi^p\in A$. Now, $g\notin H=\mathrm{SL}_2(q)\rtimes\langle\phi^e\rangle$, because $p<e$. Hence $g=x\phi^p$ has a conjugate in $K_M={\bf N}_A(P)\cong (q-\sqrt{2q}+1):4:\langle\phi\rangle$.  It follows that $x\in (q-\sqrt{2q}+1):4$, so that ${\bf o}(x)=2^p-1$ divides $q-\sqrt{2q}+1$. Arguing as above, we see that this  is impossible because $p>1$ is odd.
Therefore, $e=|A:H|$ is prime. 
\medskip

Suppose that $G=\mathrm{PSL}_2(3^f)$ and that we are in part~\eqref{parTTT5}. Here we prove that no normal $2$-covering arises. Let $q:=3^f$. Now, let $\iota$ be the projective image of 
\[
\begin{pmatrix}
-1&0\\
0&1
\end{pmatrix}\in \mathrm{GL}_2(q)
\]
in $\mathrm{PGL}_2(q)$ and let $\phi$ be a field automorphism of $\mathrm{PSL}_2(q)$ of order $f$. Thus 
$\mathrm{Aut}(G)=\mathrm{Aut}(\mathrm{PSL}_2(q))=G\rtimes\langle \iota\phi\rangle$, where $\langle\iota \phi\rangle$ is a cyclic group of order $2f$, because $f$ is odd and $\iota$ and $\phi$ commute. 

In particular, $A=G\rtimes\langle (\iota \phi)^{e_1}\rangle$, for some divisor $e_1$ of $2f$, and $H=G\rtimes\langle(\iota \phi)^{e_1e_2}\rangle$, for some divisor $e_2$ of $2f/e_1$ with $e_2>1$. 
Assume first that $e_1$ is even, that is, $e_1:=2e_1'$, for some divisor $e_1'$ of $f$. Thus $$A=G\rtimes\langle (\iota \phi)^{e_1}\rangle=G\rtimes \langle \phi^{e_1'}\rangle,\, H=G\rtimes\langle\phi^{e_1'e_2}\rangle.$$ 
Observe that, for every $s\in G$, $\phi^{e_1'}s\in A\setminus H$ and hence $\phi^{e_1'}s$ has a conjugate in $K_M$. We now apply the Shintani descent with $\sigma:=\phi^{e_1'}$, $e:=f/e_1'$ and  $X:=\mathrm{PSL}_2(\bar{\mathbb{F}}_3)$, where $\bar{\mathbb{F}}_3$ is the algebraic closure of $\mathbb{F}_3$. Thus, $\mathcal{G}=\mathrm{PSL}_2(q)$ and $\mathcal{H}=\mathrm{PSL}_2(3^{e_1'}).$
 Using Lemma~\ref{shintani} and the surjectivity of the Shintani descent, we choose $s\in G$ such that $(\sigma s)^e$ has order $3$. Since $\sigma s=\phi^{e_1'}s$ has a conjugate in $K_M$, we have that  $(\sigma s)^e\in K_M\cap G=D_{3^f+1}$  against the fact that $D_{3^f+1}$ has no element of order $3.$\footnote{
Strictly speaking, in this particular case, the Shintani descent is not necessary and we could argue by simply using the norm with respect to a field extension. Consider $q':=q^{e_1'}$ and let $$N:\mathbb{F}_q\to \mathbb{F}_{q'}$$
be the norm map of the Galois extension $\mathbb{F}_q/\mathbb{F}_{q'}$. In particular, for every $\alpha\in \mathbb{F}_q$, 
$$N(\alpha)=\prod_{\theta\in \mathrm{Gal}(\mathbb{F}_q/\mathbb{F}_{q'})}\alpha^\theta=
\prod_{\theta\in \langle\sigma\rangle}\alpha^\theta
=\alpha^{\sigma^{f/e_1'-1}}\cdots \alpha^\sigma \alpha.$$
From the surjectivity of the norm, there exists $\alpha\in \mathbb{F}_q$ with $N(\alpha)=1$. Consider now
\[
s:=
\left[
\begin{array}{cc}
1&\alpha\\
0&1
\end{array}
\right]\in G=\mathrm{PSL}_2(q).
\]
Then $g:=\phi^{e_1'}s=\sigma s\in A\setminus H$. In particular, $g$ has a conjugate in $K_M$. Therefore $g^{f/e_1'}$ has also a conjugate in $K_M$. Thus $g^{f/e_1'}\in K_M\cap G  =D_{q+1}$. Now, by \eqref{orderelt}, we have
$$g^{f/e_1'}=(\sigma s)^{f/e_1'}=s^{\sigma^{f/e_1'-1}}\cdots s^\sigma s=
\left[
\begin{array}{cc}
1&\alpha^{\sigma^{f/e_1'-1}}\cdots \alpha^\sigma \alpha\\
0&1
\end{array}
\right]
=
\left[
\begin{array}{cc}
1&1\\
0&1
\end{array}
\right].
$$ 
Hence $K_M\cap G= D_{3^f+1}$ contains an element having order $3$, however this is a contradiction.

Assume now that $e_1$ is odd. Thus $$A=\mathrm{PSL}_2(q)\rtimes \langle(\iota\phi)^{e_1}\rangle
=\mathrm{PSL}_2(q)\rtimes\langle \iota,\phi^{e_1}\rangle=\mathrm{PGL}_2(q)\rtimes\langle \phi^{e_1}\rangle.$$ Now, we may argue exactly  as above applying the Shintani descent to   $\sigma:=\phi^{e_1}$, $e:=f/e_1$ and $X:=\mathrm{PGL}_2(\bar{\mathbb{F}}_3)$ and using the fact that $D_{q+1}.2$ does not contain elements of order $3$.}
In particular, in this case no example arises for our main result.

\medskip

Suppose that $G=\mathrm{PSL}_2(p^f)$ and that we are in part~\eqref{parTTT4}. Let $$\iota\in \mathrm{PGL}_2(q)\setminus \mathrm{PSL}_2(q).$$ 
Let $\phi$ be a field automorphism of $\mathrm{PSL}_2(q)$ of order $f$. Thus 
$\mathrm{Aut}(G)=\mathrm{Aut}(\mathrm{PSL}_2(q))=G\langle \iota,\phi\rangle$.  Note that the outer automorphism group of $\mathrm{PSL}_2(q)$ is isomorphic to $\mathbb{Z}_2\times \mathbb{Z}_f$, because $f$ is even. Since $A$ is a subgroup of $\mathrm{Aut}(G)$ containing $G$, we obtain one of the following possibilities
\begin{enumerate}
\item\label{bestio1} $A=G\langle \iota,\phi^i\rangle$, where $i$ is a divisor of $f$,
\item\label{bestio2} $A=G\rtimes\langle \phi^i\rangle$, where $i\neq f$ is a divisor of $f$,
\item\label{bestio3} $A=G\langle \iota\phi^i\rangle$, where $i$ is a divisor of $f$ with $f/i$ even.
\end{enumerate}
Assume first that we are in case \eqref{bestio1} or in case \eqref{bestio2}. 
We claim that $\phi^i\in H.$ This is obvious when we are dealing with case~\eqref{bestio1} with $i=f$; therefore, in the proof of this claim, when discussing case~\eqref{bestio1}, we may assume that $i\ne f$. Observe that in case~\eqref{bestio2}, we always have $i\ne f$.

For proving the previous claim, we use the Shintani descent with $e:=f/i$ and $\sigma:= \phi ^i$ establishing a bijection among the conjugacy classes of $G\rtimes\langle \phi^i\rangle\leq A$ contained in $\phi ^iG\subseteq A$ and the conjugacy classes in $\mathrm{PSL}_2(p^i).$ Since obviously $\mathrm{PSL}_2(p^i)$ contains an element of order $p$, there exists $s\in G$ such that $(\phi^is)^e$ has order $p$ and moreover, by \eqref{orderelt}, we have $(\phi^is)^e\in G$. Consider now $\phi^is\in A$. Assume, by contradiction, that $\phi^is$ belongs to an $A$-conjugate of $K_M.$ Since $G\unlhd A$ we also have that $(\phi^is)^e$ belongs to an $A$-conjugate of $K_M\cap G=D_{q-1}$, against the fact that $D_{q-1}$ contains no element of order $p$. It follows that $\phi^is\in H$ and, since $H\geq G$, we deduce that $\phi^i\in H.$

If  we are in case \eqref{bestio2} this leads to the contradiction $H=A$. Thus case \eqref{bestio2} cannot arise.
If we are in case \eqref{bestio1}, this leads to $H\geq G\rtimes\langle \phi^i\rangle$ and, since $G\rtimes\langle \phi^i\rangle$ has index $2$ in $A$, we deduce that $H=H_M=G\rtimes\langle \phi^i\rangle$. We show that this does not give rise to a normal $2$-covering of $A$. We argue by contradiction and we suppose that this is a normal $2$-covering. Let $g\in \mathrm{PGL}_2(q)=\langle G,\iota\rangle\le A$ be a Singer cycle. Clearly, $g\notin H\unlhd A$, because $H$ has no elements having order $q+1$. Therefore, $g$ has an $A$-conjugate in $K_M$. Now, $K_M$ is a maximal subgroup of $A$ with $K_M\cap \mathrm{PSL}_2(q)=D_{q-1}$. From~\cite[Table~8.1]{bhr}, when $q\ne 9$, we have $K_M={\bf N}_A(K_M\cap \mathrm{PSL}_2(q))=D_{2(q-1)}.(f/i)$. Moreover, as $g\in \mathrm{PGL}_2(q)\unlhd A$, we deduce that $g$ has an $A$-conjugate in $K_M\cap\mathrm{PGL}_2(q)=D_{2(q-1)}$. However, $D_{2(q-1)}$ has no elements of order $q+1$. The case $q=9$ also does not give rise to a normal $2$-covering; this has been verified with a computer computation.

Finally assume that we are in case \eqref{bestio3}. Then $A=G\langle \phi^i\iota\rangle $ contains the coset $(\phi^i\iota)^2G=\phi^{2i}G$. Note that, since $i$ is odd dividing $f$ even we also have that $2i$ divides $f.$ We claim that  $\phi^{2i}\in H.$ 

For that purpose, we use the Shintani descent with $e:=f/2i$ and $\sigma:= \phi ^{2i}$, establishing a bijection among the conjugacy classes of $G\rtimes\langle \phi^{2i}\rangle\leq A$ contained in $\phi ^{2i}G\subseteq A$ and the conjugacy classes in $\mathrm{PSL}_2(p^{2i}).$ Since obviously $\mathrm{PSL}_2(p^i)$ contains an element of order $p$, there exists $s\in G$ such that $(\phi^{2i}s)^e$ has order $p$ and moreover, by \eqref{orderelt}, we have $(\phi^{2i}s)^e\in G$. Consider now $\phi^{2i}s\in A$. Assume, by contradiction, that $\phi^{2i}s$ belongs to an $A$-conjugate in $K_M.$ Since $G\unlhd A$ we also have that $(\phi^{2i}s)^e$ belongs to an $A$-conjugate of $K_M\cap G=D_{q-1}$, against the fact that $D_{q-1}$ contains no element of order $p$. It follows that $\phi^{2i}s\in H$ and, since $H\geq G$, we deduce that $\phi^{2i}\in H.$ 

As a consequence, we have that $H\geq G\langle \phi^{2i}\rangle$ and, since $G\rtimes\langle \phi^{2i}\rangle$ has index $2$ in $A=G\langle \iota\phi^i\rangle$, we deduce that $H=H_M=G\rtimes\langle \phi^{2i}\rangle$.

\medskip

Suppose finally that $G=\mathrm{PSU}_3(2^f)$ and that we are in part~\eqref{parTTT7}.  We consider the Hermitian form having matrix:
\[
\begin{pmatrix}
0&0&1\\
0&1&0\\
1&0&0
\end{pmatrix}.
\]Set $q:=2^f$ and  let $\lambda$ be a generator of the subgroup of $\mathbb{F}_{q^2}^\ast$ having order $q+1$. Now, let $\iota$ be the projective image of 
\[
\begin{pmatrix}
1&0&0\\
0&\lambda&0\\
0&0&1
\end{pmatrix}\in \mathrm{GU}_3(q)
\]
in $\mathrm{PGU}_3(q)$ and let $\phi$ be a graph-field automorphism of $\mathrm{PSU}_3(q)$ of order $2f$. Thus 
$\mathrm{Aut}(G)=\mathrm{Aut}(\mathrm{PSU}_3(q))=G\langle \iota,\phi\rangle$, where the outer automorphism group of $\mathrm{PSU}_3(q)$ is isomorphic to $(\mathbb{Z}_3\times \mathbb{Z}_f).\mathbb{Z}_2\cong\mathbb{Z}_3\rtimes\mathbb{Z}_{2f}$, because $f$ is odd. In particular, we have $\iota^\phi=\iota^{-1}\pmod G$ and $\iota^3=1\pmod G$. Since $A$ is a subgroup of $\mathrm{Aut}(G)=G\langle \iota,\phi\rangle$ containing $G$, we obtain one of the following possibilities
\begin{enumerate}
\item\label{Bestio1} $A=G\langle\iota,\phi^i\rangle$, for some divisor $i$ of $2f$,
\item\label{Bestio2} $A=G\rtimes\langle \phi^i\rangle$, for some divisor $i$ of $2f$, with $i\neq 2f$,
\item\label{Bestio3} $A=G\langle \iota^\varepsilon\phi^{i}\rangle$, for some 
$\varepsilon\in \{-1,1\}$ and for some divisor $i$ of $f$,
\item\label{Bestio4} $A=G\langle \iota^\varepsilon\phi^{2i}\rangle$, for some 
$\varepsilon\in \{-1,1\}$ and for some divisor $i$ of $f$ with $f/i$ divisible by $3$.
\end{enumerate}
Observe that in~\eqref{Bestio3}, we have $$(\iota^\varepsilon \phi^i)^2=\phi^{2i},$$ and that in~\eqref{Bestio4} we have 
\begin{equation}\label{strum}
(\iota^\varepsilon \phi^{2i})^3=\iota^{3\iota}\phi^{6i}\pmod G=\phi^{6i}\pmod G.
\end{equation}
 Moreover, $\iota^{-1}\phi\iota\equiv \phi\iota^2\pmod G$ and hence the groups appearing in~\eqref{Bestio2} with $i$ odd and those in~\eqref{Bestio3} are conjugate under an element in $\mathrm{Aut}(G)$.

We claim that 
\begin{center}
$(\dag)\ $ $\forall j\mid f$,  if $\phi^{2j}\in A$, then $\phi^{2j}\in H$.
\end{center}
Let $j$ be a divisor of $f$ and assume that $\phi^{2j}\in A$. Let $\sigma:=\phi^{2j}$ so that $\order \sigma=f/j=:e$. We have $G\rtimes\langle \sigma\rangle\leq A$
and we apply the Shintani descent to the conjugacy classes of this group contained in the coset $\phi^{2j}G=\sigma G$. Note that the codomain of the descent is given here by $\mathrm{PSU}_3(2^j)\geq \mathrm{PSU}_3(2)\geq Q_8$. Then there exists $s\in G$ such that $(\phi^{2j}s)^e$ has order $4.$ Consider then $\phi^{2j}s\in A$. If it belongs to $K_M$ up to $A$-conjugacy, then we have $(\phi^{2j}s)^e$ in $K_M\cap G\cong (2^f+1)^2.S_3$ up to $A$-conjugacy, against the fact that the group $(2^f+1)^2.S_3$ contains no element of order $4$.

We next claim that 

\begin{center}
$(\dag\dag)\ $ if $\iota\in A$, then $\iota\in H$.
\end{center}

Let $\iota\in A$. Then $A\supseteq \mathrm{PGU}_3(q)$. Let $g\in \mathrm{PGU}_3(q)$ be a Singer cycle. 
Assume that $g\in K_M$ up to $A$-conjugacy. Then $$g\in K_M\cap \mathrm{PGU}_3(q)\leq (K_M\cap \mathrm{PSU}_3(q)).3=(q+1)^2.S_3.3$$ up to $A$-conjugacy.
Since $f>1$, there exists $r\in P_{6}(q)$, that is $r$ is a primitive prime divisor of $q^{6}-1=2^{6f}-1$, and $r\mid \order g$. Then $r\nmid q+1$ and $r\geq 7$ so that it cannot divide $(q+1)^2.S_3.3$ and we get a contradiction. It follows that $\iota\in H$ up to $A$-conjugacy. It remains to justify that $\iota\in H$. Now, for some $a\in A$, we have $\iota^a\in H$ and hence $$H\ge\langle G,\iota^a\rangle=\langle G,\iota\rangle^a=\mathrm{PGU}_3(q)^a=\mathrm{PGU}_3(q)=\langle G,\iota\rangle.$$

\smallskip

We consider now the various possibilities for $A$.

Let $A$ be as in~\eqref{Bestio1}. If $i$ is even, then we have $i=2j$, for some $j\mid f$, and by $(\dag)$ and $(\dag\dag)$, we deduce $H=A$, which is a contradiction. If $i$ is odd, then $i\mid f$ and we have $\phi^{2i}\in A$. So, by $(\dag)$, we deduce $\phi^{2i}\in H.$ Moreover, by $(\dag\dag)$, we have $\iota \in H$. Thus $H\supseteq G\langle \iota, \phi^{2i}\rangle$. Since this last subgroup of $A$ has index $2$, we deduce $H=H_M=G\langle \iota, \phi^{2i}\rangle$. Now, let $s\in \mathrm{PGU}_3(q)\le A$ be a Singer cycle and let $g:=s\phi^i$. Observe that $g\notin H$ and hence $g\in K_M=(K_M\cap \mathrm{PGU}_3(q)).\langle \phi^i\rangle$ up to $\mathrm{Aut}(A)$-conjugacy.  Thus $s\in K_M\cap \mathrm{PGU}_3(q)$. However, arguing as above we reach a contradiction.

\smallskip

Let $A$ be as in~\eqref{Bestio2}. If $i$ is even, then we have $i=2j$ for some $j\mid f$ and, by $(\dag)$, we get $\phi^i\in H$. Then  $H=A$, which is a contradiction. If $i$ is odd, by  $(\dag)$, we get $\phi^{2i}\in H$ so that  $H\supseteq G\rtimes\langle \phi^{2i}\rangle$. Since this last subgroup of $A$ has index $2$, we deduce $H=H_M=G\rtimes\langle \phi^{2i}\rangle$.  Now, let $s\in \mathrm{PSU}_3(q)\le A$ be a Singer cycle and let $g:=s\phi^i$. Observe that $g\notin H$ and hence $g\in K_M=(K_M\cap \mathrm{PSU}_3(q)).\langle \phi^i\rangle$ up to $\mathrm{Aut}(A)$-conjugacy.  Thus $s\in K_M\cap \mathrm{PSU}_3(q)$ and, arguing as above, we reach a contradiction.

\smallskip

Let $A$ be as in~\eqref{Bestio3}. We have shown above that in this case $A$ is $\mathrm{Aut}(G)$-conjugate to a group as in part~\eqref{Bestio2}. Therefore, there exists $\tilde A$ as in~\eqref{Bestio2} and $\nu\in\mathrm{Aut}(G)$ such that $A^{\nu}=\tilde A$. From  $A=\bigcup_{a\in A}H^a\cup \bigcup_{a\in A}K^a,$ it follows that 
\begin{equation*}\label{info1}
\tilde A=A^{\nu}=\bigcup_{a\in A}H^{a\nu}\cup \bigcup_{a\in A}K^{a\nu}.
\end{equation*}
Now, inside $\mathrm{Aut}(G)$, we have $A\nu=\nu \tilde A$ and $H^{\nu}, K^{\nu}\leq \tilde A$. Thus we get 
$$\tilde A=\bigcup_{a\in \tilde A}H^{\nu a}\cup \bigcup_{a\in \tilde A}K^{\nu a},$$
so that $\tilde A$ admits a $2$-normal covering. However we have shown that no group  of type ~\eqref{Bestio2} admits a $2$-normal covering.
\smallskip

Let finally $A$ be as in~\eqref{Bestio4}. Therefore, $A=G\langle \iota^\varepsilon\phi^{2i}\rangle$, where $\varepsilon\in \{-1,1\}$ and $i$ is a divisor of $f$ with $f/i$ divisible by $3$. By~\eqref{strum}, we have that $\phi^{6i}\in A$ so that, by $(\dag)$, $\phi^{6i}\in H.$ It follows that $H=H_M=G\rtimes \langle \phi^{6i}\rangle$ has index $3$ in $A$. In order to prove~\eqref{parTT7}, it suffices to show that $i=1$.

For each $a\in\mathbb{F}_{q}$ with $a\ne 0$, the projective image $x_a$ of the matrix
$$\begin{pmatrix}
a&0&0\\
0&1&0\\
0&0&a^{-1}
\end{pmatrix}$$
lies in $\mathrm{PSU}_3(q)$ and hence $\iota^\varepsilon\phi^{2i} x_a\in A$. Clearly, $\iota^\varepsilon\phi^{2i} x_a\notin G\rtimes\langle \phi^{2i}\rangle=H$ and hence $\iota^\varepsilon\phi^{2i} x_a$ has a conjugate in $K_M$. Now, as $\phi^{2i}$ commutes with $\iota$ and as $\phi^{2f}=1$, we obtain
$$(\iota^{\varepsilon}\phi^{2i}x_a)^{f/i}=(\phi^{2i}\iota^\varepsilon x_a)^{f/i}\in \mathrm{PSU}_3(q),$$
 which is $A$-conjugate to an element of $K_M\cap \mathrm{PSU}_3(q)\cong (q+1)^2.S_3$. Set $\sigma:=\phi^{2i}$. Now a direct computation using~\eqref{orderelt} shows that the $f/i$ power of $\phi^{2i}\iota^\varepsilon x_a$ is
$$\begin{pmatrix}
N(a^\varepsilon)&0&0\\
0&1&0\\
0&0&N(a^{-\varepsilon})
\end{pmatrix},$$
where $$N:\mathbb{F}_q\to\mathbb{F}_{q^{\frac{i}{f}}}$$ is the Galois norm of the field extension $\mathbb{F}_q/\mathbb{F}_{q^{i/f}}$. Since the norm map is surjective, there exists $a\in \mathbb{F}_q$ such that $N(a)$ has order $q^{i/f}-1=2^i-1.$ This proves that $K_M\cap \mathrm{PSU}_3(q)\cong (q+1)^2.S_3$ contains an element having order $2^i-1$. However, recalling that $i$ is odd, we have  that $2^i-1$ is relatively prime to $(q+1)^2.|S_3|$. Thus $i=1$, which is what we wanted.
\end{proof}

All good things in life at some point come to an end. Therefore, also in our work,
we need to wrap up and finish. We close this section observing that Theorems~\ref{theorem:silly} and~\ref{theorem:silly2} are far to give a complete classification of the degenerate normal 2-coverings
of almost simple groups. However, we believe that they could be the beginning of
interesting new research. 

Actually, we cannot resist from making one last observation concerning Theorem~\ref{theorem:silly2}. Indeed, in Theorem~\ref{theorem:silly2} part~\eqref{parTTT2}, for a suitable prime power $q$, every group of Lie type can arise. Rather than giving full details we just give one example using projective special linear groups.

Let $n$ be a positive integer with $n\ge 2$ and let $p$ be a prime number. Now, let $f$ be a prime number with $f$ relatively prime to $p^i-1$, for every $i\in\{1,\ldots,n\}$. For instance, we may take $f$ to be any prime number greater than $p^n-1$. Next, let $q:=p^f$, $G:=\mathrm{PSL}_n(q)$, let $\phi\in\mathrm{Aut}(G)$ be a field automorphism of order $f$ and let $A:=G\rtimes \langle\phi\rangle $.

Observe that $f$ is relatively prime to $|G|$: indeed, from Fermat's little theorem, for every $i\in \{1,\ldots,n\}$, we have $q^i=p^{if}\equiv p^i\pmod f$. Since $f$ is relatively prime to $p^i-1$, we deduce that also $f$ is relatively prime to $q^i-1$.

Now, let $H:=G=\mathrm{PSL}_n(q)$ and let $K:={\bf C}_A(\phi)=\mathrm{PSL}_n(p)\times\langle\phi\rangle$. We claim that $\{H,K\}$ is a normal $2$-covering of $A$. Let $g\in A$. If $g\in G$, then $g\in H$, because $H=G$. If $g\notin G$, then $g=s\phi^i$, where $i\in \{1,\ldots,f-1\}$ and $s\in G$. Since $g\pmod G$ has order $f$ in $A/G$, ${\bf o}(g)$ is divisible by $f$. Set $x:=g^{{\bf o }(g)/f}$ and observe that $x$ has order $f$. By Sylow's theorem, $\langle x\rangle$ and $\langle\phi\rangle$ are $A$-conjugate, and hence there exists $a\in A$ with $\langle x\rangle^a=\langle\phi\rangle$. Now,
$$g^a\in ({\bf C}_A(g^{{\bf o}(g)/f}))^a=({\bf C}_A(x))^a={\bf C}_A(x^a)={\bf C}_A(\phi)=K.$$

\thebibliography{30}
\bibitem{a} M.~Aschbacher, On the maximal subgroups of the finite
classical groups, \textit{Invent. Math.} \textbf{76} (1984), 469--514.

\bibitem{baer}R.~Baer, Supersoluble groups, \textit{Proc. Amer. Math. Soc.} \textbf{6} (1955), 16--32.

\bibitem{BCGR}R.~A.~Bailey, P.~J.~Cameron, M.~Giudici, G.~F.~Royle, Groups generated by derangements,
\textit{J. Algebra} \textbf{572} (2021), 245--262. 

\bibitem{Wilson3}R.~W.~Barraclough, R.~A.~Wilson, The character table of a maximal subgroup of the
Monster, \textit{LMS Journal of Computation and Mathematics} \textbf{10} (2007), 161--175.

\bibitem{be} A. Bereczky, Maximal overgroups of Singer elements in classical groups, \textit{J. Algebra} \textbf{234} (2000), 187--206.

\bibitem{magma} W.~Bosma, J.~Cannon, C.~Playoust,
The Magma algebra system. I. The user language,
\textit{J. Symbolic Comput.} \textbf{24} (3-4) (1997), 235--265.

\bibitem{Boston}N.~Boston, W.~Dabrowski, T.~Foguel, P.~J.~Gies, J.~Leavitt, D.~T.~Ose, The proportion of fixed-point-free elements in a transitive group, \textit{Comm. Algebra} \textbf{21} (1993), 3259--3275.
\bibitem{bbh} R.~Brandl , D.~Bubboloni, I.~Hupp,  Polynomials with
roots $mod$ $p$ for all primes $p$, \textit{J. Group Theory} \textbf{4}
(2001), 233--239.

\bibitem{bhr}J.~N.~Bray, D.~F.~Holt, C.~M.~Roney-Dougal,
 \textit{The maximal subgroups of the low dimensional classical groups},
 London Mathematical Society Lecture Note Series \textbf{407}, Cambridge University Press, Cambridge, 2013.

\bibitem{breuer}T.~Breuer, Manual for the GAP Character Table Library, Version 1.1, RWTH Aachen,
2004.
\bibitem{breuer2}T.~Breuer, Four Primitive Permutation Characters of the Monster Group, {\url{https://www.math.rwth-aachen.de/~Thomas.Breuer/ctbllib/doc2/chap8.html#X8337F3C682B6BE63}}

\bibitem{BM}J.~R.~Britnell, A.~Mar\'oti, Normal coverings of linear groups,  \textit{Algebra Number Theory} \textbf{ 7}, no. 9 (2013), 2085--2102.

\bibitem{b} D.~Bubboloni, Coverings of the symmetric and
alternating groups,  \textit{Dipartimento  di Matematica   "U. Dini" -
Universit\`a di Firenze} {\bf 7} (1998).

\bibitem{bl} D.~Bubboloni, M.~S.~Lucido, Coverings of linear groups,
\textit{Comm. Algebra}  \textbf{30} (2002), 2143--2159.

\bibitem{blw} D.~Bubboloni, M.~S.~Lucido, Th.~Weigel, Generic $2$-coverings of finite groups of Lie-type, \textit{Rend. Sem. Mat. Padova} \textbf{115} (2006), 209--252.

\bibitem{JA} D.~Bubboloni, C.~E.~Praeger, P.~Spiga,
Normal coverings  and pairwise generation of finite alternating and symmetric groups,
 \textit {J. Algebra}, \textbf{ 390} (2013), 199--215 .

\bibitem{BPS2}D.~Bubboloni, C.~E.~Praeger, P.~Spiga, Linear bounds for the normal covering number of the symmetric and alternating groups, \textit{Monatsh. Math.} \textbf{191} (2020), 229--247.

\bibitem{BS} D.~Bubboloni, J. Sonn, Intersective $S_n$ polynomials with few irreducible factors, \textit{Manuscripta Math.} \textbf{151} (2016), 477--492.

\bibitem{BurGue}T.~C.~Burness, S.~Guest, On the uniform spread of almost simple linear groups, \textit{Nagoya Math. J.} \textbf{209} (2013), 35--109.

\bibitem{BT} T.~C.~Burness, H. P.~Tong-Viet, Primitive permutation groups and derangements of prime power order, \textit{Manuscripta Math.} \textbf{150} (2016), 255--291.

\bibitem{burness} T.~C.~Burness, M.~Giudici, \textit{Classical Groups, Derangements and Primes}, Australian Mathematical Society Lecture Series \textbf{25}, Cambridge University Press, 2016.

\bibitem{burness2}T.~C.~Burness, E.~A.~O'Brien, R.~A.~Wilson, Base sizes for sporadic simple groups, \textit{Israel J. Math.} \textbf{177} (2010), 307--333.

\bibitem{buturlakin1}A.~A.~Buturlakin, Spectra of finite linear and unitary groups, \textit{Algebra and Logica} \textbf{47} (2008), 157--173.

\bibitem{buturlakin}A.~A.~Buturlakin, Spectra of finite symplectic and orthogonal groups, \textit{Mat. Tr.} \textbf{13} (2010), 33--83.

\bibitem{BC}A.~A.~Buturlakin, M.~A.~Grechkoseeva, The cyclic structure
  of maximal tori in finite classical groups, \textit{Algebra and
    Logic} \textbf{46} (2007), 73--89.

\bibitem{Cameron}P.~J.~Cameron, \textit{Projective and polar spaces}, QMW Maths Notes 13, Published by the School of Mathematical Sciences, Queen Mary and Westfield College, Mile End Road, London E1 4NS, U.~K.

\bibitem{carter}R.~W.~Carter, Finite groups of Lie type: conjugacy classes and complex characters, A Wiley-interscience Publication, John Wiley \& Sons, Inc., New York, 1985.

\bibitem{CPS}E.~Cline, B.~Parshall, L.~Scott, Cohomology of finite groups of Lie type, I, \textit{Inst. Hautes \`Etudes Sci. Publ. Math. } \textbf{45} (1975), 169--191. 

\bibitem{atlas} J.~H.~Conway, R.~T. Curtis, S.~P.~Norton, R.~A.~Parker, R.~A.~Wilson, An $\mathbb{ATLAS}$ of Finite Groups \textit{Clarendon Press, Oxford}, 1985; reprinted with corrections 2003.

\bibitem{DeFranceschi}G.~De Franceschi, \textit{Centralizers and conjugacy classes in finite classical groups}, PhD thesis, University of Auckland, 2018.

\bibitem{DeFranceschi1}G.~De Franceschi, Centralizers and conjugacy classes in finite classical groups,  	arXiv:2008.12651 [math.GR].

\bibitem{Dye} R.~H.~Dye, Interrelations of Symplectic and
Orthogonal Groups in Characteristic Two, \textit{J. Algebra} \textbf{59}
(1979), 202--221.

\bibitem{enomoto}H.~Enomoto, The characters of the finite symplectic group $\mathrm{Sp}(4,q)$, $q=2^f$, \textit{Osaka Math. J.} \textbf{9} (1972), 75--94.

\bibitem{enomoto1}H.~Enomoto, H.~Yamada, The characters of $G_2(2^n)$, \textit{Japan J. Mat} \textbf{12} (1986), 325--377.

\bibitem{G1}J.~Fulman, R.~M.~Guralnick, Derangements in simple and primitive groups, in: \textit{Groups, combinatorics, and geometry (Durham, 2001)}, 99--121, World Sci. Publ., River Edge, NJ, 2003.

\bibitem{G2}J.~Fulman, R.~M.~Guralnick, Derangements in finite classical groups for actions related to extension field and imprimitive subgroups
and the solution of the Boston--Shalev conjecture,  \textit{Trans. Amer. Math. Soc.} \textbf{370} (2018), 4601--4622.

\bibitem{G22}J.~Fulman, R.~M.~Guralnick, Derangements in subspace actions of finite classical groups,  \textit{Trans. Amer. Math. Soc.} \textbf{369} (2017), no. 4, 2521--2572.

\bibitem{G3}J.~Fulman, R.~M.~Guralnick, Bounds on the number and sizes of conjugacy classes in finite Chevalley groups with applications to derangements, \textit{Trans. Amer. Math. Soc.} \textbf{364} (2012), 3023--3070.

\bibitem{GAP}
  The GAP~Group, \emph{GAP -- Groups, Algorithms, and Programming,
  Version 4.11.1};
  2021,
  \url{https://www.gap-system.org}.

\bibitem{Lucchini}M.~Garonzi, A.~Lucchini,
Covers and normal covers of finite groups,
\textit{J. Algebra} \textbf{422} (2015), 148--165.

\bibitem{garzoni}D.~Garzoni, Connected components in the invariably generating graph of a finite group, \textit{Bull. Aust. Math. Soc.} \textbf{104} (2021), 453--463.

\bibitem{Garzoni1}D.~Garzoni, The invariably generating graph of the alternating and symmetric groups, \textit{J. Group Theory} \textbf{23} (2020), 1081--1102.

\bibitem{Garzoni2}D.~Garzoni, A.~Lucchini, Minimal invariable generating sets, \textit{J. Pure Appl. Algebra} \textbf{224} (2020), 218--238.

\bibitem{Garzoni3} D.~Garzoni, E.~McKemmie, On the probability of generating invariably a finite simple group,  arXiv:2008.03812v2 [math.GR].

\bibitem{GGS}N.~Gill, M.~Giudici, P.~Spiga, A generalization of Szep's conjecture for almost simple groups, in preparation.

\bibitem{GLS3} D.~Gorenstein, R.~Lyons, R.~Solomon,  \textit{The
      classification of the finite simple groups}, Number 3.  Amer. Math. Soc.  Surveys and Monographs {\bf 40}, 3 (1998).

\bibitem{guest} S.~Guest, A.~Previtali, P.~Spiga, A remark on the permutation representations afforded by the embeddings of $\mathrm{O}^\pm_{2m}(2^f)$ in $\mathrm{Sp}_{2m}(2^f)$, \textit{Bull. Aust. Math. Soc.} \textbf{89} (2014),  331--336.

\bibitem{GM} R.~M.~Guralnick, G. Malle, Simple groups admit Beauville structures, \textit{J.  Lond. Math. Soc.} \textbf{85}, Issue 3 (2012),  694--721.

\bibitem{GMS}R.~M.~Guralnick, P.~M\"{u}ller, J.~Saxl, The rational function analogue of a question of Schur and
exceptionality of permutation representations, \textit{Mem. Am. Math. Soc.} \textbf{162} (2003) 1--79.

\bibitem{gpps} R.~M.~Guralnick, T.~Penttila, C.~E.~Praeger, J.~Saxl,
          Linear groups with orders having certain large prime
divisors, \textit{Proc. London Math. Soc. (3)} {\bf 78 } (1999),
167--214.

\bibitem{HW}G.~H.~Hardy, E.~M.~Wright, \textit{An introduction to the Theory of Numbers}, Oxford University press, 1938.

\bibitem{HarperScott}S.~Harper, Shintani descent, simple groups and spread, \textit{J. Algebra} \textbf{578} (2021), 319--355.

\bibitem{hest} M. D. Hestens, Singer groups, \textit{Can. J. Math.} {\bf XXII}, no 3 (1970), 492--513.

\bibitem{hu-book} B.~Huppert, \textit{Endliche Gruppen I}, Grundlehren der mathematischen Wissenschaften {\bf 134},
Springer-Verlag, Berlin Heidelberg New York Tokyo, 1967.

\bibitem{hu} B.~Huppert, Singer-Zyklen in Klassischen
Gruppen, \textit{Math. Z.} {\bf 117} (1970), 141--150.

\bibitem{KS}W.~M.~Kantor, \'A.~Seress, Large element orders and the characteristic of Lie-type
simple groups, \textit{J. Algebra} \textbf{322} (2009), 802--832.

\bibitem{KLS}W.~M.~Kantor, A.~Lubotzky, A.~Shalev,
Invariable generation and the Chebotarev invariant of a finite group,
\textit{J. Algebra} \textbf{348} (2011), 302--314

\bibitem{Ka}N.~Kawanaka, On the irreducible characters of the finite unitary groups, \textit{J. Math. Soc. Japan} \textbf{29} (1977),  425--450.

\bibitem{Kouvorka}E.~I.~Khukhro, V.~D.~Mazurov, Unsolved Problems in Group Theory. The Kourovka Notebook, No.~20,  \href{https://arxiv.org/abs/1401.0300v24}{arXiv:1401.0300v24}.

\bibitem{kl} P.~B.~Kleidman, M.~W.~Liebeck, \textit{The subgroup
structure of the finite classical groups}, London Math. Soc.
Lecture Notes {\bf 129}, Cambridge University Press, 1990.

\bibitem{landau}E.~Landau, \"{U}ber die Maximalordnung der Permutationen gegebenen Grades, \textit{Arch. Math. Phys.} \textbf{5} (1903), 92--103.

\bibitem{LPS3}M.~W.~Liebeck, C.~E.~Praeger, J.~Saxl, The maximal factorizations of the finite simple groups and their automorphism groups, \textit{Memoirs of the American Mathematical Society}, Volume~\textbf{86}, Number~\textbf{432} (1990), Providence, Rhode Island.

\bibitem{LPS4}M.~W.~Liebeck, C.~E.~Praeger, J.~Saxl, On factorizations of almost simple groups, \textit{J. Algebra} \textbf{185}
(1996), no. 2, 409--419.

\bibitem{lucido}M.~S.~Lucido, On the $n$-covers of exceptional groups of Lie type, \textit{Groups St.
Andrews 2005}. Vol. 2, 621–623, London Math. Soc. Lecture Note Ser., 340, Cambridge Univ. Press, Cambridge, 2007.

\bibitem{msw} G.~Malle, J.~Saxl, Th.~Weigel,  Generation of
classical groups, \textit{Geometriae Dedicata} {\bf 49} (1994), 85-116.

\bibitem{landau1}J.~P.~Massias, J.~L.~Nicolas, G.~Robin, Effective Bounds for the Maximal Order of an Element in the
Symmetric Group, \textit{Mathematics of Computation} \textbf{53} (1989), 665--678.

\bibitem{meagher}K.~Meagher,  A. Sarobidy Razafimahatratra, P.~Spiga, On triangles in derangement graphs, \textit{J. Combin. Theory Ser. A} \textbf{180} (2021), Paper No. 105390, 26 pp.

\bibitem{pellegrini}M.~A.~Pellegrini, $2$-coverings for exceptional and sporadic simple groups, \textit{Arch. Math. (Basel)} \textbf{101} (2013), 201--206.

\bibitem{perlis}R.~Perlis, On the equation $\zeta_K(s)=\zeta_{K'}(s)$, \textit{J. Number Theory} \textbf{9} (1977), 342–360.
\bibitem{Preager3KC}C.~E.~Praeger, Covering subgroups of groups and Kronecker classes of fields, \textit{J. Algebra} \textbf{118} (1988), 455--463. 

\bibitem{Octic}C.~E.~Praeger, On octic extensions and a problem in group theory, in: Group Theory, Proceedings of
the 1987 Singapore Group Theory Conference (eds. K. N. Cheng and Y. K. Leong) (De Gruyter,
Berlin, 1989) pp. 443--463.

\bibitem{Praeger2KC}C.~E.~Praeger, Kronecker classes of field extensions of small degree, \textit{J. Austral. Math. Soc. Ser. A} \textbf{50} (1991), 297--315.

\bibitem{Praeger1KC}C.~E.~Praeger, Kronecker classes of fields and covering subgroups of finite groups, \textit{J. Austral. Math. Soc. Ser. A} \textbf{57} (1994), 17--34.

\bibitem{RS} D.~Rabayev, J.~Sonn, On Galois realizations of the $2$-coverable symmetric and alternating groups, \textit{Comm. Algebra} \textbf{42} (2014), 253--258.

\bibitem{robinson}D.~J.~S.~Robinson, \textit{A course in the theory of groups}, Second Edition, Graduate Texts in Mathematics 80, Springer-Verlag, 1996.

\bibitem{Sa88} J.~Saxl, On a Question of W. Jehne concerning covering subgroups of groups and Kronecker classes of fields, 
 \textit{J. London Math. Soc. (2)} \textbf{38} (1988), 243--249. 

\bibitem{jps} J-P. Serre, \textit{Galois Cohomology}, Springer Monographs in Mathematics, Springer 1997.

\bibitem{wall0}G.~E.~Wall, Conjugacy classes in projective and special linear groups, \textit{Bull. Austral. Math. Soc.} \textbf{22} (1980), 339--364.

\bibitem{wall}G.~E.~Wall, On the conjugacy classes in the unitary, symplectic and orthogonal groups, \textit{J. Austral. Math. Soc.} \textbf{3} (1963) 1--62.

\bibitem{Wilson}R.~A.~Wilson, Maximal subgroups of sporadic groups. \textit{Finite simple groups: thirty years of the atlas and beyond}, 57--72, Contemp. Math., 694, Amer. Math. Soc., Providence, RI, 2017.
\bibitem{Wilson2} R.~A.~Wilson, The uniqueness of $\mathrm{PSU}_3(8)$ in the Monster, \textit{Bull. Lond. Math. Soc.} \textbf{49} (2017), 877--880.

\bibitem{zs} K.~Zsigmondy, Zur Theorie der Potenzreste, \textit{Monathsh. Fur Math. u. Phys.} \textbf{3} (1892), 265--284.

\end{document}